\newtheorem{definition}{Definition}[section]
\newtheorem{theoremnonum}{Theorem}
\newtheorem{theoremnonum_fr}{Théorème}
\newtheorem{propnonum}[theoremnonum]{Proposition}
\newtheorem{theorem}[definition]{Theorem}
\newtheorem{lemma}[definition]{Lemma}
\newtheorem{corollary}[definition]{Corollary}
\newtheorem{proposition}[definition]{Proposition}
\newtheorem{remark}[definition]{Remark}
\newcommand{\nc}{\newcommand}
\nc{\qed}{\mbox{}\nolinebreak\hfill \rule{2mm}{2mm}} 
\nc{\weak}{\rightharpoonup}
\nc{\weakstar}{\stackrel{\ast}{\rightharpoonup}} 
\nc{\proof}{{\bf Proof: }} 
\renewcommand{\div}{{{\mathrm{div}}_x}\,}
\newcommand{\vrho}{\varrho}
\nc{\modular}[1]{{\stackrel{ #1}{\longrightarrow\,}}}
\def\bbbone{{\mathchoice {\rm 1\mskip-4mu l}
{\rm 1\mskip-4mu l} {\rm 1\mskip-4.5mu l} {\rm 1\mskip-5mu l}}}
\def\vec#1{\boldsymbol{#1}}
\newcommand{\anl}{a_\veps^{n+l}}
\newcommand{\an}{a_\veps^{n+1}}
\newcommand{\vu}		{\vec{u}}
\newcommand{\vr}		{\vrho}
\newcommand{\vrm}       {\vrho^{(1)}_{\veps,M}}
\newcommand{\vre}		{\vr_\varepsilon}
\newcommand{\vrez}	{\vr_{0,\varepsilon}}
\newcommand{\vret}		{\wtilde{\vr}_\ep}
\newcommand{\ue}		{\vec{u}_\varepsilon}
\newcommand{\uez}		{\vec{u}_{0,\varepsilon}}
\newcommand{\ep}		{\varepsilon}
\newcommand{\temp}	{\vartheta}
\newcommand{\tem}		{\vartheta_\varepsilon}
\newcommand{\temz}	{\vartheta_{0,\varepsilon}}
\newcommand{\tems}	{\overline{\vartheta}}
\newcommand{\q}		{\vec{q}}
\newcommand{\n}		{\vec{n}}
\newcommand{\vU}		{\vec{U}}
\newcommand{\ess} 	{{\rm{ess}}}
\newcommand{\res}		{{\rm{res}}}
\newcommand{\ds}		{\,{\rm d}s}
\newcommand{\dx}		{\,{\rm d}x}
\newcommand{\dt}		{\, {\rm d}t}
\newcommand{\detau}		{\, {\rm d}\tau}
\newcommand{\dxdt}	{\, {\rm d}x{\rm d}t}
\newcommand{\U}		{\vec{U}}
\def\bbbone{{\mathchoice {\rm 1\mskip-4mu l}
{\rm 1\mskip-4mu l} {\rm 1\mskip-4.5mu l} {\rm 1\mskip-5mu l}}}
\renewcommand{\bbbone}{\mathds{1}}
\newcommand{\mbb}{\mathbb}
\newcommand{\mc}{\mathcal}
\newcommand{\mf}{\mathfrak}
\newcommand{\veps}{\varepsilon}
\newcommand{\vtheta}{\vartheta}
\newcommand{\what}{\widehat}
\newcommand{\wtilde}{\widetilde}
\newcommand{\vphi}{\varphi}
\newcommand{\oline}{\overline}
\newcommand{\ra}{\rightarrow}
\newcommand{\g}{\gamma}
\newcommand{\z}{\zeta}
\newcommand{\s}{\sigma}
\newcommand{\de}{\delta}
\newcommand{\lan}{\langle}
\newcommand{\ran}{\rangle}
\newcommand{\e}{\vec{e}}
\newcommand{\R}{\mathbb{R}}
\newcommand{\N}{\mathbb{N}}
\newcommand{\Z}{\mathbb{Z}}
\newcommand{\B}{\mathbb{B}}
\newcommand{\T}{\mathbb{T}^1}
\newcommand{\TT}{\mathbb{T}}
\newcommand{\h}{\mathbb{H}}
\renewcommand{\div}{{\rm div}\,}
\newcommand{\curl}{{\rm curl}\,}
\newcommand{\divh}{{\rm div}_h}
\newcommand{\curlh}{{\rm curl}_h}
\newcommand{\Id}{{\rm Id}\,}
\newcommand{\Supp}{{\rm Supp}\,}
\newcommand{\dbtilde}[1]{\accentset{\approx}{#1}}
\def\d{\partial}
\def\div{{\rm div}\,}
\begin{document}
  \begin{titlepage}
 \begingroup
    \fontsize{14pt}{12pt}\selectfont 
   \begin{center}

\vspace*{-2.5cm}
\begin{figure}[h]
    \centering
    \includegraphics[width=17cm]{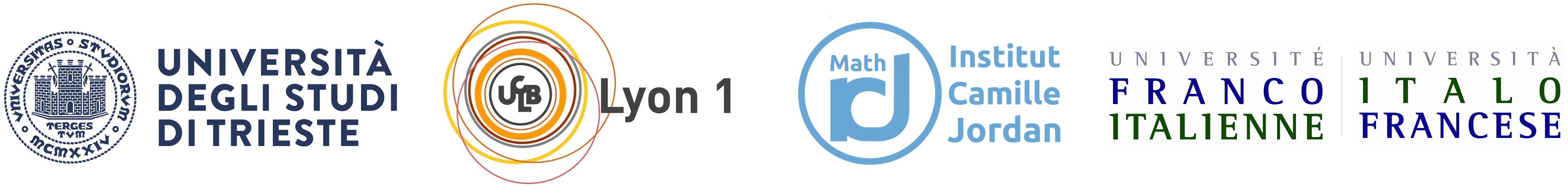}
\end{figure}

\vspace{0.5cm}

           {\textsc{Universit\`a degli Studi di Trieste}\\}
           \vspace{0.2cm}
           {\textit{Dipartimento di Matematica e Geoscienze}\\}
\vspace{0.4cm}  
           {\textsc{Universit\'e de Lyon, Universit\'e Claude Bernard Lyon 1}\\}
           \vspace{0.2cm}
           {\textit{Institut Camille Jordan, CNRS UMR 5208}\\}
\vspace{0.4cm}
{(also supported by \textsc{Universit\`a Italo-Francese/Universit\'e  Franco-Italienne)}\\}

\vspace{12pt}

           {XXXIV cycle in the Doctoral Program  \textbf{``Scienze della Terra, Fluidodinamica e Matematica. Interazioni e Metodiche'' } \\
(SSD: MAT/05)\\}
\vspace{0.4cm}
           { Doctoral School \textbf{``INFOMATHS''} (Math\'ematiques)}

\vspace{0.7cm}

           {Co-tutorial thesis submitted on November 30, 2021 for the degree of ``Philosophiæ Doctor'' by~:\\}
           \vspace{0.3cm}
           {\Large\bf {Gabriele SBAIZ}}
\vspace{0.4cm}

\rule{15cm}{1pt}

\vspace{13pt}
   \AddToShipoutPicture*{\put(130,284)%
{\includegraphics[width=17.7cm, height=3.4cm]{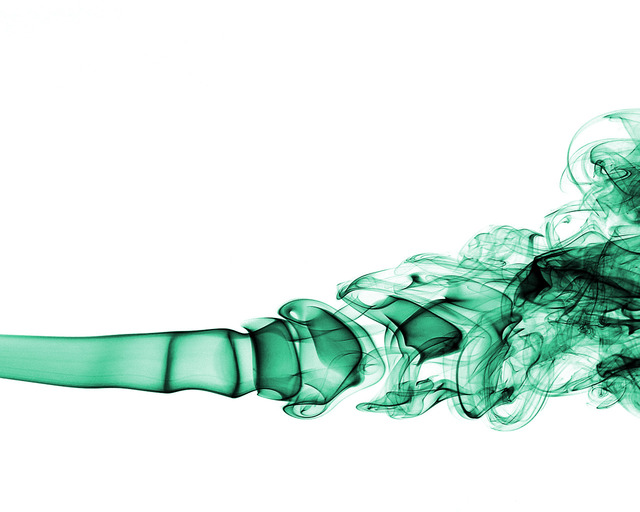}%
}} 
           {\textcolor{blue}{\LARGE \bf Some stability and instability issues in the dynamics of highly rotating fluids}}\\

\vspace{16pt}
\rule{15cm}{1pt}

\vspace{0.5cm}

\end{center}

 {
\begin{tabular}{ll}
\textbf{BERTI Massimiliano} &\textbf{External member}\\
\textit{SISSA} \vspace{0.1cm}\\
\textbf{DALIBARD Anne-Laure} &\textbf{External member}\\
\textit{Sorbonne Université} \vspace{0.1cm}\\        
\textbf{DEL SANTO Daniele} &\textbf{Supervisor}\\
\textit{Universit\`a degli Studi di Trieste} \vspace{0.1cm}\\        
\textbf{FANELLI Francesco}		&\textbf{Co-supervisor}\\
\textit{Universit\'e Claude Bernard Lyon 1} \vspace{0.1cm}\\
\textbf{IFTIMIE Drago\c{s}} &\textbf{Internal member}\\
\textit{Universit\'e Claude Bernard Lyon 1} \vspace{0.1cm}\\        
\textbf{NEČASOVÁ Š\'arka} &\textbf{Referee}\\
\textit{Academy of Sciences of the Czech Republic} \vspace{0.1cm}\\        
\textbf{PAICU Marius-Gheorghe} &\textbf{Referee}\\
\textit{Université de Bordeaux} \vspace{0.1cm}\\        
\textbf{ROSSET Edi} &\textbf{Internal member}\\
\textit{Universit\`a degli Studi di Trieste}        
		\end{tabular}
}

\vspace{0.2cm}

\begin{center}
Academic year 2020/2021
\end{center}
\endgroup


\end{titlepage}


\newpage
\null
\thispagestyle{empty}
\newpage


{\thispagestyle{empty} 

    \null\vfill 

    
    \begin{flushright}
      \textit{``Mathematics is the most beautiful and\\
       most powerful creation of the human spirit.''}\\
      
      \rule{5cm}{1pt}\\
      
       Stefan Banach
    \end{flushright}
    
    \bigskip
    
    \bigskip
    
    \bigskip
    
    \bigskip
    
    \bigskip
    
    \bigskip

    \begin{flushright}
      \textit{``En math\'ematique, c'est comme dans\\
       un roman policier ou un \'episode de Columbo: \\
le raisonnement par lequel le d\'etective \\
confond l'assassin est au moins aussi important\\
 que la solution du myst\`ere elle-même.''}\\
      
      \rule{5cm}{1pt}\\
      
       C\'edric Villani
    \end{flushright}

    \vfill\vfill\vfill\vfill\vfill\vfill\null
    
    \clearpage}
{\thispagestyle{empty}
\null\vfil
\vspace{60pt}
\begin{flushleft}{\Large \sl Dedicate to my extended family,\\
 which supports and stands me everyday.}\end{flushleft}
\vfil\null
}

\let\cleardoublepage\clearpage

\frontmatter
    \thispagestyle{plain}
    \begin{center}{\huge{\textit{Acknowledgements}} \par}\end{center}
    \normalsize
The first thing I would like to do in these pages is to thank my supervisors, my family, my girlfriend, my friends and my colleagues from Trieste and Lyon. 

\medskip

The first person I thank is Daniele Del Santo, my supervisor in Trieste. Daniele has taken care of me for a long time: (unlucky for him!) he has been my supervisor also for the bachelor's and master's thesis. All the time, he advises and supports me in order to get out my best. His long experience (in the life and in the work) has stimulated my research activity and his constant help allows me to overcome not only research issues but also life, administrative and bureaucratic troubles (not so trivial in view of the Italian and French rules!). In any time, he has words of comfort and he has always cheered me up, even in moment of greatest dejection: he finds always time to listen me!

We share a lot of things during these three years of the thesis: the working activity, the life in the laboratory, but even more the adventures/misadventures in missions and conferences.

To sum up, I can say that he is one of the people who has most inspired and influenced me!\\
Many, many thanks!

\medskip

Now, I have to thank my co-supervisor (in Lyon) of the thesis: Francesco Fanelli. He has always followed my research and he has shared lots of ideas to overcome mathematical issues which seemed to be insurmountable. There have been many times I wanted to finish a project early but he disagreed and asked me to do it better. I admire his work ethic (he is a stakhanovite!) and working with him, I learned how to improve quality of the research. He led me in every moment, sharing also his templates to write an article, a cover letter and a letter for a journal. Since writing was a hard task, I had to do it again and again (and again!) and he often advised me: ``you have to work more on it!''. When I was in Lyon, he was always available to discuss not only the progresses in the research. Even now that I am in Trieste, we keep in touch frequently by Zoom calls. 

For all the above reasons, I send him my sincere thanks. 

\medskip

I would also thank Professor Scipio Cuccagna for his useful remarks on Bochner integrals and Calderón–Zygmund theory; Professor Alessandro Fonda for his unmissable help in various occasions; Stefano Scrobogna for his patience and comprehension of my hard period in writing the manuscript; Professor Lucio Torelli for his daily pearls of wisdom and Guido Travaglia for his constant aid in informatics troubles. Moreover, I thank Professor Filippo Santambrogio for his numerous advices in practical situations; L\'eon Matar Tine who is my captain of the ICJ football team, and the members of the ``Comit\'e de Suivi Individuel'', Professors Thomas Lepoutre and Christophe Prange, for their insightful suggestions and remarks. 

\medskip

No less important, the acknowledgements to the referees Professor $\check{\rm S}$\'arka Ne$\check{\rm c}$asov\'a and Professor Marius-Gheorghe Paicu to have accepted to dedicate their time in reading my manuscript, and to Professors Massimiliano Berti, Anne-Laure Dalibard, Drago\c{s} Iftimie and Edi Rosset to make part of the defence committee. 

\medskip

Now, it is the turn of my friends and colleagues. I have to thank my friends forever: Daniele, Lara, Mirko and Susanna. During these three years, they have damped my complains (a lot of complains!) in front of a beer or a cup of tea/herbal tea. Moreover, I have to send many thanks to my colleagues in Trieste especially to Fabio, with whom we have always supported each other to survive in the ``jungle'' of the doctoral program, and to my colleagues in Lyon above all to David and Godfred.

I also have to thank my colleague Martin, who has read carefully the introduction in French and he has suggested changes which have allowed to improve the presentation. 

Together with David and his girlfriend Claudia, we share lots of dinner from Spanish to Italian cuisine passing through the French one. I still remember our cr$\hat{\rm e}$pes with an exaggerated quantity of Nutella. David is a very kind and thoughtful colleague and friend. We are very similar in thinking and viewing the work and the real life: for that reason we have easily discussed and talked on the most disparate arguments. I can say that he is like an ``adopted cousin''. He has only one defect: he is not interested in football at all!

Godfred besides being an officemate is also a friend. We share our thoughts, our preoccupations, our passions and our perspectives (during my year in Lyon) and we have strengthened each other by also going to work in weekends. Moreover, he is my ``English Professor'' since English is his native language.

\medskip

At last but not at least, I have to great thank my family and my girlfriend Miriam. For me it is a very hard task to find only few words to express my gratitude to them: there will never be enough space to thank them! 

My family helps me in any occasion and from all points of view: during the periods of greatest stress they support me both from the psychological viewpoint and in the small things, like making a cup of coffee or tea or just cooking my favourite dishes. I would tell only this anecdote to explain how important and constant has been their aid.
When I decided to move to Lyon for my period at the Institut Camille Jordan, we reached together the city by car: it is 800 km far from my small city in Italy. They stayed few days helping me to clean the apartment, to do the administrative documentation and to find the main services in the new city. Many many thanks!

To conclude this too short paragraph of acknowledgements, I have to thank Miriam. She really was (and she is) next to me in each occasion and she is able to highlight my merits and to obscure my defects. When I say that she was next to me in each occasion, I am not lying or exaggerating. Indeed, when I was in Paris or in Lyon she phoned me every day and we agreed on the choice of the film to watch (virtually) together during the evenings. What to say more? Only, thanks, thanks and thanks a lot!

{
    \vfil\vfil\null
    \clearpage
}
\thispagestyle{plain}
    \begin{center}{\huge{\textit{Abstract}} \par}\end{center}
    \normalsize
In the present thesis, we are interested in the description of the dynamics of flows on large scales, like the atmospheric and ocean currents on the Earth. In this context, the fluids are governed by rotational, weak compressibility and stratification effects, whose importance is ``measured'' by adimensional numbers: the Rossby, Mach and Froude numbers respectively. 
More those three physical parameters are small, more the relative effects are strong.

The first part of the thesis is dedicated to the analysis of a 3-D multi-scale problem called the \emph{full Navier-Stokes-Fourier} system where variations in density and temperature are taken into account and in addition the dynamics is influenced by the action of Coriolis, centrifugal and gravitational forces. We study, in the framework of weak solutions, the combined incompressible and fast rotation limits in the regime of small Mach, Froude and Rossby numbers ($Ma,\, Fr,\, Ro$ respectively) and for general ill-prepared initial data.
In the so-called multi-scale regime where some effect is predominant in the motion, precisely when the Mach number is of higher order than the Rossby number, we prove that the limit dynamics is described by an incompressible Oberbeck-Boussinesq type system. It is worth noticing that the velocity
field is purely horizontal at the limit (according to the so-renowned Taylor-Proudman theorem in geophysics), but surprisingly vertical effects on the temperature equation appear. These stratification effects are completely absent when $Fr$ exceeds $\sqrt{Ma}$, whereas they suddenly come into play as soon as one reaches the endpoint scaling $Fr=\sqrt{Ma}$.

Conversely, when the Mach and Rossby numbers have the same order of magnitude (the isotropic scaling), and in absence of the centrifugal force, we show convergence towards a quasi-geostrophic type equation for a stream-function
of the limit velocity field, coupled with a transport-diffusion equation for a quantity that mixes the target density and temperature profiles.

Following \textit{``le fil rouge''} of the asymptotic analysis, in the second part of the thesis, we examine the effects of high rotation (small Rossby number) for the 2-D incompressible density-dependent Euler system. With respect to the previous problem, now we deal with an incompressible system with a hyperbolic structure, where the viscosity effects are neglected. More precisely, the main goal is to perform the singular limit in the fast rotation regime, showing the convergence of the Euler equations to a quasi-homogeneous type system. The limit system is a coupled system of a transport equation for the density and a momentum equation for the velocity with a non-linear term of lower order, which combines the effects of fluctuations of the density and the velocity field. For the convergence process, a core point is to develop uniform (with respect to $Ro$) estimates in high regularity norms not to deteriorate the lifespan of solutions. Moreover, as a sub-product of the local well-posedness analysis (recall that the global existence of solutions is an open problem even in 2-D), we find an \textit{``asymptotically global''} well-posedness result: for small densities, the lifespan of solutions to the primitive and limiting systems tend to infinity. 

The proof of convergence of the two primitive problems (the Navier-Stokes-Fourier system and the Euler system, respectively) towards the reduced models is based on a compensated compactness argument. The key point is to use the structure of the underlying system of Poincar\'e waves in order to identify some compactness properties for suitable quantities.
Compared to previous results, our method enables to treat the whole range of parameters in the multi-scale problem, and also
to reach and go beyond the \emph{``critical''} choice $Fr\,=\,\sqrt{Ma}$.

\paragraph*{\small Keywords:} {\footnotesize Navier-Stokes-Fourier system; barotropic Navier-Stokes-Coriolis system; density-dependent incompressible Euler system; Coriolis force; gravity; stratification effects; singular perturbation problem;  multi-scale limit; low Mach, Froude and Rossby numbers; compensated compactness.}
{
    \vfil\vfil\null
    \clearpage
}
\thispagestyle{plain}
    \begin{center}{\huge{\textit{R\'esum\'e}} \par}\end{center}
    \normalsize

Dans cette th\`ese, nous nous int\'eressons \`a la description de la dynamique des fluides \`a grande \'echelle, comme les courants atmosph\'eriques et oc\'eaniques sur la plan\`ete Terre. Dans ce contexte, les fluides sont dirig\'es par des effets de rotation, de faible compressibilit\'e et de stratification, dont l'importance est ``mesurée'' par des nombres adimensionnels: respectivement les nombres de Rossby, Mach et Froude.
Plus ces trois paramètres physiques sont petits, plus les relatifs effets sont importants.


La premi\`ere partie de la th\`ese est ensuite consacr\'ee à l'analyse d'un problème multi-\'echelle 3-D appel\'e le syst\`eme de \emph{Navier-Stokes-Fourier complet} o\`u les variations de densit\'e et de temp\'erature sont prises en compte et en plus la dynamique est influenc\'ee par l'action de la force de Coriolis et des forces centrifuge et gravitationnelle. Nous \'etudions, dans le cadre des solutions faibles, la limite incompressible et à rotation rapide dans le r\'egime des petits nombres de Mach, Froude et Rossby ($Ma,\, Fr,\, Ro$ respectivement) et pour des donn\'ees initiales g\'en\'erales mal pr\'epar\'ees.
Dans le r\'egime appel\'e multi-\'echelles o\`u un effet est pr\'edominant dans le mouvement, pr\'ecis\'ement lorsque le nombre de Mach est d'ordre sup\'erieur au nombre de Rossby, nous montrons que la dynamique limite est d\'ecrite par un syst\`eme incompressible de type Oberbeck-Boussinesq. Il est \`a noter que le champ de vitesse est purement horizontal \`a la limite (selon le th\'eor\`eme si renomm\'e de Taylor-Proudman en g\'eophysique), mais \'etonnamment des effets verticaux apparaissent dans l'\'equation de la temp\'erature. Ces effets de stratification sont totalement absents lorsque $Fr$ d\'epasse $\sqrt{Ma}$, alors qu'ils entrent en jeu imm\'ediatement quand on considère l’\'echelle critique $Fr=\sqrt{Ma}$.

\`A l'inverse, lorsque les nombres de Mach et Rossby ont le même ordre de grandeur (l'échelle appelée isotrope), et en absence de la force centrifuge, on montre la convergence vers une équation de type quasi-géostrophique pour une fonction de flux liée au champ de vitesse limite, couplée à une équation de transport-diffusion pour une quantité qui mélange les profils limites de densité et de température.

En suivant \textit{le fil rouge} de l'analyse asymptotique, dans la deuxième partie de la thèse, nous examinons les effets de la rotation rapide (petit nombre de Rossby) pour le système d'Euler incompressible 2-D dépendant de la densité. Par rapport au problème précédent, maintenant nous sommes en présence d'un système incompressible et avec une structure hyperbolique où les effets de viscosité sont négligés. Plus précisément, l'objectif principal est d'effectuer la limite singulière dans le régime de rotation rapide, montrant la convergence des équations d'Euler vers un système de type quasi-homogène. Le système limite est un système couplé d'une équation de transport pour la densité et d'une équation de quantité de mouvement pour la vitesse avec un terme non linéaire d'ordre inférieur, qui combine les effets des fluctuations de la densité avec le champ de vitesse. Pour atteindre ce but, un point central est de développer des estimations uniformes (par rapport à $Ro$) dans des normes de haute régularité, pour ne pas détériorer la durée de vie des solutions. De plus, en tant que sous-produit de l'analyse du caractère bien posé local (rappelons que l'existence globale de solutions est un problème ouvert même en 2-D), nous trouvons un résultat de caractère bien posé \textit{``asymptotiquement globale''}: pour des petites densités, la durée de vie des solutions des systèmes primitif et limite tend vers l'infini.

La preuve de la convergence des deux problèmes primitifs (respectivement le système de Navier-Stokes-Fourier et le système d'Euler) vers les modèles réduits est basée sur un argument de compacité par compensation. Le point clé est d'utiliser la structure du système sous-jacent, appelé système d'ondes de Poincar\'e, afin d'identifier certaines propriétés de compacité pour des quantités appropriées.
Par rapport aux résultats précédents, notre méthode permet de traiter l'ensemble des paramètres du problème multi-échelles, et aussi pour atteindre et dépasser le choix \emph{``critique''} $Fr\,=\,\sqrt{Ma}$.

\paragraph*{\small Mot cl\'es:} {\footnotesize syst\`eme de Navier-Stokes-Fourier; syst\`eme barotrope de Navier-Stokes-Coriolis; syst\`eme d'Euler incompressible d\'ependant de la densit\'e; force de Coriolis; gravit\'e; effets de stratification; probl\`eme de perturbation singuli\`ere ; limite multi-\'echelles; faibles nombres de Mach, Froude et Rossby; compacit\'e par compensation.}
{
    \vfil\vfil\null
    \clearpage
}

\tableofcontents
\setcounter{secnumdepth}{3}

\newpage

\chapter{Notation and conventions}

Let $\mc B\subset\R^d$ with $d\geq 2$. Throughout the whole thesis, the symbol $\bbbone_{\mc B}$ denotes the characteristic function of $\mc B$.
The notation $C_c^\infty (\mc B)$ stands for the space of $\infty$-times continuously differentiable functions on $\R^d$ and having compact support in $\mc B$. The dual space $\mc D^{\prime}(\mc B)$ is the space of
distributions on $\mc B$. We use also the notation $C^0_w([0,T];\mc X)$, with $\mc X$ a Banach space, to refer to the space of continuous in time functions with values in $\mc X$ endowed with its weak topology. \\
Given $p\in[1,+\infty]$, by $L^p(\mc B)$ we mean the classical space of Lebesgue measurable functions $g$, where $|g|^p$ is integrable over the set $\mc B$ (with the usual modifications for the case $p=+\infty$).
We use also the notation $L_T^p(L^q)$ to indicate the space $L^p\big([0,T];L^q(\mc B)\big)$, with $T>0$.
Given $k \geq 0$, we denote by $W^{k,p}(\mc B)$ the Sobolev space of functions which belongs to $L^p(\mc B)$ together with all their derivatives up to order $k$. When $p=2$, we alternately use the
notation $W^{k,2}(\mc B)$ and  $H^k(\mc B)$.
We denote by $\dot{W}^{k,p}(\mc B)$ the corresponding homogeneous Sobolev spaces, i.e. $\dot{W}^{k,p}(\mc B) = \{ g \in L^1_{\rm loc}(\mc B)\, : \, D^\alpha g \in L^p(\mc B),\ |\alpha| = k \}$.
Recall that $\dot{W}^{k,p}$ is the completion of $C^\infty_c(\overline{\mc B})$ with respect to the $L^p$ norm of the $k$-th order derivatives. Moreover, the notation $B^s_{p,r}(\mc B)$ stands for the Besov spaces on $\mc B$ that are interpolation spaces between the Sobolev ones. \\
The symbol $\mc{M}^+(\mc B)$ denotes the cone of non-negative Borel measures on $\mc B$. For the sake of simplicity, we will omit from the notation the set $\mc B$, that we will explicitly point out if needed.

In the whole thesis, the symbols $c$ and $C$ will denote generic multiplicative constants, which may change from line to line, and which do not depend on the small parameter $\veps$.
Sometimes, we will explicitly point out the quantities which these constants depend on, by putting them inside brackets.\\
In addition, we agree to write $f\sim g$ whenever we have $c\, g\leq f \leq C\, g$, and $f\lesssim g$ if $f\leq Cg$.

Let $\big(f_\veps\big)_{0<\veps\leq1}$ be a family of functions in a normed space $Y$. If this family of functions is bounded in $Y$,  we use the notation $\big(f_\veps\big)_{\veps} \subset Y$.
 
\medbreak
As we will see in the sequel (we refer in particular to Chapters \ref{chap:multi-scale_NSF} and \ref{chap:BNS_gravity}), one of the main features of our asymptotic analysis is that the limit-flow
will be \emph{two-dimensional} and \emph{horizontal} along the plane orthogonal to the rotation axis.
Then, let us introduce some notation to describe better this phenomenon.

Let $\Omega$ be a domain in $\R^3$. We decompose $\vec x\in\Omega$
into $\vec x=(x^h,x^3)$, with $ x^h\in\R^2$ denoting its horizontal component. Analogously,
for a vector-field $\vec v=(v^1,v^2,v^3)\in\R^3$, we set $\vec v^h=(v^1,v^2)$ and we define the differential operators
$\nabla_h$ and $\div_{\!h}$ as the usual operators, but acting just with respect to $x^h$.
In addition, we define the operator $\nabla^\perp_h\,:=\,\bigl(-\d_2\,,\,\d_1\bigr)$.

Finally, we introduce the Helmholtz projection $\mbb{H}[\vec{v}]$ of a vector field $\vec{v}\in L^p(\Omega; \R^3)$ on the subspace of divergence-free vector fields. It is defined by 
the decomposition
	\begin{equation*}
	\vec{v} = \mbb{H}[\vec{v}] + \nabla_x \Psi\, ,
	\end{equation*}
where $\Psi \in \dot W^{1,p}(\Omega)$ is the unique solution of 
	$$\int_{\Omega} \nabla_x \Psi \cdot \nabla_{x} \varphi \dx = \int_{\Omega} \vec{v} \cdot \nabla_x \varphi  \dx
	\quad\mbox{for all } \varphi \in C^\infty_c (\overline{\Omega}),$$
which formally means: $\Delta \Psi = \div \vec{v}$ and 
$\vec{v}  \cdot \n |_{ \partial \Omega } =  0$.

The symbol $\h_h$ denotes instead the Helmholtz projection on $\R^2$.
Observe that, in the sense of Fourier multipliers, one has $\h_h\vec f\,=\,-\nabla_h^\perp(-\Delta_h)^{-1}\curlh\vec f$.

Moreover, since we will deal with a periodic problem in the $x^{3}$-variable, we also introduce the following decomposition: for a vector-field $X$, we write
\begin{equation} \label{eq:decoscil}
X(x)=\langle X\rangle (x^{h})+\dbtilde{X}(x)\quad\qquad
 \text{ with }\quad \langle X\rangle(x^{h})\,:=\,\frac{1}{\left|\T\right|}\int_{\T}X(x^{h},x^{3})\, \dx^{3}\,, \tag{OSC}
\end{equation}
where $\mbb{T}^1\,:=\,[-1,1]/\sim$ is the one-dimensional flat torus (here $\sim$ denotes the equivalence relation which identifies $-1$ and $1$)
and $\left|\T\right|$ denotes its Lebesgue measure.
Notice that $\dbtilde{X}$ has zero vertical average, and therefore we can write $\dbtilde{X}(x)=\d_{3}\dbtilde{Z}(x)$ with $\dbtilde{Z}$ having zero vertical average as well.

\chapter{Contributions of the thesis} 

\section*{The Navier-Stokes-Fourier problem: some physical insight}
In this thesis, we devote ourselves to the study of the behaviour of fluid flows characterized by large time and space scales. Typical examples of those flows are currents in the atmosphere and the ocean, but of course there are
many other cases where such fluids occur out of the Earth, like flows on stars or other celestial bodies.
At those scales, the effects of rotation of the ambient space (which in the case of oceans or atmosphere is the Earth) are not negligible, and the fluid motion is influenced by the action of a strong Coriolis force. There are  two other features that characterize the dynamics of these flows, usually called geophysical flows (see \cite{C-R}, \cite{Ped} and \cite{Val}, for instance): the compressibility or incompressibility of the fluid and the stratification effects (i.e. density variations, essentially due to the gravity).
The relevance of the previous attributes is ``measured'' by introducing, in the mathematical model, three positive adimensional parameters which, for the geophysical flows, are assumed to be small. Those parameters are:
\begin{itemize}
\item the \emph{Mach} number $Ma$, which sets the size of isentropic departures from incompressible flows: the more $Ma$ is small, the more compressibility effects are low; 
\item the \emph{Froude} number $Fr$, which measures the importance of the stratification effects in the dynamics: the more $Fr$ is small, the more gravitational effects are strong;
\item the \emph{Rossby} number $Ro$, that is related to the rotation of the ambient system: when $Ro$ is very small, the effects of the fast rotation are predominant in the dynamics.
\end{itemize}
We adopt a simplistic assumption (often assumed in physical and mathematical studies) which consists in restricting the attention to flows at mid-latitudes, i.e. flows which take place far enough from the poles and the equatorial zone. In this context, the variations of rotational effects due to the latitude are negligible.

Denote by $\vrho ,\, \vtheta\geq 0$ the density and the absolute temperature of the fluid, respectively, and by $\vec{u}\in \mathbb{R}^3$ its velocity field: the full 3-D Navier-Stokes-Fourier system in its
non-dimensional form, can be written (see e.g. \cite{F-N}) as
\begin{equation} \label{eq_i:NSF}
\begin{cases}
	\partial_t \vrho + \div (\vrho\vec{u})=0\  \\[3ex]
	\partial_t (\vrho\vec{u})+ \div(\vrho\vec{u}\otimes\vec{u}) + \dfrac{\e_3 \times \vrho\vec{u}}{Ro}\,  +    \dfrac{1}{Ma^2} \nabla_x p(\vrho,\vtheta) \\[1ex]
	\qquad \qquad \qquad \qquad \qquad \qquad \qquad \; \; \; =\div \mbb{S}(\vtheta,\nabla_x\vec{u}) + \dfrac{\vrho}{Ro^2} \nabla_x F  + \dfrac{\vrho}{Fr^2} \nabla_x G  \\[3ex]
	 \partial_t \bigl(\vrho s(\vrho, \vtheta)\bigr)  + \div \bigl(\vrho s (\vrho,\vtheta)\vec{u}\bigr) + \div\left(\dfrac{\q(\vtheta,\nabla_x \vtheta )}{\vtheta} \right)
	= \sigma\,, \tag{NSF}
\end{cases}
\end{equation}
which is setted in the infinite straight 3-D strip: 
\begin{equation} \label{eq:domain}
\Omega\,:=\,\R^2\times\,]0,1[\,. \tag{DOM}
\end{equation} 
In system \eqref{eq_i:NSF} above, the functions $s,\vec{q},\sigma$ are the specific entropy, the heat flux and the
entropy production rate respectively, and $\mbb{S}$ is the viscous stress tensor, which satisfies Newton's rheological law (see Subsections \ref{sss:primsys} and \ref{sss:structural} for the more precise formulation).

The Coriolis force is represented by 
\begin{equation} \label{def:Coriolis}
\mf C(\vr,\vu)\,:=\,\frac{1}{Ro}\,\vec e_3\times\vr\,\vu\,, \tag{COR}
\end{equation}
where $\vec e_3=(0,0,1)$ and the symbol $\times$ stands for the classical external product of vectors in $\R^3$. In particular, the previous definition implies that the rotation takes place around the vertical axis, and its strength does not depend on the latitude (see e.g. \cite{C-R} and \cite{Ped} for details). We point out that, despite all those simplifications, the obtained model is already able to capture several physically relevant phenomena occurring in the dynamics of geophysical flows: the so-called \emph{Taylor-Proudman theorem}, the formation
of \emph{Ekman layers} and the propagation of \emph{Poincar\'e waves}. We refer to \cite{C-D-G-G} for a more in-depth discussion. 
In the present thesis, we avoid boundary layer effects, i.e. the issue linked to the Ekman layers, by imposing \emph{complete-slip} boundary conditions.

As established by the \emph{Taylor-Proudman theorem} in geophysics, the fast rotation imposes a certain rigidity/stability, forcing the motion to take place on planes orthogonal to the rotation axis.
Therefore, the dynamics becomes purely two-dimensional and horizontal, and the fluid tends to move in vertical columns.

However, such an ideal configuration is hindered by another fundamental force acting at geophysical scales, the gravity, which works to restore vertical stratification of the density. The gravitational force is described in system \eqref{eq_i:NSF} by the term
\[
 \mc G(\vr)\,:=\,\frac{1}{Fr^2}\,\vr\,\nabla_xG\, ,
\]
where in our case $G(x)=\,G(x^3)\,=\,-\,x^3$. Moreover, the gravitational effects are weakened by the presence of the centrifugal force 
$$\mf F(\vr):=\frac{1}{Ro^2} \, \vr\, \nabla_x F\, , $$
 with $F(x)=|x^h|^2$. Such force is an inertial force that, at mid-latitude, slightly shifts the direction of the gravity. 

Thus, the competition between the stabilisation consequences, due to the rotational effects, and the vertical stratification (due to gravity), is translated in the model into the competition between the orders of magnitude of $Ro$ and $Fr$. 

Actually, it turns out that the gravity $\mc G$ acts in combination with the pressure force: 
$$
\mf P(\vr, \vtheta)\,:=\,\frac{1}{Ma^2}\,\nabla_x p(\vr, \vtheta)\,,
$$
where $p$ is a known smooth function of the density and the temperature of the fluid (see Subsection \ref{sss:structural}).

We notice the fact that the terms $\mf C,\, \mc G,\, \mf P$ and $\mf F$ enter into play in the model with a large prefactor, therefore our aim is to study the systems when $Ma, \, Fr$ and $Ro$ are small in different regimes. 
\section*{The multi-scale analysis}

At the mathematical level, in the last 30 years there has been a huge amount of works devoted to the rigorous justification, in various functional frameworks,
of the reduced models considered in geophysics. 

Reviewing the whole literature about this topic goes far beyond the scopes of this introductory part, therefore we make the choice of reporting only the works which
deal with the presence of the Coriolis force \eqref{def:Coriolis}.
We also decide to postpone, to the next part, the discussion about the incompressible models, because less pertinent for multi-scale analysis, due to the rigidity imposed by the divergence-free constraint
on the velocity field of the fluid.

The framework of compressible fluid models, instead, provides a much richer setting for the multi-scale analysis of geophysical flows.
In addition, we choose to focus our attention mostly on works dealing with viscous fluids and which perform the asymptotic study for general
ill-prepared initial data.

\subsection*{Previous results}
First results in the above direction were obtained by Feireisl, Gallagher and  Novotn\'y in \cite{F-G-N} and together with G\'erard-Varet in \cite{F-G-GV-N}, for the barotropic Navier-Stokes system (see also \cite{B-D-GV} for a preliminary study and \cite{G-SR_Mem} for the analysis of equatorial waves). There, the authors investigated the combined low Rossby number regime (fast rotation effects) with low Mach number regime (weak compressibility of the fluid) under the scaling
\begin{align} 
Ro\,&=\,\veps\tag{LOW RO}\\
Ma\,&=\,\veps^m \qquad\qquad \mbox{ with }\quad m\geq 0
\,, \tag{LOW MA}\label{eq:scale}
\end{align}
where $\veps\in\,]0,1]$ is a small parameter, which one lets go to $0$ in order to derive the reduced model. In the case when $m=1$ in \eqref{eq:scale}, the system presents an isotropic scaling, since $Ro$ and $Ma$ act at the same order of magnitude and the pressure and rotation terms keep in balance (the so-called \emph{quasi-geostrophic balance}) at the limit. The limit system
is identified as the so-called \emph{quasi-geostrophic equation} for a stream-function of the target velocity field.
In \cite{F-G-GV-N} when $m>1$ and with in addition the centrifugal force, instead, the pressure term predominates (over the Coriolis force) in the dynamics of the fluid. 
In this case, the limit system is described by a $2$-D incompressible Navier-Stokes system and the difficulties generated by the anisotropy of scaling are overcome by using dispersive estimates.

Afterwards, Feireisl and Novotn\'y continued the multi-scale analysis for the same system without the centrifugal force term yet, by considering
the effects of a low stratification, i.e. $Ma/Fr\rightarrow 0$ when $\veps\rightarrow 0^+$ (see \cite{F-N_AMPA}, \cite{F-N_CPDE}).
We refer to \cite{F_MA} for a similar study in the context of capillary models, where the choice $m=1$ was made, but the anisotropy was given by the scaling
fixed for the internal forces term (the so-called Korteweg stress tensor). In addition, we have to mention \cite{F_2019} for the case of large Mach numbers with respect to the Rossby parameter,
namely $0\leq m<1$ in \eqref{eq:scale}. Since, in that instance, the pressure gradient is not strong enough to compensate the Coriolis force,
in order to find some relevant limit dynamics one has to penalise the bulk viscosity coefficient.

The analysis for models presenting also heat transfer is much more recent, and has begun with the work \cite{K-M-N} by Kwon, Maltese and Novotn\'y. In that paper, the authors considered a multi-scale
problem for the full Navier-Stokes-Fourier system with Coriolis and gravitational forces ($F=0$ therein), taking the scaling
\begin{equation} \label{eq:scale-G}
{Fr}\,=\,\veps^n\,,\qquad\qquad\mbox{ with }\quad 1\,\leq\,n\,<\,\frac{m}{2}\,.\tag{LOW FR}
\end{equation}
In particular, in that paper, the choice \eqref{eq:scale-G} implied that $m>2$ and the case $n=m/2$ was left open. Similar restrictions on the parameters can be found in \cite{F-N_CPDE}
for the barotropic model. Such restrictions has to be ascribed to the techniques used for proving convergence, which are based on a combination of relative energy/relative entropy method with dispersive estimates derived from oscillatory integrals
(notice that an even larger restriction, $m>10$, appears in \cite{F-G-GV-N}). On the other hand, it is worth underlying that the relative energy methods allow to get a precise rate of convergence and to consider also inviscid and non-diffusive limits (in those cases, one does not dispose of a uniform bound
for $\nabla_x\vtheta$ and on $\nabla_x\vec{u}$).
The case when $m=1$ was handled in the subsequent work \cite{K-N} by Kwon and Novotn\'y, resorting to similar techniques
(however, the gravitational term is not penalised at all).
\subsection*{Novelties}
The first part of this thesis is devoted to the analysis of multi-scale problems,
by focusing on the full Navier-Stokes-Fourier system introduced in \eqref{eq_i:NSF}.
In a first instance, we improve the choice of scaling \eqref{eq:scale-G} taking the endpoint case $n=m/2$ with $m\geq1$ (this is the scaling adopted throughout the Chapter \ref{chap:multi-scale_NSF}). Of course, we are still in a regime of low stratification, since $Ma/Fr\ra0$, but having $Fr=\sqrt{Ma}$ 
allows us to capture some additional qualitative properties on the limit dynamics.
In addition, we add to the system the centrifugal force term $\nabla_x F$ (in the spirit of \cite{F-G-GV-N}), which is a source of technical troubles, due to its unboundedness.
Let us now comment all these issues in detail.

First of all, in absence of the centrifugal force, namely when $F=0$, we are able to perform incompressible, mild stratification and fast rotation limits for the \emph{whole range}
of values of $m\geq 1$, in the framework of \emph{finite energy weak solutions} to the Navier-Stokes-Fourier system \eqref{eq_i:NSF} and for general \emph{ill-prepared initial data}.
In the case $m>1$, the incompressibility and stratification effects are predominant with respect to the Coriolis force: then we prove convergence to the well-known \emph{Oberbeck-Boussinesq system} (see for instance Paragraph 1.6.2 of \cite{Z} for physical insights about that system),
giving a rigorous justification to this approximate model in the context of fast rotating fluids. Thus, we can state the following theorem (see Theorem \ref{th:m-geq-2} for the accurate statement). 
\begin{theoremnonum}\label{thm_1}
Consider system \eqref{eq_i:NSF}. Let $\Omega = \R^2 \times\,]0,1[\,$. Let $F=|x^{h}|^{2}$ and $G=-x^{3}$. Take $n=m/2$ and either ${m\geq 2}$, or ${m>1}$ and ${F=0}$. Then, 
	\begin{align*}
	\varrho_\ep \rightarrow 1  \\
	R_{\ep}:=\frac{\varrho_\ep - 1}{\ep^m}  \weakstar R  \\
	\vec{u}_\ep \weak \vec{U} \\
	\Theta_{\ep}:=\frac{\vartheta_\ep - \bar{\vartheta}}{\ep^m}  \weak \Theta \, ,
	\end{align*}
where in accordance to the Taylor-Proudman theorem, one has 
$$\vec{U} = (\vec U^h,0),\quad \quad \vec U^h=\vec U^h(t,x^h),\quad \quad \div_h\vec U^h=0.$$
Moreover, $\Big(\vec{U}^h,\, \, R ,\, \, \Theta \Big)$ solves, in the sense of distributions, the incompressible Oberbeck-Boussinesq type system
\begin{align*}
& \d_t \vec U^{h}+\div_{h}\left(\vec{U}^{h}\otimes\vec{U}^{h}\right)+\nabla_h\Gamma-\mu (\oline\vtheta )\Delta_{h}\vec{U}^{h}=\delta_2(m)\langle R\rangle\nabla_{h}F \\[2ex]& 
\d_t\Theta\,+\,\divh(\Theta\,\vec U^h)\,-\,\kappa(\oline\vtheta)\,\Delta\Theta\,=\,\oline\vtheta\,\vec{U}^h\cdot\nabla_h \overline{\mc G}
\\[2ex]
& \nabla_{x}\left( \d_\varrho p(1,\oline{\vtheta})\,R\,+\,\d_\vtheta p(1,\oline{\vtheta})\,\Theta \right)\,=\,\nabla_{x}G\,+\,\delta_2(m)\,\nabla_{x}F\, ,
\end{align*}
where
$\overline{\mc G}$ is the sum of external force $\,G\,+\,\delta_2(m)F$, $\Gamma \in \mc D^\prime$ and $\delta_2(m)= 1$ if $m=2$, $\delta_2(m)=0$ otherwise.
\end{theoremnonum}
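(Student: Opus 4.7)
The plan is to follow the standard three-stage singular limit scheme for the Navier-Stokes-Fourier system: uniform estimates via the relative energy inequality, identification of the limit constraints by passing to the limit in the most singular terms, and treatment of the nonlinearities through a compensated compactness argument tuned to the Poincar\'e-Rossby wave operator of the system. First I would fix a family of \emph{finite energy weak solutions} $(\vr_\veps,\vtheta_\veps,\vu_\veps)$ and derive uniform bounds from a relative energy / entropy inequality, where the reference state $(\wtilde\vr_\veps,\oline\vtheta)$ is chosen to absorb the hydrostatic part induced by $G$ and (when $F\neq 0$) by $F$. A Gronwall argument then yields the usual essential/residual decomposition: $O(\veps^m)$ control of the essential parts of $R_\veps$ and $\Theta_\veps$ in $L^\infty_tL^2_x$, together with $L^2_tH^1_x$-type bounds on $\vu_\veps$ and on $\Theta_\veps$. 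The restriction $m\geq 2$ when $F\neq 0$ appears here: only in that regime is the centrifugal contribution in the reference state $O(1)$, so that the total energy remains uniformly bounded despite the unboundedness of $|x^h|^2$.

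From these bounds I would extract weak limits $R_\veps\weakstar R$, $\Theta_\veps\weak\Theta$, $\vu_\veps\weak\vec U$, with $\vr_\veps\to 1$ and $\vtheta_\veps\to\oline\vtheta$ strongly. Passing to the limit in the continuity equation gives $\div\vec U=0$. Multiplying the momentum equation by $\veps$ and passing to the limit kills the inertial, viscous and time-derivative contributions and forces the \emph{geostrophic balance} $\vec e_3\times\vec U+\nabla_x\Pi=0$, which, together with $\div\vec U=0$ and the complete-slip boundary conditions, is equivalent to the Taylor-Proudman structure $\vec U=(\vec U^h(t,x^h),0)$ with $\divh\vec U^h=0$. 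At the next order, balancing the pressure gradient with the gravitational and centrifugal terms produces the static relation for $R$ and $\Theta$ displayed in the statement, with the coefficient $\delta_2(m)$ appearing precisely because the penalisation $\veps^{m-2}$ of the centrifugal term is $O(1)$ only at $m=2$ and vanishes for $m>2$.

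The analytic heart of the proof is passing to the limit in the convective terms $\div(\vr_\veps\vu_\veps\otimes\vu_\veps)$ and $\div(\vr_\veps\,s(\vr_\veps,\vtheta_\veps)\,\vu_\veps)$, which is delicate because the density, temperature and velocity fluctuations carry acoustic-Poincar\'e waves oscillating on scale $1/\veps$. I would recast the linearisation of the continuity, momentum and entropy equations for the triple $(R_\veps,\Theta_\veps,\vu_\veps)$ as a singularly perturbed wave system of the form
\begin{equation*}
\veps\,\d_t\Phi_\veps\,+\,\mc A\,\Phi_\veps\,=\,\veps^{m-1}\,\mc F_\veps\,,
\end{equation*}
where $\Phi_\veps$ couples the scalar combination $\alpha R_\veps+\beta\Theta_\veps$ dictated by the pressure law with the curl-free part of $\vu_\veps$, and $\mc A$ is a skew-adjoint Poincar\'e operator mixing the Coriolis term with the acoustic/stratification gradient. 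Applying the decomposition \eqref{eq:decoscil}, the vertical mean $\langle\vu^h_\veps\rangle$ carries the slow two-dimensional dynamics and is compact in $L^2_{\mathrm{loc}}$ by a div-curl / Aubin-Lions argument, while the oscillatory part $\dbtilde{\vu}_\veps$ is treated by integration by parts in time against the wave equation, which shows that its self-interactions enter the limit momentum equation only as gradients.

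Feeding this information into the momentum equation tested against divergence-free, $x^3$-independent horizontal test functions produces the horizontal Navier-Stokes equation for $\vec U^h$, with the buoyancy forcing $\delta_2(m)\langle R\rangle\nabla_hF$ surviving exactly at $m=2$. Dividing the entropy equation by $\veps^m$, linearising $s$ around $(1,\oline\vtheta)$ and using the Helmholtz projection together with the static relation for $R$ and $\Theta$ yields the transport-diffusion equation for $\Theta$; the source $\oline\vtheta\,\vec U^h\cdot\nabla_h\oline{\mc G}$ arises from the horizontal transport of the vertically stratified equilibrium profile by $\vec U^h$. I expect the compensated compactness step at the endpoint scaling $n=m/2$ to be the main obstacle: there the gravitational and pressure gradients enter at the same order, forcing the wave operator to be diagonalised jointly in density and temperature, so that the correct ``acoustic'' variable must be the combination of $R_\veps$ and $\Theta_\veps$ weighted by $\d_\vrho p(1,\oline\vtheta)$ and $\d_\vtheta p(1,\oline\vtheta)$, and the control of the unbounded centrifugal potential throughout this step is what ultimately fixes the borderline $m\geq 2$.
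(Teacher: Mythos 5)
Your high-level architecture matches the paper's: uniform bounds from the total dissipation balance with reference state $(\wtilde\vr_\veps,\oline\vtheta)$, essential/residual decomposition, constraints on the limit points (Taylor-Proudman and the Boussinesq relation with the $\delta_2(m)$ coefficient coming from the $\veps^{m-2}$ penalisation of $F$), an acoustic-Poincar\'e wave system, and a compensated-compactness argument for the convective term split into vertical averages and zero-mean oscillations. The localisation of the restriction $m\geq 2$ for $F\neq0$ is also correctly identified. However, the heart of the compensated-compactness step, as you describe it, does not close.

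The genuine gap is the claim that the vertical mean $\langle\vu^h_\veps\rangle$ ``is compact in $L^2_{\rm loc}$ by a div-curl / Aubin-Lions argument.'' It is not: even after averaging in $x^3$, the velocity and momentum carry fast time oscillations from the Coriolis term (order $\veps^{-1}$) and the pressure/gravity gradients (order $\veps^{-m}$), so their time derivatives blow up and Aubin-Lions cannot be applied to $\langle\vu^h_\veps\rangle$ directly. The quantity that \emph{is} compact in the paper is the scalar
\begin{equation*}
\gamma_{\veps,M}\,:=\,\curlh\langle\vec W^h_{\veps,M}\rangle\,-\,\frac{\veps^{m-1}}{\mc A}\,\langle\Lambda_{\veps,M}\rangle\,,
\end{equation*}
where $\vec W_{\veps,M}$ and $\Lambda_{\veps,M}$ are the spectrally localised momentum and the acoustic variable $Z_\veps=\mc A\vr^{(1)}_\veps+\mc B(\ldots)$. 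The crucial algebraic observation is that if one takes $\curlh$ of the vertically averaged horizontal momentum equation, multiplies the vertically averaged mass equation by $\veps^{m-1}/\mc A$, and subtracts, the two singular $\veps^{-1}\,\divh\langle\vec W^h_{\veps,M}\rangle$ contributions cancel, leaving $\d_t\gamma_{\veps,M}$ uniformly bounded in $L^2_T(L^2)$; only then does Aubin-Lions apply. This cancellation \emph{is} the ``compensation,'' and without singling it out you cannot control $\vu^h_\veps\otimes\vu^h_\veps$ — you also need to first rewrite $\mc T^1_{\veps,M}$, modulo gradients, as $\gamma_{\veps,M}\,\langle\vec W^h_{\veps,M}\rangle^\perp$, so that the nonlinearity becomes a (strong)$\times$(weak) product.

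Two further omissions worth noting. First, the existence theory for \eqref{eq_i:NSF} on the unbounded strip $\Omega$ only furnishes an entropy \emph{inequality}, which is not enough to derive the wave system; the paper circumvents this with an ``invading domains'' construction, solving on expanding bounded cylinders $\Omega_\veps$ whose boundary recedes faster than the acoustic wave speed $\veps^{-m}$, and then uses finite propagation speed. Second, the algebraic manipulations above require a high-frequency cut-off ($S_M$, or mollification when $F\neq0$) before any pointwise identities can be used, with a separate $M\to\infty$ limit at the end; finite energy weak solutions simply do not have the regularity to integrate by parts freely.
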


We point out that the target velocity field is $2$-dimensional, according to the celebrated Taylor-Proudman theorem in geophysics: in the limit of high rotation, the fluid motion tends to have a planar behaviour, it takes place on planes orthogonal to the rotation axis (i.e. horizontal planes in our model) and is essentially constant along the vertical direction. We refer to
\cite{C-R}, \cite{Ped} and \cite{Val} for more details on the physical side. Notice however that, although the limit dynamics is purely horizontal, the limit density and temperature variations,
$R$ and $\Theta$ respectively, appear to be stratified: this is the main effect of taking $n=m/2$ for the Froude number in  \eqref{eq:scale-G}.
This is also the main qualitative property which is new here, with respect to the previous studies, and justifies the epithet of \emph{``critical''} scaling.

When $m=1$, instead, all the forces act at the same scale, and then they balance each other asymptotically for $\veps\ra0^+$. As a result, the limit motion is described by the so-called
\emph{quasi-geostrophic equation} for a suitable function $q$, which is linked to $R$ and $\Theta$ (respectively, the target density and temperature variations) and to the gravity,
and which plays the role of a stream-function for the limit velocity field. This quasi-geostrophic equation is coupled with a scalar transport-diffusion equation for a new quantity
$\Upsilon$, mixing  $R$ and $\Theta$. The precise statement of the following theorem can be found in Paragraph \ref{ss:results}.
\begin{theoremnonum}
Consider system \eqref{eq_i:NSF}. Let $\Omega = \R^2 \times\,]0,1[\,$. Let ${F=0}$ and $G=-x^3$. Take ${m=1}$ and $n=1/2$. 
Then, one has the same convergences found in Theorem \ref{thm_1} and $\vec U$ satisfies the Taylor-Proudman theorem. 

We define
$$
\Upsilon := \d_\vrho s(1,\oline{\vtheta}) R + \d_\vtheta s(1,\oline{\vtheta})\,\Theta$$ 
$$q= \d_\varrho p(1,\oline{\vtheta}) R  +\d_\vtheta p(1,\oline{\vtheta})\Theta -G\, ,
$$
then $q=q(t,x^h)$ and $ \vec{U}^{h}=\nabla_h^{\perp} q$.
Moreover, the couple $\Big(q,\, \, \Upsilon \Big)$ satisfies, in the sense of distributions,  
\begin{align*}
& \d_{t}\left(q-\Delta_{h}q\right) -\nabla_{h}^{\perp}q\cdot
\nabla_{h}\left( \Delta_{h}q\right) +\mu (\oline{\vtheta})
\Delta_{h}^{2}q=\langle X\rangle  \\[2.5ex]
& \d_{t} \Upsilon +\nabla_h^\perp q\cdot\nabla_h\Upsilon-\kappa(\oline\vtheta) \Delta \Upsilon\,=\,
\,\kappa(\oline\vtheta)\,\Delta_hq\, ,
\end{align*}
where $\langle X\rangle$ is a suitable ``external'' force.
\end{theoremnonum}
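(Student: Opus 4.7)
The plan is to follow the same general strategy as in Theorem~\ref{thm_1}, but to exploit the isotropic scaling $Ma=Ro=\veps$, $Fr=\sqrt{\veps}$ in order to extract the quasi-geostrophic balance. First I would derive uniform bounds on the family of finite energy weak solutions. Using the energy inequality for \eqref{eq_i:NSF} together with the entropy balance and the relative energy/entropy functional built around the static state $(1,\oline{\vtheta},\vec 0)$, one obtains, uniformly in $\veps$, control of $R_\veps=(\vr_\veps-1)/\veps$ and $\Theta_\veps=(\vartheta_\veps-\oline{\vtheta})/\veps$ in $L^\infty_T(L^2+L^{5/3})$-type spaces (after splitting into essential and residual parts), of $\vec u_\veps$ in $L^2_T(H^1)$, and of $\nabla_x\Theta_\veps$ in $L^2_T(L^2)$. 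From these bounds I would extract a (not relabeled) subsequence and weak/weak-$\ast$ limits $R$, $\Theta$, $\vec U$, together with $\vr_\veps\to 1$ and $\vartheta_\veps\to\oline{\vtheta}$ strongly.

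The next step is to read the singular terms of the momentum equation at order $1/\veps$. Multiplying the momentum equation by $\veps$ and passing to the limit in the sense of distributions yields the constraint
\begin{equation*}
\vec e_3\times\vec U+\nabla_x q=0,\qquad q:=\d_\vrho p(1,\oline\vtheta)\,R+\d_\vtheta p(1,\oline\vtheta)\,\Theta-G.
\end{equation*}
From the third component I deduce $\d_3 q=0$, so that $q=q(t,x^h)$; the horizontal components then give the Taylor-Proudman structure $\vec U=(\vec U^h,0)$ with $\vec U^h=\nabla_h^{\perp}q$ and in particular $\div_h\vec U^h=0$. In parallel, the continuity equation, divided by $\veps$, shows that $\div\vec U=0$ and is consistent with the fact that the limit velocity is purely horizontal and independent of $x^3$.

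The heart of the argument is then to pass to the limit in the convective term $\vr_\veps\vu_\veps\otimes\vu_\veps$, for which only weak convergence is available. I would follow a compensated compactness approach: rewrite the linearised system for $(\veps R_\veps,\veps\Theta_\veps,\vec u_\veps)$ as a singular perturbation of a Poincar\'e wave system
\begin{equation*}
\veps\,\d_t V_\veps+\mc A\,V_\veps=\veps\,F_\veps,
\end{equation*}
where $\mc A$ encodes the coupling of pressure, gravity and Coriolis terms. Testing the momentum equation with divergence-free horizontal test functions of the form $\vec\varphi=\nabla_h^\perp\psi(t,x^h)$ (which belong to the kernel of the singular operator) removes the contributions of Coriolis, pressure and gravity at order $1/\veps$. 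The core of the analysis is then to show that
\begin{equation*}
\int_0^T\!\!\int_\Omega \vr_\veps\,\vu_\veps\otimes\vu_\veps:\nabla\vec\varphi\,\dxdt \;\longrightarrow\; \int_0^T\!\!\int_\Omega \vec U^h\otimes\vec U^h:\nabla_h\vec\varphi\,\dxdt,
\end{equation*}
plus a remainder generated by the oscillating (Poincar\'e) modes, which one bounds by a RAGE-type/non-stationary phase argument based on the decomposition \eqref{eq:decoscil} into vertical mean and oscillating part. Taking then $\psi$ arbitrary and recognising $\vec U^h=\nabla_h^\perp q$ transforms the horizontal curl of the momentum equation into the quasi-geostrophic equation for $q$, with right-hand side $\langle X\rangle$ collecting the non-vanishing contributions (notably the $L^2$-orthogonal projection on the kernel of the resonant wave interactions and of the viscous and entropy-related remainders).

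Finally, for the equation on $\Upsilon=\d_\vrho s(1,\oline\vtheta)\,R+\d_\vtheta s(1,\oline\vtheta)\,\Theta$, I would linearise the entropy balance around $(1,\oline\vtheta)$, combine it suitably with the continuity equation divided by $\veps$, and exploit Gibbs' relation to identify the heat-flux term. Strong convergence of $\vr_\veps$ and $\vartheta_\veps$, together with the convergence of $\vec u_\veps$ on quantities depending only on $x^h$ (which is a consequence of the Taylor-Proudman constraint and of the compactness on horizontal averages), allows one to pass to the limit in the transport term $\nabla_h^\perp q\cdot\nabla_h\Upsilon$, while Fourier's law and the identity $\d_\vtheta s=c_p/\vtheta+\ldots$ combined with the structural assumptions on $p,s$ produce the diffusive part $-\kappa(\oline\vtheta)\Delta\Upsilon$ and the source $\kappa(\oline\vtheta)\Delta_h q$ through the linearised relation $\Theta=q+G-\d_\vrho p\,R$ evaluated on horizontal/vertical averages. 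The main obstacle, as in Chapter \ref{chap:multi-scale_NSF}, is the compensated compactness step for the convective term in the quasi-geostrophic regime: here pressure, gravity and Coriolis all act at the same order $1/\veps$, so the spectral analysis of $\mc A$ is more involved than in the barotropic case and one has to control carefully the resonances between vertical and horizontal modes through the decomposition \eqref{eq:decoscil}.
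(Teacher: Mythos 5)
Your proposal matches the paper's strategy in broad outline: uniform bounds from the relative entropy, extraction of weak limits, the quasi-geostrophic balance $\vec e_3\times\vec U+\nabla_x q=0$ obtained from the momentum equation at order $\veps$, the Taylor--Proudman structure, and a compensated-compactness step for the convective term. Two specific points, however, are genuine gaps.

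First, the claim that testing with $\vec\varphi=(\nabla_h^\perp\psi,0)$ ``removes the contributions of Coriolis, pressure and gravity at order $1/\veps$'' is wrong in the isotropic regime $m=1$. Pressure and gravity drop out because they are (or combine into) gradients; the singular Coriolis term $\frac{1}{\veps}\vre(\ue^h)^\perp$ is \emph{not} annihilated by divergence-free test functions and does not vanish in the limit. The paper rewrites
$\int\frac{1}{\veps}\langle\vre\ue^h\rangle\cdot\nabla_h\phi$
via the weak mass equation tested against $\phi$, obtaining $-\int\langle R_\veps\rangle\,\d_t\phi-\int\langle R_{0,\veps}\rangle\,\phi(0,\cdot)$, and it is exactly this surviving Coriolis contribution, combined with the algebraic relations among $R,\Theta,q,\Upsilon$, that produces the $\d_t q$ part of the quasi-geostrophic equation. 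Discard it and the target equation disappears.

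Second, for the oscillations you propose a ``RAGE-type/non-stationary phase argument''. The paper deliberately avoids dispersive estimates: it points out that dispersion-based proofs in the earlier literature are precisely what caused the restrictions $m>2$ or $m>10$ in those references, whereas the method here is ``purely based on the algebraic structure of the system''. Concretely, the paper regularises the wave system (via mollifiers, because of the invading-domains construction), splits the convective term into a vertical-average part $\mc T^1_{\veps,M}$ and an oscillating part $\mc T^2_{\veps,M}$, shows $\mc T^2_{\veps,M}$ vanishes by algebraic identities among $\dbtilde{\Phi}^h_{\veps,M}$ and $\dbtilde{\omega}^3_{\veps,M}$ that expose perfect gradients (no decay of oscillatory integrals is needed), and for $\mc T^1_{\veps,M}$ identifies the special scalar
$\gamma_{\veps,M}=\curlh\langle\vec V^h_{\veps,M}\rangle-\tfrac{1}{\mc A}\langle Z_{\veps,M}\rangle$,
whose time derivative is uniformly bounded; Aubin--Lions then yields strong convergence of $\gamma_{\veps,M}$, which is the key to passing to the limit in the bilinear term. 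A RAGE-type argument would run into trouble here since the wave operator on $\R^2\times\T^1$ has no good spectral decay, and in any case the crucial observation that a specific linear combination $\gamma_{\veps,M}$ is compact is missing from your sketch.

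The description of the $\Upsilon$ equation (linearising the entropy balance, using Gibbs' relation to produce the diffusion) is vague but consistent with what the paper does: one passes to the limit directly in the weak entropy balance to get $\d_t\Upsilon+\div_h(\Upsilon\vec U^h)-\frac{\kappa(\oline\vtheta)}{\oline\vtheta}\Delta\Theta=0$, and then uses the linear relation expressing $\Theta$ through $\Upsilon,q,G$ to rewrite the diffusive term as $-\kappa(\oline\vtheta)\Delta\Upsilon$ plus $\kappa(\oline\vtheta)\,\alpha(1,\oline\vtheta)\,\Delta_hq$.
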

This is in the spirit of the result in \cite{K-N}, but once again, here we capture also gravitational effects in the limit, so that we cannot say
anymore that $R$ and $\Theta$ (and then $\Upsilon$) are horizontal; on the contrary, and somehow surprisingly, $q$ and the target velocity $\vec U$ are purely horizontal.

At this point, let us make a couple of remarks. First of all, we mention that, as announced above, we are able to add to the system the effects of the centrifugal force $\nabla_x F$.
Unfortunately, in this case the restriction $m\geq 2$ appears (which is still less severe than the ones imposed in \cite{F-G-GV-N}, \cite{F-N_CPDE} and \cite{K-M-N}).
However, we show that such a restriction is not of technical nature, but it is hidden in the structure of the wave system (see Proposition \ref{p:target-rho_bound} and Remark \ref{slow_rho}).
The result for $F\neq 0$ is analogous to the one presented above for the case $F=0$ and $m>1$: when $m>2$, the anisotropy of scaling is too large in order to see any effect due to $F$ in the limit, and no qualitative differences appear with respect to the instance when $F=0$; when $m=2$, instead, additional terms, related to $F$, appear in the Oberbeck-Boussinesq system (see Theorem \ref{thm_1}). In any case, the analysis will be considerably more complicated,
since $F$ is not bounded in $\Omega$ (defined in \eqref{eq:domain} above) and this will demand an additional localisation procedure (already employed in \cite{F-G-GV-N}).

We also point out that the classical existence theory of finite energy weak solutions for \eqref{eq_i:NSF} requires the physical domain to be a smooth \emph{bounded} subset of $\R^3$
 (see \cite{F-N} for a comprehensive study). The theory was later extended in \cite{J-J-N} to cover the case of unbounded domains, and this might appear suitable for us in view of \eqref{eq:domain}.
Nonetheless, the notion of weak solutions developed in \cite{J-J-N} is somehow milder than the classical one (the authors speak in fact of \emph{very weak solutions}), inasmuch as the
usual weak formulation of the entropy balance, i.e. the third equation in \eqref{eq_i:NSF}, has to be replaced by an inequality in the sense of distributions.
Now, such a formulation is not convenient for us, because, when deriving the system of acoustic-Poincar\'e waves, we need to combine the mass conservation
and the entropy balance equations together. In particular, this requires to have true equalities, satisfied in the (classical) weak sense.
In order to overcome this problem, we resort to the technique of \emph{invading domains} (see e.g. Chapter 8 of \cite{F-N}, \cite{F-Scho} and \cite{WK}):
namely, for each $\veps\in\,]0,1]$, we solve system \eqref{eq_i:NSF}, with the choice $n=m/2$ for the Froude number, in a smooth bounded domain $\Omega_\veps$, where $\big(\Omega_\veps\big)_\veps$ converges
(in a suitable sense) to $\Omega$ when $\veps\ra0^+$, with a rate higher than the wave propagation speed (which is proportional to $\veps^{-m}$).
Such an ``approximation procedure'' will need some extra work.

In order to prove our results, and get the improvement on the values of the different parameters, we propose a unified approach, which actually works both for the case $m>1$
(allowing us to treat the anisotropy of scaling quite easily) and for the case $m=1$ (allowing us to treat the more complicate singular perturbation operator).
This approach is based on \emph{compensated compactness} arguments, firstly employed by Lions and Masmoudi in \cite{L-M} for dealing with the incompressible limit of the barotropic
Navier-Stokes equations, and later adapted  by Gallagher and Saint-Raymond in \cite{G-SR_2006} to the case of fast rotating (incompressible homogeneous) fluids. More recent applications
of that method in the context of geophysical flows can be found in \cite{F-G-GV-N}, \cite{F_JMFM}, \cite{Fan-G} and \cite{F_2019}.

The quoted method does not give a quantitative convergence at all, but only qualitative. The technique is purely based on the algebraic structure of the system, which allows to find smallness (and vanishing to the limit) of suitable non-linear quantities, and fundamental compactness properties for other quantities.
These strong convergence properties are by no means evident, because the singular terms are responsible for strong oscillations in time (the so-called acoustic-Poincar\'e waves) of the solutions, which may finally prevent the convergence of the non-linearities.
Nonetheless, a fine study of the system for acoustic-Poincar\'e waves actually reveals compactness (for any $m\geq1$ if $F=0$, for $m\geq 2$ if $F\neq 0$) of a special quantity
$\g_\veps$, which combines (roughly speaking) the vertical averages of the momentum $\vec{V}_\veps=\vrho_\veps\vec{u}_\veps$ (of its vorticity, in fact) and of another function
$Z_\veps$, obtained as a linear combination of density and temperature variations (see Subsections \ref{sss:term1} and \ref{ss:convergence_1} for more details in this respect). Similar compactness properties have been highlighted in \cite{Fan-G} for incompressible density-dependent fluids
in $2$-D, and in \cite{F_2019} for treating a multi-scale problem at ``large'' Mach numbers.
In the end, the strong convergence of $\big(\g_\veps\big)_\veps$ turns out to be enough to take the limit in the convective term,
and to complete the proof of our results.

To conclude this part, let us mention that we expect the same technique to enable us to treat also the case $m=1$ and $F\neq0$ (this was the case in \cite{F-G-GV-N}, for barotropic flows).
Nonetheless, the presence of heat transfer deeply complicates the wave system, and new technical difficulties arise 
in the analysis of the convective term (the approach of \cite{F-G-GV-N}, in the case of constant temperature, does not work here).
For that reason, here we are not able to handle that case, which still remains open.

Another feature that remains uncovered in our analysis is the \emph{strong stratification} regime, namely when the ratio ${Ma}/{Fr}$ is of order $O(1)$. This regime is particularly delicate for fast rotating fluids.
This is in stark contrast with the results available about the derivation of the anelastic approximation, where rotation is neglected:
we refer e.g. to \cite{Masm}, \cite{BGL}, \cite{F-K-N-Z} and, more recently, \cite{F-Z} (see also \cite{F-N} and references therein for a
more detailed account of previous works). The reason for that has to be ascribed exactly to the competition between vertical stratification (due to gravity) and horizontal stability (which the Coriolis force tends to impose): in the strong stratification regime, vertical oscillations of the solution
(seem to) persist in the limit, and the available techniques do not allow at present to deal with this problem in its full generality.
Nonetheless, partial results have been obtained in the case of well-prepared initial data, by means of a relative entropy method: we refer to \cite{F-L-N}
for the first result, where the mean motion is derived, and to \cite{B-F-P} for an analysis of Ekman boundary layers in that framework.

\section*{Going beyond the critical scaling $Fr=\sqrt{Ma}$}
At this point, we are interested in going beyond the critical choice $Fr=\sqrt{Ma}$ considered in the previous paragraph and we would investigate other regimes where the stratification has an even more important effect. 

For clarity of exposition, we neglect the centrifugal effects and the heat transfer process in the fluid, focusing on the classical barotropic Navier-Stokes system: 
\begin{equation}\label{2D_Euler_system}
\begin{cases}
\partial_t \vrho + \div (\vrho\vec{u})=0\  \\[2ex]
	\partial_t (\vrho\vec{u})+ \div(\vrho\vec{u}\otimes\vec{u}) + \dfrac{\e_3 \times \vrho\vec{u}}{Ro}\,  +    \dfrac{1}{Ma^2} \nabla_x p(\vrho) 
=\div \mbb{S}(\nabla_x\vec{u})  + \dfrac{\vrho}{Fr^2} \nabla_x G\, . \tag{NSC}
\end{cases}
\end{equation}
The more general system presented in \eqref{eq_i:NSF} can be handled at the price of additional technicalities already discussed above (remember, in particular, the restriction on the Mach number due to the presence of the centrifugal force).  
The goal now is to perform the asymptotic limit for system \eqref{2D_Euler_system} in the regimes when we assume
\begin{equation*}
Ma=\veps^m, \quad \quad Ro=\veps \quad \quad \text{and}\quad \quad Fr=\veps^n
\end{equation*}
with
\begin{equation*} 
\mbox{ either }\qquad m\,>\,1\quad\mbox{ and }\quad m\,<\,2\,n\,\leq\,m+1\,,\qquad\qquad\mbox{ or }\qquad
m\,=\,1\quad\mbox{ and }\quad \frac{1}{2}\,<\,n\,<\,1\,.
\end{equation*}
The restriction $n<1$ when $m=1$ is imposed in order to avoid a strong stratification regime: as already mentioned before, it is not clear
at present how to deal with this case for general ill-prepared initial data, as all the available techniques seem to break down in that case.
On the other hand, the restriction $2\,n\leq m+1$ (for $m>1$) looks to be of technical nature. However, it comes out naturally
in at least two points of our analysis (see e.g. Subsections \ref{ss:ctl1_G} and \ref{sss:term2_G}), and it is not clear to us if, and how, it is possible to bypass it and consider the remaining range of values
$(m+1)/2\,<\,n\,<\,m$. 
Let us point out that, in our considerations, the relation $n<m$ holds always true,
so we will always work in a low stratification regime.

At the qualitative level, our main results will be quite similar to the ones presented in the previous part. In particular the limit dynamics will be the same, after distinguishing the two cases $m>1$ and $m= 1$ (see Theorems \ref{th:m>1} and \ref{th:m=1}). 
The main point, we put the accent on now, is how using in a fine way not only the structure of the system, but also the precise structure of each term in order to pass to the limit. To be more precise, the fact of considering
values of $n$ going above the threshold $2n=m$ is made possible thanks to special algebraic cancellations involving the gravity term in the system of wave equations.
Such cancellations owe to the peculiar form of the gravitational force, which depends on the vertical variable only, and they do not appear, in general, if one wants to consider the action of different forces on the system. As one may easily guess, the case $2n=m+1$ is more involved: indeed, this choice
of the scaling implies the presence of an additional bilinear term of order $O(1)$ in the computations; in turn, this term might not vanish in the limit, differently to what happens in the case $2n<m+1$. In order to see that this does not occur, and that this term indeed disappears in the limit process, one has to use more thoroughly the structure of the system to control the oscillations (see equation \eqref{rel_oscillations_bis} and computations below).

\section*{The Euler system: the incompressible case}
In the second part of this thesis, we change our focus dealing with an incompressible and inviscid system with a hyperbolic structure. More precisely, we are interested in describing the 2-D evolution of a fluid that takes places far enough from the physical boundaries. Therefore, in $\Omega:=\R^2$, the Euler type system reads 
\begin{equation}\label{full Euler_intro}
\begin{cases}
\d_t \vrho +\div (\vrho \vu)=0\\
\d_t (\vrho \vu)+\div (\vrho \vu \otimes \vu)+ \dfrac{1}{Ro}\vrho \vu^\perp+\nabla_x p=0\\
\div \vu =0\, ,\tag{E}
\end{cases}
\end{equation}
where $\vu^\perp:=(-u^2, u^1)$ is the rotation of angle $\pi/2$ of the velocity field $\vu=(u^1,u^2)$.
The pressure term $\nabla_x p$ represents the Lagrangian multiplier associated to the divergence-free constraint on the velocity field.

The main scope of that analysis will be to study the asymptotic behaviour of the system \eqref{full Euler_intro} when $Ro=\veps\rightarrow 0^+$. 

\subsection*{Known results}
We will limit ourselves to give a short exposition on known results dealing with density-dependent fluids. 
We refer instead to \cite{C-D-G-G} for an overview of the broad literature in the context of homogeneous rotating fluids (see also \cite{B-M-N_EMF} and \cite{B-M-N_AA} for the pioneering studies, concerning the homogeneous 3-D Euler and Navier-Stokes equations). 

In the compressible cases discussed above, the fact that  the pressure is a given function of the density implies a double advantage in the analysis: on the one hand, one can recover good uniform bounds for the oscillations (from the reference state) of the density; on the other hand, at the limit, one disposes of a stream-function relation between the densities and the velocities. 

On the contrary, although the incompressibility condition is physically well-justified for the geophysical fluids, only few studies tackle this case. We refer to \cite{Fan-G}, in which Fanelli and Gallagher have studied the fast rotation limit for viscous incompressible fluids with variable density. In the case when the initial density is a small perturbation of a constant state (the so-called \textit{slightly non-homogeneous} case), they proved convergence to a quasi-homogeneous type system. Instead, for general non-homogeneous fluids (the so-called \textit{fully non-homogeneous} case), they have shown that the limit dynamics is described in terms of the vorticity and the density oscillation function, since they lack enough regularity to prove convergence on the momentum equation itself (see more details below).

We have also to mention \cite{C-F_RWA}, where the authors rigorously prove the convergence of the ideal magnetohydrodynamics (MHD) equations towards a quasi-homogeneous type system (see also \cite{C-F_Nonlin} where the compensated compactness argument is adopted). Their method relies on a relative entropy inequality for the primitive system that allows to treat also the inviscid limit but requires well-prepared initial data. 

\subsection*{New results} 

In Chapter \ref{chap:Euler}, we tackle the asymptotic analysis (for $\veps\rightarrow 0^+$) in the case of density-dependent Euler system in the \textit{slightly non-homogeneous} context, i.e. when the initial density is a small perturbation of order $\veps$ of a constant profile (say $\overline{\vrho}=1$). These small perturbations around a constant reference state are physically justified by the so-called \textit{Boussinesq approximation} (see e.g. Chapter 3 of \cite{C-R} or Chapter 1 of \cite{Maj} in this respect). As a matter of fact, since the constant state $\overline{\vrho}=1$ is transported by a divergence-free vector field, the density can be written as $\vrho_\veps=1+\veps R_\veps$ at any time (provided this is true at $t=0$), where one can state good uniform bounds on $R_\veps$. We also point out that in the momentum equation of \eqref{full Euler_intro}, with the scaling $Ro=\veps$, the Coriolis term can be rewritten as
\begin{equation}\label{Coriolis}
 \frac{1}{\veps}\vrho_\veps \vec u_\veps^\perp=\frac{1}{\veps}\vec u_\veps^\perp+R_\veps\vec u_\veps^\perp \, .\tag{2D COR}
\end{equation}
We notice that, thanks to the incompressibility condition, the former term on the right-hand side of \eqref{Coriolis} is actually a gradient: it can be ``absorbed'' into the pressure term, which must scale as $1/\veps$. In fact, the only force that can compensate the effect of fast rotation in system \eqref{full Euler_intro} is, at geophysical scale, the pressure term: i.e. we can write $\nabla_x p_\veps= (1/\veps) \, \nabla_x \Pi_\veps$.

Let us point out that the \textit{fully non-homogeneous} case (where the initial density is a perturbation of an arbitrary state) is out of our study. This case is more involved and new technical troubles arise in the well-posedness analysis and in the asymptotic inspection. Indeed, as already highlighted in \cite{Fan-G} for the Navier-Stokes-Coriolis system, the limit dynamics is described by an underdetermined system which mixes the vorticity and the density fluctuations.
In order to depict the full limit dynamics (where the limit density variations and the limit velocities are decoupled), one had to assume stronger \textit{a priori} bounds than the ones which could be obtained by classical energy estimates. Nonetheless, the higher regularity involved is \textit{not} propagated uniformly in $\veps$ in general, due to the presence of the Coriolis term. 
In particular, the structure of the Coriolis term is more complicated  than the one in \eqref{Coriolis} above, since one has $\vrho_{\veps}=\oline \vrho+ \veps \sigma_\veps$ (with $\sigma_\veps$ the fluctuation), if at the initial time we assume $\vrho_{0, \veps}=\oline \vrho+ \veps R_{0,\veps}$ with $\oline \vrho$ the arbitrary reference state. At this point, if one plugs the previous decomposition of $\vrho_\veps$ in \eqref{Coriolis}, a term of the form $(1/\veps)\, \oline \vrho \ue^\perp$ appears: this term is a source of troubles in order to propagate the higher regularity estimates needed.

Equivalently, if one tries to divide the momentum equation in \eqref{full Euler_intro} by the density $\vrho_\veps$, then the previous issue is only translated on the analysis of the pressure term, which becomes $1/(\veps \vrho_\veps)\, \nabla_x \Pi_\veps$.

In light of all the foregoing discussion, let us now point out the main difficulties arising in our problem.

First of all, our model is an inviscid system with a hyperbolic type structure for which we can expect \textit{no} smoothing effects and \textit{no} gain of regularity. For that reason, it is natural to look at equations in \eqref{full Euler_intro} in a regular framework like the $H^s$ spaces with $s>2$. The Sobolev spaces $H^s(\R^2)$, for $s>2$, are in fact embedded in the space $W^{1,\infty}$ of globally Lipschitz functions: this is a minimal requirement to preserve the initial regularity (see e.g. Chapter 3 of \cite{B-C-D} and also \cite{D_JDE}, \cite{D-F_JMPA} for a broad discussion on this topic). 
Actually, all the Besov spaces $B^s_{p,r}(\R^d)$ which are embedded in $W^{1,\infty}(\R^d)$, a fact that occurs for $(s,p,r)\in \R\times [1,+\infty]^2$ such that 
\begin{equation}\label{Lip_assumption}
s>1+\frac{d}{p} \quad \quad \quad \text{or}\quad \quad \quad s=1+\frac{d}{p} \quad \text{and}\quad r=1\, ,\tag{LIP}
\end{equation}
are good candidates for the well-posedness analysis (see Appendix \ref{app:Tools} for more details). However, the choice of working in $H^s\equiv B^s_{2,2}$ is dictated by the presence of the Coriolis force: we will deeply exploit the antisymmetry of this singular term.  

Moreover, the fluid is assumed to be incompressible, so that the pressure term is just a Lagrangian multiplier and does \textit{not} give any information on the density, unlike in the compressible case. In addition, due to the non-homogeneity, the analysis of the gradient of the pressure term is much more involved since we have to deal with an elliptic equation with \textit{non-constant} coefficients, namely 
\begin{equation}\label{elliptic_eq_introduction}
-\div (A \, \nabla_x p)=\div F \quad \text{where}\quad \div F:=\div \left(\vu \cdot \nabla_x \vu+ \frac{1}{Ro} \vu^\perp \right)\quad \text{and}\quad A:=1/\vrho \, . \tag{ELL}
\end{equation}
The main difficulty is to get appropriate \textit{uniform} bounds (with respect to the rotation parameter) for the pressure term in the regular framework we will consider. We refer to \cite{D_JDE} and \cite{D-F_JMPA} for more details concerning the issues which arise in the analysis of the elliptic equation \eqref{elliptic_eq_introduction} in $B^s_{p,r}$ spaces.

Once we have analysed the pressure term, we show that system \eqref{full Euler_intro} is locally well-posed in the $H^s$ setting. It is worth to notice that, in the local well-posedness theorem, all the estimates will be \textit{uniform} with respect to the rotation parameter and, in addition, we will have that the time of existence is independent of $\veps$. 
\begin{theoremnonum}
Let $s>2$. For any $\veps>0$, 
there exists a time $T_\veps^\ast >0$ 
such that 
the system \eqref{full Euler_intro} has a unique solution $(\vrho_\veps, \vu_\veps, \nabla_x \Pi_\veps)$ where 
\begin{itemize}
\item $\vrho_\veps$ belongs to the space $C^0([0,T_\veps^\ast]\times \R^2)$ with $\nabla_x \vrho_\veps \in  C^0([0,T_\veps^\ast]; H^{s-1}(\R^2))$;
\item $\vu_\veps$ and $\nabla_x \Pi_\veps$ belong to the space $C^0([0,T_\veps^\ast]; H^s(\R^2))$.
\end{itemize}
Moreover, 
$$\inf_{\veps>0}T_\veps^\ast  >0\, .$$
\end{theoremnonum}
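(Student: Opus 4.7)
The plan is to establish the theorem via the classical energy method in the $H^s$ scale, with special care to extract constants that are independent of $\veps$. First I would reformulate the system using the density fluctuation $R_\veps := (\vrho_\veps-1)/\veps$: since $\div\vu_\veps=0$, the continuity equation becomes the pure transport $\partial_t R_\veps + \vu_\veps\cdot\nabla R_\veps = 0$, while the momentum equation, after the decomposition of the Coriolis term as in \eqref{Coriolis}, reads
\begin{equation*}
\partial_t \vu_\veps + \vu_\veps\cdot\nabla\vu_\veps + \frac{1}{\veps}\vu_\veps^\perp + R_\veps\vu_\veps^\perp + \frac{1}{\vrho_\veps}\nabla\Pi_\veps = 0\,,
\end{equation*}
where $\nabla\Pi_\veps$ is chosen to absorb the $O(1/\veps)$ gradient part produced by the Coriolis force on the constant-density background. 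I would then build a sequence of smooth approximations $(R_\veps^n, \vu_\veps^n, \nabla\Pi_\veps^n)$ through a Friedrichs scheme (spectral cut-off on high frequencies), for which local well-posedness is standard since the truncated equations reduce to an ODE in a closed subspace of $L^2$.

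The heart of the proof is the derivation of \emph{uniform} (in both $n$ and $\veps$) a priori bounds in $H^s$. For the density, classical tame estimates for the transport equation by a divergence-free vector field yield $\|R_\veps(t)\|_{H^s} \lesssim \|R_{0,\veps}\|_{H^s}\exp\bigl(C\int_0^t \|\nabla\vu_\veps\|_{L^\infty}\,d\tau\bigr)$. For the velocity I apply $\Lambda^s=(-\Delta)^{s/2}$ to the momentum equation and test against $\Lambda^s\vu_\veps$. The singular term produces $\frac{1}{\veps}\langle \Lambda^s\vu_\veps^\perp,\Lambda^s\vu_\veps\rangle$, which vanishes \emph{exactly} by the pointwise antisymmetry $\vu^\perp\cdot\vu=0$ — and this is precisely why the framework \eqref{Lip_assumption} is specialized to $H^s\equiv B^s_{2,2}$. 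The lower-order Coriolis contribution is treated by writing $\Lambda^s(R_\veps\vu_\veps^\perp) = R_\veps\Lambda^s\vu_\veps^\perp + [\Lambda^s,R_\veps]\vu_\veps^\perp$: the first summand again cancels against $\Lambda^s\vu_\veps$ by antisymmetry, while the commutator is controlled via Kato--Ponce type inequalities by $C\|R_\veps\|_{H^s}\|\vu_\veps\|_{H^s}^2$, uniformly in $\veps$. The convective term is handled analogously.

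The main obstacle is the uniform control of $\nabla\Pi_\veps$ in $H^s$. Taking divergence of the momentum equation produces the non-constant-coefficient elliptic problem
\begin{equation*}
-\div\bigl((1/\vrho_\veps)\nabla\Pi_\veps\bigr) = \div\bigl(\vu_\veps\cdot\nabla\vu_\veps\bigr) + \div\bigl(R_\veps\vu_\veps^\perp\bigr) + \frac{1}{\veps}\div(\vu_\veps^\perp)\,,
\end{equation*}
in which the source carries an apparent $O(1/\veps)$ singularity through $\frac{1}{\veps}\div(\vu_\veps^\perp) = -\frac{1}{\veps}\curl\vu_\veps$. The remedy is an algebraic splitting $\Pi_\veps = \frac{1}{\veps}\Pi_\veps^{(0)} + \widetilde\Pi_\veps$, exploiting the identity $\vu^\perp = \nabla(-\Delta)^{-1}\curl\vu$ valid for divergence-free fields on $\R^2$, so that the singular piece is completely resolved and $\widetilde\Pi_\veps$ solves an elliptic equation whose right-hand side is $O(1)$ in $\veps$. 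Applying the sharp $B^s_{p,r}$ elliptic estimates for operators with rough coefficients of \cite{D_JDE, D-F_JMPA} then gives the desired $H^s$-bound for $\nabla\Pi_\veps$, uniformly in $\veps$.

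Putting everything together produces a differential inequality of the form $\frac{d}{dt}E_\veps(t) \leq C\,E_\veps(t)^{3/2}$ for the energy $E_\veps := \|R_\veps\|_{H^s}^2 + \|\vu_\veps\|_{H^s}^2$, from which a positive lifespan $T_\veps^*\geq T^*>0$ \emph{independent of $\veps$} is extracted by Gronwall. The remaining points are routine: compactness in $n$ yields an $L^\infty_T(H^s)$ solution; time-continuity with values in $H^s$ is recovered by a Bona--Smith regularization argument; and uniqueness is obtained through a stability estimate at the $L^2$ level on the difference of two solutions, where the same antisymmetry mechanism neutralizes the $1/\veps$ Coriolis contribution. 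The truly delicate step, and the conceptual novelty with respect to the homogeneous case, is the combination of the antisymmetry-based cancellations in the momentum equation with the algebraic splitting of the pressure that desingularizes the non-constant-coefficient elliptic equation.
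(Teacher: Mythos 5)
The overall architecture (Friedrichs scheme, $H^s$ energy estimates killing the singular Coriolis term by antisymmetry, commutator estimates, elliptic control of the pressure, Gr\"onwall) is the same as the paper's. But your reformulated momentum equation is not the correct one, and this propagates into an artificially complicated pressure analysis. After dividing the original momentum equation by $\vrho_\veps$, the Coriolis term is \emph{exactly} $\frac{1}{\veps}\vu_\veps^\perp$ (one checks $\frac{1}{\veps\vrho_\veps}\vrho_\veps\vu_\veps^\perp=\frac{1}{\veps}\vu_\veps^\perp$), with no residual $R_\veps\vu_\veps^\perp$. The decomposition $\frac{1}{\veps}\vrho_\veps\vu_\veps^\perp=\frac{1}{\veps}\vu_\veps^\perp+R_\veps\vu_\veps^\perp$ is used in this thesis only for the asymptotic analysis, \emph{without} dividing by $\vrho_\veps$; mixing the two produces a spurious term that double-counts the Coriolis force. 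The paper's working form (eq.\ \eqref{Euler-a_eps_1}) is $\d_t\vu_\veps+\vu_\veps\cdot\nabla\vu_\veps+\tfrac{1}{\veps}\vu_\veps^\perp+(1+\veps a_\veps)\tfrac{1}{\veps}\nabla\Pi_\veps=0$, where the pressure term carries the $1/\veps$ factor explicitly.

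The consequence for the pressure is that the singular elliptic problem you set up does not arise. With the rescaled pressure $\tfrac{1}{\veps}\nabla\Pi_\veps$, taking divergence and multiplying through by $\veps$ gives $-\div\bigl((1+\veps a_\veps)\nabla\Pi_\veps\bigr)=\veps\,\div(\vu_\veps\cdot\nabla\vu_\veps)-\curl\vu_\veps$, a source that is $O(1)$ in $\veps$ from the outset: no $O(1/\veps)$ singularity, hence no need for the splitting $\Pi_\veps=\tfrac{1}{\veps}\Pi_\veps^{(0)}+\widetilde\Pi_\veps$. Your splitting via $\vu^\perp=\nabla(-\Delta)^{-1}\curl\vu$ is not wrong as algebra, but in effect it just reconstructs the paper's rescaled pressure by hand; the claim that the ``combination of antisymmetry cancellations with the algebraic splitting of the pressure'' is the conceptual novelty is therefore misplaced --- the novelty is the antisymmetry combined with the \emph{a priori} $O(1)$ elliptic estimate in the variable-coefficient problem (Lemma \ref{l:commutator_pressure} and the argument around \eqref{eq:elliptic_problem}--\eqref{est:Pi^(n+1)}). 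Once the momentum equation is corrected and the pressure is normalized as in \eqref{Euler_eps}, the rest of your outline --- $\Lambda^s$ energy estimates (the paper uses $\Delta_j$ blocks, which is equivalent), Kato--Ponce/commutator control, convergence of the Friedrichs scheme, time continuity, and $L^2$ stability for uniqueness --- matches the proof in Section \ref{s:well-posedness_original_problem}.
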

With the local well-posedness result at the hand, we perform the fast rotation limit for general \textit{ill-prepared} initial data. We show the convergence of system \eqref{full Euler_intro} towards what we call \textit{quasi-homogeneous} incompressible Euler system 
\begin{equation}\label{Q-H_E_intro}
\begin{cases}
\d_t R+\div (R\vu)=0 \\
\d_t \vec u+\div \left(\vec{u}\otimes\vec{u}\right)+R\vu^\perp+ \nabla_x \Pi =0 \\
\div \vec u\,=\,0\,, \tag{QHE}
\end{cases}
\end{equation}  
where $R$ represents the limit of fluctuations $R_\veps$. 
We also point out that in the momentum equation of \eqref{Q-H_E_intro} a non-linear term of lower order (i.e. $R\vec u^\perp$) appears: it is a sort of remainder in the convergence for the Coriolis term, recasted as in \eqref{Coriolis}.

Passing to the limit in the momentum equation of \eqref{full Euler_intro} is no more evident, although we are in the $H^s$ framework: the Coriolis term is responsible for strong oscillations in time of solutions (the already quoted \textit{Poincar\'e waves}) which may prevent the convergence of the convective term towards the one of \eqref{Q-H_E_intro}. To overcome this issue, we employ the same approach mentioned above: the compensated compactness technique.

Now, once the limit system is rigorously depicted, one could address its well-posedness issue: it is worth noticing that the global well-posedness of system \eqref{Q-H_E_intro} remains an open problem. However, roughly speaking, for $R_0$ small enough, the system \eqref{Q-H_E_intro} is ``close'' to the $2$-D homogeneous and incompressible Euler system, for which it is well-known the global well-posedness. Thus, it is natural to wonder if there exists an ``asymptotically global'' well-posedness result in the spirit of \cite{D-F_JMPA} and \cite{C-F_sub}: for small initial fluctuations $R_0$, the quasi-homogeneous system \eqref{Q-H_E_intro} behaves like the standard Euler equations and the lifespan of its solutions tends to infinity. In particular, as already shown in \cite{C-F_sub} for the quasi-homogeneous ideal MHD system (see also references therein), we prove that the lifespan of solutions to \eqref{Q-H_E_intro} goes as  
\begin{equation}\label{lifespan_Q-H}
T_\delta^\ast \sim \log \log \frac{1}{\delta}\,,\tag{LIFE} 
\end{equation}
where $\delta>0$ is the size of the initial fluctuations. 

This result for the time of existence of solutions to \eqref{Q-H_E_intro} pushes our attention to the study of the lifespan of solutions to the primitive system \eqref{full Euler_intro}.
For the $3$-D \textit{homogeneous} Euler system with the Coriolis force, Dutrifoy in \cite{Dut} has proved that the lifespan of solutions tends to infinity in the fast rotation regime (see also \cite{Gall}, \cite{Cha} and \cite{Scro}, where the authors inspected the lifespan of solutions in the context of viscous homogeneous fluids). For system \eqref{full Euler_intro} it is not clear to us how to find similar stabilization effects (due to the Coriolis term), in order to improve the lifespan of the solutions: for instance to show that $T_\veps^\ast\longrightarrow +\infty$ when $\veps\rightarrow 0^+$. Nevertheless, independently of the rotational effects, we are able to state an ``asymptotically global'' well-posedness result in the regime of \textit{small} oscillations, in the sense of \eqref{lifespan_Q-H}: namely, when the size of the initial fluctuation $R_{0,\veps}$ is small enough, of size $\delta >0$, the lifespan $T^\ast_\veps$ of the corresponding solution to system \eqref{full Euler_intro} can be bounded from below by $T^\ast_\veps\geq T^\ast(\delta)$, with $T^\ast (\delta)\longrightarrow +\infty$ when $\delta\rightarrow 0^+$ (see also \cite{D-F_JMPA} for a density-depend fluid in the absence of Coriolis force). More precisely, one has the following result.
\begin{propnonum}
The lifespan $T_\veps^\ast$ of the solution to the two-dimensional density-dependent incompressible Euler equations \eqref{full Euler_intro} with the Coriolis force is bounded from below by
\begin{equation}\label{improv_life_fullE_intro}
\frac{C}{\|\vec u_{0,\veps}\|_{H^s}}\log\left(\log\left(\frac{C\, \|\vec u_{0,\veps}\|_{H^s}}{\max \{\mc A_\veps(0),\, \veps \, \mc A_\veps(0)\, \|\vec u_{0,\veps}\|_{H^s}\}}+1\right)+1\right)\, ,\tag{BOUND}
\end{equation}
where $\mc A_\veps (0):= \|\nabla_x R_{0,\veps}\|_{H^{s-1}}+\veps\, \|\nabla_x R_{0,\veps}\|_{H^{s-1}}^{\lambda +1}$, for some suitable $\lambda\geq 1$.
\end{propnonum}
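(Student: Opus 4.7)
The strategy is a Beale-Kato-Majda continuation argument, refined to extract the precise double-logarithmic lifespan. I would propagate three quantities uniformly in $\veps$: the Sobolev norm $E_\veps(t):=\|\vec u_\veps(t)\|_{H^s}$, the uniform vorticity $\|\omega_\veps(t)\|_{L^\infty}$ with $\omega_\veps:=\curl \vec u_\veps$, and the density-fluctuation functional $\mc A_\veps(t)$ appearing in the statement. Two structural cancellations drive the analysis: the antisymmetry of the planar rotation, which kills the singular term $\veps^{-1}\vec u^\perp$ in any $L^2$-based energy identity, and the two-dimensional identity $\curl\vec u^\perp=\div\vec u=0$, which kills it again at the level of the vorticity equation.

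First I would derive a tame $H^s$ bound
$$\frac{d}{dt}E_\veps \,\leq\, C(1+\|\nabla\vec u_\veps\|_{L^\infty})\,E_\veps \,+\, C\,\mc A_\veps\,\Phi(E_\veps)$$
by applying standard Moser and commutator estimates to the rescaled momentum equation and controlling $\nabla\Pi_\veps$ through the non-constant coefficient elliptic theory of Danchin-Fanelli recalled in the excerpt. Next, taking the curl of the momentum equation leaves only the baroclinic source $(\veps/\vrho_\veps^{2})\,\nabla R_\veps\wedge\nabla\Pi_\veps$, so transport by the divergence-free field $\vec u_\veps$ yields
$$\|\omega_\veps(t)\|_{L^\infty}\,\leq\,\|\omega_{0,\veps}\|_{L^\infty}+C\veps\int_0^t\|\nabla R_\veps\|_{L^\infty}\|\nabla\Pi_\veps\|_{L^\infty}\,\detau,$$
while a purely transport estimate for $R_\veps$, combined with the nonlinear definition of $\mc A_\veps$, propagates $\mc A_\veps(t)$ through the time integral of $(1+\|\nabla\vec u_\veps\|_{L^\infty})$.

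The keystone is then the two-dimensional Brezis-Gallouet-Wainger interpolation $\|\nabla\vec u_\veps\|_{L^\infty}\leq C(1+\|\omega_\veps\|_{L^\infty})\log(e+E_\veps)$, which is available because $s>2$. Plugging it back into the three estimates produces a coupled system which, after iterating the logarithmic bound and using the $\veps$-smallness of the baroclinic source to prevent $\|\omega_\veps\|_{L^\infty}$ from exceeding $C\max\{\mc A_\veps(0),\veps\mc A_\veps(0)E_\veps(0)\}$ on the maximal existence interval, closes into a differential inequality of the form
$$\frac{d}{dt}\log\log(e+E_\veps(t))\,\leq\,C\Bigl(\|\vec u_{0,\veps}\|_{H^s}+\max\{\mc A_\veps(0),\,\veps\,\mc A_\veps(0)\,\|\vec u_{0,\veps}\|_{H^s}\}\Bigr).$$
Integrating this bound and inverting it at the first time where the BKM integral $\int_0^t\|\nabla\vec u_\veps\|_{L^\infty}\detau$ could diverge reproduces the lower bound \eqref{improv_life_fullE_intro}.

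The main obstacle will be controlling $\|\nabla\Pi_\veps\|_{L^\infty}$ uniformly in $\veps$. Since the original pressure scales as $\veps^{-1}$, the cancellation must be isolated first by writing $\vec u_\veps^\perp=-\nabla\psi_\veps$ with $\psi_\veps$ the stream function and absorbing $\veps^{-1}\vec u_\veps^\perp$ into the pressure gradient, and only then estimating the remainder through the non-constant coefficient operator $\div(\vrho_\veps^{-1}\nabla\cdot)$. Bootstrapping this elliptic problem in the $H^{s-1}$ scale, as in the Danchin-Fanelli theory, introduces positive powers of $\|\nabla R_\veps\|_{H^{s-1}}$ in the bound for $\nabla\Pi_\veps$, which is precisely what forces the nonlinear correction $\veps\|\nabla R_{0,\veps}\|_{H^{s-1}}^{\lambda+1}$ in the definition of $\mc A_\veps(0)$. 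Tracking this $\veps$-dependence carefully through all three coupled estimates in order to recover the sharp maximum structure in \eqref{improv_life_fullE_intro} is the main technical task.
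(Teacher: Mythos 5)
Your route is genuinely different from the paper's. The paper works entirely in the critical Besov scale $B^1_{\infty,1}$, propagating $E_\veps(t):=\|\vu_\veps(t)\|_{L^2\cap B^1_{\infty,1}}$ and $\mc A_\veps(t):=\|\nabla a_\veps(t)\|_{B^0_{\infty,1}}$ (with $a_\veps=(\vrho_\veps^{-1}-1)/\veps$), and the key ingredient is the Vishik/Hmidi--Keraani improved transport estimate (Theorem~\ref{thm:improved_est_transport}), which bounds the $B^0_{\infty,1}$ norm of the vorticity by a factor that is \emph{linear} in $\int_0^t\|\nabla\vu_\veps\|_{L^\infty}$ rather than exponential. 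This linearity is exactly what keeps $E_\veps$ growing only singly exponentially, $E_\veps(t)\lesssim E_\veps(0)\,e^{CtE_\veps(0)}$, on the interval $[0,T^\ast_\veps]$; $\mc A_\veps$ then grows doubly exponentially, and the double logarithm comes from solving the implicit constraint defining $T^\ast_\veps$ (that the accumulated source stay below $E_\veps(0)$). A BKM-type continuation criterion in $H^s$ (Proposition~\ref{prop:cont_criterion_original_prob}) then transfers the $B^1_{\infty,1}$ lifespan to the Sobolev one. Your BGW-based scheme, $\|\nabla\vu_\veps\|_{L^\infty}\lesssim\|\omega_\veps\|_{L^\infty}\log(e+\|\vu_\veps\|_{H^s})$ fed into a tame $H^s$ estimate, could in principle also produce a double log, but it loses a logarithm: $E_\veps$ already grows doubly exponentially, and an extra factor $\log(e+\|\vu_{0,\veps}\|_{H^s})$ ends up inside the outer log of the final bound, so the stated estimate does not follow without further work.

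Two concrete errors must also be corrected. First, the baroclinic source in the rescaled vorticity equation carries \emph{no} prefactor $\veps$: taking the curl of $\d_t\vu_\veps+\vu_\veps\cdot\nabla\vu_\veps+\veps^{-1}\vu_\veps^\perp+(1+\veps a_\veps)\,\veps^{-1}\nabla\Pi_\veps=0$ gives $\d_t\omega_\veps+\vu_\veps\cdot\nabla\omega_\veps+\nabla a_\veps\wedge\nabla\Pi_\veps=0$, as in \eqref{curl_eq} --- the factor $\veps$ in $\nabla(\vrho_\veps^{-1})=\veps\nabla a_\veps$ cancels exactly against the $\veps^{-1}$ from the rescaled pressure. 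Your claimed ``$\veps$-smallness of the baroclinic source'' is therefore illusory; the smallness of the lifespan-limiting term comes only from $\mc A_\veps(0)$, which under the paper's hypotheses is merely $O(1)$ in $\veps$, not vanishing. Second, the constraint defining $T^\ast_\veps$ cannot be that $\|\omega_\veps\|_{L^\infty}$ stays below $C\max\{\mc A_\veps(0),\,\veps\,\mc A_\veps(0)E_\veps(0)\}$: the initial vorticity $\|\omega_{0,\veps}\|_{L^\infty}$ is of size $\|\vu_{0,\veps}\|_{H^s}$, not of size $\mc A_\veps(0)$, so that bound already fails at $t=0$. The correct constraint, as in \eqref{def_T^ast_veps}, is on the \emph{accumulated baroclinic contribution}, which must stay $\lesssim E_\veps(0)$; $T^\ast_\veps$ is the implicit solution of this condition, and inverting the resulting double exponential in $\mc A_\veps$, not integrating a differential inequality for $\log\log(e+E_\veps)$ with a constant right-hand side, is what yields \eqref{improv_life_fullE_intro}.
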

As an immediate corollary of the previous lower bound, if we consider the initial densities of the form $\vrho_{0,\veps}=1+\veps^{1+\alpha}R_{0,\veps}$ with $\alpha >0$, then we get $T^\ast_\veps\sim \log \log (1/\veps)$.

At this point, let us sketch the main steps to show \eqref{improv_life_fullE_intro} for the primitive system \eqref{full Euler_intro}.

The key point in the proof of \eqref{improv_life_fullE_intro} is to study the lifespan of solutions in critical Besov spaces. In those spaces, we can take advantage of the fact that, when $s=0$, the $B^0_{p,r}$ norm of solutions can be bounded \textit{linearly} with respect to the Lipschitz norm of the velocity, rather than exponentially (see the works \cite{Vis} by Vishik and \cite{H-K} by Hmidi and Keraani). Since the triplet $(s,p,r)$ has to satisfy \eqref{Lip_assumption}, the lowest regularity Besov space we can reach is $B^1_{\infty,1}$. Then if $\vec u$ belongs to $B^1_{\infty,1}$, the vorticity $\omega := -\d_2 u^1+\d_1 u^2$ has the desired regularity to apply the quoted improved estimates by Hmidi-Keraani and Vishik (see Theorem \ref{thm:improved_est_transport} in this respect). 

Analysing the vorticity formulation of the system, we discover that the \emph{curl} operator cancels the singular effects produced by the Coriolis force: that cancellation is not apparent, since the skew-symmetric property of the Coriolis term is out of use in the critical framework considered.

Finally, we need a continuation criterion (in the spirit of Baele-Kato-Majda criterion, see \cite{B-K-M}) which guarantees that we can ``measure'' the lifespan of solutions in the space of lowest regularity index, namely $s=r=1$ and $p=+\infty$. That criterion is valid under the assumptions that 
$$
\int_0^{T}  \big\| \nabla_x \vec u(t) \big\|_{L^\infty}  \dt < +\infty\qquad \text{with}\qquad T<+\infty\, .
$$
The previous criterion ensures that the lifespan of solutions found in the critical Besov spaces is the same as in the sought Sobolev functional framework, allowing us to conclude the proof (see considerations in Subsection \ref{ss:cont_criterion+consequences}). 

\medskip

\subsection*{Overview of the contents of this thesis}
Before moving on, we give a brief overview of the structure of the present thesis. 

The Chapter \ref{chap:geophysics} has the goal to ``dip'' the reader in the discipline of the geophysical fluid dynamics, giving a brief physical justification of the mathematical models we will consider in the next chapters. In Chapter \ref{chap:multi-scale_NSF} we address the study of the singular perturbation problem, given by the Navier-Stokes-Fourier equations, in the scaling which we call ``critical''. The next Chapter \ref{chap:BNS_gravity} is devoted to the improvement of the previous scaling: we will go beyond the ``critical'' threshold. Finally, in the last Chapter \ref{chap:Euler} we change a bit the model, dedicating ourselves to the asymptotic analysis for the density-dependent Euler equations. In addition, we will focus on the lifespan of its solutions, proving an ``asymptotically global'' well-posedness result. 

At the end of this thesis, there are two more sections dedicated to the future perspectives and an appendix containing some tools and well-known results employed throughout the manuscript.

\newpage
\chapter{Contributions de la th\`ese} 

\section*{Le problème de Navier-Stokes-Fourier: un aperçu physique}
Dans cette thèse, nous nous consacrons à l'étude du comportement des fluides caractérisés par de grandes échelles de temps et d'espace.
Des exemples typiques sont les courants dans l'atmosphère et l'océan, mais bien sûr il y a
de nombreux autres phénomènes liés aux fluides hors de la Terre, comme les écoulements sur une étoiles ou sur d'autres corps célestes.
\`A ces échelles, les effets de la rotation de l'environnement (qui dans le cas de l'atmosphère ou de l'océan est la Terre) ne sont pas négligeables, et le mouvement du fluide est influencé par l'action d'une forte force de Coriolis. Il y a deux autres éléments qui caractérisent la dynamique de ce type de fluides, appelés géophysiques (voir \cite{C-R}, \cite{Ped} et \cite{Val}, par exemple): la faible compressibilité du fluide et les effets de la stratification (les variations de densité, essentiellement à cause de la gravité). 
L'importance des attributs précédents est ``mesurée'' en introduisant, dans le modèle mathématique, trois paramètres adimensionnels positifs qui, pour les fluides géophysiques, sont supposés faibles. Ces paramètres sont:
\begin{itemize}
\item le nombre de \emph{Mach} $Ma$, qui fixe la taille des écarts isentropiques par rapport aux fluides incompressibles: plus $Ma$ est petit, plus les effets de compressibilité sont faibles;
\item le nombre de \emph{Froude} $Fr$, qui mesure l'importance des effets de la stratification dans la dynamique: plus $Fr$ est petit, plus les effets gravitationnels sont forts;
\item le nombre de \emph{Rossby} $Ro$, qui est lié à la rotation du système: lorsque $Ro$ est très petit, les effets de la rotation rapide sont prédominants dans la dynamique.
\end{itemize}

Dans notre contexte, nous adoptons une hypothèse simpliste (souvent supposée dans les études physiques et mathématiques) qui consiste à restreindre l'étude du fluide aux latitudes moyennes, c'est-à-dire aux écoulements qui se déroulent assez loin des pôles et de la zone équatoriale. Dans ce cas, les variations des effets de rotation dues à la latitude sont négligeables.
 
Notons par $\vrho ,\, \vtheta\geq 0$ respectivement  la densité et la température absolue du fluide, et par $\vec{u}\in \mathbb{R}^3$ son champ de vitesse: le système de Navier-Stokes-Fourier 3-D dans sa forme adimensionnelle peut être écrit (voir par exemple \cite{F-N}) comme
\begin{equation} \label{eq_i:NSF_fr}
\begin{cases}
 \partial_t \vrho + \div (\vrho\vec{u})=0\ \\[3ex] 
 \partial_t (\vrho\vec{u})+ \div(\vrho\vec{u}\otimes\vec{u}) + \dfrac{\e_3 \times \vrho\vec{u}}{Ro}\, + \dfrac{1}{Ma^2} \nabla_x p(\vrho,\vtheta) \\[1ex]     
 \qquad \qquad \qquad \qquad \qquad \qquad \qquad \; \; \; =\div \mbb{S}(\vtheta,\nabla_x\vec{u}) + \dfrac{\vrho}{Ro^2} \nabla_x F + \dfrac{\vrho}{Fr^2} \nabla_x G \\[3ex]   
 \partial_t \bigl(\vrho s(\vrho, \vtheta)\bigr) + \div \bigl(\vrho s (\vrho,\vtheta)\vec{u}\bigr) + \div\left(\dfrac {\q(\vtheta,\nabla_x \vtheta )}{\vtheta} \right) 
 = \sigma\,, \tag{NSF}
\end{cases}
\end{equation}
dans la bande 3-D infinie:
\begin{equation} \label{eq:domain_fr}
\Omega\,:=\,\R^2\times\,]0,1[\,. \tag{DOM}
\end{equation}
Dans le système \eqref{eq_i:NSF_fr} ci-dessus, les fonctions $s,\vec{q},\sigma$ sont respectivement l'entropie spécifique, le flux de chaleur et le taux de production d'entropie, et $\mbb{S}$ est le tenseur des contraintes visqueuses, qui satisfait la loi rhéologique de Newton (voir les sous-sections \ref{sss:primsys} and \ref{sss:structural} pour une formulation plus précise).
 
La force de Coriolis est représentée par
\begin{equation} \label{def:Coriolis_fr}
\mf C(\vr,\vu)\,:=\,\frac{1}{Ro}\,\vec e_3\times\vr\,\vu\,, \tag{COR}
\end{equation}
où $\vec e_3=(0,0,1)$ et le symbole $\times$ représente le produit extérieur usuel des vecteurs dans $\R^3$. En particulier, la définition précédente implique que la rotation a lieu autour de l'axe vertical, et sa force ne dépend pas de la latitude (voir par exemple \cite{C-R} et \cite{Ped} pour plus de détails). Nous soulignons que, malgré toutes ces simplifications, le modèle obtenu est déjà capable de capturer plusieurs phénomènes physiquement typiques dans la dynamique des écoulements géophysiques: le fameux \emph{théorème de Taylor-Proudman}, la formation des \emph{couches d'Ekman} et la propagation des \emph{ondes de Poincar\'e}. Nous renvoyons à \cite{C-D-G-G} pour une discussion plus approfondie.
Dans la présente thèse, nous évitons les effets de couche limite, i.e. le problème lié aux couches d'Ekman, en imposant des conditions aux limites appelées conditions de \emph{glissement complèt}.
 
Comme établi par le \emph{théorème de Taylor-Proudman} en géophysique, la rotation rapide impose une certaine rigidité/stabilité, forçant le mouvement à se dérouler sur des plans orthogonaux à l'axe de rotation.
Par conséquent, la dynamique devient purement bidimensionnelle et horizontale, et le fluide a tendance à se déplacer long de colonnes verticales.
 
Cependant, une telle configuration idéale est entravée par une autre force fondamentale agissant aux échelles géophysiques, la gravité, qui travaille à restaurer la stratification verticale de la densité. La force gravitationnelle est décrite dans le système \eqref{eq_i:NSF_fr} par le terme
\[
 \mc G(\vr)\,:=\,\frac{1}{Fr^2}\,\vr\,\nabla_xG\, ,
\]
où dans notre cas $G(x)=\,G(x^3)\,=\,-\,x^3$. De plus, les effets gravitationnels sont affaiblis par la présence de la force centrifuge 
$$\mf F(\vr):=\frac{1}{Ro^2}\,  \vr\, \nabla_x F\, ,$$
avec $F(x)=|x^h| ^2$. Une telle force est une force d'inertie qui, aux latitudes moyennes, décale légèrement la direction de la gravité.
 
Ainsi, la compétition entre les effets de stabilisation, dus à la rotation, et la stratification verticale (due à la gravité), se traduit dans le modèle par la compétition entre les ordres de grandeur de $Ro$ et $Fr$.
 
De plus, il apparaît que la gravité $\mc G$ agit en combinaison avec la force de pression:
$$
\mf P(\vr, \vtheta)\,:=\,\frac{1}{Ma^2}\,\nabla_x p(\vr, \vtheta)\,,
$$
où $p$ est une fonction lisse connue de la densité et de la température du fluide (voir sous-section \ref{sss:structural}).
 
Nous remarquons que les termes $\mf C,\, \mc G,\, \mf P$ et $\mf F$ entrent en jeu dans le modèle avec un grand pré-facteur, en conséquence notre but est d'étudier les systèmes quand $Ma, \, Fr$ et $Ro$ sont petits dans différents régimes.

\section*{L'analyse multi-échelle}
Au niveau mathématique, au cours des 30 dernières années, un nombre considérable de travaux a été consacré à la justification rigoureuse, dans divers cadres fonctionnels, des modèles réduits considérés en géophysique.
 
L'examen de l'ensemble de la littérature sur ce sujet dépasse largement le cadre de cette partie introductive, c'est pourquoi nous faisons le choix de ne rapporter que les travaux qui abordent la présence de la force de Coriolis \eqref{def:Coriolis_fr}.
Nous décidons également de reporter, à la partie suivante, la discussion sur les modèles incompressibles, car moins pertinents pour l'analyse multi-échelle, en raison de la rigidité imposée par la contrainte de divergence nulle sur le champ de vitesse du fluide.
 
Les modèles de fluides compressibles, au contraire, fournit un cadre beaucoup plus riche pour l'analyse multi-échelle des écoulements géophysiques.
De plus, nous avons choisi de nous concentrer principalement sur les travaux traitant des fluides visqueux et qui effectuent l'étude asymptotique pour des données initiales mal préparées.
 
 \subsection*{Résultats précédents}
Les premiers résultats, dans le sens mentionné ci-dessus, ont été obtenus par Feireisl, Gallagher et Novotn\'y dans \cite{F-G-N} et avec G\'erard-Varet dans \cite{F-G-GV-N}, pour le système barotrope de Navier-Stokes (voir aussi \cite{B-D-GV} pour une étude préliminaire et \cite{G-SR_Mem} pour l'analyse des ondes équatoriales). Dans ces travaux, les auteurs ont étudié le régime combiné du faible nombre de Rossby (effets de rotation rapide) avec un régime de faible nombre de Mach (faible compressibilité du fluide) sous l'échelle 
\begin{align}
Ro\,&=\,\veps\tag{LOW RO}\\
Ma\,&=\,\veps^m \qquad\qquad \mbox{ avec }\quad m\geq 0
\,, \tag{LOW MA} \label{eq:scale_fr}
\end{align}
où $\veps\in\,]0,1]$ est un petit paramètre, qu’on aimerait faire tendre vers $0$ afin de dériver le modèle réduit. Dans le cas où $m=1$ dans \eqref{eq:scale_fr}, le système présente une échelle isotrope, puisque $Ro$ et $Ma$ agissent au même ordre de grandeur et les termes de pression et de rotation restent en équilibre (l'\emph{équilibre quasi-géostrophique}) à la limite. Le système limite est identifié par l’\emph{équation quasi-géostrophique} pour une fonction de flux du champ de vitesse.
Au contraire, dans \cite{F-G-GV-N} lorsque $m>1$ et avec en plus la force centrifuge, le terme de pression prédomine (sur la force de Coriolis) dans la dynamique du fluide. 
Dans ce cas, le système limite est décrit par un système de Navier-Stokes incompressible en 2-D et les difficultés générées par l'anisotropie d'échelle ont été surmontées en utilisant des estimations de dispersion.
 
Par la suite, Feireisl et Novotn\'y ont poursuivi l'analyse multi-échelle pour le même système, encore une fois sans le terme de force centrifuge, en considérant les effets d'une faible stratification, c’est-à-dire $Ma/Fr\rightarrow 0$ lorsque $\veps\rightarrow 0^+$ (voir \cite{F-N_AMPA}, \cite{F-N_CPDE}).
Nous renvoyons à \cite{F_MA} pour une étude similaire dans le cadre des modèles capillaires, où le choix $m=1$ a été fait, mais l'anisotropie a été donnée par l'échelle fixée pour le terme de forces internes (appelé tenseur des contraintes de Korteweg). De plus, il faut mentionner \cite{F_2019} pour le cas des grands nombres de Mach par rapport au paramètre de Rossby, à savoir $0\leq m<1$ dans \eqref{eq:scale_fr}. Dans ce cas, le gradient de pression n'est pas assez fort pour compenser la force de Coriolis, et afin de trouver une dynamique limite pertinente, il faut pénaliser le coefficient de viscosité.
 
L'analyse des modèles présentant aussi des transferts de chaleur est beaucoup plus récente, et a été commencé avec le papier \cite{K-M-N} de Kwon, Maltese et Novotn\'y. Dans cet article, les auteurs ont considéré une approche multi-échelle pour le système de Navier-Stokes-Fourier complet avec Coriolis et la force gravitationnelle (et $F=0$), en prenant l'échelle
\begin{equation} \label{eq:scale-G_fr}
{Fr}\,=\,\veps^n\,,\qquad\qquad\mbox{ avec }\quad 1\,\leq\,n\,<\,\frac{m}{2}\, .\tag{LOW FR}
\end{equation}
En particulier, dans cet article, le choix \eqref{eq:scale-G_fr} impliquait que $m>2$ et le cas $n=m/2$ était laissé ouvert. Des restrictions similaires sur les paramètres peuvent être trouvées dans \cite{F-N_CPDE} pour le modèle barotrope. Ces restrictions doivent être attribuées aux techniques utilisées pour prouver la convergence, qui sont basées sur une combinaison de méthode d'énergie relative/entropie relative avec des estimations dispersives
(on note qu'une restriction encore plus grande, $m>10$ , apparaît dans \cite{F-G-GV-N}). D'autre part, on souligne que les méthodes d'énergie relative permettent d'obtenir un taux de convergence précis et de considérer également des limites non visqueuses et non diffusives (dans ces cas, on ne dispose pas d'une borne uniforme pour $\nabla_x\vtheta$ et sur $\nabla_x\vec{u}$).
Le cas où $m=1$ a été traité postérieurement dans l’article \cite{K-N} de Kwon et Novotn\'y, en recourant à des techniques similaires (cependant, le terme gravitationnel n'est pas pénalisé).
\subsection*{Nouveautés} 
La première partie de cette thèse est consacrée à l'étude des problèmes multi-échelle, en se concentrant sur le système de Navier-Stokes-Fourier complet introduit dans \eqref{eq_i:NSF_fr}.
Dans un premier temps, nous améliorons le choix de l'échelle \eqref{eq:scale-G_fr} en prenant le cas limite $n=m/2$ avec $m\geq1$ (c'est l'échelle adoptée dans le chapitre \ref{chap:multi-scale_NSF}). Bien sûr, nous sommes toujours dans un régime de faible stratification, puisque $Ma/Fr\ra0$, mais le choix $Fr=\sqrt{Ma}$  nous permet de capturer quelques propriétés qualitatives supplémentaires sur la dynamique limite.
De plus, nous ajoutons au système le terme de force centrifuge $\nabla_x F$ (dans l'esprit de \cite{F-G-GV-N}), qui est source des problèmes techniques dus à son caractère non borné.
Commentons maintenant toutes ces questions en détail.
 
Tout d'abord, en absence de la force centrifuge, c'est-à-dire $F=0$, nous sommes capables d'effectuer la limite incompressible, avec une faible stratification et une rotation rapide pour \emph{toute la gamme} de valeurs $m\geq 1$, dans le cadre des \emph{solutions faibles d'énergie finie} de le système de Navier-Stokes-Fourier \eqref{eq_i:NSF_fr} et pour des \emph{données initiales mal préparées}.
Dans le cas $m>1$, les effets d'incompressibilité et de stratification sont prédominants par rapport à la force de Coriolis: on prouve alors la convergence vers le bien connu \emph{système d'Oberbeck-Boussinesq} (voir par exemple le paragraphe 1.6.2 de \cite{Z} pour des explications physiques sur ce système), donnant une justification rigoureuse à ce modèle approché dans le contexte des fluides en rotation rapide. Ainsi, nous pouvons énoncer le théorème suivant (voir le Théorème \ref {th:m-geq-2} pour l'énoncé précis).
\begin{theoremnonum_fr} \label{thm_1_fr}
On considère le système \eqref{eq_i:NSF_fr}. Soit $\Omega = \R^2 \times\,]0,1[\,$. Soit $F=|x^{h}|^{2}$ et $G=-x^{3}$. On prenne $n=m/2$ et ou bien ${m\geq 2}$, ou ${m>1}$ et ${ F=0}$. Alors, on a les convergences suivantes:
 \begin{align*}
 \varrho_\ep \rightarrow 1 \\ 
 R_{\ep}:=\frac{\varrho_\ep - 1}{\ep^m} \weakstar R \\   
 \vec{u}_\ep \weak \vec{U} \\
 \Theta_{\ep} :=\frac{\vartheta_\ep - \bar{\vartheta}}{\ep^m} \weak \Theta \, , 
 \end{align*}
où en accord avec le théorème de Taylor-Proudman, on a
$$\vec{U} = (\vec U^h,0),\quad \quad \vec U^h=\vec U^h(t,x^h),\quad \quad \div_h\vec U^h=0.$$
De plus, $\Big(\vec{U}^h,\, \, R ,\, \, \Theta \Big)$ résout, au sens des distributions, le système incompressible de type Oberbeck-Boussinesq
\begin{align*}
& \d_t \vec U^{h}+\div_{h}\left(\vec{U}^{h}\otimes\vec{U}^{h}\right)+\nabla_h\Gamma-\mu (\oline\vtheta )\Delta_{h}\vec{U}^{h}=\delta_2(m)\langle R\rangle\nabla_{h}F \\[2ex]&
\d_t\Theta\,+\,\divh(\Theta\,\vec U^h)\,-\,\kappa(\oline\vtheta)\,\Delta\Theta\,=\,\oline\vtheta \,\vec{U}^h\cdot\nabla_h \overline{\mc G}
\\[2ex]
& \nabla_{x}\left( \d_\varrho p(1,\oline{\vtheta})\,R\,+\,\d_\vtheta p(1,\oline{\vtheta})\,\Theta \right)\,=\,\nabla_{x}G\,+\,\delta_2(m)\,\nabla_{x}F\, ,
\end{align*}
où
$\overline{\mc G}$ est la somme de forces externes $\,G\,+\,\delta_2(m)F$ , $\Gamma \in \mc D^\prime$ et $\delta_2(m )= 1$ si $m=2$ , $\delta_2(m)=0$ sinon.
\end{theoremnonum_fr}
 
Nous soulignons qu'à la limite le champ de vitesse est de dimension $2$, selon le célèbre théorème de Taylor-Proudman en géophysique: à la limite en rotation rapide, le mouvement du fluide a un comportement planaire, il se déroule sur des plans orthogonaux à l'axe de rotation (c'est-à-dire des plans horizontaux dans notre modèle) et il est essentiellement constant dans la direction verticale. On se réfère à
\cite{C-R}, \cite{Ped} et \cite{Val} pour plus de détails sur la formulation physique. Notez cependant que, bien que la dynamique limite soit purement horizontale, la densité limite et les variations de température,
$R$ et $\Theta$ respectivement, sont stratifiées: c'est l'effet principal du choix $n=m/2$ pour le nombre de Froude dans \eqref{eq:scale-G_fr}. 
C'est aussi la principale propriété qualitative qui est nouvelle dans notre travail par rapport aux études précédentes et qui justifie l'épithète d'échelle \emph{``critique''}.
 
Lorsque $m=1$, au contraire, toutes les forces agissent à la même échelle, puis s'équilibrent asymptotiquement pour $\veps\ra0^+$. 

En conséquence, le mouvement limite est décrit par une \emph{équation quasi-géostrophique} pour une fonction $q$, qui est liée à $R$ et $\Theta$ (respectivement, la densité et les variations de température à la limite) et à la gravité, et qui joue le rôle de fonction de flux pour le champ de vitesse limite. 
Cette équation quasi-géostrophique est couplée à une équation de transport-diffusion scalaire pour une nouvelle grandeur
$\Upsilon$, qui mélange $R$ et $\Theta$. 
L'énoncé précis du théorème suivant se trouve dans le paragraphe \ref{ss:results}. 
\begin{theoremnonum_fr}
On considère le système \eqref{eq_i:NSF_fr}. Soit $\Omega = \R^2 \times\,]0,1[\,$. Soit ${F=0}$ et $G=-x^3$. On prenne ${m=1}$ et $n=1/2$.
Alors, on a les mêmes convergences trouvées dans le Théorème \ref{thm_1_fr} et $\vec U$ satisfait le théorème de Taylor-Proudman.
Par ailleurs définissons
$$
\Upsilon := \d_\vrho s(1,\oline{\vtheta}) R + \d_\vtheta s(1,\oline{\vtheta})\,\Theta$$
et
$$q= \d_\varrho p(1,\oline{\vtheta}) R +\d_\vtheta p(1,\oline{\vtheta})\Theta -G \, .
$$
Alors $q=q(t,x^h)$ et $ \vec{U}^{h}=\nabla_h^{\perp} q$.
De plus, le couple $\Big(q,\, \, \Upsilon \Big)$ satisfait, au sens des distributions, 
\begin{align*}
& \d_{t}\left(q-\Delta_{h}q\right) -\nabla_{h}^{\perp}q\cdot
\nabla_{h}\left( \Delta_{h}q\right) +\mu (\oline{\vtheta})
\Delta_{h}^{2}q=\langle X\rangle \\[2.5ex] 
& \d_{t} \Upsilon +\nabla_h^\perp q\cdot\nabla_h\Upsilon-\kappa(\oline\vtheta) \Delta \Upsilon\,=\,
\,\kappa(\oline\vtheta)\,\Delta_hq\, ,
\end{align*}
où $\langle X\rangle$ est une force ``externe'' appropriée.
\end{theoremnonum_fr}
Ce théorème est dans l'esprit du résultat de \cite{K-N}, mais ici encore sont captés à la limite les effets gravitationnels, de sorte qu'il n'est plus possible d'affirmer que $R$ et $\Theta$ (donc $\Upsilon$) sont horizontaux. En revanche, et de façon surprenante, $q$ et la vitesse limite $\vec U$ sont purement horizontales.
 
\`A ce stade, faisons quelques remarques. Tout d'abord, mentionnons que, comme déjà annoncé, nous sommes capables d'ajouter au système les effets de la force centrifuge $\nabla_x F$.
Malheureusement, dans ce cas apparaît la restriction $m\geq 2$ (qui est quand même moins sévère que celles imposées dans \cite{F-G-GV-N}, \cite{F-N_CPDE} et \cite{K-M-N}).
Cependant, nous montrons qu'une telle restriction n'est pas de nature technique, mais qu'elle est cachée dans la structure du système d'ondes (voir proposition \ref{p:target-rho_bound} et remarque \ref{slow_rho}).
Le résultat pour $F\neq 0$ est analogue à celui présenté ci-dessus pour le cas $F=0$ et $m>1$: quand $m>2$, l'anisotropie de l'échelle est trop grande pour voir les effets dus à $F$ à la limite, et aucune différence qualitative n'apparaît par rapport à l'instance où $F=0$; lorsque $m=2$, en revanche, des termes supplémentaires liés à $F$ apparaissent dans le système de Oberbeck-Boussinesq (voir théorème \ref{thm_1_fr}). Dans tous les cas, l'analyse sera considérablement plus compliquée, puisque $F$ n'est pas bornée dans le domaine $\Omega$ (défini dans \eqref {eq:domain_fr} ci-dessus) et cela demandera une procédure de localisation supplémentaire (déjà employée dans \cite{F-G-GV-N}).
 
Soulignons en outre que la théorie classique de l'existence des solutions faibles d'énergie finie pour \eqref{eq_i:NSF_fr} exige que le domaine physique soit un sous-ensemble \emph {borné} et lisse de $\R^3$ (voir \cite{F-N} pour une étude complète). La théorie a ensuite été étendue dans \cite{J-J-N} pour couvrir le cas des domaines non bornés, et cela pourrait nous sembler approprié à notre cas.

Néanmoins, la notion de solutions faibles développée dans \cite{J-J-N} est en quelque sorte plus faible que la notion usuel (les auteurs parlent en fait de \emph{solutions très faibles}), dans la mesure où la formulation faible habituelle du bilan d'entropie, c'est-à-dire la troisième équation de \eqref {eq_i:NSF_fr}, doit être remplacée par une inégalité au sens des distributions.
Une telle formulation ne nous convient pas, car, lors de la dérivation du système d'ondes acoustiques de Poincar\'e, nous devons combiner la conservation de la masse et l'équation de l'entropie. En particulier, cela nécessite d'avoir de vraies égalités, satisfaites au sens faible (usuel).
Afin de pallier à ce problème, on recourt à la technique des \emph{domaines envahissants} (voir par exemple le chapitre 8 de \cite{F-N}, \cite{F-Scho} et \cite{WK}): pour chaque $\veps\in\,]0,1]$, on résout le système \eqref {eq_i:NSF_fr}, avec le choix $n=m/2$ pour le nombre de Froude, dans un domaine lisse $ \Omega_\veps$, où $\big(\Omega_\veps\big)_\veps$ converge (dans un sens approprié) vers $\Omega$, lorsque $\veps\ra0^+$, plus vite que la vitesse de propagation des ondes (qui est proportionnelle à $\veps^{-m}$).
Une telle ``procédure d'approximation'' nécessitera un travail supplémentaire.

Afin de prouver nos résultats, et d'obtenir l'amélioration sur les valeurs des différents paramètres, nous proposons une approche unifiée, qui fonctionne en fait à la fois pour le cas $m>1$ (permettant de traiter assez facilement l'anisotropie de l'échelle) et pour le cas $m=1$ (permettant de traiter l'opérateur de perturbation singulier plus compliqué).
Cette approche est basée sur les arguments de \emph{compacité par compensation}, d'abord employés par Lions et Masmoudi dans \cite{L-M} pour traiter la limite incompressible du système barotrope de Navier-Stokes, et plus tard adaptées par Gallagher et Saint-Raymond dans \cite {G-SR_2006} au cas des fluides en rotation rapide (incompressibles et homogènes). Des applications plus récentes de cette méthode dans le cadre des écoulements géophysiques se trouvent dans \cite{F-G-GV-N}, \cite{F_JMFM}, \cite{Fan-G} et \cite{F_2019}.
 
La méthode citée ne donne pas du tout une convergence quantitative, mais seulement qualitative. La technique est purement basée sur la structure algébrique du système, qui permet de trouver la petitesse (et disparaissant à la limite) de quantités non linéaires appropriées, et des propriétés de compacité pour d'autres quantités.
Ces propriétés de convergence forte ne sont en aucun cas évidentes, car les termes singuliers sont responsables de fortes oscillations en temps des solutions (les ondes dites acoustiques de Poincar\'e), qui peuvent empêcher la convergence des non-linéarités.
Néanmoins, une étude fine du système des ondes acoustiques de Poincar\'e révèle en fait la compacité (pour tout $m\geq1$ si $F=0$, pour $m\geq 2$ si $F\neq 0$) d’une quantité spéciale
$\g_\veps$, qui combine (grossièrement) les moyennes verticales de la quantité de mouvement $\vec{V}_\veps=\vrho_\veps\vec{u}_\veps$ (de son tourbillon, en fait) et d'une autre fonction
$Z_\veps$, obtenu comme une combinaison linéaire des variations de densité et de température (voir les sous-sections \ref{sss:term1} et \ref{ss:convergence_1} pour plus de détails sur ce sujet). Des propriétés de compacité similaires ont été mises en évidence dans \cite{Fan-G} pour les fluides incompressibles dépendant de la densité en $2$-D, et dans \cite {F_2019} pour traiter un problème multi-échelles aux ``grands'' nombres de Mach.
\`A la fin, la convergence forte de $\big(\g_\veps\big)_\veps$ s'avère suffisante pour prendre la limite dans le terme convectif, et pour compléter la preuve de nos résultats.
 
Pour conclure cette partie, on remarque que nous nous attendons à ce que la même technique puisse aussi marcher dans le cas $m=1$ et $F\neq0$ (ce fut le cas dans \cite{F-G-GV-N}, pour des écoulements barotropes).
Néanmoins, la présence de transfert de chaleur complique profondément le système des ondes, et de nouvelles difficultés techniques surviennent dans l'analyse du terme convectif (l'approche de \cite{F-G-GV-N}, dans le cas de température constante, ne fonctionne pas ici).
Pour cette raison, nous ne sommes pas capables de traiter ici cette condition, qui reste toujours une question ouverte.
 
Une autre caractéristique, qui n'est pas  traitée dans notre analyse, est le régime de \emph{forte stratification}, c’est-à-dire que le rapport ${Ma}/{Fr}$ est d'ordre $O(1)$. Ce régime est particulièrement délicat pour les fluides en rotation rapide.
Cela contraste fortement avec les résultats disponibles sur la dérivation de l'approximation anélastique, où la rotation est négligée: nous renvoyons pour exemple à \cite{Masm}, \cite{BGL}, \cite{F-K-N-Z} et, plus récemment, \cite{F-Z} (voir aussi \cite{F-N} et ses références pour un compte rendu plus détaillé des travaux antérieurs). La raison est précisément à attribuer à la compétition entre la stratification verticale (due à la gravité) et la stabilité horizontale (que la force de Coriolis tend à imposer): dans le régime de forte stratification, les oscillations verticales de la solution (semblent) persister à la limite, et les techniques disponibles ne permettent pas actuellement de traiter ce problème dans toute sa généralité.
Néanmoins, des résultats partiels ont été obtenus dans le cas de données initiales bien préparées, au moyen d'une méthode d'entropie relative: nous nous référons à \cite{F-L-N}
pour le premier résultat, où le mouvement moyen est dérivé, et à \cite{B-F-P} pour une analyse des couches limites d'Ekman dans ce cadre.
 
 \newpage
 
\section*{Aller au-delà de l'échelle critique $Fr=\sqrt{Ma}$}
\`A ce stade, nous sommes intéressés par l'étude des valeurs $Fr$ supérieures à la valeur critique $Fr=\sqrt{Ma}$ considéré dans le paragraphe précédente et nous étudierons d'autres régimes où la stratification a un effet encore plus important.
 
Pour la clarté d'exposition, nous négligeons les effets de la force centrifuge et le processus de transfert de chaleur dans le fluide, en nous concentrant sur le système barotrope classique de Navier-Stokes:
\begin{equation} 
\begin{cases}
\partial_t \vrho + \div (\vrho\vec{u})=0\ \\[2ex] 
 \partial_t (\vrho\vec{u})+ \div(\vrho\vec{u}\otimes\vec{u}) + \dfrac{\e_3 \times \vrho\vec{u}}{Ro}\, + \dfrac{1}{Ma^2} \nabla_x p(\vrho)     
=\div \mbb{S}(\nabla_x\vec{u}) + \dfrac{\vrho}{Fr^2} \nabla_x G\, . \tag{NSC} 
\end{cases}
\end{equation}
Le système plus général présenté dans \eqref{eq_i:NSF_fr} peut être manipulé au prix de technicités supplémentaires déjà évoquées plus en haut (rappelons en particulier la restriction sur le nombre de Mach due à la présence de la force centrifuge). 
Le but est maintenant d'effectuer la limite asymptotique pour le système \eqref{2D_Euler_system} dans les régimes quand on suppose
\begin{equation*}
Ma=\veps^m, \quad \quad Ro=\veps \quad \quad \text{et}\quad \quad Fr=\veps^n
\end{equation*}
avec
\begin{equation*}
\mbox{ bien ou }\qquad m\,>\,1\quad\mbox{ et }\quad m\,<\,2\,n\,\leq\,m+1\,,\qquad\qquad \mbox{ ou }\qquad
m\,=\,1\quad \mbox{ et }\quad \frac{1}{2}\,<\,n\,<\,1\,.
\end{equation*}
La restriction $n<1$ lorsque $m=1$ est imposée afin d'éviter un régime de stratification forte: comme déjà mentionné précédemment, il n'est pas clair, à l'heure actuelle, comment traiter ce cas pour des données initiales générales mal préparés, car toutes les techniques disponibles semblent échouer dans ce cas-là.
En revanche, la restriction $2\,n\leq m+1$ (pour $m>1$) semble être de nature technique. Cependant, elle sort naturellement dans au moins deux points de notre analyse (voir par exemple les sous-sections \ref{ss:ctl1_G} et \ref{sss:term2_G}), et il n’est pas clair si, et comment, il est possible de la contourner et de considérer la gamme de valeurs restante
$(m+1)/2\,<\,n\,<\,m$.
Précisons que, dans nos considérations, la relation $n<m$ est toujours vraie, donc nous travaillerons dans un régime de faible stratification.

Au niveau qualitatif, nos principaux résultats seront assez similaires à ceux présentés dans la partie précédente. En particulier la dynamique limite sera la même, après avoir distingué les deux cas $m>1$ et $m= 1$ (voir théorèmes \ref{th:m>1} et \ref{th:m=1}).
L'essentiel, sur lequel nous mettons l'accent maintenant, est de savoir comment utiliser de manière fine non seulement la structure du système, mais aussi la structure précise de chaque terme pour passer à la limite. Pour être plus précis, le fait de considérer les valeurs de $n$ dépassant le seuil $2n=m$ est rendu possible grâce à des annulations algébriques spéciales faisant intervenir le terme de gravité dans le système des ondes.
Telles annulations sont dues à la forme particulière de la force gravitationnelle, qui ne dépend que de la variable verticale, et elles n'apparaissent pas, en général, si on veut considérer l'action de différentes forces sur le système. Comme on peut facilement le deviner, le cas $2n=m+1$ est plus complexe: en effet, ce choix d'échelle implique la présence dans les calculs d'un terme bilinéaire supplémentaire d'ordre $O(1)$; à son tour, ce terme pourrait ne pas disparaître à la limite, contrairement à ce qui se passe dans le cas $2n<m+1$. Afin de voir que cela ne se produit pas, et que ce terme disparaît bien dans le processus limite, il faut utiliser plus minutieusement la structure du système pour contrôler les oscillations (voir l'équation \eqref{rel_oscillations_bis} et les calculs là-dessous).
 
\section*{Le système d'Euler: le cas incompressible}
Dans la deuxième partie de cette thèse, nous changeons d'orientation en traitant un système incompressible, non visqueux et avec une structure hyperbolique. Plus précisément, nous nous intéressons à décrire l'évolution 2-D d'un fluide qui se déroule suffisamment loin des frontières physiques. Par conséquent, le système de type Euler, dans $\Omega:=\R^2$, est
\begin{equation} \label{full Euler_intro_fr}
\begin{cases}
\d_t \vrho +\div (\vrho \vu)=0\\
\d_t (\vrho \vu)+\div (\vrho \vu \otimes \vu)+ \dfrac{1}{Ro}\vrho \vu^\perp+\nabla_x p=0\\
\div \vu =0\, ,\tag{E}
\end{cases}
\end{equation}
où $\vu^\perp:=(-u^2, u^1)$ est la rotation d'angle $\pi/2$ du champ de vitesse $\vu=(u^1,u^2)$ .
Le terme de pression $\nabla_x p$ représente le multiplicateur de Lagrange associé à la contrainte de divergence nulle sur le champ de vitesse.

La portée principale de cette analyse sera d'étudier le comportement asymptotique du système \eqref{full Euler_intro_fr} lorsque $Ro=\veps\rightarrow 0^+$. 

\subsection*{Résultats connus}
Nous nous limiterons à donner un bref exposé sur les résultats connus concernant les fluides dépendant de la densité.
Nous nous référons plutôt à \cite{C-D-G-G} pour un aperçu de la littérature dans le contexte des fluides homogènes en rotation rapide (voir aussi \cite{B-M-N_EMF} et \cite{B-M-N_AA} pour les études pionnières, concernant les équations homogènes d'Euler et de Navier-Stokes en 3-D).
 
Dans les cas compressibles évoqués au-dessus, le fait que la pression soit une fonction donnée de la densité, implique un double avantage dans l'analyse: d'une part, on peut récupérer de bonnes bornes uniformes pour les oscillations (à partir de l'état de référence) de la densité; de plus, à la limite, on dispose d'une relation flux-fonction entre les densités et les vitesses. 
 
En revanche, bien que la condition d'incompressibilité soit physiquement bien justifiée pour les fluides géophysiques, peu d'études abordent ce cas. Nous nous référons à \cite{Fan-G}, dans lequel Fanelli et Gallagher ont étudié la limite en rotation rapide pour des fluides incompressibles, visqueux et à densité variable. Dans le cas où la densité initiale est une petite perturbation d'un état constant (le cas dit \textit{légèrement non-homogène}), les auteurs ont prouvé la convergence vers un système de type quasi-homogène. Par contre, pour les fluides non-homogènes généraux (le cas dit \textit{totalement non-homogène}), Fanelli et Gallagher ont montré que la dynamique limite est décrite en termes du tourbillon et des fonctions d'oscillation de densité, car il n'y avait pas de régularité suffisante pour prouver la convergence sur l'équation de la quantité de mouvement elle-même (voir plus de détails ci-dessous).
 
Il faut aussi mentionner \cite{C-F_RWA}, où les auteurs prouvent rigoureusement la convergence des équations idéales de la magnétohydrodynamique (MHD) vers un système de type quasi-homogène (voir aussi \cite{C-F_Nonlin} où l'argument de compacité par compensation est adopté). Leur méthode repose sur des inégalités d'entropie relative pour le système primitif qui permettent de traiter également la limite non visqueuse mais nécessite des données initiales bien préparées.
 
\subsection*{Nouveaux résultats}
 
Dans le chapitre \ref{chap:Euler}, nous abordons l'analyse asymptotique (pour $\veps\rightarrow 0^+$) dans le cas d'un système d'Euler dépendant de la densité dans le contexte \textit {légèrement non-homogène}, c'est-à-dire lorsque la densité initiale est une petite perturbation d'ordre $\veps$ d'un profil constant (disons $\overline{\vrho}=1$). Ces petites perturbations autour d'un état de référence constant sont physiquement justifiées par l’\textit {approximation de Boussinesq} (voir par exemple le chapitre 3 de \cite{C-R} ou le chapitre 1 de \cite{Maj} à cet égard). En effet, puisque l'état constant $\overline{\vrho}=1$ est transporté par un champ vectoriel à divergence nulle, la densité peut s'écrire comme $\vrho_\veps=1+\veps R_\veps$ pour tous les temps (à condition que cela soit vrai à $t=0$ ), où l'on peut énoncer de bonnes bornes uniformes sur $R_\veps$. Nous soulignons aussi que dans l'équation de quantité de mouvement de \eqref{full Euler_intro_fr}, avec l'échelle $Ro=\veps$, le terme de Coriolis peut être réécrit comme
\begin{equation}\label{Coriolis_fr}
 \frac{1}{\veps}\vrho_\veps \vec u_\veps^\perp=\frac{1}{\veps}\vec u_\veps^\perp+R_\veps\vec u_\veps^\perp \, .\tag{2D COR}
\end{equation}
On remarque que, grâce à la condition d'incompressibilité, le premier terme du membre de droite de \eqref{Coriolis_fr} est en fait un gradient: il peut être ``absorbé'' dans le terme de pression, qui doit s'échelonner comme $1/\veps$. En fait, la seule force qui peut compenser l'effet de la rotation rapide dans le système \eqref{full Euler_intro_fr} est, à l'échelle géophysique, le terme de pression: c'est-à-dire qu'on peut écrire $\nabla_x p_\veps= (1/\veps)\, \nabla_x \Pi_\veps$.

Précisons que le cas \textit{totalement non-homogène} (où la densité initiale est une perturbation d'un état arbitraire) est hors de notre étude. Ce cas est plus complexe et de nouveaux problèmes techniques surviennent dans l'analyse du caractère bien posé et dans l'inspection asymptotique. En effet, comme déjà souligné dans \cite{Fan-G} pour le système de Navier-Stokes-Coriolis, la dynamique limite est décrite par un système sous-déterminé qui mélange le tourbillon et les fluctuations de densité.
Afin de décrire la dynamique limite complète (où les variations de densité limite et les vitesses limites sont découplées), il fallait supposer des bornes \textit {a priori} plus fortes que celles qui pourraient être obtenues par des estimations d'énergie classiques. Néanmoins, la haute régularité impliquée n'est \textit{pas} propagée uniformément en $\veps$ en général, à cause de la présence du terme de Coriolis.
En particulier, la structure du terme de Coriolis est plus compliquée que celle montrée en \eqref{Coriolis_fr}, puisqu'on a $\vrho_{\veps}=\oline \vrho+ \veps \sigma_\veps$ (avec $\sigma_ \veps$ la fluctuation), si au départ on suppose $\vrho_{0, \veps}=\oline \vrho+ \veps R_{0,\veps}$ avec $\oline \vrho$ l'état de référence arbitraire. \`A ce stade, si on insère la décomposition précédente de $\vrho_\veps$ dans \eqref{Coriolis_fr}, un terme de la forme $(1/\veps)\, \oline \vrho \ue^\perp$ apparaît: ce terme est source de difficultés pour propager les estimations de haute régularité nécessaires.

De manière équivalente, si on essaie de diviser l'équation de quantité de mouvement dans \eqref{full Euler_intro_fr} par la densité $\vrho_\veps$, alors le problème précédent ne se traduit que sur l'analyse du terme de pression, qui devient $1/(\veps \vrho_\veps)\, \nabla_x \Pi_\veps$.

\`A la lumière de toute la discussion qui précède, signalons maintenant les principales difficultés qui se posent dans notre problème.
 
Tout d'abord, notre modèle est un système non visqueux et de type hyperbolique pour lequel nous ne pouvons \textit{pas} nous attendre à des effets de lissage et de gain de régularité. Pour cette raison, il est naturel de regarder les équations \eqref{full Euler_intro_fr} dans un cadre régulier comme les espaces $H^s$ avec $s>2$. Les espaces de Sobolev $H^s(\R^2)$, pour $s>2$, sont en fait plongés dans l'espace $W^{1,\infty}$ des fonctions globalement Lipschitziennes: c'est une exigence minimale pour préserver la régularité initiale (voir par exemple le chapitre 3 de \cite{B-C-D} et aussi \cite{D_JDE}, \cite{D-F_JMPA} pour une large discussion sur ce sujet).
En fait, tous les espaces de Besov $B^s_{p,r}(\R^d)$ qui sont inclus dans $W^{1,\infty}(\R^d)$, un fait qui se produit pour $( s,p,r)\in \R\times [1,+\infty]^2$ tels que
\begin{equation} \label{Lip_assumption_fr}
s>1+\frac{d}{p} \quad \quad \quad \text{ou}\quad \quad \quad s=1+\frac{d}{p} \quad \text{et}\quad r=1\, ,\tag{LIP}
\end{equation}
sont de bons candidats pour l'analyse du caractère bien posé (reportez-vous à l'annexe \ref{app:Tools} pour plus de détails). Cependant, le choix de travailler dans $H^s\equiv B^s_{2,2}$ est dicté par la présence de la force de Coriolis: nous exploiterons en profondeur l'antisymétrie de ce terme singulier. 
 
Par ailleurs, le fluide est supposé incompressible, de sorte que le terme de pression n'est qu'un multiplicateur de Lagrange et ne donne \textit{pas} d'informations sur la densité, contrairement au cas compressible. De plus, à cause de la non-homogénéité, l'analyse du gradient de pression est beaucoup plus complexe puisqu'on doit étudier une équation elliptique à coefficients \textit{non constants}, c’est-à-dire
\begin{equation}\label{elliptic_eq_introduction_fr}
-\div (A \, \nabla_x p)=\div F \quad \text{où}\quad \div F:=\div \left(\vu \cdot \nabla_x \vu+ \frac{1}{Ro} \vu^\perp \right)\quad \text{et}\quad A:=1/\vrho \, . \tag{ELL}
\end{equation}
La principale difficulté est d'obtenir des bornes \textit{uniformes} appropriées (par rapport au paramètre de rotation) pour le terme de pression dans le cadre régulier que nous considérerons. 
Nous renvoyons à \cite{D_JDE} et \cite{D-F_JMPA} pour plus de détails concernant les problèmes qui se posent dans l'analyse de l'équation elliptique \eqref{elliptic_eq_introduction_fr} dans les espaces $B^s_{p,r}$.

Après avoir analysé le terme de pression, nous montrons que le système \eqref{full Euler_intro_fr} est localement bien posé dans le cadre $H^s$. On note que, dans le théorème pour le caractère bien posé local (ci-dessous), toutes les estimations sont \textit{uniformes} par rapport au paramètre de rotation et, en plus, nous avons que le temps d'existence est indépendant de $\veps$.
\begin{theoremnonum_fr}\label{thm3_intro_french}
Soit $s>2$. Pour tout $\veps>0$, 
il existe un temps $T_\veps^\ast >0$
tel que 
le système \eqref{full Euler_intro_fr} a une solution unique $(\vrho_\veps, \vu_\veps, \nabla_x \Pi_\veps)$ où
\begin{itemize}
\item $\vrho_\veps$ appartient à l'espace $C^0([0,T_\veps^\ast]\times \R^2)$ avec $\nabla_x \vrho_\veps \in C^0([ 0,T_\veps^\ast]; H^{s-1}(\R^2))$;  
\item $\vu_\veps$ et $\nabla_x \Pi_\veps$ appartiennent à l'espace $C^0([0,T_\veps^\ast]; H^s(\R^2))$.
\end{itemize}
Par ailleurs, 
$$\inf_{\veps>0}T_\veps^\ast >0\, .$$
\end{theoremnonum_fr}
Une fois que nous avons énoncé le résultat du caractère bien posé local dans le théorème \ref{thm3_intro_french}, on passe à la limite en rotation rapide pour des données initiales générales \emph{mal préparées}. Nous prouvons la convergence du système \eqref{full Euler_intro_fr} vers ce que nous appelons système d’Euler incompressible \textit{quasi-homogène}
\begin{equation} \label{Q-H_E_intro_fr}
\begin{cases}
\d_t R+\div (R\vu)=0 \\
\d_t \vec u+\div \left(\vec{u}\otimes\vec{u}\right)+R\vu^\perp+ \nabla_x \Pi =0 \\
\div \vec u\,=\,0\,, \tag{QHE}
\end{cases}
\end{equation} 
où $R$ représente la limite des fluctuations $R_\veps$.
Signalons aussi que dans l'équation de quantité de mouvement de \eqref{Q-H_E_intro_fr} apparaît un terme non linéaire d'ordre inférieur (i.e. $R\vec u^\perp$): c'est une sorte de reste dans la convergence pour le terme de Coriolis, défini dans \eqref{Coriolis_fr}.

Le passage à la limite dans l'équation de la quantité de mouvement \eqref{full Euler_intro_fr} n'est plus évident, bien que nous soyons dans le cadre $H^s$: le terme de Coriolis est responsable de fortes oscillations en temps des solutions (les \textit{ondes de Poincar\'e}) qui peuvent empêcher la convergence du terme convectif vers celui de \eqref{Q-H_E_intro_fr}. Pour surmonter ce problème, nous utilisons la même approche mentionnée au-dessus: la technique de compacité par compensation.
 
Par ailleurs, il est intéressant de noter que le caractère bien posé global du système \eqref{Q-H_E_intro_fr} reste un problème ouvert. Cependant, \textit{grosso modo}, pour $R_0$ assez petit, le système \eqref{Q-H_E_intro_fr} est ``proche'' du système 2-D d'Euler homogène et incompressible, pour lequel il est bien connu le caractère bien posé global. Ainsi, il est naturel de se demander s'il existe un résultat de caractère bien posé ``asymptotiquement global'' dans l'esprit de \cite{D-F_JMPA} et \cite{C-F_sub}: pour de petites fluctuations initiales $R_0$, le système quasi-homogène \eqref{Q-H_E_intro_fr} se comporte comme les équations d'Euler standards et la durée de vie de ses solutions tend vers l'infini. En particulier, comme déjà montré dans \cite{C-F_sub} pour le système MHD idéal et quasi-homogène (voir aussi la bibliographie de \cite{C-F_sub}), on prouve que la durée de vie des solutions de \eqref{Q-H_E_intro_fr} satisfait 
\begin{equation} \label{lifespan_Q-H_fr}
T_\delta^\ast \sim \log \log \frac{1}{\delta}\,,\tag{LIFE}
\end{equation}
où $\delta>0$ est la taille des fluctuations initiales.
 
Ce résultat pour le temps d'existence des solutions de \eqref{Q-H_E_intro_fr} pousse notre attention vers l'étude de la durée de vie des solutions du système primitif \eqref{full Euler_intro_fr}.
Pour le système d'Euler en 3-D \textit {homogène} avec la force de Coriolis, Dutrifoy dans \cite{Dut} a prouvé que la durée de vie des solutions tend vers l'infini dans le régime de la rotation rapide (voir aussi \cite{Gall}, \cite{Cha} et \cite{Scro}, où les auteurs ont inspecté la durée de vie des solutions dans le contexte de fluides homogènes et visqueux). Pour le système \eqref{full Euler_intro_fr} il n'est pas clair comment trouver des effets de stabilisation similaires (dus au terme de Coriolis), afin d'améliorer la durée de vie des solutions: c’est-à-dire montrer que $T_\veps^\ast \longrightarrow +\infty$ quand $\veps\rightarrow 0^+$. Néanmoins, indépendamment des effets rotationnels, nous sommes en mesure d'énoncer un résultat de caractère bien posé ``asymptotiquement global'' dans le régime d'oscillations \textit{petites}, au sens de \eqref{lifespan_Q-H_fr}: à savoir, quand la taille de la fluctuation initiale $R_{0,\veps}$ est assez petite, de taille $\delta >0$, la durée de vie $T^\ast_\veps$ de la solution correspondante du système \eqref{full Euler_intro_fr} est bornée inférieurement par $T^\ast_\veps\geq T^\ast(\delta)$, avec $T^\ast (\delta)\longrightarrow +\infty$ quand $\delta\rightarrow 0^+ $ (voir aussi \cite{D-F_JMPA} pour un fluide dépendant de la densité mais en absence de la force de Coriolis). Plus précisément, on a le résultat suivant.
\begin{propnonum}
La durée de vie $T_\veps^\ast$ de la solution des équations d'Euler incompressibles dépendant de la densité en deux dimensions \eqref{full Euler_intro_fr} avec la force de Coriolis est bornée inférieurement par
\begin{equation} \label{improv_life_fullE_intro_fr}
\frac{C}{\|\vec u_{0,\veps}\|_{H^s}}\log\left(\log\left(\frac{C\, \|\vec u_{0, \veps}\|_{H^s}}{\max \{\mc A_\veps(0),\, \veps \, \mc A_\veps(0)\, \|\vec u_{0, \veps}\|_{H^s}\}}+1\right)+1\right)\, ,\tag{BOUND}
\end{equation}
où $\mc A_\veps (0):= \|\nabla_x R_{0,\veps}\|_{H^{s-1}}+\veps\, \|\nabla_x R_{0,\veps }\|_{H^{s-1}}^{\lambda +1}$, pour un $\lambda\geq 1$ approprié.
\end{propnonum}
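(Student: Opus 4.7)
The plan is to derive the bound in the critical Besov space $B^1_{\infty,1}$, where transport-type estimates grow only linearly in the Lipschitz norm of the velocity, and then transfer the result back to the $H^s$ setting via a Beale--Kato--Majda type continuation criterion. The very first move is to pass to the vorticity formulation: taking $\curl$ of the momentum equation in \eqref{full Euler_intro_fr}, the singular Coriolis term $\veps^{-1}\vu_\veps^\perp$ is killed by the incompressibility condition $\div\,\vu_\veps=0$, and one is left with a transport equation $\d_t\omega_\veps+\vu_\veps\cdot\nabla_x\omega_\veps=\mc R_\veps$, where $\omega_\veps:=-\d_2 u_\veps^1+\d_1 u_\veps^2$ and the forcing $\mc R_\veps$ collects the commutator between $\curl$ and the non-homogeneous pressure gradient, together with an $O(1)$ contribution coming from $R_\veps\,\vu_\veps^\perp$. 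The key point of this reduction is that the antisymmetry of the Coriolis term, which is not usable in $B^0_{\infty,1}$, is no longer needed at all.

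The second step is to control $\mc R_\veps$ in $B^0_{\infty,1}$. The pressure $\nabla_x\Pi_\veps$ satisfies the elliptic equation with non-constant coefficients \eqref{elliptic_eq_introduction_fr}, so using the paradifferential techniques of \cite{D_JDE} and \cite{D-F_JMPA} its Besov norm can be bounded by suitable norms of $\vu_\veps$ and $R_\veps$, up to an unavoidable loss of a factor $\veps^{-1}$ coming from the Coriolis term sitting on the right-hand side of \eqref{elliptic_eq_introduction_fr}. Combined with the transport estimate for $\nabla_x R_\veps$ in $H^{s-1}$, which grows like $\exp(C V_\veps(t))$ with $V_\veps(t):=\int_0^t\|\nabla_x\vu_\veps(\tau)\|_{L^\infty}\detau$, this yields
\[
\|\mc R_\veps(t)\|_{B^0_{\infty,1}}\,\lesssim\,\mc A_\veps(0)\,\bigl(1+\|\vu_\veps(t)\|_{H^s}\bigr)^{\lambda}\,\exp\bigl(C\,V_\veps(t)\bigr),
\]
where the exponent $\lambda\geq 1$ and the extra $\veps$-factor in front of $\|\nabla_x R_{0,\veps}\|_{H^{s-1}}^{\lambda+1}$ in $\mc A_\veps(0)$ are precisely the ones tracking the $\veps^{-1}$ loss just mentioned.

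The third step invokes the Vishik--Hmidi--Keraani linear estimate for transport equations in $B^0_{\infty,1}$, which reads
\[
\|\omega_\veps(t)\|_{B^0_{\infty,1}}\,\lesssim\,\Bigl(\|\omega_{0,\veps}\|_{B^0_{\infty,1}}+\int_0^t\|\mc R_\veps(\tau)\|_{B^0_{\infty,1}}\detau\Bigr)\bigl(1+V_\veps(t)\bigr).
\]
Using the embedding $B^1_{\infty,1}\hookrightarrow W^{1,\infty}$ together with the uniform $L^2$-energy bound to recover the low frequencies of $\vu_\veps$, this can be recast as a closed differential inequality for $V_\veps$ of Osgood type,
\[
V_\veps'(t)\,\lesssim\,\|\vu_{0,\veps}\|_{H^s}\,\bigl(1+V_\veps(t)\bigr)\,\log\bigl(e+V_\veps(t)\bigr),
\]
where the extra $\log$ factor accounts for the exponential in the previous bound on $\mc R_\veps$. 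Integrating this inequality produces precisely the double logarithm and the prefactor $1/\|\vu_{0,\veps}\|_{H^s}$ that appear in \eqref{improv_life_fullE_intro_fr}; the precise form of the argument of the outer logarithm, namely $\max\{\mc A_\veps(0),\veps\,\mc A_\veps(0)\,\|\vu_{0,\veps}\|_{H^s}\}$, simply records which of the two contributions in the bound on $\mc R_\veps$ dominates at initial time.

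Finally, the Beale--Kato--Majda type continuation criterion discussed in Subsection \ref{ss:cont_criterion+consequences} guarantees that the $H^s$ solution persists as long as $V_\veps(t)$ stays finite, so the Besov lifespan lower bound transfers verbatim to $H^s$ and gives the announced \eqref{improv_life_fullE_intro_fr}. The main technical obstacle is the second step: one must close the estimate on $\mc R_\veps$ in $B^0_{\infty,1}$ uniformly in $\veps$ in spite of the $\veps^{-1}$ singularity inherited from the elliptic analysis of the pressure, which requires a careful paraproduct decomposition and exploits the fact that $\vu_\veps^\perp$ is divergence-free so that only higher-order commutators, and not the full size of the Coriolis term, survive in the vorticity equation.
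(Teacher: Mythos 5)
The overall strategy you describe — pass to the vorticity formulation so that the Coriolis singularity disappears, invoke the Vishik--Hmidi--Keraani linear estimate in $B^0_{\infty,1}$, and transfer back to $H^s$ by a Beale--Kato--Majda type continuation criterion — is indeed the paper's approach. However two of your intermediate claims are wrong in a way that would break the argument if carried out literally.

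First, there is \emph{no} $\veps^{-1}$ loss in the elliptic estimate for the pressure. Once one writes $\nabla_x p_\veps = \veps^{-1}\nabla_x\Pi_\veps$ (which is imposed from the start, since the pressure is the only force that can balance the Coriolis term at leading order), the equation for $\Pi_\veps$ reads $-\div\bigl((1+\veps a_\veps)\nabla_x\Pi_\veps\bigr) = \veps\,\div(\vu_\veps\cdot\nabla_x\vu_\veps) + \div\vu_\veps^\perp$, whose right-hand side is $O(1)$, not $O(\veps^{-1})$. The factors $\veps$ that appear in the final elliptic estimate — and hence the $\veps$ in front of $\|\nabla_x R_{0,\veps}\|_{H^{s-1}}^{\lambda+1}$ in $\mc A_\veps(0)$ — are \emph{favorable} powers coming from the smallness of $\alpha_\veps = 1/\vrho_\veps - 1 = \veps a_\veps$, not remnants of a singular contribution that has to be absorbed. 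Had there actually been an $\veps^{-1}$ loss, the lifespan bound could not possibly be uniform in $\veps$. Relatedly, once the momentum equation is divided by $\vrho_\veps$, the Coriolis term is exactly $\veps^{-1}\vu_\veps^\perp$ with no $R_\veps\vu_\veps^\perp$ remainder; after taking $\curl$ the only forcing in the vorticity equation is $-\nabla_x a_\veps \wedge \nabla_x\Pi_\veps$, not a sum of a commutator and a separate $R_\veps\vu_\veps^\perp$ piece.

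Second, the closing step is not an Osgood argument and your proposed differential inequality $V_\veps' \lesssim \|\vu_{0,\veps}\|_{H^s}\,(1+V_\veps)\log(e+V_\veps)$ does not produce a lifespan lower bound: with the modulus $V\log V$ the integral $\int dV/(V\log V)$ \emph{diverges}, so integrating that inequality gives a global-in-time double-exponential a priori bound on $V_\veps$, which is a qualitatively different statement. The double logarithm in \eqref{improv_life_fullE_intro_fr} does not come from a log modulus of continuity; it comes from the fact that the forcing in the vorticity equation is \emph{proportionally small}, controlled by $\mc A_\veps(0)\exp\bigl(C\int_0^t E_\veps\bigr)$ where $\mc A_\veps(0)\to 0$. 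The actual mechanism is a bootstrap: one defines $T_\veps^\ast$ as the largest time up to which $\mc B_\veps(0)\int_0^t \exp\bigl(C\int_0^\tau E_\veps\bigr)(\veps E_\veps^2 + E_\veps)\,\detau \leq E_\veps(0)$, on that interval the Vishik--Hmidi--Keraani estimate becomes a \emph{linear} Gr\"onwall inequality yielding $E_\veps(t)\leq C E_\veps(0) e^{C E_\veps(0) t}$, and plugging this exponential bound back into the definition of $T_\veps^\ast$ and solving for $T_\veps^\ast$ yields $T_\veps^\ast \gtrsim E_\veps(0)^{-1}\log\log\bigl(E_\veps(0)/\mc B_\veps(0)\bigr)$. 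Without the proportional smallness $\mc B_\veps(0)$ in front of the integral, no such conclusion is available, and your sketch offers no step that exploits this smallness.
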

Comme corollaire immédiat de la minoration précédente, si on considère des densités initiales de la forme $\vrho_{0,\veps}=1+\veps^{1+\alpha}R_{0,\veps}$ avec $ \alpha >0$, alors on obtient $T^\ast_\veps\sim \log \log (1/\veps)$.

De suite, on va illustrer schématiquement les principales étapes pour montrer \eqref{improv_life_fullE_intro_fr} pour le système primitif \eqref{full Euler_intro_fr}.
 
Le point clé de la preuve de \eqref{improv_life_fullE_intro_fr} est d'étudier la durée de vie des solutions dans des espaces critiques de Besov. Dans ces espaces, on peut profiter du fait que, lorsque $s=0$, la norme $B^0_{p,r}$ des solutions peut être bornée \textit{linéairement} par rapport à la norme de Lipschitz de la vitesse, plutôt qu'exponentiellement (voir les travaux \cite{Vis} de Vishik et \cite{H-K} de Hmidi et Keraani). Puisque le triplet $(s,p,r)$ doit satisfaire \eqref{Lip_assumption_fr}, l'espace de Besov de régularité la plus basse que nous puissions atteindre est $B^1_{\infty,1}$. Par conséquent, si $\vec u$ appartient à $B^1_{\infty,1}$, le tourbillon $\omega := -\d_2 u^1+\d_1 u^2$ a la régularité désirée pour appliquer les estimations améliorées de Hmidi-Keraani et Vishik (voir le théorème \ref{thm:improved_est_transport} à cet égard).

En analysant la formulation en tourbillon du système, on découvre que l'opérateur \emph{curl} annule les effets singuliers produits par la force de Coriolis: cette annulation n'est pas apparente, parce que la propriété antisymétrique du terme de Coriolis est hors d'usage dans le cadre critique considéré.

Enfin, nous avons besoin d'un critère de continuation (dans l'esprit du critère de Baele-Kato-Majda, voir \cite{B-K-M}) qui garantit qu’on puisse ``mesurer'' la durée de vie des solutions dans l'espace d'indice de régularité plus faible, $ s=r=1$ et $p=+\infty$. Ce critère est valable sous l'hypothèse que
$$
\int_0^{T} \big\| \nabla_x \vec u(t) \big\|_{L^\infty} \dt < +\infty\qquad \text{avec}\qquad T<+\infty\, .   
$$
Le critère précédent assure que la durée de vie des solutions trouvées dans les espaces critiques de Besov est la même que dans le cadre fonctionnel de Sobolev recherché: ce qui nous permet de conclure la preuve (voir les considérations dans la sous-section \ref{ss:cont_criterion+consequences}).
 
\medskip

\subsection*{Aperçu du contenu de cette thèse}
 
Avant de poursuivre, nous donnons un bref aperçu de la structure de la présente thèse.
 
Le chapitre \ref{chap:geophysics} a pour objectif de ``plonger'' le lecteur dans la discipline de la dynamique géophysique des fluides, en donnant une justification physique succincte des modèles mathématiques que nous considérerons dans les prochains chapitres. Dans le chapitre \ref{chap:multi-scale_NSF} nous abordons l'étude du problème de perturbation singulière, donné par les équations de Navier-Stokes-Fourier, dans l'échelle que nous appelons ``critique''. Le chapitre \ref{chap:BNS_gravity} est consacré à l'amélioration de l'échelle précédente: nous dépasserons le seuil ``critique''. Enfin, dans le dernier chapitre \ref{chap:Euler} nous modifions un peu le modèle et nous allons travailler sur l'analyse asymptotique des équations d'Euler dépendant de la densité. De plus, nous nous concentrerons sur l’étude de la durée de vie de ses solutions, prouvant un résultat de caractère bien posé ``asymptotiquement global''.
 
\`A la fin de cette thèse, se trouvent deux autres sections consacrées aux perspectives d'avenir et une annexe contenant quelques outils et des résultats célèbres utilisés dans le manuscrit.


\newpage
\null
\thispagestyle{empty}
\newpage


\mainmatter
\chapter{Some geophysical considerations}\label{chap:geophysics}

\begin{quotation}

The scope of this chapter is to introduce the mathematical features of geophysical flows. The main reference is the book \cite{C-R} (see also \cite{K-C-D}, \cite{Ped}, \cite{Val}, \cite{Z}). We will briefly discuss the physical motivations of the mathematical models we will consider in the next chapters. The equations presented in the following paragraphs will be derived mainly from physical considerations. For this reason, the functions which will appear in the sequel have to be considered smooth. 

\medbreak

Let us give an overview of the chapter. First of all, after a brief introduction (see Section \ref{sect:intro}), we present the two main characters which influence the dynamics of the geophysical fluids: the rotation (see Section \ref{sec:rotation}) and the stratification (we refer to Section \ref{sec:strat}). In Section \ref{sec:budgets} we show how to derive, from the physical point of view, the budget equations and Section \ref{sec:Bous_app} is instead devoted to the Boussinesq approximation. Next, in Section \ref{sec:scales_motion}, we perform a scale analysis and we define some important dimensionless numbers. Section \ref{sec:TP_thm} is dedicated to the celebrated Taylor-Proudman theorem. We conclude this chapter talking about stratified and quasi-incompressible fluids (see Section \ref{sec:strat_q-i}), and rewriting the Navier-Stokes system in its dimensionless form (Section \ref{sec:NS_dimenless}). 
\end{quotation}

\section{The geophysical fluid flows}\label{sect:intro}
The geophysical fluid dynamics (GFD) studies the naturally occurring flows on large scales that mostly take place on Earth but also on other planets or stars. The discipline encompasses the motion of both fluid phases: liquids (e.g. waters in the ocean) and gases (e.g. air in the Earth's atmosphere or in other planets). In addition, it is on large scales that the common features of atmospheric and oceanic dynamics come to light. In most of the problems concerning GFD, either the ambient rotation (of Earth, planets, stars) or density differences (warm and cold air masses, fresh and saline water) or both assume a relevant importance. Typical problems arising in geophysical fluid dynamics are, for example, the variability of the atmosphere (weather and climate dynamics), of the ocean (waves, vortices and currents) and vortices on other planet (Jupiter's Great Red Spot, see Figure \ref{fig:Jupiter}), and convection in stars. 
The effects of rotation and those of stratification distinguish the GFD from the traditional fluid mechanics. The fact that the ambient is rotating (e.g. the Earth's rotation around its axis) introduces in the equations the presence of two acceleration terms that, in view of the Newton's second law of motion, can be interpreted as forces: the Coriolis force and the centrifugal force. On the one hand, although the centrifugal effects are more palpable (on a planetary scale), they play a negligible role in the dynamics. On the other hand, the less intuitive Coriolis force turns out to be a crucial character in describing the behaviour of geophysical motions. The major effect of the Coriolis force is to impose certain vertical rigidity to the fluid: if the Coriolis effect is strong enough, we could observe that the homogeneous flow displaces itself in vertical columns: the particle along the same vertical move together and retain their alignment over long periods of time (e.g. currents in the Western North Atlantic). 
This property is known as \emph{Taylor-Proudman theorem}. That result was firstly derived by S. S. Hough in 1897, but was named after the works (in 1916-1917) of G. I. Taylor and J. Proudman. Five years later, G. I. Taylor verified experimentally such a property.

\begin{figure}[htbp]
\centering
\includegraphics[scale=0.15]{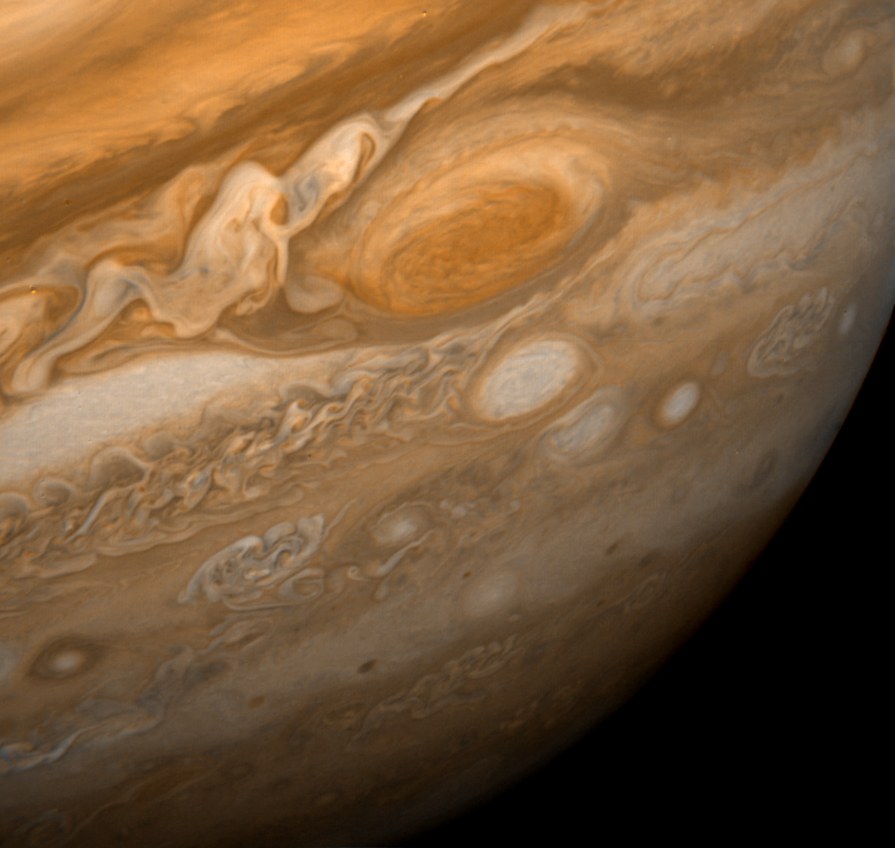}
\caption{Jupiter's Great Red Spot (1979)}\label{fig:Jupiter}
\end{figure}


In the large scale atmospheric and oceanic flows, the previous state of perfect vertical rigidity is not realized due to the fact that the rotation is not sufficiently fast and due to the appearance of stratification, i.e. density variations. The cause of those vertical effects is attributable to the presence of the gravitational force, which tends to lower the regions of the fluid with heavier density and to raise the lightest. Under equilibrium conditions, the fluid is stably stratified in stacked horizontal layers of decreasing density. However, the fluid motions disturb this equilibrium that gravity tends to restore. 

We conclude this part pointing out that the advances in GFD touch considerably our real life. The progress in the ability to predict with some confidence the paths of hurricanes has led the creation of warning systems that have saved and will save numerous lives in sea and coastal areas (e.g. we can think to the Hurricane Frances in 2004, see Figure \ref{fig:Hurricane}).
\begin{figure}[htbp]
\centering
\includegraphics[scale=0.15]{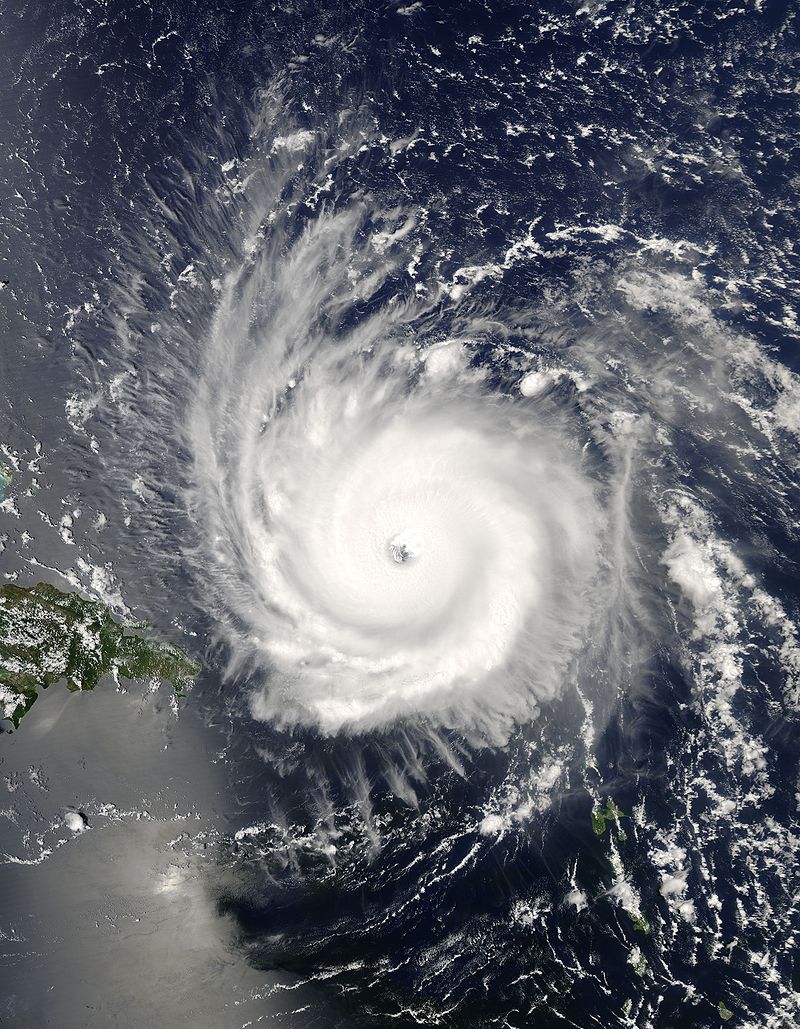}
\caption{Hurricane Frances (2004)}\label{fig:Hurricane}
\end{figure}

Another fundamental aspect is that the combined dynamics of atmosphere and oceans contribute to the global climate. The behaviour of the atmosphere modulates, for example, the agricultural success and the ocean currents affect navigation, fisheries and disposal of pollution. Thus, understanding and reliably predicting of geophysical events and trends are scientific, economic, humanitarian and political priorities. 

\newpage

\section{First character: the rotation} \label{sec:rotation}
We are now interested in which scales the ambient rotation is no more negligible in the fluid dynamics. For that reason, we introduce the following criterion considering the velocity $U$ and length $L$ scales of motion. If a particle at speed $U$ covers the distance $L$ in a time larger than or comparable to a rotation period (of the Earth, for example), we can imagine that the trajectory is influenced by the ambient rotation. Therefore, we write 
$$ \oline\veps:=\frac{\text{time of one revolution}}{\text{time taken by a particle to cover }L \text{ at } U}=\frac{2\pi/\underline{\Omega}}{L/U}=\dfrac{2\pi U}{\underline{\Omega}L} \, ,$$
where $\underline{\Omega}:=\frac{2\pi}{\text{time of one revolution}}$ is the ambient rotation rate. 

If $\oline \veps\lesssim 1$, then we can conclude that the rotation is important. In geophysical flows the previous inequality holds, since e.g. an ocean current usually flows at 10 cm/s over a distance of 10 km or a wind blows at 10 m/s in an anticyclonic formation 1000 km wide. 
\subsection{The Coriolis force} 
In this paragraph, we give a short mathematical inspection about the rotating framework of reference. To simplify the exposition, we focus on the two-dimensional case. Let $X^1$ and $X^2$ be the axes of the inertial framework of reference and $x^1$, $x^2$ be those of the rotating framework with angular velocity $\underline{\Omega}$. We denote by $\vec I$, $\vec J$ and $\vec i$, $\vec j$ the corresponding unit vectors (see Figure \ref{fig:ref} below). 
\begin{figure}[htbp]
\centering
\includegraphics[scale=0.2]{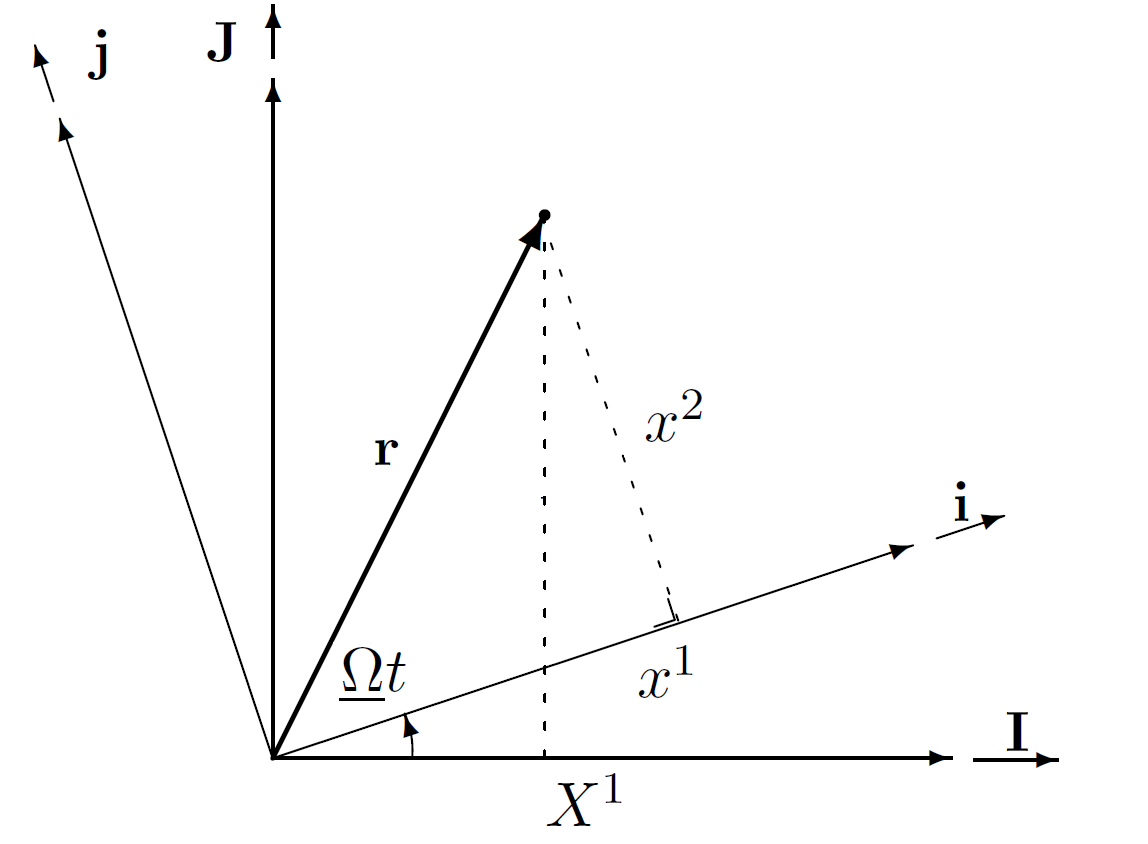}
\caption{Inertial framework versus rotating framework}\label{fig:ref}
\end{figure}

Then, it follows that
\begin{equation*}
\begin{split}
\vec I&=\vec i \cos (\underline{\Omega}t)-\vec j\sin (\underline{\Omega}t)\\
\vec J&=\vec i \sin (\underline{\Omega} t)+\vec j \cos(\underline{\Omega}t)
\end{split}
\end{equation*}
and the position vector is defined as 
\begin{equation}\label{definition_r}
\begin{split}
\vec r&=X^1\, \vec I+X^2\, \vec J\\
&=x^1\, \vec i+x^2\, \vec j\, .
\end{split}
\end{equation}
Thus, it is easy to find that 
\begin{equation*}
\begin{split}
x^1&=X^1\cos (\underline{\Omega}t)+X^2\sin (\underline{\Omega}t)\\
x^2&=-X^1\sin (\underline{\Omega}t)+X^2\cos (\underline{\Omega}t)\, .
\end{split}
\end{equation*}
At this point, taking the first derivative in time yields:
\begin{equation*}
\begin{split}
\frac{dx^1}{dt}&=\frac{dX^1}{dt}\cos (\underline{\Omega}t)+\frac{dX^2}{dt}\sin (\underline{\Omega}t)+\underline{\Omega}x^2\\
\frac{dx^2}{dt}&=-\frac{dX^1}{dt}\sin (\underline{\Omega}t)+\frac{dX^2}{dt}\cos (\underline{\Omega}t)-\underline{\Omega}x^1\, .
\end{split}
\end{equation*}
The previous expressions are the components of the relative velocity:
\begin{equation*}
\vec u =\frac{dx^1}{dt}\, \vec i+\frac{dx^2}{dt}\, \vec j=u^1\, \vec i+u^2 \, \vec j\, .
\end{equation*}
Similarly the absolute velocity is defined as 
\begin{equation*}
\vec U=\frac{dX^1}{dt}\, \vec I+\frac{dX^2}{dt}\, \vec J\, .
\end{equation*}
Rewriting the absolute velocity in terms of the rotating framework, we get 
\begin{equation*}
\begin{split}
\vec U&=\left(\frac{dX^1}{dt}\cos (\underline{\Omega}t)+\frac{dX^2}{dt}\sin (\underline{\Omega}t)\right)\, \vec i+ \left(-\frac{dX^1}{dt}\sin (\underline{\Omega}t)+\frac{dX^2}{dt}\cos (\underline{\Omega}t)\right)\, \vec j\\
&=U^1\, \vec i+ U^2\, \vec j\, .
\end{split}
\end{equation*}
Then, comparing the absolute and relative velocities, one has 
\begin{equation*}
\begin{split}
U^1&=u^1-\underline{\Omega}x^2\\
U^2&=u^2+\underline{\Omega}x^1\, .
\end{split}
\end{equation*}
This means that the absolute velocity is the relative velocity with in addition the entrainment speed caused by the ambient rotation. In a similar manner, we can deduce that 
\begin{equation*}
\frac{d^2x^1}{dt^2}=\left(\frac{d^2X^1}{dt^2}\cos (\underline{\Omega}t)+\frac{d^2X^2}{dt^2}\sin (\underline{\Omega}t)\right)+ 2\underline{\Omega}U^2-|\underline{\Omega}|^2x^1
\end{equation*}
and 
\begin{equation*}
\frac{d^2x^2}{dt^2}=\left(-\frac{d^2X^1}{dt^2}\sin (\underline{\Omega}t)+\frac{d^2X^2}{dt^2}\cos (\underline{\Omega}t)\right)- 2\underline{\Omega}U^1-|\underline{\Omega}|^2x^2\, .
\end{equation*}
In terms of acceleration, we have 
\begin{equation*}
\begin{split}
\vec a &=\frac{d^2x^1}{dt^2}\, \vec i+\frac{d^2x^2}{dt^2}\, \vec j=a^1\, \vec i+a^2\, \vec j\\
\vec A&=\frac{d^2X^1}{dt^2}\, \vec I+\frac{d^2X^2}{dt^2}\, \vec J=A^1\, \vec i+A^2\, \vec j
\end{split}
\end{equation*}
and so 
\begin{equation*}
\begin{split}
A^1&=a^1-2\underline{\Omega}u^2-|\underline{\Omega}|^2x^1\\
A^2&=a^2+2\underline{\Omega}u^1-|\underline{\Omega}|^2x^2\, .
\end{split}
\end{equation*}
Now, we notice that the absolute acceleration differs from the relative acceleration for two contributions: the term proportional to $\underline{\Omega}$ and the relative velocity, which is called \emph{Coriolis acceleration}; the term proportional to $|\underline{\Omega}|^2$ and the relative coordinates, i.e. the \emph{centrifugal acceleration}. The centrifugal force acts as an outward pull, whereas the Coriolis force depends on the relative speed. 

In three-dimensions, one can repeat the above computations deriving 
\begin{equation*}
\begin{split}
\vec U&=\vec u+ \underline{\vec \Omega}\times \vec r\\
\vec A&=\vec a+2 \underline{\vec \Omega}\times \vec u+ \underline{\vec \Omega}\times \left( \underline{\vec \Omega}\times \vec r\right)\, ,
\end{split}
\end{equation*}
where the symbol $\times$ stands for the external products by vectors in $\R^3$ and $\underline{\vec \Omega}=\underline{\Omega}\, \vec k$ (with $\vec k$ the unit vector in the third dimension). This means that if we would to take a derivative in time in the inertial framework, we have to apply 
$$ \frac{d}{dt}+\underline{\vec \Omega}\times $$
in the rotating framework of reference. 
\subsection{The centrifugal force}
Unlike the Coriolis force which is proportional to the velocity (as we have seen above), the centrifugal force depends on the rotation rate and the distance of the particle to the rotation axis. The centrifugal force is responsible for the slightly flattened shape of the planets.

\begin{figure}[htbp]
\centering
\includegraphics[scale=0.2]{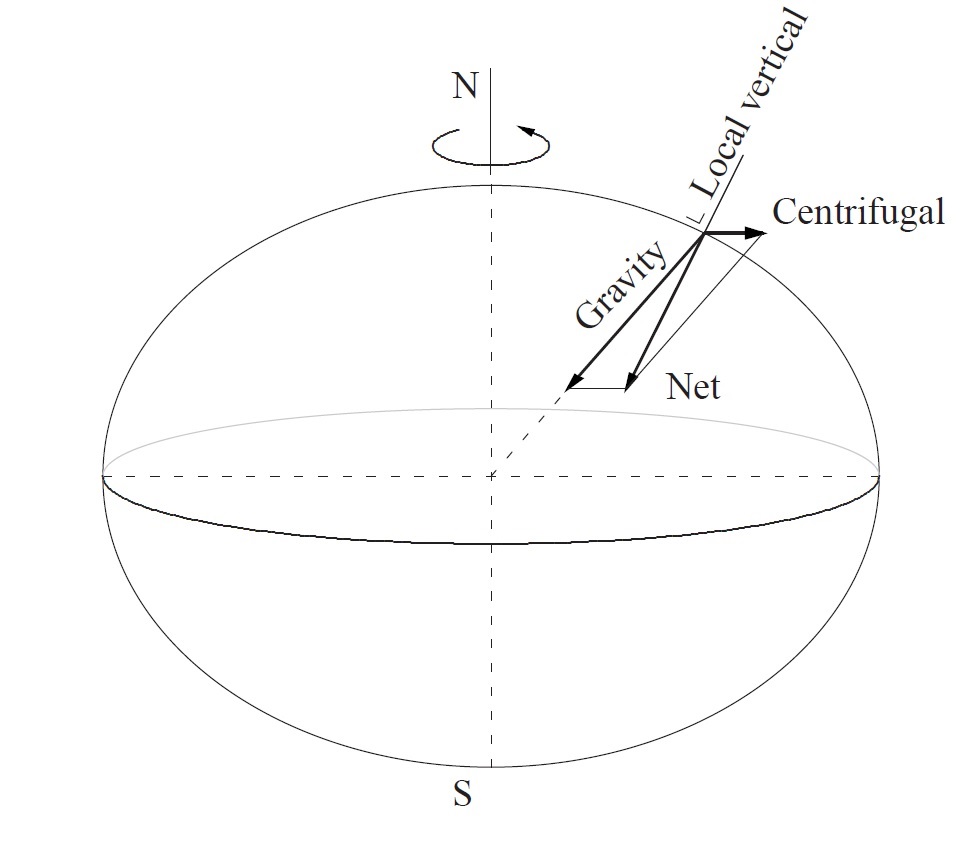}
\caption{The effects of the centrifugal force}\label{fig:centrifugal-force}
\end{figure}

For example, due to the centrifugal effects, the terrestrial equatorial radius is 6378 km, slightly greater than its polar radius of 6357 km. Moreover, the centrifugal effects cause an outward pull to particles, that in any case they don't fly out to space thanks to the gravity. However, the centrifugal force affects the gravity: it shifts the direction of the gravity away from the Earth's center, thus weakening the gravitational effects. 

\section{Second character: the stratification effects}\label{sec:strat}
As mentioned above, geophysical fluids consist of fluid masses of different densities, that the gravitational action tends to arrange in horizontal layers. On the contrary, the motion disturbs this equilibrium raising the dense zones and lowering the lighter ones. In this way, the potential energy increase at the cost of decreasing the kinetic energy and thus the flow slows down. Therefore, the importance of stratification can be evaluated in terms of potential and kinetic energies. We denote by $\Delta \vrho$ the scale of density variations and $H$ is its height scale. We perturb the stratification, raising a fluid particle of density $\vrho_0+\Delta \vrho$ over the height $H$ and lowering a fluid element of density $\vrho_0$ over the same $H$. The corresponding change in potential energy, per unit of volume, is 
\begin{equation*}
(\vrho_0 +\Delta \vrho)gH-\vrho_0gH=gH\,\Delta \vrho\, .
\end{equation*} 
Now, we define
\begin{equation*}
\oline \sigma:=\dfrac{\frac{1}{2}\vrho_0|U|^2}{gH\, \Delta \vrho}\, ,
\end{equation*}
which is the comparative energy ratio between the kinetic energy (per unit of volume) $\frac{1}{2}\vrho_0|U|^2$ and the potential energy. Therefore, if $\oline \sigma\lesssim 1$ the stratification effects cannot be ignored in the dynamics of the fluid. 

In geophysical flows, an interesting situation is when rotation and stratification effects are both important, i.e. $\oline \veps\sim 1$ and $\oline \sigma \sim 1$. This implies that 
\begin{equation*}
L\sim \frac{U}{\underline{\Omega}}\quad \quad \text{and}\quad \quad U\sim \sqrt{\frac{\Delta \vrho}{\vrho_0}gH}\, .
\end{equation*}
In this way, we have a fundamental length scale 
$$ L\sim \frac{1}{\underline{\Omega}}\sqrt{\frac{\Delta \vrho}{\vrho_0}gH}\, . $$
On the Earth, the typical length and velocity scales for the atmosphere and oceans are, respectively, 
\begin{equation*}
\begin{split}
L_{\rm atm}\sim 500\; {\rm km}\quad \quad &\text{and}\quad \quad U_{\rm atm}\sim 30\; {\rm m/s}\\
L_{\rm oc}\sim 60\; {\rm km}\quad \quad &\text{and}\quad \quad U_{\rm oc}\sim 4\; {\rm m/s}\, .
\end{split}
\end{equation*}
We point out that, in general, the oceanic motions are slower and slightly more confined than the atmospheric flows. 
\section{Mass, momentum, energy and entropy budgets}\label{sec:budgets}
The object of this section is to establish, via physical considerations, the equations governing the movement of geophysical flows.

\subsection{The continuity equation}
We consider an infinitesimal cube with volume $\Delta V=\Delta x^1 \Delta x^2 \Delta x^3$ that is fixed in space. 
\begin{figure}[htbp]
\centering
\includegraphics[scale=0.3]{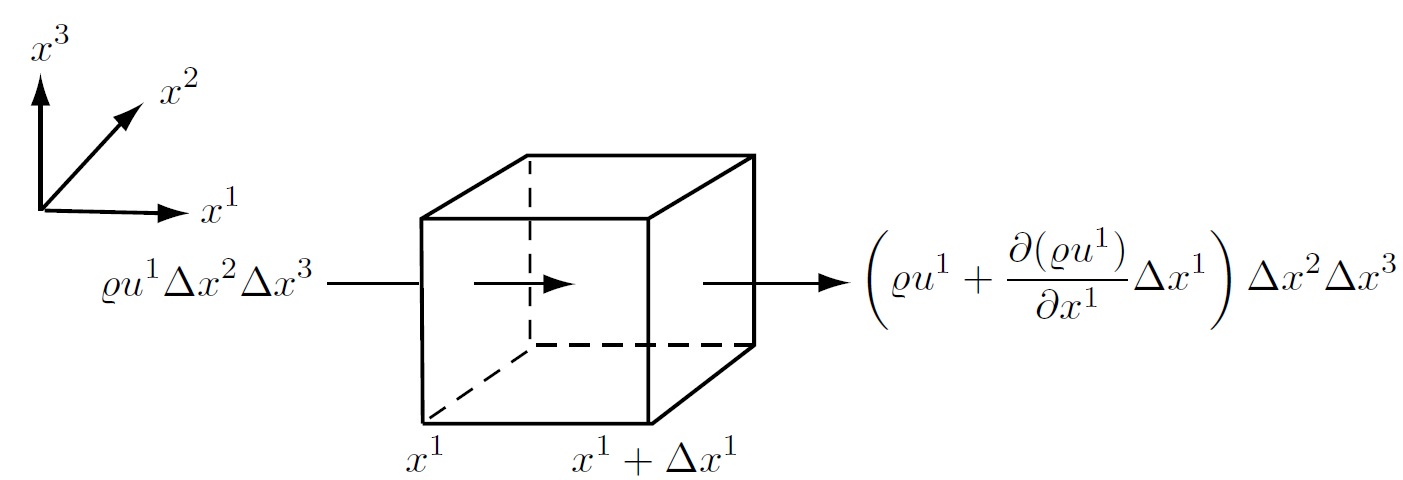}
\caption{Mass conservation in an infinitesimal cube}\label{fig:cube}
\end{figure}

The fluid crosses the cube in the $x^1$-direction, passing through the faces in the $x^2$-$x^3$ plane of area $\Delta A=\Delta x^2 \Delta x^3$. The accumulation of the fluid in-out, in the $x^1$-direction, is:
\begin{equation*}
\Delta x^2 \Delta x^3 \left[(\vrho u^1)(x^1,x^2,x^3)-(\vrho u^1)(x^1+\Delta x^1, x^2,x^3)\right]=-\frac{\d (\vrho u^1)}{\d x^1}(x^1,x^2,x^3)\, \Delta x^1 \Delta x^2 \Delta x^3\, ,
\end{equation*}
where $\vrho$ is the density of the fluid (in kg/$\rm m^3$) and $u^1$ (in m/s) is the first component of the flow velocity $\vec u=(u^1,u^2,u^3)$.
Similarly for the $x^2$ and $x^3$-directions, we have
$$ -\left[\frac{\d(\vrho u^2)}{\d x^2}+\frac{\d (\vrho u^3)}{\d x^3}\right]\Delta x^1 \Delta x^2 \Delta x^3\, . $$
This net accumulation of the fluid must be accompanied by an increase of fluid mass within the volume, represented by 
$$ \frac{\d \vrho}{\d t}\Delta x^1 \Delta x^2 \Delta x^3\, . $$
Since the mass is conserved, one has 
\begin{equation*}
\Delta x^1 \Delta x^2 \Delta x^3 \left[\frac{\d \vrho}{\d t}+\frac{\d (\vrho u^1)}{\d x^1}+\frac{\d (\vrho u^2)}{\d x^2}+\frac{\d (\vrho u^3)}{\d x^3}\right]=0
\end{equation*}
and, therefore, 
\begin{equation}\label{chap1:mass_conservation}
\d_t \vrho +\div (\vrho \vec u)=0\, .
\end{equation}
The previous equation \eqref{chap1:mass_conservation} is the so-called \emph{mass continuity equation}. Sturm in \cite{Sturm} reports that Leonardo da Vinci had already derived a simplified form of the statement of mass conservation in the 15th century. However, the three-dimensional form had to be accredited to Leonhard Euler (1707-1783).
\subsection{The momentum budget}
At this point, we are interested in performing budgets on momentum and energy. We sketch the approach on the momentum in 3-D. 

We consider $\vrho u^1$ the momentum which can be changed by forces and by in-out flow of momentum. The budget momentum fluxes can be calculated as in the case of masses except that $\vrho$ is now replaced by $\vrho u^1$. Instead, the forces applied to the infinitesimal cube in the $x^1$-direction are: 
\begin{equation*}
\begin{split}
&-p(x^1,x^2,x^3)\Delta x^2 \Delta x^3+p(x^1+\Delta x^1, x^2,x^3) \Delta x^2 \Delta x^3\\
&+ \mbb S^{x^1 x^2}(x^1,x^2,x^3) \Delta x^1 \Delta x^3- \mbb S^{x^1 x^2}(x^1, x^2+\Delta x^2,x^3)\Delta x^1 \Delta x^3\\
&+ \mbb S^{x^1 x^1}(x^1,x^2,x^3) \Delta x^2 \Delta x^3- \mbb S^{x^1 x^1}(x^1+\Delta x^1, x^2,x^3)\Delta x^2 \Delta x^3\\
&+ \mbb S^{x^1 x^3}(x^1,x^2,x^3) \Delta x^1 \Delta x^2- \mbb S^{x^1 x^3}(x^1, x^2,x^3+\Delta x^3)\Delta x^1 \Delta x^2\, ,
\end{split}
\end{equation*}
where the viscous stresses $\mbb S$ depend on the nature of the matter. We have also assumed that $\mbb S^{x^j x^k}=\mbb S^{x^k x^j}$ (with $j \neq k$): if these stresses had not the same intensity, the cube would be subjected to an unbalanced torque.

\medskip

\begin{figure}[htbp]
\centering
\includegraphics[scale=0.3]{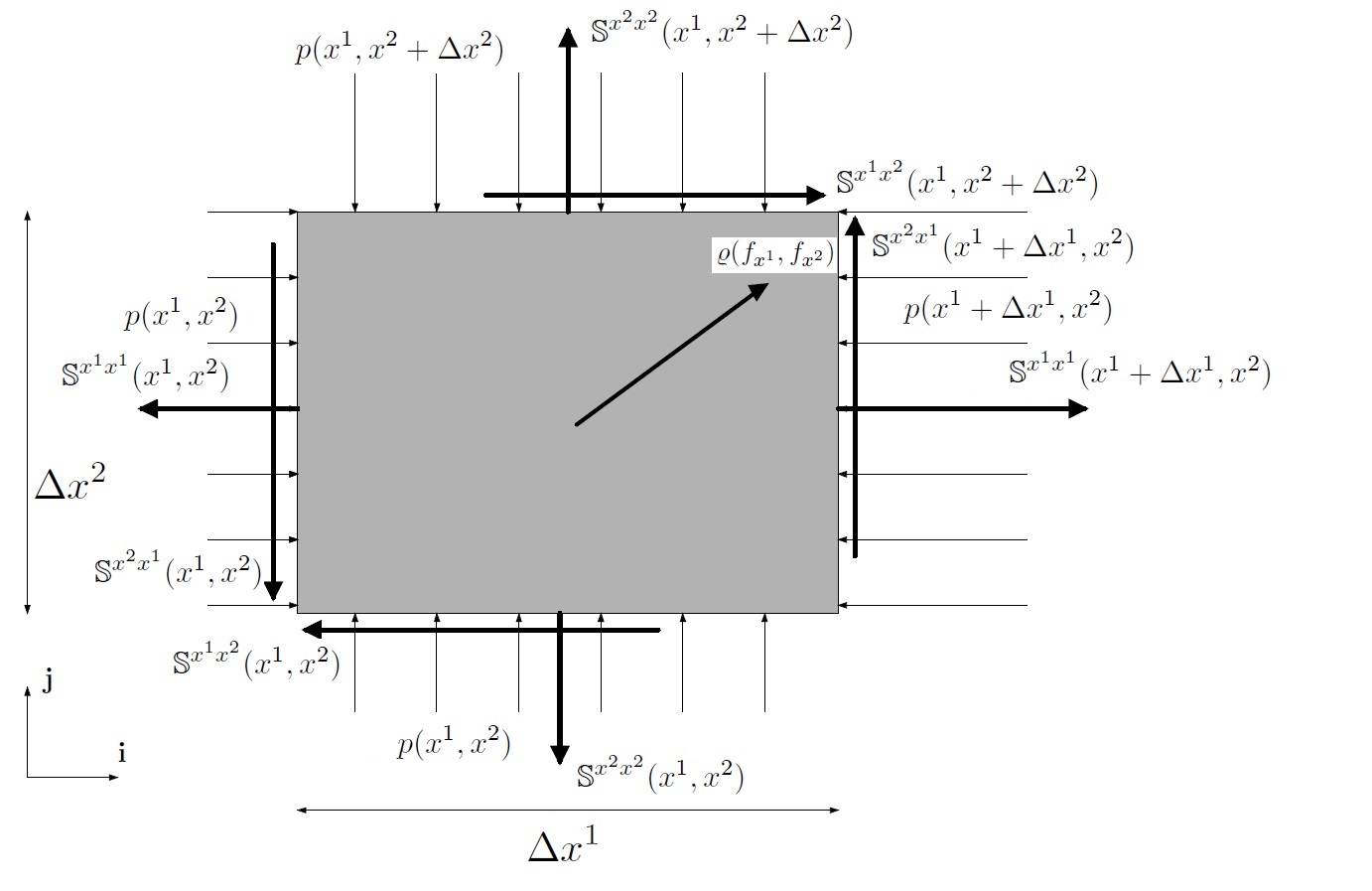}
\caption{Two-dimensional situation with forces acting on the fluid parcel}\label{fig:moementum_budget}
\end{figure}
Therefore, with these forces and the in-out flow of momentum, we derive (for the $x^1$-direction):
\begin{equation*}
\frac{\d}{\d t}(\vrho u^1)+\frac{\d}{\d x^1}(\vrho u^1 u^1)+\frac{\d}{\d x^2}(\vrho u^1 u^2)+\frac{\d}{\d x^3}(\vrho u^1 u^3)=-\frac{\d p}{\d x^1}+\frac{\d \mbb S^{x^1 x^1}}{\d x^1}+\frac{\d \mbb S^{x^1 x^2}}{\d x^2}+\frac{\d \mbb S^{x^1 x^3}}{\d x^3}\, .
\end{equation*}
Similarly to the $x^2$ and $x^3$-directions (taking into account the gravitational force $\vrho g x^3 \vec k$), we obtain the momentum equation:
\begin{equation}\label{chap1:mom_equation}
\d_t (\vrho \vec u)+\div (\vrho \vec u \otimes \vec u)=-\nabla p + \div \mbb S -\vrho g x^3 \vec k\, ,
\end{equation}
where $\vec k=\vec e_3=(0,0,1)$ is the unit vector directed along the vertical axis. 

Since we are interested in fluids which are heavily subjected to the rotational effects of the ambient, we have to make use of the relation
$$ \left(\frac{\d}{\d t}\vec u\right)_{\rm inertial}=\frac{\d}{\d t}\vec u+2\underline{\vec \Omega}\times \vec u+\underline{\vec \Omega}\times \left(\underline{\vec \Omega}\times \vec r\right)\, . $$
In the previous relation one can recognize:
\begin{itemize}
\item the Coriolis force $\underline{\vec \Omega}\times \vec u$;
\item the centrifugal force $\underline{\vec \Omega}\times \left(\underline{\vec \Omega}\times \vec r\right)=\frac{1}{2}\nabla \left(\left|\underline{\vec \Omega}\times \vec r\right|^2\right)$, where $\vec r$ is the position defined in \eqref{definition_r}.
\end{itemize}
Then, at the end recalling \eqref{chap1:mom_equation}, we get the momentum equation in the rotational framework:
\begin{equation*}\label{chap1:mom_equation_rot}
\d_t (\vrho \vec u)+\div (\vrho \vec u \otimes \vec u)+2\underline{\vec \Omega}\times \vrho \vec u+\nabla p = \div \mbb S -\vrho g x^3 \vec k+\frac{1}{2}\vrho \nabla \left(\left|\underline{\vec \Omega}\times \vec r\right|^2\right)
\end{equation*}
and in general 
\begin{equation}\label{chap1:mom_equation_gen}
\d_t (\vrho \vec u)+\div (\vrho \vec u \otimes \vec u)+2\underline{\vec \Omega}\times \vrho \vec u+\nabla p = \div \mbb S +\vrho \vec f\, ,
\end{equation}
where $\vrho \vec f$ is called \emph{body force}.
\subsection{The energy budget}
The energy density $\mc E$ can be written as
$$ \mc E := \frac{1}{2}\vrho |\vec u|^2+\vrho e\, , $$
where the function $e$ denotes the specific internal energy. 

Taking $\underline{\vec \Omega}= \Omega \vec k$ and multiplying the momentum equation \eqref{chap1:mom_equation_gen} on $\vec u$, we deduce the kinetic energy balance: 
\begin{equation*}
\d_t \left(\frac{1}{2}\vrho |\vec u|^2\right)+\div \left(\frac{1}{2}\vrho |\vec u|^2 \vec u\right)=\div (\mbb T \vec u)- \mbb T:\nabla \vec u +\vrho \vec f\cdot \vec u\, ,
\end{equation*}
where we have defined $\mbb A:\mbb B:=\sum_{j,k=1}^3 A^{jk}B^{kj}$ and the stress tensor 
$$ \mbb T:=\mbb S-p\,  \Id $$
via the Stokes' law. We recall that $\Id$ represents the identity matrix.

On the other hand, by virtue of the \emph{First law of thermodynamics}, the energy changes of system are due only to external sources, i.e.
\begin{equation}\label{chap1:energy_relation}
\d_t(\vrho e)+\div (\vrho e \vec u)+ \div \vec q=\mbb S:\nabla \vec u- p\, \div \vec u +\vrho Q\, ,
\end{equation} 
where $\vrho Q$ represents the volumetric rate of the internal energy production, and $\vec q$ is the internal energy flux. 

Therefore, the energy balance reads:
\begin{equation*}
\d_t \mc E + \div (\mc E \vec u)+ \div (\vec q -\mbb S \vec u+p\vec u)=\vrho \vec f +\vrho Q\, .
\end{equation*}
\subsection{The entropy budget}\label{subsec:entropy_bud}
In accordance with the \emph{Second law of thermodynamics}, the quantities $p$, $e$ and $s$ are linked trough the Gibbs' relation
\begin{equation*}
\vtheta D_{\vrho,\vtheta}s=D_{\vrho, \vtheta}e +p\, D_{\vrho, \vtheta}\left(\frac{1}{\vrho}\right)\, ,
\end{equation*} 
where $D_{\vrho, \vtheta}$ stands for the differential with respect to the density $\vrho$ and the temperature $\vtheta$, and $s$ is the specific entropy. 
Accordingly, the internal energy balance equation \eqref{chap1:energy_relation} can be rewritten in the form of entropy balance
\begin{equation*}
\d_t(\vrho s)+\div (\vrho s \vec u)+ \div \left(\frac{\vec q}{\vtheta}\right)=\sigma +\frac{\vrho}{\vtheta}Q\, ,
\end{equation*}
with the entropy production rate
$$ \sigma :=\frac{1}{\vtheta}\left(\mbb S:\nabla \vec u-\frac{\vec q\cdot \nabla \vtheta}{\vtheta}\right)\, . $$
Moreover, the \emph{Second law of thermodynamics} postulates that $\sigma$ must be non-negative for any admissible thermodynamic process.
\section{Boussinesq approximation}\label{sec:Bous_app}
In most geophysical flows, the density of the fluid has ``small'' oscillations around a mean value. Indeed, variations in density within one ocean basin rarely exceed 0.3\% (instead, in the atmosphere, density variations due to the winds are usually no more than 5\%). So, it appears physically justifiable to assume that the fluid density $\vrho$ does not depart much from a reference state $\oline \vrho$, i.e.
\begin{equation*}
\vrho=\oline \vrho+\vrho^\prime (t, x^1,x^2,x^3)\quad \quad \text{with}\quad \quad |\vrho^\prime|\ll \oline\vrho\, .
\end{equation*}
Therefore, the continuity equation \eqref{chap1:mass_conservation} becomes 
\begin{equation*}
\oline \vrho \left(\frac{\d u^1}{\d x^1}+\frac{\d u^2}{\d x^2}+\frac{\d u^3}{\d x^3}\right)+\vrho^\prime \left(\frac{\d u^1}{\d x^1}+\frac{\d u^2}{\d x^2}+\frac{\d u^3}{\d x^3}\right)+ \left(\frac{\d \vrho^\prime}{\d t}+u^1\frac{\d \vrho^\prime}{\d x^1}+u^2\frac{\d \vrho^\prime}{\d x^2}+u^3\frac{\d \vrho^\prime}{\d x^3}\right)=0\, .
\end{equation*} 
Since $|\vrho^\prime|\ll \oline \vrho$, only the first group of terms has to be retained so that
$$ \frac{\d u^1}{\d x^1}+\frac{\d u^2}{\d x^2}+\frac{\d u^3}{\d x^3}=0\, . $$
Physically, the statement means that we are dealing with an incompressible fluid. 
\section{Scales of motion and dimensionless numbers}\label{sec:scales_motion}
We perform in this section an analysis of scales characterizing the geophysical flows. First of all, we compare the time, length and velocity scales with respect to the ambient rotation rate $\underline{\Omega}$. Typically, one has 
$$ T\gtrsim \frac{1}{\underline{\Omega}}\quad \quad \text{and}\quad \quad \frac{U}{L}\lesssim \underline{\Omega}\, . $$ 
It is generally not required to discriminate between the two horizontal directions and velocities, respectively: we assign indeed the same length scale $L$ and velocity scale $U$ for the horizontal components (respectively). The same, however, cannot be said of the vertical direction. Geophysical flows are in fact confined in domain, which are wider than they are thick: $H/L$ is small. If we assume the \emph{Boussinesq approximation}, the terms in the continuity equation (in its reduced form) have orders of magnitude
$$ \frac{U}{L}\; , \quad \quad \frac{U}{L}\; , \quad \quad \frac{W}{H} \; .$$ 
By geophysical considerations, the vertical velocity scale must by constrained by
$$ W\lesssim \frac{H}{L}U $$
and by virtue of $H\ll L$, one has $W\ll U$. In other words, large-scale geophysical flows are shallow ($H \ll L$) and nearly two-dimensional ($W\ll U$). 

At this point, we consider the momentum equation \eqref{chap1:mom_equation_gen} under the Boussinesq approximation, in which (only for clarity of exposition) we take $\div \mbb S=\nu \Delta \vec u$, $\vec f=g\vec k$ and $\underline{\vec \Omega}=\underline{\Omega}\cos \varphi \, \vec j+\underline{\Omega}\sin \varphi \, \vec k$ where $\varphi$ is the latitude. Then, the equation reads
\begin{equation}\label{chap1:system_momentum_3D}
\begin{cases}
\frac{\d u^1}{\d t}+u^1\frac{\d u^1}{\d x^1}+u^2\frac{\d u^1}{\d x^2}+u^3\frac{\d u^1}{\d x^3}+f_\ast u^3-fu^2=-\frac{1}{\oline \vrho}\frac{\d p}{\d x^1}+\nu \left(\frac{\d^2 u^1}{\d x^1 \d x^1}+\frac{\d^2 u^1}{\d x^2 \d x^2}+\frac{\d^2 u^1}{\d x^3 \d x^3}\right)\\
\frac{\d u^2}{\d t}+u^1\frac{\d u^2}{\d x^1}+u^2\frac{\d u^2}{\d x^2}+u^3\frac{\d u^2}{\d x^3}+f u^1=-\frac{1}{\oline \vrho}\frac{\d p}{\d x^2}+\nu \left(\frac{\d^2 u^2}{\d x^1 \d x^1}+\frac{\d^2 u^2}{\d x^2 \d x^2}+\frac{\d^2 u^2}{\d x^3 \d x^3}\right)\\
\frac{\d u^3}{\d t}+u^1\frac{\d u^3}{\d x^1}+u^2\frac{\d u^3}{\d x^2}+u^3\frac{\d u^3}{\d x^3}-f_\ast u^1=-\frac{1}{\oline \vrho}\frac{\d p}{\d x^3}-\frac{g\vrho}{\oline \vrho}+\nu \left(\frac{\d^2 u^3}{\d x^1 \d x^1}+\frac{\d^2 u^3}{\d x^2 \d x^2}+\frac{\d^2 u^3}{\d x^3 \d x^3}\right)\, ,
\end{cases}
\end{equation}
with $f:=2\underline{\Omega}\sin \varphi$ and $f_\ast := 2 \underline{\Omega}\cos \varphi$.

Let us consider the $x^1$-momentum: the terms scale sequentially as 
\begin{equation}\label{chap1:scales_mom}
\frac{U}{T}\; , \; \quad \frac{U^2}{L}\; ,\; \quad \frac{U^2}{L}\; ,\;\quad \frac{WU}{H}\; , \; \quad \underline{\Omega}W\; ,\; \quad \underline{\Omega}U\; ,\; \quad \frac{P}{\oline \vrho L}\; , \; \quad \nu \frac{U}{L^2}\; , \; \quad \nu \frac{U}{L^2}\; , \; \quad \nu \frac{U}{H^2}\, . 
\end{equation}
Due to the fact that $W\ll U$, the term $\underline{\Omega}W$ is always smaller than $\underline{\Omega}U$ and can be safely neglected. 

Moreover, since the rotation has a fundamental importance in the geophysical fluid dynamics, the pressure term will scale as the Coriolis force, i.e. 
$$ \frac{P}{\oline \vrho L}=\underline{\Omega}U\, . $$
For physical considerations also the last three terms in \eqref{chap1:scales_mom} are small:
$$ \frac{U}{L^2}\, ,\,  \frac{U}{H^2}\lesssim \underline{\Omega}U\, .  $$
Similar arguments apply to the $x^2$-momentum equation. Now, let us analyse the vertical momentum in \eqref{chap1:system_momentum_3D}, which scales as
\begin{equation*}
\frac{W}{T}\; , \quad \quad \frac{UW}{L}\; , \quad \quad \frac{UW}{L}\; ,\quad \quad \frac{W^2}{H}\; ,\quad \quad \underline{\Omega}U\; ,\quad \quad \frac{P}{\oline \vrho H}\; , \quad \quad\frac{g\Delta \vrho}{\oline \vrho}\; , \quad \quad \nu \frac{W}{L^2}\; , \quad \quad \nu \frac{W}{L^2}\; , \quad \quad \nu \frac{W}{H^2}\, . 
\end{equation*} 
Regarding the first term $W/T$ one has 
$$ \frac{W}{T}\lesssim \underline{\Omega}W\ll \underline{\Omega}U\, . $$
Analogously, for the next three terms. Now,
$$ \frac{\underline{\Omega}U}{\frac{P}{\oline \vrho H}}\sim \frac{H}{L}\ll 1\, . $$
The previous relation establishes the smallness of the fifth term. As already seen before:
$$ \frac{W}{L^2}\, ,\,\frac{W}{H^2}\ll \underline{\Omega}U\, . $$
At the end only two terms remain, leading to the so-called \emph{hydrostatic balance}
$$ 0=-\frac{1}{\oline \vrho}\frac{\d p}{\d x^3}-\frac{g \vrho}{\oline \vrho}\, , $$
and we observe that, in absence of stratification, $p$ is nearly $x^3$-independent. 

At this point, our main goal is to introduce some important dimensionless numbers. In the previous scale analysis the term $\underline{\Omega}U$ was central. A division of \eqref{chap1:scales_mom} by $\underline{\Omega}U$, allows us to compare the importance of the other terms with respect to the Coriolis force, yielding (for the $x^1$-momentum): 
\begin{equation*}
\frac{1}{\underline{\Omega} T}\; , \quad \quad \frac{U}{\underline{\Omega}L}\; , \quad \quad \frac{U}{\underline{\Omega}L}\; ,\quad \quad \frac{WL}{UH}\frac{U}{\underline{\Omega}L}\; ,\quad \quad 1\; ,\quad \quad \frac{P}{\oline \vrho \underline{\Omega}LU}\; , \quad \quad  \frac{\nu}{\underline{\Omega}L^2}\; , \quad \quad  \frac{\nu}{\underline{\Omega}L^2}\; , \quad \quad  \frac{\nu}{\underline{\Omega}H^2}\, . 
\end{equation*}
The first ratio
\begin{equation*}
Ro_T:=\frac{1}{\underline{\Omega}T}
\end{equation*}
is called \emph{temporal Rossby number}.

\newpage

The next one, is the so-called \emph{Rossby number}
\begin{equation*}
Ro:=\frac{U}{\underline{\Omega}L}\, ,
\end{equation*}
which compare advection to Coriolis force: it is at most on the order of unity. 

In addition, the last ratio measures the importance of vertical friction and it is called \emph{Ekman number}
$$ Ek:=\frac{\nu}{\underline{\Omega}H^2}\, . $$
If now we focus our attention on the $x^3$-momentum equation, we have also
$$ \frac{P}{\oline \vrho H}\; ,\quad \quad \frac{g\Delta \vrho}{\oline \vrho}\, . $$
Taking the ratio between these two quantities, we obtain
$$ \frac{gH\Delta \vrho}{P}=\frac{gH\Delta \vrho}{\oline \vrho \underline{\Omega}LU}=Ro \, \frac{gH\Delta \vrho}{\oline \vrho U^2}\, . $$
This leads to another adimensional number: the \emph{Richardson number}
$$ Ri:=\frac{gH\Delta \vrho}{\oline \vrho U^2}\, . $$
\section{The Taylor-Proudman theorem}\label{sec:TP_thm}
Let us now focus on rapidly rotating fluids, ignoring the frictional and density-variation effects, i.e. 
$$ Ro_T\ll 1\; , \quad \quad Ro\ll 1\; ,\quad \quad Ek\ll 1\, . $$
Therefore, we get
\begin{equation}\label{T-P_system}
\begin{cases}
-fu^2=-\frac{1}{\oline \vrho}\frac{\d p}{\d x^1}\\
fu^1=-\frac{1}{\oline \vrho}\frac{\d p}{\d x^2}\\
0=-\frac{1}{\oline \vrho}\frac{\d p}{\d x^3}\\
\div \vec u=0\, .
\end{cases}
\end{equation}
If now we take the vertical derivative in the first and second equations of \eqref{T-P_system}, then we infer that 
$$ -f\frac{\d u^2}{\d x^3}=-\frac{1}{\oline \vrho}\frac{\d}{\d x^3}\left(\frac{\d p}{\d x^1}\right)=0\\ $$
and
$$ f\frac{\d u^1}{\d x^3}=-\frac{1}{\oline \vrho}\frac{\d}{\d x^3}\left(\frac{\d p}{\d x^2}\right)=0\, .\\ $$
The previous relations mean $\d_{3}u^1=\d_{3}u^2=0$.

This is the so-celebrated \emph{Taylor-Proudman theorem}. Physically, it states that the horizontal velocity field has no vertical shear and the particles on the same vertical move together (the \emph{Taylor columns}). 
\begin{figure}[htbp]
\centering
\includegraphics[scale=0.3]{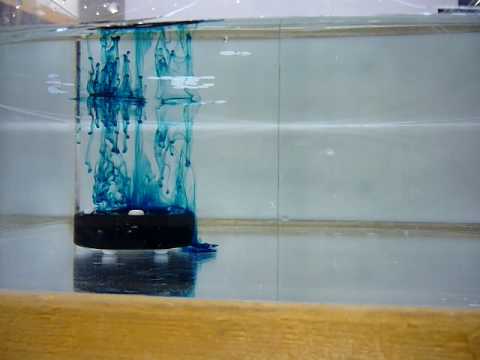}
\caption{Side view of a Taylor columns experiment}\label{fig:Taylor_column_exp}
\end{figure}
\section{Stratified and quasi-incompressible fluids}\label{sec:strat_q-i}
\subsection{The Brunt-V\"ais\"al\"a frequency}
Until now, we have devoted our attention to the effects of rotation, and stratification was avoided. Therefore, we introduce in a  first instance a basic measure of stratification called \emph{Brunt-V\"ais\"al\"a frequency} and later the accompanying dimensionless ratio, the \emph{Froude number}. 

We consider a fluid in static equilibrium. We take a fluid parcel (of volume $V$) at height $x^3$ above a reference level with density $\vrho (x^3)$ and we displace it to the higher level $x^3+h$ where the ambient density is $\vrho (x^3+h)$, see Figure \ref{fig:B-V_frequency} below. If the fluid is incompressible, by Archimede buoyancy principle, the particle is subject to the force
$$ g\left( \vrho (x^3)-\vrho(x^3+h)\right) V\, . $$ 
Thus, the Newton's law yields
$$ \vrho(x^3)V\frac{d^2 h}{d t^2}=g\left( \vrho (x^3)-\vrho(x^3+h)\right) V\, . $$
Using a Taylor expansion for the term on the right-hand side and under the Boussinesq approximation, one gets
$$ \frac{d^2h}{dt^2}-\frac{g}{\oline \vrho}\frac{d\vrho}{dx^3}h=0\, . $$
If the density is decreasing with the height ($d\vrho/dx^3<0$), we can define the \emph{Brunt-V\"ais\"al\"a frequency} as
$$ N^2:=-\frac{g}{\oline \vrho}\frac{d\vrho}{dx^3}\, . $$ 
\begin{figure}[htbp]
\centering
\includegraphics[scale=0.2]{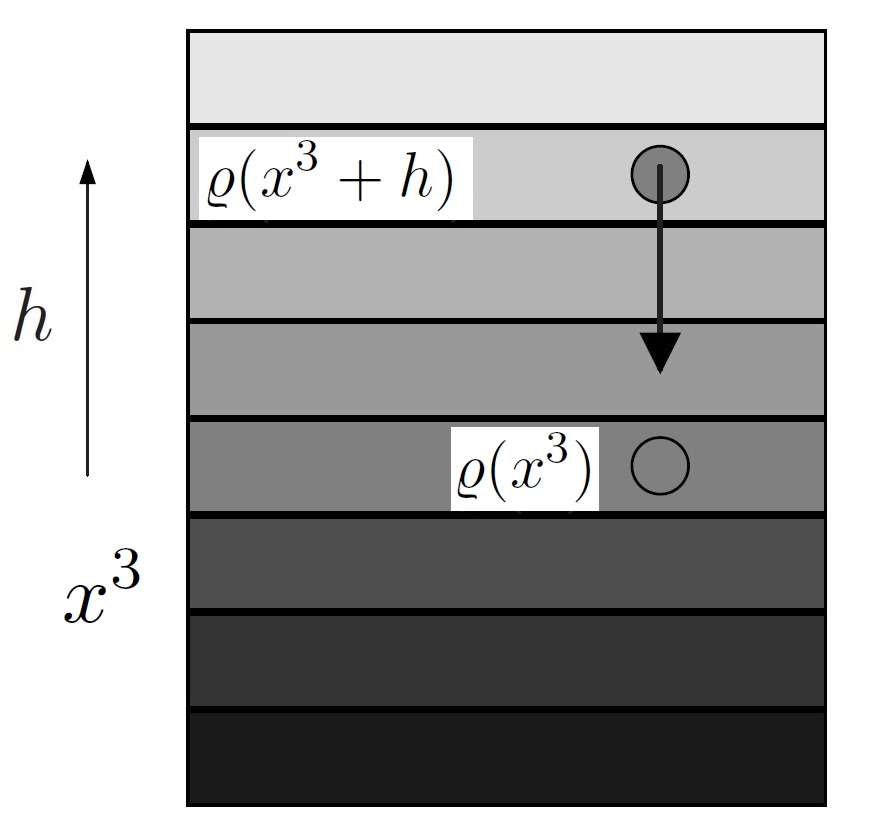}
\caption{Fluid parcel in a stratified environment}\label{fig:B-V_frequency}
\end{figure}

Physically this means that, when we displace upward the parcel, its weight is heavier than the ambient: then, it is subjected to a downward force. The particle, going down, acquires a vertical velocity and when it reaches the original level, goes further downward (due to the inertia). At this point, the particle is surrounded by an heavier ambient so that it is recalled upward and oscillations persist around the equilibrium level. 
\subsection{The measurement of stratification: the Froude number}
In this paragraph, we illustrate how to derive the physical Froude number. 
\begin{figure}[htbp]
\centering
\includegraphics[scale=0.3]{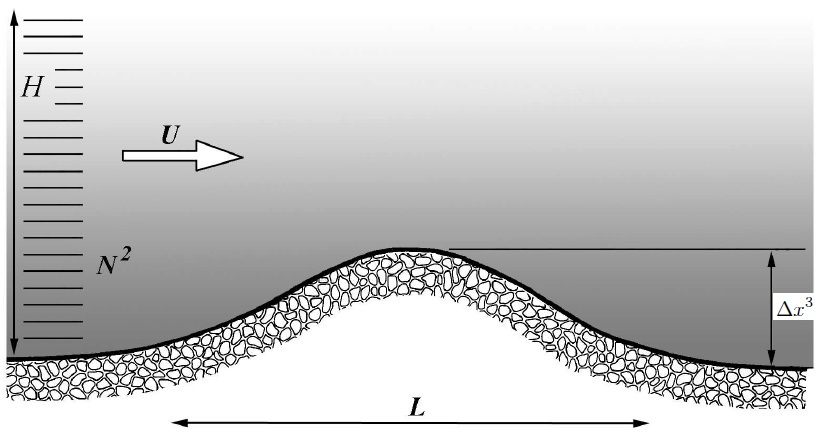}
\caption{Deep oceanic currents over an irregular bottom}\label{fig:Froude_num}
\end{figure}

We consider a stratified fluid of thickness $H$ and frequency $N$. We suppose its speed equal to $U$ over an obstacle of length $L$ and height $\Delta x^3$ (see Figure \ref{fig:Froude_num} above). We can think about deep oceanic currents over an irregular seabed. 

The obstacle forces the fluid to displace itself also vertically and hence requires some additional gravitational energy. Stratification will act to minimize such vertical displacement, forcing the flow to get around the block. To climb the impediment the fluid needs a vertical velocity 
$$ W=\frac{\Delta x^3}{T}=\frac{U\Delta x^3}{L}\, . $$ 
At this point, the vertical displacement produces a density variation 
$$\Delta \vrho =\left|\frac{d\vrho}{dx^3}\right|\Delta x^3=\frac{\oline \vrho N^2}{g}\Delta x^3 \, .$$
As a consequence, one has also a pressure disturbance that, due to the hydrostatic balance, is
$$ \Delta P=gH\Delta \vrho=\oline \vrho N^2H\Delta x^3\, , $$
which in turn causes a change in the fluid velocity
\begin{equation}\label{chap1:rel_fluid_vel}
 \frac{U^2}{L}=\frac{\Delta P}{\oline \vrho L} \, .
\end{equation}
Therefore, the last relation \eqref{chap1:rel_fluid_vel} tells us that $U^2=N^2H\Delta x^3$. 

If now we take the ratio $\frac{W/H}{U/L}$, we obtain that
$$ \frac{W/H}{U/L}=\frac{\Delta x^3}{H}=\frac{U^2}{N^2H^2}\, . $$
We note if $U$ is less than $NH$, $W/H$ is less than $U/L$. This implies that the variation in the vertical direction cannot fully meet the horizontal displacement: the fluid is then deflected horizontally. 

In addition, the stronger the stratification, the smaller is $U$ compared to $NH$ and thus $W/H$ with respect to $U/L$. For that reason, to measure the stratification, we define the \emph{Froude number}
$$ Fr:=\frac{U}{NH}\, . $$
\subsection{The Mach number}\label{subsect:Mach_number}
To define the Mach number, we consider a flow in which the density-changes induced by the pressure are isentropic, i.e. 
$$ \frac{\d p}{\d x^i}=\oline c^2\frac{\d \vrho}{\d x^i} \quad \quad \quad \text{for }\; i=1,2,3 \; ,$$
where $\oline c$ is the sound speed. Therefore, the continuity equation reads:
$$ \div \vec u=-\frac{1}{\vrho}\frac{D\vrho}{Dt}=-\frac{1}{\vrho \oline c^2}\frac{Dp}{Dt} \, ,$$
with $D/Dt=\d/\d t+\vec u\cdot \nabla$ the material derivative. 

Using the following dimensionless variables (for $i,j=1,2,3$)
\begin{equation*}
x^i_\ast=\frac{x^i}{L}\; ,\quad \quad t_\ast=\frac{Ut}{L}\; ,\quad \quad u^j_\ast=\frac{u^j}{U}\; , \quad \quad p_\ast=\frac{p}{\oline \vrho U^2}\; , \quad \quad \vrho_\ast=\frac{\vrho}{\oline \vrho}\, ,
\end{equation*}
where $\oline \vrho$ is a reference density, we obtain
$$ \div_\ast \vec u_\ast=-\frac{U^2}{\oline c^2}\frac{1}{\vrho_\ast}\frac{Dp_\ast}{Dt_\ast}\, , $$
with $\div_\ast= L\, \div$.

Then, we can define the \emph{Mach number} as
$$ Ma:= \frac{U}{\oline c}\, , $$
which sets the size of isentropic departures from incompressible flow: the flows are considered incompressible when $Ma<0.3$.
\section{The dimensionless Navier-Stokes equations}\label{sec:NS_dimenless} 
We start now from the Navier-Stokes momentum equation for incompressible flows ($\div \vec u=0$) of the following form
\begin{equation}\label{Sect:rel_NS}
\vrho \left(\frac{D}{Dt}\vec u\right)_{\rm inertial}=-\nabla p+\vrho \vec g+\nu \Delta \vec u\, .
\end{equation}
Remembering the connection between the inertial and the rotational frameworks, one has 
$$ \left(\frac{D}{Dt}\vec u\right)_{\rm inertial}=\frac{D}{Dt}\vec u +2\underline{\vec \Omega}\times \vec u+\underline{\vec \Omega}\times \left(\underline{\vec \Omega}\times \vec r\right)=\frac{D}{Dt}\vec u +2\underline{\vec \Omega}\times \vec u+ \nabla \left(\frac{1}{2}\left|\underline{\vec \Omega}\times \vec r\right|^2\right)\, ,$$
where $\underline{\vec \Omega}=\underline{\Omega}\vec k$ ($\underline{\Omega}$ is the scalar magnitude associated) and we will call $F:=\frac{1}{2}\left|\underline{\vec \Omega}\times \vec r\right|^2$. 

Moreover, we recall that the effects of compressibility can be recasted from the continuity equation (see the previous Subsection \ref{subsect:Mach_number}). 

So, relation \eqref{Sect:rel_NS} can be rendered dimensionless by defining (for $i,j=1,2,3$)
\begin{equation*}
x^i_\ast=\frac{x^i}{L}\; ,\quad \quad t_\ast=\underline{\Omega}t\; ,\quad \quad u^j_\ast=\frac{u^j}{U}\; , \quad \quad p_\ast=\frac{p}{\vrho U^2}\; , \quad \quad g^j_\ast=\frac{g^j}{g}\, ,
\end{equation*} 
where $g$ is the acceleration of gravity. 

Therefore, we get
\begin{equation}\label{system_NSFC_nodim}
St\frac{\d \vec u_\ast}{\d t_\ast}+\vec u_\ast \cdot \nabla_\ast \vec u_\ast +\frac{2}{Ro}\vec e_3\times \vec u_\ast =-\nabla_\ast p_\ast +\frac{1}{Fr^2}g_\ast+\frac{1}{Re}\Delta_\ast \vec u_\ast+\frac{1}{Ro^2}\nabla_\ast F_\ast
\end{equation}
where $\nabla_\ast =L\nabla$. 

In the previous equation the symbols $St$ and $Re$ stay for the \emph{Strouhal} and \emph{Reynolds} numbers (see \cite{K-C-D} for more details).

\let\cleardoublepage\clearpage

\part{The Navier-Stokes-Coriolis equations}

\let\cleardoublepage\clearpage

\chapter{A multi-scale limit}\label{chap:multi-scale_NSF}
\begin{quotation}
In this chapter we address the singular perturbation problem given by the full Navier-Stokes-Fourier system, which (for the reader's convenience) we remember to be
\begin{equation}\label{chap2:syst_NSFC}
\begin{cases}
	\partial_t \vrho + \div (\vrho\vec{u})=0\  \\[3ex]
	\partial_t (\vrho\vec{u})+ \div(\vrho\vec{u}\otimes\vec{u}) + \dfrac{\e_3 \times \vrho\vec{u}}{Ro}\,  +    \dfrac{1}{Ma^2} \nabla_x p(\vrho,\vtheta) \\[1ex]
	\qquad \qquad \qquad \qquad \qquad \qquad \qquad \; \; \; =\div \mbb{S}(\vtheta,\nabla_x\vec{u}) + \dfrac{\vrho}{Ro^2} \nabla_x F  + \dfrac{\vrho}{Fr^2} \nabla_x G  \\[3ex]
	 \partial_t \bigl(\vrho s(\vrho, \vtheta)\bigr)  + \div \bigl(\vrho s (\vrho,\vtheta)\vec{u}\bigr) + \div\left(\dfrac{\q(\vtheta,\nabla_x \vtheta )}{\vtheta} \right)
	= \sigma\,, 
\end{cases}
\end{equation}
with $St=1$ and $Re=1$ (see also system \eqref{system_NSFC_nodim} in Section \ref{sec:NS_dimenless}). 


The contents of this chapter are included in the article \cite{DS-F-S-WK}.

\medskip

Let us now give an outline of the chapter.

In Section \ref{s:result} we collect our assumptions and we state our main results. In Section \ref{s:sing-pert} we study the singular perturbation problem,
stating uniform bounds on our family of weak solutions and establishing constraints that the limit points have to satisfy. Section \ref{s:proof} is devoted to the proof of the convergence
result for $m\geq2$ and $F\neq0$, employing the compensated compactness technique. In the last Section \ref{s:proof-1}, with the same approach, we prove the convergence result for
$m=1$ and $F=0$; actually, in absence of the centrifugal force, the same argument shows convergence for any $m>1$.
\end{quotation}

\section{The Navier-Stokes-Fourier system} \label{s:result}

In this section, we formulate our working hypotheses (see Subsection \ref{ss:FormProb}) and we state our main results
(in Subsection \ref{ss:results}).

\subsection{Setting of the problem} \label{ss:FormProb}

In this subsection, we present the rescaled Navier-Stokes-Fourier system with Coriolis, centrifugal and gravitational forces, which we are going to consider in our study, and we
formulate the main working hypotheses. 
The material of this part is mostly classical: unless otherwise specified, we refer to \cite{F-N} for details.
Paragraph \ref{sss:equilibrium} contains some original contributions, concerning the analysis of the equilibrium states under our hypotheses on the specific form of the centrifugal and gravitational forces.

 \subsubsection{Primitive system}\label{sss:primsys}
To begin with, let us introduce the ``primitive system'', i.e. the rescaled compressible Navier-Stokes-Fourier system \eqref{chap2:syst_NSFC},
supplemented with the scaling 
\begin{equation} \label{eq_i:scales}
Ro=\veps \, , \quad Ma=\veps^m \quad \text{and}\quad Fr=\veps^{m/2}\,, \qquad\qquad \mbox{ for some }\quad m\geq 1\, ,
\end{equation}
where $\veps\in\,]0,1]$ is a small parameter.
Thus, the system consists of the continuity equation (conservation of mass), the momentum equation, the entropy balance and the total energy balance: respectively,
\begin{align}
&	\partial_t \vre + \div (\vre\ue)=0 \label{ceq}\tag{NSF$_{\ep}^1$} \\
&	\partial_t (\vre\ue)+ \div(\vre\ue\otimes\ue) + \frac{1}{\ep}\,\e_3 \times \vre\ue +    \frac{1}{\ep^{2m}} \nabla_x p(\vre,\tem) 
	= \label{meq}\tag{NSF$_{\ep}^2$} \\
&	\qquad\qquad\qquad\qquad\qquad\qquad\qquad\qquad
=\div \mbb{S}(\tem,\nabla_x\ue) + \frac{\vre}{\ep^2} \nabla_x F  + \frac{\vre}{\ep^m} \nabla_x G \nonumber \\
&	 \partial_t \bigl(\vre s(\vre, \tem)\bigr)  + \div \bigl(\vre s (\vre,\tem)\ue\bigr) + \div\left(\frac{\q(\tem,\nabla_x \tem )}{\tem} \right)
	= \sigma_\ep \label{eiq}\tag{NSF$_{\ep}^3$} \\
&	\frac{d}{dt} \int_{\Omega_\veps} 
	\left( \frac{\ep^{2m}}{2} \vre|\ue|^2 +  \vre e(\vre,\tem) - {\ep^m} \vre G - {\ep^{2(m-1)}} \vre F \right) dx = 0\,. \label{eeq}\tag{NSF$_{\ep}^4$}
	\end{align}
The unknowns are the fluid mass density $\vre=\vre(t,x)\geq0$ of the fluid, its velocity field $\ue=\ue(t,x)\in\R^3$ and
its absolute temperature $\tem=\tem(t,x)\geq0$, $t\in \;  ]0,T[$ , $x\in \Omega_\veps$ which fills, for $\veps \in \; ]0,1]$ fixed, the bounded domain 
\begin{equation}\label{dom}
	\Omega_\ep  :=    {B}_{L_\veps} (0) \times\,]0,1[\;,\qquad\qquad\mbox{ where }\qquad L_\veps\,:=\,\frac{1}{\ep^{m+ \delta}}\,L_0
	\end{equation}
for $\delta >0$ and for some $L_0>0$ fixed. Here above, we have denoted by ${B}_{l}(x_0)$  the Euclidean ball of center $x_0$ and radius $l$ in  $\R^2$.
Notice that, roughly speaking, we have the property
$$\Omega_\ep\, \longrightarrow \, \Omega := \R^2 \times\,]0,1[\, \quad \mbox{ as } \ep \to 0^+\,.$$ 
\begin{remark} \label{smooth domains}
We explicitly point out that, throughout all the chapter, we tacitly assume \emph{rounded corners} in \eqref{dom}.
In this way, we can apply the standard weak solutions existence theory developed in \cite{F-N}, which requires $C^{2,\nu}$ regularity, with $\nu \in (0,1)$, on the space domain.  
\end{remark}
The pressure $p$, the specific internal energy 
$e$ and the specific entropy $s$ are given scalar valued functions of $\vr$ and $\temp$ which are related through 
Gibbs' equation	
	\begin{equation}\label{gibbs}
	\temp D_{\vrho,\vtheta} s = D_{\vrho,\vtheta} e + p D_{\vrho, \vtheta} \left( \frac{1}{\vr}\right),	
	\end{equation}
where the symbol $D_{\vrho,\vtheta}$ stands for the differential with respect to the variables $\vr$ and $\vartheta$ (see also Subsection \ref{subsec:entropy_bud}). The viscous stress tensor in \eqref{meq} is given by Newton's rheological law
	\begin{equation}\label{S}
	\mbb{S}(\tem,\nabla_x \ue) = \mu(\tem)\left( \nabla_x\ue \,+\, ^t\nabla_x \ue  \,-\, \frac{2}{3}\div \ue \,  \Id \right)
	\,+\, \eta(\tem) \div\ue \,  \Id\,,
	\end{equation}
for two suitable coefficients $\mu$ and $\eta$ (we refer to Paragraph \ref{sss:structural} below for the precise hypotheses), and here the apex $t$ stands for the transpose operator. 

Moreover, the entropy production rate $\sigma_\ep$ in \eqref{eiq} satisfies 
	\begin{equation}\label{ss}
	 \sigma_\ep \geq \frac{1}{\tem} \left({\ep^{2m}} \mbb{S}(\tem,\nabla_x\ue) : \nabla_x \ue
	 - \frac{\q(\tem,\nabla_x \tem )\cdot \nabla_x \tem}{\tem}  \right). 
	\end{equation}
The heat flux $\q$ in \eqref{eiq} is determined by Fourier's law
	\begin{equation}\label{q}
	\q(\tem,\nabla_x \tem)= - \kappa(\tem) \nabla_x \tem ,
	\end{equation}
where $\kappa>0$ is the heat-conduction coefficient. The term $\e_3\times\vrho_\veps\vec u_\veps$ takes into account the (strong) Coriolis force acting on the fluid.
Next, we turn our attention to centrifugal and gravitational forces, $F$ and $G$ respectively.	
We assume that they are of the form
	\begin{equation}\label{assFG}
	F(x) = \left|x^h\right|^2\qquad\mbox{ and }\qquad G(x)= -x^3\,.
	\end{equation}
The precise expression of $F$ and $G$ will be useful in Paragraph \ref{sss:equilibrium} below (and even more in Chapter \ref{chap:BNS_gravity}), but the previous assumptions are certainly not optimal from the point of view of the weak solutions theory.
\begin{remark}	
For the existence theory of weak solutions to our system, it would be enough to assume $F\in W^{1,\infty}_{\rm loc}\bigl(\R^2 \times\,]0,1[\,\bigr)$ to satisfy
$$ 
	F(x) \geq 0,\quad  \ F(x_1,x_2,- x_3) = F(x_1,x_2,x_3),\quad 
	\
	 | \nabla_x F(x) | \leq c\, (1 + |x^h| )\; \; \mbox{ for all }\; \;  x\in \R^2\times\,]0,1[
$$
and $G\in W^{1,\infty}\bigl(\R^2 \times\,]0,1[\,\bigr)$.
\end{remark}
	
The system is supplemented  with \emph{complete slip boundary conditions}, namely
	\begin{equation}\label{bc1-2}
	\big(\ue \cdot \n_\veps\big) _{|\partial \Omega_\veps} = 0,
	\quad \mbox{ and } \quad
	\bigl([ \mbb{S} (\tem, \nabla_x \ue) \n_\veps ] \times \n_\veps\bigr)_{|\d\Omega_\veps} = 0\,,
	\end{equation}
where $\vec{n}_\veps$ denotes the outer normal to the boundary $\d\Omega_\veps$.
We also suppose that the boundary of physical space is \emph{thermally isolated}, i.e. one has
	\begin{equation}\label{bc3}
	\big(\q\cdot\n_\veps\big)_{|\partial {\Omega_\veps}}=0\,.
	\end{equation}	

\begin{remark} \label{r:speed-waves}
Notice that, as $\delta>0$ in \eqref{dom} and the speed of sound is proportional to $\ep^{-m}$, hypothesis \eqref{dom} guarantees that $\d B_{L_\veps}(0)\times\,]0,1[\,$
of the boundary $\d\Omega_\veps$ of $\Omega_\ep$ becomes irrelevant when one considers the behaviour of acoustic waves on some compact set of the physical space. We refer to
Subsections \ref{ss:acoustic} and \ref{ss:unifbounds_1} for details about this point. 
\end{remark}

\subsubsection{Structural restrictions} \label{sss:structural}
Now we need to  impose structural restrictions on the thermodynamical functions $p$, $e$, $s$ as well as on  
the diffusion coefficients $\mu$, $\eta$, $\kappa$. We start by setting, for some real number $a>0$,
	\begin{equation}\label{pp1}
	p(\vrho,\vtheta)= p_M(\vrho,\vtheta) + \frac{a}{3} \vtheta^{4}\,,\qquad\qquad\mbox{ where }\qquad p_M(\vrho,\vtheta)\,=\,\vtheta^{5/2} P\left(\frac{\vrho}{\vtheta^{3/2}}\right)\,.
	\end{equation}
The first component $p_M$ in relation \eqref{pp1} corresponds  to the standard molecular pressure of a general \textit{monoatomic} gas (see Section 1.4 of \cite{F-N}), while the second one represents the thermal radiation. Here above,
	\begin{equation}\label{pp2}
	P\in C^1 [0,\infty)\cap C^2(0,\infty),\qquad P(0)=0,\qquad P'(Z)>0\quad \mbox{ for all }\,Z\geq 0\,,
	\end{equation}
which in particular implies the positive compressibility condition
	\begin{equation}\label{p_comp}
	\partial_\vr  p (\vr,\temp)>0.
	\end{equation}
Additionally, we assume that $\partial_\temp e(\vr,\temp)$ is positive and bounded (see below): this translates into the condition
	\begin{equation}\label{pp3}
	0<\frac{ \frac{5}{3} P(Z) - P'(Z)Z }{ Z }< c \qquad\qquad\mbox{ for all }\; Z > 0\,.
	\end{equation}
In view of  \eqref{pp3}, we have that $Z \mapsto P(Z) / Z^{5/3}$ is a decreasing function and in addition we assume
	\begin{equation}\label{pp4}
	\lim\limits_{Z\to +\infty} \frac{P(Z)}{Z^{5/3}} = P_\infty >0\,.
	\end{equation}

Accordingly to Gibbs' relation \eqref{gibbs}, the specific internal energy and the specific entropy can be written in the following forms:
$$
	e(\vr,\temp) = e_M(\vr,\temp) + a\frac{\temp^{4}}{\vr}\,, \quad\quad
	s(\vr,\temp)= S\left(\frac{\vr}{\temp^{3/2}}\right) + \frac{4}{3} a \frac{\temp^{3}}{\vr}\,,
$$
where we have set
	\begin{equation}\label{ss1s}
e_M(\vr,\temp)=\frac{3}{2} \frac{\temp^{5/2}}{\vr} P\left( \frac{\vr}{\temp^{3/2}} \right)\qquad\mbox{ and }\qquad
S'(Z) = -\frac{3}{2} \frac{ \frac{5}{3} P(Z) - Z P'(Z)}{Z^2}\quad \mbox{ for all }\, Z>0\,.
	\end{equation}

The diffusion coefficients $\mu$ (shear viscosity), $\eta$ (bulk viscosity) and $\kappa$ (heat conductivity)  are assumed to be continuously differentiable 
functions of the temperature  $\temp \in [0,\infty[\,$, satisfying the following  growth conditions for all $\temp\geq 0$:
	\begin{equation}\label{mu}
	0<\underline\mu(1+\temp) \leq \mu(\temp) \leq \overline\mu (1 + \temp),
\quad
	0 \leq \eta(\temp) \leq \overline\eta(1 + \temp), 
\quad
	0 <  \underline\kappa (1 + \temp^3) \leq \kappa(\temp) \leq \overline\kappa(1+\temp^3),
	\end{equation}
 where $\underline\mu$, $\overline\mu$, $\overline\eta$, $\underline\kappa$ and $\overline\kappa$ are positive constants. Let us remark that the above assumptions may be not optimal from the point of view of the existence theory.

\begin{remark}\label{rmk:pressure_choice}
We point out that the choice in taking the pressure  as above (which formulation describes the pressure for a \textit{monoatomic gas}) is dictated by the fact that we follow the ``solid'' existence theory developed by Feireisl and Novotn\'y in the book \cite{F-N}. 
Any other formulation for the pressure, for which one possesses an existence theory, is allowed in our analysis (see e.g. \cite{B-D_JMPA} for the polytropic gas case, and references therein): as we will see in the next chapter, if $\vtheta$ is constant, one can take in \eqref{pp4} any $\gamma >d/2$ as exponent (where $d$ is the dimension). We refer to \cite{Lions_2}, \cite{F-N-P} and references therein, in this respect (see also \cite{PL_Lions} for a first result in that context). 
\end{remark}

\subsubsection{Analysis of the equilibrium states} \label{sss:equilibrium}

For each scaled system \eqref{ceq} to \eqref{eeq}, the so-called \emph{equilibrium states}  consist of static density $\vret$ and constant temperature distribution  $\tems>0$
satisfying
	\begin{equation}\label{prF}
\nabla_x p(\vret,\tems) = \ep^{2(m-1)} \vret \nabla_x F + \ep^m  \vret  \nabla_x G \quad \mbox{ in } \Omega.
	\end{equation}	
For later use, it is convenient to state \eqref{prF} on whole set $\Omega$. 	
Notice that, \textsl{a priori}, it is not known that the target temperature has to be constant: this follows from the fact that $\nabla_x \temp_\ep$  needs to vanish as $\ep \to 0^+$
(see Section 4.2 of \cite{F-N} for more comments about this).

%
%

Equation \eqref{prF} identifies $\wtilde{\vrho}_\veps$ up to an additive constant: normalizing it to $0$, and taking the target density to be $1$, we get
\begin{equation} \label{eq:target-rho}
  \Pi(\vret)\,=\,\wtilde{F}_\veps\,:=\, \ep^{2(m-1)} F + \ep^m G\,,\qquad\qquad \mbox{ where }\qquad 
\Pi(\vrho) = \int_1^{\vrho} \frac{\partial_\varrho p(z,\oline{\vtheta})}{z} {\rm d}z\,.
\end{equation}

From this relation, we immediately get the following properties:
\begin{itemize}
 \item[(i)] when $m>1$, or $m=1$ and $F=0$, for any $x\in\Omega$ one has $\wtilde{\vrho}_\veps(x)\longrightarrow 1$ in the limit $\veps\ra0^+$;
 \item[(ii)] for $m=1$ and $F\neq0$, $\bigl(\wtilde{\vrho}_\veps\bigr)_\veps$ converges pointwise to $\wtilde{\vrho}$, where
$$
\wtilde\vrho\quad\mbox{ is a solution of the problem}\qquad \Pi\bigl(\wtilde{\vrho}(x)\bigr)\,=\,F(x)\,,\ \mbox{ with }\ x\in\Omega\,.
$$
In particular,
$\wtilde{\vrho}$ is non-constant, of class $C^2(\Omega)$ (keep in mind assumptions \eqref{pp2} and \eqref{p_comp} above) and it depends just on the horizontal variables due to \eqref{assFG}.
\end{itemize}

We are now going to study more in detail the equilibrium densities $\wtilde\vrho_\veps$.
In order to keep the discussion as general as possible, we are going to consider both cases (i) and (ii) listed above, even though our results will concern only case (i).

The first problem we have to face is that the right-hand side of \eqref{eq:target-rho} may be negative: this means that $\wtilde{\varrho}_\veps$ can go below the value $1$ in some regions of $\Omega$.
Nonetheless, the next statement excludes the presence of vacuum.
\begin{lemma} \label{l:target-rho_pos}
Let the  centrifugal force $F$ and the gravitational force $G$ be given by \eqref{assFG}.
Let $\bigl(\wtilde{\vrho}_\veps\bigr)_{0<\veps\leq1}$ be a family of static solutions to equation \eqref{eq:target-rho}
on $\Omega$.

Then, there exist an $\veps_0>0$ and a $0<\rho_*<1$ such that $\wtilde{\vrho}_\veps\geq\rho_*$ for all $\veps\in\,]0,\veps_0]$
and all $x\in\Omega$.
\end{lemma}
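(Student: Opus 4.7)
My plan is to invert the algebraic relation $\Pi(\wtilde{\vrho}_\veps)=\wtilde{F}_\veps$ furnished by \eqref{eq:target-rho}, and to translate a uniform lower bound on the right-hand side into the desired uniform lower bound on $\wtilde{\vrho}_\veps$. The central object to analyse is therefore the scalar function $\Pi$, whose qualitative behaviour must be pinned down independently of $\veps$, combined with an $\veps$-uniform pointwise estimate from below on $\wtilde{F}_\veps$ over the whole of $\Omega$.

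First I would establish that $\Pi:(0,\infty)\to\R$ is a continuous, strictly increasing bijection onto an interval containing a neighbourhood of $0$, whose inverse is continuous with $\Pi^{-1}(0)=1$. Strict monotonicity and the value at $\vrho=1$ are immediate from the positive compressibility condition \eqref{p_comp} and from the very definition of $\Pi$ in \eqref{eq:target-rho}. The non-trivial point is the behaviour of $\Pi$ as $\vrho\to 0^+$: from \eqref{pp1} one has $\d_\vrho p(z,\oline{\vtheta})=\oline{\vtheta}\,P'(z/\oline{\vtheta}^{3/2})$, and because of \eqref{pp2} the quantity $P'(0)$ is strictly positive, so that the integrand $\d_\vrho p(z,\oline{\vtheta})/z$ behaves like $C/z$ as $z\to 0^+$. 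Consequently $\Pi(\vrho)\to-\infty$ as $\vrho\to 0^+$, and in particular $\Pi^{-1}$ is well defined and continuous on some interval $(-M,+\infty)\cap\mathrm{Im}(\Pi)$ with $M$ arbitrarily large.

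Second, I would exploit the explicit form \eqref{assFG} of $F$ and $G$ together with $x^{3}\in\,]0,1[\,$ to produce the $\veps$-uniform lower bound
$$
\wtilde{F}_\veps(x)\,=\,\veps^{2(m-1)}\,|x^{h}|^{2}\,-\,\veps^{m}\,x^{3}\,\geq\,-\,\veps^{m}\qquad\mbox{ for every }x\in\Omega\mbox{ and every }\veps\in\,]0,1]\,.
$$
Since $\Pi^{-1}$ is increasing, applying it to \eqref{eq:target-rho} immediately yields the pointwise bound $\wtilde{\vrho}_\veps(x)\,\geq\,\Pi^{-1}(-\veps^{m})$ for all $x\in\Omega$. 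By continuity of $\Pi^{-1}$ at $0$ with $\Pi^{-1}(0)=1$, there exists $\veps_{0}\in\,]0,1]$ such that $\Pi^{-1}(-\veps_{0}^{m})\geq 1/2$; setting $\rho_*:=1/2$ concludes the argument, since then $\wtilde{\vrho}_\veps(x)\geq\Pi^{-1}(-\veps^{m})\geq\Pi^{-1}(-\veps_{0}^{m})\geq \rho_*$ for all $\veps\leq\veps_{0}$ and all $x\in\Omega$.

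The only delicate point in this plan is the verification that $\Pi^{-1}$ is defined and continuous in a full one-sided neighbourhood of $0$, namely that $\Pi$ actually blows up at $\vrho=0^{+}$; this is precisely where the structural assumptions \eqref{pp1}--\eqref{pp3} on the equation of state are used. Everything else is elementary, reducing to the observation that $F\geq 0$ and that the negative contribution of the gravitational potential to $\wtilde{F}_\veps$ is of order $\veps^{m}$, hence vanishes in the singular limit $\veps\to 0^{+}$. Note that this argument treats uniformly both regimes isolated after \eqref{eq:target-rho}, since in case (ii), where $m=1$, the same bound $\wtilde{F}_\veps\geq -\veps^{m}=-\veps$ holds.
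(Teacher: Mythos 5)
Your proof is correct, and it takes a genuinely different and, in fact, more streamlined route than the paper's. The paper argues by contradiction: it assumes a sequence $(\veps_n,x_n)$ with $\wtilde{\vrho}_{\veps_n}(x_n)\leq 1/n$, deduces that $\wtilde{F}_{\veps_n}(x_n)$ is squeezed between $-\veps_n^m$ and $0$ and hence vanishes, then invokes the mean value theorem for $\Pi$ together with \eqref{p_comp} to conclude $\wtilde{\vrho}_{\veps_n}(x_n)\to 1$, a contradiction; for $m=1$, $F\neq 0$ the paper runs a separate, slightly more involved argument anchored at the point $x^0=(0,0,1)$. You instead invert $\Pi$ directly: you verify, from the structural conditions \eqref{pp1}--\eqref{pp3} (notably $P'(0)>0$, which gives $\Pi'(z)\sim C/z$ near $0$ so that $\Pi(\vrho)\to-\infty$ as $\vrho\to 0^+$), that $\Pi^{-1}$ exists, is increasing, and is continuous at $0$ with $\Pi^{-1}(0)=1$; you then combine this with the uniform lower bound $\wtilde{F}_\veps\geq-\veps^m$ on all of $\Omega$, valid for every $m\geq 1$ since $F\geq 0$ and $-x^3\geq-1$. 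The two approaches are built on the same key estimate $\wtilde F_\veps\geq-\veps^m$, but yours handles all three regimes of the paper ($m>1$; $m=1$, $F=0$; $m=1$, $F\neq 0$) in a single stroke, avoids both the contradiction and the mean value theorem, and additionally delivers an explicit threshold $\rho_*$ (any value in $]\,\Pi^{-1}(-\veps_0^m),1[\,$, e.g.\ $1/2$). The one ingredient you use that the paper does not make explicit is the blow-up of $\Pi$ at the origin, and you justify it cleanly from the growth of the integrand. No gaps.
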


\begin{proof}
Let us consider the case $m>1$ (hence $F\neq0$) first. Suppose, by contradiction, that there exists a sequence $\bigl(\veps_n,x_n\bigr)_n$ such that
$0\,\leq\,\wtilde{\vrho}_{\veps_n}(x_n)\,\leq\,1/n$, and we will observe that the sequence $\bigl(x_n\bigr)_n$ cannot be bounded. Indeed, relation
\eqref{eq:target-rho}, computed on $\wtilde{\vrho}_{\veps_n}(x_n)$, would immediately imply that $\wtilde{\vrho}_{\veps_n}(x_n)$ should
rather converge to $1$.
In any case, since $1/n<1$ for $n\geq2$ and $x^3\in\; ]0,1[\, $, we deduce that
$$
-\,(\veps_n)^m\,\leq\,\wtilde{F}_{\veps_n}(x_n)\,=\,(\veps_n)^{2(m-1)}\,|\,(x_n)^h\,|^2\,-\,(\veps_n)^m\,(x_n)^3\,<\,0\,,
$$
which in particular implies that $\wtilde{F}_{\veps_n}(x_n)$ has to go to $0$ for $\veps\ra0^+$. As a consequence, since
$\Pi(1)=0$, by the mean value theorem (see e.g. Chapter 5 of \cite{Rud}) and \eqref{eq:target-rho} we get
$$
\wtilde{F}_{\veps_n}(x_n)\,=\,\Pi\bigl(\wtilde{\vrho}_{\veps_n}(x_n)\bigr)\,=\,\Pi'(z_n)\,\bigl(\wtilde{\vrho}_{\veps_n}(x_n)-1\bigr)\,=\,
\frac{\d_\varrho p\bigl(z_n,\oline{\vtheta}\bigr)}{z_n}\,\bigl(\wtilde{\vrho}_{\veps_n}(x_n)-1\bigr)\,\longrightarrow\,0\,,
$$
for some $z_n\in\,]\wtilde{\vrho}_{\veps_n}(x_n),1[\,\subset\,]0,1[\,$, for all $n\in\N$. In turn, this relation,
combined with \eqref{p_comp}, implies that $\wtilde{\vrho}_{\veps_n}(x_n)\longrightarrow 1$,
which is in contradiction with the fact that it has to be $\leq1/n$ for any $n\in\N$.

The case $m=1$ and $F=0$ can be treated in a similar way. Let us now assume that $m=1$ and $F\neq0$: relation \eqref{eq:target-rho} in this case becomes
\begin{equation} \label{eq:target_m=1}
\Pi\bigl(\wtilde{\vrho}_\veps(x)\bigr)\,=\,|\,x^h\,|^2\,-\,\veps\,x^3\,.
\end{equation}
We observe that the right-hand side of this identity is negative on the set
$\left\{0\,\leq\,|\,x^h\,|^2\,\leq\,\veps\,x^3\right\}$. By definition \eqref{eq:target-rho}, this is equivalent to having $\wtilde{\vrho}_\veps(x)\leq1$.

In particular, the smallest value of $\wtilde{\vrho}_\veps(x)$
is attained for $x=x^0=(0,0,1)$, for which $\Pi\bigl(\wtilde{\vrho}_\veps(x^0)\bigr)=-\veps$.
On the other hand, fixed a $x^0_\veps$ such that $|\,(x^0_\veps)^h\,|^2=\veps$ and $(x^0_\veps)^3=1$, we have
$\Pi\bigl(\wtilde{\vrho}_\veps(x_\veps^0)\bigr)=0$, and then $\wtilde{\vrho}_\veps(x^0_\veps)=1$.
Therefore, by mean value theorem again we get
\begin{align*}
-\,\veps\;=\;\Pi\bigl(\wtilde{\vrho}_\veps(x^0)\bigr)\,-\,\Pi\bigl(\wtilde{\vrho}_\veps(x^0_\veps)\bigr)\,&=\,
\frac{\d_\varrho p\bigl(\wtilde{\vrho}_\veps(y_\veps),\oline{\vtheta}\bigr)}{\wtilde{\vrho}_\veps(y_\veps)}\,
\bigl(\wtilde{\vrho}_\veps(x^0)\,-\,\wtilde{\vrho}_\veps(x^0_\veps)\bigr)\,=\,\frac{\d_\varrho p\bigl(\wtilde{\vrho}_\veps(y_\veps),\oline{\vtheta}\bigr)}{\wtilde{\vrho}_\veps(y_\veps)}\,
\bigl(\wtilde{\vrho}_\veps(x^0)\,-\,1\bigr),
\end{align*}
for some suitable point $y_\veps=\bigl((x_\veps)^h,1\bigr)$ lying on the line connecting $x^0=(0,0,1)$ with $x^0_\veps$.

From this equality and the structural hypothesis \eqref{p_comp}, since $\wtilde{\vrho}_\veps(x^0)\,-\,1<0$ (due to the fact that $\Pi\bigl(\wtilde{\vrho}_\veps(x^0)\bigr)<0$),
we deduce that $\wtilde{\vrho}_\veps(y_\veps)>0$ for all $\veps>0$. On the other hand, \eqref{eq:target_m=1} says that,
for $x^3$ fixed, the function $\Pi\circ\wtilde{\vrho}_\veps$ is radially increasing on $\R^2$: then, in particular
$\wtilde{\vrho}_\veps(y_\veps)\leq\wtilde{\vrho}_\veps(x^0_\veps)=1$.

Finally, thanks to these relations and the regularity properties \eqref{pp1} and \eqref{pp2}, we see that
$$
\wtilde{\vrho}_\veps(x^0)\,=\,1\,-\,\veps\,
\frac{\wtilde{\vrho}_\veps(y_\veps)}{\d_\varrho p\bigl(\wtilde{\vrho}_\veps(y_\veps),\oline{\vtheta}\bigr)}
$$
remains strictly positive, at least for $\veps$ small enough.
\qed
\end{proof}

\medbreak
For simplicity, and without loss of any generality, we assume from now on that $\veps_0=1$ in Lemma \ref{l:target-rho_pos}.

Next, denoted as above $B_l(0)$ the ball in the horizontal variables $x^h\in\R^2$ of center $0$ and radius $l>0$, we define the cylinder \emph{with smoothed corners}
$$
	\mbb B_{L} := \left\{ x\in \Omega \ : \ |x^h| < L  \right\}=B_L(0)\times\, ]0,1[\, .
$$
We can now state the next boundedness property for the family $\bigl(\wtilde{\vrho}_\veps\bigr)_\veps$.
\begin{lemma} \label{l:target-rho_bound}
Let $m\geq1$. Let $F$ and $G$ satisfy \eqref{assFG}. Then, for any $l>0$, there exists a constant $C(l)>1$ such that for all $\veps\in\,]0,1]$ one has
\begin{equation} \label{est:target-rho_in}
\wtilde{\vrho}_\veps\,\leq\,C(l)\qquad\qquad\mbox{ on }\qquad \oline{\mbb{B}}_{l}\,.
\end{equation}

If $F=0$, then there exists $C>1$ such that, for all $\veps\in\,]0,1]$ and all $x\in\Omega$, one has $\left|\wtilde\vrho_\veps(x)\right|\leq C$.
\end{lemma}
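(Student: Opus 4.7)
The plan is to read off the bound directly from the defining relation \eqref{eq:target-rho}, by exploiting the monotonicity of the function $\Pi$ and the fact that the right-hand side $\wtilde F_\veps = \veps^{2(m-1)} F + \veps^m G$ is uniformly bounded on compact sets (and globally bounded when $F=0$).

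First I would record the key properties of $\Pi$. By hypothesis \eqref{p_comp} and the positivity of $z$ in the integrand in \eqref{eq:target-rho}, $\Pi:\,]0,+\infty[\,\to \R$ is strictly increasing and continuous, with $\Pi(1)=0$. Moreover, a direct computation from \eqref{pp1} gives $\d_\varrho p(z,\oline\vtheta) = \oline\vtheta\, P'(z/\oline\vtheta^{3/2})$, so that by \eqref{pp2} and \eqref{pp4} the integrand in the definition of $\Pi$ behaves like $C/z$ near $z=0$ and like $C z^{-1/3}$ as $z\to+\infty$. Hence $\Pi(\vrho)\to -\infty$ as $\vrho\to 0^+$ and $\Pi(\vrho)\to +\infty$ as $\vrho\to+\infty$, so $\Pi$ is a homeomorphism of $\,]0,+\infty[\,$ onto $\R$, and its inverse $\Pi^{-1}$ is continuous and increasing.

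Next, I would bound $\wtilde F_\veps$ explicitly. Using the expressions \eqref{assFG} together with $\veps\in\,]0,1]$ and $m\geq 1$ (so $\veps^{2(m-1)}\leq 1$ and $\veps^m\leq 1$), on $\oline{\mbb B}_l$ one has
\[
\wtilde F_\veps(x)\,=\,\veps^{2(m-1)}|x^h|^2\,-\,\veps^m x^3\,\leq\,l^2\qquad\text{for all }x\in\oline{\mbb B}_l\,,\ \veps\in\,]0,1]\,.
\]
Applying the increasing map $\Pi^{-1}$ to the equation $\Pi(\wtilde{\vrho}_\veps)=\wtilde F_\veps$ then yields
\[
\wtilde{\vrho}_\veps(x)\,=\,\Pi^{-1}\!\bigl(\wtilde F_\veps(x)\bigr)\,\leq\,\Pi^{-1}(l^2)\,=:\,C(l)\qquad\text{for all }x\in\oline{\mbb B}_l\,,\ \veps\in\,]0,1]\,,
\]
which, combined with Lemma \ref{l:target-rho_pos}, shows in particular that $C(l)>1$ (up to enlarging it).

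Finally, if $F=0$, then $\wtilde F_\veps(x)=-\veps^m x^3$, so $\wtilde F_\veps(x)\leq 0\leq 1$ uniformly in $x\in\Omega$ and $\veps\in\,]0,1]$. The same application of $\Pi^{-1}$ gives the global bound $\wtilde{\vrho}_\veps(x)\leq\Pi^{-1}(1)=:C$ on all of $\Omega$, concluding the proof. The only non-routine step is the asymptotic analysis of $\Pi$ near $0$ and at infinity, which is needed to guarantee that $\Pi^{-1}$ is defined on all of $\R$; once this is done, the bounds follow immediately from the elementary monotonicity argument sketched above.
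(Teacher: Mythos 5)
Your argument is correct, and it takes a genuinely different and in fact more direct route than the paper. The paper proceeds by contradiction: it supposes $\wtilde\vrho_\veps(x_n)\geq n$ on $\oline{\mbb B}_l$, uses hypotheses \eqref{pp3}--\eqref{pp4} to show $\Pi(\wtilde\vrho_\veps(x_n))\to+\infty$, and pits this against the uniform bound $\wtilde F_\veps\leq l^2+1$ on the cylinder; it then invokes the uniform convergence of $\wtilde\vrho_\veps$ to its pointwise limit on the compact $\oline{\mbb B}_l$ to upgrade the resulting constant to one depending only on $l$, treating $m>1$, $m=1$ with $F\neq0$, and $F=0$ as three distinct cases because the limit profiles differ. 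You instead establish once and for all, from $\d_\vrho p(z,\oline\vtheta)=\oline\vtheta\,P'(z/\oline\vtheta^{3/2})$ and \eqref{pp2}--\eqref{pp4}, that $\Pi$ is a strictly increasing homeomorphism of $\,]0,+\infty[\,$ onto $\R$, and then simply apply the increasing inverse $\Pi^{-1}$ to the pointwise identity $\Pi(\wtilde\vrho_\veps)=\wtilde F_\veps$ combined with the elementary inequality $\wtilde F_\veps\leq l^2$ on $\oline{\mbb B}_l$ (and $\wtilde F_\veps\leq 0$ on $\Omega$ when $F=0$). This yields the $\veps$-uniform constant $C(l)=\Pi^{-1}(l^2)$ in one step, with no case distinction and no appeal to pointwise convergence; the same divergence of $\Pi$ at $+\infty$ that the paper uses inside its contradiction appears here packaged as surjectivity of $\Pi$. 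Two minor remarks: the $z\to0^+$ asymptotics are not actually needed for the upper bound, since $l^2\geq 0=\Pi(1)>\Pi(0^+)$ already places $l^2$ in the range of $\Pi$ regardless of its left endpoint; and $C(l)=\Pi^{-1}(l^2)>\Pi^{-1}(0)=1$ is automatic by strict monotonicity, so no enlargement of the constant is needed.
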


\begin{proof}
Let us focus on the case $m>1$ and $F\neq 0$ for a while.
In order to see \eqref{est:target-rho_in}, we proceed in two steps. First of all, we fix $\veps$ and we show that $\wtilde{\varrho}_\veps$
is bounded in the previous set. Assume it is not: there exists a sequence $\bigl(x_n\bigr)_{n}\subset \oline{\mbb{B}}_{l}$
such that $\wtilde{\varrho}_\veps(x_n)\geq n$. But then, thanks to hypothesis \eqref{pp3}, we can write
$$
\Pi\bigl(\wtilde{\varrho}_\veps(x_n)\bigr)\,\geq\,\int^n_1\frac{\d_\varrho p(z,\oline{\vtheta})}{z}{\rm\,d}z\,\geq\,C(\oline{\vtheta})\,
\int^{n/\oline{\vtheta}^{3/2}}_{1/\oline{\vtheta}^{3/2}}\frac{P(Z)}{Z^2}{\rm\,d}Z\,,
$$
and, by use of \eqref{pp4}, it is easy to see that the last integral diverges to $+\infty$ for $n\ra+\infty$. On the other hand,
on the set $\oline{\mbb{B}}_{l}$, the function $\wtilde{F}_\veps$ is uniformly bounded by the constant
$l^2+1$, and, recalling formula \eqref{eq:target-rho}, these two facts are in contradiction one with other.

So, we have proved that, $\wtilde{\varrho}_\veps\,\leq\,C(\veps,l)$ on the set $\oline{\mbb{B}}_{l}$.
But, thanks to point (i) below \eqref{eq:target-rho}, the pointwise convergence of $\wtilde{\vrho}_\veps$ to $1$ becomes uniform in the
previous set, so that the constant $C(\veps,l)$ can be dominated by a new constant $C(l)$, just depending on the fixed $l$. 

Let us now take $m=1$ and $F\neq0$. 
We start by observing that, again, the following property holds true:
for any $\veps$ and any $l>0$ fixed, one has $\wtilde{\vrho}_\veps\,\leq\,C(\veps,l)$ in $\oline{\mbb{B}}_{l}$.
Furthermore, by point (ii) below \eqref{eq:target-rho} we have that $\wtilde\vrho\in C^2(\Omega)$, and then $\wtilde\vrho$ is locally bounded:
for any $l>0$ fixed, we have $\wtilde{\vrho}\,\leq\,C(l)$ on the set $\oline{\mbb{B}}_{l}$. On the other hand, the pointwise convergence of
$\bigl(\wtilde{\vrho}_\veps\bigr)_\veps$ towards $\wtilde\vrho$ becomes uniform on the compact set $\oline{\mbb{B}}_{l}$:
gluing these facts together, we infer that, in the previous bound for $\wtilde{\vrho}_\veps$, we can replace $C(\veps,l)$ by a constant $C(l)$ which is uniform in $\veps$.

Consider now the case $F=0$, and any value $m\geq1$. In this case, relation \eqref{eq:target-rho} becomes
$$
\Pi\big(\wtilde\vrho_\veps\big)\,=\,\veps^m\,G,\qquad\text{which implies}\qquad \left|\Pi\big(\wtilde\vrho_\veps\big)\right|\,\leq\,C\quad\mbox{ in }\;\Omega\,.
$$
At this point, as a consequence of the structural assumptions \eqref{pp1}, \eqref{pp3} and \eqref{pp4}, we observe that
$\Pi(z)\longrightarrow+\infty$ for $z\ra+\infty$. Then, $\wtilde\vrho_\veps$ must be uniformly bounded in $\Omega$.

This completes the proof of the lemma. \qed
\end{proof}

\medbreak
We conclude this paragraph by showing some additional bounds, which will be relevant in the sequel.
\begin{proposition} \label{p:target-rho_bound}
Let $F\neq0$. For any $l>0$, on the cylinder $\oline{\mbb{B}}_{l}$ one has, for any $\veps\in\,]0,1]$:
\begin{enumerate}[(1)]
\item $
\left|\wtilde{\vrho}_\veps(x)\,-\,1\right|\,\leq\,C(l)\,\veps^m\, \text{ if }m\geq2$; 
\item $
\left|\wtilde{\vrho}_\veps(x)\,-\,1\right|\,\leq\,C(l)\,\veps^{2(m-1)}\, \text{ if }1<m<2
$;
\item $
\left|\wtilde{\vrho}_\veps(x)\,-\,\wtilde{\varrho}(x)\right|\,\leq\,C(l)\,\veps\, \text{ if }m=1$. 
\end{enumerate}

When $F=0$ and $m\geq1$, instead, one has $\left|\wtilde{\vrho}_\veps(x)\,-\,1\right|\,\leq\,C\,\veps^m$, for a constant $C>0$ which is uniform in $x\in\Omega$ and in $\veps\in\,]0,1]$.
\end{proposition}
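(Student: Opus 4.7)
The plan is to reduce each bound to a direct application of the mean value theorem to the function $\Pi$, exploiting the uniform two-sided bounds on $\wtilde{\vrho}_\veps$ that have already been established in Lemmas \ref{l:target-rho_pos} and \ref{l:target-rho_bound}. Concretely, since $\Pi(1)=0$, for every $\veps$ and every $x\in\oline{\mbb B}_l$ there exists $z_\veps=z_\veps(x)$ lying between $\wtilde{\vrho}_\veps(x)$ and $1$ such that
\[
\wtilde{F}_\veps(x)\,=\,\Pi\bigl(\wtilde{\vrho}_\veps(x)\bigr)\,-\,\Pi(1)\,=\,\Pi'(z_\veps)\,\bigl(\wtilde{\vrho}_\veps(x)-1\bigr)\,=\,\frac{\d_\varrho p(z_\veps,\oline\vtheta)}{z_\veps}\,\bigl(\wtilde{\vrho}_\veps(x)-1\bigr),
\]
so the whole game is to control $\Pi'(z_\veps)$ from below (and above) by a positive constant depending only on $l$, and then read off the rate from $\wtilde F_\veps=\veps^{2(m-1)}F+\veps^m G$.

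First I would observe that, thanks to Lemma \ref{l:target-rho_pos}, $\wtilde{\vrho}_\veps\geq\rho_*>0$ uniformly in $\veps$ and $x$, while Lemma \ref{l:target-rho_bound} yields $\wtilde{\vrho}_\veps\leq C(l)$ on $\oline{\mbb B}_l$. Hence $z_\veps\in[\rho_*,C(l)]$, and on this compact interval the structural assumption \eqref{p_comp} together with the regularity \eqref{pp1}--\eqref{pp2} gives
\[
0\,<\,c(l)\,\leq\,\frac{\d_\varrho p(z_\veps,\oline\vtheta)}{z_\veps}\,\leq\,C(l)\,.
\]
This immediately yields $|\wtilde{\vrho}_\veps-1|\sim|\wtilde F_\veps|$ pointwise on $\oline{\mbb B}_l$. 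Since $|F|\leq l^2$ and $|G|\leq 1$ on $\oline{\mbb B}_l$, we find $|\wtilde F_\veps|\leq l^2\veps^{2(m-1)}+\veps^m$: case (1) ($m\geq 2$) follows from $2(m-1)\geq m$, so $\veps^{2(m-1)}\leq\veps^m$; case (2) ($1<m<2$) follows from the opposite inequality, so $\veps^m\leq\veps^{2(m-1)}$.

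For case (3) with $m=1$, I would run the same argument relative to $\wtilde\vrho$ instead of $1$: subtracting the defining equations for $\wtilde\vrho_\veps$ and $\wtilde\vrho$ gives $\Pi(\wtilde\vrho_\veps)-\Pi(\wtilde\vrho)=-\veps\, x^3$, and applying the mean value theorem with $z_\veps$ now lying between $\wtilde\vrho_\veps(x)$ and $\wtilde\vrho(x)$ produces $|\wtilde\vrho_\veps-\wtilde\vrho|\leq C(l)\,\veps$ on $\oline{\mbb B}_l$, provided one knows that $z_\veps$ again stays in a compact subset of $]0,+\infty[$ uniformly in $\veps$ and $x\in\oline{\mbb B}_l$. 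This is granted by the bounds on $\wtilde\vrho_\veps$ from Lemmas \ref{l:target-rho_pos} and \ref{l:target-rho_bound} and by the fact that $\wtilde\vrho\in C^2(\Omega)$ (point (ii) below \eqref{eq:target-rho}), hence locally bounded above and (by \eqref{p_comp}, arguing as in Lemma \ref{l:target-rho_pos}) locally bounded away from zero.

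Finally, for the case $F=0$ the identity $\Pi(\wtilde\vrho_\veps)=\veps^m G$ holds on all of $\Omega$ with $|G|\leq 1$, and Lemma \ref{l:target-rho_bound} gives a uniform (in $x\in\Omega$) upper bound on $\wtilde\vrho_\veps$, while Lemma \ref{l:target-rho_pos} gives a uniform lower bound: the same mean-value argument therefore yields $|\wtilde\vrho_\veps-1|\leq C\,\veps^m$ on the whole of $\Omega$. The only delicate point in the whole plan is really this uniform two-sided control of $z_\veps$; once that is in place, every statement is a one-line consequence of the mean value theorem applied to $\Pi$.
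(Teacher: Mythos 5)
Your argument is correct and follows essentially the same route as the paper: apply the mean value theorem to $\Pi$, control the intermediate point $z_\veps$ inside a fixed compact interval $[\rho_*,C(l)]$ via Lemmas \ref{l:target-rho_pos} and \ref{l:target-rho_bound}, and read off the rate from the orders of $\veps$ in $\wtilde F_\veps$ (respectively in $-\veps\, x^3$ for $m=1$, and $\veps^m G$ for $F=0$). Your added remark that $\wtilde\vrho$ is locally bounded away from zero (needed to locate $z_\veps$ uniformly when $m=1$) is a useful clarification that the paper leaves implicit.
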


\begin{proof}
Assume $F\neq0$ for a while. Let $m\geq2$. Thanks to the Lemma \ref{l:target-rho_bound}, the estimate on $\left|\wtilde{\vrho}_\veps(x)\,-\,1\right|$ easily
follows applying the mean value theorem (see again e.g. Chapter 5 of \cite{Rud}) to equation \eqref{eq:target-rho}, and noticing that
$$
\sup_{z\in[\rho_*,C(l)]}\left|\frac{z}{\d_\varrho p(z,\oline{\vtheta})}\right|\,<\,+\infty\,,
$$ 
on $\oline{\mbb{B}}_l$ for any fixed $l>0$.
According to the hypothesis $m\geq2$, we have $2(m-1)\geq m$. The claimed bound then follows.
The proof of the inequality for $1<m<2$ is analogous, using this time that $2(m-1)\leq m$.

In order to prove the inequality for $m=1$, we consider the equations satisfied by $\wtilde{\vrho}_\veps$ and $\wtilde{\vrho}$: we have
$$
\Pi\bigl(\wtilde{\vrho}_\veps(x)\bigr)\,=\,|\,x^h\,|^2\,-\,\veps\,x^3\qquad\qquad\mbox{ and }\qquad\qquad
\Pi\bigl(\wtilde{\vrho}(x)\bigr)\,=\,|\,x^h\,|^2\,.
$$
Now, we take the  difference and we apply again the mean value theorem, finding
$$
\Pi'\big(z_\veps(x)\big)\,\big(\wtilde{\vrho}_\veps(x)\,-\,\wtilde{\varrho}(x)\big)\,=\,-\veps\,x^3\,,
$$
for some $z_\veps(x)\in\,]\wtilde{\vrho}_\veps(x),\wtilde{\varrho}(x)[\,$ (or with exchanged extreme points, depending on $x$). By Lemma \ref{l:target-rho_bound},
we have uniform (in $\veps$) bounds on the set $\oline{\mbb{B}}_{l}$, depending on $l$, for $\wtilde{\vrho}_\veps(x)$ and
$\wtilde{\varrho}(x)$: then, from the previous identity, on this cylinder we find
$$
\left|\wtilde{\vrho}_\veps(x)\,-\,\wtilde{\varrho}(x)\right|\,\leq\,C(l)\,\veps\,.
$$

The bounds in the case $F=0$ can be shown in an analogous way. 
The proposition is now  completely proved.
\qed
\end{proof}

\medbreak

%

From now on, we will focus on the following cases:
\begin{equation} \label{eq:choice-m}
\mbox{ either }\quad m\geq2\,,\qquad\quad \mbox{ or }\quad\qquad m\geq1\quad\mbox{ and }\quad F=0\,.
\end{equation}
Notice that in all those cases, the target density profile $\wtilde\vrho$ is constant, namely $\wtilde\vrho\equiv1$.


\subsubsection{Initial data and finite energy weak solutions} \label{sss:data-weak}

We address the singular perturbation problem described in Paragraph \ref{sss:primsys} for general \emph{ill prepared initial data}, in the framework of \emph{finite energy weak solutions},
whose theory was developed in \cite{F-N}.
Since we work with weak solutions based on dissipation estimates and control of entropy production rate, we need to assume that the initial data are close to the equilibrium states $(\vret,\tems)$ that we have just identified.
Namely, we consider initial densities and temperatures of the following form:
	\begin{equation}\label{in_vr}
	\vrez = \vret + \ep^m \vrez^{(1)} 
	\qquad\qquad\mbox{ and }\qquad\qquad
	\temz = \tems + \ep^m \Theta_{0,\veps}\,.
	\end{equation}
For later use, let us introduce also the following decomposition of the initial densities: 
\begin{equation} \label{eq:in-dens_dec}
\vrho_{0,\veps}\,=\,1\,+\,\veps^m\,R_{0,\veps}\qquad\qquad\mbox{ with }\qquad
R_{0,\veps}\,=\,\vrho_{0,\veps}^{(1)}\,+\,\wtilde r_\veps\,,\qquad \wtilde r_\veps\,:=\,\frac{\wtilde\vrho_\veps-1}{\veps^m}\,.
\end{equation}
Notice that $\wtilde r_\veps$ is in fact a datum of the system, since it only depends on $p$, $F$ and $G$.

We suppose $\vrez^{(1)}$ and $\Theta_{0,\veps}$ to be bounded measurable functions satisfying the controls
	\begin{align}
\sup_{\veps\in\,]0,1]}\left\|  \vrez^{(1)} \right\|_{(L^2\cap L^\infty)(\Omega_\veps)}\,\leq \,c\,,\qquad 
\sup_{\veps\in\,]0,1]}\left(\left\|\Theta_{0,\veps}\right\|_{L^\infty(\Omega_\veps)}\,+\,\left\| \sqrt{\wtilde\vrho_\veps}\,\Theta_{0,\veps}\right\|_{L^2(\Omega_\veps)}\right)\,\leq\, c\,,\label{hyp:ill_data}
	\end{align}
together with the mean-free conditions
$$
\int_{\Omega_\veps}  \vrez^{(1)} \dx = 0\qquad\qquad\mbox{ and }\qquad\qquad \int_{\Omega_\veps}\Theta_{0,\veps} \dx = 0\,.
$$
As for the initial velocity fields, we will assume instead the following uniform bounds:
\begin{equation} \label{hyp:ill-vel}
 	\sup_{\veps\in\,]0,1]}\left(\left\| \sqrt{\wtilde\vrho_\veps} \vec{u}_{0,\ep} \right\|_{L^2(\Omega_\veps)}\,+\, \left\| \vec{u}_{0,\ep}  \right\|_{L^\infty(\Omega_\veps)}\right)\,  \leq\, c\,.
\end{equation}

\begin{remark} \label{r:ill_data}
In view of Lemma \ref{l:target-rho_pos}, the conditions in \eqref{hyp:ill_data} and \eqref{hyp:ill-vel} imply in particular that
$$
\sup_{\veps\in\,]0,1]}\left(\left\| \Theta_{0,\veps}\right\|_{L^2(\Omega_\veps)}\,+\,\left\| \vec{u}_{0,\ep}  \right\|_{L^2(\Omega_\veps)}\right)\,\leq\,c\,.
$$
\end{remark}


Thanks to the previous uniform estimates, up to extraction, we can assume that
\begin{equation} \label{conv:in_data}
\vrho^{(1)}_0\,:=\,\lim_{\veps\ra0}\vrho^{(1)}_{0,\veps}\;,\qquad
R_0\,:=\,\lim_{\veps\ra0}R_{0,\veps}\;,\qquad
\Theta_0\,:=\,\lim_{\veps\ra0}\Theta_{0,\veps}\;,\qquad
\vec{u}_0\,:=\,\lim_{\veps\ra0}\vec{u}_{0,\veps}\,,
\end{equation}
where we agree that the previous limits are taken in the weak-$*$ topology of $L_{\rm loc}^\infty(\Omega)\,\cap\,L_{\rm loc}^2(\Omega)$.

\medbreak


Let us specify better what we mean for \emph{finite energy weak solution} (see \cite{F-N} for details). First of all, the equations have to be satisfied in a distributional sense:
	\begin{equation}\label{weak-con}
	-\int_0^T\int_{\Omega_\veps} \left( \vre \partial_t \varphi  + \vre\ue \cdot \nabla_x \varphi \right) \dxdt = 
	\int_{\Omega_\veps} \vrez \varphi(0,\cdot) \dx,
	\end{equation}
for any $\varphi\in C^\infty_c([0,T[\,\times \overline\Omega_\veps)$;
	\begin{align}
	&\int_0^T\!\!\!\int_{\Omega_\veps}  
	\left( - \vre \ue \cdot \partial_t \vec\psi - \vre [\ue\otimes\ue]  : \nabla_x \vec\psi 
	+ \frac{1}{\ep} \, \e_3 \times (\vre \ue ) \cdot \vec\psi  - \frac{1}{\ep^{2m}} p(\vre,\tem) \div \vec\psi  \right) \dxdt \label{weak-mom} \\
	& =\int_0^T\!\!\!\int_{\Omega_\veps} 
	\left(- \mbb{S}(\vtheta_\veps,\nabla_x\vec u_\veps)  : \nabla_x \vec\psi +  \left(\frac{1}{\ep^2} \vre \nabla_x F +  \frac{1}{\ep^m} \vre \nabla_x G \right)\cdot \vec\psi \right) \dxdt 
	+ \int_{\Omega_\veps}\vrez \uez  \cdot \vec\psi (0,\cdot) \dx, \nonumber
	\end{align}
for any test function $\vec\psi\in C^\infty_c([0,T[\,\times \overline\Omega_\veps; \R^3)$ such that $\big(\vec\psi \cdot \n_\veps\big)_{|\partial {\Omega_\veps}} = 0$;
	\begin{align}
	\int_0^T\!\!\!\int_{\Omega_\veps} 
	& \Bigl( - \vre s(\vre,\tem) \partial_t \varphi  -  \vre s(\vre,\tem) \ue \cdot \nabla_x \varphi \Bigr) \dxdt \label{weak-ent} \\
	& - \int_0^T\int_{\Omega_\veps}  \frac{\q(\vtheta_\veps,\nabla_x\vtheta_\veps)}{\tem} \cdot \nabla_x \varphi \dxdt - \langle \sigma_\ep; \varphi  \rangle_{ [{\cal{M}}; C^0]([0,T]\times \overline\Omega_\veps)}
	= \int_{\Omega_\veps} \vrez s(\vrez,\temz) \varphi (0,\cdot) \dx, \nonumber
	\end{align}
for any $\varphi\in C^\infty_c([0,T[\,\times \overline\Omega_\veps)$, with $\sigma_\ep \in {\cal{M}}^+ ([0,T]\times \overline\Omega_\veps)$. 
In addition, we require that the energy identity 
\begin{align}
	\int_{\Omega_\veps}
	& \left( \frac{1}{2} \vre |\ue|^2  +  \frac{1}{\ep^{2m}} \vre e(\vre,\tem)  -  \frac{1}{\ep^2} \vre F - \frac{1}{\ep^m} \vre G \right) (t) \dx  \label{weak-eng} \\
	&=  \int_{\Omega_\veps} \left( \frac{1}{2} \vrez |\uez|^2  +  \frac{1}{\ep^{2m}} \vrez e(\vrez,\temz) -  \frac{1}{\ep^2} \varrho_{0,\ep} F  -\frac{1}{\ep^m} \varrho_{0,\ep} G  \right) \dx \nonumber
	\end{align}
holds true for almost every $t\in\,]0,T[\,$. Notice that this is the integrated version of \eqref{eeq}.

Under the previous assumptions (collected in Paragraphs \ref{sss:primsys} and \ref{sss:structural} and here above), at any \emph{fixed} value of the parameter $\veps\in\,]0,1]$,
the existence of a global in time finite energy weak solution $(\vrho_\veps,\vec u_\veps,\vtheta_\veps)$ to system \eqref{ceq} to \eqref{eeq}, related to the initial datum
$(\vrho_{0,\veps},\vec u_{0,\veps},\vtheta_{0,\veps})$, has been proved in e.g. \cite{F-N} (see Theorems 3.1 and 3.2 therein).
Moreover, 
the following regularity of solutions $( \vre, \ue, \tem )$  can be obtained,  
which justifies all the integrals appearing in \eqref{weak-con} to \eqref{weak-eng}: for any $T>0$ fixed, one has
	\begin{equation*}
	\vre \in C^0_{w}\big([0,T];L^{5/3}(\Omega_\ep)\big),\quad 
	\vre \in L^q\big((0,T)\times \Omega_\ep\big) \ \mbox{ for some } q>\frac{5}{3}\,,
\qquad
	\ue \in L^2\big([0,T]; W^{1,2}(\Omega_\ep;\R^3)\big)\,.
	\end{equation*}
In addition, the mapping $t \mapsto (\vre\ue)(t,\cdot)$ is weakly continuous, and one has $(\vre)_{|t=0} = \vrez$ together with $(\vre\ue)_{|t=0}= \vrez\uez$. Finally,
the absolute temperature $\tem$ is a measurable function, $\tem>0$ a.e. in $\R_+ \times \Omega_\ep$, and given any $T>0$, one has
	\begin{equation*}
	\tem \in L^2\big([0,T]; W^{1,2}(\Omega_\ep)\big)\cap L^{\infty}\big([0,T]; L^4 (\Omega_\ep)\big), \quad 
	\log \tem \in L^2\big([0,T]; W^{1,2}(\Omega_\ep)\big)\,.
	\end{equation*}

Notice that, in view of \eqref{ceq}, the total mass is conserved in time, in the following sense: for almost every $t\in[0,+\infty[\,$,
one has
\begin{equation} \label{eq:mass_conserv}
\int_{\Omega_\veps}\bigl(\vre(t)\,-\,\vret\bigr)\,\dx\,=\,0\,.
\end{equation}

Let us now remark that, since the entropy production rate is a non-negative measure, and in particular it may possess jumps, the total entropy 
$\vre s(\vre,\tem)$ may not be weakly continuous in time. To avoid 
this problem, we introduce a time lifting $\Sigma_\ep$ of the measure $\sigma_\ep$ (see Paragraph 5.4.7 in \cite{F-N} for details) by the following formula:
	\begin{equation}\label{lift0}
	\langle \Sigma_\ep , \varphi \rangle = \langle \sigma_\ep , I[\varphi] \rangle,
\quad \mbox{ where }\quad
	I [\varphi] (t,x) = \int _0^t \varphi (\tau,x) {\rm\, d}\tau\quad\mbox{for any } \varphi \in L^1(0,T; C^0(\overline\Omega_\veps)).
	\end{equation}
	The time lifting $\Sigma_\ep$ can be identified with an abstract function 
$\Sigma_\ep \in L^{\infty}_{w}(0,T; {\cal{M}}^+(\overline{\Omega}))$, where $L_{w}^\infty$ stands for ``weakly measurable'',
and $\Sigma_\veps$ is defined by the relation
	\begin{equation*}
	\langle \Sigma_\ep(\tau),\varphi \rangle = \lim\limits_{\delta \to 0^+} \langle \sigma_\ep , \psi_\delta\varphi \rangle,
\quad \mbox{ with } \quad
	\psi_\delta(t) =
	\left\{\begin{array}{cc}
	0 					& {\mbox{for }} t\in [0,\tau), \\
	\frac{1}{\delta}(t - \tau) 	& {\mbox{for }} t\in (\tau, \tau + \delta), \\
	1					& {\mbox{for }} t \geq \tau +\delta.
	\end{array}\right.
	\end{equation*}
In particular, the measure $\Sigma_\ep$ is well-defined for any $\tau\in[0,T]$, and the mapping 
$\tau \to \Sigma_\ep(\tau)$ is non-increasing in the sense of measures.

Then, the weak formulation of the entropy balance can be equivalently rewritten as
	\begin{equation*} 
	\begin{split}
	& \int_{\Omega_\veps} 
	 \left[  \vre s(\vre,\tem)(\tau)\varphi(\tau)
	-   \vrez s(\vrez,\temz)\varphi(0)  \right] \dx 
	 + \langle \Sigma_\ep(\tau),\varphi(\tau) \rangle  -  \langle \Sigma_\ep(0),\varphi(0) \rangle\\
	& = \int_0^\tau  \langle \Sigma_\ep,\partial_t \varphi \rangle \dt
	+ \int_0^\tau \int_{\Omega_\veps} 
	\left( 
	 \vre s(\vre,\tem)   \partial_t \varphi  +  \vre s(\vre,\tem)\ue \cdot \nabla_x \varphi 
	 +  \frac{\q(\vtheta_\veps,\nabla_x\vtheta_\veps)}{\tem} \cdot \nabla_x \varphi 
	\right) \dxdt,
	\end{split}
	\end{equation*}
for any $\varphi\in C^\infty_c([0,T]\times \overline\Omega_\veps)$, and  the mapping 
	$
	t \to \vre s(\vre,\tem)(t,\cdot) + \Sigma_\ep(t) \ 
	$
 is continuous with values in ${\cal{M}}^+(\overline{\Omega}_\veps)$, provided that 
${\cal{M}}^+$ is endowed with the weak-$*$ topology.
\begin{remark} 
We explicitly point out that the previous properties are \emph{not} uniform in the small parameter $\veps$. In order to deduce uniform properties on our family of weak solutions
$\bigl(\vrho_\veps,\vec u_\veps,\vtheta_\veps\bigr)_\veps$,
we ``measure'' the energy of the solutions with respect to the energy
at the equilibrium states $\left(\wtilde\vrho_\veps,0,\oline\vtheta\right)$.
\end{remark}

\medbreak

To conclude this part, we introduce the ballistic free energy function
$$
	H_{\tems}(\vr,\temp)\,:=\,\vr \bigl( e(\vr,\temp) - \tems s(\vr,\temp)  \bigr)\,,
$$
and we define the \emph{relative entropy functional} (for details, see in particular Chapters 1, 2 and 4 of \cite{F-N})
$$
\mc E\left(\rho,\theta\;|\;\wtilde\vrho_\veps,\oline\vtheta\right)\,:=\,H_{\tems}(\rho,\theta) - (\rho - \vret)\,\partial_\vrho H_{\tems}(\vret,\tems)
- H_{\tems}(\vret,\tems)\,.
$$
First of all, we  notice that, by \eqref{eq:target-rho} and Gibbs' relation \eqref{gibbs}, equation \eqref{prF} can be rewritten as
	\begin{equation*}
	\partial_\vrho H_{\oline{\vtheta}} (\vret,\tems) \, =\, \ep^{2(m-1)} F + \ep^m G
	\end{equation*}
in $\Omega_\veps$ (up to some constant, that we have normalized to $0$).

Then, combining the total energy balance \eqref{weak-eng}, the entropy equation \eqref{weak-ent} and the mass conservation \eqref{eq:mass_conserv}, we obtain 
the following total dissipation balance, for any $\veps>0$ fixed:
\begin{align}
&\hspace{-0.7cm} \int_{\Omega_\veps}\frac{1}{2}\vre|\ue|^2(t) \dx\,+\,\frac{1}{\ep^{2m}}\int_{\Omega_\veps}\mc E\left(\vrho_\veps,\vtheta_\veps\;|\;\wtilde\vrho_\veps,\oline\vtheta\right) \dx
+  \frac{\tems}{\ep^{2m}}\sigma_\ep \left[[0,t]\times \overline\Omega_\veps\right] \label{est:dissip} \\
&\qquad\qquad\qquad\qquad\qquad\qquad\qquad\qquad
\,\leq\,
\int_{\Omega_\veps}\frac{1}{2}\vrez|\uez|^2 \dx\,+\,
\frac{1}{\ep^{2m}}\int_{\Omega_\veps}\mc E\left(\vrho_{0,\veps},\vtheta_{0,\veps}\;|\;\wtilde\vrho_\veps,\oline\vtheta\right) \dx\,.
\nonumber
\end{align}

Inequality \eqref{est:dissip} will be the only tool to derive uniform estimates for the family of weak solutions we consider. As a matter of fact,
we will establish in Lemma \ref{l:initial-bound} below that, under the previous assumptions on the initial data, the quantity on the right-hand side of \eqref{est:dissip}
is uniformly bounded for any $\veps\in\,]0,1]$.


\subsection{Main results}\label{ss:results}

\medbreak
We can now state our main results. The first statement concerns the case when low Mach number effects are predominant with respect to the fast rotation, i.e. $m>1$. 
For technical reasons which will appear clear in the course of the proof, when $F\neq0$ we need to take $m\geq2$.

We also underline that the limit dynamics of $\vec{U}$ is purely horizontal (see \eqref{eq_lim_m:momentum} below) on the plane $\R^2\times \{0\}$ accordingly to the celebrated Taylor-Proudman theorem.
Nonetheless the equations that involve $R$ and $\Theta$ (see \eqref{eq_lim_m:temp} and \eqref{eq_lim_m:Boussinesq} below) depend also on the vertical variable. 
\begin{theorem}\label{th:m-geq-2}
For any $\veps\in\,]0,1]$, let $\Omega_\ep$ be the domain defined by \eqref{dom} and $\Omega = \R^2 \times\,]0,1[\,$. 
Let  $p$, $e$, $s$  satisfy  Gibbs' relation \eqref{gibbs} and structural hypotheses from \eqref{pp1} to \eqref{ss1s}, and suppose that the diffusion coefficients $\mu$, $\eta$, $\kappa$
enjoy growth conditions  \eqref{mu}. Let $G\in W^{1,\infty}(\Omega)$ be given as in \eqref{assFG}.
Take either $m\geq 2$ and $F\in W_{loc}^{1,\infty}(\Omega)$ as in \eqref{assFG}, or $m>1$ and $F=0$. \\
For any fixed value of $\veps \in \; ]0,1]$, let initial data $\left(\vrho_{0,\veps},\vec u_{0,\veps},\vtheta_{0,\veps}\right)$ verify the hypotheses fixed in Paragraph \ref{sss:data-weak}, and let
$\left( \vre, \ue, \tem\right)$ be a corresponding weak solution to system \eqref{ceq} to \eqref{eeq}, supplemented with structural hypotheses from \eqref{S} to \eqref{q} and with boundary conditions \eqref{bc1-2} and \eqref{bc3}.
Assume that the total dissipation balance \eqref{est:dissip} is satisfied.
Let $\left(R_0,\vec u_0,\Theta_0\right)$ be defined as in \eqref{conv:in_data}.

Then, for any $T>0$, one has the following convergence properties:
	\begin{align*}
	\varrho_\ep \rightarrow 1 \qquad\qquad \mbox{ in } \qquad &L^{\infty}\big([0,T]; L_{\rm loc}^{5/3}(\Omega )\big) \\
	R_\veps:=\frac{\varrho_\ep - 1}{\ep^m}  \weakstar R \qquad\qquad \mbox{ weakly-$*$ in }\qquad &L^\infty\bigl([0,T]; L^{5/3}_{\rm loc}(\Omega)\bigr) \\
	\Theta_\veps:=\frac{\vartheta_\ep - \bar{\vartheta}}{\ep^m}  \weak \Theta \qquad \mbox{ and }\qquad \vec{u}_\ep \weak \vec{U}
	\qquad\qquad \mbox{ weakly in }\qquad &L^2\big([0,T];W_{\rm loc}^{1,2}(\Omega)\big)\,, 
	\end{align*}	
where $\vec{U} = (\vec U^h,0)$, with $\vec U^h=\vec U^h(t,x^h)$ such that $\divh\vec U^h=0$. In addition, the triplet $\Big(\vec{U}^h,\, \, R ,\, \, \Theta \Big)$ is a weak solution
to the incompressible Oberbeck-Boussinesq system  in $\R_+ \times \Omega$:
\begin{align}
& \d_t \vec U^{h}+\divh\left(\vec{U}^{h}\otimes\vec{U}^{h}\right)+\nabla_h\Gamma-\mu (\oline\vtheta )\Delta_{h}\vec{U}^{h}=\delta_2(m)\lan R\ran\nabla_{h}F \label{eq_lim_m:momentum} \\
& c_p(1,\oline\vtheta)\,\Bigl(\d_t\Theta\,+\,\divh(\Theta\,\vec U^h)\Bigr)\,-\,\kappa(\oline\vtheta)\,\Delta\Theta\,=\,
\oline\vtheta\,\alpha(1,\oline\vtheta)\,\vec{U}^h\cdot\nabla_h\big({\,G\,+\,\delta_2(m)F}\big)
\label{eq_lim_m:temp} \\
& \nabla_{x}\Big( \d_\varrho p(1,\oline{\vtheta})\,R\,+\,\d_\vtheta p(1,\oline{\vtheta})\,\Theta \Big)\,=\,\nabla_{x}G\,+\,\delta_2(m)\,\nabla_{x}F\, , \label{eq_lim_m:Boussinesq}
\end{align}
supplemented with the initial conditions
$$
\vec{U}_{|t=0}=\h_h\left(\lan\vec{u}^h_{0}\ran\right)\quad \text{ and }\quad
\Theta_{|t=0}\,=\,\frac{\oline\vtheta}{c_p(1,\oline\vtheta)}\,\Big(\d_\vrho s(1,\oline\vtheta)\,R_0\,+\,\d_\vtheta s(1,\oline\vtheta)\,\Theta_0\,+\,
\alpha(1,\oline\vtheta)\,{\big(\,G\,+\,\delta_2(m)F\big)}\Big)
$$
and the boundary condition $\nabla_x \Theta \cdot\vec{n}_{|\d\Omega}\,=\,0$,
where $\vec{n}$ is the outer normal to $\d\Omega\,=\,\{x_3=0\}\cup\{x_3=1\}$. \\
In \eqref{eq_lim_m:momentum}, $\Gamma$ is a distribution in $\mc D'(\R_+\times\R^2)$ and we have set $\delta_2(m)=1$ if $m=2$, $\delta_2(m)=0$ otherwise.
In \eqref{eq_lim_m:temp}, we have defined 
$$
c_p(\vrho,\vtheta)\,:=\,\d_\vtheta e(\vrho,\vtheta)\,+\,\alpha(\vrho,\vtheta)\,\frac{\vtheta}{\vrho}\,\d_\vtheta p(\vrho,\vtheta)\,,\qquad 
\alpha(\vrho,\vtheta)\,:=\,\frac{1}{\vrho}\,\frac{\d_\vtheta p(\vrho,\vtheta)}{\d_\vrho p(\vrho,\vtheta)}\,.
$$
\end{theorem}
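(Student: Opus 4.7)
The plan is to combine the coercivity of the total dissipation balance \eqref{est:dissip} with the compensated compactness machinery of Lions-Masmoudi and Gallagher-Saint-Raymond, tailored to the acoustic-Poincaré wave system produced by the singular scalings. First, using the hypotheses \eqref{hyp:ill_data}-\eqref{hyp:ill-vel} on the initial data together with the expansion of $\wtilde\vrho_\veps$ from Proposition \ref{p:target-rho_bound}, I would verify that the right-hand side of \eqref{est:dissip} is uniformly bounded in $\veps$. The coercive properties of the relative entropy $\mc E(\cdot,\cdot\,|\,\wtilde\vrho_\veps,\oline\vtheta)$ then deliver uniform $\veps$-estimates on $\sqrt{\vre}\,\vec u_\veps$ in $L^\infty_T(L^2_{\rm loc})$, on $\vec u_\veps$ in $L^2_T(W^{1,2}_{\rm loc})$, on $R_\veps$ in $L^\infty_T((L^2+L^{5/3})_{\rm loc})$, on $\Theta_\veps$ in $L^\infty_T(L^2_{\rm loc})\cap L^2_T(W^{1,2}_{\rm loc})$, and on the entropy production $\sigma_\veps$; up to extraction, these yield the weak-$*$ convergences stated in the theorem.

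Next, I would identify the algebraic constraints on the limit points by multiplying the weak momentum formulation \eqref{weak-mom} successively by $\veps^{2m}$ and by $\veps$, and passing to the limit. The first scaling produces the Boussinesq-type relation \eqref{eq_lim_m:Boussinesq}, with the $F$-contribution appearing only at the endpoint $m=2$ because of the asymptotic size of $\wtilde r_\veps$ given by Proposition \ref{p:target-rho_bound}. The second scaling forces $\e_3\times\vec U$ to be a gradient, which combined with the incompressibility from the mass equation produces the Taylor-Proudman structure $\vec U=(\vec U^h(t,x^h),0)$ with $\divh\vec U^h=0$. Passing to the limit in the entropy balance \eqref{weak-ent}, after linearising about $(\wtilde\vrho_\veps,\oline\vtheta)$, then yields the convection-diffusion equation \eqref{eq_lim_m:temp}, provided the nonlinear flux $\Theta_\veps\vec u_\veps$ can be taken to the limit; here a two-dimensional div-curl argument on the horizontal averages, combined with an Aubin-Lions compactness for $\langle\vec u_\veps^h\rangle$, is enough.

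The core of the proof is the passage to the limit in the convective term $\div(\vre\vec u_\veps\otimes\vec u_\veps)$, obstructed by strong in-time oscillations (acoustic-Poincaré waves). I would combine \eqref{ceq} with the linearised entropy balance \eqref{eiq} to obtain a wave system of schematic form
\[\veps\,\d_t Z_\veps+\div V_\veps=\veps\,f_\veps\,,\qquad \veps\,\d_t V_\veps+\nabla_x\bigl(\alpha R_\veps+\beta\Theta_\veps\bigr)+\e_3\times V_\veps=\veps\,\vec g_\veps\,,\]
where $V_\veps:=\vre\vec u_\veps$ and $Z_\veps$ is a well-chosen linear combination of $R_\veps$ and $\Theta_\veps$ making the right-hand sides uniformly bounded in some negative Sobolev norm. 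Precisely at this step the restriction $m\geq 2$ when $F\neq 0$ enters, through the estimate on $\wtilde r_\veps$ from Proposition \ref{p:target-rho_bound}: only in this range is the centrifugal contribution absorbable into the $O(\veps)$ source. Introducing $\gamma_\veps:=\curlh\langle V_\veps^h\rangle+\eta\langle Z_\veps\rangle$ for a suitable constant $\eta$, the Coriolis and pressure contributions cancel in the evolution of $\gamma_\veps$, so that $(\d_t\gamma_\veps)_\veps$ is uniformly bounded in some $H^{-k}_{\rm loc}$; combined with uniform spatial regularity inherited from the $L^2_T W^{1,2}_{\rm loc}$ bound on $\vec u_\veps$, Aubin-Lions yields strong compactness of $(\gamma_\veps)_\veps$ in $L^2_T(L^2_{\rm loc})$. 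Algebraic manipulations analogous to those in \cite{Fan-G,F_2019} then rewrite the oscillating part of the convective term, tested against horizontal divergence-free test functions, as a quadratic form in $\gamma_\veps$ plus remainders vanishing by the smallness of non-resonant wave-wave interactions and by Rellich-Kondrachov for the resonant ones; this identifies the limit of the convective term with $\divh(\vec U^h\otimes\vec U^h)$ and closes \eqref{eq_lim_m:momentum}.

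The main obstacles I expect are twofold. Since $\Omega$ is unbounded whereas the existence theory of \cite{F-N} lives on bounded domains, one has to work on the invading family $\Omega_\veps$ with $L_\veps\gg\veps^{-m}$ and carefully localise test functions and wave-system computations so that boundary effects do not interfere with the interior wave dynamics on any fixed compact subset of $\Omega$ (cf. Remark \ref{r:speed-waves}); transferring the uniform bounds and convergences from $\Omega_\veps$ to $\Omega$ will require meticulous bookkeeping. Secondly, when $F\neq 0$ the centrifugal potential is unbounded in $\Omega$, forcing all estimates and wave-system manipulations to be performed in cylinders $\mbb B_l$ and preventing any use of global integrability for terms like $\vre\nabla_x F$ or $\wtilde r_\veps\nabla_x F$; controlling these contributions in a way that is compatible with the compensated compactness scheme, in the spirit of \cite{F-G-GV-N}, will be the most delicate technical part of the argument.
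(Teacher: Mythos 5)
Your proposal follows essentially the same strategy as the paper: coercivity of the total dissipation balance for uniform bounds, algebraic constraints from testing the weak momentum equation against rescaled test functions, a reformulation into an acoustic-Poincar\'e wave system for a pair $(Z_\veps, V_\veps)$ with $Z_\veps$ a linear combination of density and temperature (entropy) variations, strong compactness of a vorticity-type quantity $\g_\veps$ via Aubin-Lions, and a compensated compactness argument splitting the convective term into vertical averages and oscillating parts, all localised with cut-offs because of the unbounded centrifugal potential and set up on invading domains because the existence theory is posed on bounded sets. The one place where you should be careful is the anisotropic scaling: since the theorem concerns $m>1$, the wave system should carry prefactors $\veps^m$ on the time derivatives and $\veps^{m-1}$ on the Coriolis term (you wrote the isotropic $m=1$ form), and accordingly the compensated quantity is $\g_{\veps,M}=\curlh\langle V^h_{\veps,M}\rangle-\tfrac{\veps^{m-1}}{\mc A}\langle Z_{\veps,M}\rangle$ with an $\veps$-dependent (and vanishing, when $m>1$) coefficient on $\langle Z\rangle$, not a fixed constant $\eta$; it is precisely this choice that makes the $\divh\langle V^h\rangle$ contributions cancel in $\d_t\g_{\veps,M}$ and gives the $O(1)$ bound you need. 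Also, the Boussinesq constraint is obtained by testing against $\veps^m\vec\phi$ (using the static relation \eqref{prF} to cancel the leading pressure term), not $\veps^{2m}\vec\phi$, which would kill the gravitational and centrifugal contributions entirely. These are slips in the details, not in the idea; with them fixed your argument lines up with the paper's proof.
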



\begin{remark}\label{r:lim delta theta} 
%
We notice that, after defining
$$
\Upsilon := \d_\vrho s(1,\oline{\vtheta})R + \d_\vtheta s(1,\oline{\vtheta})\,\Theta\qquad\mbox{ and }\qquad
\Upsilon_0\,:=\,\d_\vrho s(1,\oline\vtheta)\,R_0\,+\,\d_\vtheta s(1,\oline\vtheta)\,\Theta_0\,,
$$
from equation \eqref{eiq} one would get, in the limit $\veps\ra0^+$, the equation
\begin{equation} \label{eq:Upsilon}
\d_{t} \Upsilon +\divh\left( \Upsilon \vec{U}^{h}\right) -\frac{\kappa(\oline\vtheta)}{\oline\vtheta} \Delta \Theta =0\,,
\qquad\qquad \Upsilon_{|t=0}\,=\,\Upsilon_0\,,
\end{equation}
which is closer to the formulation of the target system given in \cite{K-M-N} and \cite{K-N}.
From \eqref{eq:Upsilon} one easily recovers \eqref{eq_lim_m:temp} by using \eqref{eq_lim_m:Boussinesq}. Formulation \eqref{eq_lim_m:temp} is in the spirit of Chapters 4 and 5 of \cite{F-N}.
\end{remark}

The case $m=1$ realizes the \emph{quasi-geostrophic balance} in the limit. Namely the Mach and Rossby numbers have the same order of magnitude, and they keep in balance in the whole asymptotic process.
The next statement is devoted to this case. Due to technical reasons, in this instance we have to assume $F=0$.
Indeed, when $F\neq 0$, the coexistence of the centrifugal effects and the heat transfer deeply complicates the wave system and new technical troubles arise.
\begin{theorem} \label{th:m=1_F=0}
For any $\veps\in\,]0,1]$, let $\Omega_\ep$ be the domain defined by \eqref{dom} and $\Omega = \R^2 \times\,]0,1[\,$.
Let  $p$, $e$, $s$  satisfy \eqref{gibbs} and the structural hypotheses from \eqref{pp1} to \eqref{ss1s}, and suppose that the diffusion coefficients $\mu$, $\eta$, $\kappa$
enjoy \eqref{mu}. Let $F=0$ and $G\in W^{1,\infty}(\Omega)$ be as in \eqref{assFG}. Take $m=1$. \\
For any fixed value of $\veps$, let initial data $\left(\vrho_{0,\veps},\vec u_{0,\veps},\vtheta_{0,\veps}\right)$ verify the hypotheses fixed in Paragraph \ref{sss:data-weak}, and let
$\left( \vre, \ue, \tem\right)$ be a corresponding weak solution to system \eqref{ceq} to \eqref{eeq}, supplemented with structural hypotheses from \eqref{S} to \eqref{q} and with boundary conditions \eqref{bc1-2} and \eqref{bc3}.
Assume that the total dissipation balance \eqref{est:dissip} is satisfied.
Let $\left(R_0,\vec u_0,\Theta_0\right)$ be defined as in \eqref{conv:in_data}.

Then, for any $T>0$, the convergence properties stated in the previous theorem still hold true: namely, one has
	\begin{align*}
		\varrho_\ep \rightarrow 1 \qquad\qquad \mbox{ in } \qquad &L^{\infty}\big([0,T]; L_{\rm loc}^{5/3}(\Omega )\big) \\
	R_\veps:=\frac{\varrho_\ep - 1}{\ep}  \weakstar R \qquad\qquad \mbox{ weakly-$*$ in }\qquad &L^\infty\bigl([0,T]; L^{5/3}_{\rm loc}(\Omega)\bigr) \\
	\Theta_\veps:=\frac{\vartheta_\ep - \bar{\vartheta}}{\ep}  \weak \Theta \qquad \mbox{ and }\qquad \vec{u}_\ep \weak \vec{U}
	\qquad\qquad \mbox{ weakly in }\qquad &L^2\big([0,T];W_{\rm loc}^{1,2}(\Omega)\big)\,, 
	\end{align*}	
where $\vec{U} = (\vec U^h,0)$, with $\vec U^h=\vec U^h(t,x^h)$ such that $\divh\vec U^h=0$. Moreover, let us introduce the real number $\mc A>0$ by the formula
\begin{equation} \label{def:A}
\mc{A}\,=\,\d_\vrho p(1,\oline{\vtheta})\,+\,\frac{\left|\d_\vtheta p(1,\oline{\vtheta})\right|^2}{\d_\vtheta s(1,\oline{\vtheta})}\,,
\end{equation}
and define
$$
\Upsilon\, := \,\d_\vrho s(1,\oline{\vtheta}) R + \d_\vtheta s(1,\oline{\vtheta})\,\Theta\qquad\mbox{ and }\qquad
q\,:=\, \d_\varrho p(1,\oline{\vtheta})R +\d_\vtheta p(1,\oline{\vtheta})\Theta -G-1/2\,.
$$
Then we have
$$
q\,=\,q(t,x^h)\,=\,\d_\varrho p(1,\oline{\vtheta})\lan R\ran\,+\,\d_\vtheta p(1,\oline{\vtheta})\lan\Theta\ran\qquad\mbox{ and }\qquad
\vec{U}^{h}=\nabla_h^{\perp} q\,.
$$
Moreover, the couple $\Big(q,\Upsilon \Big)$ satisfies (in the weak sense) the  quasi-geostrophic type system
\begin{align}
& \d_{t}\left(\frac{1}{\mc{A}}q-\Delta_{h}q\right) -\nabla_{h}^{\perp}q\cdot
\nabla_{h}\left( \Delta_{h}q\right) +\mu (\oline{\vtheta})
\Delta_{h}^{2}q\,=\,\frac{1}{\mc A}\,\lan X\rangle \label{eq_lim:QG}  \\
& c_p(1,\oline\vtheta)\Big(\d_{t} \Upsilon +\nabla_h^\perp q\cdot\nabla_h\Upsilon \Big)-\kappa(\oline\vtheta) \Delta \Upsilon\,=\,
\kappa(\oline\vtheta)\,\alpha(1,\oline\vtheta)\,\Delta_hq\,, \label{eq_lim:transport}
\end{align}
supplemented with the initial conditions
$$
\left(\frac{1}{\mc{A}}q-\Delta_{h}q\right)_{|t=0}=\left(\lan R_0\ran+\frac{1}{2\mc A}\right)-\curlh\lan\vec u^h_{0}\ran\,,\quad
\Upsilon_{|t=0}=\d_\vrho s(1,\oline\vtheta)R_0+\d_\vtheta s(1,\oline\vtheta)\Theta_0
$$
and the boundary condition
\begin{equation} \label{eq:bc_limit_2}
\nabla_x\big(\Upsilon\,+\,\alpha(1,\oline\vtheta)\,G\big) \cdot\vec{n}_{|\d\Omega}\,=\,0\,,
\end{equation}
where $\vec{n}$ is the outer normal to the boundary $\d\Omega\,=\,\{x_3=0\}\cup\{x_3=1\}$. In \eqref{eq_lim:QG}, we have defined 
\begin{equation}\label{def:X}
 X:=\mc B\frac{\kappa(\oline\vtheta)}{c_p(1,\oline \vtheta)}\left(\Delta \Upsilon\,+\,\alpha(1,\oline\vtheta)\,\Delta_hq -\frac{1}{\kappa(\oline\vtheta)}\nabla^\perp_hq \cdot \nabla_h  \Upsilon\right) \quad \quad  \text{with}\quad \quad \mc B := \frac{\d_\vtheta p(1,\oline \vtheta)}{\d_\vtheta s(1,\oline \vtheta)}\, . 
\end{equation}
\end{theorem}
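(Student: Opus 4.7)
My plan is to follow the compensated compactness approach sketched in the introduction, adapted to the isotropic regime $m=1$ with $F=0$. The starting point will be to extract uniform bounds from the total dissipation inequality \eqref{est:dissip}: combined with the controls on the initial data \eqref{hyp:ill_data}--\eqref{hyp:ill-vel} and with Proposition \ref{p:target-rho_bound}, this yields uniform estimates on $\bigl(R_\veps\bigr)_\veps$, $\bigl(\Theta_\veps\bigr)_\veps$, $\bigl(\sqrt{\vre}\,\ue\bigr)_\veps$, $\bigl(\nabla_x\ue\bigr)_\veps$ and $\bigl(\nabla_x\tem\bigr)_\veps$ in appropriate spaces, giving after extraction the weak limits $R$, $\Theta$, $\vec U$. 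Testing \eqref{weak-mom} against $\veps\vec\psi$ and letting $\veps\to 0^+$ isolates the $O(1/\veps)$ balance
\begin{equation*}
\vec e_3\times\vec U\,+\,\nabla_x\bigl(\d_\vrho p(1,\oline\vtheta)\,R\,+\,\d_\vtheta p(1,\oline\vtheta)\,\Theta\bigr)\,=\,\nabla_x G\,;
\end{equation*}
with $q$ defined as in the statement, this reads $\nabla_x q+\vec e_3\times\vec U=0$, which immediately forces $q=q(t,x^h)$, $\vec U^h=\nabla_h^\perp q$, $\divh\vec U^h=0$, and, together with the slip condition \eqref{bc1-2}, $\vec U^3\equiv 0$. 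This is the Taylor--Proudman property in our setting.

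The core of the argument is the identification of the limit of the convective term $\divh(\vre\,\ue\otimes\ue)$. Following the scheme of \cite{L-M,G-SR_2006,F-G-GV-N,Fan-G}, I would linearize \eqref{ceq} and \eqref{eiq} around $(1,\oline\vtheta)$ and combine them via Gibbs' relation \eqref{gibbs} to isolate a scalar quantity $Z_\veps$, built as a suitable linear combination of $R_\veps$ and $\Theta_\veps$ orthogonal (in the Gibbs sense) to $\Upsilon_\veps$ and normalized by $\mc A$ defined in \eqref{def:A}, such that the pair $(Z_\veps,\vec V_\veps)$ with $\vec V_\veps:=\vre\,\ue$ satisfies an acoustic-Poincar\'e wave system with singular prefactor $1/\veps$. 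Because $F=0$, the vertical averages $\bigl(\lan Z_\veps\ran,\lan\vec V_\veps^h\ran\bigr)$ close into a two-dimensional Poincar\'e-wave system, from which one deduces strong $L^2_T(L^2_{\rm loc})$ compactness of the ``combined quantity'' $\gamma_\veps$, essentially the horizontal curl of $\lan\vec V_\veps^h\ran$ minus $\mc A^{-1}\lan Z_\veps\ran$, by applying the Aubin--Lions lemma to its own conservation-law type evolution (a potential-vorticity-like equation). The non-averaged pieces $\dbtilde Z_\veps$, $\dbtilde{\vec V}_\veps$ will produce no contribution in the limit of the convective term once tested against functions depending only on $(t,x^h)$, thanks to the dispersive estimates associated with the wave operator.

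With strong convergence of $\gamma_\veps$ in hand, I would test \eqref{weak-mom} against $\vec\psi=\nabla_h^\perp\phi(t,x^h)$. The singular Coriolis and pressure-gravity terms cancel after integration by parts using the quasi-geostrophic relation established in the first paragraph, and the bilinear convective term reduces, in the limit, to $-\nabla_h^\perp q\cdot\nabla_h(\Delta_h q)$; the viscous term yields $\mu(\oline\vtheta)\Delta_h^2 q$. Dividing \eqref{weak-ent} by $\veps$ and letting $\veps\to 0^+$ produces \eqref{eq:Upsilon} for $\Upsilon$; elimination of $\Theta$ through the quasi-geostrophic identity gives \eqref{eq_lim:transport}. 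Time-differentiating the linear combination that defines $q$ and substituting \eqref{eq:Upsilon} will produce the coupling term $\lan X\ran$ in \eqref{def:X}, while the boundary condition \eqref{eq:bc_limit_2} emerges as the limit of \eqref{bc3}.

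The hard part will be the construction of $\gamma_\veps$ and the proof of its strong compactness. The coupling between density and temperature oscillations makes the wave system considerably more intricate than in the barotropic case of \cite{F-G-GV-N,F_2019}; moreover, since gravity is penalized here at order $1/\veps$, both $R$ and $\Theta$ depend non-trivially on $x^3$ via stratification (in contrast to \cite{K-N}, where gravity is not penalized), so that taking vertical averages is an essential reduction step, and one must keep careful track of the oscillating parts $\dbtilde Z_\veps$, $\dbtilde{\vec V}_\veps$ to show that they produce no residual contribution in the limit of the convective term.
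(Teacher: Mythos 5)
Your overall scheme matches the paper's: uniform bounds from \eqref{est:dissip}, the quasi-geostrophic balance from testing \eqref{weak-mom} against $\veps\vec\psi$, the wave reformulation with $Z_\veps$ and $\vec V_\veps$, and the compensated compactness via the compact quantity $\gamma_\veps$ for the vertically averaged part. The derivation of the limit entropy balance and the identification of $\lan X\ran$ are also on the right track.

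However, there is one genuine gap in your treatment of the vertical-mean-free parts $\dbtilde Z_\veps$, $\dbtilde{\vec V}_\veps$, which you propose to eliminate ``thanks to the dispersive estimates associated with the wave operator.'' This is precisely the mechanism used by Kwon--Maltese--Novotn\'y and Feireisl--G\'erard-Varet et al., and it is the mechanism responsible for the restrictive conditions on the parameters $(m,n)$ in those works. The whole point of the compensated compactness approach in this chapter is to \emph{avoid} dispersive estimates. What the paper does instead for the term $\mc T^2_{\veps,M}$ is purely algebraic: one introduces the potentials $\dbtilde\Phi^h_{\veps,M}$ and $\dbtilde\omega^3_{\veps,M}$, rewrites $\curlh\dbtilde{\vec V}^h_{\veps,M}\,(\dbtilde{\vec V}^h_{\veps,M})^\perp$ by swapping time derivatives through the wave equations (system \eqref{eq:eq momentum term2}), and then recognizes the residual as a perfect gradient plus a term with a small $\veps^m$-prefactor, using crucially that the test function is independent of $x^3$ so that $\d_3$ can be integrated by parts harmlessly. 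No dispersion is invoked; the cancellation is structural. If you were to try to run an honest dispersive argument instead, you would need to face the unbounded expanding domains $\Omega_\veps$ and the coupling between density and temperature waves, and you would likely recover the parameter restrictions the paper aims to remove.

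A second, more technical, omission: because existence of weak solutions requires the \emph{bounded} domains $\Omega_\veps$, one cannot directly apply Littlewood--Paley regularization in $\R^2\times\TT^1$. The paper circumvents this by mollifying in space and then invoking finite-propagation-speed for the symmetric hyperbolic wave system (valid since $\delta>0$ in \eqref{dom} makes the domains expand faster than the acoustic speed $\sim\veps^{-1}$), which allows one to pretend the wave system lives on the unbounded slab $\wtilde\Omega$ with compactly supported data. Your plan as written skips this ``invading domains plus domain-of-dependence'' step, which is needed to make the Aubin--Lions compactness argument for $\gamma_{\veps,M}$ rigorous.
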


\begin{remark} \label{r:limit_1}
Observe that $q$ and $\Upsilon$  
can be equivalently chosen for describing the target problem. Indeed, straightforward computations show that
\begin{align*}
R\,&=\,-\,\frac{1}{\beta}\,\big(\d_\vtheta p(1,\oline\vtheta)\,\Upsilon\,-\,\d_\vtheta s(1,\oline\vtheta)\,q\,-\,\d_\vtheta s(1,\oline\vtheta)\,G\big) \\
\Theta\,&=\,\frac{1}{\beta}\,\big(\d_\vrho p(1,\oline\vtheta)\,\Upsilon\,-\,\d_\vrho s(1,\oline\vtheta)\,q\,-\,\d_\vrho s(1,\oline\vtheta)\,G\big)\,,
\end{align*}
where we have set $\beta\,=\,\d_\vrho p(1,\oline\vtheta)\,\d_\vtheta s(1,\oline\vtheta)\,-\,\d_\vtheta p(1,\oline\vtheta)\,\d_\vrho s(1,\oline\vtheta)$. In particular, equation \eqref{eq_lim:transport}
can be deduced from \eqref{eq:Upsilon}, which is valid also when $m=1$, using the expression of $\Theta$ and the fact that
$$
\beta\,=\,c_p(1,\oline\vtheta)\,\frac{\d_\vrho p(1,\oline\vtheta)}{\oline\vtheta}\,.
$$
Here we have chosen to formulate the target entropy balance equation in terms of $\Upsilon$ (as in \cite{K-N}) rather than $\Theta$ (as in Theorem \ref{th:m-geq-2} above),
because the equation for $\Upsilon$ looks simpler (indeed, the equation for $\Theta$ would make a term in $\d_tq$ appear). The price to pay is the non-homogeneous boundary
condition \eqref{eq:bc_limit_2}, which may look a bit unpleasant.
\end{remark}

As pointed out for Theorem \ref{th:m-geq-2}, we notice that, despite the function $q$ is defined in terms of $G$, the dynamics described by \eqref{eq_lim:QG} is purely horizontal.
On the contrary, dependence on $x^3$ and vertical derivatives appear in \eqref{eq_lim:transport}.

\begin{remark} \label{r:energy}
We have not investigated here the well-posedness of the target problems, formulated in Theorems \ref{th:m-geq-2} and \ref{th:m=1_F=0}. Very likely, when $F=0$, by standard energy methods
(see e.g. \cite{C-D-G-G}, \cite{F-G-N}, \cite{DeA-F}) it is possible to prove that those systems are well-posed in the energy space, globally in time. \\
Yet, it is not clear for us that the solutions identified in the previous theorems are (the unique) finite energy weak solutions to the target problems.
\end{remark}

\section{Analysis of the singular perturbation} \label{s:sing-pert}

The purpose of this section is twofold. First of all, in Subsection \ref{ss:unif-est} we establish uniform bounds and further properties for our family
of weak solutions. Then, we study the singular operator underlying to the primitive equations \eqref{ceq} to \eqref{eeq}, and determine constraints that the limit points of our family of weak solutions
have to satisfy (see Subsection \ref{ss:ctl1}).

\subsection{Uniform bounds}\label{ss:unif-est}

This section is devoted to establish uniform bounds on the sequence $\bigl(\vrho_\veps,\vec u_\veps,\vtheta_\veps\bigr)_\veps$.
Since the Coriolis term does not contribute to the total energy balance of the system, most of the bounds can be proven as in the case without rotation; we refer to \cite{F-N} for details.
First of all, let us introduce some preliminary material.

\subsubsection{Preliminaries} \label{sss:unif-prelim}
Let us recall here some basic notations and results, which we need in proving our convergence results. We refer to Sections 4, 5 and 6 of \cite{F-N} for more details.

Let us introduce the so-called ``essential'' and ``residual'' sets. Recall that the positive constant $\rho_*$ has been defined in Lemma \ref{l:target-rho_pos}.
Following the approach of \cite{F-N}, we define
$$
	 {\cal{O}}_{\ess}\,: = \, \left[2\rho_*/3\, ,\, 2 \right]\, \times\, \left[\tems/2\,,\, 2 \tems\right]\,,\qquad  
	 {\cal{O}}_{\res}\,: =\, \,]0,+\infty[\,^2\setminus {\cal{O}}_{\ess}\,.
$$
Then, we fix a smooth function
$\mf{b} \in C^\infty_c \bigl( \,]0,+\infty[\,\times\,]0,+\infty[\, \bigr)$ such that $0\leq \mf b\leq 1, \ \mf b\equiv1$  on the set $ {\cal{O}}_{\ess}$, and
we introduce the decomposition on essential and residual part of a measurable function $h$ as follows:
	\begin{equation*}\label{ess-def}
	h = [h]_{\ess} + [h]_{\res},\qquad\mbox{ with }\quad  [ h]_{\ess} := \mf b(\vre,\tem) h\,,\quad \  [h]_{\res} = \bigl(1-\mf b(\vre,\tem)\bigr)h\,.
	\end{equation*}
We also introduce the sets $\mc{M}^\veps_{\ess}$ and $\mc{M}^\veps_{\res}$, defined as 
	$$\mc{M}^\veps_{\ess}  := \left\{ (t,x) \in\, ]0,T[\, \times\, \Omega_\veps \ : \ \bigl(\varrho_\ep(t,x),\vartheta_\ep(t,x)\bigr) \in  {\cal{O}}_{\ess} \right\}\qquad\mbox{ and }\qquad
	\mc{M}^\veps_{\res}  := \big(\,]0,T[\,\times\,\Omega_\veps\,\big) \setminus \mc{M}^\veps_{\ess}\,,$$
and their version at fixed time $t\geq0$, i.e. 
	$$\mc{M}^\veps_{\ess} [t] := \{ x \in \Omega_\veps \  : (t,x) \in \mc{M}^\veps_{\ess} \}\qquad \mbox{ and }\qquad
	\mc{M}^\veps_{\res}[t]  := \Omega_\veps \setminus \mc{M}^\veps_{\ess}[t]\,.$$

The next result, which will be useful in the next subsection, is the analogous of Lemma 5.1 in \cite{F-N} in our context. Here we need to pay attention to the fact that,
when $F\neq0$, the estimates for the equilibrium states (recall Proposition \ref{p:target-rho_bound}) are not uniform on the whole $\Omega_\veps$.
\begin{lemma}\label{l:H}
Fix $m\geq1$ and let $\vret$ and $\tems$ be the static states identified in Paragraph \ref{sss:equilibrium}. Under the previous assumptions, and with the notations introduced above,
we have the following properties.

Let $F\neq0$. For all $l>0$, there exist $\veps(l)$ and positive constants $c_j\,=\,c_j(\rho_*,\oline\vtheta,l)$, with $j=1,2,3$, such that, for all $0<\veps\leq\veps(l)$,
the next properties hold true, for all $x\in\oline{\mbb B}_{l}$:
\begin{enumerate}[(a)]
 \item for all $(\rho,\theta)\,\in\,\mc O_{\ess}$, one has
$$
c_1\,\left(\left|\rho-\wtilde\vrho_\veps(x)\right|^2\,+\,\left|\theta-\oline\vtheta\right|^2\right)\,\leq\,\mc E\left(\rho,\theta\;|\;\wtilde\vrho_\veps(x),\oline\vtheta\right)\,\leq\,
c_2\,\left(\left|\rho-\wtilde\vrho_\veps(x)\right|^2\,+\,\left|\theta-\oline\vtheta\right|^2\right)\,;
$$
\item for all $(\rho,\theta)\,\in\,\mc O_{\res}$, one has
$$
\mc E\left(\rho,\theta\;|\;\wtilde\vrho_\veps(x),\oline\vtheta\right)\,\geq\,c_3\,.
$$
\end{enumerate}

When $F=0$, the previous constants $\big(c_j\big)_{j=1,2,3}$ can be chosen to be independent of $l>0$.
\end{lemma}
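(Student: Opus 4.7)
The plan is to adapt the classical computation of Lemma 5.1 in \cite{F-N} to cope with the $\veps$- and $x$-dependence of the equilibrium density $\wtilde\vrho_\veps$. The starting point is a Taylor expansion of the ballistic free energy $H_{\oline\vtheta}$ around the state $(\wtilde\vrho_\veps(x),\oline\vtheta)$. A direct computation from the definition $H_{\oline\vtheta}(\vrho,\vtheta)=\vrho(e(\vrho,\vtheta)-\oline\vtheta\,s(\vrho,\vtheta))$, combined with Gibbs' relation \eqref{gibbs}, shows the crucial identities $\d_\vtheta H_{\oline\vtheta}(\vrho,\oline\vtheta)=0$, $\d_\vrho^2 H_{\oline\vtheta}(\vrho,\oline\vtheta)=\d_\vrho p(\vrho,\oline\vtheta)/\vrho$ and $\d_\vtheta^2 H_{\oline\vtheta}(\vrho,\oline\vtheta)=\vrho\,\d_\vtheta e(\vrho,\oline\vtheta)/\oline\vtheta$. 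Therefore, rewriting $\mc E(\rho,\theta\,|\,\wtilde\vrho_\veps(x),\oline\vtheta)$ by subtracting the affine part of $H_{\oline\vtheta}$ at $(\wtilde\vrho_\veps(x),\oline\vtheta)$, the linear term in $(\theta-\oline\vtheta)$ drops out and one is left with
\[
\mc E(\rho,\theta\,|\,\wtilde\vrho_\veps(x),\oline\vtheta)\,=\,\tfrac12\,\mathrm{Hess}\,H_{\oline\vtheta}(\tilde\rho,\tilde\theta)\bigl[(\rho-\wtilde\vrho_\veps(x),\theta-\oline\vtheta)\bigr]^{\otimes 2}\,,
\]
for some intermediate point $(\tilde\rho,\tilde\theta)$ on the segment joining $(\rho,\theta)$ and $(\wtilde\vrho_\veps(x),\oline\vtheta)$.

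For part (a), I would argue as follows. By Lemma \ref{l:target-rho_pos} and Lemma \ref{l:target-rho_bound}, there exists $\veps(l)>0$ such that for every $\veps\in\,]0,\veps(l)]$ and every $x\in\oline{\mbb B}_l$, one has $\rho_*\leq\wtilde\vrho_\veps(x)\leq C(l)$. Hence, when $(\rho,\theta)\in\mc O_{\ess}$, the intermediate point $(\tilde\rho,\tilde\theta)$ varies in the fixed compact set $[\rho_*,\max\{2,C(l)\}]\times[\oline\vtheta/2,2\oline\vtheta]$. On such a compact set, the structural assumptions \eqref{p_comp}, \eqref{pp3} together with the regularity of $p$, $e$, $s$ guarantee that $\mathrm{Hess}\,H_{\oline\vtheta}$ is continuous and uniformly positive-definite: both eigenvalues are bounded from below and above by positive constants depending only on $\rho_*$, $\oline\vtheta$ and $l$. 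This yields the two-sided bound claimed in (a), with constants $c_1,c_2=c_{1,2}(\rho_*,\oline\vtheta,l)$.

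For part (b), I would split the residual region $\mc O_{\res}$ into the subregions where $\rho$ or $\theta$ is close to $0$, lies in a bounded set but outside $\mc O_{\ess}$, and where $\rho+\theta\to+\infty$. On any bounded piece disjoint from a neighbourhood of $(\wtilde\vrho_\veps(x),\oline\vtheta)$, strict convexity of $H_{\oline\vtheta}(\cdot,\cdot)$ (again via the Hessian bound on compact sets) together with the uniform enclosure $\wtilde\vrho_\veps(x)\in[\rho_*,C(l)]$ gives a uniform positive lower bound on $\mc E$. On the unbounded piece one invokes the coercivity of $H_{\oline\vtheta}$: the structural assumptions \eqref{pp1}--\eqref{pp4} together with the radiation term $\tfrac{a}{3}\vtheta^4$ make $H_{\oline\vtheta}(\rho,\theta)$ grow superlinearly in $\rho$ and in $\theta$ for $(\rho,\theta)$ large, which dominates the affine correction $(\rho-\wtilde\vrho_\veps)\,\d_\vrho H_{\oline\vtheta}(\wtilde\vrho_\veps,\oline\vtheta)+H_{\oline\vtheta}(\wtilde\vrho_\veps,\oline\vtheta)$ (uniformly bounded in $\veps$ and $x\in\oline{\mbb B}_l$). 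Putting these pieces together, a strictly positive constant $c_3(\rho_*,\oline\vtheta,l)$ is obtained, proving (b). Finally, when $F=0$, Proposition \ref{p:target-rho_bound} gives $|\wtilde\vrho_\veps-1|\leq C\veps^m$ uniformly on $\Omega$, so one can replace $[\rho_*,C(l)]$ by a fixed interval around $1$ independent of $l$; all constants above become independent of $l$.

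The main obstacle will be the residual region at infinity: one must check that the correction term $(\rho-\wtilde\vrho_\veps)\,\d_\vrho H_{\oline\vtheta}(\wtilde\vrho_\veps,\oline\vtheta)+H_{\oline\vtheta}(\wtilde\vrho_\veps,\oline\vtheta)$ is controlled uniformly in $\veps$ and in $x\in\oline{\mbb B}_l$, which is where the uniform bounds of Lemmas \ref{l:target-rho_pos}--\ref{l:target-rho_bound} are really needed; besides this, the computation is essentially the same as in \cite{F-N}, since all dependence on $x$ and $\veps$ is absorbed into the uniform localisation of $\wtilde\vrho_\veps$ in a compact subset of $\,]0,+\infty[\,$.
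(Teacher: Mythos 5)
There is a genuine gap in your argument for part (b). The infimum of $\mc E(\cdot,\cdot\,|\,\wtilde\vrho_\veps(x),\oline\vtheta)$ over $\mc O_\res$ is strictly positive only if the base point $\bigl(\wtilde\vrho_\veps(x),\oline\vtheta\bigr)$ — at which $\mc E$ vanishes identically — lies \emph{outside} $\mc O_\res$, i.e.\ in $\mc O_\ess$. You invoke only Lemma \ref{l:target-rho_bound}, which gives $\wtilde\vrho_\veps\leq C(l)$; nothing in that bound rules out $\wtilde\vrho_\veps(x)>2$ for some $x\in\oline{\mbb B}_l$ and some $\veps$, in which case $\bigl(\wtilde\vrho_\veps(x),\oline\vtheta\bigr)\in\mc O_\res$ and $\inf_{\mc O_\res}\mc E=0$. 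Your splitting of $\mc O_\res$ into pieces ``disjoint from a neighbourhood of $(\wtilde\vrho_\veps(x),\oline\vtheta)$'' silently assumes away exactly the problematic case. The paper closes this by using Proposition \ref{p:target-rho_bound}, not just Lemma \ref{l:target-rho_bound}: since $\bigl|\wtilde\vrho_\veps(x)-1\bigr|\leq C(l)\,\veps^m$ on $\oline{\mbb B}_l$, one may pick $\veps(l)$ so that $\wtilde\vrho_\veps(x)\in[\rho_*,3/2]\subset\mc O_\ess$ for all $\veps\leq\veps(l)$; this is the reason the lemma asserts the estimates only for $\veps\leq\veps(l)$. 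Once $\bigl(\wtilde\vrho_\veps,\oline\vtheta\bigr)$ is inside $\mc O_\ess$, convexity of $\mc E$ gives $\inf_{\mc O_\res}\mc E\geq\inf_{\d\mc O_\ess}\mc E>0$, as in relation (6.69) of \cite{F-N}, and the lower bound is uniform because the base point varies in a compact subset of $\mc O_\ess$. So the fix is simple — replace your appeal to $\wtilde\vrho_\veps\leq C(l)$ by the sharper $\wtilde\vrho_\veps\in[\rho_*,3/2]$ for $\veps\leq\veps(l)$ — but as written the step does not go through.

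For part (a) your route is correct and essentially equivalent, differing only in presentation: you use the full bivariate Lagrange remainder and the identity $\d_\vtheta H_{\oline\vtheta}(\vrho,\oline\vtheta)=0$ to reduce $\mc E$ to a quadratic form $\tfrac12\,\mathrm{Hess}\,H_{\oline\vtheta}(\tilde\rho,\tilde\theta)$ evaluated at a single intermediate point, then bound the Hessian on a compact set. The paper instead expands step by step — first a mean-value step in $\theta$ at fixed $\rho$, then a second-order Taylor step in $\rho$ at $\oline\vtheta$ — yielding $\d_\vtheta H_{\oline\vtheta}(\rho,\eta)(\theta-\oline\vtheta)+\tfrac12\,\d^2_{\vrho\vrho}H_{\oline\vtheta}(z_\veps,\oline\vtheta)(\rho-\wtilde\vrho_\veps)^2$, which can be estimated directly from formulas (2.49)–(2.50) of \cite{F-N} without ever forming the full mixed Hessian. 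Both give the same two-sided bound; the paper's variant is a bit more economical since it only needs a pure second derivative in $\rho$ at $\theta=\oline\vtheta$ and a first derivative in $\theta$, but your version requires no new ingredient once the intermediate point is pinned into a fixed compact subset of $\,]0,+\infty[^2$. Even here, though, using Proposition \ref{p:target-rho_bound} to enclose $\wtilde\vrho_\veps$ in $[\rho_*,3/2]$ keeps that compact set independent of $l$ (modulo choosing $\veps\leq\veps(l)$), which is a cleaner way to track the constants and also streamlines the $F=0$ case.
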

\begin{proof}
Let us start by considering the case $F\neq0$. Fix $m\geq 1$. In view of Lemma \ref{l:target-rho_pos} and Proposition \ref{p:target-rho_bound}, for all $l>0$ fixed, there exists $\veps(l)$ such that,
for all $\veps\leq\veps(l)$, we have $\wtilde\vrho_\veps(x)\,\in\,[\rho_*,3/2]\,\subset\,\mc O_{\ess}$ for all $x\in \oline{\mbb B}_{l}$.
With this inclusion at hand, the first inequality is an immediate consequence of the decomposition
\begin{align*}
\mc E\left(\rho,\theta\;|\;\wtilde\vrho_\veps,\oline\vtheta\right)\,&=\,\Bigl(H_{\tems}(\rho,\theta)-H_{\tems}(\rho,\oline\vtheta)\Bigr)\,+\,
\Bigl(H_{\tems}(\rho,\oline\vtheta) - H_{\tems}(\wtilde\vrho_\veps,\oline\vtheta) - (\rho - \vret)\,\partial_\vrho H_{\tems}(\vret,\tems)\Bigr) \\
&=\,\d_\vtheta H_{\oline\vtheta}(\rho,\eta)\,\bigl(\vtheta-\oline\vtheta\bigr)\,+\, \frac{1}{2}\d^2_{\vrho\vrho}H_{\oline\vtheta}(z_\veps,\oline\vtheta)\,\bigl(\rho-\wtilde\vrho_\veps\bigr)^2\,,
\end{align*}
for some suitable $\eta$ belonging to the interval connecting $\theta$ and $\oline\vtheta$, and $z_\veps$ belonging to the interval connecting $\rho$ and $\wtilde\vrho_\veps$. Indeed,
it is enough to use formulas (2.49) and (2.50) of \cite{F-N}, together with the fact that we are in the essential set.

Next, thanks again to the property $\wtilde\vrho_\veps(x)\,\in\,[\rho_*,3/2]\,\subset\,\mc O_{\ess}$, we can conclude, exactly as in relation (6.69) of \cite{F-N}, that
$$
\inf_{(\rho,\theta)\in\mc O_\res}\mc E\left(\rho,\theta\;|\;\wtilde\vrho_\veps,\oline\vtheta\right)\,\geq\,
\inf_{(\rho,\theta)\in\d\mc O_\ess}\mc E\left(\rho,\theta\;|\;\wtilde\vrho_\veps,\oline\vtheta\right)\,\geq\,c\,>\,0\,.
$$


The case $F=0$ follows by similar arguments, using that the various constants in Lemma \ref{l:target-rho_bound} and Proposition \ref{p:target-rho_bound} are uniform in $\Omega$.
This completes the proof of the lemma.
\qed
\end{proof}

\subsubsection{Uniform estimates for the family of weak solutions} \label{sss:uniform}

With the total dissipation balance \eqref{est:dissip} and Lemma \ref{l:H} at hand, we can derive uniform bounds for our family of weak solutions.
Since this derivation is somehow classical, we limit ourselves to recall the main inequalities and sketch the proofs; we refer the reader to Chapters 5, 6 and 8 of \cite{F-N} for details.

To begin with, we remark that, owing to the assumptions fixed in Paragraph \ref{sss:data-weak} on the initial data and to the structural hypotheses of Paragraphs \ref{sss:primsys} and \ref{sss:structural},
the right-hand side of \eqref{est:dissip} is \emph{uniformly bounded} for all $\veps\in\,]0,1]$.
\begin{lemma} \label{l:initial-bound}
Under the assumptions fixed in Paragraphs \ref{sss:primsys}, \ref{sss:structural} and \ref{sss:data-weak}, there exists an absolute constant $C>0$ such that, for all $\veps\in\,]0,1]$,
one has
$$
\int_{\Omega_\veps} \frac{1}{2}\vrez|\uez|^2\,\dx + \frac{1}{\ep^{2m}}\int_{\Omega_\veps}\mc E\left(\vrho_{0,\veps},\vtheta_{0,\veps}\;|\;\wtilde\vrho_\veps,\oline\vtheta\right)\,\dx\,\leq\,C\,.
$$
\end{lemma}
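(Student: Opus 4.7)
The plan is to split the left-hand side into its kinetic and relative-energy contributions, exploiting the perturbative ansatz \eqref{in_vr} together with the uniform bounds in \eqref{hyp:ill_data}-\eqref{hyp:ill-vel} and Remark \ref{r:ill_data}. For the kinetic term, I would write $\vrho_{0,\veps}=\wtilde\vrho_\veps+\veps^m\vrho^{(1)}_{0,\veps}$ and decompose
\[
\int_{\Omega_\veps}\vrho_{0,\veps}|\vec u_{0,\veps}|^2\,\dx\,=\,\bigl\|\sqrt{\wtilde\vrho_\veps}\,\vec u_{0,\veps}\bigr\|_{L^2(\Omega_\veps)}^2\,+\,\veps^m\!\int_{\Omega_\veps}\vrho^{(1)}_{0,\veps}\,|\vec u_{0,\veps}|^2\,\dx\,,
\]
the first term being bounded by \eqref{hyp:ill-vel}, the second by $\veps^m\|\vrho^{(1)}_{0,\veps}\|_{L^\infty}\|\vec u_{0,\veps}\|_{L^2}^2$, uniform thanks to \eqref{hyp:ill_data} and Remark \ref{r:ill_data}.

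For the singular term $\veps^{-2m}\int\mc E\,\dx$, the core idea is to perform a second-order Taylor expansion of $H_{\oline\vtheta}$ around the equilibrium point $(\wtilde\vrho_\veps,\oline\vtheta)$. Two cancellations are built in: the first-order term in $\vrho$ is annihilated by the very definition of $\mc E$, while Gibbs' relation \eqref{gibbs} forces $\partial_\vtheta H_{\oline\vtheta}(\rho,\oline\vtheta)\equiv 0$ and hence also $\partial^2_{\vrho\vtheta}H_{\oline\vtheta}(\rho,\oline\vtheta)\equiv 0$ for every $\rho>0$. Parametrising the segment from $(\wtilde\vrho_\veps,\oline\vtheta)$ to $(\vrho_{0,\veps},\vtheta_{0,\veps})$ by $t\in[0,1]$ and setting $(\vrho^{(t)}_\veps,\vtheta^{(t)}_\veps):=(\wtilde\vrho_\veps+t\veps^m\vrho^{(1)}_{0,\veps},\oline\vtheta+t\veps^m\Theta_{0,\veps})$, Taylor's formula in integral form gives
\[
\mc E\bigl(\vrho_{0,\veps},\vtheta_{0,\veps}\,|\,\wtilde\vrho_\veps,\oline\vtheta\bigr)\,=\,\veps^{2m}\!\int_0^1\!(1-t)\Bigl[\partial^2_\vrho H_{\oline\vtheta}\,\bigl|\vrho^{(1)}_{0,\veps}\bigr|^2+2\,\partial^2_{\vrho\vtheta}H_{\oline\vtheta}\,\vrho^{(1)}_{0,\veps}\Theta_{0,\veps}+\partial^2_\vtheta H_{\oline\vtheta}\,|\Theta_{0,\veps}|^2\Bigr]\!\bigl(\vrho^{(t)}_\veps,\vtheta^{(t)}_\veps\bigr)\,\dt\,,
\]
so that the $\veps^{2m}$ prefactor exactly compensates the singular $\veps^{-2m}$ weight in \eqref{est:dissip}.

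It remains to control the three second derivatives uniformly along the segment. By Lemma \ref{l:target-rho_pos} and the $L^\infty$ bounds on $\vrho^{(1)}_{0,\veps}$ and $\Theta_{0,\veps}$, one has $\vrho^{(t)}_\veps\geq\rho_*/2$ and $\vtheta^{(t)}_\veps\in[\oline\vtheta/2,2\oline\vtheta]$ pointwise for $\veps$ sufficiently small, so no degeneracy occurs. The identity $\partial^2_\vrho H_{\oline\vtheta}=\partial_\vrho p/\vrho$ combined with \eqref{pp2}-\eqref{pp4} gives a uniform pointwise bound on this derivative, whereas a direct computation based on \eqref{pp1}, \eqref{ss1s} and the crucial hypothesis \eqref{pp3} yields the \emph{at most linear} growth $0\leq\partial^2_\vtheta H_{\oline\vtheta}(\vrho,\vtheta)\leq C_1\vrho+C_2$ near $\oline\vtheta$. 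An analogous argument handles the mixed term, which is moreover $O(\veps^m|\Theta_{0,\veps}|)$ thanks to the vanishing mentioned above. Plugging these bounds into the Taylor remainder and integrating over $\Omega_\veps$, the $\vrho$-contribution is controlled by $\|\vrho^{(1)}_{0,\veps}\|_{L^2}^2$ and the $\vtheta$-contribution by $\|\sqrt{\wtilde\vrho_\veps}\,\Theta_{0,\veps}\|_{L^2}^2+\|\Theta_{0,\veps}\|_{L^2}^2$, all uniform by \eqref{hyp:ill_data} and Remark \ref{r:ill_data}.

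The delicate point I anticipate is precisely this matching between the growth of $\partial^2_\vtheta H_{\oline\vtheta}$ in $\vrho$ and the weighted $L^2$ control of $\Theta_{0,\veps}$: when $F\neq 0$, the equilibrium density $\wtilde\vrho_\veps$ is \emph{not} uniformly bounded on $\Omega_\veps$ (for $|x^h|\sim L_\veps$ it can become arbitrarily large), and a plain $\|\Theta_{0,\veps}\|_{L^2}^2$ control would be insufficient. It is exactly the weight $\sqrt{\wtilde\vrho_\veps}$ in \eqref{hyp:ill_data}, in conjunction with the monoatomic-gas structural assumption \eqref{pp3}, that closes the estimate; checking this balance carefully is the main technical content of the proof.
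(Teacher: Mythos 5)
Your proposal is correct and reaches the same conclusion, but via a genuinely different Taylor decomposition from the paper's. The paper does \emph{not} perform a joint bivariate second-order expansion of $H_{\oline\vtheta}$ with integral remainder around $(\wtilde\vrho_\veps,\oline\vtheta)$; instead it uses the two-step split already underlying Lemma \ref{l:H}(a), writing $\mc E$ as a first-order increment in $\vtheta$ at the fixed density $\vrho_{0,\veps}$ plus a purely-in-$\vrho$ second-order remainder at $\oline\vtheta$,
\[
\mc E\left(\vrho_{0,\veps},\vtheta_{0,\veps}\;|\;\wtilde\vrho_\veps,\oline\vtheta\right)\,=\,\d_\vtheta H_{\oline\vtheta}(\vrho_{0,\veps},\eta_{0,\veps})\,\bigl(\vtheta_{0,\veps}-\oline\vtheta\bigr)\,+\,\frac{1}{2}\,\d^2_{\vrho\vrho}H_{\oline\vtheta}(z_{0,\veps},\oline\vtheta)\,\bigl(\vrho_{0,\veps}-\wtilde\vrho_\veps\bigr)^2\,,
\]
with intermediate points $\eta_{0,\veps}=\oline\vtheta+\veps^m\lambda_\veps\Theta_{0,\veps}$ and $z_{0,\veps}=\wtilde\vrho_\veps+\veps^m\z_\veps\vrho^{(1)}_{0,\veps}$. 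The $\vtheta$-part is then handled through the explicit identity (formula (2.50) of \cite{F-N})
$\d_\vtheta H_{\oline\vtheta}(\vrho,\eta)=4a\eta^2(\eta-\oline\vtheta)+\frac{\vrho}{\eta}(\eta-\oline\vtheta)\d_\vtheta e_M(\vrho,\eta)$, which already exhibits the factor $(\eta-\oline\vtheta)$ so that the whole term is automatically quadratic in $\Theta_{0,\veps}$; one then only needs $|\d_\vtheta e_M|\leq c$, a direct consequence of \eqref{pp3}. This route entirely sidesteps the mixed derivative $\d^2_{\vrho\vtheta}H_{\oline\vtheta}$ and, more importantly, the third derivative $\d^3_{\vrho\vtheta\vtheta}H_{\oline\vtheta}$ that your argument implicitly needs in order to justify the claimed $O(\veps^m|\Theta_{0,\veps}|)$ smallness of the mixed term along the segment — that is the one step you leave unverified. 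By contrast, your integral-remainder version is more self-contained (it does not lean on a ready-made formula from \cite{F-N}) and makes the cancellations $\d_\vtheta H_{\oline\vtheta}(\cdot,\oline\vtheta)\equiv 0$ and $\d^2_{\vrho\vtheta}H_{\oline\vtheta}(\cdot,\oline\vtheta)\equiv 0$ explicit. Both routes rest on exactly the same essential ingredients, which you correctly identified: the positivity $\wtilde\vrho_\veps\geq\rho_*$ from Lemma \ref{l:target-rho_pos}, the structural bounds \eqref{pp3}--\eqref{pp4} giving $\d^2_{\vrho\vrho}H_{\oline\vtheta}(z,\oline\vtheta)=P'(Z)/(Z\sqrt{\oline\vtheta})\lesssim 1$ for $z$ bounded away from zero, and — crucially when $F\neq 0$ and $\wtilde\vrho_\veps$ is unbounded on $\Omega_\veps$ — the weighted hypothesis $\|\sqrt{\wtilde\vrho_\veps}\,\Theta_{0,\veps}\|_{L^2}\leq c$ to absorb the linear-in-$\vrho$ growth of $\d^2_{\vtheta\vtheta}H_{\oline\vtheta}\sim\vr\,\d_\vtheta s(\vr,\oline\vtheta)$.
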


\begin{proof}
The boundedness of the first term in the left-hand side is an obvious consequence of \eqref{hyp:ill-vel} and \eqref{hyp:ill_data} for the density. So, let us show how to control the term containing
$\mc E\left(\vrho_{0,\veps},\vtheta_{0,\veps}\;|\;\wtilde\vrho_\veps,\oline\vtheta\right)$. Owing to Taylor formula, one has
\begin{align*}
\mc E\left(\vrho_{0,\veps},\vtheta_{0,\veps}\;|\;\wtilde\vrho_\veps,\oline\vtheta\right)\,&=\,
\d_\vtheta H_{\oline\vtheta}(\vrho_{0,\veps},\eta_{0,\veps})\,\bigl(\vtheta_{0,\veps}-\oline\vtheta\bigr)\,+\,\frac{1}{2}\,
\d^2_{\vrho\vrho}H_{\oline\vtheta}(z_{0,\veps},\oline\vtheta)\,\bigl(\vrho_{0,\veps}-\wtilde\vrho_\veps\bigr)^2\,,
\end{align*}
where we can write $\eta_{0,\veps}(x)\,=\,\oline\vtheta\,+\,\veps^m\,\lambda_\veps(x)\,\Theta_{0,\veps}$ and $z_{0,\veps}\,=\,\wtilde\vrho_\veps\,+\,\veps^m\,\z_\veps(x)\,\vrho^{(1)}_{0,\veps}$,
with both the families $\bigl(\lambda_\veps\bigr)_\veps$ and $\bigl(\z_\veps\bigr)_\veps$ belonging to $L^\infty(\Omega_\veps)$, uniformly in $\veps$ (in fact,
$\lambda_\veps(x)$ and $\z_{\veps}(x)$ belong to the interval $\,]0,1[\,$ for all $x\in\Omega_\veps$).
Notice that $\big(\eta_{0,\veps}\big)_\veps\,\subset\,L^\infty(\Omega_\veps)$ and that $\eta_{0,\veps}\geq c_1>0$ and $z_{0,\veps}\geq c_2>0$ (at least for $\veps$ small enough).
By the structural hypotheses fixed in Paragraph \ref{sss:structural} (and in particular Gibbs' law), we get (see also formula (2.50) in \cite{F-N})
\begin{align} \label{eq:d_th-H_th}
\d_\vtheta H_{\oline\vtheta}(\vrho_{0,\veps},\eta_{0,\veps})\,&=\,4\,a\,\eta_{0,\veps}^2\,\bigl(\eta_{0,\veps}-\oline\vtheta\bigr)\,+\,
\frac{\vrho_{0,\veps}}{\eta_{0,\veps}}\,\bigl(\eta_{0,\veps}-\oline\vtheta\bigr)\,\d_\vtheta e_M(\rho_{0,\veps},\eta_{0,\veps})\,.
\end{align}
In view of condition \eqref{pp3}, we gather that $\left|\d_\vtheta e_M\right|\,\leq\,c$; therefore, from hypotheses \eqref{hyp:ill_data} and Remark \ref{r:ill_data}
it is easy to deduce that
$$
\frac{1}{\veps^{2m}}\int_{\Omega_\veps}\d_\vtheta H_{\oline\vtheta}(\vrho_{0,\veps},\eta_{0,\veps})\,\bigl(\vtheta_{0,\veps}-\oline\vtheta\bigr)\dx\,\leq\,C\,.
$$

Moreover, by \eqref{pp1} we get (keep in mind formula (2.49) of \cite{F-N})
$$
\d^2_{\vrho\vrho}H_{\oline\vtheta}(z_{0,\veps},\oline\vtheta)\,=\,\frac{1}{z_{0,\veps}}\,\d_\vrho p_M(z_{0,\veps},\oline\vtheta)\,=\,\frac{1}{\sqrt{\oline\vtheta}}\,\frac{1}{Z_{0,\veps}}\,P'(Z_{0,\veps})\,,
$$
where we have set $Z_{0,\veps}\,=\,z_{0,\veps}\,\oline\vtheta^{-3/2}$. Now, thanks to \eqref{pp3} again and to the fact that $z_{0,\veps}$ is strictly positive, we can estimate,
for some positive constants which depend also on $\oline\vtheta$,
$$
\frac{1}{Z_{0,\veps}}\,P'(Z_{0,\veps})\,\leq\,C\,\frac{P(Z_{0,\veps})}{Z_{0,\veps}^2}\,\leq\,C\left(\frac{P(Z_{0,\veps})}{Z_{0,\veps}^2}\,\bbbone_{\{0\leq Z_{0,\veps}\leq1\}}\,+\,
\frac{P(Z_{0,\veps})}{Z_{0,\veps}^{5/3}}\,\bbbone_{\{Z_{0,\veps}\geq1\}}\right)\,\leq\,C\,,
$$
where we have used also \eqref{pp4}.

Hence, we can check that
$$
\frac{1}{2\veps^{2m}}\int_{\Omega_\veps}\d^2_{\vrho\vrho}H_{\oline\vtheta}(z_{0,\veps},\oline\vtheta)\,\bigl(\vrho_{0,\veps}-\wtilde\vrho_\veps\bigr)^2\dx\,\leq\,C\,.
$$
This inequality completes the proof of the lemma.
\qed
\end{proof}

\medbreak

Owing to the previous lemma, from \eqref{est:dissip} we gather, for any $T>0$, the estimates
\begin{align}
	\sup_{t\in[0,T]} \| \sqrt{\vre}\ue\|_{L^2(\Omega_\veps;\R^3)}\, &\leq\,c \label{est:momentum} \\
	\| \sigma_\ep\|_{{\mathcal{M}}^+ ([0,T]\times\oline\Omega_\veps )}\, &\leq \,\ep^{2m}\, c\,. \label{est:sigma}
\end{align}
Fix now any $l>0$. Employing Lemma~\ref{l:H} (and keeping track of the dependence of constants only on $l$), we deduce
	\begin{align}
	\sup_{t\in[0,T]} \left\| \left[ \dfrac{\vre - \vret}{\ep^m}\right]_\ess (t) \right\|_{L^2(\mbb B_{l})}\,+\,
	\sup_{t\in[0,T]} \left\| \left[ \dfrac{\tem - \tems}{\ep^m}\right]_\ess (t) \right\|_{L^2(\mbb B_{l})}\,&\leq\, c(l)\,. \label{est:rho_ess} 
	\end{align}
In addition, we infer also that the measure of the ``residual set'' is small: more precisely, we have
\begin{equation}\label{est:M_res-measure}
	\sup_{t\in[0,T]} \int_{\mbb B_{l}}	\bbbone_{\mc{M}^\veps_\res[t]} \dx\,\leq \,\ep^{2m}\, c(l)\,.
	\end{equation}

{\begin{remark}\label{rmk:cut-off}
When $F=0$, thanks to Lemma \ref{l:target-rho_bound} and Proposition \ref{p:target-rho_bound},
one can see that estimates \eqref{est:rho_ess} and \eqref{est:M_res-measure} hold on the whole $\Omega_\veps$, without any need of taking the localisation on the cylinders $\B_l$.
From this observation, it is easy to see that, when $F=0$, we can replace $\mathbb{B}_l$ with the whole $\Omega_\veps$ in all the following estimates.
\end{remark}}

Now, we fix $l\geq 0$. We estimate
$$
\int_{\B_{l}}\left|\left[\vrho_\veps\,\log\vrho_\veps\right]_\res\right|\dx\,=\,
\int_{\B_{l}}\left|\vrho_\veps\,\log\vrho_\veps\right|\,\bbbone_{\{0\leq\vrho_\veps\leq2\rho_*/3\}}\dx\,+\,
\int_{\B_{l}}\left|\vrho_\veps\,\log\vrho_\veps\right|\,\bbbone_{\{\vrho_\veps\geq2\}}\dx\,.
$$
Thanks to \eqref{est:M_res-measure}, the former term in the right-hand side is easily controlled by $\veps^{2m}$, up to a suitable multiplicative constant also depending on $l$.
As for the latter term, we have to argue in a different way. Owing to inequalities \eqref{pp2}, \eqref{p_comp}, \eqref{pp3} and \eqref{pp4}, we get that
$\d^2_\vrho H_{\oline\vtheta}(\vrho,\oline\vtheta)\geq C/\vrho$; therefore, by direct integration we find
\begin{align*}
C\,\vrho_\veps\,\log\vrho_\veps\,-\,C\left(\vrho_\veps-1\right)\,&\leq\,H_{\oline\vtheta}(\vrho_\veps,\oline\vtheta)\,-\,H_{\oline\vtheta}(1,\oline\vtheta)\,-\,
\d_\vrho H_{\oline\vtheta}(1,\oline\vtheta)(\vrho_\ep-1) \\
&\leq\,\mc E\left(\vrho_\veps,\vtheta_\veps\;|\;\wtilde\vrho_\veps,\oline\vtheta\right)\,+\,\mc E\left(\wtilde\vrho_\veps,\oline\vtheta\;|\;1,\oline\vtheta\right)\,+\,
\Big( \d_\vrho H(\wtilde\vrho_\veps,\oline\theta) - \d_\vrho H(1,\oline\theta)\Big) \big( \vrho_\veps - \wtilde\vrho_\veps \big)\, ,
\end{align*}
since an expansion analogous to \eqref{eq:d_th-H_th} allows to gather that $H_{\oline\vtheta}(\vrho_\veps,\oline\vtheta)\,-\,H_{\oline\vtheta}(\vrho_\veps,\vtheta_\veps)\,\leq\,0$.
On the one hand, using \eqref{est:dissip}, Proposition \ref{p:target-rho_bound} and \eqref{est:M_res-measure} one deduces
\begin{equation*} \label{est:rho-log_prelim}
\left|\int_{\B_{l}\cap\mc O_\res}\left(\mc E\left(\vrho_\veps,\vtheta_\veps\;|\;\wtilde\vrho_\veps,\oline\vtheta\right)\,+\,
\mc E\left(\wtilde\vrho_\veps,\oline\vtheta\;|\;1,\oline\vtheta\right)\,+\,\Big( \d_\vrho H(\wtilde\vrho_\veps,\oline\theta) - \d_\vrho H(1,\oline\theta)\Big) \big( \vrho_\veps - \wtilde\vrho_\veps \big)\,\right)\dx \right|\,\leq\,C\,\veps^{2m}\,.
\end{equation*}
On the other hand, $\vrho_\veps\log\vrho_\veps-\left(\vrho_\veps-1\right)\,\geq\,\vrho_\veps\left(\log\vrho_\veps-1\right)\,\geq\,(1/2)\,\vrho_\veps\,\log\vrho_\veps$ whenever
$\vrho_\veps\geq e^2$. Hence,  since we have
$$
\int_{\B_{l}}\left|\vrho_\veps\,\log\vrho_\veps\right|\,\bbbone_{\{2\leq\vrho_\veps\leq e^2\}}\dx\,\leq\,C\,\veps^{2m}
$$
owing to \eqref{est:M_res-measure} again,
we finally infer that, for any fix $l>0$,
\begin{equation} \label{est:rho*log-rho}
\sup_{t\in[0,T]}\int_{\B_{l}}\left|\left[\vrho_\veps\,\log\vrho_\veps\right]_\res(t)\right|\dx\,\leq\,c(l)\,\veps^{2m}\,.
\end{equation}


Owing to inequality \eqref{est:rho*log-rho}, we deduce (exactly as in \cite{F-N}, see estimates (6.72) and (6.73) therein) that
	\begin{align}
	\sup_{t\in [0,T]} \int_{\B_{l}}
	\bigl(
	\left| [ \vre e(\vre,\tem)]_\res\right| +  \left| [ \vre s(\vre,\tem)]_\res\right|\bigr)\,\dx\,&\leq\,\ep^{2m}\, c (l)\,, \label{est:e-s_res}
	\end{align}
which in particular implies (again, we refer to Section 6.4.1 of \cite{F-N} for details) the following bounds:
\begin{align}
	\sup_{t\in [0,T]} \int_{\B_{l}} [ \vre]^{5/3}_\res (t)\,\dx \,+\,\sup_{t\in [0,T]} \int_{\B_{l}} [ \tem]^{4}_\res (t)\, \dx
	\,&\leq\,\ep^{2m}\, c (l)\,. \label{est:rho_res} 
\end{align}

Let us move further. In view of \eqref{S}, \eqref{ss}, \eqref{q} and \eqref{mu}, relation \eqref{est:sigma} implies
	\begin{align}
	\int_0^T \left\| \nabla_x \ue +\, ^t\nabla_x \ue  - \frac{2}{3} \div \ue \, \Id \right\|^2_{L^2(\Omega_\veps;\R^{3\times3})}\, \dt\,
	&\leq\, c \label{est:Du}  \\
	\int_0^T \left\| \nabla_x \left(\frac{\tem - \tems}{\ep^m}\right) \right\|^2_{L^2(\Omega_\veps;\R^3)}\, \dt\, +\,
	\int_0^T \left\| \nabla_x \left(\frac{\log(\tem) - \log(\tems)}{\ep^m}\right) \right\|^2_{L^2(\Omega_\veps;\R^3)} \,\dt\,
	&\leq\, c\,. \label{est:D-theta}
	\end{align}

{Thanks to the previous inequalities and \eqref{est:M_res-measure}, we can argue as in Subsection 8.2 of \cite{F-N}:
by generalizations of respectively Poincar\'e and Korn-Poincar\'e inequalities (see Propositions \ref{app:poincare_prop} and \ref{app:korn-poincare_prop}), for all $l>0$ we gather also}
\begin{align}
\int_0^T \left\| \frac{\tem - \tems}{\ep^m} \right\|^2_{W^{1,2}({\B_l};\R^3)}\, \dt \,+\,
\int_0^T \left\| \frac{\log(\tem) - \log(\tems)}{\ep^m} \right\|^2_{W^{1,2}({\B_l};\R^3)}\, \dt\,&\leq\,c(l) \label{est:theta-Sob} \\
\int_0^T \left\| \ue \right\|^2_{W^{1,2}(\B_l; \R^3)} \dt\,&\leq\,c(l)\,. \label{est:u-H^1}
\end{align}

Finally, we discover that
\begin{align}
\int^T_0\left\|\left[\frac{\vrho_\veps\,s(\vrho_\veps,\vtheta_\veps)}{\veps^m}\right]_{\res}\right\|^{2}_{L^{30/23}(\B_{l})}\dt\,+\,
\int^T_0\left\|\left[\frac{\vrho_\veps\,s(\vrho_\veps,\vtheta_\veps)}{\veps^m}\right]_{\res}\,
\vec{u}_\veps\right\|^{2}_{L^{30/29}(\B_{l})}\dt\,&\leq\,c(l)  \label{est:rho-s_res} \\
\int^T_0\left\|\frac{1}{\veps^m}\,\left[\frac{\kappa(\vtheta_\veps)}{\vtheta_\veps}\right]_{\res}\,
\nabla_{x}\vtheta_\veps(t)\right\|^{2}_{L^{1}(\B_l)}\dt\,&\leq\,c(l)\,.  \label{est:Dtheta_res}
\end{align}
The argument for proving \eqref{est:rho-s_res} and \eqref{est:Dtheta_res} is similar to one employed in the proof of Proposition 5.1 of \cite{F-N}, but here it is important
to get bounds for the $L^2$ norm in time (see also Remark \ref{r:bounds} below). Indeed, we have that
\begin{equation} \label{5.58_book}
\left[\vrho_\veps\,s(\vrho_\veps,\vtheta_\veps)\right]_{\res}\leq C\, \left[ \vrho_\veps +\vrho_\veps \, |\log \vrho_\veps |\, +\vrho_\veps \, |\log \vtheta_\veps -\log \oline \vtheta|+\vtheta_{\veps}^{3}\, \right]_{\res} 
\end{equation}
and thanks to the previous uniform bounds \eqref{est:rho_res} and \eqref{est:theta-Sob}, one has that
$\big(\left[\vrho_\veps \right]_{\res}\big)_\veps\subset L_{T}^{\infty}( L_{\rm loc}^{5/3})$,
$\big(\left[\vrho_\veps \, |\log \vrho_\veps |\,\right]_{\res}\big)_\veps\subset L_{T}^{\infty}( L_{\rm loc}^{q})$ for all $1\leq q< 5/3$ (see relation (5.60) in \cite{F-N}),
$\big(\left[\vrho_\veps \, |\log \vtheta_\veps -\log \oline \vtheta|\, \right]_{\res}\big)_\veps\subset L_{T}^{2}( L_{\rm loc}^{30/23})$ and finally
$\big(\left[\vtheta_{\veps}^{3}\, \right]_{\res}\big)_\veps\subset L_{T}^{\infty}( L_{\rm loc}^{4/3})$.
Let us recall that the inclusion symbol means that the sequences are uniformly bounded in the respective spaces.
Then, it follows that the first term in \eqref{est:rho-s_res}  is in $L_T^{2}(L_{\rm loc}^{30/23})$. 
Next, taking \eqref{5.58_book} we obtain
\begin{equation*}
\left[\vrho_\veps\,s(\vrho_\veps,\vtheta_\veps)\ue\right]_{\res}\leq C\, \left[ \vrho_\veps\ue +\vrho_\veps \, |\log \vrho_\veps |\, \ue\, +\vrho_\veps \, |\log \vtheta_\veps -\log \oline \vtheta|\,\ue +\vtheta_{\veps}^{3}\ue \, \right]_{\res} 
\end{equation*}
and using the uniform bounds \eqref{est:rho_res} and \eqref{est:u-H^1}, we have that $\big(\left[\vrho_\veps\ue\right]_{\res}\big)_\veps\subset L_T^{2}(L_{\rm loc}^{30/23})$. 
Now, we look at the second term. We know that $\big(\left[\vrho_\veps \, |\log \vrho_\veps |\, \right]_{\res}\big)_\veps\subset L_{T}^{\infty}( L_{\rm loc}^{q})$ for all $1\leq q< 5/3$ and
$\ue \in L_T^{2}(L_{\rm loc}^{6})$ (thanks to Sobolev embeddings, see Theorem \ref{app:sob_embedd_thm}). Then, we take $q$ such that $1/p:=1/q+1/6<1$ and so
$$ \big(\left[\vrho_\veps \, |\log \vrho_\veps |\, \ue\,\right]_{\res}\big)_\veps\subset L_T^{2}(L_{\rm loc}^{p})\, . $$
Keeping \eqref{est:rho_res}, \eqref{est:theta-Sob} and \eqref{est:momentum} in mind and using that
$$\left[\vrho_\veps \, |\log \vtheta_\veps -\log \oline \vtheta|\, \vec{u}_\veps\, \right]_{\res}=
\left[\sqrt{\vrho_\veps}\, |\log \vtheta_\veps -\log \oline \vtheta|\, \sqrt{\vrho_\veps}\, \vec{u}_{\veps}\, \right]_{\res}\,,$$
we obtain that the third term is uniformly bounded in $L_{T}^{2}( L_{\rm loc}^{30/29})$.
Using again the uniform bounds, we see that the last term is in $L_T^{\infty}(L_{\rm loc}^{12/11})$.
Thus, we obtain \eqref{est:rho-s_res}.  \\
To get \eqref{est:Dtheta_res}, we use instead the following estimate (see Proposition 5.1 of \cite{F-N}):
\begin{equation*}
\left[\frac{k(\vtheta_\veps )}{\vtheta_\veps}\right]_{\res}\left|\frac{\nabla_{x}\vtheta_\veps}{\veps^{m}}\right| \leq C
\left(\left|\frac{\nabla_{x}(\log \vtheta_\veps )}{\veps^m}\right|+\left[\vtheta_{\veps}^{2}\right]_{\res}\left|\frac{\nabla_{x}\vtheta_{\veps}}{\veps^m}\right|\right)\,.
\end{equation*} 
Owing to the previous uniform bounds, the former term is uniformly bounded in $L_{T}^{2}( L_{\rm loc}^{2})$ and the latter one is uniformly bounded in $L_{T}^{2}( L_{\rm loc}^{1})$.
So, we obtain the estimate \eqref{est:Dtheta_res}.

\begin{remark} \label{r:bounds}
Differently from \cite{F-N}, here we have made the integrability indices in \eqref{est:rho-s_res} and \eqref{est:Dtheta_res} explicit.
In particular, having the $L^2$ norm in time will reveal to be fundamental for the compensated compactness argument, see Lemma \ref{l:source_bounds} below.
\end{remark}

\subsection{Constraints on the limit dynamics}\label{ss:ctl1}

In this subsection, we establish some properties that the limit points of the family $\bigl(\vrho_\veps,\vec u_\veps,\vtheta_\veps\bigr)_\veps$ have to satisfy.
These are static relations, which do not characterise the limit dynamics yet.

\subsubsection{Preliminary considerations} \label{sss:constr_prelim}
To begin with, we propose an extension of Proposition 5.2 of \cite{F-N}, which will be heavily used in the sequel. Two are the novelties here: firstly,
for the sake of generality we will consider a non-constant density profile $\wtilde\vrho$ in the limit (although this property is not used in our analysis); in addition, due to the
centrifugal force, when $F\neq0$ our result needs a localization procedure on compact sets. 

\begin{proposition} \label{p:prop_5.2}
Let $m\geq1$ be fixed. Let $\wtilde\vrho_\veps$ and $\oline\vtheta$ be the static solutions identified and studied in Paragraph \ref{sss:equilibrium}, and take $\wtilde\vrho$
to be the pointwise limit of the family $\left(\wtilde\vrho_\veps\right)_\veps$ (in particular, $\wtilde\vrho\equiv1$ if $m>1$ or $m=1$ and $F=0$).
Let $(\varrho_\ep)_\ep$ and $(\vartheta_\ep)_\ep$ be sequences of non-negative measurable functions, and define
$$
R_\veps\,:=\,\frac{\varrho_\ep - \wtilde\vrho}{\ep^m}\qquad\mbox{ and }\qquad
\Theta_\veps\,:=\,\frac{\vartheta_\ep - \oline\vartheta}{\ep^m}\,.
$$
Suppose that, in the limit $\veps\ra0^+$, one has the convergence properties
\begin{equation}\label{hyp_p5.2:1}
\left[R_\veps\right]_{\rm ess}\, \weakstar\, R \quad\mbox{ and }\quad
\left[\Theta_\veps\right]_{\rm ess}\, \weakstar\, \Theta\qquad\quad \mbox{ in the weak-$*$ topology of} \ L^\infty\bigl([0,T];L^2(K)\bigr)\,,
\end{equation}
for any compact $K\subset \Omega$, and that, for any $L>0$, one has
\begin{equation}\label{hyp_p5.2:2}
\sup_{t\in[0,T]} \int_{\mbb B_{L}} \bbbone_{\mathcal{M}^\ep_{\rm res} [t] }\,dx\, \leq \,c(L)\,\ep^{2m}\,.
\end{equation}

Then, for any given function $G \in C^1(\overline{\mathcal{O}}_{\rm ess})$,  one has the convergence
$$
	\frac{[G(\varrho_\ep,\vartheta_\ep)]_{\ess} - G(\wtilde\vrho,\oline\vartheta)}{\ep^m}\,
	\weakstar\,\partial_\vrho G(\wtilde\vrho,\oline\vartheta)\,R\, +\,\partial_\vtheta G(\wtilde\vrho,\oline\vartheta)\,\Theta
	\qquad \mbox{ in the weak-$*$ topology of} \ L^\infty\bigl([0,T];L^2(K)\bigr)\,,
$$
for any compact $K\subset \Omega$.
\end{proposition}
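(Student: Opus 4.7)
The plan is to split the quantity of interest as
\begin{equation*}
\frac{[G(\varrho_\ep,\vartheta_\ep)]_{\ess} - G(\wtilde\vrho,\oline\vartheta)}{\ep^m}
\,=\,\underbrace{\frac{[G(\varrho_\ep,\vartheta_\ep) - G(\wtilde\vrho,\oline\vartheta)]_{\ess}}{\ep^m}}_{(\mathrm{I})_\ep}
\,-\,\underbrace{\frac{G(\wtilde\vrho,\oline\vartheta)\,\bigl(1-\mf b(\varrho_\ep,\vartheta_\ep)\bigr)}{\ep^m}}_{(\mathrm{II})_\ep}\,,
\end{equation*}
using that $[\,\cdot\,]_{\ess} = \mf b(\varrho_\ep,\vartheta_\ep)\,\cdot\,$ acts as a multiplication operator. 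The first piece will be handled by a first-order Taylor expansion of $G$, and the second by the smallness of the residual set.

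First I would treat $(\mathrm{II})_\ep$. Fix a compact $K \subset \Omega$ and choose $L>0$ so that $K \subset \mbb{B}_L$. By Proposition \ref{p:target-rho_bound}, $\wtilde\vrho$ (the pointwise limit) lies in a compact subset of the interior of $[2\rho_*/3,2]$ on $K$ for $\ep$ small enough, so $\sup_K|G(\wtilde\vrho,\oline\vartheta)| \leq C$. Since $1-\mf b(\varrho_\ep,\vartheta_\ep)$ vanishes on $\mc M^\ep_{\ess}$, hypothesis \eqref{hyp_p5.2:2} gives
\begin{equation*}
\sup_{t\in[0,T]}\|(\mathrm{II})_\ep(t)\|_{L^2(K)} \leq \frac{C}{\ep^m}\,|\mc M^\ep_{\res}[t]\cap K|^{1/2} \leq C\,c(L)^{1/2}\,,
\qquad
\|(\mathrm{II})_\ep\|_{L^1([0,T]\times K)} \leq C\,T\,c(L)\,\ep^m\,.
\end{equation*}
Uniform boundedness in $L^\infty_t L^2_x$ together with strong convergence to $0$ in $L^1([0,T]\times K)$ yields, via a standard density argument (truncating test functions in $L^1_t L^2_x$), weak-$*$ convergence of $(\mathrm{II})_\ep$ to $0$ in $L^\infty([0,T];L^2(K))$.

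For the essential part $(\mathrm{I})_\ep$, extend $G$ to a $C^1$ function on a neighbourhood of $\mathrm{supp}\,\mf b \subset\,]0,+\infty[^2$. Since $\mc O_{\ess}$ is convex and $(\wtilde\vrho(x),\oline\vartheta)\in\mc O_{\ess}$ on $K$ for $\ep$ small, the integral form of the mean value theorem gives pointwise on $\mc M^\ep_{\ess}\cap K$
\begin{equation*}
G(\varrho_\ep,\vartheta_\ep) - G(\wtilde\vrho,\oline\vartheta) \,=\,
\ep^m\bigl( R_\ep\,\wtilde A_\ep + \Theta_\ep\,\wtilde B_\ep\bigr)\,,
\qquad
\wtilde A_\ep := \int_0^1\! \partial_\vrho G\bigl((1-s)(\wtilde\vrho,\oline\vartheta) + s(\varrho_\ep,\vartheta_\ep)\bigr)\,ds\,,
\end{equation*}
and analogously for $\wtilde B_\ep$ with $\partial_\vtheta G$. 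Multiplying by $\mf b(\varrho_\ep,\vartheta_\ep)/\ep^m$ and absorbing the cutoff into the relevant factor, we obtain $(\mathrm{I})_\ep = \wtilde A_\ep\,[R_\ep]_{\ess} + \wtilde B_\ep\,[\Theta_\ep]_{\ess}$. The families $\wtilde A_\ep$, $\wtilde B_\ep$ are uniformly bounded on $\mathrm{supp}\,\mf b$ by $\|\nabla G\|_{L^\infty(\mathrm{supp}\,\mf b)}$. Moreover, writing $\varrho_\ep = \wtilde\vrho + \ep^m R_\ep$ with $[R_\ep]_{\ess}$ bounded in $L^\infty_t L^2_x$ (and using $\wtilde\vrho_\ep\to\wtilde\vrho$ uniformly on $K$) shows that $\varrho_\ep \to \wtilde\vrho$ in $L^\infty_t L^2_x(K)$, hence in measure; up to extraction, $\varrho_\ep \to \wtilde\vrho$ and $\vartheta_\ep\to\oline\vartheta$ a.e. on $K$. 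Continuity of $\partial_\vrho G$, $\partial_\vtheta G$ on the compact $\overline{\mathcal O}_{\ess}$ and dominated convergence then yield $\wtilde A_\ep \to \partial_\vrho G(\wtilde\vrho,\oline\vartheta)$ and $\wtilde B_\ep \to \partial_\vtheta G(\wtilde\vrho,\oline\vartheta)$ strongly in $L^p(K)$ for all $p<\infty$. Combining this strong convergence with the weak-$*$ convergence \eqref{hyp_p5.2:1} (and uniqueness of the limit, which upgrades subsequential convergence to convergence of the whole sequence) yields the claim.

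The main technical delicacy is the interplay between weak and strong convergence together with the localisation procedure: $\wtilde\vrho$ is only known to be bounded on compact sets (when $F\neq 0$ and $m=1$, $\wtilde\vrho$ is genuinely non-constant), which forces the statement to be localised on $K\subset\Omega$ and requires Proposition \ref{p:target-rho_bound} to control the position of $(\wtilde\vrho,\oline\vartheta)$ inside $\mc O_{\ess}$. Everything else reduces to the standard residual-set estimate \eqref{hyp_p5.2:2} and $C^1$ regularity of $G$.
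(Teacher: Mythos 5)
Your proof is correct, and it takes a genuinely different route from the paper's. The paper first reduces (exactly as you do) to the essential part via the residual-set bound, then writes the difference
\begin{equation*}
\left[\frac{G(\varrho_\ep,\vartheta_\ep)-G(\wtilde\vrho,\oline\vartheta)}{\ep^m}-\partial_\vrho G(\wtilde\vrho,\oline\vartheta)R_\veps-\partial_\vtheta G(\wtilde\vrho,\oline\vartheta)\Theta_\veps\right]_{\ess}
\end{equation*}
and, assuming first $G\in C^2(\oline{\mc O}_{\ess})$, bounds it pointwise by $C\,\veps\,\|{\rm Hess}\,G\|_{L^\infty}\bigl([R_\veps]_{\ess}^2+[\Theta_\veps]_{\ess}^2\bigr)$, i.e.\ a second-order Taylor estimate. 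This gives $O(\veps)$ smallness in $L^\infty_T(L^1(K))$, which combined with uniform $L^\infty_T(L^2(K))$ bounds on both sides yields weak-$*$ convergence to zero. The $C^1$ case is then handled by uniformly approximating $G$ by a sequence of $C^2$ functions. Your argument instead replaces the second-order Taylor expansion by the first-order \emph{integral form} of the mean value theorem, which already works for $G\in C^1$; you then need to show that the coefficients $\wtilde A_\veps,\wtilde B_\veps$ converge a.e.\ (after subsequence extraction via convergence in measure) and use the bounded-strong-times-weak pairing plus the sub-subsequence principle. The paper's route buys a quantitative $O(\veps)$ rate in $L^1$ for $C^2$ data and sidesteps any subsequence extraction; yours buys a cleaner, one-pass argument that never leaves the $C^1$ class.

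One small sloppy step, not affecting the conclusion: from the proposition's hypotheses alone you do not get $\varrho_\ep\to\wtilde\vrho$ in $L^\infty_t L^2_x(K)$, because $[R_\veps]_{\res}$ is not controlled in $L^2$. What the hypotheses do give is convergence in measure on $[0,T]\times K$ (since $\ep^m[R_\veps]_{\ess}\to0$ in $L^\infty_t L^2_x$ and the measure of $\mc M^\veps_\res$ on $K$ is $O(\veps^{2m})$), and that is what you actually use to pass to an a.e.\ convergent subsequence, so the logic survives. You should also be slightly careful that the integral-form Taylor expansion is applied on the full set $\{\mf b(\varrho_\veps,\vartheta_\veps)\neq 0\}$, which is strictly larger than $\mc O_{\ess}$; your extension of $G$ to a $C^1$ function on a convex neighbourhood of $\supp\mf b$ containing $(\wtilde\vrho,\oline\vartheta)$ handles this, but it is worth stating that the neighbourhood must be chosen convex (or at least star-shaped with respect to $(\wtilde\vrho,\oline\vartheta)$) for the segment argument to go through.
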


\begin{proof}
The case $\wtilde\vrho\equiv1$ follows by a straightforward adaptation of the proof of Proposition 5.2 of \cite{F-N}. So, let us immediately focus on the case $m=1$ and $F\neq0$,
so that the target profile $\wtilde\vrho$ is non-constant.

We start by observing that, by virtue of \eqref{hyp_p5.2:2} and Lemma \ref{l:target-rho_bound}, the estimates
$$
\frac{1}{\veps}\,\left\|\left[G(\wtilde\vrho,\oline\vtheta)\right]_{\rm res}\right\|_{L^1(\mbb B_L)}\,\leq\,C(L)\,\veps\qquad\mbox{ and }\qquad
\frac{1}{\veps}\,\left\|\left[G(\wtilde\vrho,\oline\vtheta)\right]_{\rm res}\right\|_{L^2(\mbb B_L)}\,\leq\,C(L)
$$
hold true, for any $L>0$ fixed. Combining those bounds with hypothesis \eqref{hyp_p5.2:1}, after taking $L>0$ so large that $K\subset\mbb B_L$, we see that it is enough to prove the convergence
\begin{equation} \label{conv:to-prove}
\int_{K}\left[\frac{G(\varrho_\ep,\vartheta_\ep)- G(\wtilde\vrho,\oline\vartheta)}{\ep}\,-\,
\partial_\vrho G(\wtilde\vrho,\oline\vartheta)\,R_\veps\,-\,\partial_\vtheta G(\wtilde\vrho,\oline\vartheta)\,\Theta_\veps\right]_{\ess}\,\psi\,\dx\,\longrightarrow\,0
\end{equation}
for any compact $K$ fixed and any $\psi\in L^1\bigl([0,T];L^2(K)\bigr)$.

Next, we remark that, whenever $G\in C^2(\oline{\mc O}_{\rm ess})$, we have
\begin{align}
&\left|\left[\frac{G(\varrho_\ep,\vartheta_\ep)- G(\wtilde\vrho,\oline\vartheta)}{\ep}\,-\,
\partial_\vrho G(\wtilde\vrho,\oline\vartheta)\,R_\veps-\partial_\vtheta G(\wtilde\vrho,\oline\vartheta)\,\Theta_\veps\right]_{\ess}\right|\,\leq \label{est:prelim_5.2} \\
&\qquad\qquad\qquad\qquad\qquad\qquad\qquad\quad
\leq\,C\,\veps\,\left\|{\rm Hess}(G)\right\|_{L^\infty(\oline{\mc O}_{\rm ess})}\left(\left[R_\veps\right]_{\rm ess}^2+\left[\Theta_\veps\right]_{\rm ess}^2\right), \nonumber
\end{align}
where we have denoted by ${\rm Hess}(G)$ the Hessian matrix of the function $G$ with respect to its variables $(\vrho,\vtheta)$.
In particular, \eqref{est:prelim_5.2} implies the estimate
\begin{align}
\left\|\left[\frac{G(\varrho_\ep,\vartheta_\ep)- G(\wtilde\vrho,\oline\vartheta)}{\ep}\,-\,
\partial_\vrho G(\wtilde\vrho,\oline\vartheta)\,R_\veps\,-\,\partial_\vtheta G(\wtilde\vrho,\oline\vartheta)\,\Theta_\veps\right]_{\ess}\right\|_{L^\infty_T(L^1(K))}\,
\leq\,C\,\veps\,. \label{est:prop_5.2}
\end{align}
Property \eqref{conv:to-prove} then follows from \eqref{est:prop_5.2}, after noticing that both terms $\left[G(\varrho_\ep,\vartheta_\ep)- G(\wtilde\vrho,\oline\vartheta)\right]_\ess/\veps$
and $\left[\partial_\vrho G(\wtilde\vrho,\oline\vartheta)\,R_\veps+\partial_\vtheta G(\wtilde\vrho,\oline\vartheta)\,\Theta_\veps\right]_{\ess}$
are uniformly bounded in $L_T^\infty\bigl(L^2(K)\bigr)$.

Finally, when $G$ is just $C^1(\oline{\mc O}_{\rm ess})$, we approximate it by a family of smooth functions $\bigl(G_n\bigr)_{n\in\N}$, uniformly in $C^1(\oline{\mc O}_{\rm ess})$.
Obviously, for each $n$, convergence \eqref{conv:to-prove} holds true for $G_n$. Moreover, we have
$$
\left|\left[\frac{G(\varrho_\ep,\vartheta_\ep)- G(\wtilde\vrho,\oline\vartheta)}{\ep}\right]_{\rm ess}-
\left[\frac{G_n(\varrho_\ep,\vartheta_\ep)- G_n(\wtilde\vrho,\oline\vartheta)}{\ep}\right]_{\rm ess}\right|\,\leq\,C\,
\left\|G\,-\,G_n\right\|_{C^1(\oline{\mc O}_{\rm ess})}\left(\left[R_\veps\right]_{\rm ess}+\left[\Theta_\veps\right]_{\rm ess}\right)\,,
$$
and a similar bound holds for the terms presenting partial derivatives of $G$. In particular, these controls entail that the remainders, created replacing $G$ by $G_n$ in
\eqref{conv:to-prove}, are uniformly small in $\veps$, whenever $n$ is sufficiently large.
This completes the proof of the proposition.
\qed
\end{proof}

\medbreak
From now on, we will focus on the two cases \eqref{eq:choice-m}: either $m\geq2$ and possibly $F\neq0$, or $m\geq1$ and $F=0$. We explain this in the next remark.
\begin{remark} \label{slow_rho}
If $1<m<2$ and $F\neq 0$, the structure of the wave system (see Paragraph 4.1.1) is much more complicated, since the centrifugal force term becomes singular; in turn, this prevents us from proving
that the quantity $\g_\veps$ (see details below) is compact, a fact which is a key point in the convergence step. On the other hand, the idea of combining the centrifugal force term
with $\g_\veps$, in order to gain compactness of a new quantity, does not seem to work either, because, owing to temperature variations (and differently from [15] where the temperature was constant), there is no direct relation
between the centrifugal force and the pressure term. 
\end{remark}



Recall that, in both cases presented in \eqref{eq:choice-m}, the limit density profile is always constant, say $\wtilde\vrho\equiv1$. Let us fix an arbitrary positive time
$T>0$, which we keep fixed until the end of this paragraph.
Thanks to \eqref{est:rho_ess}, \eqref{est:rho_res} and Proposition \ref{p:target-rho_bound}, we get
	\begin{equation}\label{rr1}
\| \vre - 1 \|_{L^\infty_T(L^2 + L^{5/3}(K))}\,\leq\, \ep^m\,c(K) \qquad \mbox{ for all }\;
K \subset \Omega\quad\mbox{ compact.}
	\end{equation}
In particular, keeping in mind the notations introduced in \eqref{in_vr} and \eqref{eq:in-dens_dec}, we can define
\begin{equation} \label{def_deltarho}
R_\veps\,:= \frac{\varrho_\ep -1}{\ep^m} = \,\vrho_\veps^{(1)}\,+\,\wtilde{r}_\veps\;,\qquad\quad\mbox{ where }\quad
\vrho_\veps^{(1)}(t,x)\,:=\,\frac{\vre-\wtilde{\vrho}_\veps}{\ep^m}\quad\mbox{ and }\quad
\wtilde{r}_\veps(x)\,:=\,\frac{\wtilde{\vrho}_\veps-1}{\ep^m}\,.
\end{equation}
Thanks to \eqref{est:rho_ess}, \eqref{est:rho_res} and Proposition \ref{p:target-rho_bound}, the previous quantities verify the following bounds:
\begin{equation}\label{uni_varrho1}
\sup_{\veps\in\,]0,1]}\left\|\vrho_\veps^{(1)}\right\|_{L^\infty_T(L^2+L^{5/3}({\B_{l}}))}\,\leq\, c \qquad\qquad\mbox{ and }\qquad\qquad
\sup_{\veps\in\,]0,1]}\left\| \wtilde{r}_\veps \right\|_{L^{\infty}(\B_{l})}\,\leq\, c \,.
\end{equation}
As usual, here above the radius $l>0$ is fixed (and the constants $c$ depend on it). In addition, in the case $F=0$, there is no need of localising in $\B_l$, and one gets instead
\begin{equation*}
\sup_{\veps\in\,]0,1]}\left\|\vrho_\veps^{(1)}\right\|_{L^\infty_T(L^2+L^{5/3}(\Omega_\veps))}\,\leq\, c \qquad\qquad\mbox{ and }\qquad\qquad
\sup_{\veps\in\,]0,1]}\left\| \wtilde{r}_\veps \right\|_{L^{\infty}(\Omega_\veps)}\,\leq\,\sup_{\veps\in\,]0,1]}\left\| \wtilde{r}_\veps \right\|_{L^{\infty}(\Omega)}\,\leq\, c \,.
\end{equation*}
In view of the previous properties, there exist $\vrho^{(1)}\in L^\infty_T(L^{5/3}_{\rm loc})$ and
$\wtilde{r}\in L^\infty_{\rm loc}$ such that (up to the extraction of a suitable subsequence)
\begin{equation} \label{conv:rr}
\vrho_\veps^{(1)}\,\weakstar\,\vrho^{(1)}\qquad\quad \mbox{ and }\qquad\quad \wtilde{r}_\veps\,\weakstar\,\wtilde{r}\,,
\end{equation}
where we understand that limits are taken in the weak-$*$ topology of the respective spaces.
Therefore,
	\begin{equation}\label{conv:r}
	R_\veps\, \weakstar\,R\,:=\,\vrho^{(1)}\,+\,\wtilde{r}\qquad\qquad\qquad \mbox{ weakly-$*$ in }\quad L^\infty\bigl([0,T]; L^{5/3}_{\rm loc}(\Omega)\bigr)\,.
	\end{equation}
Observe that $\wtilde r$ can be interpreted as a datum of our problem.
Moreover, owing to Proposition~\ref{p:target-rho_bound} and \eqref{est:rho_ess}, we also get
$$
	\left[R_\veps \right]_{\rm  ess}\weakstar R\qquad\qquad \mbox{ weakly-$*$ in }\quad L^\infty\bigl([0,T]; L^2_{\rm loc}(\Omega)\bigr)\,.
$$

In a pretty similar way, we also find that
\begin{align}
\Theta_\veps\,:=\,\frac{\vtheta_\veps\,-\,\oline{\vtheta}}{\veps^m}\,&\rightharpoonup\,\Theta
\qquad\qquad\mbox{ in }\qquad L^2\bigl([0,T];W^{1,2}_{\rm loc}(\Omega)\bigr) \label{conv:theta} \\
\vec{u}_\veps\,&\weak\,\vec{U}\qquad\qquad\mbox{ in }\qquad L^2\bigl([0,T];W_{\rm loc}^{1,2}(\Omega)\bigr)\,. \label{conv:u}
\end{align}

Let us infer now some properties that these weak limits have to satisfy, starting with the case of anisotropic scaling, namely, in view of \eqref{eq:choice-m}, either $m\geq2$, or $m>1$ and $F=0$.

\subsubsection{The case of anisotropic scaling} \label{ss:constr_2}

When $m\geq 2$, or $m>1$ and $F=0$, the system presents multiple scales, which act and interact at the same time; however, the low Mach number limit has a predominant effect.
As established in the next proposition, this fact imposes some rigid constraints on the target profiles.


\begin{proposition} \label{p:limitpoint}
Let $m\geq2$, or $m>1$ and $F=0$ in \eqref{ceq} to \eqref{eeq}.
Let $\left( \vre, \ue, \tem\right)_{\veps}$ be a family of weak solutions, related to initial data $\left(\vrho_{0,\veps},\vec u_{0,\veps},\vtheta_{0,\veps}\right)_\veps$
verifying the hypotheses of Paragraph \ref{sss:data-weak}. Let $(R, \vec{U},\Theta )$ be a limit point of the sequence
$\left( R_\veps, \ue,\Theta_\veps\right)_{\veps}$, as identified in Subsection \ref{sss:constr_prelim}. Then,
\begin{align}
&\vec{U}\,=\,\,\Big(\vec{U}^h\,,\,0\Big)\,,\qquad\qquad \mbox{ with }\qquad \vec{U}^h\,=\,\vec{U}^h(t,x^h)\quad \mbox{ and }\quad \div_{\!h}\,\vec{U}^h\,=\,0 \label{eq:anis-lim_1} \\[1ex]
&\nabla_x\Big(\d_\varrho p(1,\oline{\vtheta})\,R\,+\,\d_\vtheta p(1,\oline{\vtheta})\,\Theta\Big)\,=\,\nabla_x G\,+\,\delta_2(m)\nabla_x F
\qquad\qquad\mbox{ a.e. in }\;\,\R_+\times \Omega \label{eq:anis-lim_2} \\[1ex]
&\d_{t} \Upsilon +\div_{h}\left( \Upsilon \vec{U}^{h}\right) -\frac{\kappa(\oline\vtheta)}{\oline\vtheta} \Delta \Theta =0\,,\qquad\qquad
\mbox{ with }\qquad \Upsilon\,:=\,\d_\vrho s(1,\oline{\vtheta})R + \d_\vtheta s(1,\oline{\vtheta})\,\Theta\,,
\label{eq:anis-lim_3}
\end{align}
where 
the last equation is supplemented with the initial condition
$\Upsilon_{|t=0}=\d_\vrho s(1,\oline\vtheta)\,R_0\,+\,\d_\vtheta s(1,\oline\vtheta)\,\Theta_0$.
\end{proposition}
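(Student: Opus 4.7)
The plan is to pass to the limit $\veps\to0^+$ in the three weak formulations \eqref{weak-con}, \eqref{weak-mom} and \eqref{weak-ent}, rescaled by appropriate powers of $\veps$, and to read off the three constraints from the leading singular terms. The essential analytic tool, used repeatedly, is Proposition \ref{p:prop_5.2}, which handles the linearisation of nonlinear functions of $(\vrho_\veps,\vtheta_\veps)$ around the equilibrium $(1,\oline{\vtheta})$ by separating essential and residual contributions.

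\emph{Divergence-free constraint.} Writing $\vre\ue = \ue + \veps^m R_\veps\ue$ in \eqref{weak-con} and testing against $\varphi\in C_c^\infty([0,T[\times\Omega)$, I use \eqref{conv:u} and the uniform bound $\|R_\veps\|_{L^\infty_T(L^{5/3}_{\rm loc})}\leq c$ together with Sobolev embedding $\vec u_\veps\in L^2_T(L^6_{\rm loc})$ to pass to the limit. The factor $\veps^m$ kills the nonlinear term and the initial contribution. What survives is $\int_0^T\!\int_\Omega \vec U\cdot\nabla_x\varphi\,\dxdt=0$, i.e. $\div\vec U=0$ in $\mc D'$.

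\emph{Pressure constraint \eqref{eq:anis-lim_2}.} I multiply \eqref{weak-mom} by $\veps^m$ and test against $\vec\psi\in C_c^\infty([0,T[\times\Omega;\R^3)$ with $\vec\psi\cdot\n_{|\d\Omega}=0$. The time-derivative, convective and viscous terms all vanish by \eqref{est:momentum}, \eqref{est:Du}; the Coriolis term scales as $\veps^{m-1}$ and vanishes because $m>1$. For the pressure term $\veps^{-m}p(\vre,\tem)\,\div\vec\psi$, I subtract the constant $p(1,\oline{\vtheta})$ (which integrates to zero against $\div\vec\psi$) and apply Proposition \ref{p:prop_5.2} with $G=p$ to obtain the weak-$*$ limit $\d_\vrho p(1,\oline{\vtheta})R+\d_\vtheta p(1,\oline{\vtheta})\Theta$ on the essential set, while residual contributions vanish thanks to \eqref{est:rho_res}, \eqref{est:M_res-measure} and Proposition \ref{p:target-rho_bound}. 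For the force terms, $\veps^{m-2}\vre\nabla_x F\to\delta_2(m)\nabla_x F$ (since $\vre\to1$ in $L^{5/3}_{\rm loc}$ by \eqref{rr1} and $\veps^{m-2}=1$ iff $m=2$), and $\vre\nabla_x G\to\nabla_x G$. Integrating by parts and using the arbitrariness of $\vec\psi$ yields exactly \eqref{eq:anis-lim_2}.

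\emph{Taylor-Proudman structure \eqref{eq:anis-lim_1}.} Testing the momentum equation against $\veps\vec\psi$ with $\div\vec\psi=0$ and $\vec\psi\cdot\n_{|\d\Omega}=0$ eliminates the pressure term. The inertial, convective and viscous contributions vanish at order $\veps$. To handle the singular force terms $\veps^{-1}\vre\nabla_x F$ and $\veps^{1-m}\vre\nabla_x G$, I use the equilibrium identity $\nabla_x p(\vret,\tems)=\veps^{2(m-1)}\vret\nabla_xF+\veps^m\vret\nabla_xG$ to regroup them with $\veps^{1-2m}\nabla_xp(\vre,\tem)$; the leading singular piece in the resulting combination is a pure gradient, which is killed by the divergence-free test function, while the subleading piece tends (thanks to \eqref{eq:anis-lim_2} and Proposition \ref{p:prop_5.2}) to a controlled limit which itself has gradient structure and thus disappears. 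What remains is $\int\vec e_3\times\vec U\cdot\vec\psi=0$ for every divergence-free $\vec\psi$, so De Rham's lemma gives $\vec e_3\times\vec U=\nabla_x\Pi$ in $\mc D'$. Reading components yields $\d_3\Pi=0$, $U^1=\d_2\Pi$, $U^2=-\d_1\Pi$; combined with $\div\vec U=0$ and the boundary condition $U^3|_{x^3\in\{0,1\}}=0$, this forces $U^3\equiv0$, $\vec U^h=\vec U^h(t,x^h)$ and $\div_{\!h}\vec U^h=0$.

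\emph{Entropy equation \eqref{eq:anis-lim_3}.} Dividing \eqref{weak-ent} by $\veps^m$ and testing against $\varphi\in C_c^\infty([0,T[\times\Omega)$, Proposition \ref{p:prop_5.2} applied to $G=\vrho s$ yields $(\vre s(\vre,\tem)-s(1,\oline{\vtheta}))/\veps^m\weakstar s(1,\oline{\vtheta})R+\Upsilon$ on the essential set, with residual contributions estimated by \eqref{est:rho-s_res}. In the convective term I split $\vre s\,\ue/\veps^m=s(1,\oline{\vtheta})\,\ue/\veps^m+$ (convergent remainder); the singular piece $s(1,\oline{\vtheta})\,\div\ue/\veps^m$ is, by the continuity equation rewritten as $\div\ue=-\veps^m[\d_tR_\veps+\div(R_\veps\ue)]$, equal to $-s(1,\oline{\vtheta})[\d_tR_\veps+\div(R_\veps\ue)]$, which cancels exactly the $s(1,\oline{\vtheta})[\d_tR+\div(R\vec U)]$ coming from the time-derivative and from the remainder of the convective term. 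The heat-flux contribution converges by Fourier's law \eqref{q} to $-(\kappa(\oline{\vtheta})/\oline{\vtheta})\Delta\Theta$, and the entropy production satisfies $\sigma_\veps/\veps^m=O(\veps^m)\to0$ by \eqref{est:sigma}. Using $\vec U=(\vec U^h,0)$ with $\vec U^h$ independent of $x^3$ gives $\div(\Upsilon\vec U)=\div_{\!h}(\Upsilon\vec U^h)$, producing \eqref{eq:anis-lim_3}. The principal technical difficulty across all four steps is the rigorous control of the essential/residual decomposition uniformly in $\veps$ when $F\neq0$, since the equilibrium $\vret$ is then unbounded at infinity; this is exactly the role played by the localisation on $\B_l$ in the bounds \eqref{est:rho_ess}--\eqref{est:rho_res} and by Proposition \ref{p:target-rho_bound}.
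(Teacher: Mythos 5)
Your treatment of steps 1--3 is sound, and your step 2 is actually a slightly more direct route than the paper takes: you expand the pressure around $(1,\oline\vtheta)$ and pass directly to \eqref{eq:anis-lim_2}, whereas the paper first derives the Boussinesq relation \eqref{eq:rho-theta} by expanding around $(\wtilde\vrho_\veps,\oline\vtheta)$ using the static relation \eqref{prF}, and only afterwards converts it to \eqref{eq:anis-lim_2} via Remark \ref{r:F-G}. Both are correct, and yours is marginally shorter. (A small imprecision in step 3: after regrouping via \eqref{prF}, the subleading force contributions $\veps^{m-1}\vrho^{(1)}_\veps\nabla_xF$ and $\veps\vrho^{(1)}_\veps\nabla_xG$ do not tend ``to a controlled limit with gradient structure'' --- they simply tend to zero in $\mc D'$, by the bounds \eqref{uni_varrho1} and $m\geq2$.)

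The genuine gap is in step 4. You need to pass to the limit in the bilinear products $R_\veps\,\ue$ and $\vrho_\veps\tfrac{s(\vrho_\veps,\vtheta_\veps)-s(1,\oline\vtheta)}{\veps^m}\ue$, and you assert the limits $R\,\vec U$ and $\Upsilon\,\vec U$ as though they followed from the individual weak convergences alone --- but both factors converge only weakly, so a compensated-compactness input is indispensable and you never invoke one. The paper handles this precisely via the Div-Curl Lemma (Theorem \ref{app:div-curl_lem}), applied to the essential part $[\vrho_\veps]_\ess\big([s]_\ess-s(1,\oline\vtheta)\big)/\veps^m$, whose \emph{time derivative} is controlled from the rescaled entropy balance itself (combined with the uniform bounds \eqref{est:rho-s_res}, \eqref{est:Dtheta_res}, \eqref{est:sigma}); this yields compactness against $\ue\in L^2_T(H^1_{\rm loc})$. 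Your rewriting $\div\ue/\veps^m=-[\d_tR_\veps+\div(R_\veps\ue)]$ does not resolve the issue but merely shifts it: the limit of $\d_tR_\veps+\div(R_\veps\ue)$ requires knowing that $R_\veps\ue\weak R\vec U$, and $\d_tR_\veps$ has a singular piece $-\div(\ue/\veps^m)$ which itself is not a priori bounded, so an Aubin-Lions argument applied directly to $R_\veps$ fails. Algebraically, of course, your cancellation recombines everything into $\d_tg_\veps+\div(g_\veps\ue)$ with $g_\veps=\vrho_\veps(s-s(1,\oline\vtheta))/\veps^m$, which is exactly the paper's quantity --- but that only reformulates the problem; the compactness argument for $g_\veps\ue$ still has to be supplied, and the fact that you phrase the result as an ``exact cancellation'' suggests you did not recognise the need for it.
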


\begin{proof}
Let us focus here on the case $m\geq 2$ and $F\neq 0$. A similar analysis yields the result also in the case $m>1$, provided we take $F=0$.

First of all, let us consider the weak formulation of the mass equation \eqref{ceq}: for any test function $\varphi\in C_c^\infty\bigl(\R_+\times\Omega\bigr)$, denoting $[0,T]\times K\,=\,\Supp \, \varphi$, with $\vphi(T,\cdot)\equiv0$, we have
$$
-\int^T_0\int_K\bigl(\vrho_\veps-1\bigr)\,\d_t\varphi \dxdt\,-\,\int^T_0\int_K\vrho_\veps\,\vec{u}_\veps\,\cdot\,\nabla_{x}\varphi \dxdt\,=\,
\int_K\bigl(\vrho_{0,\veps}-1\bigr)\,\varphi(0,\,\cdot\,)\dx\,.
$$
We can easily pass to the limit in this equation, thanks to the strong convergence $\vrho_\veps\longrightarrow1$ provided by \eqref{rr1} and the weak convergence of
$\vec{u}_\veps$ in $L_T^2\bigl(L^6_{\rm loc}\bigr)$ (by \eqref{conv:u} and Sobolev embeddings): we find
$$
-\,\int^T_0\int_K\vec{U}\,\cdot\,\nabla_{x}\varphi \dxdt\,=\,0\, ,
$$
for any test function $\varphi \, \in C_c^\infty\bigl([0,T[\,\times\Omega\bigr)$, which in particular implies
\begin{equation} \label{eq:div-free}
\div \U = 0 \qquad\qquad\mbox{ a.e. in }\; \,\R_+\times \Omega\,.
\end{equation}

Let us now consider the momentum equation \eqref{meq}, in its weak formulation \eqref{weak-mom}.
First of all, we test the momentum equation on $\veps^m\,\vec\phi$, for a smooth compactly supported $\vec\phi$.
By use of the uniform bounds we got in Subsection \ref{ss:unif-est}, it is easy to see that the only terms which do not converge to $0$ are the ones involving the pressure and
the gravitational force; in the endpoint case $m=2$, we also have the contribution of the centrifugal force. Hence, let us focus on them, and more precisely on the quantity
\begin{align}
\Xi\,:&=\,\frac{\nabla_x p(\vrho_\veps,\vtheta_\veps)}{\veps^m}\,-\,\veps^{m-2}\,\vrho_\veps\nabla_x F\,-\,\vrho_\veps\nabla_x G \label{eq:mom_rest_1}\\ 
&=\frac{1}{\veps^m}\nabla_x\left(p(\vrho_\veps,\vtheta_\veps)\,-\,p(\wtilde{\vrho}_\veps,\oline{\vtheta})\right)\,-\,
\veps^{m-2}\,\left(\vrho_\veps-\wtilde{\vrho}_\veps\right)\nabla_x F\,-\,\left(\vrho_\veps-\wtilde{\vrho}_\veps\right)\nabla_x G\,, \nonumber
\end{align}
where we have used relation \eqref{prF}.
By uniform bounds and \eqref{conv:r}, the second and third terms in the right-hand side of \eqref{eq:mom_rest_1} converge to $0$,
when tested against any smooth compactly supported $\vec\phi$; notice that this is true actually for any $m>1$.
On the other hand, for the first item we can use the decomposition
$$
\frac{1}{\veps^m}\,\nabla_x\left(p(\vrho_\veps,\vtheta_\veps)\,-\,p(\wtilde{\vrho}_\veps,\oline{\vtheta})\right)\,=\,
\frac{1}{\veps^m}\,\nabla_x\left(p(\vrho_\veps,\vtheta_\veps)\,-\,p(1,\oline{\vtheta})\right)\,-\,
\frac{1}{\veps^m}\,\nabla_x\left(p(\wtilde{\vrho}_\veps,\oline{\vtheta})\,-\,p(1,\oline{\vtheta})\right)\,.
$$

Due to the smallness of the residual set \eqref{est:M_res-measure} and to estimate \eqref{est:rho_res}, decomposing $p$ into essential and residual part
and then applying Proposition \ref{p:prop_5.2}, we get the convergence
$$
\frac{1}{\veps^m}\,\nabla_x\left(p(\vrho_\veps,\vtheta_\veps)\,-\,p(1,\oline{\vtheta})\right)\;\stackrel{*}{\rightharpoonup}\;
\nabla_x\left(\d_\varrho p(1,\oline{\vtheta})\,R\,+\,\d_\vtheta p(1,\oline{\vtheta})\,\Theta\right)
$$
in $L_T^\infty(H^{-1}_{\rm loc})$, for any $T>0$.
On the other hand, a Taylor expansion of $p(\,\cdot\,,\oline{\vtheta})$ up to the second order around $1$ gives, together with Proposition \ref{p:target-rho_bound}, the bound
$$
\left\|\frac{1}{\veps^m}\,\left(p(\wtilde{\vrho}_\veps,\oline{\vtheta})\,-\,p(1,\oline{\vtheta})\right)\,-\,
\d_\varrho p(1,\oline{\vtheta})\,\wtilde{r}_\veps\right\|_{L^\infty(K)}\,\leq\,C(K)\,\veps^m\, ,
$$
for any compact set $K\subset\Omega$. From the previous estimate we deduce that
$\left(p(\wtilde{\vrho}_\veps,\oline{\vtheta})\,-\,p(1,\oline{\vtheta})\right)/\veps^m\,\longrightarrow\,
\d_\varrho p(1,\oline{\vtheta})\,\wtilde{r}$ in e.g. $\mc{D}'\bigl(\R_+\times\Omega\bigr)$.

Putting all these facts together and keeping in mind relation \eqref{conv:r}, thanks to \eqref{eq:mom_rest_1} we finally find the celebrated \emph{Boussinesq relation}
	\begin{equation} \label{eq:rho-theta}
	\nabla_x\left(\d_\varrho p(1,\oline{\vtheta})\,\vrho^{(1)}\,+\,\d_\vtheta p(1,\oline{\vtheta})\,\Theta\right)\,=\,0
	\qquad\qquad\mbox{ a.e. in }\; \R_+\times \Omega\,.
	\end{equation}

\begin{remark} \label{r:F-G}
Notice that, dividing \eqref{prF} by $\veps^m$ and passing to the limit in it, one gets the identity
$$
\d_\varrho p(1,\oline{\vtheta})\,\nabla_x\wtilde{r} \,=\,\nabla_x G\,+\,\delta_2(m)\nabla_x F\,,
$$
where we have set $\delta_2(m)=1$ if $m=2$, $\delta_2(m)=0$ otherwise.
Hence, relation \eqref{eq:rho-theta} is equivalent to equality \eqref{eq:anis-lim_2}, 
which might be more familiar to the reader (see formula (5.10) in Chapter 5 of \cite{F-N}).
\end{remark}



Up to now, the contribution of the fast rotation in the limit has not been seen: this is due to the fact that the incompressible
limit takes place faster than the high rotation limit, because $m>1$. Roughly speaking, the rotation term enters into the singular
perturbation operator as a ``lower order'' part; nonetheless, being singular, it does impose some conditions on the limit dynamics.

To make this rigorous, we test \eqref{meq} on $\veps\,\vec\phi$, where this time we take $\vec\phi\,=\,\curl\vec\psi$, for some smooth compactly supported $\vec\psi\,\in C^\infty_c\bigl([0,T[\,\times\Omega\bigr)$.
Once again, by uniform bounds we infer that the $\d_t$ term, the convective term and the viscosity term all converge to $0$ when $\veps\ra0^+$.
As for the pressure and the external forces, we repeat the same manipulations as before: making use of relation \eqref{prF} again, we are reconducted to work on
$$
\int^T_0\int_K\left(\frac{1}{\veps^{2m-1}}\nabla_x\left(p(\vrho_\veps,\vtheta_\veps)\,-\,p(\wtilde{\vrho}_\veps,\oline{\vtheta})\right)\,-\,
\frac{\vrho_\veps-\wtilde{\vrho}_\veps}{\veps}\,\nabla_x F\,-\,\frac{\vrho_\veps-\wtilde{\vrho}_\veps}{\veps^{m-1}}\nabla_x G\right)\cdot\vec\phi\,\dx\,dt\,,
$$
where the compact set $K\subset\Omega$ is such that $\Supp\vec\phi\subset[0,T[\,\times K$, and $\veps>0$ is small enough.
According  to \eqref{rr1}, the two forcing terms converge to $0$, in the limit for $\veps\ra0^+$; on the other hand, the first term (which has no chance to be bounded uniformly in $\veps$)
simply vanishes, due to the fact that $\vec\phi\,=\,{\rm curl}\,\vec\psi$.

Finally, using a priori bounds and properties \eqref{conv:r} and
\eqref{conv:u}, it is easy to see that the rotation term converges to $\int^T_0\int_K\e_3\times\vec{U}\cdot\vec\phi$.
In the end, passing to the limit for $\veps\ra0^+$ we find
$$
\mbb{H}\left(\e_3\times\vec{U}\right)\,=\,0\qquad\qquad\text{and so}\qquad\qquad \e_3\times\vec{U}\,=\,\nabla_x\Phi\,
$$
for some potential function $\Phi$. From this relation, which in components reads 
\begin{equation}\label{limit_U_components}
\begin{pmatrix}
-U^2 \\ 
U^1\\
0
\end{pmatrix} =\begin{pmatrix}
\d_1 \Phi  \\ 
\d_2 \Phi \\ 
\d_3 \Phi
\end{pmatrix} \, ,
\end{equation}
we deduce that $\Phi=\Phi(t,x^h)$, i.e. $\Phi$ does not depend
on $x^3$, and that the same property is inherited by $\vec{U}^h\,=\,\bigl(U^1,U^2\bigr)$, i.e. $\vec{U}^h\,=\,\vec{U}^h(t,x^h)$. Furthermore, from \eqref{limit_U_components}, it is also easy to see that
the $2$-D flow given by $\vec{U}^h$ is incompressible, namely $\div_{\!h}\,\vec{U}^h\,=\,0$.
Combining this fact with \eqref{eq:div-free}, we infer that $\d_3 U^3\,=\,0$; on the other hand, thanks to the boundary condition
\eqref{bc1-2} we must have $\bigl(\vec{U}\cdot\vec{n}\bigr)_{|\d\Omega}\,=\,0$. Keeping in mind that
$\d\Omega\,=\,\bigl(\R^2\times\{0\}\bigr)\cup\bigl(\R^2\times\{1\}\bigr)$, we finally get $U^3\,\equiv\,0$,
whence \eqref{eq:anis-lim_1} finally follows.

Next, we observe that we can by now pass to the limit in the weak formulation \eqref{weak-ent} of \eqref{eiq}.
The argument being analogous to the one used in \cite{F-N} (see Paragraph 5.3.2), we only sketch it.
First of all, testing \eqref{eiq} on $\varphi/\veps^m$, for some $\varphi\in C^\infty_c\bigl([0,T[\,\times\Omega\bigr)$, and using \eqref{ceq}, for $\veps>0$ small enough we get
\begin{align}
&-\int^T_0\!\!\int_K\vrho_\veps\left(\frac{s(\vrho_\veps,\vtheta_\veps)-s(1,\oline{\vtheta})}{\veps^m}\right)\d_t\varphi -
\int^T_0\!\!\int_K\vrho_\veps\left(\frac{s(\vrho_\veps,\vtheta_\veps)-s(1,\oline{\vtheta})}{\veps^m}\right)\vec{u}_\veps\cdot\nabla_x\varphi \label{weak:entropy}  \\
&+\int^T_0\!\!\int_K\frac{\kappa(\vtheta_\veps)}{\vtheta_\veps}\,\frac{1}{\veps^m}\,\nabla_x\vtheta_\veps\cdot\nabla_x\varphi-
\frac{1}{\veps^m}\,\langle\sigma_\veps,\varphi\rangle_{[\mc{M}^+,C^0]([0,T]\times K)} =
\int_K\vrho_{0,\veps}\left(\frac{s(\vrho_{0,\veps},\vtheta_{0,\veps})-s(1,\oline{\vtheta})}{\veps^m}\right)\varphi(0)\,.  \nonumber
\end{align}

To begin with, let us decompose
\begin{align}
&\vrho_\veps\left(\frac{s(\vrho_\veps,\vtheta_\veps)-s(1,\oline{\vtheta})}{\veps^m}\right) =  \label{eq:dec_rho-s} \\
&=[\vrho_\veps]_{\ess}\left(\frac{[s(\vrho_\veps,\vtheta_\veps)]_{\ess}-s(1,\oline{\vtheta})}{\veps^m}\right) + 
\left[\frac{\vrho_\veps}{\veps^m}\right]_{\res}\left([s(\vrho_\veps,\vtheta_\veps)]_{\ess}-s(1,\oline{\vtheta})\right) +
\left[\frac{\vrho_\veps\,s(\vrho_\veps,\vtheta_\veps)}{\ep^m}\right]_{\res}\,.  \nonumber
\end{align}
Thanks to \eqref{est:rho_res}, we discover that the second term in the right-hand side strongly converges to $0$ in
$L_T^\infty(L^{5/3}_{\rm loc})$. 
Also the third term converges to $0$ in the space $ L_T^2(L^{30/23}_{\rm loc})$, as a consequence of \eqref{est:M_res-measure} and \eqref{est:rho-s_res}.
Notice that these terms converge to $0$ even when multiplied by $\vec{u}_\veps$: to see this, it is enough to put \eqref{est:M_res-measure},
\eqref{est:rho-s_res}, \eqref{est:u-H^1} and the previous properties together.

As for the first term in the right-hand side of \eqref{eq:dec_rho-s}, Propositions \ref{p:prop_5.2} and \ref{p:target-rho_bound} and estimate \eqref{rr1} imply that it
weakly converges to $\d_\vrho s(1,\oline{\vtheta})\,R\,+\,\d_\vtheta s(1,\oline{\vtheta})\,\Theta$, where $R$ and $\Theta$ are defined respectively in \eqref{conv:r}
and \eqref{conv:theta}. On the other hand, an application of the Div-Curl Lemma (see Theorem \ref{app:div-curl_lem}) gives 
$$
[\vrho_\veps]_{\ess}\left(\frac{[s(\vrho_\veps,\vtheta_\veps)]_{\ess}-s(1,\oline{\vtheta})}{\veps^m}\right)\,\vec{u}_\veps\,\rightharpoonup\,
\Bigl(\d_\vrho s(1,\oline{\vtheta})\,R\,+\,\d_\vtheta s(1,\oline{\vtheta})\,\Theta\Bigr)\,\vec{U}
$$
in the space $L_T^2(L^{3/2}_{\rm loc})$.  
In addition, from \eqref{est:sigma} we deduce that
$$
\frac{1}{\veps^m}\,\langle\sigma_\veps,\varphi\rangle_{[\mc{M}^+,C^0]([0,T]\times K)}\,\longrightarrow\,0
$$
when $\veps\ra0^+$.
Finally, a separation into essential and residual part of the coefficient $\kappa(\vtheta_\veps)/\vtheta_\veps$, together with \eqref{mu}, \eqref{est:rho_ess},
\eqref{est:rho_res},   \eqref{est:theta-Sob}  and \eqref{est:Dtheta_res} gives
$$
\frac{\kappa(\vtheta_\veps)}{\vtheta_\veps}\,\frac{1}{\veps^m}\,\nabla_x\vtheta_\veps\,\rightharpoonup\,
\frac{\kappa(\oline\vtheta)}{\oline\vtheta}\,\nabla_x\Theta\qquad\qquad\mbox{ in }\qquad L^2\bigl([0,T];L^{1}_{\rm loc}(\Omega)\bigr)\,.
$$

In the end, we have proved that equation \eqref{weak:entropy} converges, for $\veps\ra0^+$, to equation
\begin{align}
&-\int^T_0\int_\Omega\Bigl(\d_\vrho s(1,\oline{\vtheta})R + \d_\vtheta s(1,\oline{\vtheta})\,\Theta\Bigr)\left(\d_t\varphi + 
\vec{U}\cdot\nabla_x\varphi\right)\dxdt \,+ \label{eq:ent_bal_lim_1} \\
&\qquad\qquad+ \int^T_0\int_\Omega\frac{\kappa(\oline\vtheta)}{\oline\vtheta} \nabla_x\Theta\cdot\nabla_x\varphi\dxdt =
\int_\Omega\Bigl(\d_\vrho s(1,\oline{\vtheta})\,R_0\,+\,\d_\vtheta s(1,\oline{\vtheta})\,\Theta_0\Bigr)\,\varphi(0)\dx\, , \nonumber
\end{align}
for all $\varphi \in C_c^\infty([0,T[\,\times\Omega)$, with $T>0$ any arbitrary time.
Relation \eqref{eq:ent_bal_lim_1} means that the quantity $\Upsilon$, defined in \eqref{eq:anis-lim_3}, is a weak solution of that equation,
related to the initial datum $\Upsilon_0:=\d_\vrho s(1,\oline\vtheta)\,R_0\,+\,\d_\vtheta s(1,\oline\vtheta)\,\Theta_0$.
Equation \eqref{eq:anis-lim_3} is in fact an equation for $\Theta$ only, keep in mind Remark \ref{r:lim delta theta}.
\qed
\end{proof}

\subsubsection{The case of isotropic scaling} \label{ss:constr_1}

We focus now on the case of isotropic scaling, namely $m=1$. Recall that, in this instance, we also set $F=0$. In this case, the fast rotation and weak compressibility
effects are of the same order; in turn, this allows to reach the so-called \emph{quasi-geostrophic balance} in the limit (see equation \eqref{eq:streamq} below).
\begin{proposition}  \label{p:limit_iso}
Take $m=1$ and $F=0$ in system \eqref{ceq} to \eqref{eeq}.
Let $\left( \vre, \ue, \tem\right)_{\veps}$ be a family of weak solutions to \eqref{ceq} to \eqref{eeq}, associated with initial data
$\left(\vrho_{0,\veps},\vec u_{0,\veps},\vtheta_{0,\veps}\right)$ verifying the hypotheses fixed in Paragraph \ref{sss:data-weak}.
Let $(R, \vec{U},\Theta )$ be a limit point of the sequence $\left(R_{\veps} , \ue,\Theta_\veps\right)_{\veps}$, as identified in Subsection
\ref{sss:constr_prelim}.
Then,
\begin{align}
&\vec{U}\,=\,\,\Big(\vec{U}^h\,,\,0\Big)\,,\qquad\qquad \mbox{ with }\qquad \vec{U}^h\,=\,\vec{U}^h(t,x^h)\quad \mbox{ and }\quad \div_{\!h}\,\vec{U}^h\,=\,0 \nonumber \\[1ex] 
&\vec{U}^h\,=\,\nabla^\perp_hq
\;\mbox{ a.e. in }\;\,]0,T[\, \times \Omega\, ,
\quad\mbox{ with }  \label{eq:streamq} \\[1ex]
& q\,=\,q(t,x^h)\,:=\,\d_\varrho p(1,\oline{\vtheta})R+\d_\vtheta p(1,\oline{\vtheta})\Theta-G-1/2 \label{eq:for q} \\[1ex]
&\d_{t} \Upsilon +\divh\left( \Upsilon \vec{U}^{h}\right) -\frac{\kappa(\oline\vtheta)}{\oline\vtheta} \Delta \Theta =0\,,\qquad\quad\mbox{ with }\qquad 
\Upsilon_{|t=0}\,=\,\Upsilon_0\,, \nonumber 
\end{align}
where $ \Upsilon$ and $\Upsilon_0$ are the same quantities defined in Proposition \ref{p:limitpoint}.
\end{proposition}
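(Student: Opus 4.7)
The proof is the natural adaptation of Proposition \ref{p:limitpoint} to the isotropic scaling $m=1$, so I will mainly emphasize the differences, in particular the analysis of the singular terms in the momentum equation, where now the Coriolis and pressure-gravity contributions act at the same order and combine to yield the quasi-geostrophic balance \eqref{eq:streamq}.

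First I would pass to the limit in the continuity equation \eqref{ceq}: exactly as in the proof of Proposition \ref{p:limitpoint}, the strong convergence $\vrho_\veps\to 1$ provided by \eqref{rr1} and the weak convergence \eqref{conv:u} yield $\div\vec U=0$ in $\mc D'(\R_+\times\Omega)$. The novelty is in the momentum equation: testing the weak formulation \eqref{weak-mom} against $\veps\,\vec\phi$ with $\vec\phi\in C^\infty_c([0,T[\,\times\Omega;\R^3)$ tangent to $\d\Omega$, the uniform bounds collected in Subsection \ref{ss:unif-est} kill the time derivative, the convective term and the viscous term. The three terms which survive in the limit are the Coriolis, the pressure and the gravitational ones; they need to be combined by exploiting the static relation \eqref{prF}, which in our present scaling ($m=1$, $F=0$) reads $\nabla_x p(\wtilde\vrho_\veps,\oline\vtheta)=\veps\,\wtilde\vrho_\veps\nabla_x G$. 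Writing
\[
\frac{1}{\veps}\nabla_x p(\vrho_\veps,\vtheta_\veps)-\rho_\veps\nabla_x G=\frac{1}{\veps}\nabla_x\bigl(p(\vrho_\veps,\vtheta_\veps)-p(\wtilde\vrho_\veps,\oline\vtheta)\bigr)-(\vrho_\veps-\wtilde\vrho_\veps)\nabla_x G,
\]
and noticing that $(\vrho_\veps-\wtilde\vrho_\veps)\nabla_x G=\veps\,\vrho_\veps^{(1)}\nabla_x G$ is of order $O(\veps)$ in the sense of distributions by \eqref{uni_varrho1}, I will further split $p(\vrho_\veps,\vtheta_\veps)-p(\wtilde\vrho_\veps,\oline\vtheta)=\bigl[p(\vrho_\veps,\vtheta_\veps)-p(1,\oline\vtheta)\bigr]-\bigl[p(\wtilde\vrho_\veps,\oline\vtheta)-p(1,\oline\vtheta)\bigr]$. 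Proposition \ref{p:prop_5.2} (applied to the essential/residual decomposition, using \eqref{est:rho_res} and \eqref{est:M_res-measure}) identifies the weak-$*$ limit of the first bracket, divided by $\veps$, as $\d_\vrho p(1,\oline\vtheta)R+\d_\vtheta p(1,\oline\vtheta)\Theta$, while a first-order Taylor expansion of $\Pi(\wtilde\vrho_\veps)=\veps G$ around $1$ together with Proposition \ref{p:target-rho_bound} gives
\[
\frac{1}{\veps}\bigl(p(\wtilde\vrho_\veps,\oline\vtheta)-p(1,\oline\vtheta)\bigr)\,\longrightarrow\,\d_\vrho p(1,\oline\vtheta)\,\wtilde r\,=\,G\qquad\text{uniformly on compact sets.}
\]
Putting everything together, the limit momentum balance takes the form $\e_3\times\vec U+\nabla_x\bigl(\d_\vrho p(1,\oline\vtheta)R+\d_\vtheta p(1,\oline\vtheta)\Theta-G\bigr)=0$, that is $\e_3\times\vec U+\nabla_x q=0$ with $q$ given by \eqref{eq:for q} (the additive constant $-1/2$ being irrelevant for the gradient, and chosen so that $q$ has zero vertical mean).

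From the scalar relation $\e_3\times\vec U+\nabla_x q=0$, component-wise, one immediately reads off $\d_3 q=0$ (so $q=q(t,x^h)$), $U^2=\d_1 q$ and $U^1=-\d_2 q$, i.e. $\vec U^h=\nabla_h^\perp q$, which also yields $\divh\vec U^h=0$ and the fact that $\vec U^h$ only depends on $(t,x^h)$. Combined with the constraint $\div\vec U=0$ obtained in the first step, this forces $\d_3 U^3=0$; then the complete-slip boundary condition \eqref{bc1-2}, giving $U^3_{|x^3=0}=U^3_{|x^3=1}=0$, enforces $U^3\equiv 0$ on $\R_+\times\Omega$. This completes the derivation of the structural constraints on $\vec U$ and the quasi-geostrophic relation \eqref{eq:streamq}.

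Finally, the equation for $\Upsilon$ is obtained exactly as in Paragraph \ref{ss:constr_2}, by testing \eqref{weak-ent} with $\varphi/\veps^m=\varphi/\veps$, decomposing $\vrho_\veps\,s(\vrho_\veps,\vtheta_\veps)$ into essential and residual parts via \eqref{eq:dec_rho-s}, and applying Proposition \ref{p:prop_5.2} to handle the essential contribution, together with \eqref{est:M_res-measure}--\eqref{est:rho-s_res} for the residual one; the convective nonlinearity is treated by the Div-Curl lemma (Theorem \ref{app:div-curl_lem}) as in the proof of Proposition \ref{p:limitpoint}, and the heat flux passes to the limit thanks to \eqref{est:theta-Sob} and \eqref{est:Dtheta_res}. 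The entropy production term vanishes by \eqref{est:sigma}, and since $\vec U=(\vec U^h,0)$ the convective term reduces to $\divh(\Upsilon\,\vec U^h)$, yielding the desired equation. The main obstacle in the whole argument is the pressure-gravity analysis outlined above: unlike the anisotropic case, here the two singular contributions must be balanced \emph{simultaneously} against the Coriolis term, so the identification of the correct corrector profile $\wtilde r$ via the equilibrium relation \eqref{eq:target-rho} is crucial in order to recover the stream-function identity \eqref{eq:for q} in its precise form.
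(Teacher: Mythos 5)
Your proposal is correct and follows essentially the same line as the paper's proof. You pass to the limit in the continuity equation exactly as in Proposition \ref{p:limitpoint} to get $\div\vec U=0$, test \eqref{weak-mom} on $\veps\,\vec\phi$, isolate the three surviving singular contributions (Coriolis, pressure, gravity), and combine them via the static relation \eqref{prF}; the identification of the limit of the rescaled pressure difference via Proposition \ref{p:prop_5.2} together with the Taylor expansion of $p(\wtilde\vrho_\veps,\oline\vtheta)$ around $1$ gives precisely the balance $\e_3\times\vec U+\nabla_x q=0$, and the componentwise reading yields $q=q(t,x^h)$, $\vec U^h=\nabla_h^\perp q$, $U^3\equiv0$. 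The only difference with the paper's exposition is purely cosmetic: the paper first obtains the limit in terms of $\vrho^{(1)}$ and then re-expresses it via Remark \ref{r:F-G}, while you split $p(\vrho_\veps,\vtheta_\veps)-p(\wtilde\vrho_\veps,\oline\vtheta)$ through $p(1,\oline\vtheta)$ and get the limit directly in the $(R,\Theta,G)$ variables, which are two equivalent bookkeepings of the same computation. One small inaccuracy worth fixing: the shift $-1/2$ in \eqref{eq:for q} is not there to make $\lan q\ran=0$; it is chosen (cf. Remark \ref{r:q}) to cancel the vertical mean of $-G$ so that $\lan q\ran=\d_\vrho p(1,\oline\vtheta)\lan R\ran+\d_\vtheta p(1,\oline\vtheta)\lan\Theta\ran$, which is a cleaner stream-function for $\vec U^h$; since only $\nabla_x q$ enters the argument, this does not affect the validity of the proof.
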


\begin{proof}
Arguing as in the proof of Proposition \ref{p:limitpoint}, it is easy to pass to the limit in the continuity equation and in the entropy balance. In particular, we obtain again
equations \eqref{eq:div-free} and \eqref{eq:ent_bal_lim_1}.

The only changes concern the analysis of the momentum equation, written in its weak formulation \eqref{weak-mom}.
We start by testing it on $\veps\,\vec\phi$, for a smooth compactly supported $\vec\phi$. Similarly to what done above, the uniform bounds of Subsection \ref{ss:unif-est}
allow us to say that the only quantity which does not vanish in the limit is the sum of the terms involving the Coriolis force, the pressure and the gravitational force:
$$
\vec{e}_{3}\times \vrho_{\veps}\ue\,+\frac{\nabla_x \left( p(\vrho_\veps,\vtheta_\veps)-p(\widetilde{\vrho}_\veps,\vtheta_\veps)\right)}{\veps}\,-\,
\left(\vrho_\veps-\widetilde{\vrho}_\veps \right)\nabla_x G\,=\,\mc O(\veps)\,.
$$
From this relation, following the same computations performed in the proof of Proposition \ref{p:limitpoint}, in the limit $\veps\ra0^+$ we obtain that
$$ 
\vec{e}_{3}\times \vec{U}+\nabla_x\left(\d_\varrho p(1,\oline{\vtheta})\,\vrho^{(1)}\,+\,\d_\vtheta p(1,\oline{\vtheta})\,\Theta\right)\,=\,0 \qquad\qquad\mbox{ a.e. in }\; \R_+\times \Omega\,.
$$ 
After defining $q$ as in \eqref{eq:for q}, i.e.
$$
q\,:=\,\d_\varrho p(1,\oline{\vtheta})R+\d_\vtheta p(1,\oline{\vtheta})\Theta-G-1/2
$$
and keeping Remark \ref{r:F-G} in mind, this equality can be equivalently written as 
$$ 
\vec{e}_{3}\times \vec{U}+\nabla_xq\,=\,0 \qquad\qquad\mbox{ a.e. in }\; \R_+ \times \Omega\,.
$$ 
As done in the proof to Proposition \ref{p:limitpoint}, from this relation we immediately deduce that $q=q(t,x^h)$ and $\vec{U}^h=\vec{U}^h(t,x^h)$.
In addition, we get $\vec{U}^h=\nabla^\perp_hq$, whence we gather that $q$ can be viewed as a stream function for $\vec U^h$. 
Using \eqref{eq:div-free}, we infer that $\d_{3}U^{3}=0$, which in turn implies that $U^{3}\equiv0$, thanks to \eqref{bc1-2}.
The proposition is thus proved.
\qed
\end{proof}

\begin{remark} \label{r:q}
Notice that $q$ is defined up to an additive constant. We fix it to be $-1/2$, in order to compensate the vertical mean of $G$ and have a cleaner expression for $\lan q\ran$ (see 
Theorem \ref{th:m=1_F=0}). As a matter of fact, it is $\lan q\ran$ the natural quantity to look at, see also Subsection \ref{ss:limit_1} in this respect.
\end{remark}

\section{Convergence in presence of the centrifugal force}\label{s:proof}

In this section we complete the proof of Theorem \ref{th:m-geq-2}, in the case when $m\geq2$ and $F\neq0$. In the case $m>1$ and $F=0$, some arguments of the proof slightly change,
due to the absence of the (unbounded) centrifugal force: we refer to Section \ref{s:proof-1} below for more details.

\medbreak
The uniform bounds of Subsection \ref{ss:unif-est} allow us to pass to the limit in the mass and entropy equations, but they are
not enough for proving convergence in the weak formulation of the momentum equation: the main problem relies on identifying the weak limit of the convective term
$\vrho_\veps\,\vec u_\veps\otimes\vec u_\veps$.
For this, we need to control the strong oscillations in time of the solutions: this is the aim of Subsection \ref{ss:acoustic}. 
In Subsection \ref{ss:convergence}, by using a compensated compactness argument together with Aubin-Lions Theorem (see Theorem \ref{app:Aubin_thm}), we establish strong convergence of suitable quantities related to the velocity fields.
This property, which deeply relies on the structure of the wave system, allows us to pass to the limit in our equations (see Subsection \ref{ss:limit}).

\subsection{Analysis of the acoustic waves} \label{ss:acoustic}

The goal of the present subsection is to describe oscillations of solutions. First of all, we recast our equations into a wave system; there we also implement a localisation
procedure, due to the presence of the centrifugal force. Then, we establish uniform bounds for the quantities appearing in the wave system. Finally, we apply a regularisation in space for all the quantities, which is preparatory in view of the computations of Subsection \ref{ss:convergence}.

\subsubsection{Formulation of the acoustic equation} \label{sss:wave-eq}

Let us define 
$$
\vec{V}_\veps\,:=\,\vrho_\veps\vec{u}_\veps\,.
$$
We start by writing the continuity equation in the form
\begin{equation} \label{eq:wave_mass}
\veps^m\,\d_t\vrho^{(1)}_\veps\,+\,\div\vec{V}_\veps\,=\,0\,.
\end{equation}
Of course, this relation, as well as the other ones which will follow, has to be read in the weak form.

Using continuity equation and resorting to the time lifting \eqref{lift0} of the measure $\sigma_\veps$, straightforward computations
lead us to the following form of the entropy balance:
$$
\veps^m\d_t\!\left(\vrho_\veps\,\frac{s(\vrho_\veps,\vtheta_\veps)-s(\wtilde{\vrho}_\veps,\oline{\vtheta})}{\veps^m}-
\frac{1}{\veps^m}\Sigma_\veps\right)\,=\,\veps^m\,\div\!\!\left(\frac{\kappa(\vtheta_\veps)}{\vtheta_\veps}
\frac{\nabla_x\vtheta_\veps}{\veps^m}\right)+s(\wtilde{\vrho}_\veps,\oline{\vtheta})\div\!\!\left(\vrho_\veps\,\vec{u}_\veps\right)-
\div\!\!\left(\vrho_\veps s(\vrho_\veps,\vtheta_\veps)\vec{u}_\veps\right),
$$
where, with a little abuse of notation, we use the identification
$\int_{\Omega_\veps}\Sigma_\veps\,\varphi\,dx\,=\,\langle\Sigma_\veps,\varphi\rangle_{[\mc{M}^+,C^0]}$. Next, since $\wtilde{\vrho}_\veps$
is smooth (recall relation \eqref{eq:target-rho} above), the previous equation can be finally written as
\begin{align}
&\veps^m\,\d_t\left(\vrho_\veps\,\frac{s(\vrho_\veps,\vtheta_\veps)-s(\wtilde{\vrho}_\veps,\oline{\vtheta})}{\veps^m}\,-\,
\frac{1}{\veps^m}\Sigma_\veps\right)\,=  \label{eq:wave_entropy} \\
&\qquad\quad=\,
\veps^m\,\biggl(\div\!\left(\frac{\kappa(\vtheta_\veps)}{\vtheta_\veps}\,\frac{\nabla_x\vtheta_\veps}{\veps^m}\right)\,-\,
\vrho_\veps\,\vec{u}_\veps\,\cdot\,\frac{1}{\veps^m}\,\nabla_x s(\wtilde{\vrho}_\veps,\oline{\vtheta})\,-\,
\div\!\left(\vrho_\veps\,\frac{s(\vrho_\veps,\vtheta_\veps)-s(\wtilde{\vrho}_\veps,\oline{\vtheta})}{\veps^m}\,\vec{u}_\veps\right)\biggr)\,.
\nonumber
\end{align}

Now, we turn our attention to the momentum equation. By \eqref{prF} we find
\begin{align}
&\veps^m\,\d_t\vec{V}_\veps\,+\,\nabla_x\left(\frac{p(\vrho_\veps,\vtheta_\veps)-p(\wtilde{\vrho}_\veps,\oline{\vtheta})}{\veps^m}\right)\,+\,\veps^{m-1}\,\e_3\times \vec V_\veps\,=\,
\veps^{2(m-1)}\frac{\vrho_\veps-\wtilde{\vrho}_\veps}{\veps^m}\nabla_x F\,+ \label{eq:wave_momentum} \\
&\qquad\qquad\qquad\qquad\qquad
+\,\veps^m\left(\div\mbb{S}\!\left(\vtheta_\veps,\nabla_x\vec{u}_\veps\right)\,-\,\div\!\left(\vrho_\veps\vec{u}_\veps\otimes\vec{u}_\veps\right)\,+\,
\frac{\vrho_\veps-\wtilde{\vrho}_\veps}{\veps^m}\nabla_x G\right)\,. \nonumber
\end{align}

At this point, let us introduce two real numbers $\mc{A}$ and $\mc{B}$, such that the following relations are satisfied:
\begin{equation} \label{relnum}
\mc{A}\,+\,\mc{B}\,\d_\vrho s(1,\oline{\vtheta})\,=\,\d_\vrho p(1,\oline{\vtheta})\qquad\mbox{ and }\qquad
\mc{B}\,\d_\vtheta s(1,\oline{\vtheta})\,=\,\d_\vtheta p(1,\oline{\vtheta})\,.
\end{equation}
Due to Gibbs' law \eqref{gibbs} and the structural hypotheses of Paragraph \ref{sss:structural} (see also Chapter 8 of \cite{F-N} and \cite{F-Scho}), we notice that
$\mc A$ is given by formula \eqref{def:A}, and $\mc A>0$.

Taking a linear combination of \eqref{eq:wave_mass} and \eqref{eq:wave_entropy}, with coefficients respectively $\mc{A}$ and $\mc{B}$,
and keeping in mind equation \eqref{eq:wave_momentum}, we finally get the wave system
\begin{equation} \label{eq:wave_syst}
\left\{\begin{array}{l}
       \veps^m\,\d_tZ_\veps\,+\,\mc{A}\,\div\vec{V}_\veps\,=\,\veps^m\,\left(\div\vec{X}^1_\veps\,+\,X^2_\veps\right) \\[1ex]
       \veps^m\,\d_t\vec{V}_\veps\,+\,\nabla_x Z_\veps\,+\,\veps^{m-1}\,\e_3\times \vec V_\veps\,=\,\veps^m\,\left(\div\mbb{Y}^1_\veps\,+\,\vec{Y}^2_\veps\,+\,\nabla_x Y^3_\veps\right)\,,
       \qquad\big(\vec{V}_\veps\cdot\vec n\big)_{|\d\Omega_\veps}\,=\,0\,,
       \end{array}
\right.
\end{equation}
where we have defined the quantities
\begin{eqnarray*}
Z_\veps & := & \mc{A}\,\vrho^{(1)}_\veps\,+\,\mc{B}\,\left(\vrho_\veps\,
\frac{s(\vrho_\veps,\vtheta_\veps)-s(\wtilde{\vrho}_\veps,\oline{\vtheta})}{\veps^m}\,-\,\frac{1}{\veps^m}\Sigma_\veps\right) \\
\vec{X}^1_\veps & := & \mc{B}\left(\frac{\kappa(\vtheta_\veps)}{\vtheta_\veps}\,\frac{\nabla_x\vtheta_\veps}{\veps^m}\,-\,
\vrho_\veps\,\frac{s(\vrho_\veps,\vtheta_\veps)-s(\wtilde{\vrho}_\veps,\oline{\vtheta})}{\veps^m}\,\vec{u}_\veps\right) \\
X^2_\veps & := & -\,\mc{B}\,\vrho_\veps\,\vec{u}_\veps\,\cdot\,\frac{1}{\veps^m}\,\nabla_x s(\wtilde{\vrho}_\veps,\oline{\vtheta}) \\
\mbb{Y}^1_\veps & := & \mbb{S}\!\left(\vtheta_\veps,\nabla\vec{u}_\veps\right)\,-\,\vrho_\veps\vec{u}_\veps\otimes\vec{u}_\veps \\
\vec{Y}^2_\veps & := & \frac{\vrho_\veps-\wtilde{\vrho}_\veps}{\veps^m}\nabla_x G\,+\,
\veps^{m-2}\,\frac{\vrho_\veps-\wtilde{\vrho}_\veps}{\veps^m}\nabla_x F \\
Y^3_\veps & := &\frac{1}{\veps^{m}}\left( \mc{A}\,\frac{\vrho_\veps-\wtilde{\vrho}_\veps}{\veps^m}\,+\mc{B}\,\vrho_\veps\,
\frac{s(\vrho_\veps,\vtheta_\veps)-s(\wtilde{\vrho}_\veps,\oline{\vtheta})}{\veps^m}\,-\,\mc{B}\,\frac{1}{\veps^m}\Sigma_\veps\,-\,
\frac{p(\vrho_\veps,\vtheta_\veps)-p(\wtilde{\vrho}_\veps,\oline{\vtheta})}{\veps^m}\right)\,.
\end{eqnarray*}

We remark that system \eqref{eq:wave_syst} has to be read in the weak sense: for any $\varphi\in C_c^\infty\bigl([0,T[\,\times\oline\Omega_\veps\bigr)$, one has
$$
-\,\veps^m\,\int^T_0\int_{\Omega_\veps} Z_\veps\,\d_t\varphi\,-\,\mc{A}\,\int^T_0\int_{\Omega_\veps} \vec{V}_\veps\cdot\nabla_x\varphi\,=\,
\veps^{m}\int_{\Omega_\veps} Z_{0,\veps}\,\varphi(0)\,+\,\veps^m\,\int^T_0\int_{\Omega_\veps}\left(-\,\vec{X}^1_\veps\cdot\nabla_x\varphi\,+\,X^2_\veps\,\varphi\right)\,,
$$
and also, for any $\vec{\psi}\in C_c^\infty\bigl([0,T[\,\times\oline\Omega_\veps;\R^3\bigr)$ such that $\big(\vec\psi \cdot \n_\veps\big)_{|\partial {\Omega_\veps}} = 0$, one has
\begin{align*}
&-\,\veps^m\,\int^T_0\int_{\Omega_\veps}\vec{V}_\veps\cdot\d_t\vec{\psi}\,-\,\int^T_0\int_{\Omega_\veps} Z_\veps\,\div\vec{\psi}\,+\,\veps^{m-1}\int^T_0\int_{\Omega_\veps} \e_3\times\vec V_\veps\cdot\vec\psi \\
&\qquad\qquad\qquad\qquad
=\,\veps^{m}\int_{\Omega_\veps}\vec{V}_{0,\veps}\cdot\vec{\psi}(0)\,+\,\veps^m\,\int^T_0\int_{\Omega_\veps}\left(-\,\mbb{Y}^1_\veps:\nabla_x\vec{\psi}\,+\,\vec{Y}^2_\veps\cdot\vec{\psi}\,-\,
Y^3_\veps\,\div\vec{\psi}\right)\,,
\end{align*}
where we have set
\begin{equation} \label{def:wave-data}
Z_{0,\veps}\,=\,\mc{A}\,\vrho^{(1)}_{0,\veps}\,+\,\mc{B}\,\left(\vrho_{0,\veps}\,
\frac{s(\vrho_{0,\veps},\vtheta_{0,\veps})-s(\wtilde{\vrho}_\veps,\oline{\vtheta})}{\veps^m}\right) \qquad\mbox{ and }\qquad
\vec{V}_{0,\veps}\,=\,\vrho_{0,\veps}\,\vec{u}_{0,\veps}\,.
\end{equation}

At this point, analogously to \cite{F-G-GV-N}, for any fixed $l>0$, let us introduce a smooth cut-off
\begin{align} 
&\chi_l\in C^\infty_c(\R^2)\quad \mbox{ radially decreasing}\,,\qquad \mbox{ with }\quad 0\leq\chi_l\leq1\,, \label{eq:cut-off} \\
&\mbox{such that }\quad \chi_l\equiv1 \ \mbox{ on }\ \B_l\,, \quad \chi_l\equiv0 \ \mbox{ out of }\ \B_{2l}\,,\quad \left|\nabla_{h}\chi_l(x^h)\right|\,\leq\,C(l)\; \; \text{for any}\;x^h\in\R^2\,. \nonumber
\end{align}
Then we define
\begin{equation} \label{def:L-W}
\Lambda_{\veps,l}\,:=\,\chi_l\,Z_\veps\,=\,\chi_l\,\mc{A}\,\vrho^{(1)}_\veps\,+\,\chi_l\,\mc{B}\,\left(\vrho_\veps\,
\frac{s(\vrho_\veps,\vtheta_\veps)-s(\wtilde{\vrho}_\veps,\oline{\vtheta})}{\veps^m}\,-\,\frac{1}{\veps^m}\Sigma_\veps\right)
\quad\mbox{ and }\quad \vec W_{\veps,l}\,:=\,\chi_l\,\vec V_\veps\,.
\end{equation}
For notational convenience, in what follows we keep using the notation
$\Lambda_{\veps}$ and $\vec W_\veps$ instead of $\Lambda_{\veps,l}$ and $\vec W_{\veps,l}\, $, tacitly meaning the dependence on $l$.  
So system \eqref{eq:wave_syst} becomes
\begin{equation} \label{eq:wave_syst2}
\left\{\begin{array}{l}
       \veps^m\,\d_t\Lambda_\veps\,+\,\mc{A}\,\div\vec{W}_\veps\,=\,
       \veps^m f_\veps \\[1ex]
       \veps^m\,\d_t\vec{W}_\veps\,+\,\nabla_x \Lambda_\veps\,+\,\veps^{m-1}\,\e_3\times \vec W_\veps\,=\, 
       \veps^m \vec{G}_\veps\,,
       \qquad\big(\vec{W}_\veps\cdot\vec n\big)_{|\d\Omega_\veps}\,=\,0\,,
       \end{array}
\right.
\end{equation}
where we have defined $f_\veps\,:=\,\div\vec{F}^1_\veps\,+\,F^2_\veps\;$ and $\;\vec G_\veps\,:=\,\div\mbb{G}^1_\veps\,+\,\vec{G}^2_\veps\,+\,\nabla_x G^3_\veps$, with
\begin{align*}
 & \vec{F}^1_\veps\,=\,\chi_l\,\vec{X}^1_\veps\qquad\qquad\mbox{ and }\qquad\qquad
F^2_\veps\,=\,\chi_l X^2_\veps\,-\,\vec{X}^1_\veps\cdot\nabla_{x}\chi_l\,+\,\mc{A}\,\vec V_\veps\cdot \nabla_{x}\chi_l\, \\
& \mbb{G}^1_\veps\,=\,\chi_l\,\mbb{Y}^1_\veps\;,\qquad
\vec G^2_\veps\,=\,\chi_l\,\vec Y^2_\veps\,+\,\left(\frac{Z_\veps}{\veps^{m}}-Y^3_\veps \right)\,\nabla_{x}\chi_l\,-\,^t\mbb{Y}^1_\veps\cdot \nabla_{x}\chi_l\qquad\mbox{ and }\qquad
G^3_\veps\,=\,\chi_l\,Y^3_\veps\,.
\end{align*}

\subsubsection{Uniform bounds} \label{sss:w-bounds}

Here we use estimates of Subsection \ref{ss:unif-est} in order to show uniform bounds for the solutions and the data in the wave equation \eqref{eq:wave_syst2}.
We start by dealing with the ``unknowns'' $\Lambda_\veps$ and $\vec W_\veps$.

\begin{lemma} \label{l:S-W_bounds}
Let $\bigl(\Lambda_\veps\bigr)_\veps$  and $\bigl(\vec W_\veps\bigr)_\veps$ be defined as above. Then, for any $T>0$ and all $\ep \in \, ]0,1]$, one has
$$
	\| \Lambda_\veps\|_{L^\infty_T(L^2+L^{5/3}+L^1+\mc{M}^+)} \leq c(l)\, ,\quad\quad
	 \| \vec W_\veps\|_{L^2_T(L^2+L^{30/23})} \leq c(l) \, .
$$
\end{lemma}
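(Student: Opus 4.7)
The plan is to exploit the natural decomposition of $\Lambda_\veps$ into three parts — coming from $\vrho^{(1)}_\veps$, from the rescaled entropy difference, and from the lifted measure $\Sigma_\veps$ — each of which will live in one of the summand spaces appearing in the statement. Since the cut-off $\chi_l$ is supported in $\oline{\mbb B}_{2l}$, all localised estimates from Subsection \ref{ss:unif-est} translate into global bounds with constants depending on $l$.

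First I would handle $\Lambda_\veps$. The contribution $\chi_l\,\mc A\,\vrho^{(1)}_\veps$ is directly controlled in $L^\infty_T(L^2+L^{5/3})$ by the first bound in \eqref{uni_varrho1}. For the entropy contribution $\chi_l\,\mc B\,\vrho_\veps\,(s(\vrho_\veps,\vtheta_\veps)-s(\wtilde\vrho_\veps,\oline\vtheta))/\veps^m$, I would split into essential and residual parts. On the essential set, since $\wtilde\vrho_\veps\to 1$ uniformly in $\oline{\mbb B}_{2l}$ (Proposition \ref{p:target-rho_bound}), I would write
\[
\vrho_\veps\,\frac{s(\vrho_\veps,\vtheta_\veps)-s(\wtilde\vrho_\veps,\oline\vtheta)}{\veps^m}\,=\,\vrho_\veps\,\frac{s(\vrho_\veps,\vtheta_\veps)-s(1,\oline\vtheta)}{\veps^m}\,-\,\vrho_\veps\,\frac{s(\wtilde\vrho_\veps,\oline\vtheta)-s(1,\oline\vtheta)}{\veps^m},
\]
apply Proposition \ref{p:prop_5.2} with $G(\vrho,\vtheta)=\vrho\,s(\vrho,\vtheta)$ to the first piece (yielding an $L^\infty_T(L^2_{\rm loc})$ bound via \eqref{est:rho_ess} and \eqref{conv:rr}–\eqref{conv:theta}) and control the second piece pointwise, using that $(\wtilde\vrho_\veps-1)/\veps^m$ is uniformly bounded on $\oline{\mbb B}_{2l}$. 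On the residual set, estimate \eqref{est:e-s_res} gives $\|\,[\vrho_\veps s(\vrho_\veps,\vtheta_\veps)]_\res\,\|_{L^\infty_T(L^1(\B_{2l}))}\leq c(l)\,\veps^{2m}$, while $\chi_l[\vrho_\veps]_\res\,s(\wtilde\vrho_\veps,\oline\vtheta)/\veps^m$ is dominated by $[\vrho_\veps]_\res/\veps^m$, which belongs to $L^\infty_T(L^{5/3}(\B_{2l}))$ (times $\veps^m$) by \eqref{est:rho_res}. Finally, the bound \eqref{est:sigma} on $\sigma_\veps$ propagates to $\Sigma_\veps$ via the definition \eqref{lift0}, giving $\chi_l\,\mc B\,\Sigma_\veps/\veps^m\in L^\infty_T(\mc M^+)$ with norm bounded by $c\,\veps^m$. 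Assembling these four pieces yields the claimed bound for $\Lambda_\veps$.

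For $\vec W_\veps=\chi_l\,\vrho_\veps\vec u_\veps$, I would decompose $\vrho_\veps=[\vrho_\veps]_\ess+[\vrho_\veps]_\res$. On the essential set, $[\vrho_\veps]_\ess$ is bounded in $L^\infty$ (by definition of $\mc O_\ess$), hence $\chi_l\,[\vrho_\veps]_\ess\,\vec u_\veps\in L^2_T(L^2)$ thanks to the Sobolev-type bound \eqref{est:u-H^1}. On the residual set, \eqref{est:rho_res} gives $[\vrho_\veps]_\res\in L^\infty_T(L^{5/3}(\B_{2l}))$, and Sobolev embedding together with \eqref{est:u-H^1} gives $\vec u_\veps\in L^2_T(L^6_{\rm loc})$, so Hölder's inequality (with $1/p=3/5+1/6=23/30$) yields $\chi_l\,[\vrho_\veps]_\res\,\vec u_\veps\in L^2_T(L^{30/23})$ with $\veps$-independent norm.

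The main (minor) technical obstacle is the treatment of the essential part of the entropy difference: because $\wtilde\vrho_\veps$ is \emph{not} constant when $F\neq 0$ and $m=1$, and only tends to $1$ locally uniformly on $\oline{\mbb B}_{2l}$, one must be careful to track all constants through $l$ — this is precisely the reason for introducing the cut-off $\chi_l$ in \eqref{def:L-W}. Once the rewriting above is performed, Proposition \ref{p:prop_5.2} applies verbatim and the remaining estimates are routine consequences of the uniform bounds collected in Paragraph \ref{sss:uniform}.
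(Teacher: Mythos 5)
Your proposal is correct and follows essentially the same route as the paper's proof: the $\vec W_\veps$ bound uses exactly the same essential/residual split with the same Hölder exponent $30/23$, and the $\Lambda_\veps$ bound isolates the $\vrho^{(1)}_\veps$, entropy-difference, and $\Sigma_\veps$ contributions in the same way, with the $\Sigma_\veps$ piece controlled by \eqref{est:sigma} and the residual entropy piece by \eqref{est:e-s_res}. The only (immaterial) difference is the center of the Taylor expansion for the entropy difference — you expand around $(1,\oline\vtheta)$ and package the essential-part estimate via Proposition \ref{p:prop_5.2}, whereas the paper subtracts $\wtilde\vrho_\veps\,s(\wtilde\vrho_\veps,\oline\vtheta)$ and expands directly, but both reduce to \eqref{est:rho_ess} and give the same bound.
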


\begin{proof}
We start by writing $\vec W_\veps\,=\,\vec W^1_\veps\,+\,\vec W_\veps^2$, where
$$
\vec W^1_\veps\,:=\,\chi_l\,[\vrho_\veps]_{\ess}\,\vec u_\veps\qquad\qquad\mbox{ and }\qquad\qquad
\vec W^2_\veps\,:=\,\chi_l\,[\vrho_\veps]_{\res}\,\vec u_\veps\,.
$$
Since the density and temperature are uniformly bounded on the essential set, by \eqref{est:u-H^1} we infer that $\vec W_\veps^1$ is uniformly bounded in
$L_T^2(L^2)$. On the other hand, by \eqref{est:rho_res} and \eqref{est:u-H^1} again, we easily deduce that $\vec W_\veps^2$ is uniformly bounded in
$L_T^2(L^p)$, where $3/5+1/6\,=\,1/p$. The claim about $\vec W_\veps$ is hence proved.

Let us now consider $\Lambda_\veps$, defined in \eqref{def:L-W}, i.e.
$$ 
\Lambda_\veps=\Lambda_{\veps,l}\,:=\,\chi_l\,Z_\veps\,=\,\chi_l\,\mc{A}\,\vrho^{(1)}_\veps\,+\,\chi_l\,\mc{B}\,\left(\vrho_\veps\,
\frac{s(\vrho_\veps,\vtheta_\veps)-s(\wtilde{\vrho}_\veps,\oline{\vtheta})}{\veps^m}\,-\,\frac{1}{\veps^m}\Sigma_\veps\right)\, .
$$
First of all, owing to the bounds $\left\|\Sigma_\veps\right\|_{L^\infty_T(\mc{M}^+)}\,\leq\,C\,\|\sigma_\veps\|_{\mc{M}^+_{t,x}}$ and \eqref{est:sigma}, we have that
$$
\left\|\frac{1}{\veps^{2m}}\,\chi_l\,\Sigma_\veps\right\|_{L^\infty_T(\mc{M}^+)} \leq c(l)\,,
$$
uniformly in $\veps>0$. Next, we can write the following decomposition:
$$
\vrho_\veps\,\chi_l\,\frac{s(\vrho_\veps,\vtheta_\veps)-s(\wtilde{\vrho}_\veps,\oline{\vtheta})}{\veps^m}\,=\,
\frac{1}{\veps^m}\,\chi_l\,\left(\vrho_\veps\,s(\vrho_\veps,\vtheta_\veps)\,-\,\wtilde{\vrho}_\veps\,s(\wtilde{\vrho}_\veps,\oline{\vtheta})\right)\,-\,
\chi_l\,\vrho_\veps^{(1)}\,s(\wtilde{\vrho}_\veps,\oline{\vtheta})\,,
$$
where the latter term in the right-hand side is bounded in $L^\infty_T(L^2+L^{5/3})$ in view of \eqref{uni_varrho1} and Proposition~\ref{p:target-rho_bound}. Concerning the former term,
we can write it as
\begin{equation}\label{eq:ub_1}
\frac{1}{\veps^m}\chi_l \left(\varrho_\ep s(\vrho_\veps,\vtheta_\veps)-\varrho_\ep s( \wtilde{\vrho}_\veps,\oline\vtheta)\right)=
\frac{1}{\veps^m}\chi_l \bigl[\varrho_\ep s(\vrho_\veps,\vtheta_\veps)-\varrho_\ep s(\wtilde{\vrho}_\veps,\oline\vtheta)\bigr]_{\ess}+
\frac{1}{\veps^m}\chi_l \bigl[\varrho_\ep s(\vrho_\veps,\vtheta_\veps)\bigr]_{\res}\,,
\end{equation}
since the support of $\chi_l\varrho_\ep s(\wtilde{\vrho}_\veps,\oline\vtheta)$ is contained in the essential set by Proposition \ref{p:target-rho_bound}, for small enough $\veps$
(depending on the fixed $l>0$).
By \eqref{est:e-s_res}, the last term on the right-hand side of \eqref{eq:ub_1} is uniformly bounded in $L^\infty_T(L^1)$; as  for the first term, a Taylor expansion at the first order, together with
inequality \eqref{est:rho_ess} and the structural restrictions on $s$, immediately yields its uniform boundedness in $L^\infty_T(L^2)$.

The lemma is hence completely proved.
\qed
\end{proof}

\medbreak
In the next lemma, we establish bounds for the source terms in the system of acoustic waves \eqref{eq:wave_syst2}.
\begin{lemma} \label{l:source_bounds}
For any $T>0$ fixed, let us define the following spaces:
\begin{itemize}
 \item $
\mc X_1\,:=\,L^2\Bigl([0,T];\big(L^2+L^{1}+L^{3/2}+L^{30/23}+L^{30/29}\big)(\Omega)\Bigr)$;
\item  $\mc X_2\,:=\,L^2\Bigl([0,T];\big(L^2+L^1+L^{4/3}\big)(\Omega)\Bigr)$;
\item  $\mc X_3\,:=\,\mc X_2\,+\,L^\infty\Bigl([0,T];\big(L^2+L^{5/3}+L^1\big)(\Omega)\Bigr)$;
\item $\mc X_4\,:=\,L^\infty\Bigl([0,T];\big(L^2+L^{5/3}+L^1+\mc{M}^+\big)(\Omega)\Bigr)$.
\end{itemize}
Then, for any $l>0$ fixed, one has the following bounds, uniformly in $\veps\in\,]0,1]$:
$$
\left\|\vec F^1_\veps\right\|_{\mc X_1}\,+\,\left\|F^2_\veps\right\|_{\mc X_1}\,+\,\left\|\mbb{G}^1_\veps\right\|_{\mc X_2}\,+\,\left\|\vec G^2_\veps\right\|_{\mc X_3}\,+\,
\left\|G^3_\veps\right\|_{\mc X_4}\,\leq\,C(l)\,.
$$
In particular, the sequences $\bigl( f_\veps\bigr)_\veps$ and
$\bigl(\vec{G}_\veps\bigr)_\veps$, defined in system \eqref{eq:wave_syst2}, are uniformly bounded in the space $L^{2}\big([0,T];W^{-1,1}(\Omega)\big)$, 
thus 
in $L^{2}\big([0,T];H^{-s}(\Omega)\big)$, for all $s>5/2$.
\end{lemma}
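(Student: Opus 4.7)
The plan is to estimate each of the five source quantities $\vec F^1_\veps$, $F^2_\veps$, $\mathbb G^1_\veps$, $\vec G^2_\veps$, $G^3_\veps$ separately, then combine them. The cut-off $\chi_l$ localises everything to $\oline{\mathbb B}_{2l}$, where Lemma \ref{l:target-rho_bound} and Proposition \ref{p:target-rho_bound} give uniform control of the equilibrium density $\wtilde\vrho_\veps$; accordingly, all estimates will follow by splitting each factor into essential and residual parts and invoking the uniform bounds of Subsection \ref{ss:unif-est}.

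For $\vec F^1_\veps$ the heat-flux contribution splits into its essential part, bounded in $L^2_T(L^2)$ via \eqref{est:D-theta}, and its residual part, bounded in $L^2_T(L^1)$ via \eqref{est:Dtheta_res}. The entropy-flux contribution is the product of $\vrho_\veps\bigl(s(\vrho_\veps,\vtheta_\veps)-s(\wtilde\vrho_\veps,\oline\vtheta)\bigr)/\veps^m$ with $\vec u_\veps$: the first factor will be controlled in $L^\infty_T(L^2)$ on the essential set by a first-order Taylor expansion combined with \eqref{est:rho_ess}, and in $L^2_T(L^{30/23})$ on the residual set by \eqref{est:rho-s_res}; multiplying by $\vec u_\veps\in L^2_T(L^6)$ (thanks to \eqref{est:u-H^1} and Sobolev embedding) then produces the required $L^2_T(L^{3/2}+L^{30/29})$ bound. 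For $\mathbb G^1_\veps$, Newton's law \eqref{S} together with an ess/res splitting of the viscosity coefficient and the gradient bound \eqref{est:Du} gives $\mathbb S(\vtheta_\veps,\nabla_x\vec u_\veps)\in L^2_T(L^2+L^{4/3})$, while the convective part factorises as $\sqrt{\vrho_\veps}\vec u_\veps\otimes\sqrt{\vrho_\veps}\vec u_\veps$ and is bounded in $L^\infty_T(L^1)$ through \eqref{est:momentum}. For $F^2_\veps$ the key algebraic observation, obtained by differentiating \eqref{eq:target-rho}, is
\[
\frac{\nabla_x\wtilde\vrho_\veps}{\veps^m}\,=\,\frac{\wtilde\vrho_\veps}{\d_\vrho p(\wtilde\vrho_\veps,\oline\vtheta)}\Bigl(\veps^{m-2}\,\nabla_x F\,+\,\nabla_x G\Bigr),
\]
which is uniformly bounded on $\oline{\mathbb B}_{2l}$ precisely thanks to the assumption \eqref{eq:choice-m}; hence $\chi_l X^2_\veps$ is controlled by $\vrho_\veps\vec u_\veps$, and the corrective contributions involving $\nabla_x\chi_l$ reuse the bounds already derived for $\vec X^1_\veps$ and $\vec V_\veps$.

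The two most delicate quantities are $\vec G^2_\veps$ and $G^3_\veps$. The forcing in $\vec G^2_\veps$ reads $\chi_l\vec Y^2_\veps=\chi_l R_\veps\bigl(\veps^{m-2}\nabla_x F+\nabla_x G\bigr)$, which is bounded in $L^\infty_T(L^2+L^{5/3})$ by \eqref{conv:r} and the restriction $m\geq 2$; the commutator-type terms with $\nabla_x\chi_l$ are handled by the bounds on $Z_\veps$, $\mathbb Y^1_\veps$ and $Y^3_\veps$ obtained above. The main obstacle will be $G^3_\veps$: the numbers $\mc A$ and $\mc B$ have been designed in \eqref{relnum} precisely so that a second-order Taylor expansion of $p(\vrho_\veps,\vtheta_\veps)-p(\wtilde\vrho_\veps,\oline\vtheta)$ about $(\wtilde\vrho_\veps,\oline\vtheta)$ cancels the linear combination $\mc A\,\vrho^{(1)}_\veps+\mc B\,\vrho_\veps(s(\vrho_\veps,\vtheta_\veps)-s(\wtilde\vrho_\veps,\oline\vtheta))/\veps^m$ at leading order. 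What remains is a quadratic remainder in the variations $(\vrho_\veps-\wtilde\vrho_\veps,\vtheta_\veps-\oline\vtheta)$, which, once split into essential and residual parts and controlled via \eqref{est:rho_ess} and \eqref{est:rho_res}, yields an $L^\infty_T(L^2+L^{5/3}+L^1)$ bound; the time-lifted entropy production $-\mc B\,\Sigma_\veps/\veps^{2m}$ is handled in $L^\infty_T(\mc M^+)$ thanks to \eqref{est:sigma}. Tracking this cancellation on both regimes and identifying the correct Lebesgue exponents in which the quadratic remainder lives is the technically most involved step.

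Once the five bounds are in place, the final claims on $f_\veps=\div\vec F^1_\veps+F^2_\veps$ and $\vec G_\veps=\div\mathbb G^1_\veps+\vec G^2_\veps+\nabla_x G^3_\veps$ follow at once: the divergence and gradient map $\mc X_1$, $\mc X_2$, $\mc X_4$ continuously into suitable $L^2_T(W^{-1,1})$-type spaces, and the Sobolev embedding $H^{s}(\Omega)\hookrightarrow L^\infty(\Omega)$ valid for $s>5/2$ in dimension three gives, by duality, $W^{-1,1}(\Omega)\hookrightarrow H^{-s}(\Omega)$, yielding the last embedding of the statement.
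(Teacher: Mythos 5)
Your proposal follows essentially the same route as the paper: a term-by-term estimate using the essential/residual decomposition, the uniform bounds of Subsection \ref{ss:unif-est}, and the algebraic cancellation built into the definitions of $\mc A$ and $\mc B$. Two small points are worth flagging. First, in $\vec Y^2_\veps$ the fluctuation that appears is $\vrho^{(1)}_\veps=(\vrho_\veps-\wtilde\vrho_\veps)/\veps^m$, not $R_\veps=(\vrho_\veps-1)/\veps^m$; the uniform bound to invoke is \eqref{uni_varrho1} rather than the weak convergence \eqref{conv:r}. The distinction is harmless here because $\wtilde r_\veps$ is itself uniformly bounded on $\B_{2l}$, but the paper's bookkeeping is in terms of $\vrho^{(1)}_\veps$.

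Second, and more substantively, for $G^3_\veps$ you propose a Taylor expansion of $p(\vrho_\veps,\vtheta_\veps)-p(\wtilde\vrho_\veps,\oline\vtheta)$ and of the corresponding entropy difference about the $x$-dependent point $(\wtilde\vrho_\veps(x),\oline\vtheta)$. With that basepoint, the first-order cancellation is not exact: the brackets $\mc A+\mc B\,\d_\vrho s(\wtilde\vrho_\veps,\oline\vtheta)-\d_\vrho p(\wtilde\vrho_\veps,\oline\vtheta)$ and $\mc B\,\d_\vtheta s(\wtilde\vrho_\veps,\oline\vtheta)-\d_\vtheta p(\wtilde\vrho_\veps,\oline\vtheta)$ vanish only at $\wtilde\vrho_\veps=1$, not identically. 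They are of size $O(\wtilde\vrho_\veps-1)=O(\veps^m)$ by Proposition \ref{p:target-rho_bound}, so the residual linear term, once the extra $\veps^{-m}$ prefactor of $Y^3_\veps$ is taken into account, gives an $O(1)$ contribution and your argument can be completed — but you must explicitly notice and estimate this residual, which your sketch does not do. The paper sidesteps the issue by inserting $\pm s(1,\oline\vtheta)$ and $\pm p(1,\oline\vtheta)$ and expanding both halves around the \emph{fixed} point $(1,\oline\vtheta)$: with that basepoint the first-order terms cancel \emph{exactly} by \eqref{relnum}, and what remains are genuinely quadratic remainders in $(\vrho_\veps-1,\vtheta_\veps-\oline\vtheta)$ plus a $(\wtilde\vrho_\veps-1)^2$ term, all of the right size. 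That normalisation is the cleaner way to organise the bound for $G^3_\veps$, and I recommend adopting it when you write up the details.
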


\begin{proof}
We start by dealing with $\vec F^1_\veps$. By relations \eqref{est:D-theta} and \eqref{est:Dtheta_res}, it is easy to see that
$$
\left\|\frac{1}{\veps^m}\,\chi_l\,\frac{\kappa(\vtheta_\veps)}{\vtheta_\veps}\,\nabla_{x}\vtheta_\veps\right\|_{L^2_T(L^2+L^1)}\,\leq\,c(l)\,.
$$
On the other hand, the analysis of the term
$$
\vrho_\veps\,\chi_l\,\frac{s(\vrho_\veps,\vtheta_\veps)-s(\wtilde{\vrho}_\veps,\oline{\vtheta})}{\veps^m}\,\vec u_\veps
$$
is based on an analogous decomposition as used in the proof of Lemma \ref{l:S-W_bounds} and on uniform bounds of Paragraph \ref{sss:uniform}: these facts allow us to bound
it in $L^2_T(L^{3/2}+L^{30/23}+L^{30/29})$.  

The bounds for $F^2_\veps$ easily follow from the previous ones and Lemma \ref{l:S-W_bounds} (indeed, the analysis for $\vec{W}_\veps$
applies also to the terms of the form $\vrho_\veps\vec{u}_\veps$ which appear in the definition of $F^2_\veps$), provided we show that
$$
\frac{1}{\veps^m}\,\left|\chi_l\,\nabla_{x}\wtilde{\vrho}_\veps\right|\,\leq\,C(l)\,.
$$
The previous bound immediately follows from the equation
$$
\nabla_{x}\wtilde{\vrho}_\veps\,=\,\frac{\wtilde{\vrho}_\veps}{\d_\vrho p(\wtilde{\vrho}_\veps,\oline\vtheta)}\,\left(\veps^{2(m-1)}\,\nabla_{x} F\,+\,\veps^m\,\nabla_{x} G\right)\,,
$$
which derives from \eqref{prF}.
Hence, by Proposition \ref{p:target-rho_bound} and the definitions given in \eqref{assFG}, we get
$$
\frac{1}{\veps^m}\,\left|\chi_l\,\nabla_{x}\wtilde{\vrho}_\veps\right|\,\leq\,C(l)\,\left(\veps^{2(m-1)-m}+1\right)\,\leq\,C(l)\,.
$$

The bound on  $\mbb{G}^1_\veps$ is an immediate consequence of \eqref{est:Du} and \eqref{est:momentum}.

Let us focus now on the term $\vec G^2_\veps$. The control of the term $\,^t\mbb{Y}^1_\veps\cdot\nabla_{x}\chi_l$ is the same as above. The control of $\chi_l\vec Y^2_\veps$, instead,
gives rise to a bound in $L^\infty_T(L^2+L^{5/3})$: this is easily seen once we write
$$
\chi_l\,\vec Y^2_\veps\,=\,\chi_l\,\vrho_\veps^{(1)}\,\nabla_{x} G\,+\,\veps^{m-2}\,\chi_l\,\vrho_\veps^{(1)}\,\nabla_{x}F
$$
and we use \eqref{uni_varrho1} and \eqref{assFG}. Finally, we have the equality
\begin{equation*}
\begin{split}
\nabla_x \chi_l\,\left( \frac{Z_\veps}{\veps^{m}}-Y^3_\veps \right)&=\nabla_x \chi_{l}\,\left(\frac{p(\vrho_\veps,\vtheta_\veps)-p(\wtilde{\vrho}_\veps,\oline\vtheta)}{\veps^m}\right)\\
&=\nabla_x \chi_{l}\left[\frac{p(\vrho_\veps,\vtheta_\veps)-p(\wtilde{\vrho}_\veps,\oline\vtheta)}{\veps^m}\right]_\ess+\nabla_x \chi_{l}\left[\frac{p(\vrho_\veps,\vtheta_\veps)}{\veps^m}\right]_\res.
\end{split}
\end{equation*}
The second term in the last line is uniformly bounded in $L^\infty_T(L^1)$, in view of \eqref{est:rho_res}.
For the first term, instead,
we can proceed as in \eqref{eq:ub_1}.

At this point, we switch our attention to the term $G^3_\veps$, whose analysis is more involved. 
%
By definition, we have
\begin{equation*}
\begin{split}
\chi_l\,Y^3_\veps  &:= \frac{1}{\veps^{m}}\,\chi_l\,\left( \mc{A}\,\frac{\vrho_\veps-\wtilde{\vrho}_\veps}{\veps^m}\,+\mc{B}\,\vrho_\veps\,
\frac{s(\vrho_\veps,\vtheta_\veps)-s(\wtilde{\vrho}_\veps,\oline{\vtheta})}{\veps^m}\,-\,\mc{B}\,\frac{1}{\veps^m}\Sigma_\veps\,-\,
\frac{p(\vrho_\veps,\vtheta_\veps)-p(\wtilde{\vrho}_\veps,\oline{\vtheta})}{\veps^m}\right) \\
&=\frac{1}{\veps^{m}}\,\chi_l\,\left( \mc{A}\,\frac{\vrho_\veps-\wtilde{\vrho}_\veps}{\veps^m}\,+\mc{B}\, \frac{s(\vrho_\veps,\vtheta_\veps)-s(\wtilde{\vrho}_\veps,\oline{\vtheta})}{\veps^m}\,-
\frac{p(\vrho_\veps,\vtheta_\veps)-p(\wtilde{\vrho}_\veps,\oline{\vtheta})}{\veps^m}\right)\\
&\qquad-\,\mc{B}\,\frac{1}{\veps^{2m}}\,\chi_l\,\Sigma_\veps\,+\,\mc{B}\,\chi_l\,\left(\frac{\vrho_\veps -1}{\veps^{m}}\right)\,
\frac{s(\vrho_\veps,\vtheta_\veps)-s(\wtilde{\vrho}_\veps,\oline{\vtheta})}{\veps^m}\,,
\end{split}
\end{equation*}
with $\mc A$ and $\mc B$ (see definition \eqref{relnum} above) such that
$$  
\mc{A}\,+\,\mc{B}\,\d_\vrho s(1,\oline{\vtheta})\,=\,\d_\vrho p(1,\oline{\vtheta})\qquad\mbox{ and }\qquad
\mc{B}\,\d_\vtheta s(1,\oline{\vtheta})\,=\,\d_\vtheta p(1,\oline{\vtheta})\,.
$$
Next, we use a Taylor expansion to write
\begin{equation*}
\begin{split}
s(\vrho_\veps,\vtheta_\veps)-s(\widetilde{\vrho}_\veps,\oline{\vtheta})&=s(\vrho_\veps,\vtheta_\veps)-s(1,\oline{\vtheta})+s(1,\oline \vtheta)-s(\widetilde{\vrho}_\veps,\oline{\vtheta})\\
&=\d_\vrho \, s(1,\oline \vtheta )\, (\vrho_\veps -1)+\d_\vtheta \, s(1,\oline \vtheta )\, (\vtheta_\veps -\oline \vtheta)+\frac{1}{2}\,{\rm Hess}(s)[\xi_1 ,\eta_1]\begin{pmatrix}
\vrho_\veps -1 \\ 
\vtheta_\veps -\oline\vtheta
\end{pmatrix}\cdot\begin{pmatrix}
\vrho_\veps -1 \\ 
\vtheta_\veps -\oline\vtheta
\end{pmatrix} \\
&+\d_\vrho \, s(1,\oline \vtheta )\, (1-\widetilde{\vrho}_\veps )+\frac{1}{2}\, \d_{\vrho \vrho}^2\,s(\xi_{2},\oline\vtheta)\, (\widetilde{\vrho}_\veps -1)^{2}\\
&=\d_\vrho \, s(1,\oline \vtheta )\, (\vrho_\veps - \widetilde{\vrho}_\veps)+\d_\vtheta \, s(1,\oline \vtheta )\, (\vtheta_\veps -\oline \vtheta)\\
&+\frac{1}{2}\left({\rm Hess}(s)[\xi_1 ,\eta_1]\begin{pmatrix}
\vrho_\veps -1 \\ 
\vtheta_\veps -\oline\vtheta
\end{pmatrix}\cdot\begin{pmatrix}
\vrho_\veps -1 \\ 
\vtheta_\veps -\oline\vtheta
\end{pmatrix}+\d_{\vrho \vrho}^2\,s(\xi_{2},\oline \vtheta)\, (\widetilde{\vrho}_\veps -1)^{2}\right)\,,
\end{split}
\end{equation*}
where $\xi_1,\xi_2,\eta_1$ are suitable points between $1$ and $\vrho_\veps$, $1$ and $\widetilde{\vrho}_\veps$, $\oline \vtheta$ and $ \vtheta_\veps $ respectively, and
we have denoted by ${\rm Hess}(s)[\xi,\eta]$ the Hessian matrix of the function $s$ with respect to its variables $\big(\vrho,\vtheta\big)$, computed at the point $(\xi,\eta)$.
Analogously, for the pressure term we have
\begin{equation*}
\begin{split}
p(\vrho_\veps,\vtheta_\veps)-p(\widetilde{\vrho}_\veps,\oline{\vtheta}) 
&=\d_\vrho \, p(1,\oline \vtheta )\, (\vrho_\veps - \widetilde{\vrho}_\veps)+\d_\vtheta \, p(1,\oline \vtheta )\, (\vtheta_\veps -\oline \vtheta)\\
&+\frac{1}{2}\left({\rm Hess}(p)[\xi_3 ,\eta_2]\begin{pmatrix}
\vrho_\veps -1 \\ 
\vtheta_\veps -\oline\vtheta
\end{pmatrix}\cdot\begin{pmatrix}
\vrho_\veps -1 \\ 
\vtheta_\veps -\oline\vtheta
\end{pmatrix}+\d_{\vrho \vrho}^2\,p(\xi_{4},\oline \vtheta)\, (\widetilde{\vrho}_\veps -1)^{2}\right)\,,
\end{split}
\end{equation*}
where $\xi_3,\xi_4,\eta_2$ are still between $1$ and $\vrho_\veps$, $1$ and $\widetilde{\vrho}_\veps$, $\oline \vtheta$ and $ \vtheta_\veps $ respectively.
Using now \eqref{relnum}, we find that the first order terms cancel out, and we are left with
\begin{equation*}
\begin{split}
\chi_l\,Y^3_\veps  &=\frac{\mc{B}}{2\veps^{2m}}\,\chi_l\,\left( \,{\rm Hess}(s)[\xi_1 ,\eta_1]\begin{pmatrix}
\vrho_\veps -1 \\ 
\vtheta_\veps -\oline\vtheta
\end{pmatrix}\cdot\begin{pmatrix}
\vrho_\veps -1 \\ 
\vtheta_\veps -\oline\vtheta
\end{pmatrix}+\d_{\vrho \vrho}^2\,s(\xi_{2},\oline \vtheta)\, (\widetilde{\vrho}_\veps -1)^{2}\right)\\
&-\,\frac{1}{2\veps^{2m}}\,\chi_l\,\left({\rm Hess}(p)[\xi_3 ,\eta_2]\begin{pmatrix}
\vrho_\veps -1 \\ 
\vtheta_\veps -\oline\vtheta
\end{pmatrix}\cdot\begin{pmatrix}
\vrho_\veps -1 \\ 
\vtheta_\veps -\oline\vtheta
\end{pmatrix}+\d_{\vrho \vrho}^2\,p(\xi_{4},\oline \vtheta)\, (\widetilde{\vrho}_\veps -1)^{2}\right)\\
&-\,\frac{\mc B}{\veps^{2m}}\,\chi_l\,\Sigma_\veps\,+\,\mc{B}\,\chi_l\,\left(\frac{\vrho_\veps -1}{\veps^{m}}\right)\,
\frac{s(\vrho_\veps,\vtheta_\veps)-s(\wtilde{\vrho}_\veps,\oline{\vtheta})}{\veps^m}\, .
\end{split}
\end{equation*} 
Thanks to the uniform bounds established in Paragraph \ref{sss:uniform} and the decomposition into essential and residual parts, the claimed control in the space $\mc X_4$ follows.
\qed
\end{proof}

\subsubsection{Regularization and description of the oscillations}\label{sss:w-reg}

Following \cite{F-N_AMPA} and \cite{F-N_CPDE} (see also \cite{Ebin}), it is convenient to reformulate our problem \eqref{ceq} to \eqref{eeq}, supplemented with complete slip boundary
conditions \eqref{bc1-2} and \eqref{bc3}, in a completely equivalent way, in the domain
$$
\wtilde{\Omega}_\veps\,:=\,{B}_{L_\veps} (0) \times \mbb{T}^1\,,\qquad\qquad\mbox{ with }\qquad\mbb{T}^1\,:=\,[-1,1]/\sim\,,
$$
where $\sim$ denotes the equivalence relation which identifies $-1$ and $1$.
For this, it is enough to extend $\varrho_\veps$, $\vtheta_\veps$, and $\vec u_\veps^h$ as even functions with respect to $x^{3}$, $u_\veps^3$ and $G$ as odd functions.

Correspondingly, we consider also the wave system \eqref{eq:wave_syst2} to be satisfied in the new domain $\wtilde\Omega_\veps$. It goes without saying that the uniform bounds
established above hold true also when replacing $\Omega$ with $\wtilde\Omega$, where we have set
$$\wtilde\Omega\,:=\,\R^2 \times \mbb{T}^1\,.$$
Notice that the wave speed in \eqref{eq:wave_syst2} is proportional to $\veps^{-m}$, while, in view of assumption \eqref{dom}, the domains $\wtilde\Omega_\veps$ are expanding at speed proportional
to $\veps^{-m-\de}$, for some $\delta>0$.
Therefore, no interactions of the acoustic-Poincar\'e waves with the boundary of $\wtilde\Omega_\veps$ take place (see also Remark \ref{r:speed-waves} in this respect),
for any finite time $T>0$ and sufficiently small $\ep>0$.
Thanks to this fact and the spatial localisation given by the cut-off function $\chi_l$, we can assume that \eqref{eq:wave_syst2} is satisfied
(still in a weak sense) on the whole $\wtilde\Omega$.

Now, for any $M\in\N$ let us consider the low-frequency cut-off operator ${S}_{M}$ of a Littlewood-Paley decomposition, as introduced in \eqref{eq:S_j}. We define 
\begin{equation*}
\Lambda_{\varepsilon ,M}={S}_{M}\Lambda_{\veps}\qquad\qquad \text{ and }\qquad\qquad \vec{W}_{\veps ,M}={S}_{M}\vec{W}_{\veps}\, .
\end{equation*} 
The following result holds true.
Recall that we are omitting from the notation the dependence of all quantities on $l>0$, due to multiplication by the cut-off function $\chi_l$ fixed above.
\begin{proposition} \label{p:prop approx}
For any $T>0$, we have the following convergence properties, in the limit $M\rightarrow +\infty $:
\begin{equation}\label{eq:approx var}
\begin{split}
&\sup_{0<\veps\leq1}\, \left\|\Lambda_{\varepsilon }-\Lambda_{\varepsilon ,M}\right\|_{L^{\infty}([0,T];H^{s})}\longrightarrow 0\quad  \forall s<-3/2-\delta \\
&\sup_{0<\veps\leq1}\, \left\|\vec{W}_{\varepsilon }-\vec{W}_{\varepsilon ,M}\right\|_{L^{\infty}([0,T];H^{s})}\longrightarrow 0\quad \forall s<-4/5-\delta\,,
\end{split}
\end{equation}
for any $\delta >0$.
Moreover, for any $M>0$, the couple $(\Lambda_{\veps ,M},\vec W_{\veps ,M})$ satisfies the approximate wave equations
\begin{equation}\label{eq:approx wave}
\left\{\begin{array}{l}
       \veps^m\,\d_t\Lambda_{\veps ,M}\,+\,\mc{A}\,\div\vec{W}_{\veps ,M}\,=\,\veps^m\,f_{\veps ,M} \\[1ex]
       \veps^m\,\d_t\vec{W}_{\veps ,M}\,+\veps^{m-1}\,e_{3}\times \vec{W}_{\veps ,M}+\,\nabla_x \Lambda_{\veps,M}\,=\,\veps^m\,\vec G_{\veps ,M}\, ,
       \end{array}
\right.
\end{equation}
where $(f_{\veps ,M})_{\veps}$ and $(\vec G_{\veps ,M})_{\veps}$ are families of smooth (in the space variables) functions satisfying, for any $s\geq0$, the uniform bounds
\begin{equation}\label{eq:approx force}
\sup_{0<\veps\leq1}\, \left\|f_{\veps ,M}\right\|_{L^{2}([0,T];H^{s})}\,+\,\sup_{0<\veps\leq1}\,\left\|\vec G_{\veps ,M}\right\|_{L^{2}([0,T];H^{s})}\,\leq\, C(l,s,M)\,,
\end{equation}
where the constant $C(l,s,M)$ depends on the fixed values of $l>0$, $s\geq 0$ and $M>0$, but not on $\veps>0$.
\end{proposition}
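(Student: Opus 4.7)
The plan is to derive all three assertions directly from the uniform bounds already established in Lemmas \ref{l:S-W_bounds} and \ref{l:source_bounds}, combined with Littlewood--Paley tools (in particular, Bernstein's inequality). Since $S_M$ commutes with time and space derivatives and with multiplication by constants, applying $S_M$ to each equation of \eqref{eq:wave_syst2} immediately yields the approximate wave system \eqref{eq:approx wave}, with
\[
f_{\veps,M}\,:=\,S_M f_\veps\qquad\text{and}\qquad \vec{G}_{\veps,M}\,:=\,S_M\vec{G}_\veps\,.
\]
Thus the only real work is to verify the quantitative convergence \eqref{eq:approx var} and the uniform regularisation estimate \eqref{eq:approx force}.

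For the convergence \eqref{eq:approx var}, the idea is to control each term in the sum defining $\Lambda_\veps$ (resp. $\vec W_\veps$) separately, using Bernstein's inequality to pay the ``price'' of upgrading from $L^p$ (or $\mc M^+$) to $L^2$. Specifically, for a dyadic block $\Delta_j f$ with $f\in L^p(\wtilde\Omega)$ and $1\leq p\leq 2$, one has
\[
\|\Delta_j f\|_{L^2}\,\leq\,C\,2^{\,3j(1/p-1/2)}\,\|f\|_{L^p}\,,
\]
with the natural analogue when $f$ is a bounded measure (using $p=1$). Applied term by term to the decomposition of $\Lambda_\veps$ from Lemma \ref{l:S-W_bounds}, the worst contributions come from $L^1$ and $\mc M^+$, which yield an exponent $3/2$; summing the tail over $j\geq M$ gives
\[
\sup_{0<\veps\leq 1}\,\|\Lambda_\veps-\Lambda_{\veps,M}\|_{L^\infty_T(H^s)}\,\leq\,C(l)\sum_{j\geq M}2^{j(s+3/2)}\,\longrightarrow\,0
\]
for any $s<-3/2$, after losing an arbitrarily small $\delta>0$ to ensure summability. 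The same argument applied to $\vec W_\veps\in L^2_T(L^2+L^{30/23})$ gives the critical exponent $3(23/30-1/2)=4/5$, so the tail is summable for $s<-4/5-\delta$, yielding the second statement in \eqref{eq:approx var}.

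For \eqref{eq:approx force}, the same Bernstein inequality is used \emph{in reverse}: since $S_M$ localises frequencies below $2^M$, for \emph{any} $s_1\leq s_2$ and any $g$ in a reasonable space,
\[
\|S_M g\|_{H^{s_2}}\,\leq\,C\,2^{M(s_2-s_1)}\,\|S_M g\|_{H^{s_1}}\,\leq\,C(M)\,\|g\|_{H^{s_1}}\,.
\]
By Lemma \ref{l:source_bounds}, both $f_\veps$ and $\vec G_\veps$ are uniformly bounded in $L^2_T(H^{-s_0})$ for some $s_0>5/2$; hence $\|S_M f_\veps\|_{L^2_T(H^s)}+\|S_M\vec G_\veps\|_{L^2_T(H^s)}\leq C(l,s,M)$ uniformly in $\veps$, which is exactly \eqref{eq:approx force}.

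The only mildly delicate point will be to treat the $\mc M^+$ contribution to $\Lambda_\veps$ (i.e. the time-lifted entropy production $\Sigma_\veps$) on the same footing as the $L^1$ part, since the Littlewood--Paley decomposition acts on distributions and Bernstein's inequality for measures produces a loss of an arbitrarily small $\delta$; this is precisely the reason for the strict inequality in the regularity threshold $s<-3/2-\delta$. Apart from this bookkeeping, the whole proposition is a fairly direct consequence of the estimates already derived, without any new analytical input.
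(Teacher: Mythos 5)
The proposal is correct and follows the same route as the paper's (brief) proof: apply $S_M$ to the wave system \eqref{eq:wave_syst2} to get \eqref{eq:approx wave}, then deduce \eqref{eq:approx var} and \eqref{eq:approx force} from the uniform bounds of Lemmas \ref{l:S-W_bounds} and \ref{l:source_bounds} together with Bernstein's inequality (the paper invokes Lemma \ref{lemma_sobolev_H^s} and \eqref{eq:LP-Sob}, which encode the same Littlewood--Paley estimates you compute by hand). Two small inaccuracies worth flagging, neither of which affects the conclusion: the Bernstein bound should read $\|\Delta_j f\|_{L^2}\lesssim 2^{3j(1/p-1/2)}\|\Delta_j f\|_{L^p}$ together with the $L^p$-boundedness of $\Delta_j$, and the $\delta$-loss for the $\mathcal M^+$ and $L^1$ parts is not caused by Bernstein itself (which gives the sharp exponent $3/2$) but simply reflects that the statement's condition $s<-3/2-\delta$ for all $\delta>0$ is equivalent to $s<-3/2$.
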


\begin{proof}
Thanks to Lemma \ref{lemma_sobolev_H^s}, properties \eqref{eq:approx var} are straightforward consequences of the uniform bounds establish in Subsection \ref{sss:w-bounds}. 

Next, applying the operator ${S}_{M}$ to \eqref{eq:wave_syst2} immediately gives us system \eqref{eq:approx wave}, where we have set 
\begin{equation*}
f_{\veps ,M}:={S}_{M}\left(\div\vec{F}^1_\veps\,+\,F^2_\veps\right)\qquad \text{ and }\qquad \vec G_{\veps ,M}:={S}_{M}\left(\div\mbb{G}^1_\veps\,+\,\vec{G}^2_\veps\,+\,\nabla_x G^3_\veps\right)\,.
\end{equation*}
Thanks to Lemma \ref{l:source_bounds} and \eqref{eq:LP-Sob} (and also Lemma \ref{lemma_sobolev_H^s}), it is easy to verify inequality \eqref{eq:approx force}.
\qed
\end{proof}

\medbreak
We also have an important decomposition for the approximated velocity fields and their $\curl$.
\begin{proposition} \label{p:prop dec}
For any $M>0$ and any $\veps\in\,]0,1]$, the following decompositions hold true:
\begin{equation*}
\vec{W}_{\veps ,M}\,=\,
\veps^{m}\vec{t}_{\veps ,M}^{1}+\vec{t}_{\veps ,M}^{2}\qquad\mbox{ and }\qquad
\curl \vec{W}_{\veps ,M}=\veps^{m}\vec{T}_{\veps ,M}^{1}+\vec{T}_{\veps ,M}^{2}\,,
\end{equation*}
where, for any $T>0$ and $s\geq 0$, one has 
\begin{align*}
&\left\|\vec{t}_{\veps ,M}^{1}\right\|_{L^{2}([0,T];H^{s})}+\left\|\vec{T}_{\veps ,M}^{1}\right\|_{L^{2}([0,T];H^{s})}\leq C(l,s,M) \\
&\left\|\vec{t}_{\veps ,M}^{2}\right\|_{L^{2}([0,T];H^{1})}+\left\|\vec{T}_{\veps ,M}^{2}\right\|_{L^{2}\left([0,T];L^2\right)}\leq C(l)\,,
\end{align*}
for suitable positive constants $C(l,s,M)$ and $C(l)$, which are uniform with respect to $\veps\in\,]0,1]$.
\end{proposition}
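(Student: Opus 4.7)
The plan is to derive the decomposition directly from the definition $\vec W_\veps = \chi_l\,\vec V_\veps = \chi_l\,\vrho_\veps\,\vec u_\veps$, by exploiting the fact that $\vrho_\veps$ is a small perturbation of the constant state $1$. Writing $\vrho_\veps = 1 + \veps^m R_\veps$ with $R_\veps$ as in \eqref{def_deltarho}, one obtains the identity
\begin{equation*}
\vec W_\veps\;=\;\chi_l\,\vec u_\veps\,+\,\veps^m\,\chi_l\,R_\veps\,\vec u_\veps\,,
\end{equation*}
so that, after applying the low-frequency cut-off $S_M$, a natural candidate for the decomposition is
\begin{equation*}
\vec t^{\,2}_{\veps,M}\,:=\,S_M\bigl(\chi_l\,\vec u_\veps\bigr)\,,\qquad\qquad
\vec t^{\,1}_{\veps,M}\,:=\,S_M\bigl(\chi_l\,R_\veps\,\vec u_\veps\bigr)\,.
\end{equation*}

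For $\vec t^{\,2}_{\veps,M}$, I would use that $\chi_l$ is smooth and compactly supported in $\B_{2l}$, together with the uniform bound \eqref{est:u-H^1} on $\vec u_\veps$, to conclude that $\chi_l\,\vec u_\veps$ is bounded in $L^2\bigl([0,T];H^1(\wtilde\Omega)\bigr)$ with a constant $C(l)$, independent of $\veps$. Since $S_M$ is a Littlewood--Paley truncation, it is bounded on $H^1$ uniformly in $M$ (indeed $\|S_M f\|_{H^1}\leq \|f\|_{H^1}$); this yields the sought $L^2_T(H^1)$ bound for $\vec t^{\,2}_{\veps,M}$ independent of both $\veps$ and $M$, and in turn, by commuting $\curl$ with $S_M$, the bound $\|\vec T^{\,2}_{\veps,M}\|_{L^2_T(L^2)}\leq C(l)$.

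For $\vec t^{\,1}_{\veps,M}$, the key observation is that, thanks to \eqref{rr1} and the Sobolev embedding $H^1_{\rm loc}\hookrightarrow L^6_{\rm loc}$ applied to \eqref{est:u-H^1}, the product $\chi_l\,R_\veps\,\vec u_\veps$ is uniformly bounded in some space of the form $L^2\bigl([0,T];L^p(\wtilde\Omega)\bigr)$ with $p\in[1,2]$ (e.g.\ $p=30/23$ coming from the H\"older pairing $L^{5/3}\cdot L^6$). Since the Fourier transform of $S_M$ is supported in a ball of radius $\sim 2^M$, Bernstein's inequality (see Lemma \ref{lemma_sobolev_H^s}) gives $\|S_M g\|_{H^s}\leq C(s,M)\,\|g\|_{L^p}$ for any $s\geq 0$; applying this uniformly in $\veps$ yields the claimed $L^2_T(H^s)$ bound $C(l,s,M)$ for $\vec t^{\,1}_{\veps,M}$. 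The corresponding bound for $\vec T^{\,1}_{\veps,M}=\curl\vec t^{\,1}_{\veps,M}$ follows by losing one derivative, which only affects the constant $C(l,s,M)$.

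There is no real obstacle here beyond bookkeeping: the whole decomposition rests on the algebraic splitting $\vrho_\veps=1+\veps^m R_\veps$ and on the (rather mild) regularity of the essential part of the density profile, combined with Bernstein estimates for the frequency cut-off $S_M$. The only point requiring some care is the choice of the Lebesgue exponent $p$ for the product $\chi_l R_\veps\vec u_\veps$, which must fall in $[1,2]$ so that Bernstein's inequality can be applied to gain arbitrary spatial regularity at a price depending on $M$.
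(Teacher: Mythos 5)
Your proposal is correct and matches the paper's own proof essentially step by step: you adopt the same definitions $\vec t^{\,1}_{\veps,M}=S_M(\chi_l R_\veps\vec u_\veps)$ and $\vec t^{\,2}_{\veps,M}=S_M(\chi_l\vec u_\veps)$ with $\vec T^{\,j}_{\veps,M}=\curl\vec t^{\,j}_{\veps,M}$, and you invoke the same two ingredients (the uniform $L^\infty_T(L^{5/3}_{\rm loc})$ control on $R_\veps$ combined with $\vec u_\veps\in L^2_T(H^1_{\rm loc})$, plus Bernstein's inequality for the compactly supported $S_M$). The only superficial discrepancy is that Bernstein's inequality is actually Lemma \ref{l:bern} rather than Lemma \ref{lemma_sobolev_H^s}, but the argument is the same.
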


\begin{proof}
We start by defining
\begin{equation} \label{eq:t-T}
\vec{t}_{\veps,M}^{1}\,:=\,{S}_{M}\left(\chi_{l}\left(\frac{\vrho_\veps -1}{\veps^{m}}\right)\vec{u}_{\veps}\right) \qquad\mbox{ and }\qquad
\vec{t}_{\veps,M}^{2}\,:=\,{S}_{M}\left(\chi_{l}\vec{u}_{\veps}\right)\,.
\end{equation}
Then, it is apparent that $\vec{W}_{\veps ,M}\,=\,\veps^m\vec t_{\veps,M}^{1}\,+\,\vec t_{\veps,M}^{2}$.
The decomposition of $\curl \vec W_{\veps,M}$ is also easy to get, if we set $\vec T_{\veps,M}^j\,:=\,\curl \vec t_{\veps,M}^j$, for $j=1,2$.
We have to prove uniform bounds for all those terms.
But this is an easy verification, thanks to the $L^\infty_T(L^{5/3}_{\rm loc})$ bound on $R_\veps$ and the $L^2_T(H^{1}_{\rm loc})$ bound on $\vec{u}_{\veps}$, for any fixed time $T>0$ 
(recall the estimates obtained in Subsection \ref{ss:unif-est} above).

On the one hand, for the estimate 
\begin{equation*}
\left\|\vec{t}_{\veps ,M}^{1}\right\|_{L_{T}^{2}(H^{s})}+\left\|\vec{T}_{\veps ,M}^{1}\right\|_{L_{T}^{2}(H^{s})}\leq C(l,s,M)\, ,
\end{equation*}
it is sufficient to employ relation \eqref{eq:LP-Sob} and Lemma \ref{lemma_sobolev_H^s}.

On the other hand, we have 
\begin{equation*}
\begin{split}
\left\| S_{M}\left( \chi_{l} \ue \right) \right\|_{H^{1}}^{2}&\leq C \sum_{j=- 1}^{M-1}2^{2j}\left\| \Delta_{j} \left(\chi_l \ue\right)\right\|_{L^{2}}^{2}\\
&\leq C\|\chi_l \ue\|_{L^2}^2+C\sum_{j=0}^{M-1}\left\| \Delta_{j}\nabla_{x} \left(\chi_l \ue \right)\right\|_{L^{2}}^{2}\leq C(l)\, ,
\end{split}
\end{equation*}
and the estimate for $\vec{T}_{\veps ,M}^{2}$ follows from analogous computations. This completes the proof.
\qed
\end{proof}


\subsection{Convergence of the non-linear convective term} \label{ss:convergence}
In this subsection we show convergence of the convective term, by using a compensated compactness argument.
Namely, we manipulate this term, by performing algebraic computations on the wave system formulated above. 
As a consequence, we derive two key pieces of information: on the one hand, we see that some non-linear terms are small remainders (in the sense specified by relations \eqref{eq:test-f}
and \eqref{eq:remainder} below); on the other hand, we derive a compactness property for a new quantity, called $\g_{\veps,M}$.

The first step is to reduce the study to the case of smooth vector fields $\vec{W}_{\veps ,M}$.

\begin{lemma} \label{lem:convterm}
Let $T>0$. For any $\vec{\psi}\in C_c^\infty\bigl([0,T[\,\times\wtilde\Omega;\R^3\bigr)$, we have 
\begin{equation*}
\lim_{M\rightarrow +\infty} \limsup_{\veps \rightarrow 0^+}\left|\int_{0}^{T}\int_{\wtilde\Omega} \vrho_\veps\,\vec{u}_\veps\otimes \vec{u}_\veps: \nabla_{x}\vec{\psi}\, \dxdt-
\int_{0}^{T}\int_{\wtilde\Omega} \vec{W}_{\veps ,M}\otimes \vec{W}_{\veps,M}: \nabla_{x}\vec{\psi}\, \dxdt\right|=0\, .
\end{equation*}
\end{lemma}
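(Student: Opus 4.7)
The plan is to insert two intermediate quantities -- $\vec V_\veps\otimes\vec V_\veps$ (where $\vec V_\veps=\vrho_\veps\vec u_\veps$) and $\vec W_\veps\otimes\vec W_\veps$ -- between $\vrho_\veps\vec u_\veps\otimes\vec u_\veps$ and $\vec W_{\veps,M}\otimes\vec W_{\veps,M}$, and to control each of the successive discrepancies in turn. First, I would fix $l>0$ so large that $\chi_l\equiv 1$ on $\Supp\vec\psi$; this is legitimate since $\Supp\vec\psi$ is a compact subset of $\wtilde\Omega$. Under this choice, $\int\vec V_\veps\otimes\vec V_\veps:\nabla_x\vec\psi=\int\chi_l^2\,\vec V_\veps\otimes\vec V_\veps:\nabla_x\vec\psi=\int\vec W_\veps\otimes\vec W_\veps:\nabla_x\vec\psi$ for free, so only two genuine errors remain:
\begin{equation*}
I_\veps^1\,:=\,\int_0^T\!\!\int_{\wtilde\Omega}\bigl(\vrho_\veps\vec u_\veps\otimes\vec u_\veps-\vec V_\veps\otimes\vec V_\veps\bigr):\nabla_x\vec\psi\,\dxdt\,,\quad
I_{\veps,M}^2\,:=\,\int_0^T\!\!\int_{\wtilde\Omega}\bigl(\vec W_\veps\otimes\vec W_\veps-\vec W_{\veps,M}\otimes\vec W_{\veps,M}\bigr):\nabla_x\vec\psi\,\dxdt\,.
\end{equation*}

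For $I_\veps^1$, the algebraic identity $\vrho_\veps\vec u_\veps\otimes\vec u_\veps-\vec V_\veps\otimes\vec V_\veps=-\veps^m\,\vrho_\veps\,R_\veps\,\vec u_\veps\otimes\vec u_\veps$ exhibits an explicit factor $\veps^m$. Splitting $\vrho_\veps R_\veps=[\vrho_\veps R_\veps]_{\rm ess}+[\vrho_\veps R_\veps]_{\rm res}$ and invoking the uniform bounds of Subsection~\ref{ss:unif-est} -- on the essential set $\vrho_\veps$ is bounded and $R_\veps\in L^\infty_T(L^2_{\rm loc})$ by \eqref{est:rho_ess}, whereas on the residual set \eqref{est:rho_res} and the smallness of its measure \eqref{est:M_res-measure} are available -- combined with $\vec u_\veps\in L^2_T(L^6_{\rm loc})$ by Sobolev embedding of \eqref{est:u-H^1}, this would yield a quantitative bound $|I_\veps^1|\leq C(\vec\psi,l)\,\veps^m$. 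Hence $\limsup_{\veps\to 0^+}|I_\veps^1|=0$, uniformly in $M$.

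For $I_{\veps,M}^2$, which is the heart of the argument, I would write
\begin{equation*}
\vec W_\veps\otimes\vec W_\veps-\vec W_{\veps,M}\otimes\vec W_{\veps,M}\,=\,(\vec W_\veps-\vec W_{\veps,M})\otimes\vec W_\veps+\vec W_{\veps,M}\otimes(\vec W_\veps-\vec W_{\veps,M})
\end{equation*}
and exploit Proposition~\ref{p:prop approx}, which gives $\sup_\veps\|\vec W_\veps-\vec W_{\veps,M}\|_{L^\infty_T(H^{-s_0})}\to 0$ as $M\to+\infty$ for any fixed $s_0\in(4/5,1)$. To pair this smallness with the remaining factor, I would use Proposition~\ref{p:prop dec} and its obvious analogue for $\vec W_\veps$ itself, writing $\vec W_\veps=\veps^m\vec t^1_\veps+\vec t^2_\veps$ and $\vec W_{\veps,M}=\veps^m\vec t^1_{\veps,M}+\vec t^2_{\veps,M}$, with $\vec t^2_\veps:=\chi_l\vec u_\veps$ and $\vec t^2_{\veps,M}:=S_M\vec t^2_\veps$ both uniformly bounded in $L^2_T(H^1)$ by $C(l)$. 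The contributions carrying the prefactor $\veps^m$ vanish as $\veps\to 0^+$, while the leading bilinear term pairs $\vec W_\veps-\vec W_{\veps,M}$, small in $L^\infty_T(H^{-s_0})$ uniformly in $\veps$, against $\vec t^2_{\veps,M}\otimes\nabla_x\vec\psi$, which is uniformly bounded in $L^2_T(H^{s_0})$ on $\Supp\vec\psi$ (product of a uniform $H^1$ function with a smooth compactly supported one). The $H^{-s_0}$--$H^{s_0}$ duality pairing then delivers $\limsup_{\veps\to 0^+}|I_{\veps,M}^2|\to 0$ as $M\to+\infty$.

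The main obstacle is precisely this regularity matching: the control on $\vec W_\veps$ given by Lemma~\ref{l:S-W_bounds} is only $L^2_T(L^2+L^{30/23})$, too weak to dualise against negative-order Sobolev spaces close to $H^{-1}$ by a straightforward H\"older inequality. Proposition~\ref{p:prop dec} is what allows one to bypass this obstruction, by isolating a smooth, uniformly $H^1$-bounded ``core'' $\vec t^2_{\veps,M}$ inside $\vec W_{\veps,M}$ at the price of a remainder of size $\veps^m$, which is harmless given the order of the iterated limit prescribed by the statement.
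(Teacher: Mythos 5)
There is a genuine gap: the intermediate quantity you introduce, $\vec V_\veps\otimes\vec V_\veps=\vrho_\veps^2\,\vec u_\veps\otimes\vec u_\veps$ (and with it $\vec W_\veps\otimes\vec W_\veps$), is not known to be locally integrable in the given weak-solution framework, so the integral $I^1_\veps$ does not obviously make sense. The energy bound \eqref{est:momentum} gives $\sqrt{\vrho_\veps}\vec u_\veps\in L^\infty_T(L^2)$, which controls $\vrho_\veps\vec u_\veps\otimes\vec u_\veps\in L^\infty_T(L^1)$ — a \emph{single} power of $\vrho_\veps$ is absorbed — but there is no uniform $L^\infty$ bound on $\vrho_\veps$ itself; on the residual set one only has $[\vrho_\veps]_{\rm res}\in L^\infty_T(L^{5/3}_{\rm loc})$ by \eqref{est:rho_res}. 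Your algebraic identity $\vrho_\veps\vec u_\veps\otimes\vec u_\veps-\vec V_\veps\otimes\vec V_\veps=-\veps^m\,\vrho_\veps R_\veps\,\vec u_\veps\otimes\vec u_\veps$ is correct, but it hides a term quadratic in the density fluctuation: $\vrho_\veps R_\veps=R_\veps+\veps^m R_\veps^2$. The factor $R_\veps^2$ only lies in $L^\infty_T(L^{5/6}_{\rm loc})$, and $L^{5/6}\cdot L^3$ does not give $L^1$ ($6/5+1/3>1$), so the residual contribution is not controlled by H\"older and the bound $|I^1_\veps|\le C\veps^m$ cannot be justified this way. The same obstruction reappears in your treatment of $I^2_{\veps,M}$: after writing $\vec W_\veps-\vec W_{\veps,M}=(\Id-S_M)(\veps^m\vec t^1_\veps+\vec t^2_\veps)$ and $\vec W_\veps=\veps^m\vec t^1_\veps+\vec t^2_\veps$, the cross-term $\veps^{2m}(\Id-S_M)\vec t^1_\veps\otimes\vec t^1_\veps$ is again quadratic in $R_\veps\vec u_\veps\in L^2_T(L^{30/23}_{\rm loc})$ and hence only lies in $L^1_T(L^{15/23}_{\rm loc})$, which is not a space; the ``$\veps^m$-prefactor makes it vanish'' argument presupposes a finiteness that has not been established.

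The paper's proof is designed precisely so that no quantity with two unmollified factors of $\vrho_\veps$ or of $R_\veps$ ever appears. It strips off a \emph{single} factor of $R_\veps$ first, writing $\vrho_\veps\vec u_\veps\otimes\vec u_\veps=(\chi_l\vec u_\veps)\otimes\vec u_\veps+\veps^m R_\veps\vec u_\veps\otimes\vec u_\veps$ on $\Supp\vec\psi$, where the error is $O(\veps^m)$ in $L^1_T(L^{15/14}_{\rm loc})$ since $R_\veps\in L^\infty_T(L^{5/3}_{\rm loc})$ pairs with $\vec u_\veps\otimes\vec u_\veps\in L^1_T(L^3_{\rm loc})$. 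It then replaces one factor $\chi_l\vec u_\veps$ by $\vec t^2_{\veps,M}=S_M(\chi_l\vec u_\veps)$ using the high-frequency tail estimate $\|(\Id-S_M)(\chi_l\vec u_\veps)\|_{L^2_T(L^2)}\le C(l)2^{-M}$; it does the same for the second factor. Finally, to pass from $\vec t^2_{\veps,M}\otimes\vec t^2_{\veps,M}$ to $\vec W_{\veps,M}\otimes\vec W_{\veps,M}$, it exploits that $\vec t^2_{\veps,M}=\vec W_{\veps,M}-\veps^m\vec t^1_{\veps,M}$ where \emph{both} $\vec W_{\veps,M}$ and $\vec t^1_{\veps,M}$ are $S_M$-mollified, hence lie in $L^2_T(H^s)$ for any $s\ge 0$ (Lemma~\ref{l:S-W_bounds}, Proposition~\ref{p:prop dec} and Bernstein), so every cross-term is integrable and the remaining errors carry explicit factors $\veps^m$. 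Your strategy would become correct if you performed the $S_M$-truncation of at least one factor \emph{before} bringing in the density weight a second time, i.e.\ if you took the route $\vrho_\veps\vec u_\veps\otimes\vec u_\veps\to\vec u_\veps\otimes\vec u_\veps\to\vec t^2_{\veps,M}\otimes\vec t^2_{\veps,M}\to\vec W_{\veps,M}\otimes\vec W_{\veps,M}$ — but that is precisely the paper's proof, not a genuinely different one.
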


\begin{proof}
Let $\vec \psi\in C_c^\infty\bigl(\R_+\times\wtilde\Omega;\R^3\bigr)$, with $\Supp\vec\psi\subset[0,T]\times K$, for some compact set $K\subset\wtilde\Omega$.
Then, we take $l>0$ in \eqref{eq:cut-off} so large that $K\subset \wtilde{\mbb{B}}_{l}\,:=\,B_l(0)\times\T$.
Therefore, using \eqref{def_deltarho}, we get
$$
\int_{0}^{T}\int_{\wtilde\Omega} \vrho_\veps\,\vec{u}_\veps\otimes \vec{u}_\veps: \nabla_{x}\vec{\psi}\,=\,
\int_{0}^{T}\int_{K}(\chi_l\,\vec{u}_\veps)\otimes\vec{u}_\veps:\nabla_{x}\vec{\psi}\,+\veps^{m}\int_{0}^{T}\int_{K}R_\veps\,\vec{u}_\veps\otimes \vec{u}_\veps:\nabla_{x}\vec{\psi}\,.
$$
As a consequence of the uniform bounds $\big(\vec{u}_{\veps}\big)_\veps\subset L^{2}_{T}(L^{6}_{\rm loc})$ and $\big(R_{\veps}\big)_\veps\subset L^{\infty}_{T}(L_{\rm loc}^{5/3})$
(recall \eqref{uni_varrho1} above), the second integral in the right-hand side is of order $\veps^{m}$. As for the first one, 
using \eqref{eq:t-T}, we can write
$$
\int_{0}^{T}\int_{K}(\chi_l\,\vec{u}_\veps)\otimes \vec{u}_\veps:\nabla_{x}\vec{\psi}\,=\,\int_{0}^{T}\int_{K}\vec{t}^2_{\veps,M}\otimes\vec{u}_\veps:\nabla_{x}\vec{\psi}
+\int_{0}^{T}\int_{K} \,(\Id-{S}_{M})(\chi_l\,\vec{u}_\veps)\otimes\vec{u}_\veps: \nabla_{x}\vec{\psi}\,.
$$
Observe that, in view of characterisation \eqref{eq:LP-Sob}, one has the property (see also Lemma \ref{lemma_sobolev_H^s})
\begin{equation*}
\left\|(\Id-{S}_{M})(\chi_l\,\vec{u}_\veps)\right\|_{L_{T}^{2}(L^{2})}\,\leq\,C\,2^{-M}\,\left\|\nabla_{x}(\chi_l\,\vec{u}_\veps)\right\|_{L_{T}^{2}(L^{2})}\,\leq\,C(l)\,2^{-M}\,.
\end{equation*}
Therefore, it is enough to consider the first term in the right-hand side of the last relation: we have
$$
\int_{0}^{T}\int_{K}\vec{t}^2_{\veps,M}\otimes \vec{u}_\veps:\nabla_{x}\vec{\psi}\,=\,\int_{0}^{T}\int_K\vec{t}^2_{\veps,M}\otimes\vec{t}^2_{\veps,M}:\nabla_{x}\vec{\psi}\,+\,
\int_{0}^{T}\int_{K} \,\vec{t}^2_{\veps,M}\otimes(\Id-{S}_{M})(\chi_l\,\vec{u}_\veps): \nabla_{x}\vec{\psi}\,,
$$
where, for the same reason as before, we gather that
\begin{equation*}
\lim_{M\rightarrow +\infty}\limsup_{\veps \rightarrow 0^+}\left|\int_{0}^{T}\int_{K}\vec{t}^2_{\veps,M}\otimes (\Id-{S}_{M})(\chi_l\,\vec{u}_\veps): \nabla_{x}\vec{\psi}\right|=0\, .
\end{equation*}

It remains us to consider the integral 
$$
\int_{0}^{T}\int_K\vec{t}^2_{\veps,M}\otimes\vec{t}^2_{\veps,M}:\nabla_{x}\vec{\psi}\,=\,\int_{0}^{T}\int_{K} \vec{W}_{\veps ,M}\otimes \vec t^2_{\veps,M}: \nabla_{x}\vec{\psi}
-\veps^{m}\int_{0}^{T}\int_{K}\vec t^1_{\veps,M}\otimes \vec t^2_{\veps,M}: \nabla_{x}\vec{\psi}\,,
$$
where we notice that, owing to Proposition \ref{p:prop dec}, the latter term in the right-hand side is of order $\veps^{m}$, so it vanishes at the limit.
As a last step, we write
$$
\int_{0}^{T}\int_{K} \vec{W}_{\veps ,M}\otimes \vec t^2_{\veps,M}: \nabla_{x}\vec{\psi}\,=\,
\int_{0}^{T}\int_{K} \vec{W}_{\veps ,M}\otimes \vec W_{\veps,M}: \nabla_{x}\vec{\psi}\,-\,\veps^m\int_{0}^{T}\int_{K} \vec{W}_{\veps ,M}\otimes \vec t^1_{\veps,M}: \nabla_{x}\vec{\psi}\,.
$$
Using Lemma \ref{l:S-W_bounds} together with Bernstein's inequalities of Lemma \ref{l:bern}, we see that the latter integral in the right-hand side is of order $\veps^{m}$.
This concludes the proof of the lemma.
\qed
\end{proof}

\medbreak
From now on, in order to avoid the appearance of (irrelevant) multiplicative constants everywhere, we suppose that the torus $\T$ has been normalised so that its Lebesgue measure is equal to $1$.

In view of the previous lemma and of Proposition \ref{p:limitpoint}, for any test-function
\begin{equation} \label{eq:test-f}
\vec\psi\in C_c^\infty\big([0,T[\,\times\wtilde\Omega;\R^3\big)\qquad\qquad \mbox{ such that }\qquad \div\vec\psi=0\quad\mbox{ and }\quad \d_3\vec\psi=0\,,
\end{equation}
we have to pass to the limit in the term 
\begin{align*}
-\int_{0}^{T}\int_{\wtilde\Omega} \vec{W}_{\veps ,M}\otimes \vec{W}_{\veps ,M}: \nabla_{x}\vec{\psi}\,&=\,\int_{0}^{T}\int_{\wtilde\Omega} \div\left(\vec{W}_{\veps ,M}\otimes
\vec{W}_{\veps ,M}\right) \cdot \vec{\psi}\,.
\end{align*}
Notice that the integration by parts above is well-justified, since all the quantities inside the integrals are smooth. At this point, we observe that,
resorting to the notation in \eqref{eq:decoscil} presented in the introductory part, we can write
$$
\int_{0}^{T}\int_{\wtilde\Omega} \div\left(\vec{W}_{\veps ,M}\otimes \vec{W}_{\veps ,M}\right) \cdot \vec{\psi}\,=\,
\int_{0}^{T}\int_{\R^2} \left(\mc{T}_{\veps ,M}^{1}+\mc{T}_{\veps, M}^{2}\right)\cdot\vec{\psi}^h\,,
$$
where we have defined the terms
\begin{equation} \label{def:T1-2}
\mc T^1_{\veps,M}\,:=\, \divh\left(\langle \vec{W}_{\veps ,M}^{h}\rangle\otimes \langle \vec{W}_{\veps ,M}^{h}\rangle\right)\qquad \mbox{ and }\qquad
\mc T^2_{\veps,M}\,:=\, \divh\left(\langle \dbtilde{\vec{W}}_{\veps ,M}^{h}\otimes \dbtilde{\vec{W}}_{\veps ,M}^{h}\rangle \right)\,.
\end{equation}
So, it is enough to focus on each of them separately.
For notational convenience, from now on we will generically denote by $\mc{R}_{\veps ,M}$ any remainder term, that is any term satisfying the property
\begin{equation} \label{eq:remainder}
\lim_{M\rightarrow +\infty}\limsup_{\veps \rightarrow 0^+}\left|\int_{0}^{T}\int_{\wtilde\Omega}\mc{R}_{\veps ,M}\cdot \vec{\psi}\, \dxdt\right|=0\, ,
\end{equation}
for all test functions $\vec{\psi}\in C_c^\infty\bigl([0,T[\,\times\wtilde\Omega;\R^3\bigr)$ as in \eqref{eq:test-f}. 

\subsubsection{The analysis of the $\mc{T}_{\veps ,M}^{1}$ term}\label{sss:term1}
We start by dealing with $\mc T^1_{\veps,M}$. Standard computations give
\begin{align}
\mc{T}_{\veps ,M}^{1}\,&=\,\divh\left(\langle \vec{W}_{\veps ,M}^{h}\rangle\otimes \langle \vec{W}_{\veps ,M}^{h}\rangle\right)=
\divh\langle \vec{W}_{\veps ,M}^{h}\rangle\, \langle \vec{W}_{\veps ,M}^{h}\rangle+\langle \vec{W}_{\veps ,M}^{h}\rangle \cdot \nabla_{h}\langle \vec{W}_{\veps ,M}^{h}\rangle \label{eq:T1} \\
&=\divh\langle \vec{W}_{\veps ,M}^{h}\rangle\, \langle \vec{W}_{\veps ,M}^{h}\rangle+\frac{1}{2}\, \nabla_{h}\left(\left|\langle \vec{W}_{\veps ,M}^{h}\rangle\right|^{2}\right)+
\curlh\langle \vec{W}_{\veps ,M}^{h}\rangle\,\langle \vec{W}_{\veps ,M}^{h}\rangle^{\perp}\,. \nonumber
\end{align}
Notice that we can forget about the second term, because it is a perfect gradient and we are testing against divergence-free test functions.
For the first term, we take advantage of system \eqref{eq:approx wave}: averaging the first equation with respect to $x^{3}$ and multiplying it by $\langle \vec{W}^h_{\veps ,M}\rangle$, we arrive at
$$
\divh\langle \vec{W}_{\veps ,M}^{h}\rangle\,\langle \vec{W}_{\veps ,M}^{h}\rangle\,=\,-\frac{\veps^{m}}{\mc{A}}\d_t\langle \Lambda_{\veps ,M}\rangle \langle \vec{W}_{\veps ,M}^{h}\rangle+
\frac{\veps^{m}}{\mc{A}} \langle f_{\veps ,M}^{h}\rangle \langle \vec{W}_{\veps ,M}^{h}\rangle\,=\,
\frac{\veps^{m}}{\mc{A}}\langle \Lambda_{\veps ,M}\rangle \d_t \langle \vec{W}_{\veps ,M}^{h}\rangle +\mc{R}_{\veps ,M}\,.
$$
We remark that the term presenting the total derivative in time is in fact a remainder.
We use now the horizontal part of \eqref{eq:approx wave}, where we take the vertical average and then multiply by $\langle \Lambda_{\veps ,M}\rangle$: we gather
\begin{align*}
\frac{\veps^{m}}{\mc{A}}\langle \Lambda_{\veps ,M}\rangle \d_t \langle \vec{W}_{\veps ,M}^{h}\rangle &=
-\frac{1}{\mc{A}} \langle \Lambda_{\veps ,M}\rangle \nabla_{h}\langle \Lambda_{\veps ,M}\rangle+\frac{\veps^{m}}{\mc{A}}\langle \Lambda_{\veps ,M}\rangle \langle \vec G_{\veps ,M}^{h}\rangle-
\frac{\veps^{m-1}}{\mc{A}}\langle \Lambda_{\veps ,M}\rangle\langle \vec{W}_{\veps ,M}^{h}\rangle^{\perp}\\
&=-\frac{\veps^{m-1}}{\mc{A}}\langle \Lambda_{\veps ,M}\rangle\langle \vec{W}_{\veps ,M}^{h}\rangle^{\perp}-\frac{1}{2\mc{A}}  \nabla_{h}\left( \left| \langle \Lambda_{\veps ,M}\rangle \right|^{2}\right)+\mc{R}_{\veps ,M}\\
&=-\frac{\veps^{m-1}}{\mc{A}}\langle \Lambda_{\veps ,M}\rangle\langle \vec{W}_{\veps ,M}^{h}\rangle^{\perp}+\mc{R}_{\veps ,M}\, ,
\end{align*}
where we repeatedly exploited the properties proved in Proposition \ref{p:prop approx} and we included in the remainder term also the perfect gradient.
Inserting this relation into \eqref{eq:T1}, we find
\begin{equation*}
\mc{T}_{\veps ,M}^{1}= \g_{\veps,M}\,\langle\vec{W}_{\veps,M}^{h}\rangle^{\perp}+\mc{R}_{\veps,M}\,,
\qquad\qquad\mbox{ with }\qquad \gamma_{\veps, M}:=\curlh\langle \vec{W}_{\veps ,M}^{h}\rangle\,-\,\frac{\veps^{m-1}}{\mc{A}}\langle \Lambda_{\veps ,M}\rangle\,.
\end{equation*}


We observe that, for passing to the limit in $\mc{T}_{\veps ,M}^{1}$, there is no other way than finding some strong convergence property for $\vec W_{\veps,M}$. 
Such a property is in fact hidden in the structure of the wave system \eqref{eq:approx wave}: in order to exploit it, some work on the term $\g_{\veps,M}$ is needed.
We start by rewriting the vertical average of the first equation in \eqref{eq:approx wave} as
\begin{equation*}
\frac{\veps^{2m-1}}{\mc{A}}\,\d_t \langle \Lambda_{\veps ,M} \rangle\,+\,\veps^{m-1}\div_{h} \langle \vec{W}_{\veps ,M}^{h}\rangle\,=\,\frac{\veps^{2m-1}}{\mc{A}}\, \langle f_{\veps ,M}^{h}\rangle\,.
\end{equation*}
On the other hand, taking the vertical average of the horizontal components of \eqref{eq:approx wave} and then applying $\curlh$, we obtain the relation
\begin{equation*}
\veps^m\,\d_t\curlh\langle \vec{W}_{\veps ,M}^{h}\rangle\,+\veps^{m-1}\,\divh\langle \vec{W}_{\veps ,M}^{h}\rangle\, =\,\veps^m \curlh\langle\vec G_{\veps ,M}^{h}\rangle\, .
\end{equation*}
Summing up the last two equations, we discover that
\begin{equation} \label{eq:gamma}
\d_{t}\gamma_{\veps,M}\,=\,\curlh\langle \vec G_{\veps ,M}^{h}\rangle\,-\,\frac{\veps^{m-1}}{\mc{A}}\,\langle f_{\veps ,M}^{h}\rangle \, .
\end{equation}
Thanks to estimate \eqref{eq:approx force} in Proposition \ref{p:prop approx}, we discover that (for any $M>0$ fixed) the family 
$\left(\d_{t}\,\gamma_{\veps,M}\right)_{\veps}$
is uniformly bounded (with respect to $\veps$) in e.g. $L_{T}^{2}(L^{2})$. 
On the other hand, thanks to Lemma \ref{l:S-W_bounds} and Sobolev embeddings, we have that (for any $M>0$ fixed)
the sequence $(\gamma_{\veps,M})_{\veps}$ is uniformly bounded (with respect to $\veps$) in the space $L_{T}^{2}(H^{1})$.
Since the embedding $H_{\rm loc}^{1}\hookrightarrow L_{\rm loc}^{2}$ is compact, the Aubin-Lions Theorem (see again the Appendix \ref{appendixA}) implies that, for any $M>0$ fixed, the family $(\gamma_{\veps,M})_{\veps}$ is compact
in $L_{T}^{2}(L_{\rm loc}^{2})$. Then, it converges strongly (up to extracting a subsequence) to a tempered distribution $\gamma_M$ in the same space. 
Of course, by definition of $\g_{\veps,M}$ (and whenever $m>1$), this tells us that also $\big(\curlh\lan\vec W_{\veps,M}^h\ran\big)_\veps$ is compact in $L^2_T(L^2_{\rm loc})$.

Now, we have that $\gamma_{\veps ,M}$ converges strongly to $\gamma_M$ in $L_{T}^{2}(L_{\rm loc}^{2})$ and $\langle \vec{W}_{\veps ,M}^{h}\rangle$ converges weakly to
$\langle \vec{W}_{M}^{h}\rangle$ in $L_{T}^{2}(L_{\rm loc}^{2})$ (owing to Proposition \ref{p:prop dec}, for instance). Then, we deduce that
\begin{equation*}
\gamma_{\veps,M}\langle \vec{W}_{\veps ,M}^{h}\rangle^{\perp}\longrightarrow \gamma_M \langle \vec{W}_{M}^{h}\rangle^{\perp}\qquad \text{ in }\qquad \mc{D}^{\prime}\big(\R_+\times\R^2\big)\,.
\end{equation*}
Observe that, by definition of $\g_{\veps,M}$, we must have $\gamma_M=\curlh\langle \vec{W}_{M}^{h}\rangle$. On the other hand, by Proposition \ref{p:prop dec} and \eqref{eq:t-T},
we know that $\langle \vec{W}_{M}^{h}\rangle= \lan{S}_{M}(\chi_l\vec{U}^{h})\ran$.

In the end, we have proved that, for any $T>0$ and any test-function $\vec \psi$ as in \eqref{eq:test-f},
one has the convergence (at any $M\in\N$ fixed, when $\veps\ra0^+$)
\begin{equation} \label{eq:limit_T1}
\int_{0}^{T}\int_{\R^2}\mc{T}_{\veps ,M}^{1}\cdot\vec{\psi}^h\,\dx^h\dt\,\longrightarrow\,
\int^T_0\int_{\R^2}\curlh\lan{S}_{M}(\chi_l\vec{U}^{h})\ran\; \lan{S}_{M}\big(\chi_l(\vec{U}^{h})^{\perp}\big)\ran\cdot\vec\psi^h\,\dx^h\dt\,.
\end{equation}

\subsubsection{Dealing with the term $\mc{T}_{\veps ,M}^{2}$}\label{sss:term2}
Let us now consider the term $\mc{T}_{\veps ,M}^{2}$, defined in \eqref{def:T1-2}. By the same computation as above, we infer that
\begin{align}
\mc{T}_{\veps ,M}^{2}\,
&=\,\langle \divh (\dbtilde{\vec{W}}_{\veps ,M}^{h})\;\;\dbtilde{\vec{W}}_{\veps ,M}^{h}\rangle+\frac{1}{2}\, \langle \nabla_{h}| \dbtilde{\vec{W}}_{\veps ,M}^{h}|^{2} \rangle+
\langle \curlh\dbtilde{\vec{W}}_{\veps ,M}^{h}\,\left( \dbtilde{\vec{W}}_{\veps ,M}^{h}\right)^{\perp}\rangle\, . \label{eq:T2} 
\end{align}

Let us now introduce now the quantities
$$
\dbtilde{\Phi}_{\veps ,M}^{h}\,:=\,( \dbtilde{\vec{W}}_{\veps ,M}^{h})^{\perp}-\d_{3}^{-1}\nabla_{h}^{\perp}\dbtilde{\vec{W}}_{\veps ,M}^{3}\qquad\mbox{ and }\qquad
\dbtilde{\omega}_{\veps ,M}^{3}\,:=\,\curlh \dbtilde{\vec{W}}_{\veps ,M}^{h}\,.
$$
Then we can write
\begin{equation*}
\left( \curl \dbtilde{\vec{W}}_{\veps ,M}\right)^{h}\,=\,\d_3 \dbtilde{\Phi}_{\veps ,M}^{h}\qquad \text{ and }\qquad
\left( \curl \dbtilde{\vec{W}}_{\veps ,M}\right)^{3}\,=\,\dbtilde{\omega}_{\veps ,M}^{3}\,.
\end{equation*}
In addition, from the momentum equation in \eqref{eq:approx wave}, where we take the mean-free part and then the $\curl$, we deduce the equations
\begin{equation} \label{eq:eq momentum term2}
\begin{cases}
\veps^{m}\d_t\dbtilde{\Phi}_{\veps ,M}^{h}-\veps^{m-1}\dbtilde{\vec{W}}_{\veps ,M}^{h}=\veps^m\left(\d_{3}^{-1}\curl\dbtilde{\vec G}_{\veps ,M} \right)^{h}\\[1ex]
\veps^{m}\d_t\dbtilde{\omega}_{\veps ,M}^{3}+\veps^{m-1}\divh\dbtilde{\vec{W}}_{\veps ,M}^{h}=\veps^m\,\curlh\dbtilde{\vec G}_{\veps ,M}^{h}\, .
\end{cases}
\end{equation}
Making use of the relations above and of Propositions \ref{p:prop approx} and \ref{p:prop dec}, we get
\begin{equation*}
\begin{split}
\curlh\dbtilde{\vec{W}}_{\veps ,M}^{h}\;\left(\dbtilde{\vec{W}}_{\veps ,M}^{h}\right)^{\perp}&=\dbtilde{\omega}_{\veps ,M}^{3}\left(\dbtilde{\vec{W}}_{\veps ,M}^{h}\right)^{\perp}\,=\,
\veps \d_t\!\left( \dbtilde{\Phi}_{\veps ,M}^{h}\right)^{\perp}\dbtilde{\omega}_{\veps ,M}^{3}-
\veps\dbtilde{\omega}_{\veps ,M}^{3}\left(\left(\d_{3}^{-1}\curl\dbtilde{\vec G}_{\veps ,M}\right)^{h}\right)^\perp  \\
&=-\veps \left( \dbtilde{\Phi}_{\veps ,M}^{h}\right)^{\perp}\d_t\dbtilde{\omega}_{\veps ,M}^{3}+\mc{R}_{\veps ,M}=
\left( \dbtilde{\Phi}_{\veps ,M}^{h}\right)^{\perp}\,\divh\dbtilde{\vec{W}}_{\veps ,M}^{h}+\mc{R}_{\veps ,M}\, .
\end{split}
\end{equation*}
Hence, including also the gradient term into the remainders, from \eqref{eq:T2} we arrive at 
\begin{align*}
\mc{T}_{\veps ,M}^{2}\,&=\,\langle \divh\dbtilde{\vec{W}}_{\veps ,M}^{h}\,\left(\dbtilde{\vec{W}}_{\veps ,M}^{h}+\left(\dbtilde{\Phi}_{\veps ,M}^{h}\right)^{\perp}\right) \rangle+\mc{R}_{\veps ,M} \\
&=\,\langle \div \dbtilde{\vec{W}}_{\veps ,M}\left(\dbtilde{\vec{W}}_{\veps ,M}^{h}+\left(\dbtilde{\Phi}_{\veps ,M}^{h}\right)^{\perp}\right) \rangle -
\langle \d_3 \dbtilde{\vec{W}}_{\veps ,M}^{3}\left(\dbtilde{\vec{W}}_{\veps ,M}^{h}+\left(\dbtilde{\Phi}_{\veps ,M}^{h}\right)^{\perp}\right) \rangle+\mc{R}_{\veps ,M}\, .
\end{align*}
The second term on the right-hand side of the last line is actually another remainder. Indeed, using the definition of the function $\dbtilde{\Phi}_{\veps ,M}^{h}$ and the fact
that the test function $\vec\psi$ does not depend on $x^3$, one has
\begin{equation*}
\begin{split}
\d_3 \dbtilde{\vec{W}}_{\veps ,M}^{3}\left(\dbtilde{\vec{W}}_{\veps ,M}^{h}+\left(\dbtilde{\Phi}_{\veps ,M}^{h}\right)^{\perp}\right)&=\d_3 \left(\dbtilde{\vec{W}}_{\veps ,M}^{3}\left(\dbtilde{\vec{W}}_{\veps ,M}^{h}+\left(\dbtilde{\Phi}_{\veps ,M}^{h}\right)^{\perp}\right)\right) - \dbtilde{\vec{W}}_{\veps ,M}^{3}\, \d_3\left(\dbtilde{\vec{W}}_{\veps ,M}^{h}+\left(\dbtilde{\Phi}_{\veps ,M}^{h}\right)^{\perp}\right)\\
&=\mc{R}_{\veps ,M}-\frac{1}{2}\nabla_{h}\left|\dbtilde{\vec{W}}_{\veps ,M}^{3}\right|^{2}=\mc{R}_{\veps ,M}\, .
\end{split}
\end{equation*}
As for the first term, instead, we use the first equation in \eqref{eq:approx wave} to obtain
\begin{equation*}
\begin{split}
\div \dbtilde{\vec{W}}_{\veps ,M}\left(\dbtilde{\vec{W}}_{\veps ,M}^{h}+\left(\dbtilde{\Phi}_{\veps ,M}^{h}\right)^{\perp}\right)&=-\frac{\veps^{m}}{\mc{A}} \d_t \dbtilde{\Lambda}_{\veps ,M}\left(\dbtilde{\vec{W}}_{\veps ,M}^{h}+\left(\dbtilde{\Phi}_{\veps ,M}^{h}\right)^{\perp}\right)+\mc{R}_{\veps ,M}\\
&=\frac{\veps^{m}}{\mc{A}} \dbtilde{\Lambda}_{\veps ,M}\, \d_t\left(\dbtilde{\vec{W}}_{\veps ,M}^{h}+\left(\dbtilde{\Phi}_{\veps ,M}^{h}\right)^{\perp}\right)+\mc{R}_{\veps ,M}\, .
\end{split}
\end{equation*}
Now, equations \eqref{eq:approx wave} and \eqref{eq:eq momentum term2} immediately yield that
\begin{equation*}
\frac{\veps^{m}}{\mc{A}} \dbtilde{\Lambda}_{\veps ,M}\, \d_t\left(\dbtilde{\vec{W}}_{\veps ,M}^{h}+\left(\dbtilde{\Phi}_{\veps ,M}^{h}\right)^{\perp}\right)=
\mc{R}_{\veps ,M}-\frac{1}{\mc{A}}\dbtilde{\Lambda}_{\veps ,M}\, \nabla_{h}\left(\dbtilde{\Lambda}_{\veps ,M}\right)=
\mc{R}_{\veps ,M}-\frac{1}{2\mc{A}}\nabla_{h}\left|\dbtilde{\Lambda}_{\veps ,M}\right|^{2}=\mc{R}_{\veps ,M}\,.
\end{equation*}

This relation finally implies that $\mc{T}_{\veps ,M}^{2}\,=\,\mc R_{\veps,M}$ is a remainder, in the sense of relation \eqref{eq:remainder}:
for any $T>0$ and any test-function $\vec \psi$ as in \eqref{eq:test-f},
one has the convergence
(at any $M\in\N$ fixed, when $\veps\ra0^+$)
\begin{equation} \label{eq:limit_T2}
\int_{0}^{T}\int_{\R^2}\mc{T}_{\veps ,M}^{2}\cdot\vec{\psi}^h\,\dx^h\dt\,\longrightarrow\,0\,.
\end{equation}

\begin{remark}\label{r:T1-T2}
Due to the presence of the term $\vec Y^2_\veps$ in \eqref{eq:wave_syst}, the choice $m\geq2$ is fundamental.
However, as soon as $F=0$, our analysis applies also in the case when $1<m<2$.
\end{remark}
 
\subsection{The limit dynamics} \label{ss:limit} 
With the convergences established in \eqref{conv:r} to \eqref{conv:u} and in Subsection \ref{ss:convergence}, we can pass to the limit in equation \eqref{weak-mom}.
Since all the integrals will be made on $\R^2$ (in view of the choice of the test functions in \eqref{eq:test-f} above), we can safely come back to the notation on $\Omega$ instead of $\wtilde\Omega$.

To begin with, we take a test-function $\vec\psi$ as in \eqref{eq:test-f}, specifically
$$
\vec{\psi}=\big(\nabla_{h}^{\perp}\phi,0\big)\,,\qquad\qquad\mbox{ with }\qquad \phi\in C_c^\infty\big([0,T[\,\times\R^2\big)\,,\quad \phi=\phi(t,x^h)\,.
$$
For such a $\vec\psi$, all the gradient terms vanish identically, as well as all the contributions
due to the vertical component of the equation. Hence, after using also \eqref{prF}, equation \eqref{weak-mom} becomes
\begin{align}
\int_0^T\!\!\!\int_{\Omega}  
& \left( -\vre \ue^h \cdot \partial_t \vec\psi^h -\vre \ue^h\otimes\ue^h  : \nabla_h \vec\psi^h
+ \frac{1}{\ep}\vre\big(\ue^{h}\big)^\perp\cdot\vec\psi^h\right)\, \dxdt \label{eq:weak_to_conv}\\
& =\int_0^T\!\!\!\int_{\Omega} 
\left(-\mbb{S}(\vtheta_\veps,\nabla_x\vec\ue): \nabla_x \vec\psi+\frac{1}{\veps^{2}}(\vrho_\veps -\widetilde{\vrho}_\veps)\, \nabla_x F\cdot \vec\psi\right)\,\dxdt+
\int_{\Omega}\vrez \uez  \cdot \vec\psi(0,\cdot)\dx\,. \nonumber
\end{align}
Making use of the uniform bounds of Subsection \ref{ss:unif-est}, we can pass to the limit in the $\d_t$ term, in the viscosity term and in the centrifugal force.
Moreover, our assumptions imply that $\vrho_{0,\veps}\vec{u}_{0,\veps}\rightharpoonup \vec{u}_0$ in $L_{\rm loc}^2$. 

Let us consider now the Coriolis term. We can write
\begin{align*}
\int_0^T\!\!\!\int_{\Omega}\frac{1}{\ep}\vre\big(\ue^{h}\big)^\perp\cdot\vec\psi^h\,&=\,\int_0^T\!\!\!\int_{\mbb{R}^2}\frac{1}{\ep}\langle\vre \ue^{h}\rangle \cdot \nabla_{h}\phi\,=\,
-\veps^{m-1}\int_0^T\!\!\!\int_{\mbb{R}^2}\langle R_\veps\rangle\, \d_t\phi\,-\,\veps^{m-1}\int_{\mbb{R}^2}\langle  R_{0,\veps}\rangle\, \phi(0,\cdot )\,, 
\end{align*}
which of course converges to $0$ when $\veps\ra0^+$. Notice that the second equality derives from the mass equation \eqref{weak-con}, tested against $\phi$: namely,
\begin{equation*}
-\veps^m\int_0^T\!\!\!\int_{\mbb{R}^2}\langle\frac{\vrho_\veps-1}{\veps^m}\rangle\, \d_t\phi\,-\,\int_0^T\!\!\!\int_{\mbb{R}^2}\langle \vrho_\veps \ue^h\rangle \cdot \nabla_{h}\phi\,=\,
\veps^m\int_{\mbb{R}^2}\langle\frac{\vrho_{0,\veps}-1}{\veps^m}\rangle\, \phi(0,\cdot )\,.
\end{equation*}

It remains to deal with the convective term $\vrho_\veps \ue^h \otimes \ue^h$. For this, we take  advantage of
Lemma \ref{lem:convterm} and relations \eqref{eq:limit_T1} and \eqref{eq:limit_T2}.
Next, we remark that, since $\vec U^h\in L^2_T(H_{\rm loc}^1)$ by \eqref{conv:u}, from \eqref{est:sobolev} we gather the strong convergence
$S_M(\chi_l\vec U^h)\longrightarrow\chi_l\vec{U}^{h}$ in $L_{T}^{2}(H^{s})$ for any $s<1$ and any $l>0$ fixed, in the limit for $M\rightarrow +\infty$.
Therefore,  in the term on the right-hand side of \eqref{eq:limit_T1}, we can perform equalities \eqref{eq:T1} backwards, and then pass to the limit also for $M\ra+\infty$.
Using that $\chi_l\equiv1$ on $\Supp\vec\psi$ by construction, we finally get the convergence (for $\veps\ra0^+$)
\begin{equation*}
\int_0^T\int_{\Omega} \vre \ue^h\otimes\ue^h  : \nabla_h \vec\psi^h\, \longrightarrow\, \int_0^T\int_{\R^2}\vec{U}^h\otimes\vec{U}^h  : \nabla_h \vec\psi^h\,.
\end{equation*}

In the end, letting $\varepsilon \rightarrow 0^+$ in \eqref{eq:weak_to_conv}, we may infer that 
\begin{align*}
&\int_0^T\!\!\!\int_{\R^2} \left(\vec{U}^{h}\cdot \d_{t}\vec\psi^h+\vec{U}^{h}\otimes \vec{U}^{h}:\nabla_{h}\vec\psi^h\right)\, \dx^h\dt\\
&\qquad\qquad= \int_0^T\!\!\!\int_{\R^2} \left(\mu(\oline\vartheta )\nabla_{h}\vec{U}^{h}:\nabla_{h}\vec\psi^h-\delta_2(m)\lan\varrho^{(1)}\ran\nabla_{h}F\cdot \vec\psi^h\right)\, \dx^h\dt-
\int_{\R^2}\lan\vec{u}_{0}^{h}\ran\cdot \vec\psi^h(0,\cdot)\dx^h\,,
\end{align*}
where $\delta_2(m)=1$ if $m=2$, $\delta_2(m)=0$ otherwise. At this point, Remark \ref{r:F-G} applied to the case $m=2$ yields the equality
$\d_\vrho p(1,\oline\vtheta)\,\nabla_h\lan\wtilde r\ran\,=\,\nabla_hF$. Therefore, keeping in mind that $R=\vrho^{(1)}+\wtilde r$, we get
$$
\lan\varrho^{(1)}\ran\nabla_{h}F\,=\,\lan R\ran\nabla_{h}F\,-\,\lan\wtilde r\ran\nabla_{h}F\,=\,\lan R\ran\nabla_{h}F\,-\,\frac{\d_\vrho p(1,\oline\vtheta)}{2}\,\nabla_h\left|\lan\wtilde r\ran\right|^2\,.
$$
Of course, the perfect gradient disappears from the weak formulation. Using this observation in the target momentum equation written above, we finally deduce \eqref{eq_lim_m:momentum}.
This completes the proof of Theorem \ref{th:m-geq-2}, in the case when $m\geq2$ and $F\neq0$.

When $m>1$ and $F=0$, most of the arguments above still apply. We refer to the next section for more details.

\section{Proof of the convergence in the case when $F=0$} \label{s:proof-1}
In the present section we prove the convergence result in the case $F=0$. For the sake of brevity, we focus on the case $m=1$, completing in this way the proof to Theorem \ref{th:m=1_F=0}.
The case $m>1$ follows by a similar analysis, using at the end the compensated compactness argument depicted in Subsection \ref{ss:convergence} (recall also Remark \ref{r:T1-T2} above).



\subsection{Analysis of the acoustic-Poincar\'e waves}\label{ss:unifbounds_1} 

We start by remarking that system \eqref{ceq} to \eqref{eeq} can be recasted in the form \eqref{eq:wave_syst}, with $m=1$: with the same notation introduced in Paragraph \ref{sss:wave-eq},
and after setting $X_\veps\,:=\,\div\vec{X}^1_\veps\,+\,X^2_\veps$ and $\vec Y_\veps\,:=\,\div\mbb{Y}^1_\veps\,+\,\vec{Y}^2_\veps\,+\,\nabla_x Y^3_\veps$,
we have
\begin{equation} \label{eq:wave_m=1}
\left\{\begin{array}{l}
       \veps\,\d_tZ_\veps\,+\,\mc{A}\,\div\vec{V}_\veps\,=\,\veps\,X_\veps \\[1ex]
       \veps\,\d_t\vec{V}_\veps\,+\,\nabla_x Z_\veps\,+\,\,\e_3\times \vec V_\veps\,=\,\veps\,\vec Y_\veps\,,\qquad\qquad\big(\vec{V}_\veps\cdot\vec n\big)_{|\d\Omega_\veps}\,=\,0\,,
       \end{array}
\right.
\end{equation}
where $\bigl(Z_\veps\bigr)_\veps$  and $\bigl(\vec V_\veps\bigr)_\veps$ are defined as in Paragraph \ref{sss:wave-eq}.
This system is supplemented with the initial datum $\big(Z_{0,\veps},\vec V_{0,\veps}\big)$, where these two functions are defined as in relation \eqref{def:wave-data} above.


\subsubsection{Uniform bounds and regularisation}

In the next lemma, we establish uniform bounds for $Z_\veps$ and $\vec V_\veps$. Its proof is an easy adaptation of the one given in Lemma \ref{l:S-W_bounds}, hence omitted. One has
to use the fact that, since $F=0$, all the bounds obtained in the previous sections hold now on the whole $\Omega_\veps$, with constants which are uniform in $\veps\in\,]0,1]$;
therefore, one can abstain from using the cut-off functions $\chi_l$.
\begin{lemma} \label{l:S-W_bounds_1}
For any $T>0$ and all $\ep \in \, ]0,1]$, we have
$$
\sup_{\veps\in\,]0,1]}\| Z_\veps\|_{L^\infty_T((L^2+L^{5/3}+L^1+\mc{M}^+)(\Omega_\veps))} \leq c\, ,\quad\quad
\sup_{\veps\in\,]0,1]}\| \vec V_\veps\|_{L^2_T((L^2+L^{30/23})(\Omega_\veps))} \leq c \, .
$$

\end{lemma}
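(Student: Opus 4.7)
The plan is to mimic verbatim the proof of Lemma \ref{l:S-W_bounds}, simply dropping every occurrence of the cut-off $\chi_l$ and replacing each local norm on $\mathbb{B}_l$ by a global norm on $\Omega_\veps$. The crucial observation, already made in Remark \ref{rmk:cut-off}, is that when $F=0$ Lemma \ref{l:target-rho_bound} and Proposition \ref{p:target-rho_bound} provide bounds on $\wtilde\vrho_\veps$ and $\wtilde r_\veps$ that are \emph{uniform in $x\in\Omega$}, and consequently all the uniform estimates from Paragraph \ref{sss:uniform} (in particular \eqref{est:rho_ess}, \eqref{est:rho_res}, \eqref{est:M_res-measure}, \eqref{est:e-s_res}, \eqref{est:u-H^1}) are valid with $\mathbb{B}_l$ replaced by $\Omega_\veps$ and with constants independent of $\veps$. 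Hence the spatial localisation used in the previous section becomes superfluous.

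For the bound on $\vec V_\veps=\vrho_\veps\vec u_\veps$, I would split
\[
\vec V_\veps\,=\,[\vrho_\veps]_{\rm ess}\,\vec u_\veps\,+\,[\vrho_\veps]_{\rm res}\,\vec u_\veps\,.
\]
On the essential set $[\vrho_\veps]_{\rm ess}$ is pointwise bounded, so the first summand lies in $L^2_T(L^2(\Omega_\veps))$ by \eqref{est:u-H^1}. For the second summand, \eqref{est:rho_res} gives $[\vrho_\veps]_{\rm res}\in L^\infty_T(L^{5/3}(\Omega_\veps))$ uniformly, and Sobolev embedding $W^{1,2}\hookrightarrow L^6$ together with \eqref{est:u-H^1} yields $\vec u_\veps\in L^2_T(L^6(\Omega_\veps))$ uniformly; Hölder's inequality with $1/p=3/5+1/6$ then places the product in $L^2_T(L^{30/23}(\Omega_\veps))$. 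This produces the claimed control of $\vec V_\veps$ in $L^2_T(L^2+L^{30/23})$.

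For $Z_\veps$, I would use the decomposition of its definition into three pieces. The entropy production contribution $\veps^{-m}\Sigma_\veps$ satisfies $\|\veps^{-2m}\Sigma_\veps\|_{L^\infty_T(\mathcal{M}^+)}\leq C\|\sigma_\veps\|_{\mathcal{M}^+}/\veps^{2m}\leq c$ by \eqref{est:sigma}. The term $\mc A\,\vrho^{(1)}_\veps$ is bounded in $L^\infty_T(L^2+L^{5/3}(\Omega_\veps))$ by the global version of \eqref{uni_varrho1}. For the middle piece $\vrho_\veps(s(\vrho_\veps,\vtheta_\veps)-s(\wtilde\vrho_\veps,\oline\vtheta))/\veps^m$ I would rewrite
\[
\frac{\vrho_\veps\,s(\vrho_\veps,\vtheta_\veps)-\wtilde\vrho_\veps\,s(\wtilde\vrho_\veps,\oline\vtheta)}{\veps^m}\,-\,\vrho^{(1)}_\veps\,s(\wtilde\vrho_\veps,\oline\vtheta)\,,
\]
where $s(\wtilde\vrho_\veps,\oline\vtheta)$ is uniformly bounded (because $\wtilde\vrho_\veps$ stays in a fixed compact subset of $]0,+\infty[$ by Lemma \ref{l:target-rho_pos} and Lemma \ref{l:target-rho_bound}), so the second summand inherits an $L^\infty_T(L^2+L^{5/3})$ control from \eqref{uni_varrho1}. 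The first summand I split into essential and residual parts: on the essential set a first order Taylor expansion around $(\wtilde\vrho_\veps,\oline\vtheta)$ combined with \eqref{est:rho_ess} gives an $L^\infty_T(L^2(\Omega_\veps))$ estimate, while the residual part is controlled in $L^\infty_T(L^1(\Omega_\veps))$ by \eqref{est:e-s_res}.

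I do not expect a genuine obstacle, since the whole point of the remark preceding the statement is precisely that, in the absence of centrifugal forcing, the previous localisation becomes unnecessary; the mildly delicate step is merely bookkeeping the four function spaces appearing in the sum space for $Z_\veps$, but each one corresponds to exactly one of the four contributions identified above (measure from $\Sigma_\veps$, $L^1$ from the residual entropy, $L^2$ from the essential part, $L^{5/3}$ from $\vrho^{(1)}_\veps$), so the decomposition matches the target norm term by term.
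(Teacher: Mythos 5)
Your proposal is correct and is exactly the argument the paper intends: the paper omits this proof, stating only that it is an easy adaptation of Lemma \ref{l:S-W_bounds} obtained by dropping the cut-off $\chi_l$ (justified by Remark \ref{rmk:cut-off}), and your writeup carries out that adaptation term by term, matching each summand of $Z_\veps$ and $\vec V_\veps$ with the corresponding global estimate from Subsection \ref{sss:uniform}.
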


\medbreak
Now, we state the analogous of Lemma \ref{l:source_bounds} for $m=1$ and $F=0$.
\begin{lemma} \label{l:source_bounds_1}
For $\veps\in\,]0,1]$, let us introduce the following spaces:
\begin{enumerate}[(i)]
 \item $\mc X^\veps_1\,:=\,L^2_{\rm loc}\Bigl(\R_+;\big(L^2+L^{1}+L^{3/2}+L^{30/23}+L^{30/29}\big)(\Omega_\veps)\Bigr)$;
\item $\mc X^\veps_2\,:=\,L^2_{\rm loc}\Bigl(\R_+;\big(L^2+L^1+L^{4/3}\big)(\Omega_\veps)\Bigr)$;
\item $\mc X^\veps_3\,:=\,L^\infty_{\rm loc}\Bigl(\R_+;\big(L^2+L^{5/3}\big)(\Omega_\veps)\Bigr)$;
\item $\mc X^\veps_4\,:=\,L^\infty_{\rm loc}\Bigl(\R_+;\big(L^2+L^{5/3}+L^1+\mc{M}^+\big)(\Omega_\veps)\Bigr)$.
\end{enumerate}

Then, one has the following uniform bound, for a constant $C>0$ independent of $\veps\in\,]0,1]$:
$$
\left\|\vec X^1_\veps\right\|_{\mc X_1^\veps}\,+\,\left\|X^2_\veps\right\|_{\mc X_1^\veps}\,+\,\left\|\mbb Y^1_\veps\right\|_{\mc X_2^\veps}\,+\,
\left\|\vec{Y}^2_\veps\right\|_{\mc X_3^\veps}\,+\,\left\|Y^3_\veps\right\|_{\mc X_4^\veps}\,\leq\,C\,.
$$
In particular, one has that the sequences $(X_\veps)_\veps$ and $(\vec Y_\veps)_\veps$, defined in system \eqref{eq:wave_m=1}, verify\footnote{For any $s\in\R$, we denote by $\lfloor s \rfloor$ the entire part of $s$,
i.e. the greatest integer smaller than or equal to $s$.}
$$
\left\|X_\veps\right\|_{L^2_T(H^{-\lfloor s \rfloor-1}(\Omega_\veps))}\,+\,\left\|\vec Y_\veps\right\|_{L^2_T(H^{-\lfloor s \rfloor-1}(\Omega_\veps))}\,\leq\,C
,$$
for all $s>5/2$ and for a constant $C>0$ independent of $\veps\in\,]0,1]$.
\end{lemma}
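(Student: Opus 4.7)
The plan is to adapt the proof of Lemma \ref{l:source_bounds} to the present setting where $m=1$ and $F=0$. The crucial simplification, already pointed out in Remark \ref{rmk:cut-off}, is that when $F=0$ all the uniform estimates of Subsection \ref{ss:unif-est} hold directly on the whole $\Omega_\veps$, with constants independent of $\veps$; in particular, there is no need for the localisation $\chi_l$, so one can essentially read through the computations of Lemma \ref{l:source_bounds} by setting $\chi_l\equiv 1$.

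For $\vec X^1_\veps$, I would place the heat-flux contribution $\kappa(\vtheta_\veps)\nabla_x\vtheta_\veps/(\veps\,\vtheta_\veps)$ in $L^2_T(L^2+L^1)$ via \eqref{est:D-theta} on the essential set and \eqref{est:Dtheta_res} on the residual set, while the entropy-velocity term $\vrho_\veps\,(s(\vrho_\veps,\vtheta_\veps)-s(\wtilde\vrho_\veps,\oline\vtheta))\,\vec u_\veps/\veps$ is split as in \eqref{eq:dec_rho-s} and bounded in $L^2_T(L^{3/2}+L^{30/23}+L^{30/29})$ by combining \eqref{est:rho-s_res}, \eqref{est:u-H^1} and Sobolev embeddings. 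The bound on $X^2_\veps$ relies on the pointwise estimate
$$
\frac{|\nabla_x \wtilde\vrho_\veps|}{\veps}\,=\,\frac{\wtilde\vrho_\veps}{\d_\vrho p(\wtilde\vrho_\veps,\oline\vtheta)}\,|\nabla_x G|\,\leq\, C,
$$
which follows from \eqref{prF} with $F=0$ together with Lemma \ref{l:target-rho_bound} and the assumption $G\in W^{1,\infty}$. The control on $\mbb Y^1_\veps$ is a direct consequence of \eqref{est:momentum} and \eqref{est:Du}, while $\vec Y^2_\veps$ reduces (since $F=0$) to $\vrho^{(1)}_\veps\,\nabla_x G$ and is placed in $L^\infty_T(L^2+L^{5/3})$ by \eqref{uni_varrho1}.

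The main technical point, exactly as in the proof of Lemma \ref{l:source_bounds}, is the analysis of $Y^3_\veps$: I would perform second-order Taylor expansions of $s(\vrho_\veps,\vtheta_\veps)-s(\wtilde\vrho_\veps,\oline\vtheta)$ and $p(\vrho_\veps,\vtheta_\veps)-p(\wtilde\vrho_\veps,\oline\vtheta)$ around the reference state $(1,\oline\vtheta)$, further expanding the $\wtilde\vrho_\veps$-contributions to second order around $1$. The first-order terms then cancel exactly by virtue of the algebraic identities \eqref{relnum} defining $\mc A$ and $\mc B$, and one is left with Hessian-type quadratic terms in $(\vrho_\veps-1,\vtheta_\veps-\oline\vtheta)/\veps$, together with $(\wtilde\vrho_\veps-1)^2/\veps^2$, the entropy-lifting term $\mc B\,\Sigma_\veps/\veps^2$, and a cubic remainder of the form $\mc B\,R_\veps\,(s(\vrho_\veps,\vtheta_\veps)-s(\wtilde\vrho_\veps,\oline\vtheta))/\veps$. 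A decomposition into essential and residual parts, combined with \eqref{est:rho_ess}, \eqref{est:rho_res}, \eqref{est:sigma}, \eqref{est:M_res-measure}, the global $L^\infty(\Omega)$ bound on $\wtilde r_\veps$ provided by Proposition \ref{p:target-rho_bound} (which holds uniformly in $\Omega$ thanks to $F=0$), and standard H\"older inequalities then gives $Y^3_\veps\in L^\infty_T(L^2+L^{5/3}+L^1+\mc M^+)$ uniformly in $\veps$.

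For the final assertion on $X_\veps=\div\vec X^1_\veps+X^2_\veps$ and $\vec Y_\veps=\div\mbb Y^1_\veps+\vec Y^2_\veps+\nabla_x Y^3_\veps$, the previous bounds combine with the continuous embeddings $L^p(\Omega_\veps)\hookrightarrow H^{-\lfloor s\rfloor}(\Omega_\veps)$ and $\mc M^+(\Omega_\veps)\hookrightarrow H^{-\lfloor s\rfloor}(\Omega_\veps)$, valid for every $s>5/2$ because $\lfloor s\rfloor \geq 2$ and in three space dimensions $H^{\lfloor s\rfloor}\hookrightarrow C^0(\overline\Omega_\veps)$. One further derivative is then absorbed by the $\div$ and $\nabla_x$ operators appearing in the definitions of $X_\veps$ and $\vec Y_\veps$, yielding the claimed $L^2_T(H^{-\lfloor s\rfloor -1}(\Omega_\veps))$ bound. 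The hardest step is unquestionably the algebraic cancellation in the $Y^3_\veps$ analysis; all remaining estimates are a careful but routine adaptation of the bookkeeping carried out in Lemma \ref{l:source_bounds}.
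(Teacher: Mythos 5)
Your proof is correct and follows the same approach as the paper, which simply states that the proof follows the main lines of Lemma~\ref{l:source_bounds} (with $\chi_l\equiv 1$, as justified by Remark~\ref{rmk:cut-off}) and highlights the slightly improved control on $\vec Y^2_\veps=\vrho^{(1)}_\veps\nabla_xG$ in $\mc X^\veps_3$. You spell out the Taylor-expansion cancellation for $Y^3_\veps$ and the Sobolev-dual embeddings in more detail than the paper does, but the decomposition, the key estimates invoked, and the final embedding argument all coincide with the authors'.
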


\begin{proof}
The proof follows the main lines of the proof of Lemma \ref{l:source_bounds}. Here, we limit ourselves to point out that we have a slightly better control on
$\vec Y^2_\veps\,=\,\vrho_\veps^{(1)}\,\nabla_{x} G$, whose boundedness in $\mc X^\veps_3$ follows from \eqref{assFG} and the estimate analogous to \eqref{uni_varrho1} for the case $F=0$.
\qed
\end{proof}

 \medbreak
The next step consists in regularising all the terms appearing in \eqref{eq:wave_m=1}. Here we have to pay attention: since the domains $\Omega_\veps$ are bounded,
we cannot use the Littlewood-Paley operators $S_M$ directly.
Rather than multiplying by a cut-off function $\chi_l$ as done in the previous section (a procedure which would create more complicated forcing terms in the wave system), we use here the arguments
of Chapter 8 of \cite{F-N} (see also \cite{F-Scho}, \cite{WK}), based on finite propagation speed properties for \eqref{eq:wave_m=1}.

First of all, similarly to Paragraph \ref{sss:w-reg} above, we extend our domains $\Omega_\veps$ and $\Omega$ by periodicity in the third variable and denote
$$
\wtilde\Omega_\veps\,:=\,{B}_{L_\veps} (0) \times \mbb{T}^1\qquad\qquad\mbox{ and }\qquad\qquad
\wtilde\Omega\,:=\,\R^2 \times \mbb{T}^1\,.
$$
Thanks to the complete slip boundary conditions \eqref{bc1-2} and \eqref{bc3}, system \eqref{ceq} to \eqref{eeq} can be equivalently reformulated in $\wtilde\Omega_\veps$. 
Analogously, the wave system \eqref{eq:wave_m=1} can be recasted in $\wtilde\Omega_\veps$ in a completely equivalent way. From now on, we will focus on
the equations satisfied on the domain $\wtilde\Omega_\veps$.

Next, we fix a smooth radially decreasing function $\omega\in{C}^\infty_c(\mbb{R}^3)$, such that $0\leq\omega\leq1$, $\omega(x)=0$ for $|x|\geq1$ and
$\int_{\R^3}\omega(x) \dx=1$. Next, we define the mollifying kernel $\big(\omega_M\big)_{M\in\N}$ by the formula
$$
\omega_M(x)\,:=\,2^{3M}\,\,\omega\!\left(2^Mx\right)\qquad\qquad \text{for any}\;\,M\in\N\,\; \text{and any}\;\,x\in\R^3\,.
$$
Then, for any tempered distribution $\mf S=\mf S(t,x)$ on $\R_+\times\wtilde\Omega$ and any $M\in\N$, we define
$$
\mf S_M\,:=\,\omega_M\,*\,\mf S\,,
$$
where the convolution is taken only with respect to the space variables.
Applying the mollifier $\omega_M$ to \eqref{eq:wave_m=1}, we deduce that $Z_{\veps,M}\,:=\,\omega_M*Z_\veps$ and $\vec V_{\veps,M}\,:=\,\omega_M*\vec V_\veps$ satisfy the regularised
wave system
\begin{equation} \label{eq:reg-wave}
\left\{\begin{array}{l}
       \veps\,\d_tZ_{\veps,M}\,+\,\mc{A}\,\div\vec{V}_{\veps,M}\,=\,\veps\,X_{\veps,M} \\[1ex]
       \veps\,\d_t\vec{V}_{\veps,M}\,+\,\nabla_x Z_{\veps,M}\,+\,\,\e_3\times \vec V_{\veps,M}\,=\,\veps\,\vec Y_{\veps,M}
       \end{array}
\right.
\end{equation}
in the domain $\R_+\times\wtilde\Omega_{\veps,M}$, where we have defined
\begin{equation} \label{def:O_e-M}
\wtilde\Omega_{\veps,M}\,:=\,\left\{x\in\wtilde\Omega_\veps\;:\quad{\rm dist}(x,\d\wtilde\Omega_\veps)\geq2^{-M} \right\}\,.
\end{equation}
Since the mollification commutes with standard derivatives, we notice that
$X_{\veps,M}\,=\,\div\vec{X}^1_{\veps,M}\,+\,X^2_{\veps,M}$ and $\vec Y_{\veps,M}\,=\,\div\mbb{Y}^1_{\veps,M}\,+\,\vec{Y}^2_{\veps,M}\,+\,\nabla_x Y^3_{\veps,M}$. 
Moreover, system \eqref{eq:reg-wave} is supplemented with the initial data
$$
Z_{0,\veps,M}\,:=\,\omega_M*Z_{0,\veps}\qquad\qquad\mbox{ and }\qquad\qquad \vec V_{0,\veps,M}\,:=\,\omega_M*\vec V_{0,\veps}\,.
$$

In accordance with Lemmas \ref{l:S-W_bounds_1} and \ref{l:source_bounds_1}, by standard properties of mollifying kernels (see Theorem \ref{app:thm_mollifiers}), we get the following properties: for all
$k\in\N$, one has
\begin{align*}
\left\|Z_{\veps,M}\right\|_{L^\infty_T(H^k(\wtilde\Omega_{\veps,M}))}\,+\,\left\|\vec V_{\veps,M}\right\|_{L^2_T(H^k(\wtilde\Omega_{\veps,M}))}\,\leq\,C(k,M) \\
\left\|X_{\veps,M}\right\|_{L^2_T(H^k(\wtilde\Omega_{\veps,M}))}\,+\,\left\|\vec Y_{\veps,M}\right\|_{L^2_T(H^k(\wtilde\Omega_{\veps,M}))}\,\leq\,C(k,M)\,,
\end{align*}
for some positive constants $C(k,M)$, only depending on the fixed $k$ and $M$. Of course, the constants blow up when $M\ra+\infty$, but they are uniform for $\veps\in\,]0,1]$.

We have the following statement, analogous to Proposition \ref{p:prop dec} above. Its proof is also similar, hence omitted. In addition, we notice that the strong convergence follows
from standard properties of the mollifying kernel.
\begin{proposition} \label{p:dec_1}
For any $M>0$ and any $\veps\in\,]0,1]$, we have
\begin{equation*}
\vec{V}_{\veps ,M}\,=\,
\veps\,\vec{v}_{\veps ,M}^{(1)}\,+\,\vec{v}_{\veps ,M}^{(2)}\,,
\end{equation*}
together with the following bounds: for any $T>0$, any compact set $K\subset\wtilde\Omega$ and any $s\in\N$, one has 
(for $\veps>0$ small enough, depending only on $K$) 
\begin{align*}
\left\|\vec{v}_{\veps ,M}^{(1)}\right\|_{L^{2}([0,T];H^{s}(K))}\,\leq\,C(K,s,M) \qquad\qquad\mbox{ and }\qquad\qquad
\left\|\vec{v}_{\veps ,M}^{(2)}\right\|_{L^{2}([0,T];H^{1}(K))}\,\leq\,C(K)\,,
\end{align*}
for suitable positive constants $C(K,s,M)$ and $C(K)$ depending only on the quantities in the brackets, but uniform with respect to $\veps\in\,]0,1]$.
\end{proposition}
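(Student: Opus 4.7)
The plan is to mimic the proof of Proposition \ref{p:prop dec}, adapting the Littlewood--Paley construction used there to the present mollification setting, and exploiting crucially the fact that, since $F=0$, all the uniform estimates of Subsection \ref{ss:unif-est} can be stated on the whole $\wtilde\Omega_\veps$ (recall Remark \ref{rmk:cut-off}), with constants that are independent of $\veps\in\,]0,1]$.

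First I would introduce the explicit decomposition. Recalling that $\vec{V}_\veps=\vrho_\veps\vec u_\veps$ and writing $\vrho_\veps=1+\veps R_\veps$ according to \eqref{def_deltarho} with $m=1$, it is natural to set
\[
\vec v^{(1)}_{\veps,M}\,:=\,\omega_M*\bigl(R_\veps\,\vec u_\veps\bigr)\qquad\mbox{ and }\qquad \vec v^{(2)}_{\veps,M}\,:=\,\omega_M*\vec u_\veps\,,
\]
so that, by linearity of the convolution, one immediately recovers $\vec V_{\veps,M}\,=\,\veps\,\vec v^{(1)}_{\veps,M}\,+\,\vec v^{(2)}_{\veps,M}$ on $\wtilde\Omega_{\veps,M}$ as defined in \eqref{def:O_e-M}. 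The fact that the claimed bounds are stated on a compact $K\subset\wtilde\Omega$ is then harmless: since the $\wtilde\Omega_\veps$ invade $\wtilde\Omega$ with rate proportional to $\veps^{-1-\delta}$, for any such $K$ and any fixed $M\in\N$ we have $K\subset\wtilde\Omega_{\veps,M}$ as soon as $\veps$ is small enough (depending on $K$ and $M$), so that both $\vec v^{(j)}_{\veps,M}$ are well-defined smooth functions on a neighbourhood of $K$.

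Next I would establish the two bounds. For $\vec v^{(2)}_{\veps,M}$, estimate \eqref{est:u-H^1} (which, as already mentioned, holds on the whole of $\wtilde\Omega_\veps$ when $F=0$) yields $\bigl(\vec u_\veps\bigr)_\veps\subset L^2_T(H^1_{\rm loc}(\wtilde\Omega))$; by standard properties of mollifiers (see Theorem \ref{app:thm_mollifiers}), convolution against $\omega_M$ does not increase the $H^1$-norm, so $\|\vec v^{(2)}_{\veps,M}\|_{L^2_T(H^1(K))}\leq C(K)$ uniformly in $\veps$ and in $M$. For $\vec v^{(1)}_{\veps,M}$, I would combine the uniform bound $\bigl(R_\veps\bigr)_\veps\subset L^\infty_T(L^2+L^{5/3})$ coming from \eqref{uni_varrho1} with the Sobolev embedding $H^1_{\rm loc}\hookrightarrow L^6_{\rm loc}$ applied to $\vec u_\veps$, to obtain via H\"older's inequality uniform bounds for the product $R_\veps\,\vec u_\veps$ in some mixed Lebesgue space $L^2_T(L^p_{\rm loc})$ with $p>1$. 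Applying then the Bernstein-type estimates for mollifiers $\|\omega_M*f\|_{H^s}\leq C(s,M)\,\|f\|_{L^p}$ (which costs a factor $2^{M(s+3(1/p-1/2))}$), we gain arbitrary Sobolev regularity on $K$, with a constant $C(K,s,M)$ blowing up as $M\to+\infty$ but uniform in $\veps\in\,]0,1]$.

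There is essentially no serious obstacle here: the argument is a direct, slightly simpler transposition of the one given in Proposition \ref{p:prop dec}, the only mild care being to keep track of the fact that the mollification is spatial only and to ensure that $K\subset\wtilde\Omega_{\veps,M}$ for all small $\veps$, which is guaranteed by the invading domains condition \eqref{dom}. The uniformity in $\veps$ of all constants ultimately stems from the fact that, in the present regime $F=0$, the bounds of Subsection \ref{ss:unif-est} are global on $\wtilde\Omega_\veps$ and independent of the small parameter, so that no further localisation by cut-off functions is required.
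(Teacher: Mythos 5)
Your proposal is correct and follows essentially the same route the paper intends: the paper explicitly omits the proof of Proposition~\ref{p:dec_1}, saying it is ``analogous to Proposition~\ref{p:prop dec} above'', and your decomposition $\vec v^{(1)}_{\veps,M}=\omega_M*(R_\veps\,\vec u_\veps)$, $\vec v^{(2)}_{\veps,M}=\omega_M*\vec u_\veps$ is the exact translation of the Littlewood--Paley splitting $\vec t^1_{\veps,M}$, $\vec t^2_{\veps,M}$ to the mollifier setting, with the cut-off $\chi_l$ dropped because $F=0$ makes the estimates of Subsection~\ref{ss:unif-est} global. The bounds are obtained from the same ingredients ($L^2_T(H^1)$ control on $\vec u_\veps$, $L^\infty_T(L^2+L^{5/3})$ control on $R_\veps$, Young-type convolution estimates for $\omega_M$). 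One small imprecision: you say the smallness threshold on $\veps$ depends on ``$K$ and $M$'', but since $\wtilde\Omega_{\veps,M}$ is increasing in $M$ (the excluded boundary layer has width $2^{-M}\leq 1$), requiring ${\rm dist}(K,\partial\wtilde\Omega_\veps)\geq 1$ gives a threshold depending only on $K$, as the statement claims.
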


In particular, we deduce the following fact: for any $T>0$ and any compact $K\subset\wtilde\Omega$, there exist $\veps_K>0$ and $M_K\in\N$ such that, for all $\veps\in\,]0,\veps_K]$ and all $M\geq M_K$,
there are positive constants $C(K)$ and $C(K,M)$ for which
\begin{equation} \label{est:V_e-M_conv}
\left\|\vec V_\veps\,-\,\vec V_{\veps,M}\right\|_{L^2_T(L^2(K))}\,\leq\,C(K,M)\,\veps\,+\,C(K)\,2^{-M}\,.
\end{equation}

\subsubsection{Finite propagation speed and consequences}
In this paragraph we show that, for the scopes of our study, we can safely assume that system \eqref{eq:reg-wave} is set in the whole $\wtilde\Omega$ and it is supplemented
with compactly supported initial data and external forces.

Take smooth initial data $\mc Z_0$ and $\vec{\mc V_0}$ and forces $\mf X$ and $\vec{\mc Y}$. Consider, in $\R_+\times\wtilde\Omega$, the wave system
\begin{equation} \label{eq:wave_Omega}
\left\{\begin{array}{l}
       \veps\,\d_t\mc Z\,+\,\mc{A}\,\div\vec{\mc V}\,=\,\veps\,\mf X \\[1ex]
       \veps\,\d_t\vec{\mc V}\,+\,\nabla_x\mc Z\,+\,\,\e_3\times \vec{\mc V}\,=\,\veps\,\vec{\mc Y}\,, 
       \end{array}
\right.
\end{equation}
supplemented with initial data  $\mc Z_{|t=0}\,=\,\mc Z_0$ and $\vec{\mc V}_{|t=0}\,=\,\vec{\mc V_0}$.

System \eqref{eq:wave_Omega} is a symmetrizable (in the sense of Friedrichs) first-order
hyperbolic system with a skew-symmetric $0$-th order term. Therefore, 
classical arguments based on energy methods (see e.g. Chapter 3 of \cite{M-2008} and Chapter 7 of \cite{Ali}) allow to establish the properties of finite propagation speed and domain of dependence for solutions to \eqref{eq:wave_Omega}.

Namely, set $\lambda\,:=\,\sqrt{\mc A}/\veps$ to be the propagation speed of acoustic-Poincar\'e waves.
Let $\mf B$ be a cylinder 
included in $\wtilde\Omega$.
Then one has the following two properties.
\begin{enumerate}[(i)]
 \item \emph{Domain of dependence}: assume that 
$$
\Supp\mc Z_0\,,\;\Supp\vec{\mc V_0}\,\subset\,\mf B\,,\qquad\qquad\qquad \Supp\mf X(t)\,,\;\Supp\vec{\mc Y}(t)\,\subset\,\mf B\quad\mbox{ for a.a. }t\in[0,T]\,;
$$
then the corresponding solution $\big(\mc Z,\vec{\mc V}\big)$ to \eqref{eq:wave_Omega} is \emph{identically zero} outside the cone
$$
\Big\{(t,x)\in\,]0,T[\,\times\,\wtilde\Omega\;:\quad {\rm dist}\big(x,\mf B\big)\,<\,\lambda\,t\Big\}\,.
$$
\item \emph{Finite propagation speed}: define the set
$$
\mf B_{\lambda T}\,:=\,\Big\{x\in\wtilde\Omega\;:\quad {\rm dist}\big(x,\mf B\big)\,<\,\lambda\,T\Big\}
$$
and assume that
$$
\Supp\mc Z_0\,,\;\Supp\vec{\mc V_0}\,\subset\,\mf B_{\lambda T}\,,\qquad\qquad\qquad \Supp\mf X(t)\,,\;\Supp\vec{\mc Y}(t)\,\subset\,\mf B_{\lambda T}\quad\mbox{ for a.a. }t\in[0,T]\,;
$$
then the solution $\big(\mc Z,\vec{\mc V}\big)$ is \emph{uniquely determined} by the data inside the cone
$$
\mc C_{\lambda T}\,:=\,\Big\{(t,x)\in\,]0,T[\,\times\mf B_{\lambda T}\;:\quad {\rm dist}\big(x,\d\mf B_{\lambda T}\big)\,>\,\lambda\,t\Big\}\,,
$$
and in particular in the space-time cylinder $\,]0,T[\,\times\,\mf B$.
\end{enumerate}

\medbreak
Next, fix any test-function $\vec\psi\in C^\infty_c\big(\R_+\times\wtilde\Omega;\R^3\big)$, and let $T>0$ and the compact set $K\subset\wtilde\Omega$ be such that $\Supp\vec\psi\subset[0,T[\,\times K$.
Take a cylindrical neighborhood $\mf B$ of $K$ in $\wtilde\Omega$. 
It goes without saying that there exist an $\veps_0=\veps_0(\mf B)\in\,]0,1]$ and a $M_0=M_0(\mf B)\in\N$ such that
\begin{equation}\label{cyl-neigh}
\oline{\mf B}\,\subset\subset\,\wtilde\Omega_{\veps,M}\qquad\qquad \mbox{ for all }\qquad 0<\veps\leq\veps_0\quad\mbox{ and }\quad M\geq M_0\,,
\end{equation}
where the set $\wtilde\Omega_{\veps,M}$ has been defined in \eqref{def:O_e-M} above. Take now a cut-off function $\mf h\in C^\infty_c(\wtilde\Omega)$ such that
$\mf h\equiv1$ on $\mf B$ (and hence on $K$), and solve problem \eqref{eq:wave_Omega} with compactly supported data
$$
\mc Z_0\,=\,\mf h\,Z_{0,\veps,M}\,,\qquad \vec{\mc V_0}\,=\,\mf h\,\vec V_{0,\veps,M}\,,\qquad
\mf X\,=\,\mf h\,X_{\veps,M}\,,\qquad \vec{\mc Y}\,=\,\mf h\,\vec{Y}_{\veps,M}\,.
$$
We point out that all the data are now localised around the compact set $K$.
Owing to assumption \eqref{dom}, the domains $\wtilde{\Omega}_{\veps,M}$ are expanding at speed proportional to $\veps^{-(1+\de)}$, whereas, in view of finite propagation speed,
the support of the solution is expanding at speed proportional to $\veps^{-1}$ (keep in mind also Remark \ref{r:speed-waves}). Thus, thanks to the inclusion \eqref{cyl-neigh},
the previous discussion implies that, up to take a smaller $\veps_0$, for any $\veps\leq\veps_0$ the corresponding solution
$\big(\mc Z,\vec{\mc V}\big)$ of \eqref{eq:wave_Omega}
has support inside a cylinder $\wtilde\B_L\,:=\,B_L(0)\times\T\subset\wtilde\Omega_\veps$, for some $L=L(T,K,\lambda)>0$, and it must coincide with the solution $\big(Z_{\veps,M},\vec V_{\veps,M}\big)$
of \eqref{eq:reg-wave} on the set $\,]0,T[\,\times\,\mf B$, for all $0<\veps\leq \veps_0$ and all $M\geq M_0$.
In particular, for all $0<\veps\leq \veps_0$ and all $M\geq M_0$ we have
$$
\mc Z\,\equiv\,Z_{\veps,M}\quad\mbox{ and }\quad \vec{\mc V}\,\equiv\,\vec V_{\veps,M}\qquad\qquad\mbox{ on }\qquad \Supp\vec\psi\,.
$$

The previous argument shows that, without loss of generality, we can assume that the regularised wave system \eqref{eq:reg-wave} is verified on the whole $\wtilde\Omega$,
with compactly supported initial data and forces, and with solutions supported on some cylinder $\wtilde\B_L$.
In particular, we can safely work with system \eqref{eq:reg-wave} and its smooth solutions $\big(Z_{\veps,M},\vec V_{\veps,M}\big)$ in the computations below.

\subsection{Convergence of the convective term} \label{ss:convergence_1}
Here we tackle the convergence of the convective term, employing again a compensated compactness argument. The first step is to reduce the study to the case of smooth vector fields $\vec{V}_{\veps ,M}$.
Arguing as in Lemma \ref{lem:convterm}, and using Proposition \ref{p:dec_1} and property \eqref{est:V_e-M_conv}, one can easily prove the following approximation result.
Again, the proof is omitted.

\begin{lemma} \label{lem:convterm_1}
Let $T>0$. For any $\vec{\psi}\in C_c^\infty\bigl([0,T[\,\times\wtilde\Omega;\R^3\bigr)$, we have 
\begin{equation*}
\lim_{M\rightarrow +\infty} \limsup_{\veps \rightarrow 0^+}\left|\int_{0}^{T}\int_{\wtilde\Omega} \vrho_\veps\,\vec{u}_\veps\otimes \vec{u}_\veps: \nabla_{x}\vec{\psi}\, \dxdt-
\int_{0}^{T}\int_{\wtilde\Omega} \vec{V}_{\veps ,M}\otimes \vec{V}_{\veps,M}: \nabla_{x}\vec{\psi}\,\ dxdt\right|=0\, .
\end{equation*}
\end{lemma}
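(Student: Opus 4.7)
The plan is to mimic the strategy of Lemma \ref{lem:convterm}, but replacing the Littlewood--Paley cut-off argument by the mollifier-based approach tailored to the $m=1$, $F=0$ setting. Fix a test-function $\vec\psi$ with spatial support in a compact set $K\subset\wtilde\Omega$. By the finite-propagation-speed discussion preceding the statement, for all sufficiently small $\veps$ and large $M$ we may treat $\vec V_{\veps,M}=\omega_M*\vec V_\veps$ as a smooth function on a neighborhood of $K$ inside $\wtilde\Omega$, and $\vec V_\veps$, $\vec V_{\veps,M}$ enjoy the approximation bound \eqref{est:V_e-M_conv} together with the structural decomposition of Proposition \ref{p:dec_1}.

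First, I would use the scaling $m=1$ to write $\vrho_\veps=1+\veps\,R_\veps$, so that $\vec V_\veps=\vrho_\veps\vec u_\veps=\vec u_\veps+\veps\,R_\veps\vec u_\veps$. Then a direct algebraic manipulation gives
\begin{align*}
\vrho_\veps\vec u_\veps\otimes\vec u_\veps\,-\,\vec V_{\veps,M}\otimes\vec V_{\veps,M}\,&=\,(\vec V_\veps-\vec V_{\veps,M})\otimes\vec u_\veps\,+\,\vec V_{\veps,M}\otimes(\vec V_\veps-\vec V_{\veps,M})\,-\,\veps\,\vec V_{\veps,M}\otimes R_\veps\vec u_\veps\,.
\end{align*}
It then suffices to prove that each of the three pieces, integrated against $\nabla_x\vec\psi$ on $[0,T]\times K$, vanishes in the iterated limit $\limsup_{\veps\to 0^+}$ followed by $\lim_{M\to+\infty}$.

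For the last piece, the uniform $L^\infty_T(L^{5/3}(K))$ bound on $R_\veps$ from \eqref{uni_varrho1}, the uniform $L^2_T(L^6(K))$ bound on $\vec u_\veps$ from \eqref{est:u-H^1} and Sobolev embedding, together with the smoothness in space of $\vec V_{\veps,M}$ at each fixed $M$, show that the corresponding contribution is $\mathcal{O}_M(\veps)$, hence it dies as $\veps\to0^+$ for any fixed $M$. The two remaining pieces are handled by a plain Hölder estimate in $L^2_T(L^2(K))$. For the first, I would bound $\vec u_\veps$ uniformly in $L^2_T(L^2(K))$ by \eqref{est:u-H^1} and then control $\|\vec V_\veps-\vec V_{\veps,M}\|_{L^2_T(L^2(K))}$ by \eqref{est:V_e-M_conv}; the latter vanishes first in $\veps$ and then in $M$. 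For the second, I would split $\vec V_{\veps,M}=\veps\,\vec v^{(1)}_{\veps,M}+\vec v^{(2)}_{\veps,M}$ via Proposition \ref{p:dec_1}: the part containing $\vec v^{(1)}_{\veps,M}$ carries an explicit factor of $\veps$, while $\vec v^{(2)}_{\veps,M}$ is bounded in $L^2_T(L^2(K))$ \emph{uniformly} in both $\veps$ and $M$, and can thus again be paired with $\vec V_\veps-\vec V_{\veps,M}$ by means of \eqref{est:V_e-M_conv}.

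The main obstacle, as is typical in this kind of compensated-compactness reduction, is to ensure that all approximation norms are controlled on the fixed compact $K$ \emph{uniformly} in both $\veps$ and $M$, and that the decomposition of Proposition \ref{p:dec_1} produces a piece whose $L^2_T(L^2)$ norm does not blow up with $M$. The finite-propagation-speed argument takes care of the first point, allowing us to bypass the unboundedness and the $\veps$-dependence of $\wtilde\Omega_\veps$, while the structural estimate $\|\vec v^{(2)}_{\veps,M}\|_{L^2_T(H^1(K))}\leq C(K)$ from Proposition \ref{p:dec_1} takes care of the second; once these two ingredients are in place, the remainder of the proof reduces to the clean Hölder bookkeeping outlined above.
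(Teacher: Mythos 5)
Your proof is correct and takes essentially the same route the paper intends for this (omitted) proof: it rests on Proposition~\ref{p:dec_1}, the approximation estimate~\eqref{est:V_e-M_conv}, and the uniform bounds on $\big(R_\veps\big)_\veps$ and $\big(\vec u_\veps\big)_\veps$, exactly the ingredients the paper points to. The only cosmetic difference is that you package the multi-step ``peeling off of remainders'' used in Lemma~\ref{lem:convterm} into a single algebraic identity and then apply Hölder to each of its three pieces, which is a legitimate streamlining rather than a different argument.
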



\medbreak
Assume now $\vec\psi\in C_c^\infty\big([0,T[\,\times\wtilde\Omega;\R^3\big)$ such that $\div\vec\psi=0$ and $\d_3\vec\psi=0$.
Thanks to the previous lemma, it is enough to pass to the limit in the smooth term 
\begin{align*}
-\int_{0}^{T}\int_{\wtilde\Omega} \vec{V}_{\veps ,M}\otimes \vec{V}_{\veps ,M}: \nabla_{x}\vec{\psi}\,&=\,
\int_{0}^{T}\int_{\wtilde\Omega}\div\left(\vec{V}_{\veps ,M}\otimes \vec{V}_{\veps ,M}\right) \cdot \vec{\psi}\,=\,
\int_{0}^{T}\int_{\R^2} \left(\mc{T}_{\veps ,M}^{1}+\mc{T}_{\veps, M}^{2}\right)\cdot\vec{\psi}^h\,,
\end{align*}
where, for simplicity, we agree that the torus $\T$ has been normalised so that its Lebesgue measure is equal to $1$ and, analogously to \eqref{def:T1-2}, we have introduced the quantities
$$ 
\mc T^1_{\veps,M}\,:=\, \divh\left(\langle \vec{V}_{\veps ,M}^{h}\rangle\otimes \langle \vec{V}_{\veps ,M}^{h}\rangle\right)\qquad \mbox{ and }\qquad
\mc T^2_{\veps,M}\,:=\, \divh\left(\langle \dbtilde{\vec{V}}_{\veps ,M}^{h}\otimes \dbtilde{\vec{V}}_{\veps ,M}^{h}\rangle \right)\,.
$$ 

We notice that the analysis of $\mc{T}_{\veps ,M}^{2}$ is similar to the one performed in Paragraph \ref{sss:term2}, up to taking $m=1$ and replacing $\vec{W}_{\veps ,M}$ and
$\Lambda_{\veps ,M}$ by $\vec{V}_{\veps ,M}$ and $Z_{\veps ,M}$ respectively.
Indeed, it deeply relies on system \eqref{eq:eq momentum term2}, which remains unchanged when $m=1$.
Also in this case, we find \eqref{eq:limit_T2}.

Therefore, we can focus on the term $\mc{T}_{\veps ,M}^{1}$ only. Its study presents some differences with respect to Paragraph \ref{sss:term1}, so let us give the full details.
To begin with, like in \eqref{eq:T1}, we have
\begin{equation*}
\mc{T}_{\veps ,M}^{1}\,=\,
\divh\langle \vec{V}_{\veps ,M}^{h}\rangle\;\; \langle \vec{V}_{\veps ,M}^{h}\rangle+\frac{1}{2}\, \nabla_{h}\left(\left|\langle \vec{V}_{\veps ,M}^{h}\rangle\right|^{2}\right)+
\curlh\langle \vec{V}_{\veps ,M}^{h}\rangle\;\;\langle \vec{V}_{\veps ,M}^{h}\rangle^{\perp}\,.
\end{equation*}

Of course, we can forget about the second term, because it is a perfect gradient.
For the first term, we use system \eqref{eq:reg-wave}: averaging the first equation with respect to $x^{3}$ and multiplying it by $\langle \vec{V}^h_{\veps ,M}\rangle$, we get
\begin{equation*}
\divh\langle \vec{V}_{\veps ,M}^{h}\rangle\;\;\langle \vec{V}_{\veps ,M}^{h}\rangle\,=\,-\frac{\veps}{\mc{A}}\d_t\langle Z_{\veps ,M}\rangle \langle \vec{V}_{\veps ,M}^{h}\rangle+
\frac{\veps}{\mc{A}} \langle X_{\veps ,M}\rangle \langle \vec{V}_{\veps ,M}^{h}\rangle\,=\,
\frac{\veps}{\mc{A}}\langle Z_{\veps ,M}\rangle \d_t \langle \vec{V}_{\veps ,M}^{h}\rangle +\mc{R}_{\veps ,M}\,.
\end{equation*}
We now use the horizontal part of \eqref{eq:reg-wave}, multiplied by $\langle Z_{\veps ,M}\rangle$, and we gather
\begin{equation*}
\begin{split}
\frac{\veps}{\mc{A}}\langle Z_{\veps ,M}\rangle \d_t \langle \vec{V}_{\veps ,M}^{h}\rangle &=-\frac{1}{\mc{A}} \langle Z_{\veps ,M}\rangle \nabla_{h}\langle Z_{\veps ,M}\rangle-
\frac{1}{\mc{A}}\langle Z_{\veps ,M}\rangle\langle \vec{V}_{\veps ,M}^{h}\rangle^{\perp}+\frac{\veps}{\mc{A}}\langle Z_{\veps ,M}\rangle \langle \vec Y_{\veps ,M}^{h}\rangle\\
&=-\frac{1}{\mc{A}}\langle Z_{\veps ,M}\rangle\langle \vec{V}_{\veps ,M}^{h}\rangle^{\perp}+\mc{R}_{\veps ,M}\, .
\end{split}
\end{equation*}
This latter relation yields that
\begin{equation*}
\mc{T}_{\veps ,M}^{1}\,=\,\left(\curlh\langle \vec{V}_{\veps ,M}^{h}\rangle-\frac{1}{\mc{A}}\langle Z_{\veps ,M}\rangle \right)\langle \vec{V}_{\veps ,M}^{h}\rangle^{\perp}+\mc{R}_{\veps ,M} .
\end{equation*}

Now we use the horizontal part of \eqref{eq:reg-wave}: 
averaging it with respect to the vertical variable and
applying the operator $\curlh$, we find
\begin{equation*}
\veps\,\d_t\curlh\langle \vec{V}_{\veps ,M}^{h}\rangle\,+\,\divh\langle \vec{V}_{\veps ,M}^{h}\rangle \,=\,\veps\, \curlh\langle \vec Y_{\veps ,M}^{h}\rangle\, .
\end{equation*}
Taking the difference of this equation with the first one in \eqref{eq:reg-wave}, we discover that
\begin{equation*}
\d_t\g_{\veps,M}
\,=\,\curlh\langle \vec Y_{\veps ,M}^{h}\rangle\,-\,\frac{1}{\mc{A}}\,\langle X_{\veps ,M}\rangle\,,\qquad\qquad \mbox{ with }\qquad
\gamma_{\veps, M}:=\curlh\langle \vec{V}_{\veps ,M}^{h}\rangle\,-\,\frac{1}{\mc{A}}\langle Z_{\veps ,M}\rangle\,.
\end{equation*}
An argument analogous to the one used after \eqref{eq:gamma} above, based on Aubin-Lions Theorem, shows also in this case that
$(\gamma_{\veps,M})_{\veps}$ is compact in $L_{T}^{2}(L_{\rm loc}^{2})$. Then, this sequence converges strongly (up to extraction of a suitable subsequence)
to a tempered distribution $\gamma_M$ in the same space. 

Since $\gamma_{\veps ,M}\longrightarrow \gamma_M$ strongly in $L_{T}^{2}(L_{\rm loc}^{2})$ and
$\langle \vec{V}_{\veps ,M}^{h}\rangle\rightharpoonup\langle \vec{V}_{M}^{h}\rangle$ weakly in $L_{T}^{2}(L_{\rm loc}^{2})$, we deduce that
\begin{equation*}
\gamma_{\veps,M}\,\langle \vec{V}_{\veps ,M}^{h}\rangle^{\perp}\,\longrightarrow\, \gamma_M\, \langle \vec{V}_{M}^{h}\rangle^{\perp}\qquad \text{ in }\qquad \mc{D}^{\prime}\big(\R_+\times\R^2\big),
\end{equation*}
where $\langle \vec{V}_{M}^{h}\rangle=\lan{\omega}_{M}*\vec{U}^{h}\ran$ and $ \gamma_M=\curlh\lan{\omega}_{M}*\vec{U}^{h}\ran-(1/\mc{A})\langle Z_{M}\rangle$.
Notice that, in view of \eqref{conv:rr}, \eqref{conv:theta}, \eqref{est:sigma}, Proposition \ref{p:prop_5.2} and the definitions given in \eqref{relnum}, we have
$$
Z_M\,=\,\d_\vrho p(1,\oline\vtheta)\,\omega_M*\vrho^{(1)}\,+\,\d_\vtheta p(1,\oline\vtheta)\,\omega_M*\Theta\,=\,\omega_M*q\,,
$$
where $q$ is the quantity defined in \eqref{eq:for q}. 
Owing to the regularity of the target velocity $\vec U^h$, we can pass to the limit also for $M\ra+\infty$, thus finding that
\begin{equation} \label{eq:limit_T1-1}
\int^T_0\!\!\!\int_{\wtilde\Omega}\vrho_\veps\,\vec{u}_\veps\otimes \vec{u}_\veps: \nabla_{x}\vec{\psi}\, \dxdt\,\longrightarrow\,
\int^T_0\!\!\!\int_{\R^2}\big(\vec U^h\otimes\vec U^h:\nabla_h\vec\psi^h\,+\,\frac{1}{\mc A}\,q\,(\vec U^h)^\perp\cdot\vec\psi^h\big)\dx^h\dt\, ,
\end{equation}
for all test functions $\vec\psi$ such that $\div\vec\psi=0$ and $\d_3\vec\psi=0$. Recall the convention $|\T|=1$.
Notice that, since $\vec U^h=\nabla_h^\perp q$, the last term in the integral on the right-hand side is actually zero.

\subsection{End of the proof} \label{ss:limit_1}
Thanks to the previous analysis, we are now ready to pass to the limit in equation \eqref{weak-mom}.
As done above, we take a test-function $\vec\psi$ such that
$$
\vec{\psi}=\big(\nabla_{h}^{\perp}\phi,0\big)\,,\qquad\qquad\mbox{ with }\qquad \phi\in C_c^\infty\big([0,T[\,\times\R^2\big)\,,\quad \phi=\phi(t,x^h)\,.
$$
Notice that $\div\vec\psi=0$ and $\d_3\vec\psi=0$. Then, all the gradient terms and all the contributions coming from the vertical
component of the momentum equation vanish identically, when tested against such a $\vec\psi$. In particular, we have 
$$ \int_0^T\int_{\Omega}\frac{1}{\veps}\vrho_\veps \nabla_x G\cdot \vec \psi\dxdt\equiv 0\, . $$

So, equation \eqref{weak-mom} reduces
to\footnote{Remark that, in view of our choice of the test-functions, we can safely come back to the notation on $\Omega$ instead of $\wtilde\Omega$.}
\begin{align*}
\int_0^T\!\!\!\int_{\Omega}  \left( -\vre \ue \cdot \partial_t \vec\psi -\vre \ue\otimes\ue  : \nabla \vec\psi
+ \frac{1}{\ep}\vre\big(\ue^{h}\big)^\perp\cdot\vec\psi^h+\mbb{S}(\vtheta_\veps,\nabla_x\vec\ue): \nabla_x \vec\psi\right)
 =\int_{\Omega}\vrez \uez  \cdot \vec\psi(0,\cdot)\,.
\end{align*}

As done in Subsection \ref{ss:limit}, we can limit ourselves to consider the rotation and convective terms only.
As for the former term, we start by using the mass equation in \eqref{ceq} tested against $\phi$: we get (recalling also \eqref{def_deltarho})
\begin{equation*} 
\begin{split}
-\int_0^T\!\!\!\int_{\R^2} \left( \lan R_{\varepsilon}\ran\, \d_{t}\phi +\frac{1}{\veps}\, \lan\vrho_{\veps}\ue^{h}\ran\cdot \nabla_{h}\phi\right)=
\int_{\R^2}\lan R_{0,\varepsilon }\ran\, \phi (0,\cdot ) \, ,
\end{split}
\end{equation*}
whence we deduce that
\begin{align*}
\int_0^T\!\!\!\int_{\Omega}\frac{1}{\ep}\vre\big(\ue^{h}\big)^\perp\cdot\vec\psi^h\,&=\,\int_0^T\!\!\!\int_{\mbb{R}^2}\frac{1}{\ep}\langle\vre \ue^{h}\rangle \cdot \nabla_{h}\phi\, \\
&=\,-\,\int_0^T\!\!\!\int_{\mbb{R}^2}\langle R_\veps\rangle\, \d_t\phi\,-\,\int_{\mbb{R}^2}\langle R_{0,\veps}\rangle\, \phi(0,\cdot )\,. 
\end{align*}

Letting now $\varepsilon \rightarrow 0^+$, thanks to the previous relation and \eqref{eq:limit_T1-1}, we finally gather
\begin{align*}
&-\int_0^T\!\!\!\int_{\R^2} \left(\vec{U}^{h}\cdot \d_{t}\nabla_{h}^{\perp} \phi+ \vec{U}^{h}\otimes \vec{U}^{h}:\nabla_{h}(\nabla_{h}^{\perp}\phi )+\lan R\ran\, \d_t \phi \right)\, \dx^h\dt\\
&=-\int_0^T\!\!\!\int_{\R^2} \mu (\oline\vartheta )\nabla_{h}\vec{U}^{h}:\nabla_{h}(\nabla_{h}^{\perp}\phi ) \, \dx^h\, \dt+\int_{\R^2}\left(\lan\vec{u}_{0}^{h}\ran\cdot \nabla _{h}^{\perp}\phi (0,\cdot )+
\lan R_{0}\ran\, \phi (0,\cdot )\right) \, \dx^h\, .
\end{align*}
Now, keeping in mind the relation for $R$ in Remark \ref{r:limit_1}, we have 
\begin{equation*}
\begin{split}
R\,&=\,-\,\frac{1}{\beta}\,\big(\d_\vtheta p(1,\oline\vtheta)\,\Upsilon\,-\,\d_\vtheta s(1,\oline\vtheta)\,q\,-\,\d_\vtheta s(1,\oline\vtheta)\,G\big)\\
&=-\frac{1}{\mc A}\left(\mc B \Upsilon-q-G\right)\, ,
\end{split}
\end{equation*}
where we have also employed the definitions of $\mc A$ and $\mc B$ in \eqref{relnum}. 

Thus, we can write 
\begin{equation*}
\begin{split}
-\int_0^T\!\!\!\int_{\R^2} \lan R\ran\, \d_t \phi \, \dx^h\dt&=\frac{1}{\mc A}\int_0^T\!\!\!\int_{\R^2}\left(\mc B \, \lan \Upsilon \ran -q+\frac{1}{2}\right) \, \d_t \phi \, \dx^h\dt\\
&=\frac{1}{\mc A}\int_0^T\!\!\!\int_{\R^2}\Big(\mc B \, \lan \Upsilon \ran -q\Big) \, \d_t \phi \, \dx^h\dt- \int_{\R^2}\frac{1}{2\mc A}\, \phi (0,\cdot )\dx^h\, .
\end{split}
\end{equation*}

At the end, noticing that $\Upsilon$ solves (in the sense of distributions) the transport-diffusion equation \eqref{eq_lim:transport}, we get
\begin{align*}
&-\int_0^T\!\!\!\int_{\R^2} \left(\vec{U}^{h}\cdot \d_{t}\nabla_{h}^{\perp} \phi+ \vec{U}^{h}\otimes \vec{U}^{h}:\nabla_{h}(\nabla_{h}^{\perp}\phi )+\frac{1}{\mc A}\, q\, \d_t \phi \right)\, \dx^h\dt\\
&=-\int_0^T\!\!\!\int_{\R^2} \mu (\oline\vartheta )\nabla_{h}\vec{U}^{h}:\nabla_{h}(\nabla_{h}^{\perp}\phi ) \, \dx^h\, \dt+\frac{1}{\mc A}\int_0^T\int_{\R^2} \lan X \ran \, \phi \dx^h \dt\\
&+\int_{\R^2}\left(\lan\vec{u}_{0}^{h}\ran\cdot \nabla _{h}^{\perp}\phi (0,\cdot )+
\left(\lan R_{0}\ran+\frac{1}{2\mc A}\right)\phi (0,\cdot )\right) \, \dx^h\, ,
\end{align*}
where we have defined $X$ as in \eqref{def:X}. 
 
Theorem \ref{th:m=1_F=0} is finally proved.



\chapter{On the influence of gravity}\label{chap:BNS_gravity}

\begin{quotation}
In this chapter, we continue the investigation we began in Chapter \ref{chap:multi-scale_NSF}, regarding the multi-scale analysis of mathematical models for geophysical flows.
Our focus here is on the effect of gravity in regimes of \emph{low stratification}, but which go beyond the choice of the scaling that, in light
of previous results, we call ``critical''. For clarity of exposition, we consider the barotropic Navier-Stokes system with the Coriolis force, i.e. 
\begin{equation}\label{chap3:BNSC}
\begin{cases}
\partial_t \vrho + \div (\vrho\vec{u})=0\  \\[2ex]
	\partial_t (\vrho\vec{u})+ \div(\vrho\vec{u}\otimes\vec{u}) + \dfrac{\e_3 \times \vrho\vec{u}}{Ro}\,  +    \dfrac{1}{Ma^2} \nabla_x p(\vrho) 
=\div \mbb{S}(\nabla_x\vec{u})  + \dfrac{\vrho}{Fr^2} \nabla_x G\, ,
\end{cases}
\end{equation}
where we will take $Ro=\veps$, $Ma=\veps^m$ and $Fr=\veps^n$ with $m$ and $n$ in suitable ranges (see \eqref{eq:scale-our} below in this respect). 

The results presented in this chapter are contained in \cite{DS-F-S-WK_sub}.

\medbreak

Before moving on, let us give a recapitulation of the contents of chapter.

In Section \ref{s:result_G} we collect our assumptions and we state the main theorems.
In Section \ref{s:energy} we show the main consequences of the finite energy condition on the family of weak solutions we are going to consider.
Namely, we derive uniform bounds in suitable norms, which allow us to extract weak-limit points, and we explore the constraints those limit points have to satisfy. In Sections \ref{s:proof_G} and \ref{s:proof-1_G}, we complete the proof of our main results, showing convergence in the weak formulation of the equations
in the cases $m>1$ and $m=1$, respectively. 
\end{quotation}


\section{Setting of the problem and main results} \label{s:result_G}

In this section, we introduce the primitive system and formulate our working framework (see Subsection \ref{ss:FormProb_G}), then we state our main results
(in Subsection \ref{ss:results_G}).

 \subsection{The barotropic Navier-Stokes-Coriolis system} \label{ss:FormProb_G}

As already said in the introductory part, in this chapter we assumed that the motion of the fluid is described by system \eqref{chap2:syst_NSFC} with constant density and without the centrifugal force.

Thus, given a small parameter $\veps\in\,]0,1]$, the barotropic Navier-Stokes system with Coriolis and gravitational forces (see system \eqref{chap3:BNSC} in this respect) reads as follows:
\begin{align}
&	\partial_t \vre + \div (\vre\ue)=0 \label{ceq_G}\tag{NSC$_{\ep}^1$} \\
&	\partial_t (\vre\ue)+ \div(\vre\ue\otimes\ue) + \frac{1}{\ep}\,\e_3 \times \vre\ue +    \frac{1}{\ep^{2m}} \nabla_x p(\vre) 
	=\div \mbb{S}(\nabla_x\ue)  + \frac{\vre}{\ep^{2n}} \nabla_x G\, ,
	\label{meq_G}\tag{NSC$_{\ep}^2$} 
	\end{align}
where we recall that $m$ and $n$ are taken 
\begin{equation}\label{eq:scale-our}
\mbox{ either }\qquad m\,>\,1\quad\mbox{ and }\quad m\,<\,2\,n\,\leq\,m+1\,,\qquad\qquad\mbox{ or }\qquad
m\,=\,1\quad\mbox{ and }\quad \frac{1}{2}\,<\,n\,<\,1\,.
\end{equation}
As in the previous Chapter \ref{chap:multi-scale_NSF}, here the unknowns in equations \eqref{ceq_G}-\eqref{meq_G} are the density $\vre=\vre(t,x)\geq0$ of the fluid and its velocity field $\ue=\ue(t,x)\in\R^3$, where $t\in\R_+$ but now $x\in \Omega:=\R^2 \times\; ]0,1[\,$.
The viscous stress tensor in \eqref{meq_G} is given by Newton's rheological law, that we recall here, 
	\begin{equation}\label{S_G}
	\mbb{S}(\nabla_x \ue) = \mu\left( \nabla_x\ue + \,^t\nabla_x \ue  - \frac{2}{3}\div \ue\,  \Id \right)
	+ \eta\, \div\ue \, \Id\,,
	\end{equation}
where $\mu>0$ and $\eta\geq 0$ now don't depend on the temperature variations. 
As for the gravitational force, 
we recall its formulation (see \eqref{assFG} in this regard):
	\begin{equation}\label{assG}
	 G(x)= -x^3\,.
	\end{equation}
The precise expression of $G$ will be useful in some computations below, 
although some generalisations are certainly possible.

	
As done in the previous Chapter \ref{chap:multi-scale_NSF}, the system is supplemented  with \emph{complete slip boundary conditions}, namely 
	\begin{align}
	\big(\ue \cdot \n\big) _{|\partial \Omega} = 0
	\quad &\mbox{ and } \quad
	\bigl([ \mbb{S} (\nabla_x \ue) \n ] \times \n\bigr)_{|\d\Omega} = 0\,,  \label{bc1-2_G}
	\end{align}
where $\vec{n}$ denotes the outer normal to the boundary $\d\Omega\,=\,\{x_3=0\}\cup\{x_3=1\}$.
Notice that this is a true simplification, because it avoids complications due to the presence of Ekman boundary layers, when passing to the limit
$\veps\ra0^+$.

\begin{remark} \label{r:period-bc}
As it is well-known (see e.g. Subsection \ref{sss:w-reg} and \cite{Ebin}), the equations \eqref{ceq_G}-\eqref{meq_G}, supplemented by the complete slip boundary boundary conditions \eqref{bc1-2_G},
can be recasted as a periodic problem with respect to the vertical variable, in the new domain
$$
\Omega\,=\,\R^2\,\times\,\mbb{T}^1\,,\qquad\qquad\mbox{ with }\qquad\mbb{T}^1\,:=\,[-1,1]/\sim\,,
$$
where $\sim$ denotes the equivalence relation which identifies $-1$ and $1$. Indeed, the equations are invariant if we extend
$\vrho$ and $\vec u^h$ as even functions with respect to $x^3$, and $u^3$ as an odd function.

In what follows, we will always assume that such modifications have been performed on the initial data, and
that the respective solutions keep the same symmetry properties.
\end{remark}

Now we need to  impose structural restrictions on the pressure function $p$. We assume that
	\begin{equation}\label{pp1_G}
	p\in C^1 [0,\infty)\cap C^2(0,\infty),\qquad p(0)=0,\qquad p'(\vrho )>0\quad \mbox{ for all }\,\vrho\geq 0\, .
	\end{equation}
Additionally to \eqref{pp1_G}, we require that (remember also Remark \ref{rmk:pressure_choice}) 
	\begin{equation}\label{pp2_G}
\text{there exists}\quad \;\g\,>\,\frac{3}{2}\quad\mbox{ such that }\qquad
\lim\limits_{\vrho \to +\infty} \frac{p^\prime(\vrho)}{\vrho^{\gamma -1}} = p_\infty >0\, .
	\end{equation}
Without loss of generality, we can suppose that $p$ has been renormalised so that $p^\prime (1)=1$.  

\begin{remark}
For a more detailed discussion about the choice $\gamma>\oline \gamma:=3/2$, which is fundamental for the existence theory, we address the reader to \cite{F-N-P} by Feireisl, Novotný and Petzeltová, and references therein. In particular, we remark that in two space dimensions $\oline \gamma$ can be improved up to $1$.  
\end{remark}


\subsubsection{Equilibrium states} \label{sss:equilibrium_G}

Next, we focus our attention on the so-called \emph{equilibrium states}. For each value of $\veps\in\,]0,1]$ fixed, the equilibria of system \eqref{ceq_G}-\eqref{meq_G} consist of static densities $\vret$ satisfying
	\begin{equation}\label{prF_G}
\nabla_x p(\vret) = \ep^{2(m-n)} \vret \nabla_x G  \qquad \mbox{ in }\; \Omega\,.
	\end{equation}	

%
%

Equation \eqref{prF_G} identifies $\wtilde{\vrho}_\veps$ up to an additive constant: taking the target density to be $1$, we get
\begin{equation} \label{eq:target-rho_G}
  H^\prime(\vret)=\, \ep^{2(m-n)} G + H^\prime (1)\,,\qquad\qquad \mbox{ where }\qquad 
H(\vrho) = \vrho \int_1^{\vrho} \frac{ p(z)}{z^2} {\rm d}z\,.
\end{equation}
Notice that relation \eqref{eq:target-rho_G} implies that 
\begin{equation*}
H^{\prime \prime}(\vrho)=\frac{p^\prime (\vrho)}{\vrho} \quad \text{ and }\quad H^{\prime \prime}(1)=1\, .
\end{equation*}

Therefore, we infer that, whenever $m\geq1$ and $m>n$ as in the present chapter, for any $x\in\Omega$ one has $\wtilde{\vrho}_\veps(x)\longrightarrow 1$ in the limit $\veps\ra0^+$.
More precisely, the next statement collects all the necessary properties of the static states. It corresponds to Lemma \ref{l:target-rho_pos} and Proposition \ref{p:target-rho_bound} of
Chapter \ref{chap:multi-scale_NSF}.
\begin{proposition} \label{p:target-rho_bound_G}
Let the  gravitational force $G$ be given by \eqref{assG}.
Let $\bigl(\wtilde{\vrho}_\veps\bigr)_{0<\veps\leq1}$ be a family of static solutions to equation \eqref{prF_G}
in $\Omega$.

Then, there exist an $\veps_0>0$ and a $0<\rho_*<1$ such that $\wtilde{\vrho}_\veps\geq\rho_*$ for all $\veps\in\,]0,\veps_0]$
and all $x\in\Omega$.
In addition, for any $\veps\in\,]0,\veps_0]$, one has:
\begin{equation*}
\left|\wtilde{\vrho}_\veps(x)\,-\,1\right|\,\leq\,C\,\veps^{2(m-n)}\, ,
\end{equation*}
for a constant $C>0$ which is uniform in $x\in\Omega$ and in $\veps\in\,]0,1]$.
\end{proposition}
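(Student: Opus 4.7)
The plan is to proceed by exploiting the implicit equation \eqref{eq:target-rho_G}, which we may write as $H'(\wtilde\vrho_\veps(x)) = H'(1) + \veps^{2(m-n)}\,G(x)$. In contrast to Proposition \ref{p:target-rho_bound} of the previous chapter, here the analysis is considerably simpler because $G(x) = -x^3$ is bounded on $\Omega$ (no centrifugal force enters the equation), so the right-hand side is a uniformly small perturbation of the constant $H'(1)$, for $\veps$ small.

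First, I would collect the basic analytic properties of $H$. From the definition $H(\vrho) = \vrho\int_1^\vrho p(z)/z^2\,dz$ and the structural hypotheses \eqref{pp1_G}--\eqref{pp2_G}, together with the normalisation $p'(1)=1$, a direct computation gives $H''(\vrho) = p'(\vrho)/\vrho > 0$ on $(0,\infty)$, with the clean value $H''(1) = 1$. In particular, $H'$ is strictly increasing and $C^1$ in a neighbourhood of $1$, so by the inverse function theorem it is a $C^1$ diffeomorphism from some interval $(1-\delta, 1+\delta)$ onto a neighbourhood $(H'(1)-\eta,H'(1)+\eta)$ of $H'(1)$, with inverse derivative equal to $1/H''(1) = 1$ at the point $H'(1)$.

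Next, since $\|G\|_{L^\infty(\Omega)} \leq 1$ and $m > n$ by \eqref{eq:scale-our}, one can choose $\veps_0 > 0$ so small that $\veps_0^{2(m-n)} < \eta$. Then for every $\veps \in (0,\veps_0]$ and every $x \in \Omega$, the right-hand side $H'(1) + \veps^{2(m-n)}G(x)$ lies in $(H'(1)-\eta, H'(1)+\eta)$, and inverting \eqref{eq:target-rho_G} forces
\[
\wtilde\vrho_\veps(x) \,=\, (H')^{-1}\bigl(H'(1) + \veps^{2(m-n)}\,G(x)\bigr) \,\in\,(1-\delta,1+\delta).
\]
Taking $\rho_* := 1-\delta \in (0,1)$ yields the uniform lower bound. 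The quantitative estimate then follows from a first-order Taylor expansion of $(H')^{-1}$ around $H'(1)$: using that $\bigl((H')^{-1}\bigr)'(H'(1)) = 1$, we get
\[
\bigl|\wtilde\vrho_\veps(x) - 1\bigr| \,\leq\, C\,\bigl|\veps^{2(m-n)}G(x)\bigr| \,\leq\, C\,\veps^{2(m-n)},
\]
with $C$ depending only on the modulus of continuity of $1/H''$ near $1$ and on $\|G\|_{L^\infty}$, hence uniform in $\veps$ and $x$.

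Since the argument is essentially an application of the inverse function theorem to a smooth monotone scalar function, there is no serious obstacle. The only point requiring some care is verifying that the perturbation parameter $\veps^{2(m-n)}$ is indeed small --- this is guaranteed by the assumption $m > n$ encoded in \eqref{eq:scale-our}, and is the analogue of the computation carried out for case (i) below \eqref{eq:target-rho} in Chapter \ref{chap:multi-scale_NSF}. Equivalently, one could argue through the mean value theorem: writing $\veps^{2(m-n)} G(x) = H''(z_\veps(x))\,(\wtilde\vrho_\veps(x) - 1)$ for some $z_\veps(x)$ between $\wtilde\vrho_\veps(x)$ and $1$, one first shows by a contradiction argument that the family $(\wtilde\vrho_\veps)_\veps$ cannot concentrate near $0$ or blow up at infinity (thanks respectively to $H'(\vrho)\to -\infty$ as $\vrho\to 0^+$, which follows from $p'(0) > 0$, and to $H'(\vrho)\to +\infty$ as $\vrho\to +\infty$, which follows from $\gamma > 3/2$ in \eqref{pp2_G}), and then concludes by continuity of $1/H''$ at the value $1$.
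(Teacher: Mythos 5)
Your proof is correct, and it takes a somewhat different (and more streamlined) route than the paper: the paper supplies no dedicated proof for this proposition, but defers to the arguments of Lemma~\ref{l:target-rho_pos} and Proposition~\ref{p:target-rho_bound} from Chapter~\ref{chap:multi-scale_NSF}, which proceed via a contradiction argument combined with the mean value theorem (and a separate Lemma~\ref{l:target-rho_bound} for the upper bound on $\wtilde\vrho_\veps$). Your inverse-function-theorem argument unifies the lower bound, the upper bound, and the quantitative rate into one step, which is a genuine simplification made possible precisely because $G$ (unlike the centrifugal potential $F$) is bounded on $\Omega$, so the forcing term $\veps^{2(m-n)}G$ is a uniformly small perturbation. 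One small point that deserves to be made explicit in your main argument: when you conclude that $\wtilde\vrho_\veps(x)\in(1-\delta,1+\delta)$ from $H'(\wtilde\vrho_\veps(x))\in(H'(1)-\eta,H'(1)+\eta)$, you are implicitly using that $H'$ is injective on \emph{all} of $(0,+\infty)$, not merely a local diffeomorphism near $\vrho=1$ — otherwise the preimage of the small interval could, a priori, contain points far from $1$. You have the needed ingredient ($H''(\vrho)=p'(\vrho)/\vrho>0$ for all $\vrho>0$, so $H'$ is globally strictly increasing) and your closing paragraph, which notes $H'(\vrho)\to-\infty$ as $\vrho\to0^+$ and $H'(\vrho)\to+\infty$ as $\vrho\to+\infty$, supplies the surjectivity; it would be worth stating that $H'$ is a strictly increasing $C^1$ bijection of $(0,+\infty)$ onto $\R$ before invoking $(H')^{-1}$, after which the rest of your argument goes through verbatim.
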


Without loss of any generality, we can assume that $\veps_0=1$ in Proposition \ref{p:target-rho_bound_G}.

\medbreak
In light of this analysis, it is natural to try to solve system \eqref{ceq_G}-\eqref{meq_G} in $\Omega$, supplemented with the \emph{far field conditions}
\begin{equation} \label{ff}
\varrho_{\varepsilon}\longrightarrow \vret \qquad \mbox{ and } \qquad \ue \longrightarrow 0 \qquad\qquad \text{ as }\quad |x|\rightarrow +\infty \, .
\end{equation}


\subsubsection{Initial data and finite energy weak solutions} \label{sss:data-weak_G}

In view of the boundary conditions \eqref{ff} ``at infinity'', we assume that the initial data are close (in a suitable sense) to the equilibrium states $\vret$ that we have just identified.
Namely, we consider initial densities of the following form:
	\begin{equation}\label{in_vr_G}
	\vrez = \vret + \ep^m \vrez^{(1)}. 
	\end{equation}
For later use, let us introduce also the following decomposition of the initial densities: 
\begin{equation} \label{eq:in-dens_dec_G}
\vrho_{0,\veps}\,=\,1\,+\,\veps^{2(m-n)}\,R_{0,\veps}\qquad\qquad\mbox{ with }\qquad
R_{0,\veps}\,=\,\wtilde r_\veps\,+\,\veps^{2n-m}\, \vrho_{0,\veps}^{(1)}\,,\qquad \wtilde r_\veps\,:=\,\frac{\wtilde\vrho_\veps-1}{\veps^{2(m-n)}}\,,
\end{equation}
where again $\wtilde r_\veps$ is a datum of the system.

We suppose the density perturbations $\vrez^{(1)}$ to be measurable functions and satisfy the control
	\begin{align}
\sup_{\veps\in\,]0,1]}\left\|  \vrez^{(1)} \right\|_{(L^2\cap L^\infty)(\Omega)}\,\leq \,c\,,\label{hyp:ill_data_G}
	\end{align}
together with the ``mean-free condition''
$$
\int_{\Omega}  \vrez^{(1)} \dx = 0\,.
$$
As for the initial velocity fields, we assume the following uniform bound
\begin{equation} \label{hyp:ill-vel_G}
 	\sup_{\veps\in\,]0,1]}\left\| \sqrt{\wtilde\vrho_\veps} \vec{u}_{0,\ep} \right\|_{L^2(\Omega)}\,  \leq\, c\, \quad \text{which also implies}\quad \sup_{\veps\in\,]0,1]}\,\left\| \vec{u}_{0,\ep}  \right\|_{L^2(\Omega)}\,\leq\,c\,.
\end{equation}



Thanks to the previous uniform estimates, up to extraction, we can identify the limit points
\begin{align} 
\vrho^{(1)}_0\,:=\,\lim_{\veps\ra0}\vrho^{(1)}_{0,\veps}\qquad\text{ weakly-$\ast$ in }&\qquad L^\infty \cap L^2\label{conv:in_data_vrho}\\
\vec{u}_0\,:=\,\lim_{\veps\ra0}\vec{u}_{0,\veps}\qquad \text{ weakly in }&\qquad L^2\label{conv:in_data_vel}\,.
\end{align}

\medbreak


At this point, let us specify better what we mean by \emph{finite energy weak solution} (see \cite{F-N} for details). 
\begin{definition} \label{d:weak}
Let $\Omega=\R^2 \times \, ]0,1[$. Fix $T>0$ and $\veps>0$. Let $(\vrho_{0,\veps}, \vec u_{0,\veps})$ be an initial datum satisfying \eqref{in_vr_G} to \eqref{hyp:ill-vel_G}. We say that the couple $(\vrho_\veps, \vec u_\veps)$ is a \emph{finite energy weak solution} of the system \eqref{ceq_G}-\eqref{meq_G} in $\,]0,T[\,\times \Omega$,
supplemented with the boundary conditions \eqref{bc1-2_G} and far field conditions \eqref{ff}, related to the initial datum $(\vrho_{0,\veps}, \vec u_{0,\veps})$, if the following conditions hold:
\begin{enumerate}[(i)]
\item the functions $\vrho_\veps$ and $\ue$ belong to the class
\begin{equation*}
\vrho_\veps\geq 0\,,\;\;\; \vrho_\veps - \widetilde{\vrho}_\veps\,\in L^\infty\big(\,]0,T[\,; L^2+L^\gamma (\Omega)\big)\,,\;\;\;
\ue \in L^2\big(\,]0,T[\,;H^1(\Omega)\big),\;\;\; \big(\ue \cdot \n\big) _{|\partial \Omega} = 0;
\end{equation*}
\item the equations have to be satisfied in a distributional sense:
	\begin{equation}\label{weak-con_G}
	-\int_0^T\int_{\Omega} \left( \vre \partial_t \varphi  + \vre\ue \cdot \nabla_x \varphi \right) \dxdt = 
	\int_{\Omega} \vrez \varphi(0,\cdot) \dx
	\end{equation}
for any $\varphi\in C^\infty_c([0,T[\,\times \overline\Omega)$ and
	\begin{align}
	&\int_0^T\!\!\!\int_{\Omega}  
	\left( - \vre \ue \cdot \partial_t \vec\psi - \vre [\ue\otimes\ue]  : \nabla_x \vec\psi 
	+ \frac{1}{\ep} \, \e_3 \times (\vre \ue ) \cdot \vec\psi  - \frac{1}{\ep^{2m}} p(\vre) \div \vec\psi  \right) \dxdt \label{weak-mom_G} \\
	& =\int_0^T\!\!\!\int_{\Omega} 
	\left(- \mbb{S}(\nabla_x\vec u_\veps)  : \nabla_x \vec\psi +  \frac{1}{\ep^{2n}} \vre \nabla_x G\cdot \vec\psi \right) \dxdt 
	+ \int_{\Omega}\vrez \uez  \cdot \vec\psi (0,\cdot) \dx \nonumber
	\end{align}
for any test function $\vec\psi\in C^\infty_c([0,T[\,\times \overline\Omega; \R^3)$ such that $\big(\vec\psi \cdot \n \big)_{|\partial {\Omega}} = 0$;
\item the energy inequality holds for almost every $t\in (0,T)$:
\begin{align}
&\hspace{-0.7cm} \int_{\Omega}\frac{1}{2}\vre|\ue|^2(t) \dx\,+\,\frac{1}{\ep^{2m}}\int_{\Omega}\mc E\left(\vrho_\veps,\wtilde\vrho_\veps\right)(t) \dx
+  \int_0^t\int_{\Omega} \mbb S(\nabla_x \ue):\nabla_x \ue \, \dx {\rm d}\tau  \label{est:dissip_G} \\
&\qquad\qquad\qquad\qquad\qquad\qquad\qquad\qquad
\,\leq\,
\int_{\Omega}\frac{1}{2}\vrez|\uez|^2 \dx\,+\,
\frac{1}{\ep^{2m}}\int_{\Omega}\mc E\left(\vrho_{0,\veps},\wtilde\vrho_\veps\right) \dx\, ,
\nonumber
\end{align}
where $
\mc E\left(\rho,\wtilde\vrho_\veps\right)\,:=\,H(\rho) - (\rho - \vret)\, H^\prime(\vret)
- H(\vret)
$ is the \emph{relative internal energy} of the fluid.
\end{enumerate}
The solution is \emph{global} if the previous conditions hold for all $T>0$.
\end{definition}
	
Under the assumptions fixed above, for any \emph{fixed} value of the parameter $\veps\in\,]0,1]$,
the existence of a global in time finite energy weak solution $(\vrho_\veps,\vec u_\veps)$ to system \eqref{ceq_G}-\eqref{meq_G}, related to the initial datum
$(\vrho_{0,\veps},\vec u_{0,\veps})$, in the sense of the previous definition, can be proved as in the classical case, see e.g. \cite{Lions_2}, \cite{Feireisl}. 
Notice that the mapping $t \mapsto (\vre\ue)(t,\cdot)$ is weakly continuous, and one has $(\vre)_{|t=0} = \vrez$ together with $(\vre\ue)_{|t=0}= \vrez\uez$. 

We remark also that, in view of \eqref{ceq_G}, the total mass is conserved in time: for almost every $t\in[0,+\infty[\,$,
one has
\begin{equation*} \label{eq:mass_conserv_G}
\int_{\Omega}\bigl(\vre(t)\,-\,\vret\bigr)\,\dx\,=\,0\,.
\end{equation*}

To conclude, as already highlighted in Chapter \ref{chap:multi-scale_NSF}, in our framework of finite energy weak solutions,
inequality \eqref{est:dissip_G} will be the only tool to derive uniform estimates for the family of weak solutions we are going to consider.

\subsection{Main theorems}\label{ss:results_G}

We can now state our main results. We point out that, due to the scaling \eqref{eq:scale-our}, the relation $m>n$ is always true,
so we will always be in a low stratification regime.

The first statement concerns the case when the effects linked to the pressure term are predominant in the dynamics (with respect to the fast rotation), i.e. $m>1$.

\begin{theorem}\label{th:m>1}
Let $\Omega= \R^2 \times\,]0,1[\,$ and $G\in W^{1,\infty}(\Omega)$ be as in \eqref{assG}. Take $m>1$ and $m+1\geq 2n >m$.
For any fixed value of $\veps \in \; ]0,1]$, let initial data $\left(\vrho_{0,\veps},\vec u_{0,\veps}\right)$ verify the hypotheses fixed in Paragraph \ref{sss:data-weak_G}, and let
$\left( \vre, \ue\right)$ be a corresponding weak solution to system \eqref{ceq_G}-\eqref{meq_G}, supplemented with the structural hypotheses  \eqref{S_G} on $\mbb{S}(\nabla_x \ue)$ and with boundary conditions \eqref{bc1-2_G} and  far field conditions \eqref{ff}.
Let $\vec u_0$ be defined as in \eqref{conv:in_data_vel}.

Then, for any $T>0$, one has that
	\begin{align*}
	\varrho_\ep \rightarrow 1 \qquad\qquad &\mbox{ strongly in } \qquad L^{\infty}\big([0,T]; L_{\rm loc}^{\min\{2,\g\}}(\Omega )\big) \\
	\vec{u}_\ep \weak \vec{U}
	\qquad\qquad &\mbox{ weakly in }\qquad L^2\big([0,T];H^{1}(\Omega)\big)\,, 
	\end{align*}	
where $\vec{U} = (\vec U^h,0)$, with $\vec U^h=\vec U^h(t,x^h)$ such that $\divh\vec U^h=0$. In addition, the vector field $\vec{U}^h $ is a weak solution
to the following homogeneous incompressible Navier-Stokes system  in $\R_+ \times \R^2$,
\begin{align}
& \d_t \vec U^{h}+\divh\left(\vec{U}^{h}\otimes\vec{U}^{h}\right)+\nabla_h\Gamma-\mu \Delta_{h}\vec{U}^{h}=0\, , \label{eq_lim_m:momentum_G} 
\end{align}
for a suitable pressure function $\Gamma\in\mc D'(\R_+\times\R^2)$ and related to the initial condition
$$
\vec{U}_{|t=0}=\h_h\left(\lan\vec{u}^h_{0}\ran\right)\, .
$$
\end{theorem}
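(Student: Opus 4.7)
The plan is to follow the same strategy developed for Theorem \ref{th:m-geq-2} in Chapter \ref{chap:multi-scale_NSF}, namely a compensated compactness argument on the acoustic-Poincaré wave system, but now carefully exploiting the precise algebraic structure of the gravity term in order to push the analysis beyond the critical threshold $2n=m$. As a first step, I would exploit the energy inequality \eqref{est:dissip_G} together with Proposition \ref{p:target-rho_bound_G} and the assumptions \eqref{hyp:ill_data_G}-\eqref{hyp:ill-vel_G} on the initial data to derive, in complete analogy with Subsection \ref{ss:unif-est}, the uniform bounds $\bigl(\sqrt{\vrho_\veps}\,\vec u_\veps\bigr)_\veps \subset L^\infty_T(L^2)$, $\bigl(\vec u_\veps\bigr)_\veps\subset L^2_T(H^1)$, together with a control of the density oscillations $R_\veps:=(\vrho_\veps-1)/\veps^{2(m-n)}$ in essential and residual parts; since here $2(m-n)\geq m-1\geq 0$, the estimate \eqref{rr1} upgrades to give strong convergence $\vrho_\veps\longrightarrow 1$ in $L^\infty_T(L^{\min\{2,\g\}}_{\rm loc})$. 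Up to extraction, this yields limit points $R$ and $\vec U$ in the announced spaces.

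Next, I would identify the constraints that $(R,\vec U)$ must satisfy by testing the weak formulation \eqref{weak-mom_G} of the momentum equation against suitable $\veps^m\vec\phi$ and $\veps\,\curl\vec\psi$, exactly as in the proof of Proposition \ref{p:limitpoint}. The analysis of the pressure, gravity and Coriolis forces, combined with the equation \eqref{prF_G} for the equilibrium states, produces on the one hand the Boussinesq-type relation $\nabla_x\bigl(p'(1)\,R\bigr)=\nabla_x G$ (which here simply identifies $R$) and, on the other hand, $\mathbb{H}(\vec e_3\times\vec U)=0$. Combined with the incompressibility constraint $\div\vec U=0$ obtained by passing to the limit in \eqref{ceq_G} and with the boundary condition \eqref{bc1-2_G}, this gives the Taylor-Proudman structure $\vec U=(\vec U^h(t,x^h),0)$ with $\divh\vec U^h=0$.

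The main step is then to pass to the limit in the convective term $\vrho_\veps\vec u_\veps\otimes\vec u_\veps$. I would recast the system as an acoustic-Poincaré wave system of the form
\begin{equation*}
\left\{\begin{array}{l}
\veps^m\,\d_t Z_\veps+\mc A\,\div\vec V_\veps=\veps^m f_\veps\,,\\[1ex]
\veps^m\,\d_t\vec V_\veps+\nabla_x Z_\veps+\veps^{m-1}\e_3\times\vec V_\veps=\veps^m\vec G_\veps\,,
\end{array}\right.
\end{equation*}
with $\vec V_\veps=\vrho_\veps\vec u_\veps$, $Z_\veps\sim p'(1)R_\veps$ and a suitable constant $\mc A$, where the gravity contribution is encoded both in the static profile $\wtilde\vrho_\veps$ and in the remainder terms. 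After a regularisation in space (mollification as in Subsection \ref{ss:unifbounds_1}, using the finite propagation speed to replace $\Omega$ by the whole space and periodising in $x^3$ as in Remark \ref{r:period-bc}), I would apply the same algebraic manipulations as in Subsections \ref{sss:term1}-\ref{sss:term2} to the decomposition $\div(\vec V_{\veps,M}\otimes\vec V_{\veps,M})=\mc T^1_{\veps,M}+\mc T^2_{\veps,M}$ obtained by averaging in $x^3$. The key point is the compactness of the auxiliary quantity $\g_{\veps,M}:=\curlh\lan\vec V_{\veps,M}^h\ran-(1/\mc A)\lan Z_{\veps,M}\ran$, deduced via Aubin-Lions from the equation $\d_t\g_{\veps,M}=\curlh\lan\vec G_{\veps,M}^h\ran-(\veps^{m-1}/\mc A)\lan f_{\veps,M}\ran$, which forces $\mc T^1_{\veps,M}\to \curlh\lan\vec U^h\ran\,\lan\vec U^h\ran^\perp$ modulo gradients, while $\mc T^2_{\veps,M}$ reduces to a remainder; both pieces combine to give the convective term $\divh(\vec U^h\otimes\vec U^h)$ of \eqref{eq_lim_m:momentum_G}.

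The hard part will be the control of the gravity contribution inside the source terms of the wave system: the ``natural'' bound on the gravitational force in $\vec G_\veps$ is of order $\veps^{2(m-n)-m}=\veps^{m-2n}$, which diverges as soon as $2n>m$, and becomes $O(1)$ precisely at the endpoint $2n=m+1$ (keep in mind Remark \ref{r:T1-T2} from the previous chapter, where this was the obstruction to treating the case with centrifugal force). The strategy to overcome this is to exploit the specific form \eqref{assG} of $G$, which depends only on $x^3$: taking the vertical average $\langle\cdot\rangle$ kills the gravity term in the horizontal components of $\vec G_\veps$ up to a perfect gradient, so that the problematic contribution never enters the equation for $\g_{\veps,M}$, while the oscillating parts only produce terms that can be absorbed into pressure gradients or remainders after integration by parts in $x^3$. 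When $2n=m+1$, an additional bilinear term of order $O(1)$ survives in the computations leading to $\mc T^1_{\veps,M}$; to show that it too vanishes in the limit, I would go back to the full wave system and use a refined oscillation relation (in the spirit of the equation \eqref{rel_oscillations_bis} alluded to in the introduction) to rewrite this term as a time derivative plus a remainder, thereby closing the argument and completing the proof of \eqref{eq_lim_m:momentum_G}.
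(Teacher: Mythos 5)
Your proposal follows essentially the same route as the paper: uniform bounds from the energy inequality, constraints on limit points from testing against $\veps^m\vec\phi$ and $\veps\,\curl\vec\psi$, recasting into an acoustic–Poincar\'e wave system, and a compensated compactness argument via the compact quantity $\g_{\veps,M}$ and the decomposition $\mc T^1_{\veps,M}+\mc T^2_{\veps,M}$, with special care at $2n=m+1$. Two small clarifications are worth making. First, the key cancellation is not that ``taking the vertical average kills the gravity term in the horizontal components'': since $G=G(x^3)=-x^3$, the fluctuation force $\vr^{(1)}_\veps\nabla_x G$ has \emph{no horizontal component at all} ($\nabla_h G\equiv 0$), and the other piece $-\nabla_x\Pi_\veps$ is an exact gradient; so $\curlh\langle\vec g^h_{\veps,M}\rangle\equiv 0$ identically, before any averaging, which is exactly what removes the gravity source from the equation $\d_t\Gamma_{\veps,M}=\curlh\langle\vec f^h_{\veps,M}\rangle$. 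Second, in the barotropic setting the paper's wave system is simpler than the one you sketch: the continuity equation $\veps^m\d_t\vr^{(1)}_\veps+\div\vec V_\veps=0$ has no source term and there is no need for the auxiliary constant $\mc A$ or the combined quantity $Z_\veps$ inherited from the heat-conducting case; the relevant preliminary step is rather the pressure decomposition (Lemma \ref{lem:manipulation_pressure}) isolating $\nabla_x\vr^{(1)}_\veps$ at order $\veps^{-m}$ and a gradient remainder $\nabla_x\Pi_\veps$ at order $\veps^{2(m-n)-2m}$, which is what produces the $\veps^{2(m-n)}\vec g_\veps$ right-hand side. With these adjustments the plan is sound and matches the paper's argument.
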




When $m=1$, the Mach and Rossby numbers have the same order of magnitude, and they keep in the so-called \emph{quasi-geostrophic balance} at the limit. Namely, the next statement is devoted to this isotropic case.
\begin{theorem} \label{th:m=1}
Let $\Omega = \R^2 \times\,]0,1[\,$ and $G\in W^{1,\infty}(\Omega)$ be as in \eqref{assG}. Take $m=1$ and $1/2<n<1$. 
For any fixed value of $\veps \in \; ]0,1]$, let initial data $\left(\vrho_{0,\veps},\vec u_{0,\veps}\right)$ verify the hypotheses fixed in Paragraph \ref{sss:data-weak_G}, and let
$\left( \vre, \ue\right)$ be a corresponding weak solution to system \eqref{ceq_G}-\eqref{meq_G}, supplemented with the structural hypotheses  \eqref{S_G} on $\mbb{S}(\nabla_x \ue)$ and with boundary conditions \eqref{bc1-2_G} and far field conditions \eqref{ff}.
Let $\left(\vrho^{(1)}_0,\vec u_0\right)$ be defined as in \eqref{conv:in_data_vrho} and \eqref{conv:in_data_vel}.

Then, for any $T>0$, one has the following convergence properties:
	\begin{align*}
	\varrho_\ep \rightarrow 1 \qquad\qquad &\mbox{ strongly in } \qquad L^{\infty}\big([0,T]; L_{\rm loc}^{\min\{2,\g\}}(\Omega )\big) \\
	\vrho^{(1)}_\veps:=\frac{\varrho_\ep - \widetilde{\vrho_\veps}}{\ep}  \weakstar \vrho^{(1)} \qquad\qquad &\mbox{ weakly-$*$ in }\qquad L^{\infty}\big([0,T]; L^{2}+L^{\gamma}(\Omega )\big) \\
	\vec{u}_\ep \weak \vec{U}
	\qquad\qquad &\mbox{ weakly in }\qquad L^2\big([0,T];H^{1}(\Omega)\big)\,,
	\end{align*}	
where, as above, $\vec{U} = (\vec U^h,0)$, with $\vec U^h=\vec U^h(t,x^h)$ such that $\divh\vec U^h=0$. 
Moreover, one has the balance $\vec U^h=\nabla_h^\perp \vrho^{(1)}$, and $\vrho^{(1)}$ satisfies (in the weak sense) the  quasi-geostrophic equation
\begin{align}
& \d_{t}\left(\vrho^{(1)}-\Delta_{h}\vrho^{(1)}\right) -\nabla_{h}^{\perp}\vrho^{(1)}\cdot
\nabla_{h}\left( \Delta_{h}\vrho^{(1)}\right) +\mu 
\Delta_{h}^{2}\vrho^{(1)}\,=\,0\,, \label{eq_lim:QG_G}  
\end{align}
supplemented with the initial condition
$$
\left(\vrho^{(1)}-\Delta_{h}\vrho^{(1)}\right)_{|t=0}=\langle \vrho_0^{(1)}\rangle-\curlh\lan\vec u^h_{0}\ran\, .
$$
\end{theorem}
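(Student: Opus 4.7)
The proof follows the compensated compactness strategy adopted in Chapter \ref{chap:multi-scale_NSF} (in particular, the analysis of Theorem \ref{th:m=1_F=0} for the isotropic case), with the additional delicate feature that the gravity force now enters the acoustic-Poincar\'e wave system through a singular prefactor $\veps^{m-2n}=\veps^{1-2n}$, which diverges since $1/2<n<1$. The whole strategy is made possible by the purely vertical character of $\nabla_x G=-\vec e_3$ and by the strict inequality $2n<m+1=2$.

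First I would derive uniform (in $\veps$) bounds from the energy inequality \eqref{est:dissip_G}, arguing as in Subsection \ref{ss:unif-est}: a decomposition of the relative internal energy $\mc E(\vrho_\veps,\wtilde\vrho_\veps)$ into essential and residual parts, combined with Proposition \ref{p:target-rho_bound_G}, gives $\sqrt{\vrho_\veps}\vec u_\veps\in L^\infty_T(L^2)$, $\vrho^{(1)}_\veps\in L^\infty_T(L^2+L^\gamma)$ with the residual set of measure $O(\veps^2)$, and, through the Korn-Poincar\'e inequality, $\vec u_\veps\in L^2_T(H^1)$. Extraction of subsequences yields the announced weak limits, while the strong convergence $\vrho_\veps\to 1$ in $L^\infty_T(L^{\min\{2,\gamma\}}_{\rm loc})$ is a direct consequence of $\vrho_\veps-\wtilde\vrho_\veps=O(\veps)$ together with the bound of Proposition \ref{p:target-rho_bound_G}. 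Testing the momentum equation \eqref{weak-mom_G} with $\veps\vec\psi$, exploiting the identity $\nabla_x p(\wtilde\vrho_\veps)=\veps^{2(1-n)}\wtilde\vrho_\veps\nabla_xG$ from \eqref{prF_G}, the leading-order balance reduces to
\[
\tfrac{1}{\veps}\nabla_x\bigl[p(\vrho_\veps)-p(\wtilde\vrho_\veps)\bigr]-\veps^{1-2n}(\vrho_\veps-\wtilde\vrho_\veps)\nabla_x G+\e_3\times\vrho_\veps\vec u_\veps=O(\veps).
\]
A Taylor expansion $p(\vrho_\veps)-p(\wtilde\vrho_\veps)=\veps\, p'(1)\vrho^{(1)}_\veps+O(\veps^2)$, the normalisation $p'(1)=1$ and the observation that $\veps^{2-2n}\to 0$ (since $n<1$) give, in the limit, the identity $\nabla_x\vrho^{(1)}+\vec e_3\times\vec U=0$. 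Its third component forces $\partial_3\vrho^{(1)}=0$, whence $\vrho^{(1)}=\vrho^{(1)}(t,x^h)$; its horizontal part yields the quasi-geostrophic balance $\vec U^h=\nabla_h^\perp\vrho^{(1)}$. Combined with $\div\vec U=0$ and the slip boundary condition, this produces $\vec U=(\vec U^h,0)$ with $\divh\vec U^h=0$.

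The main obstacle is then to identify the weak limit of the convective term $\vrho_\veps\vec u_\veps\otimes\vec u_\veps$ against test fields $\vec\psi=(\nabla_h^\perp\phi,0)$ with $\phi=\phi(t,x^h)$. To this end, I would recast the system as the wave system
\[
\veps\,\partial_t Z_\veps+\div\vec V_\veps=\veps\, X_\veps,\qquad \veps\,\partial_t\vec V_\veps+\nabla_x Z_\veps+\e_3\times\vec V_\veps=\veps\,\vec Y_\veps,
\]
with $Z_\veps\sim\vrho^{(1)}_\veps$ and $\vec V_\veps=\vrho_\veps\vec u_\veps$; here the forcing $\vec Y_\veps$ contains the singular contribution $-\veps^{1-2n}\vrho^{(1)}_\veps\nabla_x G$. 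After a spatial regularisation (by convolution with a mollifier as in Paragraph \ref{sss:w-reg}, combined with the finite speed of propagation of acoustic-Poincar\'e waves to control the domain), the convective term splits as $\mc T^1_{\veps,M}+\mc T^2_{\veps,M}$ following \eqref{def:T1-2}. The analysis of $\mc T^1_{\veps,M}$ is essentially identical to Paragraph \ref{sss:term1}: gravity does not contribute to the horizontal-mean momentum equation since $(\nabla_x G)^h\equiv 0$, so the quantity $\gamma_{\veps,M}=\curlh\langle\vec V^h_{\veps,M}\rangle-\langle Z_{\veps,M}\rangle$ is still compact in $L^2_T(L^2_{\rm loc})$ by Aubin-Lions. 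The delicate point is $\mc T^2_{\veps,M}$: in the mean-free part of the momentum equation, the gravity term enters with the dangerous prefactor $\veps^{1-2n}$; the key cancellation is that $\nabla_x G=-\vec e_3=-\partial_3(x^3)$, so $\dbtilde{\vrho}^{(1)}_\veps\nabla_xG$ can be rewritten as a vertical gradient, which combines algebraically with the $\nabla_x Z_\veps$ term in the analogue of \eqref{eq:eq momentum term2} to produce a contribution of the form $\nabla_h\bigl(\dots\bigr)$ that is harmless against horizontal divergence-free test fields. The strict inequality $2n<m+1=2$ ensures that every residual bilinear interaction of the gravity term with the oscillating parts $\dbtilde{\vec V}^h_{\veps,M}$ and $\dbtilde{Z}_{\veps,M}$ is of order $\veps^{2-2n}\to 0$, so that all such contributions lie in the remainder class $\mc R_{\veps,M}$ of \eqref{eq:remainder}; the endpoint $2n=m+1$ (excluded here) would leave an $O(1)$ bilinear term to be explicitly accounted for, as flagged in the introduction to this chapter.

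Once the convective term has been identified, passing to the limit in \eqref{weak-mom_G} against $\vec\psi=(\nabla_h^\perp\phi,0)$ is straightforward: the pressure gradient vanishes since $\div\vec\psi=0$, the gravity term vanishes \emph{identically} since $\psi^3\equiv 0$, and the Coriolis term is rewritten, via the continuity equation \eqref{weak-con_G} tested against $\phi$, as a total time derivative converging to $-\int\langle\vrho^{(1)}\rangle\,\partial_t\phi=-\int\vrho^{(1)}\partial_t\phi$. Together with the quasi-geostrophic balance $\vec U^h=\nabla_h^\perp\vrho^{(1)}$, this yields the vorticity formulation of the 2-D Navier-Stokes equations for $\vec U^h$, which, expressed in terms of the stream function $\vrho^{(1)}$, is precisely \eqref{eq_lim:QG_G}; the initial condition on $\vrho^{(1)}-\Delta_h\vrho^{(1)}$ is recovered from \eqref{conv:in_data_vrho}-\eqref{conv:in_data_vel}.
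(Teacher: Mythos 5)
Your proposal follows the same compensated compactness route as the paper and captures the essential ideas (quasi-geostrophic balance in the limit, wave system, $\mc T^1/\mc T^2$ decomposition, role of the threshold $2n<m+1$). That said, three points call for correction or sharpening.

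\emph{The claimed singular prefactor.} You assert that gravity enters the wave system with a divergent coefficient $\veps^{1-2n}$. This is misleading: the entire point of Lemma~\ref{lem:manipulation_pressure} is to write
\[
\frac{1}{\veps^{2m}}\nabla_x\bigl(p(\vrho_\veps)-p(\wtilde\vrho_\veps)\bigr)\,=\,\frac{1}{\veps^m}\nabla_x\bigl(p'(1)\vrho^{(1)}_\veps\bigr)\,+\,\frac{1}{\veps^{2n-m}}\nabla_x\Pi_\veps\,,
\]
with $\Pi_\veps$ uniformly bounded, so that the gravity-related forcing in the wave system \eqref{eq:wave_m=1_G} is $\veps^{2(1-n)}\vec g_\veps$ with $\vec g_\veps=\vrho^{(1)}_\veps\nabla_x G-\nabla_x\Pi_\veps$ uniformly bounded and $\veps^{2(1-n)}\to 0$. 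Lumping everything into a single $\vec Y_\veps=\vec f_\veps+\veps^{1-2n}\vec g_\veps$ makes $\vec Y_\veps$ unbounded and obscures both the uniform estimates of Lemma~\ref{l:source_bounds_G} (needed in Proposition~\ref{p:prop approx_G}) and the reason the compactness of $\wtilde\Gamma_{\veps,M}$ survives: one must see that the $\vrho^{(1)}_\veps\nabla_x G$ piece has zero horizontal component \emph{and} that the $\nabla_x\Pi_\veps$ piece has horizontal component $-\nabla_h\Pi_\veps$, a pure gradient, so $\curlh\langle\vec g^h_{\veps,M}\rangle\equiv0$. Your invocation of ``$(\nabla_x G)^h\equiv0$'' covers only half of this.

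\emph{The $\mc T^2$ argument.} There is no ``vertical gradient rewriting'' of $\dbtilde{\vrho}^{(1)}_\veps\nabla_x G$ in the paper, and indeed $\dbtilde{\vrho}^{(1)}_\veps\nabla_x G=-(0,0,\dbtilde{\vrho}^{(1)}_\veps)$ is a vertical \emph{vector} but not a gradient. The actual argument is much simpler: after applying $\curl$ to the oscillating momentum equation (the analogue of \eqref{eq:eq momentum term2_G}), the $\vec g_{\veps,M}$-piece drops from the vorticity component because $\curlh\dbtilde{\vec g}^h_{\veps,M}=0$, and in the remaining horizontal-curl equation it carries the prefactor $\veps^{2(m-n)}=\veps^{2(1-n)}$; when fed into \eqref{rel_oscillations} this becomes $\veps^{m+1-2n}=\veps^{2-2n}\to0$, hence a remainder. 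No structural cancellation against $\nabla_x Z_\veps$ is used here.

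\emph{Regularisation machinery.} The convolution-by-mollifier plus finite-speed-of-propagation apparatus you invoke comes from Chapter~\ref{chap:multi-scale_NSF}, where the domain $\Omega_\veps$ expands with $\veps$. In the present chapter the spatial domain is the fixed slab $\R^2\times\,]0,1[\,$ (periodised to $\R^2\times\T$), and the paper regularises directly with the Littlewood-Paley operator $S_M$ as in \eqref{def_reg_vrho-V}; no cut-off, no finite-propagation-speed argument is needed.

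Apart from these points, your derivation of the constraints (the Taylor expansion around $\wtilde\vrho_\veps$, the convergence $\nabla_x\vrho^{(1)}+\vec e_3\times\vec U=0$ since $\veps^{2-2n}\to0$), the $\mc T^1$ compactness via Aubin-Lions, and the passage to the limit against $\vec\psi=(\nabla_h^\perp\phi,0)$ all match the paper's Section~\ref{s:proof-1_G}.
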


\section{Consequences of the energy inequality} \label{s:energy}

In Definition \ref{d:weak}, we have postulated that the family of weak solutions $\big(\vrho_\veps,\vu_\veps\big)_\veps$ considered in Theorems
\ref{th:m>1} and \ref{th:m=1} satisfies the energy inequality \eqref{est:dissip_G}. 

In this section, we take advantage of the energy inequality to infer uniform bounds for $\big(\vrho_\veps,\vu_\veps\big)_\veps$: this will be done in Subsection \ref{ss:unif-est_G}.
Thanks to those bounds, we can extract (in Subsection \ref{ss:ctl1_G}) weak-limit points of the sequence of solutions and deduce some properties these limit points
have to satisfy.

\subsection{Uniform bounds and weak limits}\label{ss:unif-est_G}

This subsection is devoted to establish uniform bounds on the sequence $\bigl(\vrho_\veps,\vec u_\veps\bigr)_\veps$.
This can be done as in the classical case (see e.g. \cite{F-N} for details), since again
the Coriolis term does not contribute to the total energy balance of the system.
However, for the reader's convenience, let us present some details.

To begin with, let us recall the partition of the space domain $\Omega$ into the so-called ``essential'' and ``residual'' sets.
For this, for $t>0$ and for all $\veps\in\,]0,1]$, we define the sets
$$
\Omega_\ess^\veps(t)\,:=\,\left\{x\in\Omega\;\big|\quad \vrho_\veps(t,x)\in\left[1/2\,\rho_*\,,\,2\right]\right\}\,,\qquad\Omega^\veps_\res(t)\,:=\,\Omega\setminus\Omega^\veps_\ess(t)\,,
$$
where the positive constant $\rho_*>0$ has been defined in Proposition \ref{p:target-rho_bound_G}.

Next, we observe that
\[
\Big[\mc E\big(\rho(t,x),\wtilde\vrho_\veps(x)\big)\Big]_\ess\,\sim\,\left[\rho-\wtilde\vrho_\veps(x)\right]_\ess^2
\qquad\quad \mbox{ and }\qquad\quad
\Big[\mc E\big(\rho(t,x),\wtilde\vrho_\veps(x)\big)\Big]_\res\,\geq\,C\left(1\,+\,\big[\rho(t,x)\big]_\res^\g\right)\,,
\]
where $\vret$ is the static density state identified in Paragraph \ref{sss:equilibrium_G}. 
Here above, the multiplicative constants are all strictly positive and may depend on $\rho_*$, and we agree to write $A\sim B$ whenever there exists a ``universal'' constant $c>0$ such that $(1/c)B\leq A\leq c\, B$.

Thanks to the previous observations, we easily see that, under the assumptions fixed in Section \ref{s:result_G} on the initial data,
the right-hand side of \eqref{est:dissip_G} is \emph{uniformly bounded} for all $\veps\in\,]0,1]$: specifically, we have
$$
\int_{\Omega} \frac{1}{2}\vrez|\uez|^2\,\dx + \frac{1}{\ep^{2m}}\int_{\Omega}\mc E\left(\vrho_{0,\veps},
\, \wtilde\vrho_\veps\right)\,\dx\,\leq\,C\,.
$$

Owing to the previous inequalities and the finite energy condition \eqref{est:dissip_G} on the family of weak solutions,
it is quite standard to derive, for any time $T>0$ fixed and any $\veps\in\,]0,1]$, the following estimates, that we recall here for the reader's convenience:
\begin{align}
	\sup_{t\in[0,T]} \| \sqrt{\vre}\ue\|_{L^2(\Omega;\, \R^3)}\, &\leq\,c \label{est:momentum_G} \\	
	\sup_{t\in[0,T]} \left\| \left[ \dfrac{\vre - \vret}{\ep^m}\right]_\ess (t) \right\|_{L^2(\Omega)}\,&\leq\, c \label{est:rho_ess_G} \\
	\sup_{t\in[0,T]} \int_{\Omega}	\bbbone_{\mc{M}^\veps_\res[t]} \,dx\,&\leq \, c\,\ep^{2m} \label{est:M_res-measure_G}\\
	\sup_{t\in [0,T]} \int_{\Omega} [ \vre]^{\gamma}_\res (t)\,\dx \,
	\,&\leq\,c\,\ep^{2m} \label{est:rho_res_G} \\
	\int_0^T \left\| \nabla_x \ue +\, ^t\nabla_x \ue  - \frac{2}{3} \div \ue\, \Id \right\|^2_{L^2(\Omega ;\, \R^{3\times3})}\, \dt\,
	&\leq\, c\, . \label{est:Du_G} 
	\end{align}
We refer to \cite{F-N} (see also \cite{F-G-N}, \cite{F-G-GV-N}, \cite{F_2019}) for the details of the computations.

Owing to \eqref{est:Du_G} and a generalisation of the Korn-Poincar\'e inequality (see Proposition \ref{app:korn-poincare_prop} in the Appendix), we gather that
$\big(\nabla\vu_\veps\big)_\veps\,\subset\,L^2_T(L^2)$. On the other hand, by arguing as in \cite{F-G-N}, we can use
\eqref{est:momentum_G}, \eqref{est:M_res-measure_G} and \eqref{est:rho_res_G} to deduce that also
$\big(\vu_\veps\big)_\veps\,\subset\,L^2_T(L^2)$. Putting those bounds together, we finally infer that
\begin{equation}\label{bound_for_vel}
\int_0^T \left\|\ue  \right\|^2_{H^{1}(\Omega ;\, \R^{3})}\, \dt\,\leq \, c\, .
\end{equation} 
In particular, there exist $\vU\,\in\,L^2_{\rm loc}\big(\R_+;H^1(\Omega;\R^3)\big)$ such that, up to a suitable extraction (not relabelled here),
we have
\begin{equation} \label{conv:u_G}
\vu_\veps\,\rightharpoonup\,\vU\qquad\qquad \mbox{ in }\quad L^2_{\rm loc}\big(\R_+;H^1(\Omega;\R^3)\big)\,.
\end{equation}

Let us move further and consider the density functions. The previous estimates on the density tell us that we must find a finer decomposition for the densities. As a matter
of fact, for any time $T>0$ fixed, we have
	\begin{equation}\label{rr1_G}
\| \vre - 1 \|_{L^\infty_T(L^2 + L^{\gamma} + L^\infty)}\,\leq\,c\,  \ep^{2(m-n)}\,.
	\end{equation}
In order to see \eqref{rr1_G}, to begin with, we write 
\begin{equation}\label{rel:density_1}
|\vrho_\veps-1|\,\leq\,|\vrho_\veps-\widetilde{\vrho}_\veps|+|\widetilde{\vrho}_\veps-1|\,.
\end{equation}	
From \eqref{est:rho_ess_G}, we infer that $\big[\vr_\veps\,-\,\wtilde{\vr}_\veps\big]_\ess$ is of order $O(\veps^m)$ in $L^\infty_T(L^2)$.
For the residual part of the same term, we can use \eqref{est:rho_res_G} to discover that it is of order $O(\veps^{2m/\gamma})$.
Observe that, if $1<\g<2$, the higher order is $O(\veps^m)$, whereas, in the case $\g\geq2$, by use of \eqref{est:rho_res_G} and \eqref{est:M_res-measure_G} again, it is easy to get
\begin{equation} \label{est:res_g>2}
\left\|\left[\vr_\veps\,-\,\wtilde{\vr}_\veps\right]_\res\right\|_{L^\infty_T(L^2)}^2\,\leq\,C\,\veps^{2m}\,.
\end{equation}
Finally, we apply Proposition \ref{p:target-rho_bound_G} to control the last term in the right-hand side of \eqref{rel:density_1}.
In the end, estimate \eqref{rr1_G} is proved.

This having been established, and keeping in mind the notation introduced in \eqref{in_vr_G} and \eqref{eq:in-dens_dec_G}, we can introduce the
density oscillation functions
\[ 
R_\veps\,:=\, \frac{\varrho_\ep -1}{\ep^{2(m-n)}}\, =\,\wtilde{r}_\veps\,+\,\veps^{2n-m}\,\vrho_\veps^{(1)}\,,
\] 
where we have defined
\begin{equation} \label{def_deltarho_G}
\vrho_\veps^{(1)}(t,x)\,:=\,\frac{\vre-\wtilde{\vrho}_\veps}{\ep^m}\qquad\mbox{ and }\qquad
\wtilde{r}_\veps(x)\,:=\,\frac{\wtilde{\vrho}_\veps-1}{\ep^{2(m-n)}}\,.
\end{equation}

Thanks again to \eqref{est:rho_ess_G}, \eqref{est:rho_res_G} and Proposition \ref{p:target-rho_bound_G}, we see that the previous quantities verify the following uniform bounds, for any time $T>0$ fixed:
\begin{equation}\label{uni_varrho1_G}
\sup_{\veps\in\,]0,1]}\left\|\vrho_\veps^{(1)}\right\|_{L^\infty_T(L^2+L^{\gamma}({\Omega}))}\,\leq\, c \qquad\qquad\mbox{ and }\qquad\qquad
\sup_{\veps\in\,]0,1]}\left\| \wtilde{r}_\veps \right\|_{L^{\infty}(\Omega)}\,\leq\, c \,.
\end{equation}
In view of the previous properties, there exist $\vrho^{(1)}\in L^\infty_T(L^2+L^{\gamma})$ and
$\wtilde{r}\in L^\infty$ such that (up to the extraction of a new suitable subsequence)
\begin{equation} \label{conv:rr_G}
\vrho_\veps^{(1)}\,\weakstar\,\vrho^{(1)}\qquad\qquad \mbox{ and }\qquad\qquad \wtilde{r}_\veps\,\weakstar\,\wtilde{r}
\end{equation}
in the weak-$*$ topology of the respective spaces.
In particular, we get
	\begin{equation*} 
	R_\veps\, \weakstar\,\widetilde{r} \qquad\qquad\qquad \mbox{ weakly-$*$ in }\qquad L^\infty\bigl([0,T]; L^{\min\{\g,2\}}_{\rm loc}(\Omega)\bigr)\, . 
	\end{equation*}

\begin{remark} \label{r:g>2}
Observe that, owing to \eqref{est:res_g>2}, when $\g\geq2$ we get
\[
\sup_{\veps\in\,]0,1]}\left\|\vrho_\veps^{(1)}\right\|_{L^\infty_T(L^2)}\,\leq\, c\, .
\]
Therefore, in that case we actually have that $\vrho^{(1)}\,\in\,L^\infty_T(L^2)$ and $\vrho_\veps^{(1)}\,\stackrel{*}{\rightharpoonup}\,\vrho^{(1)}$ in that space.

Analogously, when $\g\geq2$ we also get
\[
\| \vre - 1 \|_{L^\infty_T(L^2 + L^\infty)}\,\leq\,c\,  \ep^{2(m-n)}.
\]

\end{remark}

\subsection{Constraints on the limit}\label{ss:ctl1_G}

In this subsection, we establish some properties that the limit points of the family $\bigl(\vrho_\veps,\vec u_\veps \bigr)_\veps$, which have been
identified here above, have to satisfy.

We first need a preliminary result about the decomposition of the pressure function, which will be useful in the following computations.
\begin{lemma}\label{lem:manipulation_pressure}
Let $(m,n)\in \R^2$ verify the condition $m+1\,\geq\,2n\,>\,m\geq 1$. Let $p$ be the pressure term satisfying the structural hypotheses \eqref{pp1_G} and \eqref{pp2_G}. Then, for any $\veps\in\,]0,1]$, one has
 \begin{equation}\label{p_decomp_lemma}
\begin{split}
\frac{1}{\veps^{2m}}\,\nabla_x\Big(p(\vrho_\veps)\,-\,p(\wtilde{\vrho}_\veps)\Big)\,=\,
\frac{1}{\veps^m}\nabla_x \Big(p^\prime (1)\vrho_\veps^{(1)}\Big)\,+\, 
\frac{1}{\veps^{2n-m}}\,\nabla_x \Pi_\veps\, ,
\end{split}
 \end{equation}
 where the functions $\vr_\veps^{(1)}$ have been introduced in \eqref{def_deltarho_G} and for all $T>0$ the family $\big(\Pi_\veps\big)_\veps$ verifies the uniform bound
\begin{equation}\label{unif-bound-Pi}
\left\|\Pi_\veps\right\|_{L^\infty_T(L^1+L^2+L^\g)}\,\leq\,C\,.
\end{equation}
When $\g\geq2$, one can dispense of the space $L^\g$ in the previous control of $\big(\Pi_\veps\big)_\veps$.
\end{lemma}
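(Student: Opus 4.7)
The plan is to define $\Pi_\veps$ directly by imposing the decomposition \eqref{p_decomp_lemma}. Up to an inessential additive constant, the natural choice is
$$\Pi_\veps\,:=\,\frac{1}{\veps^{3m-2n}}\Big(p(\vrho_\veps)\,-\,p(\wtilde{\vrho}_\veps)\Big)\,-\,\frac{p^\prime(1)}{\veps^{2(m-n)}}\,\vrho_\veps^{(1)}\,,$$
which, upon taking the gradient and rearranging, reproduces \eqref{p_decomp_lemma} (the exponents match since $\veps^{2n-m}\cdot\veps^{3m-2n}=\veps^{2m}$ and $\veps^{2n-m}\cdot\veps^{2(m-n)}=\veps^{m}$). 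The heart of the matter is thus to establish the uniform bound \eqref{unif-bound-Pi} on $\Pi_\veps$, which I plan to do by splitting $\Omega$ into the essential and residual sets $\Omega_\ess^\veps(t)$ and $\Omega_\res^\veps(t)$ introduced in Subsection \ref{ss:unif-est_G}, and analysing $[\Pi_\veps]_\ess$ and $[\Pi_\veps]_\res$ separately.

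On the essential set, where $\vrho_\veps\in[\rho_*/2,2]$ and $\wtilde\vrho_\veps$ lies close to $1$ by Proposition \ref{p:target-rho_bound_G}, I will combine two second-order Taylor expansions: one of $p$ around $\wtilde\vrho_\veps$ with increment $\vrho_\veps-\wtilde\vrho_\veps=\veps^m\vrho_\veps^{(1)}$, and one of $p^\prime$ around $1$ with increment $\wtilde\vrho_\veps-1=\veps^{2(m-n)}\wtilde r_\veps$. Combining them and cancelling the first-order contribution $p^\prime(1)\veps^m\vrho_\veps^{(1)}$ against the counter-term, what remains on $\Omega_\ess^\veps$ is, schematically,
$$\bbbone_{\Omega_\ess^\veps}\Big(p''(\cdot)\,\wtilde r_\veps\,\vrho_\veps^{(1)}\,+\,\veps^{2n-m}\,p''(\cdot)\,(\vrho_\veps^{(1)})^{2}\Big).$$
Using the $L^\infty$ bound on $\wtilde r_\veps$ from \eqref{uni_varrho1_G}, the boundedness of $p''$ on $[\rho_*/2,2]$, the inequality $2n-m>0$, and the $L^\infty_T(L^2)$ control of $[\vrho_\veps^{(1)}]_\ess$ provided by \eqref{est:rho_ess_G}, I will then read off $[\Pi_\veps]_\ess\in L^\infty_T(L^1+L^2)$ uniformly in $\veps$.

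The delicate part will be the estimate of $[\Pi_\veps]_\res$, because the Taylor argument is unavailable when $\vrho_\veps$ is very large (or vanishes), and each of the two terms composing $\Pi_\veps$ individually displays a singular prefactor in $\veps$. My strategy will rely on the growth hypothesis \eqref{pp2_G}, which gives $|p(\vrho_\veps)|\leq C(1+\vrho_\veps^\gamma)$, combined with the smallness estimate $|\Omega_\res^\veps(t)|\leq C\veps^{2m}$ of \eqref{est:M_res-measure_G} and the $L^\gamma$ control \eqref{est:rho_res_G}. These will yield $\|[p(\vrho_\veps)-p(\wtilde\vrho_\veps)]_\res\|_{L^1}\leq C\veps^{2m}$, and, via H\"older's inequality, $\|[\vrho_\veps^{(1)}]_\res\|_{L^1}\leq C\veps^m$. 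Division by the respective singular prefactors then produces $L^1_x$ contributions of size $\veps^{2n-m}$, uniformly bounded in $\veps$ precisely because $2n>m$.

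Summing the two pieces will give $\Pi_\veps\in L^\infty_T(L^1+L^2)$, which embeds in $L^\infty_T(L^1+L^2+L^\gamma)$ and thus proves \eqref{unif-bound-Pi}. When $\gamma\geq 2$ the residual estimates can equally be performed in $L^2$, thanks to Remark \ref{r:g>2}, so the $L^\gamma$ space becomes superfluous and the final bound holds in $L^\infty_T(L^1+L^2)$, as claimed. The main obstacle, as anticipated, is the handling of the residual contribution: it is only through the simultaneous use of the smallness of the residual set, the $L^\gamma$ integrability of $[\vrho_\veps]_\res$, and the scaling inequality $2n>m$ from \eqref{eq:scale-our} that the singular prefactors $\veps^{-(3m-2n)}$ and $\veps^{-2(m-n)}$ are absorbed, explaining in passing why the restriction $2n>m$ in the range of scalings under consideration enters at this stage.
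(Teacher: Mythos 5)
Your proof is correct, and the substance is the same as the paper's: define $\Pi_\veps$ as the difference that makes \eqref{p_decomp_lemma} an identity, then bound it via Taylor expansions on the essential set (where the first-order contribution $p'(1)\veps^m\vrho^{(1)}_\veps$ cancels against the counter-term) and via the growth bound $|p(\vrho)|\lesssim 1+\vrho^\g$, together with $|\Omega^\veps_\res|\lesssim\veps^{2m}$, on the residual set, with the scaling assumption $2n>m$ absorbing the remaining singular prefactor $\veps^{-(2n-m)}$ into a convergent power.

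The only genuine difference is organizational, and it is worth recording. The paper first writes an algebraic identity,
$p(\vrho_\veps)-p(\wtilde\vrho_\veps)\,=\,\big[p(\vrho_\veps)-p(\wtilde\vrho_\veps)-p'(\wtilde\vrho_\veps)(\vrho_\veps-\wtilde\vrho_\veps)\big]\,+\,\big(p'(\wtilde\vrho_\veps)-p'(1)\big)(\vrho_\veps-\wtilde\vrho_\veps)\,+\,p'(1)(\vrho_\veps-\wtilde\vrho_\veps)$,
then estimates the two bracketed terms separately in $L^1$ and $L^2+L^\g$ respectively, applying the essential/residual split \emph{inside} each estimate. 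You instead split $\Omega$ into $\Omega^\veps_\ess$ and $\Omega^\veps_\res$ at the outset and treat $\Pi_\veps$ as a single quantity on each piece: on $\Omega^\veps_\ess$ the double Taylor expansion yields the explicit residue $p''(\eta_\veps)\wtilde r_\veps\vrho^{(1)}_\veps+\tfrac12\veps^{2n-m}p''(z_\veps)(\vrho^{(1)}_\veps)^2$, which you control in $L^2+L^1$ by \eqref{est:rho_ess_G} and \eqref{uni_varrho1_G}; on $\Omega^\veps_\res$ you bound the two constituent terms of $\Pi_\veps$ crudely in $L^1$ via \eqref{est:M_res-measure_G} and \eqref{est:rho_res_G}. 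A small bonus of your organization is that it gives the marginally sharper conclusion $\big(\Pi_\veps\big)_\veps\subset L^\infty_T(L^1+L^2)$ for every $\g>3/2$, whereas the paper's statement keeps an $L^\g$ summand in the range $1<\g<2$; the difference is immaterial for the sequel but is a tidy observation. Two cosmetic remarks: what you call ``two second-order Taylor expansions'' is in fact a second-order expansion of $p$ about $\wtilde\vrho_\veps$ and a first-order expansion of $p'$ about $1$; and the boundedness of $p''$ on the relevant compact set should be explicitly attributed to the $C^2(0,\infty)$ regularity in \eqref{pp1_G} together with the positivity and boundedness of $\vrho_\veps$ and $\wtilde\vrho_\veps$ on $\mathcal O_\ess$.
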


\begin{proof}
First of all, we write
\begin{equation}\label{rel_p}
\begin{split}
\frac{1}{\veps^{2m}}\,\nabla_x\Big(p(\vrho_\veps)\,-\,p(\wtilde{\vrho}_\veps)\Big)\,&=\,\frac{1}{\veps^{2m}}\,\nabla_x\Big(p(\vrho_\veps)\,-\,p(\wtilde{\vrho}_\veps)-p^\prime(\widetilde{\vrho}_\veps)(\vrho_\veps -\widetilde{\vrho}_\veps ) \Big)\\
&\qquad\qquad+\,\frac{1}{\veps^{m}}\,\nabla_x\Big(\big(p^\prime(\widetilde{\vrho}_\veps)\,-\,p'(1)\big)\,\vr_\veps^{(1)}\Big)\,+\,
\frac{1}{\veps^m}\nabla_x \Big(p^\prime (1)\vrho_\veps^{(1)}\Big)\,.
\end{split}
\end{equation}

We start by analysing the first term on the right-hand side of \eqref{rel_p}. For the essential part, we can employ a Taylor expansion
to write
$$
\left[p(\vrho_\veps)-p(\widetilde{\vrho}_\veps)-p^\prime(\widetilde{\vrho}_\veps)(\vrho_\veps-\widetilde{\vrho}_\veps)\right]_\ess=\left[p^{\prime \prime}(z_\veps )(\vrho_\veps - \widetilde{\vrho}_\veps)^2\right]_\ess\, ,
$$
where $z_\veps$ is a suitable point between $\vrho_\veps$ and $\widetilde{\vrho}_\veps$. Thanks to the uniform bound \eqref{est:rho_ess_G}, we have that this term is of order $O(\veps^{2m})$ in $L^\infty_T(L^1)$, for any $T>0$ fixed. For the residual part, we can use \eqref{est:M_res-measure_G} and \eqref{est:rho_res_G}, together with the boundedness of the profiles $\wtilde\vr_\veps$ (keep in mind Proposition \ref{p:target-rho_bound_G}), to deduce that
\[
\left\|\left[p(\vrho_\veps)-p(\widetilde{\vrho}_\veps)-p^\prime(\widetilde{\vrho}_\veps)(\vrho_\veps-\widetilde{\vrho}_\veps)\right]_\res\right\|_{L^\infty_T(L^1)}\,\leq C\,\veps^{2m}\,.
\]
We refer to e.g. Lemma 4.1 of \cite{F_2019} for details.

In a similar way, a Taylor expansion for the second term on the right-hand side of \eqref{rel_p} gives
$$
\big( p^\prime(\widetilde{\vrho}_\veps)-p^\prime(1)\big)\vrho^{(1)}_\veps\,=\,p''(\eta_\veps )( \widetilde{\vrho}_\veps -1)\vrho^{(1)}_\veps\, ,
$$
where $\eta_\veps$ is a suitable point between $\widetilde{\vrho}_\veps$ and 1. Owing to Proposition \ref{p:target-rho_bound_G} again and to bound \eqref{uni_varrho1_G},
we infer that this term is of order $O(\veps^{2(m-n)})$ in $L^\infty_T(L^2+L^\g)$, for any time $T>0$ fixed.

Then, defining 
\begin{equation}\label{nablaPiveps}
\Pi_\veps:= \frac{1}{\veps^{2(m-n)}}\left[\frac{p(\vrho_\veps )-p(\widetilde{\vrho}_\veps)}{\veps^m}-p^\prime(1)\vrho_\veps^{(1)}\right] 
\end{equation}
we have the control \eqref{unif-bound-Pi}.

The final statement concerning the case $\g\geq2$ easily follows from Remark \ref{r:g>2}.
This completes the proof of the lemma. 
\qed
\end{proof}

\medbreak
Notice that the last term appearing in \eqref{p_decomp_lemma} is singular in $\veps$. This is in stark contrast with the situation considered in previous works, see e.g. \cite{F-G-N}, \cite{F-G-GV-N}, \cite{F-N_CPDE} and \cite{F_2019}. However, its gradient structure will play a fundamental role in the computations below.

This having been pointed out, we can now analyse the constraints on the weak-limit points $\big(\vr^{(1)},\vec U\big)$, identified in relations \eqref{conv:u_G} and \eqref{conv:rr_G} above. 

\subsubsection{The case of large values of the Mach number: $m>1$} \label{ss:constr_2_G}

We start by considering the case of anisotropic scaling, namely $m>1$ and $m+1\geq 2n>m$. Notice that, in particular, one has $m>n$.

\begin{proposition} \label{p:limitpoint_G}
Let $m>1$ and $m+1\geq 2n>m$ in \eqref{ceq_G}-\eqref{meq_G}.
Let $\left( \vre, \ue \right)_{\veps}$ be a family of weak solutions, related to initial data $\left(\vrho_{0,\veps},\vec u_{0,\veps}\right)_\veps$
verifying the hypotheses of Paragraph \ref{sss:data-weak_G}. Let $(\vrho^{(1)}, \vec{U} )$ be a limit point of the sequence
$\left(\vrho_\veps^{(1)}, \ue\right)_{\veps}$, as identified in Subsection \ref{ss:unif-est_G}. Then,
\begin{align}
&\vec{U}\,=\,\,\Big(\vec{U}^h\,,\,0\Big)\,,\qquad\qquad \mbox{ with }\qquad \vec{U}^h\,=\,\vec{U}^h(t,x^h)\quad \mbox{ and }\quad \div_{\!h}\,\vec{U}^h\,=\,0\,,  \label{eq:anis-lim_1_G} \\[1ex]
&\nabla_x \vrho^{(1)}\,=\, 0
\qquad\qquad\mbox{ in }\;\,\mc D^\prime(\R_+\times \Omega)\,. \label{eq:anis-lim_2_G} 
\end{align}
\end{proposition}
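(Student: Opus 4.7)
The plan is to deduce the three claims by passing to the limit in the weak formulations of the mass equation and of the momentum equation tested against suitably rescaled test functions, exploiting in a crucial way the equilibrium relation \eqref{prF_G} and the pressure decomposition of Lemma \ref{lem:manipulation_pressure}. First, I would get $\div \vU=0$ from the mass equation \eqref{weak-con_G}: since $\vrho_\veps\to 1$ strongly by \eqref{rr1_G} and $\vu_\veps\rightharpoonup \vU$ by \eqref{conv:u_G}, the product $\vrho_\veps\vu_\veps$ has a distributional limit equal to $\vU$, which is therefore divergence-free.

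Next, to establish \eqref{eq:anis-lim_2_G}, I would test \eqref{weak-mom_G} against $\veps^m\vec\phi$ for an arbitrary $\vec\phi\in C^\infty_c([0,T[\,\times\oline\Omega;\R^3)$ with $\vec\phi\cdot\vec n|_{\d\Omega}=0$. The $\d_t$, convective and viscous contributions all vanish by the uniform bounds of Subsection \ref{ss:unif-est_G}, and the Coriolis term scales as $\veps^{m-1}$, which goes to $0$ since $m>1$. The delicate point is the combined pressure–gravity contribution: using \eqref{prF_G} to write $\wtilde\vrho_\veps\nabla_xG=\veps^{-2(m-n)}\nabla_xp(\wtilde\vrho_\veps)$, one gets the cancellation
\[
-\frac{1}{\veps^{2m}}\int p(\vrho_\veps)\div\vec\psi-\frac{1}{\veps^{2n}}\int \vrho_\veps\nabla_xG\cdot\vec\psi=-\frac{1}{\veps^{2m}}\int[p(\vrho_\veps)-p(\wtilde\vrho_\veps)]\,\div\vec\psi-\veps^{m-2n}\int \vrho_\veps^{(1)}\nabla_xG\cdot\vec\psi\,.
\]
Applying Lemma \ref{lem:manipulation_pressure} and multiplying by $\veps^m$, the leading order reduces to $-p'(1)\int\vrho_\veps^{(1)}\div\vec\phi$, while the two remainder terms carry a factor $\veps^{2m-2n}$. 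The scaling condition $2n\leq m+1$ guarantees $2m-2n\geq m-1>0$, so, combined with the uniform bounds \eqref{uni_varrho1_G} and \eqref{unif-bound-Pi}, those remainders vanish in the limit. Since $p'(1)=1$, we conclude $\int\vrho^{(1)}\div\vec\phi=0$ for every admissible $\vec\phi$, which is \eqref{eq:anis-lim_2_G}.

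Finally, to obtain the Taylor–Proudman structure \eqref{eq:anis-lim_1_G}, I would test \eqref{weak-mom_G} against $\veps\vec\phi$ with $\vec\phi$ smooth, compactly supported and \emph{divergence-free}. Then $\d_t$, convective and viscous terms again vanish, the pressure term is identically zero because $\div\vec\phi=0$, and the gravity term splits (via \eqref{prF_G}) into $-\veps^{1-2m}\int p(\wtilde\vrho_\veps)\div\vec\phi = 0$ plus $\veps^{m+1-2n}\int\vrho_\veps^{(1)}\nabla_xG\cdot\vec\phi$. The exponent $m+1-2n$ is non-negative by assumption, so the latter is either $o(1)$ (if $2n<m+1$) or, at the endpoint $2n=m+1$, converges to $\int\vrho^{(1)}\nabla_xG\cdot\vec\phi$; but step 2 yields that $\vrho^{(1)}$ depends only on time, hence $\vrho^{(1)}\nabla_xG=-\vrho^{(1)}(t)\vec e_3=\nabla_x(-\vrho^{(1)}(t)x^3)$ is a perfect gradient and vanishes against $\vec\phi$. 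What survives is the Coriolis term, which passes to $\int\vec e_3\times\vU\cdot\vec\phi$. We thus get $\mbb H(\vec e_3\times\vU)=0$, so $\vec e_3\times\vU=\nabla_x\Phi$ for some distribution $\Phi$; a component-wise reading forces $\Phi=\Phi(t,x^h)$, whence $\vU^h=\vU^h(t,x^h)$ with $\divh\vU^h=0$. Together with $\div\vU=0$ and $U^3|_{\d\Omega}=0$, this yields $U^3\equiv0$.

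The main obstacle is handling the gravity term, which is a priori more singular than the Mach-number scaling would allow: the remedy is the algebraic cancellation with the pressure granted by \eqref{prF_G}, supplemented by the fine decomposition of Lemma \ref{lem:manipulation_pressure} and by the fact that $G$ depends only on $x^3$, so that $\vrho^{(1)}\nabla_xG$ is a perfect gradient once $\vrho^{(1)}$ is known to be $x$-independent. The scaling constraint $m<2n\leq m+1$ is exactly what makes all remainder exponents non-negative and lets us close the argument.
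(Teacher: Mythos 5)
Your proof is correct and follows essentially the same route as the paper's: pass to the limit in the mass equation for $\div\vU=0$, combine pressure and gravity via \eqref{prF_G} and the decomposition of Lemma \ref{lem:manipulation_pressure}, test the momentum equation against $\veps^m\vec\phi$ to obtain the constancy of $\vrho^{(1)}$ and against $\veps\vec\phi$ divergence-free for the Taylor--Proudman constraint, and absorb the surviving endpoint term $\vrho^{(1)}\nabla_x G$ as a perfect gradient once $\nabla_x\vrho^{(1)}=0$ is known. The only cosmetic difference is that you appeal to $2n\le m+1$ to control the step-two remainders where the paper simply uses the consequence $m>n$.
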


\begin{proof} First of all, let us consider the weak formulation of the mass equation \eqref{ceq_G}: for any test function $\varphi\in C_c^\infty\bigl(\R_+\times\Omega\bigr)$, denoting $[0,T]\times K\,:=\,\Supp \, \varphi$, with $\vphi(T,\cdot)\equiv0$, we have
$$
-\int^T_0\int_K\bigl(\vrho_\veps-1\bigr)\,\d_t\varphi \dxdt\,-\,\int^T_0\int_K\vrho_\veps\,\vec{u}_\veps\,\cdot\,\nabla_{x}\varphi \dxdt\,=\,
\int_K\bigl(\vrho_{0,\veps}-1\bigr)\,\varphi(0,\,\cdot\,)\dx\,.
$$
We can easily pass to the limit in this equation, thanks to the strong convergence $\vrho_\veps\longrightarrow1$, provided by \eqref{rr1_G}, and the weak convergence of
$\vec{u}_\veps$ in $L_T^2\bigl(L^6_{\rm loc}\bigr)$, provided by \eqref{conv:u_G} and Sobolev embeddings. Notice that one always has $1/\g\,+\,1/6\,\leq\,1$. In this way,
we find
$$
-\,\int^T_0\int_K\vec{U}\,\cdot\,\nabla_{x}\varphi \dxdt\,=\,0\, ,
$$
for any test function $\varphi \, \in C_c^\infty\bigl(\R_+\times\Omega\bigr)$ taken as above. The previous relation in particular implies
\begin{equation} \label{eq:div-free_G}
\div \U = 0 \qquad\qquad\mbox{ a.e. in }\; \,\R_+\times \Omega\,.
\end{equation}

Next, we test the momentum equation \eqref{meq_G} on $\veps^m\,\vec\phi$, for a smooth compactly supported $\vec\phi$.
Using the uniform bounds established in Subsection \ref{ss:unif-est_G}, it is easy to see that the term presenting the derivative in time, the viscosity term and the convective
term converge to $0$, in the limit $\veps\ra0^+$. Since $m>1$, also the Coriolis term vanishes when $\veps\ra0^+$. It remains us to consider
the pressure and gravity terms in the weak formulation \eqref{weak-mom_G} of the momentum equation:
using relation \eqref{prF_G}, we see that we can couple them to write
\begin{align}
\frac{1}{\veps^{2m}}\,\nabla_x p(\vrho_\veps)-\, \frac{1}{\veps^{2n}}\,\vrho_\veps\nabla_x G\,=\,\frac{1}{\veps^{2m}}\nabla_x\Big(p(\vrho_\veps)\,-\,p(\wtilde{\vrho}_\veps)\Big)-\,\veps^{m-2n}\vrho_\veps^{(1)}\nabla_x G\,. \label{eq:mom_rest_1_G}
\end{align}
By \eqref{uni_varrho1_G} and the fact that $m>n$, we readily see that the last term in the right-hand side of \eqref{eq:mom_rest_1_G} converges to $0$,
when tested against any smooth compactly supported $\veps^m\,\vec\phi$.
At this point, we use Lemma \ref{lem:manipulation_pressure} to treat the first term on the right-hand side of \eqref{eq:mom_rest_1_G}.
So, taking $\vec\phi \in C^\infty_c([0,T[\, \times \Omega)$ (for some $T>0$), we test the momentum equation against $\veps^m\,\vec\phi$ and using \eqref{conv:rr}, in the limit $\veps\ra0^+$ we find that 
$$ \int_0^T \int_\Omega p'(1) \vr^{(1)} \div  \vec\phi \dxdt = 0\, .$$
Recalling that $p^\prime (1)=1$, the previous relation implies \eqref{eq:anis-lim_2} for $\vr^{(1)}$.

In particular, that relation implies that $\vrho^{(1)}(t,x)\,=\,c(t)$ for almost all $(t,x)\in\R_+\times\Omega$, for a suitable function $c=c(t)$ depending only on time.

Now, in order to see effects due to the fast rotation in the limit, we need to ``filter out'' the contribution coming from the low Mach number.
To this end, we test \eqref{meq_G} on $\veps\,\vec\phi$, where this time we take $\vec\phi\,=\,\curl\vec\psi$, for some smooth compactly supported $\vec\psi\,\in C^\infty_c\bigl([0,T[\,\times\Omega\bigr)$, with $T>0$.
Once again, by uniform bounds we infer that the $\d_t$ term, the convective term and the viscosity term all converge to $0$ when $\veps\ra0^+$.
As for the pressure and the gravitational force, we argue as in \eqref{eq:mom_rest_1_G}: since the structure of $\vec\phi$ kills any gradient term, we are left with the convergence
of the integral
$$
\int^T_0\int_\Omega\veps^{m-2n+1}\vrho_\veps^{(1)}\nabla_x G\cdot\vec\phi\,\dxdt\,\longrightarrow\,
\delta_0(m-2n+1)\int^T_0\int_\Omega\vrho^{(1)}\nabla_x G\cdot\vec\phi\,\dxdt\,,
$$
where $\de_0(\zeta)\,=\,1$ if $\zeta=0$, $\de_0(\zeta)\,=\,0$ otherwise.
Finally, arguing as done for the mass equation, we see that the Coriolis term converges to the integral $\int^T_0\int_\Omega\e_3\times\vec{U}\cdot\vec\phi$.

Consider the case $m+1>2n$ for a while.
Passing to the limit for $\veps\ra0^+$, we find that $\mbb{H}\left(\e_3\times\vec{U}\right)\,=\,0$, which implies that
$\e_3\times\vec{U}\,=\,\nabla_x\Phi$, for some potential function $\Phi$. From this relation, one easily deduces that $\Phi=\Phi(t,x^h)$, i.e. $\Phi$ does not depend
on $x^3$, and that the same property is inherited by $\vec{U}^h\,=\,\bigl(U^1,U^2\bigr)$, i.e. one has $\vec{U}^h\,=\,\vec{U}^h(t,x^h)$. Furthermore, since $\vec U^h\,=\,-\,\nabla_h^\perp\Phi$, we get that $\div_{\!h}\,\vec{U}^h\,=\,0$.
At this point, we
combine this fact with \eqref{eq:div-free_G} to infer that $\d_3 U^3\,=\,0$; but, thanks to the boundary condition
\eqref{bc1-2_G}, we must have $\bigl(\vec{U}\cdot\vec{n}\bigr)_{|\d\Omega}\,=\,0$, which implies that $U^3$ has to vanish at the boundary of $\Omega$.
Thus, we finally deduce that $U^3\,\equiv\,0$, whence \eqref{eq:anis-lim_1_G} follows (see also the proof of Proposition \ref{p:limitpoint} in this regard).

Now, let us focus on the case when $m+1=2n$. The previous computations show that, when $\veps\ra0^+$, we get
\begin{equation}\label{eq:streamfunction_1} 
\vec{e}_{3}\times \vec{U}+\vrho^{(1)}\nabla_x G\,=\,\nabla_x\Phi \qquad\qquad\mbox{ in }\; \mc D^\prime(\R_+\times \Omega)\,,
\end{equation}
for a new suitable function $\Phi$. However, owing to \eqref{eq:anis-lim_2_G}, we can say that $\vrho^{(1)}\nabla_x G\,=\,\nabla_x\big(\vr^{(1)}\,G\big)$; hence, the previous relations
can be recasted as $\e_3\times\vec U\,=\,\nabla_x\wtilde\Phi$, for a new scalar function $\wtilde\Phi$. Therefore, the same analysis as above applies,
allowing us to gather \eqref{eq:anis-lim_1_G} also in the case $m+1=2n$.
\qed
\end{proof}

\subsubsection{The case $m=1$} \label{ss:constr_1_G}

Now we focus on the case $m=1$. In this case, the fast rotation and weak compressibility
effects are of the same order: this allows to reach the so-called \emph{quasi-geostrophic balance} in the limit.


\begin{proposition}  \label{p:limit_iso_G}
Take $m=1$ and $1/2<n<1$ in system \eqref{ceq_G}-\eqref{meq_G}.
Let $\left( \vre, \ue\right)_{\veps}$ be a family of weak solutions to \eqref{ceq_G}-\eqref{meq_G}, associated with initial data
$\left(\vrho_{0,\veps},\vec u_{0,\veps}\right)$ verifying the hypotheses fixed in Paragraph \ref{sss:data-weak_G}.
Let $(\vrho^{(1)}, \vec{U} )$ be a limit point of the sequence $\left(\vrho^{(1)}_{\veps} , \ue\right)_{\veps}$, as identified in Subsection \ref{ss:unif-est_G}.
Then,
\begin{align}
\vr^{(1)}\,=\,\vr^{(1)}(t,x^h)\quad\mbox{ and }\quad\vec{U}\,=\,\,\Big(\vec{U}^h\,,\,0\Big)\,,\quad \mbox{ with }\quad 
\vec{U}^h\,=\,\nabla^\perp_h \vrho^{(1)}
\;\mbox{ a.e. in }\;\R_+ \times \R^2\,.  \label{eq:for q_G}
\end{align}
In particular, one has $\vec U^h\,=\,\vec U^h(t,x^h)$ and $\div_{\!h}\vec U^h\,=\,0$.
\end{proposition}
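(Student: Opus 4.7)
The argument will closely follow the template of Proposition \ref{p:limitpoint_G}, but with extra bookkeeping to track how the stronger gravitational scaling interacts with the quasi-geostrophic balance. First, the plan is to pass to the limit in the continuity equation \eqref{weak-con_G} tested against any $\varphi\in C_c^\infty([0,T[\,\times\Omega)$: the strong convergence $\vrho_\veps\to 1$ provided by \eqref{rr1_G}, together with the weak convergence \eqref{conv:u_G} of $\vu_\veps$ in $L^2_T(L^6_{\rm loc})$ (via Sobolev embedding), allows one to conclude, exactly as in the proof of Proposition \ref{p:limitpoint_G}, that $\div\vec U=0$ in $\mathcal D'(\R_+\times\Omega)$.

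Next, I will test the momentum equation \eqref{weak-mom_G} against $\veps\,\vec\phi$ with $\vec\phi\in C_c^\infty([0,T[\,\times\overline\Omega;\R^3)$ satisfying $(\vec\phi\cdot\vec n)_{|\d\Omega}=0$. The uniform estimates in Subsection \ref{ss:unif-est_G} show at once that the contributions coming from $\d_t(\vrho_\veps\vu_\veps)$, the convective term $\div(\vrho_\veps\vu_\veps\otimes\vu_\veps)$ and the viscous term $\div\mathbb{S}(\nabla_x\vu_\veps)$ are all of order $O(\veps)$ and hence disappear in the limit; the Coriolis contribution $\e_3\times\vrho_\veps\vu_\veps$ passes to $\e_3\times\vec U$ thanks to \eqref{rr1_G} and \eqref{conv:u_G}. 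The heart of the matter lies in the combined pressure--gravity term: exploiting the equilibrium relation \eqref{prF_G} with $m=1$ I rewrite
\[
\frac{1}{\veps^{2}}\nabla_x p(\vrho_\veps)\,-\,\frac{\vrho_\veps}{\veps^{2n}}\nabla_x G\,=\,\frac{1}{\veps^{2}}\nabla_x\bigl(p(\vrho_\veps)-p(\wtilde\vrho_\veps)\bigr)\,-\,\veps^{1-2n}\,\vrho_\veps^{(1)}\,\nabla_x G\,,
\]
and Lemma \ref{lem:manipulation_pressure} (applied with $m=1$ and $p'(1)=1$) decomposes the first term in the right-hand side as $\veps^{-1}\nabla_x\vrho_\veps^{(1)}+\veps^{1-2n}\nabla_x\Pi_\veps$ with $(\Pi_\veps)_\veps$ uniformly bounded in $L^\infty_T(L^1+L^2+L^\g)$. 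After multiplication by $\veps$, the singular pressure part produces precisely $\nabla_x\vrho_\veps^{(1)}$, while the two remainders become of order $\veps^{2-2n}$; since $2-2n>0$ by the standing assumption $n<1$, both go to zero as tempered distributions by the bounds \eqref{unif-bound-Pi} and \eqref{uni_varrho1_G}.

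Passing to the limit $\veps\to 0^+$, and using \eqref{conv:rr_G} to identify the weak limit of $\nabla_x\vrho_\veps^{(1)}$ with $\nabla_x\vrho^{(1)}$, I then obtain the geostrophic balance
\[
\e_3\times\vec U\,+\,\nabla_x\vrho^{(1)}\,=\,0\qquad\text{in }\;\mathcal D'(\R_+\times\Omega)\,.
\]
Reading off its three components yields $U^2=\d_1\vrho^{(1)}$, $U^1=-\,\d_2\vrho^{(1)}$ and $\d_3\vrho^{(1)}=0$. The last relation forces $\vrho^{(1)}=\vrho^{(1)}(t,x^h)$, and inserting this back gives $\vec U^h=\nabla_h^\perp\vrho^{(1)}$, which is by construction divergence-free and independent of $x^3$. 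Finally, combining $\div\vec U=0$ with $\div_h\vec U^h=0$ one gets $\d_3 U^3=0$; the complete slip condition \eqref{bc1-2_G} at $x^3\in\{0,1\}$ then yields $U^3\equiv 0$, completing the proof of \eqref{eq:for q_G}.

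The delicate point, and the main obstacle, is precisely controlling the auxiliary pressure fluctuation $\Pi_\veps$ appearing in Lemma \ref{lem:manipulation_pressure}: this term is intrinsically more singular than the ``standard'' $\nabla_x\vrho_\veps^{(1)}/\veps$ piece, and only the strict inequality $n<1$ (i.e.\ the avoidance of the strong stratification regime) allows it to be absorbed as a vanishing remainder after multiplication by the $\veps$ factor dictated by the Coriolis scaling. Everything else is a reformulation in the barotropic setting of the reasoning already carried out for the full Navier-Stokes-Fourier system in Proposition \ref{p:limit_iso}.
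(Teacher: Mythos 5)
Your proof is correct and follows essentially the same route as the paper: pass to the limit in the continuity equation to get $\div\vec U=0$, test the momentum equation against $\veps\vec\phi$, use the equilibrium relation \eqref{prF_G} together with Lemma \ref{lem:manipulation_pressure} to isolate the surviving term $\nabla_x\vr_\veps^{(1)}$ and check that the two remainders $\veps^{2-2n}\nabla_x\Pi_\veps$ and $\veps^{2-2n}\vr_\veps^{(1)}\nabla_xG$ vanish because $n<1$, then read off the geostrophic balance $\e_3\times\vec U+\nabla_x\vr^{(1)}=0$ and unpack it component by component. The only stylistic difference is that you spell out the final unpacking (vertical independence of $\vr^{(1)}$, $\vec U^h=\nabla_h^\perp\vr^{(1)}$, and $U^3\equiv0$ from the boundary condition) where the paper simply refers back to the argument of Proposition \ref{p:limitpoint_G}.
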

\begin{proof}
Arguing as in the proof of Proposition \ref{p:limitpoint_G}, it is easy to pass to the limit in the continuity equation. In particular, we obtain again relation \eqref{eq:div-free_G} for $\vec U$.

Only the analysis of the momentum equation changes a bit with respect to the previous case $m>1$. Now, since the most singular terms are of order $\veps^{-1}$ (keep in mind Lemma \ref{lem:manipulation_pressure}), we test
the weak formulation \eqref{weak-mom_G} of the momentum equation against $\veps\,\vec\phi$, where $\vec \phi$ is a smooth compactly supported function. Similarly to what done above, the uniform bounds of Subsection \ref{ss:unif-est_G} allow us to infer that the only quantity which does not vanish in the limit is the sum of the terms involving the Coriolis force, the pressure and the gravitational force: more precisely, using also Lemma \ref{lem:manipulation_pressure}, we have
$$
\vec{e}_{3}\times \vrho_{\veps}\ue\,+\frac{\nabla_x \Big( p(\vrho_\veps)-p(\widetilde{\vrho}_\veps)\Big)}{\veps}\,-\,
\veps^{2(1-n)}\vrho_\veps^{(1)}\nabla_x G\,=\,\mc O(\veps)\, ,
$$
in the sense of $\mc D'(\R_+\times\Omega)$.
Following the same computations performed in the proof of Proposition \ref{p:limitpoint_G}, in the limit $\veps\ra0^+$ it is easy to get that
$$ 
\vec{e}_{3}\times \vec{U}+\nabla_x\left(p^\prime (1) \vrho^{(1)}\right)\,=\,0\qquad\qquad\mbox{ in }\; \mc D'\big(\R_+\times \Omega\big)\,.
$$ 
After recalling that $p^\prime (1)=1$, this equality can be equivalently written as 
$$ 
\vec{e}_{3}\times \vec{U}+\nabla_x \vrho^{(1)}\,=\,0 \qquad\qquad\mbox{ a.e. in }\; \R_+ \times \Omega\,.
$$ 
Notice that $\vec U$ is in fact in $L^2_{\rm loc}(\R_+;L^2)$, therefore so is $\nabla_x \vr^{(1)}$; hence the previous relation is in fact satisfied almost everywhere
in $\R_+\times\Omega$.

At this point, we can repeat the same argument used in the proof of Proposition \ref{p:limitpoint_G} to deduce \eqref{eq:for q_G}.
The proposition is thus proved.
\qed
\end{proof}

\section{Convergence in the case $m>1$}\label{s:proof_G}

In this section, we complete the proof of Theorem \ref{th:m>1}. Namely, we show convergence in the weak formulation of the primitive system, in the case when $m>1$ and $m+1\geq 2n>m$. 

In Proposition \ref{p:limitpoint_G}, we have already seen how passing to the limit in the mass equation.
However, problems arise when tackling the convergence in the momentum equation. Indeed, the analysis carried out so far
is not enough to identify the weak limit of the convective term $\vrho_\veps\,\vec u_\veps\otimes\vec u_\veps$, which is highly non-linear.
For proving that this term converges to the expected limit $\vec U\otimes\vec U$, the key point is to control the fast oscillations in time of the solutions,
generated by the singular terms in the momentum equation. For this, we will use a compensated compactness argument and we exploit the algebraic structure of the wave system
underlying the primitive equations \eqref{ceq_G}-\eqref{meq_G}.

In Subsection \ref{ss:acoustic_G}, we start by giving a quite accurate description of those fast oscillations. 
Then, using that description, we are able, in Subsection \ref{ss:convergence_G}, to establish two fundamental properties: on the one hand, strong convergence of a suitable quantity related to the velocity fields; on the other hand, the other terms, which do not involve that quantity, tend to vanish when $\veps\ra0^+$.
In turn, this allows us to complete, in Subsection \ref{ss:limit_G}, the proof of the convergence.

\subsection{Analysis of the strong oscillations} \label{ss:acoustic_G}

The goal of the present subsection is to describe the fast oscillations in time of the solutions. First of all, we recast our equations into a wave system. Then, we establish uniform bounds for the quantities appearing in the wave system. Finally, we apply a regularisation
in space procedure for all the quantities, which is preparatory in view of the computations of Subsection \ref{ss:convergence_G}.

\subsubsection{Formulation of the acoustic wave system} \label{sss:wave-eq_G}

We introduce the quantity
$$
\vec{V}_\veps\,:=\,\vrho_\veps\vec{u}_\veps\,.
$$
Then, straightforward computations show that we can recast the continuity equation in the form
\begin{equation} \label{eq:wave_mass_G}
\veps^m\,\d_t\vrho^{(1)}_\veps\,+\,\div\vec{V}_\veps\,=\,0\,,
\end{equation}
where $\vrho^{(1)}_\veps$ is defined in \eqref{def_deltarho_G}.
Next, thanks to Lemma \ref{lem:manipulation_pressure} and the static relation \eqref{prF_G}, we can derive the following form of the momentum equation:
\begin{align}
\veps^m\,\d_t\vec{V}_\veps\,+\,\veps^{m-1}\,\e_3\times \vec V_\veps\,+p^\prime(1)\,\nabla_x \vrho_{\veps}^{(1)}\,&=\,
\veps^{2(m-n)}\left(\vrho_\veps^{(1)}\nabla_x G\,-\,\nabla_x\Pi_\veps 
\right) \label{eq:wave_momentum_G} \\
&\qquad\qquad
+\,\veps^m\,\Big(\div\mbb{S}\!\left(\nabla_x\vec{u}_\veps\right)\,-\,\div\!\left(\vrho_\veps\vec{u}_\veps\otimes\vec{u}_\veps\right)
\Big)\,. \nonumber
\end{align}

Then, if we define
\begin{equation}\label{def_f-g}
\vec f_\veps :=\div\big(\mbb{S}\!\left(\nabla_x\vec{u}_\veps\right)\,-\,\vrho_\veps\vec{u}_\veps\otimes\vec{u}_\veps\big)\qquad \mbox{ and }\qquad
\vec g_\veps :=\vrho_\veps^{(1)}\nabla_x G\,-\,\nabla_x\Pi_\veps\,,
\end{equation}
recalling that we have normalised the pressure function so that $p^\prime (1)=1$, we can rewrite the primitive system \eqref{ceq_G}-\eqref{meq_G} in the following form:
\begin{equation} \label{eq:wave_syst_G}
\left\{\begin{array}{l}
       \veps^m\,\d_t \vrho^{(1)}_\veps\,+\,\div\vec{V}_\veps\,=\,0 \\[1ex]
       \veps^m\,\d_t\vec{V}_\veps\,+\,\nabla_x \vrho_\veps^{(1)}\,+\,\veps^{m-1}\,\e_3\times \vec V_\veps\,=\,\veps^m\,\vec f_\veps +\veps^{2(m-n)}\vec g_\veps\,.
       \end{array}
\right.
\end{equation}

We remark that system \eqref{eq:wave_syst_G} has to be read in the weak sense: for any $\varphi\in C_c^\infty\bigl([0,T[\,\times \oline\Omega\bigr)$, one has
$$
-\,\veps^m\,\int^T_0\int_{\Omega} \vrho^{(1)}_\veps\,\d_t\varphi\,-\,\int^T_0\int_{\Omega} \vec{V}_\veps\cdot\nabla_x\varphi\,=\,
\veps^{m}\int_{\Omega} \vrho^{(1)}_{0,\veps}\,\varphi(0)\,\,,
$$
and also, for any $\vec{\psi}\in C_c^\infty\bigl([0,T[\,\times \oline\Omega;\R^3\bigr)$ such that $(\vec \psi \cdot \vec n)_{|\partial \Omega}=0$, one has
\begin{align*}
&\hspace{-0.5cm}
-\,\veps^m\,\int^T_0\int_{\Omega}\vec{V}_\veps\cdot\d_t\vec{\psi}\,-\,\int^T_0\int_{\Omega} \vrho^{(1)}_\veps\,\div\vec{\psi}\,+\,\veps^{m-1}\int^T_0\int_{\Omega} \e_3\times\vec V_\veps\cdot\vec\psi \\
&\qquad\qquad\qquad\qquad\qquad
=\,\veps^{m}\int_{\Omega}\vrho_{0,\veps}\,\vec{u}_{0,\veps}\cdot\vec{\psi}(0)\,+\,\veps^m\,\int^T_0\int_{\Omega} \vec f_\veps \cdot\vec{\psi}+\,\veps^{2(m-n)}\,\int^T_0\int_{\Omega} \vec g_\veps \cdot\vec{\psi}\,.
\end{align*}

Here we use estimates of Subsection \ref{ss:unif-est_G} in order to show uniform bounds for the solutions and the data in the wave equation \eqref{eq:wave_syst_G}.
We start by dealing with the ``unknown'' $\vec V_\veps$. Splitting the term into essential and residual parts, one can obtain for all $T>0$, 
\begin{equation}\label{eq:V_bounds}
\|\vec V_\veps\|_{L^\infty_T(L^2+L^{2\gamma/(\gamma +1)})}\leq c\, .
\end{equation}
In the next lemma, we establish bounds for the source terms in the system of acoustic waves \eqref{eq:wave_syst_G}.

\begin{lemma} \label{l:source_bounds_G}
Write $\vec f_\veps\,=\,\div \wtilde{\vec f}_\veps$ and $\vec g_\veps\,=\,\vec g^1_\veps\,-\,\nabla_x\Pi_\veps$, where we have defined the quantities $\wtilde{\vec f}_\veps:=\mbb{S}\!\left(\nabla_x\vec{u}_\veps\right)-\vrho_\veps \ue \otimes \ue$, $\vec g^1_\veps\,:=\,\vr_\veps^{(1)}\,\nabla_xG$ and the functions $\Pi_\veps$ have been introduced in \eqref{nablaPiveps} of Lemma \ref{lem:manipulation_pressure}.

For any $T>0$ fixed, one has the uniform embedding properties
\[
\big(\wtilde{\vec f}_\veps\big)_\veps\,\subset\,L^2_T(L^2+L^1)\qquad\mbox{ and }\qquad \big(\vec g^1_\veps\big)_\veps\,\subset\,L^2_T(L^2+L^\g)\,.
\]
In the case $\g\geq2$, we may get rid of the space $L^\g$ in the control of $\big(\vec g^1_\veps\big)_\veps$. 

In particular, the sequences $\bigl(\vec f_\veps\bigr)_\veps$ and
$\bigl(\vec g_\veps\bigr)_\veps$, defined in system \eqref{eq:wave_syst_G}, are uniformly bounded in the space $L^{2}\big([0,T];H^{-s}(\Omega)\big)$, for all $s>5/2$.
\end{lemma}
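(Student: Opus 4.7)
The strategy is to first establish the claimed intermediate embeddings for $\wtilde{\vec f}_\veps$, $\vec g^1_\veps$ and $\Pi_\veps$ in Lebesgue spaces, and then to conclude the final $L^2_T(H^{-s})$ bounds on $\vec f_\veps$ and $\vec g_\veps$ via standard Sobolev embeddings. Recall that, on $\Omega$ (which has ``spatial dimension'' $3$), one has $L^q \hookrightarrow H^{-s}$ as soon as $s \geq 3(1/q - 1/2)_+$; in particular $L^1 \hookrightarrow H^{-s}$ for any $s > 3/2$, while $L^2$ and $L^\g$ (with $\g>3/2$ by \eqref{pp2_G}) embed into $H^{-s}$ for smaller values of $s$. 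So gaining one further derivative through $\div$ or $\nabla_x$ forces the regularity index to satisfy $s'>5/2$, which is exactly the threshold in the statement.

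For $\wtilde{\vec f}_\veps = \mbb{S}(\nabla_x \ue) - \vrho_\veps \ue\otimes\ue$, the viscous contribution is uniformly bounded in $L^2_T(L^2)$ by combining Newton's rheological law \eqref{S_G}, the boundedness of $\mu$ and $\eta$, estimate \eqref{est:Du_G}, and the generalised Korn-Poincar\'e inequality already used to derive \eqref{bound_for_vel}. For the convective piece, rather than splitting into essential and residual sets (which would force a careful Hölder argument against $[\vr_\veps]_\res \in L^\infty_T(L^\g)$), I would use the factorisation
\[
\vrho_\veps\,\ue \otimes \ue \,=\, \bigl(\sqrt{\vrho_\veps}\,\ue\bigr)\otimes\bigl(\sqrt{\vrho_\veps}\,\ue\bigr),
\]
together with the uniform momentum bound \eqref{est:momentum_G}, to deduce at once that $\vrho_\veps\,\ue\otimes\ue \in L^\infty_T(L^1) \subset L^2_T(L^1)$ on any finite time interval. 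This gives $\wtilde{\vec f}_\veps \in L^2_T(L^2+L^1)$ uniformly in $\veps$. For $\vec g^1_\veps = \vr_\veps^{(1)}\nabla_x G$, the bound follows directly from $\nabla_x G \equiv -\vec e_3 \in L^\infty$ together with the uniform estimate \eqref{uni_varrho1_G} on $\vr_\veps^{(1)}$, yielding $\vec g^1_\veps \in L^\infty_T(L^2+L^\g) \subset L^2_T(L^2+L^\g)$ on bounded time intervals; in the case $\g\geq 2$, Remark \ref{r:g>2} lets one absorb the $L^\g$ component into $L^2$. Finally, the control $\Pi_\veps \in L^\infty_T(L^1+L^2+L^\g) \subset L^2_T(L^1+L^2+L^\g)$ is precisely the content of Lemma \ref{lem:manipulation_pressure}.

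Putting everything together, one embeds $L^1+L^2+L^\g$ into $H^{-s}$ for any $s>3/2$ and then loses one derivative to accommodate either the outer $\div$ (acting on $\wtilde{\vec f}_\veps$) or the $\nabla_x$ hitting $\Pi_\veps$: this gives the uniform bound in $L^2_T(H^{-s'})$ for any $s'>5/2$, both for $\vec f_\veps$ and for $\vec g_\veps$. The only genuinely delicate step is the control of the convective term, because of the possibly large values of $\vrho_\veps$ on the residual set; the $\sqrt{\vrho_\veps}\,\ue$ trick fully sidesteps this obstacle and is the main observation that makes the proof short.
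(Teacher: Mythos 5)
Your proposal is correct and follows essentially the same route as the paper: the paper also obtains the convective piece in $L^\infty_T(L^1)\subset L^2_T(L^1)$ by invoking \eqref{est:momentum_G} (which is precisely the uniform bound on $\sqrt{\vre}\ue$, i.e.\ the factorisation trick you highlight), combines it with the viscosity bound coming from \eqref{est:Du_G} and \eqref{bound_for_vel}, bounds $\vec g^1_\veps$ via \eqref{uni_varrho1_G}, imports the $\Pi_\veps$ control from Lemma \ref{lem:manipulation_pressure}, and concludes through the dual Sobolev embedding $L^1+L^2+L^\g \hookrightarrow H^{-s_0}$ ($s_0>3/2$) followed by the loss of one derivative under $\div$ or $\nabla_x$.
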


\begin{proof}
From \eqref{est:momentum_G}, \eqref{est:Du_G} and \eqref{bound_for_vel}, we immediately infer the uniform bound for the family $\big(\wtilde{\vec f}_\veps\big)_\veps$ in $L^2_T(L^1+L^2)$,
from which we deduce also the uniform boundedness of $\big(\vec f_\veps\big)_\veps$ in $L^2_T(H^{-s})$, for any $s>5/2$ (see Theorem \ref{app:thm_dual_Sob} in this respect).

Next, for bounding $\big(\vec g^1_\veps\big)_\veps$ we simply use \eqref{uni_varrho1_G}, together with Remark \ref{r:g>2} when $\g\geq2$.
Keeping in mind the bounds established in Lemma \ref{lem:manipulation_pressure}, the uniform estimate for $\big(\vec g_\veps\big)_\veps$ follows.
\qed
\end{proof}

\subsubsection{Regularization and description of the oscillations}\label{sss:w-reg_G}



As already mentioned in Remark \ref{r:period-bc}, in order to apply the Littlewood-Paley theory, it is convenient to reformulate problem \eqref{ceq_G}-\eqref{meq_G} in the new domain (which we keep calling $\Omega$, with a little abuse of notation)
$$ \Omega:= \R^2 \times \mbb{T}^1,\quad \quad \text{with}\quad \quad \mbb{T}^1:=[-1,1]/\sim\, . $$

In addition, to avoid the appearing of the (irrelevant) multiplicative constants on the computations, we suppose that the torus $\mbb{T}^1$ has been renormalised so that its Lebesgue measure is equal to 1. 

Now, for any $M\in\N$ we consider the low-frequency cut-off operator ${S}_{M}$ of a Littlewood-Paley decomposition, as introduced in \eqref{eq:S_j} of Section \ref{app:LP}. Then, we define 
\begin{equation}\label{def_reg_vrho-V}
\vrho^{(1)}_{\varepsilon ,M}={S}_{M}\vrho^{(1)}_{\veps}\qquad\qquad \text{ and }\qquad\qquad \vec{V}_{\veps ,M}={S}_{M}\vec{V}_{\veps}\, .
\end{equation} 

The previous regularised quantities satisfy the following properties.

\begin{proposition} \label{p:prop approx_G}
For any $T>0$, we have the following convergence properties, in the limit $M\rightarrow +\infty $:
\begin{equation}\label{eq:approx var_G}
\begin{split}
&\sup_{0<\veps\leq1}\, \left\|\vrho^{(1)}_{\varepsilon }-\vrm\right\|_{L^{\infty}([0,T];H^{-s})}\longrightarrow 0\qquad
\forall\,s>\max\left\{0,3\left(\frac{1}{\g}\,-\,\frac12\right)\right\}\\
&\sup_{0<\veps\leq1}\, \left\|\vec{V}_{\varepsilon }-\vec{V}_{\varepsilon ,M}\right\|_{L^{\infty}([0,T];H^{-s})}\longrightarrow 0\qquad
\forall\,s>\frac{3}{2\,\g}\,.
\end{split}
\end{equation}
Moreover, for any $M>0$, the couple $(\vrm,\vec V_{\veps ,M})$ satisfies the approximate wave equations
\begin{equation}\label{eq:approx wave_G}
\left\{\begin{array}{l}
       \veps^m\,\d_t \vrm \,+\,\,\div\vec{V}_{\veps ,M}\,=\,0 \\[1ex]
       \veps^m\,\d_t\vec{V}_{\veps ,M}\,+\veps^{m-1}\,e_{3}\times \vec{V}_{\veps ,M}+\,\nabla_x \vrm\,=\,\veps^m\,\vec f_{\veps ,M}\,+\veps^{2(m-n)} \vec g_{\veps,M} \, ,
       \end{array}
\right.
\end{equation}
where $(\vec f_{\veps ,M})_{\veps}$ and $(\vec g_{\veps ,M})_{\veps}$ are families of smooth (in the space variables) functions satisfying, for any $s\geq0$, the uniform bounds
\begin{equation}\label{eq:approx force_G}
\sup_{0<\veps\leq1}\, \left\|\vec f_{\veps ,M}\right\|_{L^{2}([0,T];H^{s})}\,+\,\sup_{0<\veps\leq1}\,\left\|\vec g_{\veps ,M}\right\|_{L^{\infty}([0,T];H^{s})}\,\leq\, C(s,M)\,,
\end{equation}
where the constant $C(s,M)$ depends on the fixed values of $s\geq 0$ and $M>0$, but not on $\veps>0$.
\end{proposition}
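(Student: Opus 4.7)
The plan is to exploit the uniform bounds from Subsection \ref{ss:unif-est_G} and Lemma \ref{l:source_bounds_G}, embed them into negative Sobolev spaces via standard Sobolev embeddings, and then invoke the characterisation of the Littlewood--Paley cut-off $S_M$ to obtain both the strong convergences \eqref{eq:approx var_G} and the smoothness estimates \eqref{eq:approx force_G}.

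First I would deal with the convergence statements. The idea is that, by duality from $H^s \hookrightarrow L^{p'}$ (valid in dimension $3$ for $s\geq 3(1/2-1/p')$), one has $L^p \hookrightarrow H^{-s}$ whenever $s\geq \max\{0,3(1/p - 1/2)\}$. Applying this to \eqref{uni_varrho1_G}, which gives $\bigl(\vrho_\veps^{(1)}\bigr)_\veps \subset L^\infty_T(L^2 + L^\gamma)$, we deduce that $\bigl(\vrho_\veps^{(1)}\bigr)_\veps$ is uniformly bounded in $L^\infty_T(H^{-s_0})$ for $s_0 = \max\{0, 3(1/\gamma - 1/2)\}$; likewise, \eqref{eq:V_bounds} together with $p = 2\gamma/(\gamma+1)$ gives $\bigl(\vec V_\veps\bigr)_\veps \subset L^\infty_T(H^{-3/(2\gamma)})$. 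Now, for any $f$ uniformly bounded in $H^{-s_0}$ and any $\delta>0$, the Littlewood--Paley characterisation \eqref{eq:LP-Sob} (see also Lemma \ref{lemma_sobolev_H^s}) yields
\[
\bigl\|(\Id - S_M)f\bigr\|_{H^{-s_0 - \delta}}^2 \;\sim\; \sum_{j\geq M-1} 2^{-2j(s_0+\delta)}\|\Delta_j f\|_{L^2}^2 \;\leq\; 2^{-2M\delta}\,\|f\|_{H^{-s_0}}^2\,,
\]
so that $\|f - S_M f\|_{H^{-s_0-\delta}}\longrightarrow 0$ as $M\to +\infty$ uniformly in the bound. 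Choosing $s = s_0 + \delta$ for an arbitrary $\delta > 0$ gives precisely the ranges of indices stated in \eqref{eq:approx var_G}, for both $\vrho^{(1)}_\veps$ and $\vec V_\veps$.

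Next, applying the operator $S_M$ to the wave system \eqref{eq:wave_syst_G} commutes with the spatial derivatives, so the couple $(\vrm, \vec V_{\veps,M})$ defined in \eqref{def_reg_vrho-V} automatically solves \eqref{eq:approx wave_G} with source terms $\vec f_{\veps,M} := S_M \vec f_\veps$ and $\vec g_{\veps,M} := S_M \vec g_\veps$. For the uniform bounds in \eqref{eq:approx force_G}, I would decompose the sources as in Lemma \ref{l:source_bounds_G}. Writing $\vec f_\veps = \div \wtilde{\vec f}_\veps$ with $\bigl(\wtilde{\vec f}_\veps\bigr)_\veps \subset L^2_T(L^1 + L^2)$, the frequency localisation and Bernstein's inequalities (Lemma \ref{l:bern}) give, for any $s\geq 0$,
\[
\|S_M \vec f_\veps\|_{H^s} \;\leq\; C\,2^{M(s+1)}\,\|S_M \wtilde{\vec f}_\veps\|_{L^2 + L^2} \;\leq\; C(s,M)\,\|\wtilde{\vec f}_\veps\|_{L^1 + L^2}\,,
\]
where we used that $S_M:L^1\to L^2$ continuously with norm $\lesssim 2^{3M/2}$. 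Integration in time yields the control of $\vec f_{\veps,M}$ in $L^2_T(H^s)$. The same argument applied to $\vec g_\veps = \vr_\veps^{(1)}\nabla_x G - \nabla_x \Pi_\veps$, together with the uniform bound $\|\nabla_x G\|_{L^\infty}\leq 1$, the control of $\vrho^{(1)}_\veps$ in $L^\infty_T(L^2 + L^\gamma)$ given by \eqref{uni_varrho1_G}, and the bound \eqref{unif-bound-Pi} on $\Pi_\veps$, yields the $L^\infty_T(H^s)$ estimate for $\vec g_{\veps,M}$.

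The main obstacle will be to keep track carefully of the Sobolev indices, in particular when $\gamma$ is taken close to the threshold $3/2$ in \eqref{pp2_G}, since the dependence $s_0 = 3(1/\gamma - 1/2)$ becomes large in that regime, but this only affects the range of admissible $s$ in \eqref{eq:approx var_G} and does not prevent the argument from going through. The rest is mostly bookkeeping of $\veps$-independent constants, which blow up as $M \to +\infty$ but remain uniform in $\veps\in\,]0,1]$ at each fixed $M$.
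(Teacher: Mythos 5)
Your proposal is correct and follows essentially the same route as the paper's proof: the paper likewise passes from the $L^\infty_T(L^2+L^\gamma)$ bound on $\vr^{(1)}_\veps$ (and the analogous one for $\vec V_\veps$) to a uniform $H^{-\sigma}$ bound via the dual Sobolev embedding, then invokes estimate \eqref{est:sobolev} to send the high-frequency tail to zero, and it obtains \eqref{eq:approx force_G} by applying $S_M$ to \eqref{eq:wave_syst_G} and using Bernstein's inequalities together with Lemma \ref{l:source_bounds_G}. The only cosmetic points are that $(\Id-S_M)f=\sum_{j\geq M}\Delta_j f$ (so the sum starts at $j\geq M$, not $j\geq M-1$) and that the notation $\|\cdot\|_{L^2+L^2}$ should just read $L^2$ after $S_M$ has regularised both pieces; neither affects the argument.
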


\begin{proof}
Thanks to characterization \eqref{eq:LP-Sob} of $H^{s}$, properties \eqref{eq:approx var_G} are straightforward consequences of the uniform bounds establish in Subsection \ref{ss:unif-est_G}. For instance, let us consider the functions $\vr^{(1)}_\veps$: when $\g\geq2$, owing to Remark \ref{r:g>2} one has
$\big(\vr^{(1)}_\veps\big)_\veps\,\subset\,L^\infty_T(L^2)$, and then we use estimate \eqref{est:sobolev} from Section \ref{app:LP}.
When $1<\g<2$, instead, we first apply the dual Sobolev embedding (see Theorem \ref{app:thm_dual_Sob}) to infer that $\big(\vr^{(1)}_\veps\big)_\veps\,\subset\,L^\infty_T(H^{-\s})$,
with $\s\,=\,\s(\g)\,=\,3\big(1/\g-1/2\big)$, and then we use \eqref{est:sobolev} again. The bounds for the momentum
$\big(\vec V_\veps\big)_\veps$ can be deduced by a similar argument, after observing that $2\g/(\g+1)<2$ always.

Next, applying the operator ${S}_{M}$ to \eqref{eq:wave_syst_G} immediately gives us system \eqref{eq:approx wave_G}, where we have set 
\begin{equation*}
\vec f_{\veps ,M}:={S}_{M}\vec f_\veps \qquad \text{ and }\qquad \vec g_{\veps ,M}:={S}_{M}\vec g_\veps\,.
\end{equation*}
Thanks to Lemma \ref{l:source_bounds_G} and \eqref{eq:LP-Sob}, it is easy to verify inequality \eqref{eq:approx force_G}. 
\qed
\end{proof}

\medbreak
At this point, we will need also the following important decomposition for the momentum vector fields $\vec V_{\veps,M}$ and their $\curl$.
\begin{proposition} \label{p:prop dec_G}
For any $M>0$ and any $\veps\in\,]0,1]$, the following decompositions hold true:
\begin{equation*}
\vec{V}_{\veps ,M}\,=\,
\veps^{2(m-n)}\vec{t}_{\veps ,M}^{1}+\vec{t}_{\veps ,M}^{2}\qquad\mbox{ and }\qquad
\curl \vec{V}_{\veps ,M}=\veps^{2(m-n)}\vec{T}_{\veps ,M}^{1}+\vec{T}_{\veps ,M}^{2}\,,
\end{equation*}
where, for any $T>0$ and $s\geq 0$, one has 
\begin{align*}
&\left\|\vec{t}_{\veps ,M}^{1}\right\|_{L^{2}([0,T];H^{s})}+\left\|\vec{T}_{\veps ,M}^{1}\right\|_{L^{2}([0,T];H^{s})}\leq C(s,M) \\
&\left\|\vec{t}_{\veps ,M}^{2}\right\|_{L^{2}([0,T];H^{1})}+\left\|\vec{T}_{\veps ,M}^{2}\right\|_{L^{2}\left([0,T];L^2\right)}\leq C\,,
\end{align*}
for suitable positive constants $C(s,M)$ and $C$, which are uniform with respect to $\veps\in\,]0,1]$.
\end{proposition}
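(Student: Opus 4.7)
The plan is to mimic the argument used for the analogous Proposition \ref{p:prop dec} in the previous chapter, exploiting the precise structure of the density perturbations coming from \eqref{eq:in-dens_dec_G}. Since $\vrho_\veps = 1 + \veps^{2(m-n)} R_\veps$ with $R_\veps = \wtilde r_\veps + \veps^{2n-m}\vrho^{(1)}_\veps$, the identity $\vec V_\veps = \vrho_\veps\,\vec u_\veps$ gives the natural splitting
$$
\vec V_\veps \,=\, \vec u_\veps \,+\, \veps^{2(m-n)}\, R_\veps\,\vec u_\veps\,.
$$
Applying $S_M$ to this identity and using that $\curl$ commutes with $S_M$, I would simply set
$$
\vec{t}^{\,1}_{\veps,M} := S_M\bigl(R_\veps\,\vec u_\veps\bigr)\,,\quad \vec{t}^{\,2}_{\veps,M} := S_M \vec u_\veps\,,\quad \vec{T}^{\,j}_{\veps,M} := \curl \vec{t}^{\,j}_{\veps,M}\ \ (j=1,2)\,.
$$
The claimed decompositions of $\vec V_{\veps,M}$ and of $\curl \vec V_{\veps,M}$ then follow at once.

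The bounds on $\vec t^{\,2}_{\veps,M}$ and $\vec T^{\,2}_{\veps,M}$ are straightforward: $S_M$ is a Fourier multiplier with bounded symbol, hence a uniform contraction on every Sobolev space $H^s$, so that \eqref{bound_for_vel} directly yields
$$
\|\vec{t}^{\,2}_{\veps,M}\|_{L^2_T(H^1)} \,+\, \|\vec{T}^{\,2}_{\veps,M}\|_{L^2_T(L^2)} \,\leq\, C
$$
uniformly in $M\in\N$ and $\veps\in\,]0,1]$. The bounds on $\vec t^{\,1}_{\veps,M}$ and $\vec T^{\,1}_{\veps,M}$ instead rely on the Bernstein smoothing property of $S_M$ provided by Lemma \ref{l:bern}: I would first establish a $\veps$-uniform bound for $R_\veps\,\vec u_\veps$ in a space of the form $L^2_T(L^{p_1}+L^{p_2})$ with suitable exponents $1\leq p_1,p_2\leq 2$. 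This is obtained by combining the $L^\infty$ bound on $\wtilde r_\veps$ coming from Proposition \ref{p:target-rho_bound_G}, the $L^\infty_T(L^2+L^\g)$ bound on $\vrho^{(1)}_\veps$ given by \eqref{uni_varrho1_G}, the Sobolev embedding $H^1\hookrightarrow L^6$ applied to $\vec u_\veps$, and the inequality $2n>m$ (which keeps the prefactor $\veps^{2n-m}$ harmless). Bernstein's inequality then gives $\|S_M g\|_{H^s}\leq C(s,M)\,\|g\|_{L^p}$ for every $1\leq p\leq 2$, every $s\geq 0$ and every $M\in\N$, yielding the announced control of $\vec t^{\,1}_{\veps,M}$; the estimate for $\vec T^{\,1}_{\veps,M} = \curl S_M(R_\veps\vec u_\veps) = S_M\,\curl(R_\veps\vec u_\veps)$ follows by the same argument at one extra order of regularity, the additional factor $2^M$ being absorbed into $C(s,M)$.

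The only delicate point is the bookkeeping of the Hölder exponents: the structural assumption $\g>3/2$ is precisely what guarantees that a suitable interpolation of $\vec u_\veps$ between $L^2$ and $L^6$ places each product $\vrho^{(1)}_\veps\,\vec u_\veps$ in a Lebesgue space with integrability index in the admissible range $[1,2]$ for Bernstein's inequality. Notice that no $\veps$-smallness is extracted in the bounds for $\vec t^{\,1}_{\veps,M}$ and $\vec T^{\,1}_{\veps,M}$, which is consistent with the fact that the constants there are allowed to blow up with $M$; the only source of smallness in the decomposition of $\vec V_{\veps,M}$ comes from the explicit prefactor $\veps^{2(m-n)}$ in front of $\vec t^{\,1}_{\veps,M}$, and it is exactly this prefactor that will play a crucial role in the compensated compactness arguments of Subsection \ref{ss:convergence_G}.
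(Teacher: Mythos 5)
Your decomposition is exactly the one used in the paper: since $R_\veps = (\vrho_\veps-1)/\veps^{2(m-n)}$, setting $\vec t^1_{\veps,M}=S_M(R_\veps\vec u_\veps)$ and $\vec t^2_{\veps,M}=S_M\vec u_\veps$ reproduces the paper's choice verbatim, and the estimates for $\vec t^2_{\veps,M}$, $\vec T^2_{\veps,M}$ via $\big(\vu_\veps\big)_\veps\subset L^2_T(H^1)$ and for $\vec t^1_{\veps,M}$, $\vec T^1_{\veps,M}$ via the $L^2_T(L^1+L^2+L^{6\g/(\g+6)})$ control of $R_\veps\vec u_\veps$ followed by the Bernstein-type regularisation of $S_M$ are likewise the same (the paper routes through $L^2_T(H^{-\sigma})$ by dual Sobolev embedding before applying $S_M$, which is an equivalent formulation of your direct Bernstein bound). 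One small imprecision: the claim that $\g>3/2$ is \emph{precisely} what places the exponents in $[1,2]$ overstates matters slightly — for $\g>3$ one has $6\g/(\g+6)>2$ and instead invokes Remark \ref{r:g>2} to replace $L^\g$ by $L^2$ for $\vrho^{(1)}_\veps$ — but this does not affect the conclusion.
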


\begin{proof}
We decompose $\vec{V}_{\veps ,M}\,=\,\veps^{2(m-n)}\vec t_{\veps,M}^{1}\,+\,\vec t_{\veps,M}^{2}$, where we define
\begin{equation} \label{eq:t-T_G}
\vec{t}_{\veps,M}^{1}\,:=\,{S}_{M}\left(\frac{\vrho_\veps -1}{\veps^{2(m-n)}}\, \vec{u}_{\veps}\right) \qquad\mbox{ and }\qquad
\vec{t}_{\veps,M}^{2}\,:=\,{S}_{M} \vec{u}_{\veps}\,.
\end{equation}
The decomposition of $\curl \vec V_{\veps,M}$ follows after setting $\vec T_{\veps,M}^j\,:=\,\curl \vec t_{\veps,M}^j$, for $j=1,2$.

We have to prove uniform bounds for all those terms, by using the estimates established in Subsection \ref{ss:unif-est_G} above.
First of all, we have that $\big(\vu_\veps\big)_\veps\,\subset\,L^2_T(H^1)$, for any $T>0$ fixed. Then, we immediately gather the sought bounds for the vector fields $\vec t_{\veps,M}^2$ and $\vec T_{\veps,M}^2$.

For the families of $\vec t_{\veps,M}^1$ and $\vec T_{\veps,M}^1$, instead, we have to use the bounds provided by \eqref{rr1_G} and (when $\g\geq2$)
Remark \ref{r:g>2}. In turn, we see that for any $T>0$,
\[
\left(\frac{\vrho_\veps -1}{\veps^{2(m-n)}}\, \vec{u}_{\veps}\right)\,\subset\,L^2_T(L^1+L^2+L^{6\g/(\g+6)})\,\hookrightarrow\,
L^2_T(H^{-\s})\,,
\]
for some $\s>0$ large enough. Therefore, the claimed bounds follow thanks to the regularising effect of the operators $S_M$. The proof of the proposition
is thus completed.
\qed
\end{proof}


\subsection{Analysis of the convection} \label{ss:convergence_G}
In this subsection we show the convergence of the convective term. 
The first step is to reduce its analysis to the case of smooth vector fields $\vec{V}_{\veps ,M}$.

\begin{lemma} \label{lem:convterm_G}
Let $T>0$. For any $\vec{\psi}\in C_c^\infty\bigl([0,T[\,\times\Omega;\R^3\bigr)$, we have 
\begin{equation*}
\lim_{M\rightarrow +\infty} \limsup_{\veps \rightarrow 0^+}\left|\int_{0}^{T}\int_{\Omega} \vrho_\veps\,\vec{u}_\veps\otimes \vec{u}_\veps: \nabla_{x}\vec{\psi}\, \dxdt-
\int_{0}^{T}\int_{\Omega} \vec{V}_{\veps ,M}\otimes \vec{V}_{\veps,M}: \nabla_{x}\vec{\psi}\, \dxdt\right|=0\, .
\end{equation*}
\end{lemma}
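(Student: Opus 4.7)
The plan is to adapt the argument used in the proof of Lemma \ref{lem:convterm} to the present setting. The key simplification here is that, since the gravitational force is bounded on the whole $\Omega$ (no centrifugal term is present) and Subsection \ref{ss:unif-est_G} already provides uniform bounds on the whole space domain, no spatial cut-off $\chi_l$ is needed. The strategy will be to perform a chain of three replacements, each producing a remainder that vanishes either when $\veps \to 0^+$ at fixed $M$, or when $M \to \infty$, in agreement with the double limit stated in the lemma.

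\emph{First step: eliminate the density factor.} Writing $\vrho_\veps - 1 = \veps^{2(m-n)}\bigl(\widetilde r_\veps + \veps^{2n-m}\vrho^{(1)}_\veps\bigr)$ and making use of bounds \eqref{uni_varrho1_G} together with $\vu_\veps \in L^2_T(L^6)$ (which follows from \eqref{bound_for_vel} and Sobolev embedding), one gets
\[
\int_0^T\!\!\!\int_\Omega \vrho_\veps\,\vu_\veps \otimes \vu_\veps : \nabla_x \vec\psi\,\dxdt \;=\; \int_0^T\!\!\!\int_\Omega \vu_\veps \otimes \vu_\veps : \nabla_x \vec\psi\,\dxdt \;+\; \mc R^{(1)}_\veps,
\]
where the remainder $\mc R^{(1)}_\veps$ is controlled by Hölder's inequality and is of order $\veps^{2(m-n)}$, since both $m>n$ and $2n>m$ ensure that every piece of $(\vrho_\veps-1)\vu_\veps\otimes\vu_\veps$ carries a strictly positive power of $\veps$ after pairing with the compactly supported $\nabla_x\vec\psi$. \emph{Second step: regularise the two velocity factors.} Setting $\vec t^2_{\veps,M} = S_M \vu_\veps$ as in \eqref{eq:t-T_G}, I would use the Littlewood-Paley estimate $\|(\Id - S_M)\vu_\veps\|_{L^2_T(L^2)} \lesssim 2^{-M}\|\nabla_x \vu_\veps\|_{L^2_T(L^2)}$, which is uniformly of order $2^{-M}$ by \eqref{bound_for_vel}. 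Replacing each factor $\vu_\veps$ in turn by $\vec t^2_{\veps,M}$ and pairing the resulting errors with $\vu_\veps, \vec t^2_{\veps,M} \in L^2_T(L^6)$ via Hölder's inequality yields
\[
\int_0^T\!\!\!\int_\Omega \vu_\veps \otimes \vu_\veps : \nabla_x \vec\psi\,\dxdt \;=\; \int_0^T\!\!\!\int_\Omega \vec t^2_{\veps,M} \otimes \vec t^2_{\veps,M} : \nabla_x \vec\psi\,\dxdt \;+\; \mc R^{(2)}_{\veps,M},
\]
with $|\mc R^{(2)}_{\veps,M}| \lesssim 2^{-M}$ uniformly in $\veps \in\,]0,1]$.

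\emph{Third step: reconstitute $\vec V_{\veps,M}$.} By Proposition \ref{p:prop dec_G} one has $\vec V_{\veps,M} = \vec t^2_{\veps,M} + \veps^{2(m-n)}\vec t^1_{\veps,M}$, so bilinearity yields
\[
\vec V_{\veps,M} \otimes \vec V_{\veps,M} \;-\; \vec t^2_{\veps,M} \otimes \vec t^2_{\veps,M} \;=\; \veps^{2(m-n)}\bigl(\vec t^1_{\veps,M} \otimes \vec t^2_{\veps,M} + \vec t^2_{\veps,M} \otimes \vec t^1_{\veps,M}\bigr) \;+\; \veps^{4(m-n)}\,\vec t^1_{\veps,M} \otimes \vec t^1_{\veps,M}.
\]
Each mixed term is of order $\veps^{2(m-n)}$ multiplied by constants $C(s,M)$ coming from Proposition \ref{p:prop dec_G}, and therefore vanishes in the limit $\veps \to 0^+$ with $M$ fixed. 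The main obstacle, which dictates the order of limits in the statement, is precisely the fact that the bounds on $\vec t^1_{\veps,M}$ blow up with $M$: the smallness gained from powers of $\veps^{2(m-n)}$ can absorb the $M$-dependent constants only if $\veps$ is sent to zero \emph{before} $M$ is sent to infinity, whereas the Littlewood-Paley remainders of Step 2, being $\veps$-independent and of size $2^{-M}$, are absorbed precisely by the subsequent limit $M \to \infty$. Chaining the three steps in this order delivers the claim.
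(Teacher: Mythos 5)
Your proof is correct and follows essentially the same route as the paper: reduce to the constant-density convective term, replace each velocity factor by its Littlewood--Paley truncation $\vec t^2_{\veps,M}=S_M\vu_\veps$, then reconstitute $\vec V_{\veps,M}$ using the decomposition of Proposition \ref{p:prop dec_G}. The only thing left implicit that the paper highlights is that, in your Step 1, the application of H\"older's inequality to the piece $\veps^m \vrho^{(1)}_\veps\,\vu_\veps\otimes\vu_\veps : \nabla_x\vec\psi$ requires the structural assumption $\gamma\geq 3/2$: since $\vrho^{(1)}_\veps\in L^\infty_T(L^2+L^\gamma)$ and $\vu_\veps\otimes\vu_\veps\in L^1_T(L^3)$, one needs $1/\gamma+1/3\leq 1$ for the product to be locally integrable; it is worth noting this condition explicitly.
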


\begin{proof}
The proof is very similar to the one of Lemma \ref{lem:convterm} from Chapter \ref{chap:multi-scale_NSF}, for this reason we just outline it.

One starts by using the decomposition $\vr_\veps\,=\,1\,+\,\veps^{2(m-n)}\,R_\veps$ to reduce (owing to the uniform bounds of Subsection \ref{ss:unif-est_G}) the convective term to the ``homogeneous counterpart'': for any test function $\vec\psi\in C^\infty_c\big(\R_+\times\Omega;\R^3\big)$, one has
\[
\lim_{\veps \rightarrow 0^+}\left|\int_{0}^{T}\int_{\Omega} \vrho_\veps\,\vec{u}_\veps\otimes \vec{u}_\veps: \nabla_{x}\vec{\psi}\, \dxdt-
\int_{0}^{T}\int_{\Omega}\vec{u}_\veps\otimes\vec{u}_\veps:\nabla_{x}\vec{\psi}\,\dxdt\right|\,=\,0\,.
\]
Notice that, here, one has to use that $\g\geq 3/2$.

After that, we write
$\vu_\veps\,=\,S_M \vu_\veps\,+\,(\Id-S_M)\vu_\veps\,=\,\vec t^2_{\veps,M}\,+\,(\Id-S_M)\vu_\veps$.
Using Proposition \ref{p:prop dec_G} and the fact that $\left\|(\Id-{S}_{M})\,\vec{u}_\veps\right\|_{L_{T}^{2}(L^{2})}\leq C\,2^{-M}\|\nabla_x\vec u_\veps\|_{L^2_T(L^2)}\,\leq C\,2^{-M}$, which holds in view of estimate \eqref{est:sobolev} from Section \ref{app:LP} and the uniform bound \eqref{bound_for_vel},
one can conclude.
\qed
\end{proof}

\medbreak

From now on, for notational convenience, we  generically denote by $\mc{R}_{\veps ,M}$ any remainder term, that again is any term satisfying the property
\begin{equation} \label{eq:remainder_G}
\lim_{M\rightarrow +\infty}\limsup_{\veps \rightarrow 0^{+}}\left|\int_{0}^{T}\int_{\Omega}\mc{R}_{\veps ,M}\cdot \vec{\psi}\, \dxdt\right|=0\, ,
\end{equation}
for all test functions $\vec{\psi}\in C_c^\infty\bigl([0,T[\,\times\Omega;\R^3\bigr)$ lying in the kernel of the singular perturbation operator,
namely (in view of Proposition \ref{p:limitpoint_G}) such that
\begin{equation} \label{eq:test-f_G}
\vec\psi\in C_c^\infty\big([0,T[\,\times\Omega;\R^3\big)\qquad\qquad \mbox{ with }\qquad \div\vec\psi=0\quad\mbox{ and }\quad \d_3\vec\psi=0\,.
\end{equation}
Notice that, in order to pass to the limit in the weak formulation of the momentum equation and derive the limit system, it is enough to use test functions $\vec\psi$ as above.

Thus, for $\vec\psi$ as in \eqref{eq:test-f_G}, 
we have to pass to the limit in the term 
\begin{align*}
-\int_{0}^{T}\int_{\Omega} \vec{V}_{\veps ,M}\otimes \vec{V}_{\veps ,M}: \nabla_{x}\vec{\psi}\,&=\,\int_{0}^{T}\int_{\Omega} \div\left(\vec{V}_{\veps ,M}\otimes
\vec{V}_{\veps ,M}\right) \cdot \vec{\psi}\,.
\end{align*}
Notice that the integration by parts above is well-justified, since all the quantities inside the integrals are now smooth with respect to the space variable. Owing to the structure of the test function, and
resorting to the notation \eqref{eq:decoscil} setted in the introductory part, we remark that we can write
$$
\int_{0}^{T}\int_{\Omega} \div\left(\vec{V}_{\veps ,M}\otimes \vec{V}_{\veps ,M}\right) \cdot \vec{\psi}\,=\,
\int_{0}^{T}\int_{\R^2} \left(\mc{T}_{\veps ,M}^{1}+\mc{T}_{\veps, M}^{2}\right)\cdot\vec{\psi}^h\,,
$$
where we have defined the terms
\begin{equation} \label{def:T1-2_G}
\mc T^1_{\veps,M}\,:=\, \divh\left(\langle \vec{V}_{\veps ,M}^{h}\rangle\otimes \langle \vec{V}_{\veps ,M}^{h}\rangle\right)\qquad \mbox{ and }\qquad
\mc T^2_{\veps,M}\,:=\, \divh\left(\langle \dbtilde{\vec{V}}_{\veps ,M}^{h}\otimes \dbtilde{\vec{V}}_{\veps ,M}^{h}\rangle \right)\,.
\end{equation}

In the next two paragraphs, we will deal with each one of those terms separately. We borrow most of the arguments from Chapter \ref{chap:multi-scale_NSF}
(see also \cite{F-G-GV-N}, \cite{F_2019} for a similar approach). However, the special structure of the gravity force will play a key role here,
in order (loosely speaking) to compensate the stronger singularity due to our scaling $2n>m$. Finally, we point out that, in what follows, all the equalities (which will involve the derivative in time) will hold in the sense of distributions.

\subsubsection{Convergence of the vertical averages}\label{sss:term1_G}
We start by dealing with $\mc T^1_{\veps,M}$. It is standard to write
\begin{align}
\mc{T}_{\veps ,M}^{1}\,&=\,\divh\left(\langle \vec{V}_{\veps ,M}^{h}\rangle\otimes \langle \vec{V}_{\veps ,M}^{h}\rangle\right)=
\divh\langle \vec{V}_{\veps ,M}^{h}\rangle\, \langle \vec{V}_{\veps ,M}^{h}\rangle+\langle \vec{V}_{\veps ,M}^{h}\rangle \cdot \nabla_{h}\langle \vec{V}_{\veps ,M}^{h}\rangle \label{eq:T1_G} \\
&=\divh\langle \vec{V}_{\veps ,M}^{h}\rangle\, \langle \vec{V}_{\veps ,M}^{h}\rangle+\frac{1}{2}\, \nabla_{h}\left(\left|\langle \vec{V}_{\veps ,M}^{h}\rangle\right|^{2}\right)+
\curlh\langle \vec{V}_{\veps ,M}^{h}\rangle\,\langle \vec{V}_{\veps ,M}^{h}\rangle^{\perp}\,. \nonumber
\end{align}
Notice that the second term is a perfect gradient, so it vanishes when tested against divergence-free test functions. Hence, we can treat it as
a remainder, in the sense of \eqref{eq:remainder_G}.

For the first term in the second line of \eqref{eq:T1_G}, instead, we take advantage of system \eqref{eq:approx wave_G}: averaging the first equation with respect to $x^{3}$ and multiplying it by $\langle \vec{V}^h_{\veps ,M}\rangle$, we arrive at
$$
\divh\langle \vec{V}_{\veps ,M}^{h}\rangle\,\langle \vec{V}_{\veps ,M}^{h}\rangle\,=\,-\veps^m\d_t\langle \vrm\rangle \langle \vec{V}_{\veps ,M}^{h}\rangle\,=\,
\veps^m\langle \vrm\rangle \d_t \langle \vec{V}_{\veps ,M}^{h}\rangle +\mc{R}_{\veps ,M}\,.
$$
We remark that the term presenting the total derivative in time is in fact a remainder, thanks to the factor $\veps^m$ in front of it.
Now, we use the horizontal part of \eqref{eq:approx wave_G}, where we first take the vertical average and then multiply by $\langle \vrm\rangle$:
since $m>1$, we gather
\begin{align*}
&\veps^m\langle \vrm \rangle \d_t \langle \vec{V}_{\veps ,M}^{h}\rangle \\
&\qquad\quad=
- \langle \vrm\rangle \nabla_{h}\langle \vrm \rangle-
\veps^{m-1}\langle \vrm \rangle\langle \vec{V}_{\veps ,M}^{h}\rangle^{\perp} 
+\veps^{m}\langle \vrm \rangle \langle \vec f_{\veps ,M}^{h}\rangle+\veps^{2(m-n)}\langle \vrm \rangle \langle \vec g_{\veps ,M}^{h}\rangle\\
&\qquad\quad=-\veps^{m-1}\langle \vrm \rangle\langle \vec{V}_{\veps ,M}^{h}\rangle^{\perp}+\mc{R}_{\veps ,M}\, ,
\end{align*}
where we have repeatedly exploited the properties proved in Proposition \ref{p:prop approx_G} and we have included in the remainder term also the perfect gradient.
Inserting this relation into \eqref{eq:T1_G} yields
\begin{equation*}
\mc{T}_{\veps ,M}^{1}=  \left(\curlh\langle \vec{V}_{\veps ,M}^{h}\rangle\,-\,\veps^{m-1}\langle \vrm \rangle\right)   
\langle\vec{V}_{\veps,M}^{h}\rangle^{\perp}+\mc{R}_{\veps,M}\,.
\end{equation*}
Observe that the first addendum appearing in the right-hand side of the previous relation is bilinear. Thus, in order to pass to the limit in it, one needs some strong convergence properties. As a matter of fact, in the next computations we will work on the regularised wave system \eqref{eq:approx wave_G} to show that the quantity
\[
\Gamma_{\veps, M}:=\curlh\langle \vec{V}_{\veps ,M}^{h}\rangle\,-\,\veps^{m-1}\langle \vrm \rangle
\]
is \emph{compact} in some suitable space. In particular, as $m>1$, also $\curlh\langle \vec{V}_{\veps ,M}^{h}\rangle$ is compact.

In order to see this, we write the vertical average of the first equation in \eqref{eq:approx wave_G} as
\begin{equation*}
\veps^{2m-1}\,\d_t \langle \vrm \rangle\,+\,\veps^{m-1}\div_{h} \langle \vec{V}_{\veps ,M}^{h}\rangle\,=0\,.
\end{equation*}
Next, we take the vertical average of the horizontal components of \eqref{eq:approx wave_G} and then we apply $\curlh$: one obtains
\begin{equation*}
\veps^m\,\d_t\curlh\langle \vec{V}_{\veps ,M}^{h}\rangle\,+\veps^{m-1}\,\divh\langle \vec{V}_{\veps ,M}^{h}\rangle\, =\,\veps^m \curlh\langle\vec f_{\veps ,M}^{h}\rangle+\veps^{2(m-n)} \curlh\langle\vec g_{\veps ,M}^{h}\rangle\, .
\end{equation*}
At this point, we recall the definition \eqref{def_f-g} of $\vec g_\veps$, and we see that $\curlh\langle\vec g_{\veps ,M}^{h}\rangle\,\equiv\,0$.
This property is absolutely fundamental, since it allows to erase the last term in the previous relation, which otherwise would have represented an obstacle to get compactness for $\Gamma_{\veps,M}$. Indeed, thanks to this observation, we can sum up the last two equations to get
\begin{equation} \label{eq:gamma_G}
\d_{t}\Gamma_{\veps,M}\,=\,\curlh\langle \vec f_{\veps ,M}^{h}\rangle\, .
\end{equation}
Using estimate \eqref{eq:approx force_G} in Proposition \ref{p:prop approx_G}, we discover that, for any $M>0$ fixed, the family 
$\left(\d_{t}\,\Gamma_{\veps,M}\right)_{\veps}$ is uniformly bounded (with respect to $\veps$) in e.g. the space $L_{T}^{2}(L^{2})$. 
On the other hand, we have that, again for any $M>0$ fixed,
the sequence $(\Gamma_{\veps,M})_{\veps}$ is uniformly bounded (with respect to $\veps$) e.g. in the space $L_{T}^{2}(H^{1})$.
Since the embedding $H_{\rm loc}^{1}\hookrightarrow L_{\rm loc}^{2}$ is compact, the Aubin-Lions Theorem implies that, for any $M>0$ fixed, the family $(\Gamma_{\veps,M})_{\veps}$ is compact
in $L_{T}^{2}(L_{\rm loc}^{2})$. Then, up to extraction of a suitable subsequence (not relabelled here), that family converges strongly to a tempered distribution $\Gamma_M$ in the same space. 

Now, we have that $\Gamma_{\veps ,M}$ converges strongly to $\Gamma_M$ in $L_{T}^{2}(L_{\rm loc}^{2})$ and $\langle \vec{V}_{\veps ,M}^{h}\rangle$ converges weakly to
$\langle \vec{V}_{M}^{h}\rangle$ in $L_{T}^{2}(L_{\rm loc}^{2})$ (owing to Proposition \ref{p:prop dec_G}, for instance). Then, we deduce 
\begin{equation*}
\Gamma_{\veps,M}\langle \vec{V}_{\veps ,M}^{h}\rangle^{\perp}\longrightarrow \Gamma_M \langle \vec{V}_{M}^{h}\rangle^{\perp}\qquad \text{ in }\qquad \mc{D}^{\prime}\big(\R_+\times\R^2\big)\,.
\end{equation*}
Observe that, by definition of $\Gamma_{\veps,M}$, we must have $\Gamma_M=\curlh\langle \vec{V}_{M}^{h}\rangle$. On the other hand, owing to Proposition \ref{p:prop dec_G} and \eqref{eq:t-T_G}, we know that $\langle \vec{V}_{M}^{h}\rangle= \lan{S}_{M}\vec{U}^{h}\ran$.
Therefore,  in the end we have proved that, for $m>1$ and $m+1\geq 2n >m$,
one has the convergence (at any $M\in\N$ fixed, when $\veps\ra0^+$)
\begin{equation} \label{eq:limit_T1_G}
\int_{0}^{T}\int_{\R^2}\mc{T}_{\veps ,M}^{1}\cdot\vec{\psi}^h\dx^h\dt\,\longrightarrow\,
\int^T_0\int_{\R^2}\curlh\lan{S}_{M}\vec{U}^{h}\ran\; \lan{S}_{M}(\vec{U}^{h})^{\perp}\ran\cdot\vec\psi^h\dx^h\dt\, ,
\end{equation}
for any $T>0$ and for any test-function $\vec \psi$ as in \eqref{eq:test-f_G}.

\subsubsection{Vanishing of the oscillations}\label{sss:term2_G}

We now focus on the term $\mc{T}_{\veps ,M}^{2}$, defined in \eqref{def:T1-2_G}. Recall that $m>1$. In what follows, we consider separately the two cases  $m+1>2n$ and $m+1=2n$. As a matter of fact, in the case when $m+1=2n$, a bilinear term involving $\vec g_{\veps,M}$ has no power of $\veps$ in front of it, so it is not clear that it converges to $0$ and, in fact, it might persist in the limit, giving rise to an additional term in the target system. To overcome this issue and show that this actually does not happen, we deeply exploit the structure of the wave system to recover a quantitative smallness for that term (namely, in terms of positive powers of $\veps$).

\subsubsection*{The case $m+1>2n$}\label{sss:term2_osc}

Starting from the definition of $\mc T_{\veps,M}^2$, the same computations as above yield
\begin{align}
\mc{T}_{\veps ,M}^{2}\,
&=\,\langle \divh (\dbtilde{\vec{V}}_{\veps ,M}^{h})\;\;\dbtilde{\vec{V}}_{\veps ,M}^{h}\rangle+\frac{1}{2}\, \langle \nabla_{h}| \dbtilde{\vec{V}}_{\veps ,M}^{h}|^{2} \rangle+
\langle \curlh\dbtilde{\vec{V}}_{\veps ,M}^{h}\,\left( \dbtilde{\vec{V}}_{\veps ,M}^{h}\right)^{\perp}\rangle\, . \label{eq:T2_G} 
\end{align}

Let us now introduce the quantities
$$
\dbtilde{\Phi}_{\veps ,M}^{h}\,:=\,( \dbtilde{\vec{V}}_{\veps ,M}^{h})^{\perp}-\d_{3}^{-1}\nabla_{h}^{\perp}\dbtilde{\vec{V}}_{\veps ,M}^{3}\qquad\mbox{ and }\qquad
\dbtilde{\omega}_{\veps ,M}^{3}\,:=\,\curlh \dbtilde{\vec{V}}_{\veps ,M}^{h}\,.
$$
Then we can write
\begin{equation*}
\left( \curl \dbtilde{\vec{V}}_{\veps ,M}\right)^{h}\,=\,\d_3 \dbtilde{\Phi}_{\veps ,M}^{h}\qquad \text{ and }\qquad
\left( \curl \dbtilde{\vec{V}}_{\veps ,M}\right)^{3}\,=\,\dbtilde{\omega}_{\veps ,M}^{3}\,.
\end{equation*}
In addition, from the momentum equation in \eqref{eq:approx wave_G}, where we take the mean-free part and then the $\curl$, we deduce the equations
\begin{equation} \label{eq:eq momentum term2_G}
\begin{cases}
\veps^{m}\d_t\dbtilde{\Phi}_{\veps ,M}^{h}-\veps^{m-1}\dbtilde{\vec{V}}_{\veps ,M}^{h}=\veps^m\left(\d_{3}^{-1}\curl\dbtilde{\vec f}_{\veps ,M} \right)^{h}+\veps^{2(m-n)}\left(\d_{3}^{-1}\curl\dbtilde{\vec g}_{\veps ,M} \right)^{h}\\[1ex]
\veps^{m}\d_t\dbtilde{\omega}_{\veps ,M}^{3}+\veps^{m-1}\divh\dbtilde{\vec{V}}_{\veps ,M}^{h}=\veps^m\,\curlh\dbtilde{\vec f}_{\veps ,M}^{h}\, .
\end{cases}
\end{equation}
Making use of the relations above, recalling the definitions in \eqref{def_f-g}, and thanks to Propositions \ref{p:prop approx_G} and \ref{p:prop dec_G}, we can write
\begin{equation}\label{rel_oscillations}
\begin{split}
\curlh\dbtilde{\vec{V}}_{\veps ,M}^{h}\;\left(\dbtilde{\vec{V}}_{\veps ,M}^{h}\right)^{\perp}&=\dbtilde{\omega}_{\veps ,M}^{3}\left(\dbtilde{\vec{V}}_{\veps ,M}^{h}\right)^{\perp}\,\\
&=\veps \d_t\!\left( \dbtilde{\Phi}_{\veps ,M}^{h}\right)^{\perp}\dbtilde{\omega}_{\veps ,M}^{3}-
\veps\dbtilde{\omega}_{\veps ,M}^{3}\left(\left(\d_{3}^{-1}\curl\dbtilde{\vec f}_{\veps ,M}\right)^{h}\right)^\perp\\
&\qquad\qquad\qquad\qquad\qquad\qquad
-\veps^{m+1-2n}\, \dbtilde{\omega}_{\veps ,M}^{3}\left(\left(\d_{3}^{-1}\curl\dbtilde{\vec g}_{\veps ,M}\right)^{h}\right)^\perp  \\
&=-\veps \left( \dbtilde{\Phi}_{\veps ,M}^{h}\right)^{\perp}\d_t\dbtilde{\omega}_{\veps ,M}^{3}+\mc{R}_{\veps ,M}=
\left( \dbtilde{\Phi}_{\veps ,M}^{h}\right)^{\perp}\,\divh\dbtilde{\vec{V}}_{\veps ,M}^{h}+\mc{R}_{\veps ,M}\, .
\end{split}
\end{equation}
We point out that, thanks to the scaling $m+1>2n$, we could include in the remainder also the last term appearing in the second equality, which was of order $O(\veps^{m+1-2n})$. 

Hence, putting the gradient term into $\mc R_{\veps,M}$, from \eqref{eq:T2_G} we arrive at 
\begin{align*}
\mc{T}_{\veps ,M}^{2}\,&=\,\langle \divh\dbtilde{\vec{V}}_{\veps ,M}^{h}\,\left(\dbtilde{\vec{V}}_{\veps ,M}^{h}+\left(\dbtilde{\Phi}_{\veps ,M}^{h}\right)^{\perp}\right) \rangle+\mc{R}_{\veps ,M} \\
&=\,\langle \div \dbtilde{\vec{V}}_{\veps ,M}\left(\dbtilde{\vec{V}}_{\veps ,M}^{h}+\left(\dbtilde{\Phi}_{\veps ,M}^{h}\right)^{\perp}\right) \rangle -
\langle \d_3 \dbtilde{\vec{V}}_{\veps ,M}^{3}\left(\dbtilde{\vec{V}}_{\veps ,M}^{h}+\left(\dbtilde{\Phi}_{\veps ,M}^{h}\right)^{\perp}\right) \rangle+\mc{R}_{\veps ,M}\, .
\end{align*}

At this point, the computations mainly follow the same lines of \cite{F-G-GV-N} (see also \cite{F_2019}).
First of all, we notice that, in the last line, the second term on the right-hand side is another remainder. Indeed, using the definition of the function $\dbtilde{\Phi}_{\veps ,M}^{h}$ and the fact
that the test-function $\vec\psi$ does not depend on $x^3$, one has
\begin{equation*}
\begin{split}
\d_3 \dbtilde{\vec{V}}_{\veps ,M}^{3}\left(\dbtilde{\vec{V}}_{\veps ,M}^{h}+\left(\dbtilde{\Phi}_{\veps ,M}^{h}\right)^{\perp}\right)&=\d_3 \left(\dbtilde{\vec{V}}_{\veps ,M}^{3}\left(\widetilde{\vec{V}}_{\veps ,M}^{h}+\left(\dbtilde{\Phi}_{\veps ,M}^{h}\right)^{\perp}\right)\right) - \dbtilde{\vec{V}}_{\veps ,M}^{3}\, \d_3\left(\dbtilde{\vec{V}}_{\veps ,M}^{h}+\left(\dbtilde{\Phi}_{\veps ,M}^{h}\right)^{\perp}\right)\\
&=\mc{R}_{\veps ,M}-\frac{1}{2}\nabla_{h}\left|\dbtilde{\vec{V}}_{\veps ,M}^{3}\right|^{2}=\mc{R}_{\veps ,M}\, .
\end{split}
\end{equation*}
Next, in order to deal with the first term, we use the first equation in \eqref{eq:approx wave_G} to obtain
\begin{equation*}
\begin{split}
\div \dbtilde{\vec{V}}_{\veps ,M}\left(\dbtilde{\vec{V}}_{\veps ,M}^{h}+\left(\dbtilde{\Phi}_{\veps ,M}^{h}\right)^{\perp}\right)&=-\veps^{m} \d_t \dbtilde{\vrho}^{(1)}_{\veps ,M}\left(\dbtilde{\vec{V}}_{\veps ,M}^{h}+\left(\dbtilde{\Phi}_{\veps ,M}^{h}\right)^{\perp}\right)+\mc{R}_{\veps ,M}\\
&=\veps^{m} \dbtilde{\vrho}_{\veps,M}^{(1)}\, \d_t\left(\dbtilde{\vec{V}}_{\veps ,M}^{h}+\left(\dbtilde{\Phi}_{\veps ,M}^{h}\right)^{\perp}\right)+\mc{R}_{\veps ,M}\, .
\end{split}
\end{equation*}
Now, equations \eqref{eq:approx wave_G} and \eqref{eq:eq momentum term2_G} immediately yield that
\begin{equation*}
\veps^{m}\dbtilde{\vrho}^{(1)}_{\veps ,M}\, \d_t\left(\dbtilde{\vec{V}}_{\veps ,M}^{h}+\left(\dbtilde{\Phi}_{\veps ,M}^{h}\right)^{\perp}\right)=
\mc{R}_{\veps ,M}-\dbtilde{\vrho}_{\veps ,M}^{(1)}\, \nabla_{h}\dbtilde{\vrho}^{(1)}_{\veps ,M}=
\mc{R}_{\veps ,M}-\frac{1}{2}\nabla_{h}\left|\dbtilde{\vrho}_{\veps ,M}^{(1)}\right|^{2}=\mc{R}_{\veps ,M}\,.
\end{equation*}

This relation finally implies that $\mc{T}_{\veps ,M}^{2}\,=\,\mc R_{\veps,M}$ is a remainder, in the sense of relation \eqref{eq:remainder_G}:
for any $T>0$ and any test-function $\vec \psi$ as in \eqref{eq:test-f_G},
one has the convergence
(at any $M\in\N$ fixed, when $\veps\ra0^{+}$)
\begin{equation} \label{eq:limit_T2_G}
\int_{0}^{T}\int_{\R^2}\mc{T}_{\veps ,M}^{2}\cdot\vec{\psi}^h\dx^h\dt\,\longrightarrow\,0\,.
\end{equation}

\subsubsection*{The case $m+1=2n$}\label{sss:term2_osc_bis}
In the case $m+1=2n$, most of the previous computations may be reproduced exactly in the same way.
The only (fundamental) change concerns relation \eqref{rel_oscillations}: since now $m+1-2n=0$, that equation reads
\begin{equation}\label{rel_oscillations_bis}
\begin{split}
\curlh\dbtilde{\vec{V}}_{\veps ,M}^{h}\;\left(\dbtilde{\vec{V}}_{\veps ,M}^{h}\right)^{\perp}\,=\,\left( \dbtilde{\Phi}_{\veps ,M}^{h}\right)^{\perp}\,\divh\dbtilde{\vec{V}}_{\veps ,M}^{h}\,- \dbtilde{\omega}_{\veps ,M}^{3}\left(\left(\d_{3}^{-1}\curl\dbtilde{\vec g}_{\veps ,M}\right)^{h}\right)^\perp+\mc{R}_{\veps ,M}\,,
\end{split}
\end{equation}
and, repeating the same computations performed for $\mc T^2_{\veps, M}$ in the previous paragraph, we have
\begin{equation*}\label{T^2-bis}
\mc T^2_{\veps, M}= \mc R_{\veps, M}-\langle\dbtilde{\omega}_{\veps ,M}^{3}\left(\left(\d_{3}^{-1}\curl\dbtilde{\vec g}_{\veps ,M}\right)^{h}\right)^\perp \rangle\, .
\end{equation*}
Hence, the main difference with respect to the previous case is that we have to take care of the term $\widetilde{\omega}_{\veps ,M}^{3}\left(\left(\d_{3}^{-1}\curl\dbtilde{\vec g}_{\veps ,M}\right)^{h}\right)^\perp $, which is non-linear and of order $O(1)$, so it may potentially
give rise to oscillations which persist in the limit.

In order to show that this does not happen, we make use of definition \eqref{def_f-g} of $\vec g_{\veps}$ to compute
\begin{align*}
\left(\curl\dbtilde{\vec g}_{\veps ,M}\right)^{h,\perp}\,&=\,\left(\curl \left(\dbtilde{\vrho}_{\veps, M}^{(1)}\nabla_x G-\nabla_x\dbtilde{\Pi}_{\veps,M}\right)\right)^{h,\perp} \\
&=\,
\begin{pmatrix}
-\d_2 \dbtilde{\vrho}^{(1)}_{\veps, M} \\ 
\d_1 \dbtilde{\vrho}^{(1)}_{\veps ,M} \\ 
0
\end{pmatrix}^{h,\perp}\,=-\,\nabla_h \dbtilde{\vrho}_{\veps,M}^{(1)}\, .
\end{align*}
From this relation, in turn we get
\begin{equation}\label{T^2}
\mc T^2_{\veps, M}\,=\,\mc R_{\veps, M}\,+\,\langle \dbtilde{\omega}_{\veps ,M}^{3}\,  \d_3^{-1}\nabla_h \dbtilde{\vrho}_{\veps,M}^{(1)} \rangle \, .
\end{equation} 
Now, we have to employ the potential part of the momentum equation in \eqref{eq:approx wave_G}, which has not been used so far. Taking the oscillating
component of the solutions, we obtain 
\begin{equation*}
\nabla_h \dbtilde{\vrho}_{\veps,M}^{(1)}\,=-\, \veps^m\,\d_t\dbtilde{\vec{V}}^h_{\veps ,M}\,-\veps^{m-1} (\dbtilde{\vec{V}}^h_{\veps ,M})^\perp+\veps^m\,\dbtilde{\vec f}^h_{\veps ,M}\,+\veps^{2(m-n)} \dbtilde{\vec g}^h_{\veps,M}= -\, \veps^m\,\d_t\dbtilde{\vec{V}}^h_{\veps ,M}\,+ \mc R_{\veps,M}\,.
\end{equation*}
Inserting this relation into \eqref{T^2} and using \eqref{eq:eq momentum term2_G}, we finally gather
\begin{equation*}
\mc T^2_{\veps, M}=-\veps^m \langle \dbtilde{\omega}_{\veps ,M}^{3}\, \d_t\d_3^{-1}\dbtilde{\vec{V}}^h_{\veps ,M}  \rangle +\mc R_{\veps,M}=
\veps^m \langle \d_t \dbtilde{\omega}_{\veps ,M}^{3}\, \d_3^{-1}\dbtilde{\vec{V}}^h_{\veps ,M}  \rangle +\mc R_{\veps,M}=\mc R_{\veps,M}\, ,
\end{equation*}
because we have taken $m>1$.

This relation finally implies that, also in the case when $m+1=2n$, $\mc{T}_{\veps ,M}^{2}$ is a remainder: for any $T>0$ and any test-function $\vec \psi$ as in \eqref{eq:test-f_G}, one has the convergence \eqref{eq:limit_T2_G}.

\subsection{The limit system} \label{ss:limit_G} 
Thanks to the computations of the previous subsections, we can now pass to the limit in equation \eqref{weak-mom_G}. Recall that $m>1$ and $m+1\geq 2n >m$ here.

To begin with, we take a test-function $\vec\psi$ as in \eqref{eq:test-f_G}, specifically
\begin{equation} \label{eq:test-2}
\vec{\psi}=\big(\nabla_{h}^{\perp}\phi,0\big)\,,\qquad\qquad\mbox{ with }\qquad \phi\in C_c^\infty\big([0,T[\,\times\R^2\big)\,,\quad \phi=\phi(t,x^h)\,.
\end{equation}
We point out that since all the integrals will be made on $\R^2$ (in view of the choice of the test functions in \eqref{eq:test-2} above), we can work on the domain $\Omega=\R^2 \times \, ]0,1[\, $.

In addition, for such $\vec\psi$ as in \eqref{eq:test-2}, all the gradient terms vanish identically, as well as all the contributions
due to the vertical component of the equation. In particular, we do not see any contribution of the pressure and gravity terms: equation \eqref{weak-mom_G} becomes
\begin{align}
\int_0^T\!\!\!\int_{\Omega}  
& \left( -\vre \ue^h \cdot \partial_t \vec\psi^h -\vre \ue^h\otimes\ue^h  : \nabla_h \vec\psi^h
+ \frac{1}{\ep}\vre\big(\ue^{h}\big)^\perp\cdot\vec\psi^h\right)\, \dxdt \label{eq:weak_to_conv_G}\\
&\qquad\qquad\qquad\qquad =-\int_0^T\!\!\!\int_{\Omega} 
\mbb{S}(\nabla_x\vec\ue): \nabla_x \vec\psi\dxdt+
\int_{\Omega}\vrez \uez  \cdot \vec\psi(0,\cdot)\dx\,. \nonumber
\end{align}

Making use of the uniform bounds of Subsection \ref{ss:unif-est_G}, we can pass to the limit in the $\d_t$ term and in the viscosity term.
Moreover, our assumptions imply that $\vrho_{0,\veps}\vec{u}_{0,\veps}\rightharpoonup \vec{u}_0$ in e.g. $L_{\rm loc}^2$. 
Next, the Coriolis term can be arranged in a standard way: using the structure of $\vec\psi$ and the mass equation \eqref{weak-con_G}, we can write
\begin{align*}
\int_0^T\!\!\!\int_{\Omega}\frac{1}{\ep}\vre\big(\ue^{h}\big)^\perp\cdot\vec\psi^h\,&=\,\int_0^T\!\!\!\int_{\mbb{R}^2}\frac{1}{\ep}\langle\vre \ue^{h}\rangle \cdot \nabla_{h}\phi\,=\,
-\veps^{m-1}\int_0^T\!\!\!\int_{\mbb{R}^2}\langle \vrho^{(1)}_\veps\rangle\, \d_t\phi\,-\,\veps^{m-1}\int_{\mbb{R}^2}\langle  \vrho^{(1)}_{0,\veps}\rangle\, \phi(0,\cdot )\,, 
\end{align*}
which of course converges to $0$ when $\veps\ra0^+$.

It remains us to tackle the convective term $\vrho_\veps \ue^h \otimes \ue^h$.
For it, we take  advantage of Lemma \ref{lem:convterm_G} and relations \eqref{eq:limit_T1_G} and \eqref{eq:limit_T2_G}, but
we still have to take care of the convergence for $M\ra+\infty$ in \eqref{eq:limit_T1_G}.
We start by performing equalities \eqref{eq:T1_G} backwards in the term on the right-hand side of \eqref{eq:limit_T1_G}: thus, we have to pass to the limit for $M\ra+\infty$
in
\[
\int^T_0\int_{\R^2}\vec U_M^h\otimes\vec U_M^h : \nabla_h \vec\psi^h\,\dx^h\,\dt\,.
\]
Now, we remark that, since $\vec U^h\in L^2_T(H^1)$ by \eqref{conv:u_G}, from \eqref{est:sobolev} we gather the strong convergence
$S_M \vec U^h\longrightarrow \vec{U}^{h}$ in $L_{T}^{2}(H^{s})$ for any $s<1$, in the limit for $M\rightarrow +\infty$.
Then, passing to the limit for $M\ra+\infty$ in the previous relation is an easy task: we finally get, for $\veps\ra0^+$, 
\begin{equation*}
\int_0^T\int_{\Omega} \vre \ue^h\otimes\ue^h  : \nabla_h \vec\psi^h\, \longrightarrow\, \int_0^T\int_{\R^2}\vec{U}^h\otimes\vec{U}^h  : \nabla_h \vec\psi^h\,.
\end{equation*}

In the end, we have shown that, letting $\varepsilon \rightarrow 0^+$ in \eqref{eq:weak_to_conv_G}, one obtains
\begin{align*}
&\int_0^T\!\!\!\int_{\R^2} \left(\vec{U}^{h}\cdot \d_{t}\vec\psi^h+\vec{U}^{h}\otimes \vec{U}^{h}:\nabla_{h}\vec\psi^h\right)\, \dx^h \dt=
\int_0^T\!\!\!\int_{\R^2} \mu \nabla_{h}\vec{U}^{h}:\nabla_{h}\vec\psi^h \, \dx^h \dt-
\int_{\R^2}\lan\vec{u}_{0}^{h}\ran\cdot \vec\psi^h(0,\cdot)\dx^h,
\end{align*}
for any test-function $\vec\psi$ as in \eqref{eq:test-f_G}.
This implies \eqref{eq_lim_m:momentum_G}, concluding the proof of Theorem \ref{th:m>1}.

\section{Proof of the convergence for $m=1$} \label{s:proof-1_G}

In the present section, we complete the proof of the convergence in the case $m=1$ and $1/2<n<1$.
We will use again the compensated compactness argument depicted in Subsection \ref{ss:convergence_G},
and in fact most of the computations apply also in this case.

\subsection{The acoustic-Poincar\'e waves system}\label{ss:unifbounds_1_G} 

When $m=1$, the wave system \eqref{eq:wave_syst_G} takes the form
\begin{equation} \label{eq:wave_m=1_G}
\left\{\begin{array}{l}
       \veps\,\d_t \vrho_\veps^{(1)}\,+\,\div\vec{V}_\veps\,=\,0 \\[1ex]
       \veps\,\d_t\vec{V}_\veps\,+\,\nabla_x \vrho^{(1)}_\veps\,+\,\,\e_3\times \vec V_\veps\,=\,\veps\,\vec f_\veps+\veps^{2(1-n)}\vec g_\veps\,,
       \end{array}
\right.
\end{equation}
where $\bigl(\vrho^{(1)}_\veps\bigr)_\veps$  and $\bigl(\vec V_\veps\bigr)_\veps$ are defined as in Paragraph \ref{sss:wave-eq_G}.
This system is supplemented with the initial datum $\big(\vrho^{(1)}_{0,\veps},\vr_{0,\veps}\vec u_{0,\veps}\big)$.

Next, we regularise all the quantities, by applying the Littlewood-Paley cut-off operator $S_M$ to \eqref{eq:wave_m=1_G}: we deduce that $\vrho^{(1)}_{\veps,M}$ and $\vec V_{\veps,M}$, defined as in \eqref{def_reg_vrho-V}, satisfy the regularised wave system
\begin{equation} \label{eq:reg-wave_G}
\left\{\begin{array}{l}
       \veps\,\d_t \vrho_{\veps,M}^{(1)}\,+\,\div\vec{V}_{\veps,M}\,=\,0 \\[1ex]
       \veps\,\d_t\vec{V}_{\veps,M}\,+\,\nabla_x \vrho^{(1)}_{\veps,M}\,+\,\,\e_3\times \vec V_{\veps,M}\,=\,\veps\,\vec f_{\veps,M}+\veps^{2(1-n)}\vec g_{\veps,M}\, ,
       \end{array}
\right.
\end{equation}
in the domain $\R_+\times\Omega$, where we recall that $\vec f_{\veps,M}:=S_M \vec f_\veps$ and $\vec g_{\veps,M}:=S_M \vec g_\veps$.
It goes without saying that a result similar to Proposition \ref{p:prop approx_G} holds true also in this case.

As it is apparent from the wave system \eqref{eq:wave_m=1_G} and its regularised version, when $m=1$ the pressure term and the Coriolis term are in balance, since they are of the same order. This represents the main change with respect to the case $m>1$, and it comes into play in the compensated compactness argument. Therefore, despite most of the computations may be repeated identical as in the previous section, let us present
the main points of the argument.

\subsection{Handling the convective term when $m=1$} \label{ss:convergence_1_G}

Let us take care of the convergence of the convective term when $m=1$. 

First of all, it is easy to see that Lemma \ref{lem:convterm_G} still holds true. Therefore, 
given a test-function $\vec\psi\in C_c^\infty\big([0,T[\,\times\Omega;\R^3\big)$ such that $\div\vec\psi=0$ and $\d_3\vec\psi=0$,
we have to pass to the limit in the term
\begin{align*}
-\int_{0}^{T}\int_{\Omega} \vec{V}_{\veps ,M}\otimes \vec{V}_{\veps ,M}: \nabla_{x}\vec{\psi}\,&=\,
\int_{0}^{T}\int_{\Omega}\div\left(\vec{V}_{\veps ,M}\otimes \vec{V}_{\veps ,M}\right) \cdot \vec{\psi}\,=\,
\int_{0}^{T}\int_{\R^2} \left(\mc{T}_{\veps ,M}^{1}+\mc{T}_{\veps, M}^{2}\right)\cdot\vec{\psi}^h\,,
\end{align*}
where we agree again that the torus $\T$ has been normalised so that its Lebesgue measure is equal to $1$ and we have adopted the same notation as in \eqref{def:T1-2_G}.

At this point, we notice that the analysis of $\mc{T}_{\veps ,M}^{2}$ can be performed as in Paragraph \ref{sss:term2_G}, because we have
$m+1>2n$, i.e. $n<1$. \emph{Mutatis mutandis}, we find relation \eqref{eq:limit_T2_G} also in the case $m=1$.

Let us now deal with the term $\mc{T}_{\veps ,M}^{1}$. Arguing as in Paragraph \ref{sss:term1_G}, we may write it as
\begin{equation*}
\mc{T}_{\veps ,M}^{1}\,=\,\left(\curlh\langle \vec{V}_{\veps ,M}^{h}\rangle-\langle \vrho^{(1)}_{\veps ,M}\rangle \right)\langle \vec{V}_{\veps ,M}^{h}\rangle^{\perp}+\mc{R}_{\veps ,M} .
\end{equation*}
Now we use the horizontal part of \eqref{eq:reg-wave_G}: 
averaging it with respect to the vertical variable and applying the operator $\curlh$, we find
\begin{equation*}
\veps\,\d_t\curlh\langle \vec{V}_{\veps ,M}^{h}\rangle\,+\,\divh\langle \vec{V}_{\veps ,M}^{h}\rangle \,=\,
\veps\, \curlh\langle \vec f_{\veps ,M}^{h}\rangle\, .
\end{equation*}
Taking the difference of this equation with the first one in \eqref{eq:reg-wave_G}, we discover that
\begin{equation*}
\d_t\wtilde\Gamma_{\veps,M}
\,=\,\curlh\langle \vec f_{\veps ,M}^{h}\rangle\,,\qquad\qquad \mbox{ where }\qquad
\wtilde\Gamma_{\veps, M}:=\curlh\langle \vec{V}_{\veps ,M}^{h}\rangle\,-\,\langle \vrho^{(1)}_{\veps ,M}\rangle\,.
\end{equation*}
An argument analogous to the one used after \eqref{eq:gamma_G} above, based on Aubin-Lions Theorem, shows that
$\big(\wtilde\Gamma_{\veps,M}\big)_{\veps}$ is compact in e.g. $L_{T}^{2}(L_{\rm loc}^{2})$. Then, this sequence converges strongly (up to extraction of a suitable subsequence, not relabelled here) to a tempered distribution $\wtilde\Gamma_M$ in the same space. 

Using the previous property, we may deduce that
\begin{equation*}
\wtilde\Gamma_{\veps,M}\,\langle \vec{V}_{\veps ,M}^{h}\rangle^{\perp}\,\longrightarrow\, \wtilde\Gamma_M\, \langle \vec{V}_{M}^{h}\rangle^{\perp}\qquad \text{ in }\qquad \mc{D}^{\prime}\big(\R_+\times\R^2\big),
\end{equation*}
where we have $\langle \vec{V}_{M}^{h}\rangle=\lan S_M\vec{U}^{h}\ran$ and $\wtilde\Gamma_M=\curlh\lan S_M \vec{U}^{h}\ran-\langle S_M\vrho^{(1)}\rangle$.

Owing to the regularity of the target velocity $\vec U^h$, we can pass to the limit also for $M\ra+\infty$, as detailed in Subsection \ref{ss:limit_G} above. Thus, we find
\begin{equation} \label{eq:limit_T1-1_G}
\int^T_0\!\!\!\int_{\Omega}\vrho_\veps\,\vec{u}_\veps\otimes \vec{u}_\veps: \nabla_{x}\vec{\psi}\, \dxdt\,\longrightarrow\,
\int^T_0\!\!\!\int_{\R^2}\big(\vec U^h\otimes\vec U^h:\nabla_h\vec\psi^h\,-\, \vrho^{(1)}\,(\vec U^h)^\perp\cdot\vec\psi^h\big)\dx^h\dt,
\end{equation}
for all test functions $\vec\psi$ such that $\div\vec\psi=0$ and $\d_3\vec\psi=0$. Recall the convention $|\T|=1$.
Notice that, since $\vec U^h=\nabla_h^\perp \vrho^{(1)}$ when $m=1$ (keep in mind Proposition \ref{p:limit_iso_G}), the last term in the integral on the right-hand side is actually zero.

\subsection{End of the study} \label{ss:limit_1_G}
Thanks to the previous analysis, we are now ready to pass to the limit in equation \eqref{weak-mom_G} also in the case when $m=1$.
For this, we take a test-function $\vec\psi$ as in \eqref{eq:test-2};
notice in particular that $\div\vec\psi=0$ and $\d_3\vec\psi=0$. Then, once again all the gradient terms and all the contributions coming from the vertical
component of the momentum equation vanish identically, when tested against such a $\vec\psi$. Recall that all the integrals will be performed on $\R^2$. 
So, equation \eqref{weak-mom_G} reduces
to
\begin{align*}
\int_0^T\!\!\!\int_{\Omega}  \left( -\vre \ue \cdot \partial_t \vec\psi -\vre \ue\otimes\ue  : \nabla \vec\psi
+ \frac{1}{\ep}\vre\big(\ue^{h}\big)^\perp\cdot\vec\psi^h+\mbb{S}(\nabla_x\vec\ue): \nabla_x \vec\psi\right)
 =\int_{\Omega}\vrez \uez  \cdot \vec\psi(0,\cdot)\,.
\end{align*}

For the rotation term, we can test the first equation in \eqref{eq:wave_m=1_G} against $\phi$ to get
\begin{equation*} 
\begin{split}
-\int_0^T\!\!\!\int_{\R^2} \left( \lan \vrho^{(1)}_{\varepsilon}\ran\, \d_{t}\phi +\frac{1}{\veps}\, \lan\vrho_{\veps}\ue^{h}\ran\cdot \nabla_{h}\phi\right)=
\int_{\R^2}\lan \vrho^{(1)}_{0,\varepsilon }\ran\, \phi (0,\cdot ) \, ,
\end{split}
\end{equation*}
whence we deduce that
\begin{align*}
\int_0^T\!\!\!\int_{\Omega}\frac{1}{\ep}\vre\big(\ue^{h}\big)^\perp\cdot\vec\psi^h\,&=\,\int_0^T\!\!\!\int_{\mbb{R}^2}\frac{1}{\ep}\langle\vre \ue^{h}\rangle \cdot \nabla_{h}\phi\,=\,-\,\int_0^T\!\!\!\int_{\mbb{R}^2}\langle \vrho^{(1)}_\veps\rangle\, \d_t\phi\,-\,\int_{\mbb{R}^2}\langle \vrho^{(1)}_{0,\veps}\rangle\, \phi(0,\cdot )\,. 
\end{align*}

In addition, the convergence of the convective term has been performed in \eqref{eq:limit_T1-1_G}, and for the other terms, we can argue as in Subsection \ref{ss:limit_G}.
Hence, letting $\varepsilon \rightarrow 0^+$ in the equation above, we get
\begin{align*}
&-\int_0^T\!\!\!\int_{\R^2} \left(\vec{U}^{h}\cdot \d_{t}\nabla_{h}^{\perp} \phi+ \vec{U}^{h}\otimes \vec{U}^{h}:\nabla_{h}(\nabla_{h}^{\perp}\phi )+\vrho^{(1)}\, \d_t \phi \right)\dx^h \dt\\
&\qquad\qquad=-\int_0^T\!\!\!\int_{\R^2} \mu \nabla_{h}\vec{U}^{h}:\nabla_{h}(\nabla_{h}^{\perp}\phi ) \dx^h \dt+\int_{\R^2}\left(\lan\vec{u}_{0}^{h}\ran\cdot \nabla _{h}^{\perp}\phi (0,\cdot )+
\lan \vrho^{(1)}_{0}\ran\phi (0,\cdot )\right) \dx^h\, ,
\end{align*}
which is the weak formulation of equation \eqref{eq_lim:QG_G}. In the end, also Theorem \ref{th:m=1} is proved.


\part{The Euler equations}
\chapter{The fast rotation limit for Euler}\label{chap:Euler}

\begin{quotation}
We conclude this thesis describing the behaviour of a fluid which, in contrast with Chapters \ref{chap:multi-scale_NSF} and \ref{chap:BNS_gravity}, evolves far from the physical boundaries. An example of such fluid flows is represented by currents in the middle of the oceans, i.e. far enough from the surface and the bottom. In this context, we can assume the following physical approximations:
\begin{itemize}
\item the fluid is incompressible;
\item the density is a small perturbation of a constant profile (the \textit{Boussinesq} approximation);
\item the domain is the $\R^2$ plane: the motion of a $3$-D highly rotating fluid is, in a first approximation, planar (the \textit{Taylor-Proudman} theorem). 
\end{itemize}
Then, the system reads: 
\begin{equation}\label{full Euler}
\begin{cases}
\d_t \vrho +\div (\vrho \vu)=0\\
\d_t (\vrho \vu)+\div (\vrho \vu \otimes \vu)+ \dfrac{1}{Ro}\vrho \vu^\perp+\nabla p=0\\
\div \vu =0
\end{cases}
\end{equation}
in the domain $\Omega =\R^2$. 
Since in this chapter there will be no more competition between the horizontal and vertical scales, for notational convenience, we will drop everywhere (in the differential operators) the subscript $x$. 

With respect to the previous systems (see \eqref{chap2:syst_NSFC} and \eqref{chap3:BNSC} in this respect), the Euler equations \eqref{full Euler} is an incompressible 
system without the viscosity effects and with a hyperbolic structure. For that reason, we will need different analysis techniques (see Appendix \ref{app:Tools}) and in addition the functional framework will change (now we will be in regular spaces) in order to preserve the initial regularity. 

The topics presented here are part of the pre-print \cite{Sbaiz}.

\medbreak
Let us now give a summary of the chapter.

In Section \ref{s:result_E}, we collect our assumptions and we state our main results. 
In Section \ref{s:well-posedness_original_problem}, we investigate the well-posedness issues in the Sobolev spaces $H^s$ for any $s>2$.
In Section \ref{s:sing-pert_E}, we study the singular perturbation problem, establishing constraints that the limit points have to satisfy and proving the convergence to the quasi-homogeneous Euler system thanks to a \textit{compensated compactness} technique. 
In Section \ref{s:well-posedness_Q-H} we review, for the quasi-homogeneous limiting system, the results presented in \cite{C-F_RWA} and \cite{C-F_sub}, and 
we explicitly derive the lifespan of solutions to the limit equations (see relation \eqref{T-ast:improved} in this regard).

In the last section, we deal with the lifespan analysis for system \eqref{full Euler} and we point out some consequences of the continuation criterion we have established (see in particular Subsection \ref{ss:cont_criterion+consequences}).  
\end{quotation}

\section{The density-dependent Euler problem} \label{s:result_E}

In this section, we formulate our working setting (Subsection \ref{ss:FormProb_E}) and we state the main results
(Subsection \ref{ss:results_E}).

\subsection{Formulation} \label{ss:FormProb_E}

In this subsection, we present the rescaled density-dependent Euler equations with the Coriolis force, which we are going to consider in our study, and we formulate the main working hypotheses.

To begin with, let us introduce the ``primitive system'', that is the rescaled incompressible Euler system \eqref{full Euler}, supplemented with the scaling $Ro=\veps$, where $\veps \in \,]0,1]$ is a small parameter. Thus, the system consists of continuity equation (conservation of mass), the momentum equation and the divergence-free condition: respectively 
\begin{equation}\label{Euler_eps}
\begin{cases}
\d_t \vrho_{\veps} +\div (\vrho_{\veps} \vu_{\veps})=0\\
\d_t (\vrho_{\veps} \vu_{\veps})+\div (\vrho_{\veps} \vu_{\veps} \otimes \vu_{\veps})+ \dfrac{1}{\veps}\vrho_{\veps} \vu_{\veps}^{\perp}+\dfrac{1}{\veps}\nabla \Pi_\veps=0\\
\div \vu_{\veps} =0\, .
\end{cases}
\end{equation}
We point out that, here above in \eqref{Euler_eps}, the domain $\Omega$ is the plane $\R^2$ and the unknowns are $\vrho_\veps\in \R_+$ and $\vec u_\veps\in \R^2$. 

In \eqref{Euler_eps}, the pressure term has to scale like $1/\veps$, since it is the only force that allows to compensate the effect of fast rotation, at the geophysical scale.

From now on, in order to make the Lipschitz condition \eqref{Lip_cond} holds, we fix 
$$ s>2\, . $$

We assume that the initial density is a small perturbation of a constant profile.
Namely, we consider initial densities of the following form:
	\begin{equation}\label{in_vr_E}
	\vrez = 1 + \ep \, R_{0,\veps} \, ,
	\end{equation}
where we suppose $R_{0,\veps}$ to be a bounded measurable function satisfying the controls
	\begin{align}
&\sup_{\veps\in\,]0,1]}\left\|  R_{0,\veps} \right\|_{L^\infty(\R^2)}\,\leq \, C \label{hyp:ill_data_R_0}\\
&\sup_{\veps\in\,]0,1]}\left\| \nabla R_{0,\veps} \right\|_{H^{s-1}(\R^2)}\,\leq \, C \label{hyp:ill_data_nablaR_0}
	\end{align}
and the initial mass density is bounded and bounded away from zero, i.e. for all $\veps \in\;]0,1]$: 
\begin{equation}\label{assumption_densities}
0<\underline{\vrho}\leq   \vrho_{0,\veps}(x) \,\leq \, \overline{\vrho}\, , \qquad x\in \R^2
\end{equation}
where $\underline{\vrho},\overline{\vrho}>0$ are positive constants.

As for the initial velocity fields, due to framework needed for the well-posedness issues, we require the following uniform bound
\begin{equation}\label{hyp:data_u_0}
\sup_{\veps\in\,]0,1]}\left\|  \vu_{0,\veps} \right\|_{H^s(\R^2)}\,\leq \, C\, .
\end{equation}
Thanks to the previous uniform estimates, we can assume (up to passing to subsequences) that there exist $R_0 \in W^{1,\infty}(\R^2)$, with $\nabla R_0\in H^{s-1}(\R^2)$, and $\vec u_0\in H^s(\R^2)$ such that 
\begin{equation}\label{init_limit_points}
\begin{split}
R_0:=\lim_{\veps \rightarrow 0}R_{0,\veps} \quad &\text{in}\quad L^\infty (\R^2) \\
\nabla R_0:=\lim_{\veps \rightarrow 0}\nabla R_{0,\veps}\quad &\text{in}\quad H^{s-1} (\R^2)\\
\vu_0:=\lim_{\veps \rightarrow 0}\vu_{0,\veps}\quad &\text{in}\quad H^s (\R^2)\, ,
\end{split}
\end{equation} 
where we agree that the previous limits are taken in the corresponding weak-$\ast$ topology.

\subsection{Main statements}\label{ss:results_E}

\medbreak
We can now state our main results. We recall the notation $\big(f_\veps\big)_{\veps} \subset X$ to denote that the family $\big(f_\veps\big)_{\veps}$ is uniformly (in $\veps$) bounded in $X$.

The following theorem establishes the local well-posedness of system \eqref{Euler_eps} in the Sobolev spaces $B^s_{2,2}\equiv H^s$ (see Section \ref{s:well-posedness_original_problem}) and gives a lower bound for the lifespan of solutions (see Section \ref{s:lifespan_full}). 

\begin{theorem}\label{W-P_fullE}
For any $\veps \in\, ]0,1]$, let initial densities $\vrho_{0,\veps}$ be as in \eqref{in_vr_E} and satisfy the controls \eqref{hyp:ill_data_R_0} to \eqref{assumption_densities}. Let $\vu_{0,\veps}$ be divergence-free vector fields such that $\vu_{0,\veps} \in H^s(\R^2)$ for $s>2$. \\
Then, for any $\veps>0$, 
there exists a time $T_\veps^\ast >0$ such that 
the system \eqref{Euler_eps} has a unique solution $(\vrho_\veps, \vu_\veps, \nabla \Pi_\veps)$ where 
\begin{itemize}
\item $\vrho_\veps$ belongs to the space $C^0([0,T_\veps^\ast]\times \R^2)$ with $\nabla \vrho_\veps \in  C^0([0,T_\veps^\ast]; H^{s-1}(\R^2))$;
\item $\vu_\veps$ belongs to the space $C^0([0,T_\veps^\ast]; H^s(\R^2))$;
\item $\nabla \Pi_\veps$ belongs to the space $C^0([0,T_\veps^\ast]; H^s(\R^2))$.
\end{itemize}
Moreover, the lifespan $T_\veps^\ast$ of the solution to the two-dimensional density-dependent incompressible Euler equations with the Coriolis force is bounded from below by
\begin{equation}\label{improv_life_fullE}
\frac{C}{\|\vec u_{0,\veps}\|_{H^s}}\log\left(\log\left(\frac{C\, \|\vec u_{0,\veps}\|_{H^s}}{\max \{\mc A_\veps(0),\, \veps \, \mc A_\veps(0)\, \|\vec u_{0,\veps}\|_{H^s}\}}+1\right)+1\right)\, ,
\end{equation}
where $\mc A_\veps (0):= \|\nabla R_{0,\veps}\|_{H^{s-1}}+\veps\, \|\nabla R_{0,\veps}\|_{H^{s-1}}^{\lambda +1}$, for some suitable $\lambda\geq 1$.

In particular, one has 
$$\inf_{\veps>0}T_\veps^\ast  >0\, .$$
\end{theorem}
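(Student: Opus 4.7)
The plan has two main components: establishing uniform local well-posedness in $H^s$ and then deriving the quantitative lifespan bound \eqref{improv_life_fullE}. For the first part, I would construct solutions via a Friedrichs-type linearisation scheme, and the crucial point is to produce energy estimates uniform in $\veps$. The starting observation is that, since $\div \vu_\veps = 0$ in two dimensions, the vector field $\vu_\veps^\perp$ is curl-free; hence the singular piece $(1/\veps)\vu_\veps^\perp$ of the Coriolis force can be absorbed into a new pressure. Writing $\vrho_\veps = 1 + \veps R_\veps$ and dividing the momentum equation by $\vrho_\veps$, the system takes the equivalent form
$$
\d_t \vu_\veps + \vu_\veps \cdot \nabla \vu_\veps + R_\veps \vu_\veps^{\perp} + \frac{1}{\vrho_\veps}\nabla \widetilde\Pi_\veps = 0\,, \qquad \div \vu_\veps = 0\,,
$$
where the genuine $1/\veps$ singularity has disappeared and only the $O(1)$ lower-order term $R_\veps \vu_\veps^\perp$ remains.

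The pressure term is the first technical obstacle. Applying divergence to the rewritten momentum equation yields an elliptic equation with non-constant coefficients
$$
-\div\!\Bigl(\frac{1}{\vrho_\veps}\nabla \widetilde\Pi_\veps\Bigr) = \div\bigl(\vu_\veps \cdot \nabla \vu_\veps + R_\veps\vu_\veps^{\perp}\bigr)\,,
$$
for which the $B^s_{p,r}$-theory developed in \cite{D_JDE,D-F_JMPA} delivers a bound of the form $\|\nabla \widetilde\Pi_\veps\|_{H^s} \lesssim \|\vu_\veps\|_{H^s}^2 + \|R_\veps\|_{L^\infty}\|\vu_\veps\|_{H^s}$; uniformity in $\veps$ hinges on the assumption \eqref{assumption_densities} that $\vrho_\veps$ is bounded and bounded away from zero, together with the $H^{s-1}$ control of $\nabla R_\veps$. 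Combining this with standard Sobolev commutator estimates, with the embedding $H^s \hookrightarrow W^{1,\infty}$ (valid for $s>2$), and with the transport estimate for the density (which propagates $\|R_\veps\|_{L^\infty}$ trivially and $\|\nabla R_\veps\|_{H^{s-1}}$ by Gronwall), I would close an $H^s$-energy inequality whose right-hand side does \emph{not} depend on $\veps$. A standard continuation argument then produces a common existence time $T_\veps^*$ satisfying $\inf_{\veps} T_\veps^* > 0$.

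For the quantitative bound \eqref{improv_life_fullE}, the plan is to work in the critical Besov space $B^1_{\infty,1}$, where transport estimates are \emph{linear} in the Lipschitz norm of the velocity rather than exponential, following Vishik \cite{Vis} and Hmidi--Keraani \cite{H-K}. The key cancellation is that the singular Coriolis term disappears under the $\curl$ operator: setting $\omega_\veps := \d_1 u_\veps^2 - \d_2 u_\veps^1$, one finds
$$
\d_t \omega_\veps + \vu_\veps \cdot \nabla \omega_\veps = \curl\!\Bigl(-R_\veps \vu_\veps^\perp - \frac{1}{\vrho_\veps}\nabla \widetilde\Pi_\veps\Bigr)\,,
$$
whose right-hand side is controlled, after careful analysis of the pressure term in Besov norms, by an expression involving $\mc A_\veps(t)$. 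Applying the improved transport estimate in $B^0_{\infty,1}$ to $\omega_\veps$, combined with a double-exponential Gronwall argument and the Beale--Kato--Majda-type continuation criterion mentioned in the introduction, one controls $\int_0^T \|\nabla \vu_\veps(t)\|_{L^\infty} \dt$ and thereby propagates the $H^s$ regularity up to the time stated in \eqref{improv_life_fullE}. The hardest step will be the interplay between the explicit $\veps$-factors appearing in $\mc A_\veps(0)$ (coming from the nonlinear contributions in the pressure/vorticity analysis) and the linear-in-Lipschitz control, which is precisely what converts a single exponential into the double logarithm of the final bound; the uniform-in-$\veps$ statement $\inf_\veps T_\veps^* > 0$ then follows immediately from \eqref{improv_life_fullE} and the hypotheses \eqref{hyp:ill_data_nablaR_0}--\eqref{hyp:data_u_0}.
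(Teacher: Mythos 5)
Your proposal is correct and follows essentially the same route as the paper: an iterative scheme in $H^s$ closed by uniform-in-$\veps$ commutator/tame estimates plus the Lax--Milgram/Besov elliptic theory for the non-constant-coefficient pressure equation, followed by the vorticity formulation in $B^1_{\infty,1}$ with the Vishik--Hmidi--Keraani linear-in-Lipschitz transport bound, a double-exponential Gr\"onwall argument, and a Beale--Kato--Majda continuation criterion to transfer the lifespan back to $H^s$.

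The one place where your presentation diverges from the paper is cosmetic but worth flagging. You absorb $\frac{1}{\veps}\vu_\veps^\perp$ into a new pressure by using that $\vu_\veps^\perp = -\nabla\psi_\veps$ is a gradient in 2D; the paper instead keeps $\frac{1}{\veps}\vu_\veps^\perp$ explicit in system \eqref{Euler-a_eps_1} and relies on the antisymmetry cancellation $\int \Delta_j \vu_\veps^{\perp,n+1}\cdot \Delta_j\vu_\veps^{n+1}\,\dx = 0$ in the dyadic $L^2$ energy estimates (this is the reason the paper insists on the $L^2$-based scale $B^s_{2,2}\equiv H^s$ rather than a general $B^s_{p,r}$). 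These are two manifestations of the same underlying fact, so both routes close the uniform estimates. A small bookkeeping caveat on your reformulation: absorbing the gradient has to be done \emph{before} dividing by $\vrho_\veps$, since $\frac{1}{\veps\vrho_\veps}\nabla\Pi_\veps - \frac{1}{\veps}\nabla\psi_\veps$ is not of the form $\frac{1}{\vrho_\veps}\nabla\widetilde\Pi_\veps$ when $\vrho_\veps$ is non-constant; after doing so in the correct order, the lower-order Coriolis remainder is $\frac{R_\veps}{\vrho_\veps}\vu_\veps^\perp = -a_\veps\vu_\veps^\perp$ rather than $R_\veps\vu_\veps^\perp$ as you wrote, which matches the paper's variable $a_\veps = -R_\veps/\vrho_\veps$ (the paper also propagates $a_\veps$, not $R_\veps$, precisely so that the transported quantity satisfies a clean homogeneous transport equation). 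Neither of these points affects the validity of the argument.
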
 
Looking at \eqref{improv_life_fullE}, we stress the fact that the fast rotational effects are not enough to state a global well-posedness result for system \eqref{Euler_eps}, in the sense that $T^\ast_\veps$ does not tend to $+\infty$ when $\veps\rightarrow 0^+$.

Now, once we have stated the local in time well-posedness for system \eqref{Euler_eps} in the Sobolev spaces $H^s$, in Section \ref{s:sing-pert_E} we address the singular perturbation problem describing, in a rigorous way, the limit dynamics depicted by the quasi-homogeneous incompressible Euler system \eqref{system_Q-H_thm} below. 
\begin{theorem}\label{thm:limit_dynamics}
Let $s>2$. For any fixed value of $\veps \in \; ]0,1]$, let initial data $\left(\vrho_{0,\veps},\vec u_{0,\veps}\right)$ verify the hypotheses fixed in Paragraph \ref{ss:FormProb_E}, and let
$\left( \vre, \ue\right)$ be a corresponding solution to system \eqref{Euler_eps}.
Let $\left(R_0,\vec u_0\right)$ be defined as in \eqref{init_limit_points}.

Then, one has the following convergence properties:
	\begin{align*}
	\varrho_\ep \rightarrow 1 \qquad\qquad \mbox{ in } \qquad &L^\infty\big([0,T^\ast]; L^\infty(\R^2 )\big)\,, \\
	R_\veps:=\frac{\varrho_\ep - 1}{\ep}  \weakstar R \qquad\qquad \mbox{ in }\qquad &L^\infty\bigl([0,T^\ast]; L^\infty(\R^2)\bigr)\,, \\
	\nabla R_\veps  \weakstar \nabla R \qquad\qquad \mbox{ in }\qquad &L^\infty\bigl([0,T^\ast]; H^{s-1}(\R^2)\bigr)\,, \\
 \vec{u}_\ep \weakstar \vec{u}
	\qquad\qquad \mbox{ in }\qquad &L^\infty\big([0,T^\ast];H^s(\R^2)\big)\, .
	\end{align*}	
In addition, $\Big(R\, ,\, \vec{u}  \Big)$ is a solution
to the quasi-homogeneous incompressible Euler system  in $[0,T^\ast] \times \R^2$:
\begin{equation}\label{system_Q-H_thm}
\begin{cases}
\d_t R+\div (R\vu)=0 \\
\d_t \vec u+\div \left(\vec{u}\otimes\vec{u}\right)+R\vu^\perp+ \nabla \Pi =0  \\
\div \vec u\,=\,0\, ,
\end{cases}
\end{equation}
where $\nabla \Pi$ is a suitable pressure term belonging to $L^\infty\big([0,T^\ast];H^s(\R^2)\big)$. 
\end{theorem}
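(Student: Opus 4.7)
My plan is to combine the uniform bounds from Theorem \ref{W-P_fullE} with a compensated compactness argument, in the spirit of Chapters \ref{chap:multi-scale_NSF} and \ref{chap:BNS_gravity}, to pass to the limit in each term of the primitive system \eqref{Euler_eps}. First, I would exploit the uniform lifespan estimate \eqref{improv_life_fullE} to fix a common time horizon $T^\ast := \inf_{\veps\in\,]0,1]} T^\ast_\veps > 0$. On $[0,T^\ast]\times\R^2$ the family $\big(R_\veps,\nabla R_\veps,\vec u_\veps\big)_\veps$ is uniformly bounded in $L^\infty_{T^\ast}(L^\infty)\times L^\infty_{T^\ast}(H^{s-1})\times L^\infty_{T^\ast}(H^s)$, so Banach-Alaoglu yields, up to extraction, the weak-$\ast$ limit points $R$, $\nabla R$, $\vec u$ claimed in the statement; the strong convergence $\vrho_\veps \to 1$ in $L^\infty_{T^\ast}(L^\infty)$ is then immediate from $\vrho_\veps = 1 + \veps R_\veps$.

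To identify the limits of the nonlinear products, I would first upgrade the convergence of $\vec u_\veps$ to the strong topology. The key observation is that in two dimensions the incompressibility $\div\vec u_\veps = 0$ forces $\vec u_\veps^\perp = \nabla\psi_\veps$, with $\Delta\psi_\veps = -\omega_\veps$; thus the singular rotational term is a pure gradient and is annihilated by the Leray projector $\P$. Projecting the momentum equation consequently gives a bound on $\partial_t \vec u_\veps = \P\partial_t \vec u_\veps$ in a lower-regularity Sobolev space, uniformly in $\veps$, and an Aubin-Lions argument (the embedding $H^s_{\mathrm{loc}} \hookrightarrow H^{s-\delta}_{\mathrm{loc}}$ being compact) yields $\vec u_\veps \to \vec u$ strongly in $L^\infty_{T^\ast}(H^{s-\delta}_{\mathrm{loc}})$ for every $\delta>0$. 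Having this at hand, the mass equation $\partial_t R_\veps + \div(R_\veps\vec u_\veps) = 0$ passes to the limit in the sense of distributions via the usual weak-strong argument, giving $\partial_t R + \div(R\,\vec u) = 0$.

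For the momentum equation I would use the decomposition \eqref{Coriolis},
\begin{equation*}
\frac{1}{\veps}\vrho_\veps\vec u_\veps^\perp \,=\, \frac{1}{\veps}\vec u_\veps^\perp + R_\veps\vec u_\veps^\perp,
\end{equation*}
and absorb the singular gradient $(1/\veps)\vec u_\veps^\perp = \nabla(\psi_\veps/\veps)$ into a redefined pressure $\nabla\widetilde\Pi_\veps$. The rewritten balance
\begin{equation*}
\partial_t(\vrho_\veps\vec u_\veps) + \div(\vrho_\veps\vec u_\veps\otimes\vec u_\veps) + R_\veps\vec u_\veps^\perp + \nabla\widetilde\Pi_\veps \,=\, 0
\end{equation*}
has only $O(1)$ sources. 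The strong convergence of $\vec u_\veps$ and $\vrho_\veps$, together with the weak-$\ast$ convergence of $R_\veps$, identifies $\vrho_\veps\vec u_\veps\otimes\vec u_\veps \to \vec u\otimes\vec u$ and $R_\veps\vec u_\veps^\perp \to R\,\vec u^\perp$ in $\mc D'$. Finally, uniform $H^s$ control of the pressure, obtained by testing the elliptic problem analogous to \eqref{elliptic_eq_introduction} satisfied by $\widetilde\Pi_\veps$, guarantees $\nabla\widetilde\Pi_\veps\weakstar \nabla\Pi$ in $L^\infty_{T^\ast}(H^s)$, producing the target system \eqref{system_Q-H_thm}.

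The main obstacle, as emphasised in the introduction, is that the Coriolis term generates strong time oscillations (Poincaré waves) that could prevent convergence of the quadratic terms. The compensated compactness argument inherited from the previous chapters is the tool to neutralise them: one identifies, in the underlying wave system, algebraic cancellations through which the defect measure of the nonlinearity vanishes at the limit. A decisive simplification in the present 2D incompressible framework is that the singular oscillating component of $\vec u_\veps$ is a pure gradient, hence dynamically inert after projection onto the divergence-free subspace; this is precisely what allows the Aubin-Lions step in the second paragraph to succeed, and it is where the structural specificity of \eqref{Euler_eps}, compared with the compressible models of Chapters \ref{chap:multi-scale_NSF} and \ref{chap:BNS_gravity}, plays its role.
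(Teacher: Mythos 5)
Your argument is correct, but it takes a genuinely different route from the paper. The paper works at the level of the momentum $\vec V_\veps = \vrho_\veps\vec u_\veps$ and applies the $\curl$ operator to the wave system: since $\nabla\Pi_\veps$ and $\vec u_\veps^\perp$ are both gradients (the latter because $\div\vec u_\veps=0$ implies $\curl\vec u_\veps^\perp=0$), one gets $\partial_t\,\curl\vec V_\veps=\curl f_\veps$ with $f_\veps$ uniformly bounded, whence Ascoli--Arzel\`a yields compactness of $\curl\vec V_\veps$ in $C^0_T(H^{s-2}_{\rm loc})$ and hence of $\omega_\veps$; the convective term is then handled through the decomposition $\div(\vec u_\veps\otimes\vec u_\veps)=\tfrac12\nabla|\vec u_\veps|^2+\omega_\veps\vec u_\veps^\perp$ together with the strong$\times$weak pairing. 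You instead project the reformulated momentum equation \eqref{Euler-a_eps_1} with the Leray projector $\P$, exploiting the same gradient structure to kill \emph{both} singular terms, and obtain a uniform bound on $\partial_t\vec u_\veps=\P\partial_t\vec u_\veps$ directly. Your route is more direct and yields strictly more information (compactness of the full velocity field, not only its vorticity), which makes the passage to the limit in all nonlinear products a routine weak$\times$strong argument. The paper's route, while less direct here, is formulated to mirror the compensated-compactness scheme of Chapters \ref{chap:multi-scale_NSF} and \ref{chap:BNS_gravity}; both routes are, at bottom, two incarnations of the same structural cancellation, namely that the $O(1/\veps)$ oscillating modes live in the gradient part.

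One detail you should make explicit in the second paragraph: after applying $\P$ to \eqref{Euler-a_eps_1}, the pressure term $(1+\veps a_\veps)\,\frac{1}{\veps}\nabla\Pi_\veps$ does not disappear entirely. Its split $\frac{1}{\veps}\nabla\Pi_\veps + a_\veps\nabla\Pi_\veps$ produces the gradient piece, which $\P$ annihilates, \emph{and} the $O(1)$ piece $a_\veps\nabla\Pi_\veps$, which survives. This second piece must be shown uniformly bounded; it is, thanks to the uniform controls $\|a_\veps\|_{L^\infty_T(L^\infty)}\leq C$, $\|\nabla a_\veps\|_{L^\infty_T(H^{s-1})}\leq C$ and $\|\nabla\Pi_\veps\|_{L^\infty_T(H^s)}\leq C$ established in Theorem \ref{W-P_fullE}, via the tame estimate of Proposition \ref{prop:app_fine_tame_est}. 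Without this the projected $\partial_t\vec u_\veps$ would not be manifestly bounded, and the Aubin--Lions step would have no input. A second, more minor point: labelling the argument ``compensated compactness'' in your closing paragraph is a little misleading; the Leray-projection route bypasses the characteristic div--curl pairing in favour of outright compactness of $\vec u_\veps$, which is precisely what makes it shorter.
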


\begin{remark} 
Due to the fact that the system \eqref{system_Q-H_thm} is well-posed in the previous functional setting (see Theorem \ref{thm:well-posedness_Q-H-Euler} below), we get the convergence of the whole sequence of weak solutions to the solutions of the target equations on the large time interval where the weak solutions to the primitive equations exist.
\end{remark}

Then, at the limit, we have found that the dynamics is prescribed by the quasi-homogeneous incompressible Euler system \eqref{system_Q-H_thm}, for which we state the local well-posedness in $H^s$ (see Section \ref{s:well-posedness_Q-H}). It is worth to remark that the global well-posedness issue for this system is still an open problem. 

\begin{theorem}\label{thm:well-posedness_Q-H-Euler}
Take $s>2$. Let $\big(R_0,u_0 \big)$ be initial data such that $R_0\in L^{\infty}(\R^2)$ and
$\vu_0 \,\in  H^s(\R^2)$, with $\nabla R_0\in H^{s-1}(\R^2)$ and $\div \vu_0\,=\,0$.

Then, there exists a time $T^\ast > 0$ such that, on $[0,T^\ast]\times\R^2$, problem \eqref{system_Q-H_thm} has a unique solution $(R,\vu, \nabla \Pi)$ with the following properties:
\begin{itemize}
 \item $R\in C^0\big([0,T^\ast]\times \R^2\big)$ and $\nabla R\in C^0\big([0,T^\ast];H^{s-1}(\R^2)\big)$;
 \item $\vu$ belongs to $C^0\big([0,T^\ast]; H^{s}(\R^2)\big)$;
 \item the pressure term $\nabla \Pi$ is in $C^0\big([0,T^\ast];H^s(\R^2)\big)$. 
 \end{itemize}

In addition, the lifespan $T^\ast>0$ of the solution $(R, \vu, \nabla \Pi)$ to the $2$-D quasi-homogeneous Euler system \eqref{system_Q-H_thm} enjoys the following lower bound:
\begin{equation}\label{improved_low_bound}
T^\ast\geq \frac{C}{\|\vu_0\|_{H^s}}\log\left(\log \left(C\frac{\|\vu_0\|_{H^s}}{\|R_0\|_{L^\infty}+\|\nabla R_0\|_{H^{s-1}}}+1\right)+1\right)\, ,
\end{equation}
where $C>0$ is a ``universal'' constant, independent of the initial datum.

\end{theorem}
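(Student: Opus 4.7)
The plan is to establish local well-posedness by a standard approximation scheme, and then to upgrade the natural lifespan estimate to the double logarithmic bound \eqref{improved_low_bound} via a vorticity-based argument in critical Besov spaces.

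For local existence, I would build an iterative scheme $(R^n,\vu^n,\nabla\Pi^n)$: given $\vu^n$, transport $R^{n+1}$ by $\vu^n$ (which preserves $\|R\|_{L^\infty}$ since $\div\vu^n=0$ and yields $\nabla R^{n+1}\in H^{s-1}$ via standard transport estimates in $H^{s-1}$), recover $\nabla\Pi^{n+1}$ from the constant-coefficient elliptic relation
\begin{equation*}
-\Delta\Pi\,=\,\div\bigl((\vu\cdot\nabla)\vu\bigr)\,+\,\div(R\vu^\perp)\,,
\end{equation*}
whose right-hand side lives in $H^{s-1}$, so that $\nabla\Pi\in H^s$, and finally update $\vu^{n+1}$ from the resulting linear momentum equation. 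Uniform $H^s$ bounds on a short time interval follow from Kato--Ponce commutators and tame product rules in $H^s$; in particular, the lowest-order term $R\vu^\perp$ is controlled by $\bigl(\|R\|_{L^\infty}+\|\nabla R\|_{H^{s-1}}\bigr)\|\vu\|_{H^s}$. Passing to the limit, together with uniqueness obtained from an $L^2$ stability estimate for the difference of two solutions (the density entering only as an $L^\infty$ coefficient at lowest order), yields existence on a time interval of length of order $\bigl(\|R_0\|_{L^\infty}+\|\nabla R_0\|_{H^{s-1}}+\|\vu_0\|_{H^s}\bigr)^{-1}$.

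For the refined lifespan \eqref{improved_low_bound}, I would pass to the vorticity formulation. With $\omega=\d_1 u^2-\d_2 u^1$ in two dimensions, applying $\curl$ to the momentum equation and using $\div\vu=0$ gives
\begin{equation*}
\d_t\omega\,+\,\vu\cdot\nabla\omega\,=\,-\vu\cdot\nabla R\,.
\end{equation*}
The crucial observation is that $\curl$ completely cancels the stiff contribution of $R\vu^\perp$, leaving only the order-zero source $\vu\cdot\nabla R$; this cancellation is not visible at the level of the momentum equation itself and is lost in self-dual Sobolev norms. Coupled with the transport equation satisfied by $R$, this allows the application of the improved linear transport estimates of Vishik and Hmidi--Keraani (Theorem \ref{thm:improved_est_transport}) in $B^{0}_{\infty,1}$: the growth of $\|\omega\|_{B^0_{\infty,1}}$ is \emph{linear}, rather than exponential, in $\int_0^t\|\nabla\vu\|_{L^\infty}\,d\tau$. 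Since $B^0_{\infty,1}\hookrightarrow L^\infty$ and $\vu$ is recovered from $\omega$ through the Biot--Savart law (with only a logarithmic loss), one can close a Gronwall-type inequality at the level of $B^1_{\infty,1}$ and obtain a double-logarithmic lower bound on the time at which $\|\nabla\vu\|_{L^\infty}$ blows up.

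The main obstacle is reconciling the high-regularity space $H^s$ (in which the theorem is stated) with the critical Besov space $B^{1}_{\infty,1}$ in which the improved linear estimate is available. To this end, I would prove a Beale--Kato--Majda type continuation criterion stating that if $\int_0^{T^{\ast}}\|\nabla\vu\|_{L^\infty}\,dt<+\infty$, then the $H^s$ solution extends past $T^{\ast}$. Combined with the linear-in-$\|\nabla\vu\|_{L^\infty}$ control of $\|\omega\|_{B^0_{\infty,1}}$ and the transport bound for $\nabla R$ in $H^{s-1}$, this criterion reduces the $H^s$ lifespan to the $B^1_{\infty,1}$ lifespan, and iterating the Gronwall argument at two different levels—first for $\omega$ in $L^\infty$, producing one logarithm, and then for $\nabla\vu$ via the logarithmic Biot--Savart inversion, producing the second—gives the announced $\log\log$ bound \eqref{improved_low_bound}. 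The exponential-in-time growth of $\|\nabla R\|_{H^{s-1}}$ that feeds into the source $\vu\cdot\nabla R$ is precisely what forces a $\log\log$ instead of a single $\log$.
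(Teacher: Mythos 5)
Your overall architecture matches the paper's proof: a standard iterative scheme in $H^s$ for local existence, $L^2$ stability for uniqueness, a Beale--Kato--Majda-type continuation criterion reducing the $H^s$ lifespan to the $B^1_{\infty,1}$ lifespan, and then the Vishik/Hmidi--Keraani improved transport estimate in $B^0_{\infty,1}$ applied to the vorticity equation $\d_t\omega+\vu\cdot\nabla\omega=-\div(R\vu)$ (which equals $-\vu\cdot\nabla R$ by incompressibility, as you correctly note). That is exactly the route the paper takes. However, two pieces of your mechanism are misstated and would derail an attempt to write out the details.

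First, there is no ``stiff contribution of $R\vu^\perp$'' in the quasi-homogeneous system \eqref{system_Q-H_thm}: that system has no $1/\veps$ prefactor, so $R\vu^\perp$ is an order-one term. What the curl does is simply kill $\nabla\Pi$ and convert $R\vu^\perp$ into the source $\div(R\vu)$; it does not remove a singular term. (The cancellation you describe is the relevant one for the \emph{primitive} system in Subsection \ref{ss:asym_lifespan}, where the Coriolis force $\tfrac{1}{\veps}\vu^\perp$ is annihilated by $\curl$; you seem to have transferred that observation to the wrong system.) Second, and more seriously, your explanation of why the bound is $\log\log$ is wrong. You invoke a ``logarithmic Biot--Savart inversion'' and an iteration ``first for $\omega$ in $L^\infty$, then for $\nabla\vu$''; but in the critical framework used here (Lemma \ref{l:rel_curl}), the map $\omega\in B^0_{\infty,1}\mapsto \vu\in L^2\cap B^1_{\infty,1}$ is an \emph{exact} equivalence of norms with no logarithmic loss --- that is precisely why one works in $B^1_{\infty,1}$ rather than $W^{1,\infty}$. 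The two nested logarithms arise differently: one comes from the Vishik estimate replacing the usual exponential Gr\"onwall growth of $\|\omega\|_{B^0_{\infty,1}}$ by a \emph{linear} one in $\int_0^t\|\nabla\vu\|_{L^\infty}$, which after Gr\"onwall on $\mc E(t):=\|\vu(t)\|_{L^2\cap B^1_{\infty,1}}$ produces $\mc E(t)\lesssim\mc E(0)e^{C\mc E(0)t}$; the other comes from the genuine exponential growth of $\|R(t)\|_{B^1_{\infty,1}}$, namely $\|R(t)\|_{B^1_{\infty,1}}\leq\|R_0\|_{B^1_{\infty,1}}\exp(C\int_0^t\mc E)$, which multiplies the source $\|\div(R\vu)\|_{B^0_{\infty,1}}\lesssim\|R\|_{B^1_{\infty,1}}\mc E$. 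Plugging the first into the second yields an $\exp(\exp)$-type bound on the constraint defining $T^\ast$, whose inversion gives the double logarithm; see the computation around \eqref{def_T^ast}--\eqref{T-ast:improved}. Your last sentence (``the exponential-in-time growth of $\|\nabla R\|$ forces a $\log\log$'') correctly identifies one of the two sources; the other is the Gr\"onwall factor $e^{C\mc E(0)t}$, not a Yudovich-type logarithmic Calder\'on--Zygmund loss.
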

The proof of the previous ``asymptotically global'' well-posedness result is presented in Subsection \ref{ss:improved_lifespan}. 

\section{Well-posedness for the original problem}\label{s:well-posedness_original_problem}
This section is devoted to the well-posedness issue in the $H^s$ spaces stated in Theorem \ref{W-P_fullE}. We recall that, due to the Littlewood-Paley theory, we have the equivalence between $H^s$ and $B^s_{2,2}$ spaces.

We also underline that in this section we keep $\veps \in \; ]0,1] $ fixed. However, we will take care of it, explicitly pointing out the dependence to the Rossby number in all the computations in order to get controls that are uniform with respect to the $\veps$-parameter. The choice in keeping explicit the dependence on the rotational parameter is motivated by the fact that we will perform the fast rotation limit (see Section \ref{s:sing-pert_E} below). 

First of all, since $\vrho_\veps$ is a small perturbation of a constant profile, we set 
\begin{equation}\label{def_a_veps}
 \alpha_\veps :=\frac{1}{\vrho_\veps}-1=\veps a_\veps\quad \text{with}\quad a_\veps:=-R_\veps/\vrho_\veps \, .
\end{equation}
The choice of looking at $\alpha_\veps$ is dictated by the fact that we will extensively exploit the elliptic equation 
\begin{equation}\label{general_elliptic_est}
-\div \left(\alpha_\veps \nabla \Pi_\veps \right)=\, \veps\, \div \left(\ue \cdot \nabla \ue +\frac{1}{\veps}\vec u_\veps^\perp \right)\, .
\end{equation} 
Now, using the divergence-free condition, we can rewrite the system \eqref{Euler_eps} in the following way (see also Lemma 3 in \cite{D-F_JMPA}): 
\begin{equation}\label{Euler-a_eps_1}
\begin{cases}
\d_t a_{\veps} +\vu_\veps \cdot \nabla a_\veps=0\\
\d_t \vu_{\veps}+ \vu_{\veps} \cdot \nabla \vu_{\veps}+ \dfrac{1}{\veps} \vu_{\veps}^{\perp}+(1+\veps a_\veps)\dfrac{1}{\veps}\nabla \Pi_\veps=0\\
\div \vu_{\veps} =0\, ,
\end{cases}
\end{equation}
with the initial condition $(a_\veps, \ue)_{|t=0}=(a_{0,\veps},\vu_{0,\veps})$. 

We start by presenting the proof of existence of solutions at the claimed regularity. 
For that scope, we follow a standard procedure: first, we construct a sequence of smooth approximate solutions. Next, we deduce uniform bounds (with respect to the approximation parameter and also to $\veps$) for those regular solutions. Finally, by use of those uniform bounds and an energy method, together with an interpolation argument, we are able to take the limit in the approximation parameter and gather the existence of a solution to the original problem.  

We end this Section \ref{s:well-posedness_original_problem}, proving uniqueness of solutions in the claimed functional setting, by using a relative entropy method.

\subsection{Construction of smooth approximate solutions}\label{sec:construction_smooth_sol}
For any $n\in \N$, let us define 
\begin{equation*}
(a_{0,\veps}^n, \vec u_{0,\veps}^n ):= (S_n a_{0,\veps}, S_n \vec u_{0,\veps})\, ,
\end{equation*}
where $S_{n}$ is the low frequency cut-off operator introduced in \eqref{eq:S_j} in Section \ref{app:LP}. We stress also the fact that $a_{0,\veps}\in C^0_{\rm loc}$, since $a_{0,\veps}$ and $\nabla a_{0,\veps}$ are in $L^\infty$.

Then, for any $n\in \N$, we have the density functions $a_{0,\veps}^n\in L^\infty$. Moreover, one has that $\nabla a_{0,\veps}^n$ and $\vec u_{0,\veps}^n$ belong to $H^\infty:=\bigcap_{\sigma \in \R} H^\sigma$ which is embedded (for a suitable topology on $H^\infty$) in the space $C_b^\infty$ of $C^\infty$ functions which are globally bounded together with all their derivatives. 

In addition, by standard properties of mollifiers (see Section \ref{section:molli} of the Appendix \ref{appendixA}), one has the following strong convergences 
\begin{equation}\label{conv_in_data}
\begin{split}
a^n_{0,\veps}\rightarrow a_{0,\veps}\quad  &\text{in}\quad C_{\rm loc}^{0} \\
\nabla a^n_{0,\veps}\rightarrow \nabla a_{0,\veps}\quad  &\text{in}\quad  H^{s-1}\\
\vu_{0,\veps}^n\rightarrow \vu_{0,\veps} \quad &\text{in}\quad H^s \, .
\end{split}
\end{equation}

This having been established, we are going to define a sequence of approximate solutions to system \eqref{Euler-a_eps_1} by induction. First of all, we set $(a_\veps^0,\vu_\veps^0, \nabla \Pi_\veps^0)=(a^0_{0,\veps},\vu^0_{0,\veps},0)$.  Then, for all $\sigma \in \R$, we have that $\nabla a_{\veps}^0 ,\vec u_{\veps}^0 \in H^\sigma$ and $a^0_\veps \in L^\infty$ with $\div \vec u_{\veps}^0=0$. Next, assume that the couple $(a_\veps^n, \ue^n)$ is given such that, for all $\sigma \in \R$,
\begin{equation*}
 a^n_\veps \in C^0(\R_+;L^\infty) \quad \nabla a_{\veps}^n ,\vec u_{\veps}^n \in C^0(\R_+; H^\sigma)\quad  \quad \text{and}\quad \div \vec u_{\veps}^n=0\, .
\end{equation*}
First of all, we define $a_\veps^{n+1}$ as the unique solution to the linear transport equation
\begin{equation}\label{mass_eq}
\d_t a_{\veps}^{n+1} +\vu_\veps^n \cdot \nabla a_\veps^{n+1}=0 \quad \text{with}\quad {(\an)}_{|t=0}=a_{0,\veps}^{n+1}\, .
\end{equation}
Since, by inductive hypothesis and embeddings, $\ue^n$ is divergence-free, smooth and uniformly bounded with all its derivatives, we can deduce that $\an \in L^\infty(\R_+; L^\infty )$. Moreover, from

\begin{equation*}
\d_t\, \d_i \an +\ue^{n}\cdot \nabla\, \d_i \an =-\d_i \ue^n \cdot \nabla \an \quad \text{with}\quad {(\d_i\an)}_{|t=0}=\d_i a_{0,\veps}^{n+1} \quad \text{for}\; i=1,2
\end{equation*} 
and thanks to the Theorem \ref{thm_transport}, we can propagate all the $H^{\sigma}$ norms of the initial datum. We deduce that $a_\veps^{n+1}\in C^0(\R_+;L^\infty)$ and $\nabla a_\veps^{n+1}\in C^0(\R_+;H^{\sigma})$ for any $\sigma \in \R$. 
Next, we consider the approximate linear iteration
\begin{equation}\label{approx_iteration_1}
\begin{cases}
\d_t \vu_{\veps}^{n+1}+ \vu_{\veps}^n \cdot \nabla \vu_{\veps}^{n+1}+ \dfrac{1}{\veps} \vu_{\veps}^{\perp, n+1}+(1+\veps a_\veps^{n+1})\dfrac{1}{\veps}\nabla \Pi_\veps^{n+1}=0\\
\div \vu_{\veps}^{n+1} =0\\
(\vu_\veps^{n+1})_{|t=0}=\vu_{0,\veps}^{n+1}\, .
\end{cases}
\end{equation}
At this point, one can solve the previous linear problem finding a unique solution $\vu_\veps^{n+1}\in C^0(\R_+; H^\sigma )$ for any $\sigma \in \R$ and the pressure term $\nabla \Pi_\veps^{n+1}$ can be uniquely determined (we refer to \cite{D_AT} for details in this respect).

\subsection{Uniform estimates for the approximate solutions}\label{ss:unif_est}

We now have to show (by induction) uniform bounds for the sequence $(a_\veps^n, \ue^n, \nabla \Pi_\veps^n)_{n\in \N}$ we have constructed above.

We start by finding uniform estimates for $a_\veps^{n+1}$. Thanks to equation \eqref{mass_eq} and the divergence-free condition on $\ue^n$, we can propagate the $L^\infty$ norm for any $t\geq 0$:
\begin{equation}\label{eq:transport_density}
\|a_\veps^{n+1}(t)\|_{L^\infty} \leq\|a_{0,\veps}^{n+1}\|_{L^\infty}\leq C \|a_{0,\veps}\|_{L^\infty}\, .
\end{equation} 
At this point we want to estimate $\nabla a_\veps^{n+1}$ in $H^{s-1}$. We have for $i=1,2$:
\begin{equation*}
\d_t\, \d_i \an +\ue^{n}\cdot \nabla\, \d_i \an =-\d_i \ue^n \cdot \nabla \an \, .
\end{equation*}
Taking the non-homogeneous dyadic blocks $ \Delta_j$, we obtain 
\begin{equation*}
\d_t \Delta_j \, \d_i a_\veps^{n+1}+\ue^n\cdot \nabla  \Delta_j\, \d_i \an=[\ue^n\cdot \nabla,\Delta_j]\, \d_i \an -\Delta_j\left(\d_i \ue^n \cdot \nabla \an \right)\, .
\end{equation*}
Multiplying the previous relation by $ \Delta_j\, \d_i \an$, we have
\begin{equation*}
\| \Delta_j\, \d_i \an(t)\|_{L^2}\leq  \| \Delta_j\, \d_i a_{0,\veps}^{n+1}\|_{L^2}+C \int_0^t\left(\left\|[\ue^n\cdot \nabla, \Delta_j] \, \d_i \an\right\|_{L^2}+\|\Delta_j\left(\d_i \ue^n \cdot \nabla \an \right) \|_{L^2} \right)\, \detau  \, .
\end{equation*}
We apply now the second commutator estimate stated in Lemma \ref{l:commutator_est} to the former term in the integral on the right-hand side, getting
\begin{equation*}
2^{j(s-1)}\left\|[\ue^n\cdot \nabla, \Delta_j]\, \d_i \an\right\|_{L^2}\leq C\, c_j(t)\left(\|\nabla \ue^n\|_{L^\infty}\|\d_i \an\|_{ H^{s-1}}+\|\nabla \ue^n\|_{H^{s-1}}\|\d_i \an \|_{L^\infty}\right)\, ,
\end{equation*}
where $(c_j(t))_{j\geq -1}$ is a sequence in the unit ball of $\ell^2$.

Instead, the latter term can be bounded in the following way:
\begin{equation*}\label{eq:nabla-u_nabla-a}
2^{j(s-1)}\|\Delta_j \left(\d_i \ue^n \cdot \nabla \an \right)\|_{L^2}\leq C\, c_j(t)\,  \|\nabla \ue^n\|_{H^{s-1}}\|\nabla \an \|_{H^{s-1}}\, .
\end{equation*}

Then, due to the embedding $H^\sigma(\R^2)\hookrightarrow L^\infty (\R^2)$ for $\sigma>1$, one has
\begin{equation*}
2^{j(s-1)}\| \Delta_j\, \nabla \an(t)\|_{L^2}\leq 2^{j(s-1)}\| \Delta_j \nabla a_{0,\veps}^{n+1}\|_{L^2}+\int_0^t C\, c_j(\tau )\left(\| \ue^n\|_{ H^s}\|\nabla \an\|_{ H^{s-1}}\right)\, \detau\, . 
\end{equation*}
At this point, after summing on indices $j\geq -1$, thanks to the Minkowski inequality (for which we refer to Proposition \ref{app:mink}) combined with a Gr\"onwall type argument (see Section \ref{app:sect_gron}), we finally obtain 
\begin{equation}\label{est:a^(n+1)}
\sup_{0\leq t\leq T}\|\nabla \an (t)\|_{H^{s-1}}\leq \|\nabla a_{0,\veps}^{n+1}\|_{H^{s-1}}\, \exp \left(\int_0^T C \, \|\ue^n(t)\|_{ H^s}\, \dt\right)\, .
\end{equation}

Now, we have to estimate the velocity field $\ue^{n+1}$ and for that purpose we start with the $L^2$ estimate. We take the momentum equation in the original form:
\begin{equation}\label{mom_eq_original_prob}
\vrho_\veps^{n+1}\left(\d_t\ue^{n+1}+\ue^n\cdot \nabla \ue^{n+1}\right)+\frac{1}{\veps}\vrho_\veps^{n+1}\ue^{\perp, n+1}+\frac{1}{\veps}\nabla \Pi_\veps^{n+1}=0 \, ,
\end{equation}
where we construct $\vrho_\veps^{n+1}:=1/(1+\veps \an)$ starting from $\an$. Notice that $\vrho_\veps^{n+1}$ satisfies the transport equation
\begin{equation*}
\d_t \vrho_\veps^{n+1}+\ue^{n}\cdot \nabla \vrho_\veps^{n+1}=0\, .
\end{equation*}  

At this point, we test equation \eqref{mom_eq_original_prob} against $\ue^{n+1}$. We integrate by parts on $\R^2$, deriving the following estimate: 
$$ \int_{\R^2} \vrho_\veps^{n+1}\d_t|\ue^{n+1}|^2+\int_{\R^2}\vrho_\veps^{n+1}\ue^n\cdot \nabla |\ue^{n+1}|^2=0 \, ,$$
which implies (making use of the transport equation for $\vrho_\veps^{n+1}$)
\begin{equation*}
 \left\|\sqrt{\vrho_\veps^{n+1}(t)}\,  \ue^{n+1}(t)\right\|_{L^2}\leq \left\|\sqrt{\vrho_{0,\veps}^{n+1}} \, \vec u_{0,\veps}^{n+1}\right\|_{L^2}\, .
\end{equation*}
From the previous bound, due to the assumption \eqref{assumption_densities}, we  can deduce the preservation of the $L^2$ norm for the velocity field $\ue^{n+1}$:
\begin{equation*}
 \left\| \ue^{n+1}(t)\right\|_{L^2}\leq C\left\| \vec u_{0,\veps}^{n+1}\right\|_{L^2}\leq C\left\| \vec u_{0,\veps}\right\|_{L^2}\, .
\end{equation*}

Taking now the operator $ \Delta_j$ in the momentum equation in \eqref{approx_iteration_1}, we obtain
\begin{equation*}
\d_t \Delta_j \ue^{n+1}+\ue^n\cdot \nabla \Delta_j \ue^{n+1}=[\ue^n\cdot \nabla, \Delta_j]\ue^{n+1}-\frac{1}{\veps}\Delta_j\ue^{\perp ,n+1}-\Delta_j\left[\left(1+\veps a_\veps^{n+1}\right)\frac{1}{\veps}\nabla \Pi_\veps^{n+1}\right]
\end{equation*}
and multiplying again by $\Delta_j\ue^{n+1}$, we have cancellations so that 
\begin{equation}\label{eq:vel_est_dyadic}
\left\|\Delta_j\ue^{n+1}(t)\right\|_{L^2}\leq \left\|\Delta_j\vec u_{0,\veps}^{n+1}\right\|_{L^2}+C\int_0^t \left(\left\|[\ue^n\cdot \nabla, \Delta_j]\ue^{n+1}\right\|_{L^2}+\left\|\Delta_j\left( a_\veps^{n+1}\nabla \Pi_\veps^{n+1}\right)\right\|_{L^2}\right) \, \detau\, .
\end{equation}
As done before, we employ here the commutator estimates of Lemma \ref{l:commutator_est} in order to have 
\begin{equation*}
\begin{split}
2^{js} \left\|[\ue^n\cdot \nabla,  \Delta_j]\ue^{n+1}\right\|_{L^2}&\leq C\,c_j\, \left(\|\nabla \ue^n\|_{L^\infty}\|\ue^{n+1}\|_{ H^s}+\|\nabla \ue^{n+1}\|_{L^\infty}\|\ue^n\|_{ H^s}\right)\\
&\leq C\,c_j\, \left(\| \ue^n\|_{ H^s}\|\ue^{n+1}\|_{ H^s}\right)\, .
\end{split}
\end{equation*}
For the latter term on the right-hand side of \eqref{eq:vel_est_dyadic}, we take advantage of the Bony decomposition (see Paragraph \ref{app_paradiff}) and we apply Proposition \ref{prop:app_fine_tame_est}. We may infer that 
\begin{equation*}
\begin{split}
 \left\|a_\veps^{n+1}\nabla \Pi_\veps^{n+1}\right\|_{ H^s}\leq  C\left(\|\an \|_{L^\infty}+\|\nabla a_\veps^{n+1}\|_{ H^{s-1}}\right)\|\nabla \Pi_\veps^{n+1}\|_{ H^s}\, .
\end{split}
\end{equation*}
To finish with, we have to find a uniform bound for the pressure term. For that, we apply the $\div$ operator in \eqref{approx_iteration_1}. Thus, we aim at solving the elliptic problem 
\begin{equation}\label{eq:elliptic_problem}
-\div \left(\left(1+\veps a_\veps^{n+1}\right)\nabla \Pi_\veps^{n+1}\right)=\, \veps\, \div (\ue^n \cdot \nabla \ue^{n+1} )- \curl \ue^{n+1}\, .
\end{equation}
Thanks to the assumption \eqref{assumption_densities} and Lemma \ref{lem:Lax-Milgram_type}, we can obtain 
\begin{equation}\label{est:Pi_L^2}
\begin{split}
\|\nabla \Pi_\veps^{n+1}\|_{L^2}&\leq C\left(\veps \, \|\ue^n\cdot \nabla \ue^{n+1}\|_{L^2}+\|\ue^{\perp,n+1}\|_{L^2}\right)\\
&\leq  C\left(\veps \, \|\ue^n\|_{L^2} \| \ue^{n+1}\|_{H^s}+\|\ue^{n+1}\|_{L^2}\right)\, .
\end{split}
\end{equation} 
Now, we apply the spectral cut-off operator $ \Delta_j$ to \eqref{eq:elliptic_problem}. We get 
\begin{equation*}
-\, \div \left( A_\veps \Delta_j\nabla \Pi_\veps^{n+1}\right)=\div \left(\left[ \Delta_j,A_\veps\right]\nabla \Pi_\veps^{n+1}\right)+\, \div  \Delta_j F_\veps\, ,
\end{equation*}
for all $j\geq 0$ and where we have defined $A_\veps:=\left(1+\veps a_\veps^{n+1}\right)$ and $F_\veps:=\veps \ue^n \cdot \nabla \ue^{n+1}+\ue^{\perp,n+1}$.
Hence multiplying both sides by $ \Delta_j  \Pi_\veps^{n+1}$ and integrating over $\R^2$, we have 
\begin{equation*}
-\int_{\R^2}\Delta_j\Pi_\veps^{n+1} \div \left( A_\veps \Delta_j\nabla \Pi_\veps^{n+1}\right) \, \dx= \int_{\R^2}\Delta_j\Pi_\veps^{n+1} \div \left( \left[\Delta_j, A_\veps\right] \nabla \Pi_\veps^{n+1}\right) \, \dx+\int_{\R^2}\Delta_j\Pi_\veps^{n+1} \div \Delta_j F_\veps \, \dx.
\end{equation*}
Since for $j\geq 0$ we have $\|\Delta_j \nabla \Pi_\veps^{n+1}\|_{L^2}\sim 2^j\|\Delta_j  \Pi_\veps^{n+1}\|_{L^2}$ (according to Lemma \ref{l:bern}) and using H\"older's inequality (see Proposition \ref{app:hold_ineq}) for the right-hand side, we obtain for all $j\geq 0$: 
\begin{equation*}
2^{j}\| \Delta_j \nabla \Pi_\veps^{n+1}\|^2_{L^2}\leq C\| \Delta_j \nabla \Pi_\veps^{n+1}\|_{L^2}\left( \| \div \left[ \Delta_j,A_\veps \right]\nabla \Pi_\veps^{n+1}\|_{L^2}+\|\div \Delta_j F_\veps\|_{L^2} \right)\, .
\end{equation*}
To deal with the former term on the right-hand side, we take advantage of the following commutator estimate (see Lemma \ref{l:commutator_pressure}):
\begin{equation*}
\| \div \left[ \Delta_j,A_\veps \right]\nabla \Pi_\veps^{n+1}\|_{L^2}\leq C\, c_j \, 2^{-j(s-1)}\|\nabla A_\veps\|_{H^{s-1}}\|\nabla \Pi_\veps^{n+1}\|_{H^{s-1}}\, ,
\end{equation*}
for a suitable sequence $(c_j)_{j}$ belonging to the unit sphere of $\ell^2$.

After multiplying by $2^{j(s-1)}$, we get
 \begin{equation*}
2^{js}\| \Delta_j \nabla \Pi_\veps^{n+1}\|_{L^2}\leq C\left(c_j\,  \|\nabla A_\veps\|_{H^{s-1}}\|\nabla \Pi_\veps^{n+1}\|_{H^{s-1}}+2^{j(s-1)}\|\div \Delta_j F_\veps\|_{L^2} \right)\, .
\end{equation*}
Taking the $\ell^2$ norm of both sides and summing up the low frequency blocks related to $\Delta_{-1}\nabla \Pi_\veps^{n+1}$, we may have 
\begin{equation*}
\|\nabla \Pi_\veps^{n+1}\|_{H^s}\leq C\left(  \|\nabla A_\veps\|_{H^{s-1}}\|\nabla \Pi_\veps^{n+1}\|_{H^{s-1}}+\|\div  F_\veps\|_{H^{s-1}}+\|\Delta_{-1}\nabla \Pi_\veps^{n+1}\|_{L^2} \right)\, .
\end{equation*}
We observe that $\|\Delta_{-1}\nabla \Pi_\veps^{n+1}\|_{L^2}\leq C\|\nabla \Pi_\veps^{n+1}\|_{L^2}$ and 
\begin{equation*}
\|\nabla \Pi_\veps^{n+1}\|_{H^{s-1}}\leq C\|\nabla \Pi_\veps^{n+1}\|_{L^2}^{1/s}\|\nabla \Pi_\veps^{n+1}\|_{H^s}^{1-1/s}\, .
\end{equation*}
Therefore,
\begin{equation*}
\|\nabla \Pi_\veps^{n+1}\|_{H^s}\leq C\left(  \|\nabla A_\veps\|_{H^{s-1}}\|\nabla \Pi_\veps^{n+1}\|_{L^2}^{1/s}\|\nabla \Pi_\veps^{n+1}\|_{H^{s}}^{1-1/s}+\|\div  F_\veps\|_{H^{s-1}}+\|\nabla \Pi_\veps^{n+1}\|_{L^2} \right)\, .
\end{equation*}
Then applying Young's inequality (see Proposition \ref{app:young_ineq}) we finally infer that 
\begin{equation}\label{est_Pi_H^s_1}
\|\nabla \Pi_\veps^{n+1}\|_{H^s}\leq C\left(  \left(1+\|\nabla A_\veps\|_{H^{s-1}}\right)^s \|\nabla \Pi_\veps^{n+1}\|_{L^2}+\|\div  F_\veps\|_{H^{s-1}} \right)\, .
\end{equation}
It remains to analyse the term $\div F_\veps$ where $F_\veps:=\veps \ue^n \cdot \nabla \ue^{n+1}+\ue^{\perp,n+1}$. Due to the divergence-free conditions, we can write 
\begin{equation*}
\div (\ue^n \cdot \nabla \ue^{n+1} )=\nabla \ue^{n}:\nabla \ue^{n+1}
\end{equation*}
and as $H^{s-1}$ is an algebra, the term $\div (\ue^n \cdot \nabla \ue^{n+1} )$ is in $H^{s-1}$, with 
\begin{equation}\label{est:div_u}
\|\div (\ue^n \cdot \nabla \ue^{n+1} )\|_{H^{s-1}}\leq C \|\ue^n\|_{H^s}\|\ue^{n+1}\|_{H^s}\, .
\end{equation}
Putting \eqref{est:Pi_L^2} and \eqref{est:div_u} into \eqref{est_Pi_H^s_1}, we find that 
\begin{equation}\label{est_Pi_final}
\begin{split}
\|\nabla \Pi_\veps^{n+1}\|_{ H^{s}}&\leq C  \left(1+\veps \|\nabla a_\veps^{n+1}\|_{ H^{s-1}}\right)^s \left(\veps \|\ue^n\|_{L^2}\|\ue^{n+1}\|_{ H^s}+\|\ue^{\perp,n+1}\|_{L^2}\right)\\
&+C\, \left(\veps \|\ue^{n}\|_{ H^{s}}\|\ue^{n+1}\|_{ H^{s}} +\|\ue^{\perp,n+1}\|_{ H^{s}}\right)\\
&\leq C  \left(1+\veps \|\nabla a_\veps^{n+1}\|_{ H^{s-1}}\right)^s \left(\veps \|\ue^n\|_{H^s}+1\right)\|\ue^{n+1}\|_{ H^s}\, ,
\end{split}
\end{equation} 
which implies the $L^\infty_T(H^s)$ estimate for the pressure term:
\begin{equation}\label{est:Pi^(n+1)}
\|\nabla \Pi_\veps^{n+1}\|_{L^{\infty}_T H^{s}}\leq C \left(1+\veps \|\nabla a_\veps^{n+1}\|_{L^\infty_T  H^{s-1}}\right)^s\left(\veps \|\ue^n\|_{L^\infty_T H^s}+1\right)\|\ue^{n+1}\|_{L^\infty_TH^s}\, .
\end{equation}
Combining all the previous estimates together with a Gr\"onwall type inequality (again we refer to Section \ref{app:sect_gron}), we finally obtain an estimate for the velocity field:
\begin{equation}\label{est:u^(n+1)}
\sup_{0\leq t\leq T}\|\ue^{n+1}(t)\|_{H^s}\leq \|\vec u_{0,\veps}^{n+1}\|_{H^s}\exp \left(\int_0^T A_n(t)\, \dt\right)\, ,
\end{equation}
where 
\begin{equation*}
\begin{split}
A_n(t)=C\left(\|\an (t)\|_{L^\infty}+\|\nabla a_\veps^{n+1}(t)\|_{ H^{s-1}}\right)\left(1+\veps \|\nabla a_\veps^{n+1}(t)\|_{ H^{s-1}}\right)^s\left(\veps \|\ue^n(t)\|_{H^{s}}+1\right)+C\|\ue^n(t)\|_{ H^s}\, .
\end{split}
\end{equation*}

We point out that the above constants $C$ do not depend on $n$ nor on $\veps$.

The scope in what follows is to obtain uniform estimates by induction. Thanks to the assumptions stated in Paragraph \ref{ss:FormProb_E}, we can suppose that the initial data satisfy 
\begin{equation*}
\|a_{0,\veps}\|_{L^\infty}\leq \frac{C_0}{2}\, ,\quad \quad \|\nabla a_{0,\veps}\|_{H^{s-1}}\leq \frac{C_1}{2} \quad \quad \text{and}\quad \quad \|\vec{u}_{0,\veps}\|_{H^s}\leq \frac{C_2}{2} \, ,
\end{equation*}
for some $C_0, C_1, C_2>0$. Due to the relation \eqref{eq:transport_density} we immediately infer that, for all $n\geq 0$, 
\begin{equation*}
\|\an \|_{L^\infty_t L^\infty}\leq C\|a_{0,\veps}\|_{L^\infty}\leq C\, C_0 \quad \text{for all }t\in \R_+.
\end{equation*}
At this point, the aim is to show (by induction) that the following uniform bounds hold  for all $n\geq 0$:
\begin{equation}\label{eq:unifbounds}
\begin{split}
&\|\nabla a_\veps^{n+1}\|_{L^\infty_{T^\ast}H^{s-1}}\leq C_1\\
&\|\ue^{n+1}\|_{L^\infty_{T^\ast}H^s}\leq C_2\\
&\|\nabla \Pi_\veps^{n+1}\|_{L^\infty_{T^\ast}H^s}\leq C_3\, ,
\end{split}
\end{equation}
provided that $T^\ast$ is sufficiently small.

The previous estimates in \eqref{eq:unifbounds} obviously hold for $n=0$. At this point, we will prove them for $n+1$, supposing that the controls in \eqref{eq:unifbounds} are true for $n$.
From \eqref{est:a^(n+1)}, \eqref{est:u^(n+1)} and \eqref{est:Pi^(n+1)} we obtain 
\begin{align*}
&\|\nabla a_\veps^{n+1}\|_{L^\infty_{T}H^{s-1}}\leq \frac{C_1}{2}\exp \Big(CTC_2\Big)\\
&\|\ue^{n+1}\|_{L^\infty_{T}H^s}\leq \frac{C_2}{2}\exp \Big(CT(C_0+C_1)\left(1+\veps C_1\right)^s(\veps C_2+1)C_2 \Big)\\
&\|\nabla \Pi_\veps^{n+1}\|_{L^\infty_{T}H^s}\leq C(\veps C_2+1)\left(1+\veps C_1\right)^s\|\ue^{n+1}\|_{L^\infty_{T}H^s}\, .
\end{align*}
So we can choose $T^{\ast}$ such that $\exp \Big(\max\{C_0+C_1,\, 1\}\, CT\left(1+ C_1\right)^s(1+ C_2)\, C_2 \Big)\leq 2$. Notice that $T^\ast$ does not depend on $\veps$. 
 
Thus, by induction, \eqref{eq:unifbounds} holds for the step $n+1$, and therefore it is true for any $n\in \N$. 

\subsection{Convergence}\label{ss:conv_H^s}
To prove the convergence, we will estimate the difference between two iterations at different steps. First of all, let us define 
\begin{equation*}
\widetilde{a}_\veps^{n}:=a_\veps^n -a^n_{0,\veps}\, ,
\end{equation*}
that satisfies the transport equation
\begin{equation*}
\begin{cases}
\d_t \widetilde{a}^n_\veps+\ue^{n-1}\cdot \nabla \widetilde{a}_\veps^n=-\ue^{n-1}\cdot \nabla a_{0,\veps}^n\\
\widetilde{a}{_{\veps}^n}_{|t=0}=0\, .
\end{cases}
\end{equation*}
Hence, since the right-hand side is definitely uniformly bounded (with respect to $n$) in $L^1_{\rm loc}(\R_+;L^2)$, from classical results for transport equations we get that $(\widetilde{a}^n_\veps)_{n\in \N}$ is uniformly bounded in $C^0([0,T];L^2)$. Now, we want to prove that the sequence $(\widetilde{a}^n_\veps, \ue^n, \nabla \Pi^n_\veps)_{n\in \N}$ is a Cauchy sequence in $C^0([0,T];L^2)$. 
So, let us define, for $(n,l)\in \N^2$, the following quantities,
\begin{align}
&\delta {a}_\veps^{n,l}:=\anl-a_\veps^n \nonumber\\
&\delta\widetilde{a}_\veps^{n,l} :=\widetilde{a}_\veps^{n+l}-\widetilde{a}_\veps^{n}=\delta a_\veps^{n,l}-\delta a_{0,\veps}^{n,l}\, , \quad \text{ where }\quad \delta {a}_{0,\veps}^{n,l}:=a_{0,\veps}^{n+l}-a_{0,\veps}^n \nonumber\\
&\delta \ue^{n,l}:=\ue^{n+l}-\ue^n \label{general_uniform_bounds}\\
&\delta \Pi_\veps^{n,l}:=\Pi_\veps^{n+l}-\Pi_\veps^n \nonumber \, .
\end{align}
Of course, we have that $\div \delta \vec u_\veps^{n,l}=0$ for any $(n,l)\in \N^2$.

The previous quantities defined in \eqref{general_uniform_bounds} solve the system
\begin{equation}\label{syst_convergence_analysis}
\begin{cases}
\d_t \delta \widetilde{a}_{\veps}^{n,l} +\vu_\veps^{n+l-1} \cdot \nabla \delta \widetilde{a}_\veps^{n,l}=-\delta\ue^{n-1,l}\cdot \nabla a_\veps^n-\ue^{n+l-1}\cdot \nabla \delta a^{n,l}_{0,\veps}\\[1ex]
\d_t\delta \vu_{\veps}^{n,l}+ \vu_{\veps}^{n+l-1} \cdot \nabla \delta\vu_{\veps}^{n,l}=-\delta \ue^{n-1,l} \cdot \nabla \ue^n \\[0.5ex]
\qquad \qquad \qquad \qquad \qquad \qquad \qquad \qquad - \dfrac{1}{\veps} (\delta \vu_{\veps}^{n,l})^\perp-(1+\veps a_\veps^{n+l})\dfrac{1}{\veps}\nabla \delta \Pi_\veps^{n,l}-\delta a_\veps^{n,l} \nabla \Pi_\veps^n \\[2ex]
\div  \delta \vu_{\veps}^{n,l} =0 \\[1ex]
(\delta \widetilde{a}_\veps^{n,l},\delta \vu_\veps^{n,l})_{|t=0}=(0,\delta \vu_{0,\veps}^{n,l})\, .
\end{cases}
\end{equation}

We perform an energy estimate for the first equation in \eqref{syst_convergence_analysis}, getting
\begin{equation*}
\|\delta \widetilde{a}_\veps^{n,l}(t)\|_{L^2}\leq C\int_0^t\left(\|\nabla a_\veps^n\|_{L^\infty}\|\delta \vu_\veps^{n-1,l}\|_{L^2}+\|\ue^{n+l-1}\|_{L^2}\|\nabla \delta a_{0,\veps}^{n,l}\|_{L^\infty}\right)\, \detau\, .
\end{equation*} 
Moreover, from the momentum equation multiplied by $\delta \ue^{n,l}$, integrating by parts over $\R^2$, we obtain 
\begin{equation*}
\begin{split}
\int_{\R^2}\frac{1}{2}\d_t\, |\delta \ue^{n,l}|^2=-\int_{\R^2}(\delta \ue^{n-1,l}\cdot \nabla \ue^{n})\cdot\delta \ue^{n,l}+\int_{\R^2}(a_\veps^{n+l}\, \nabla \delta \Pi_\veps^{n,l})\cdot \delta \ue^{n}
+\int_{\R^2}(\delta a_\veps^{n,l}\, \nabla  \Pi_\veps^{n})\cdot \delta \ue^{n,l}\, ,
\end{split}
\end{equation*} 
which implies 
\begin{equation*}
\begin{split}
\|\delta \ue^{n,l}(t)\|_{L^2}&\leq C \|\delta \vec u_{0,\veps}^{n,l}\|_{L^2}+C\int_0^t\|\nabla \ue^{n}(\tau)\|_{L^\infty}\|\delta \ue^{n-1,l}(\tau)\|_{L^2}+\|a_\veps^{n+l}(\tau)\|_{L^\infty}\|\nabla \delta \Pi_\veps^{n,l}(\tau)\|_{L^2}\, \detau\\
&+C\int_0^t\left(\|\delta \widetilde{a}_\veps^{n,l}(\tau )\|_{L^2}+\|\delta a_{0,\veps}^{n,l}\|_{L^\infty}\right)\|\nabla \Pi_\veps^{n}(\tau)\|_{L^2 \cap L^\infty} \, \detau\, ,
\end{split}
\end{equation*} 
where we have also employed the fact that $\delta a_\veps^{n,l} =\delta \widetilde{a}_\veps^{n,l}+\delta a_{0,\veps}^{n,l}$. 

Finally, for the pressure term we take the $\div$ operator in the momentum equation of system \eqref{syst_convergence_analysis}, obtaining 
\begin{equation*}
-\div \left((1+\veps a_\veps^{n+l})\frac{1}{\veps}\nabla \delta \Pi_\veps^{n,l}\right)=\div \left(-\delta \vu_{\veps}^{n,l} \cdot \nabla \vu_{\veps}^{n+l-1} +\delta \ue^{n-1,l} \cdot \nabla \ue^n + \frac{1}{\veps} (\delta \vu_{\veps}^{n,l})^\perp+\delta a_\veps^{n,l} \nabla \Pi_\veps^n \right),
\end{equation*} 
so that we have
\begin{equation}\label{Pi_cauchy}
\begin{split}
\|\nabla \delta \Pi_\veps^{n,l}\|_{L^2}&\leq C\veps \left(\|\delta
\ue^{n-1,l}\|_{L^2}\|\nabla \ue^n\|_{L^\infty}+\|\delta
a_\veps^{n,l}\, \nabla \Pi_\veps^n\|_{L^2}\right)\\
&+C\veps\|\delta
\ue^{n,l}\|_{L^2}\|\nabla \ue^{n+l-1}\|_{L^\infty}+C\|(\delta \ue^{n,l})^\perp\|_{L^2}\\
&\leq C\veps \left(\|\delta
\ue^{n-1,l}\|_{L^2}\|\nabla \ue^n\|_{L^\infty}+\|\delta
\widetilde{a}_\veps^{n,l}\|_{L^2}\|\nabla \Pi_\veps^n\|_{L^\infty}+\|\delta
a_{0,\veps}^{n,l}\|_{L^\infty}\|\nabla \Pi_\veps^n\|_{L^2}\right)\\
&+C\veps\|\delta
\ue^{n,l}\|_{L^2}\|\nabla \ue^{n+l-1}\|_{L^\infty} +C\|\delta \ue^{n,l}\|_{L^2}\, .
\end{split}
\end{equation}
At this point, applying Gr\"onwall lemma (see Lemma \ref{app:lem_gron}) and using the bounds established in Paragraph \ref{ss:unif_est}, we thus argue that for $t\in [0,T^\ast]$:
\begin{equation*}
\begin{split}
\|\delta \widetilde{a}_\veps^{n,l}(t)\|_{L^2}+\|\delta \ue^{n,l}(t)\|_{L^2}&\leq C_{T^\ast}\left(\|\nabla \delta a_{0,\veps}^{n,l}\|_{L^\infty}+\| \delta a_{0,\veps}^{n,l}\|_{L^\infty}+\|\delta \vec u_{0,\veps}^{n,l}\|_{L^2}\right)\\
&+C_{T^\ast}\int_0^t\left(\|\delta \widetilde{a}_\veps^{n-1,l}(\tau)\|_{L^2}+\|\delta \ue^{n-1,l}(\tau)\|_{L^2}\right) \detau\, ,
\end{split}
\end{equation*}
where the constant $C_{T^\ast}$ depends on $T^\ast$ and on the initial data. 

After setting
\begin{equation*}
F_0^n:=\sup_{l\geq 0}\left(\|\nabla \delta a_{0,\veps}^{n,l}\|_{L^\infty}+\| \delta a_{0,\veps}^{n,l}\|_{L^\infty}+\|\delta \vec u_{0,\veps}^{n,l}\|_{L^2}\right)
\end{equation*}
and
\begin{equation*}
G^n(t):=\sup_{l\geq 0}\sup_{[0,t]}\left(\|\delta \widetilde{a}_\veps^{n,l}(\tau)\|_{L^2}+\|\delta \ue^{n,l}(\tau)\|_{L^2}\right), 
\end{equation*}
by induction we may conclude that, for all $t\in [0,T^\ast]$, 
\begin{equation*}
\begin{split}
G^n(t)\leq C_{T^\ast}\sum_{k=0}^{n-1}\frac{(C_{T^\ast}T^\ast)^k}{k!}F_0^{n-k}+\frac{\left(C_{T^\ast}T^\ast\right)^n}{n!}G^0(t)\, .
\end{split}
\end{equation*}
Now, bearing \eqref{conv_in_data} in mind, we have 
\begin{equation*}
\lim_{n\rightarrow +\infty}F_0^n=0\, .
\end{equation*}
Hence, we may infer that
\begin{equation}\label{est_fin_conv}
\lim_{n\rightarrow +\infty}\sup_{l\geq 0}\sup_{t\in [0,T^\ast]}\left(\|\delta \widetilde{a}_\veps^{n,l}(t)\|_{L^2}+\|\delta \ue^{n,l}(t)\|_{L^2}\right)=0\, .
\end{equation}

Property \eqref{est_fin_conv} implies that both $(\widetilde{a}_\veps^n)_{n\in \N}$ and $(\ue^n)_{n\in \N}$ are Cauchy sequences in $C^0([0,T^\ast];L^2)$: therefore, such sequences converge to some functions $\widetilde{a}_\veps$ and $\ue$ in the same space. Taking advantage of previous computations in \eqref{Pi_cauchy}, we have also that $(\nabla \Pi_\veps^n)_{n\in \N}$ converge to a function $\nabla \Pi_\veps$ in $C^0([0,T^\ast];L^2)$.

Now, we define $a_\veps:= a_{0,\veps} +\widetilde{a}_\veps$. Hence, $ a_\veps -a_{0,\veps}$ is in $C^0([0,T^\ast];L^2)$.
Moreover, as $(\nabla a_\veps^n)_{n\in \N}$ is uniformly bounded in $L^\infty ([0,T^\ast];H^{s-1})$ and Sobolev spaces have the \textit{Fatou property} (we refer to Proposition \ref{proposition_Fatou} in this respect), we deduce that $\nabla a_\veps$ belongs to the same space. Moreover, since $(a^n_\veps )_{n\in \N}$ is uniformly bounded in $L^\infty ([0,T^\ast]\times \R^2)$, we also have that $a_\veps\in L^\infty ([0,T^\ast]\times \R^2)$. Analogously, as $(\ue^n)_{n\in \N}$ and $(\nabla \Pi_\veps^n)_{n\in \N}$ are uniformly bounded in $L^\infty ([0,T^\ast];H^s)$, we deduce that $\ue$ and $\nabla \Pi_\veps$ belong to $L^\infty ([0,T^\ast];H^s)$. 
 

Due to an interpolation argument, we see that the above sequences converge strongly in every intermediate $C^0([0,T^\ast];H^\sigma)$ for all $\sigma <s$. This is enough to pass to the limit in the equations satisfied by $(a_\veps^n,\ue^n,\nabla \Pi_\veps^n)_{n\in \N}$. Hence, $(a_\veps, \ue,\nabla \Pi_\veps)$ satisfies the original problem \eqref{Euler-a_eps_1}.

This having been established, we look at the time continuity of $a_\veps$. We exploit the transport equation:
$$\d_t a_{\veps}=-\vu_\veps \cdot \nabla a_\veps\, ,$$
noticing that the term on the right-hand side belongs to $L^\infty_{T^\ast}(L^\infty)$. Thus, we can deduce that $\d_t a_\veps \in L^\infty_{T^\ast}(L^\infty)$. Moreover, by embeddings, we already know that $\nabla a_\veps \in L^\infty_{T^\ast}(L^\infty)$. The previous two relations imply that $a_\veps \in W^{1,\infty}_{T^\ast}(L^\infty)\cap L^\infty_{T^\ast}(W^{1,\infty})$. That give us the desired regularity property $a_\veps \in C^0([0,T^\ast]\times \R^2)$. In addition, looking at the momentum equation in \eqref{Euler-a_eps_1} and employing Theorem \ref{thm_transport}, one obtains the claimed time regularity property for $\ue$. At this point, the time regularity for the pressure term $\nabla \Pi_\veps$ is recovered from the elliptic problem \eqref{general_elliptic_est}.

\subsection{Uniqueness}\label{ss:uniqueness_H^s}
We conclude this section showing the uniqueness of solutions in our framework.

We start by stating a uniqueness result, that is a consequence of a standard stability result based on energy methods. Since the proof is similar to the convergence argument of the previous paragraph, we will omit it (see e.g. \cite{D_JDE} for details). We recall that, in what follows, the parameter $\veps>0$ is fixed.
\begin{theorem} \label{th:uniq}
Let $\left(\vrho_\veps^{(1)}, \ue^{(1)}, \nabla \Pi_\veps^{(1)}\right)$ and $\left(\vrho_\veps^{(2)}, \ue^{(2)}, \nabla \Pi_\veps^{(2)}\right)$ be two solutions
to the Euler system \eqref{Euler_eps} associated with the initial data $\left(\vrho_{0,\veps}^{(1)}, \vec u_{0,\veps}^{(1)}\right)$ and $\left(\vrho_{0,\veps}^{(2)}, \vec u_{0,\veps}^{(2)}\right)$. Assume that, for some $T>0$, one has the following properties:
\begin{enumerate}[(i)]
\item the densities $\vrho_\veps^{(1)}$ and $\vrho_\veps^{(2)}$ are bounded and bounded away from zero;
\item the quantities $\delta \vrho_\veps:=\vrho_\veps^{(2)}-\vrho_\veps^{(1)}$ and $\delta \ue:=\ue^{(2)}-\ue^{(1)}$ belong to the space $C^1\big([0,T];L^2(\R^2)\big)$;
\item $\nabla \ue^{(1)}$, $\nabla \vrho_\veps^{(1)}$ and $\nabla \Pi_\veps^{(1)}$ belong to $ L^1\big([0,T];L^\infty(\R^2)\big)$.
\end{enumerate}

Then, for all $t\in[0,T]$, we have the stability inequality: 
\begin{equation}\label{stability_ineq}
\|\delta \vrho_\veps (t)\|_{L^2}+ \left\|\sqrt{\vrho_\veps^{(2)}(t)}\, \delta \ue (t)\right\|_{L^2}\leq \left(\|\delta \vrho_{0,\veps}\|_{L^2}+ \left\|\sqrt{\vrho_{0,\veps}^{(2)}}\, \delta \vec u_{0,\veps}\right\|_{L^2}\right)\, e^{CA(t)}\, ,
\end{equation}
for a universal constant $C>0$, where we have defined
\begin{equation*}\label{eq:def_A(t)}
A(t):=\int_0^t\left(\left\|\frac{\nabla \vrho_\veps^{(1)}}{\sqrt{\vrho_\veps^{(2)}}}\right\|_{L^\infty}+\left\|\frac{\nabla \Pi_\veps^{(1)}}{\vrho_\veps^{(1)}\sqrt{\vrho_\veps^{(2)}}}\right\|_{L^\infty}+\|\nabla \ue^{(1)}\|_{L^\infty}\right)\, \detau\, .
\end{equation*} 
\end{theorem}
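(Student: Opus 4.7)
The plan is to establish a linear differential inequality of the form $\frac{d}{dt}\sqrt{\mc F(t)}\,\leq\,C\,A'(t)\sqrt{\mc F(t)}$ for the natural relative-energy functional
$$
\mc F(t)\,:=\,\|\delta\vrho_\veps(t)\|_{L^2}^2\,+\,\left\|\sqrt{\vrho_\veps^{(2)}(t)}\,\delta\ue(t)\right\|_{L^2}^2,
$$
from which Gr\"onwall's lemma immediately gives the stability inequality \eqref{stability_ineq} and, by specialising to identical initial data, uniqueness. The regularity hypotheses (i)--(iii) are precisely what is needed to justify all the integrations by parts below and to ensure that $A(t)$ is finite.

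First, I would subtract the two continuity equations. Using $\div\delta\ue\,=\,0$, one obtains the transport equation
$$
\d_t\delta\vrho_\veps\,+\,\ue^{(2)}\cdot\nabla\delta\vrho_\veps\,=\,-\,\delta\ue\cdot\nabla\vrho_\veps^{(1)}.
$$
A standard $L^2$ energy estimate (the transport contribution vanishing by $\div\ue^{(2)}=0$), combined with the pointwise bound $|\delta\ue|\,\leq\,|\sqrt{\vrho_\veps^{(2)}}\,\delta\ue|/\sqrt{\vrho_\veps^{(2)}}$, legitimate thanks to the lower bound on $\vrho_\veps^{(2)}$ in (i), controls $\frac{d}{dt}\|\delta\vrho_\veps\|_{L^2}$ by the first term in the integrand defining $A(t)$, times $\|\sqrt{\vrho_\veps^{(2)}}\,\delta\ue\|_{L^2}$.

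The more delicate step is the analogous estimate for the weighted kinetic energy. Writing the two momentum equations in non-conservative form, $\d_t\ue^{(i)}\,+\,\ue^{(i)}\cdot\nabla\ue^{(i)}\,=\,-\,\ue^{(i),\perp}/\veps\,-\,\nabla\Pi_\veps^{(i)}/(\veps\vrho_\veps^{(i)})$ for $i=1,2$, subtracting, and reorganising the pressure mismatch as $\nabla\Pi_\veps^{(2)}/\vrho_\veps^{(2)}\,-\,\nabla\Pi_\veps^{(1)}/\vrho_\veps^{(1)}\,=\,\nabla\delta\Pi_\veps/\vrho_\veps^{(2)}\,-\,\delta\vrho_\veps\,\nabla\Pi_\veps^{(1)}/(\vrho_\veps^{(1)}\vrho_\veps^{(2)})$, I would arrive at the equation
$$
\vrho_\veps^{(2)}\bigl(\d_t\delta\ue\,+\,\ue^{(2)}\cdot\nabla\delta\ue\,+\,\delta\ue\cdot\nabla\ue^{(1)}\bigr)\,+\,\frac{1}{\veps}\vrho_\veps^{(2)}\delta\ue^\perp\,+\,\frac{1}{\veps}\nabla\delta\Pi_\veps\,=\,\frac{1}{\veps}\,\frac{\delta\vrho_\veps}{\vrho_\veps^{(1)}}\,\nabla\Pi_\veps^{(1)}.
$$
Testing against $\delta\ue$ and integrating over $\R^2$, three structural cancellations come into play: the self-paired Coriolis integral $\int\vrho_\veps^{(2)}\delta\ue^\perp\cdot\delta\ue$ vanishes by the skew-symmetry of the rotation; the pressure contribution $\int\nabla\delta\Pi_\veps\cdot\delta\ue$ vanishes by integration by parts thanks to $\div\delta\ue\,=\,0$; and the time-derivative and convection contributions combine, upon using the mass equation for $\vrho_\veps^{(2)}$, into $\tfrac{1}{2}\frac{d}{dt}\int\vrho_\veps^{(2)}|\delta\ue|^2$. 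A Cauchy--Schwarz bound on the surviving cross-term $\int\vrho_\veps^{(2)}(\delta\ue\cdot\nabla\ue^{(1)})\cdot\delta\ue$ and on the right-hand side then controls $\frac{d}{dt}\|\sqrt{\vrho_\veps^{(2)}}\,\delta\ue\|_{L^2}$ by the remaining terms of $A'(t)$ times $\sqrt{\mc F(t)}$, where the factor $1/\veps$ is absorbed into the (fixed-$\veps$) constant $C$.

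Summing the two differential inequalities and invoking Gr\"onwall's lemma delivers \eqref{stability_ineq}. The main subtlety I anticipate is precisely the algebraic reorganisation of the pressure mismatch above: it is this manoeuvre that isolates the clean contribution $(\delta\vrho_\veps/\vrho_\veps^{(1)})\nabla\Pi_\veps^{(1)}$ and relies crucially on the joint structure of the Coriolis force (skew-symmetric) and the Leray-type pressure (killed by incompressibility) to prevent any leftover that is not bilinear in the pair $(\delta\vrho_\veps,\sqrt{\vrho_\veps^{(2)}}\,\delta\ue)$ from surviving in the energy balance.
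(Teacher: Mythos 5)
Your proposal is correct and reproduces, essentially step by step, the standard stability argument that the paper itself omits (it merely points to the analogous convergence estimate of Subsection 5.2.3 and to Danchin \cite{D_JDE}). The three key cancellations you identify — vanishing of the Coriolis integral by skew-symmetry, vanishing of $\int\nabla\delta\Pi_\veps\cdot\delta\ue$ by incompressibility, and the combination of the time-derivative and transport terms into $\tfrac12\tfrac{d}{dt}\int\vrho_\veps^{(2)}|\delta\ue|^2$ via the mass equation for $\vrho_\veps^{(2)}$ — are exactly the structural points the quoted reference exploits; multiplying the non-conservative velocity equation by $\vrho_\veps^{(2)}$ before testing is the move that isolates the clean quotient $\delta\vrho_\veps\,\nabla\Pi_\veps^{(1)}/\vrho_\veps^{(1)}$ and avoids ever estimating $\nabla\delta\Pi_\veps$ in $L^2$ (as the iteration-scheme argument of Subsection 5.2.3 does), which is why the hypotheses on $\nabla\delta\Pi_\veps$ do not appear in the statement. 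Two cosmetic points: working with $\sqrt{\mc F}$ rather than directly with the sum $\|\delta\vrho_\veps\|_{L^2}+\|\sqrt{\vrho_\veps^{(2)}}\delta\ue\|_{L^2}$ leaves an innocuous $\sqrt2$ prefactor that the displayed inequality does not show (apply Gr\"onwall to the sum itself to avoid it); and the $1/\veps$ you absorb into $C$ is indeed licit here because the whole subsection works at fixed $\veps$, but strictly speaking it makes the constant $\veps$-dependent.
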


It is worth to notice that, adapting the \textit{relative entropy} arguments presented
in Subsection 4.3 of \cite{C-F_RWA}, we can replace (in the statement above) the $C^1_T(L^2)$ requirement for $\delta \vrho_\veps$ and $\delta \ue$ with the $C^0_T
(L^2)$ regularity. However, one needs to pay an additional $L^2$ assumption on the densities. In this way, we will have a weak-strong uniqueness type result and we will prove it in the next theorem.

Concerning weak-strong results for density-dependent fluids, we refer to \cite{Ger}, where Germain exhibited a weak-strong uniqueness property within  a class of (weak) solutions to the compressible Navier-Stokes system satisfying a relative entropy inequality with respect to a (hypothetical) strong solution of the same problem (see also the work \cite{F-N-S} by Feireisl, Novotn\'y and Sun). 
Moreover, in \cite{F-J-N}, the authors established the weak-strong uniqueness property in the class of finite energy weak solutions, extending thus the classical results of Prodi \cite{Pr} and Serrin \cite{Ser} to the class of compressible flows.

Before presenting the proof of the weak-strong uniqueness result, we state the definition of a \textit{finite energy weak solution} to system (2.1), such that $\vrho_{0,\veps}-1\in L^2(\R^2)$. We also recall that our densities have the form $\vrho_\veps=1+\veps R_\veps$.
\begin{definition}\label{weak_sol_L^2}
Let $T>0$ and $\veps \in \; ]0,1]$ be fixed. Let $(\vrho_{0,\veps},\vec u_{0,\veps})$ be an initial datum fulfilling the assumptions in Paragraph \ref{ss:FormProb_E}.
We say that $(\vrho_\veps, \ue)$ is a \textit{finite energy weak solution} to system \eqref{Euler_eps} in $[0,T]\times \R^2$, related to the previous initial datum, if:
\begin{itemize}
\item $\vrho_\veps \in L^\infty([0,T]\times \R^2)$ and $\vrho_\veps -1\in C^0([0,T];L^2(\R^2))$;
\item $\ue \in L^\infty([0,T];L^2(\R^2))\cap C_w^0([0,T];L^2(\R^2))$;
\item the mass equation is satisfied in the weak sense: 
\begin{equation*}
\int_0^T\int_{\R^2}\Big(\vrho_\veps \, \d_t \varphi+ \vrho_\veps \ue \cdot \nabla \varphi\Big) \, \dxdt +\int_{\R^2}\vrho_{0,\veps}\varphi(0, \cdot )\dx =\int_{\R^2}\vrho_\veps (T)\varphi(T,\cdot)\dx \, ,
\end{equation*}
for all $\varphi\in C_c^\infty ([0,T]\times \R^2;\R)$;
\item the divergence-free condition $\div \ue =0$ is satisfied in $\mathcal{D}^\prime (]0,T[\, \times \R^2)$;
\item the momentum equation is satisfied in the weak sense:
\begin{equation*}
\begin{split}
\int_0^T\int_{\R^2}\Big(\vrho_\veps \ue \cdot \d_t \vec \psi +[\vrho_\veps\ue \otimes \ue] :\nabla \vec \psi - \frac{1}{\veps}\vrho_\veps \ue^\perp \cdot \vec \psi \Big) \dxdt+ \int_{\R^2}\vrho_{0,\veps}& \vec u_{0,\veps}\cdot \vec \psi (0)\dx \\
&=\int_{\R^2}\vrho_\veps(T)\vec u_\veps(T)\vec \psi(T)\dx,
\end{split}
\end{equation*}
for any $\vec \psi \in C_c^\infty([0,T]\times \R^2;\R^2)$ such that $\div \vec \psi=0$;
\item for almost every $t\in [0,T]$, the two following energy balances hold true:
\begin{equation*}
\int_{\R^2} \vrho_\veps (t) |\ue (t)|^2\dx\leq \int_{\R^2} \vrho_{0,\veps} |\vec u_{0,\veps}|^2\dx \quad \text{and}\quad \int_{\R^2}(\vrho_{\veps}(t)-1)^2\dx\leq \int_{\R^2}(\vrho_{0,\veps}-1)^2\dx\, .
\end{equation*}
\end{itemize}
\end{definition}

\begin{theorem} \label{th:uniq_bis}
Let $\veps\in \, ]0,1]$ be fixed. Let $\left(\vrho_\veps^{(1)}, \ue^{(1)}\right)$ and $\left(\vrho_\veps^{(2)}, \ue^{(2)}\right)$ be two finite energy weak solutions
to the Euler system \eqref{Euler_eps} as in Definition \ref{weak_sol_L^2} with initial data $\left(\vrho_{0,\veps}^{(1)}, \vec u_{0,\veps}^{(1)}\right)$ and $\left(\vrho_{0,\veps}^{(2)}, \vec u_{0,\veps}^{(2)}\right)$. Assume that, for some $T>0$, one has the following properties:
\begin{enumerate}[(i)]
\item $\nabla \ue^{(1)}$ and $\nabla \vrho_\veps^{(1)}$ belong to $ L^1\big([0,T];L^\infty(\R^2)\big)$;
\item $\nabla \Pi_\veps^{(1)}$ is in $ L^1\big([0,T];L^\infty(\R^2)\cap L^2(\R^2)\big)$.
\end{enumerate}

Then, for all $t\in[0,T]$, we have the stability inequality \eqref{stability_ineq}. 
\end{theorem}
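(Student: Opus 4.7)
I would prove this weak--strong uniqueness result by a relative entropy method, following the strategy sketched in Subsection~4.3 of \cite{C-F_RWA}. Setting $\delta\vrho_\veps:=\vrho^{(2)}_\veps-\vrho^{(1)}_\veps$ and $\delta\ue:=\ue^{(2)}-\ue^{(1)}$, the natural functional to monitor is the relative energy
\begin{equation*}
\mathcal E(t)\,:=\,\tfrac12\int_{\R^2}\vrho^{(2)}_\veps(t)\,|\delta\ue(t)|^2\dx\,+\,\tfrac12\int_{\R^2}|\delta\vrho_\veps(t)|^2\dx,
\end{equation*}
and the goal is to derive an integral inequality of the form $\mathcal E(t)\le \mathcal E(0)+C\int_0^t A'(\tau)\,\mathcal E(\tau)\detau$, from which \eqref{stability_ineq} follows by Gronwall's lemma (see Lemma~\ref{app:lem_gron}). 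The gain compared with Theorem~\ref{th:uniq} is that $\delta\ue$ and $\delta\vrho_\veps$ are only required to lie in $C^0_T(L^2)$; the loss of time differentiability is paid by the $L^2$ integrability of the density fluctuations postulated in Definition~\ref{weak_sol_L^2}, which allows the use of test functions built from the strong solution in the weak formulations satisfied by the weak one.

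The velocity part of $\mathcal E(t)$ is expanded as $\tfrac12\int\vrho^{(2)}_\veps|\ue^{(2)}|^2-\int\vrho^{(2)}_\veps\ue^{(2)}\cdot\ue^{(1)}+\tfrac12\int\vrho^{(2)}_\veps|\ue^{(1)}|^2$. The first piece is non-increasing thanks to the kinetic energy inequality for $(\vrho^{(2)}_\veps,\ue^{(2)})$; for the second one I would test the weak momentum equation for solution~2 against the divergence-free field $\vec\psi=\ue^{(1)}$, and for the third one the weak mass equation for solution~2 against $\varphi=|\ue^{(1)}|^2/2$. In both cases the derivative $\partial_t\ue^{(1)}$ that appears is substituted, using the pointwise momentum equation for solution~1, by
\begin{equation*}
\partial_t\ue^{(1)}\,=\,-\,\ue^{(1)}\cdot\nabla\ue^{(1)}\,-\,\tfrac{1}{\veps}(\ue^{(1)})^\perp\,-\,\tfrac{1}{\veps\,\vrho^{(1)}_\veps}\nabla\Pi^{(1)}_\veps,
\end{equation*}
an admissible operation thanks to hypotheses~(i)--(ii). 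Two algebraic cancellations then occur: the Coriolis contributions combine into $-\tfrac{1}{\veps}\int\vrho^{(2)}_\veps\bigl[(\ue^{(2)})^\perp\cdot\ue^{(1)}+\ue^{(2)}\cdot(\ue^{(1)})^\perp\bigr]$, which vanishes by the antisymmetry $\vec a^\perp\cdot\vec b=-\vec a\cdot\vec b^\perp$; and the pressure contributions, after the splitting $\vrho^{(2)}_\veps/\vrho^{(1)}_\veps=1+\delta\vrho_\veps/\vrho^{(1)}_\veps$ together with the identity $\int\delta\ue\cdot\nabla\Pi^{(1)}_\veps=-\int\Pi^{(1)}_\veps\div\delta\ue=0$, reduce to the single residual $\tfrac{1}{\veps}\int(\delta\vrho_\veps/\vrho^{(1)}_\veps)\,\delta\ue\cdot\nabla\Pi^{(1)}_\veps$. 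Inserting the weight $\sqrt{\vrho^{(2)}_\veps}$ and applying Cauchy--Schwarz bounds this, at fixed $\veps$, by a multiple of $\|\delta\vrho_\veps\|_{L^2}\,\|\sqrt{\vrho^{(2)}_\veps}\,\delta\ue\|_{L^2}\,\|\nabla\Pi^{(1)}_\veps/(\vrho^{(1)}_\veps\sqrt{\vrho^{(2)}_\veps})\|_{L^\infty}$, i.e.\ exactly the second term in $A'(t)$; the leftover convective pieces recombine into $\int\vrho^{(2)}_\veps(\delta\ue\otimes\delta\ue):\nabla\ue^{(1)}$, controlled by $\|\nabla\ue^{(1)}\|_{L^\infty}\|\sqrt{\vrho^{(2)}_\veps}\,\delta\ue\|_{L^2}^2$.

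The density part is treated along the same lines by working with the fluctuations $\sigma^{(i)}_\veps:=\vrho^{(i)}_\veps-1\in C^0_T(L^2)$, which solve the pure transport equations $\partial_t\sigma^{(i)}_\veps+\ue^{(i)}\cdot\nabla\sigma^{(i)}_\veps=0$. Decomposing $\tfrac12\|\delta\vrho_\veps\|_{L^2}^2=\tfrac12\|\sigma^{(2)}_\veps\|_{L^2}^2-\int\sigma^{(1)}_\veps\sigma^{(2)}_\veps+\tfrac12\|\sigma^{(1)}_\veps\|_{L^2}^2$, the two diagonal pieces are handled, respectively, by the $L^2$ energy bound from Definition~\ref{weak_sol_L^2} and by the exact conservation law satisfied by the strong transport of $\sigma^{(1)}_\veps$, while the cross piece is computed by testing the weak mass equation for solution~2 against $\sigma^{(1)}_\veps$ and using the strong mass equation for solution~1 to eliminate $\partial_t\sigma^{(1)}_\veps$. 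The outcome, up to absolute constants, is the expected control $\int_0^t\|\nabla\vrho^{(1)}_\veps/\sqrt{\vrho^{(2)}_\veps}\|_{L^\infty}\|\delta\vrho_\veps\|_{L^2}\|\sqrt{\vrho^{(2)}_\veps}\,\delta\ue\|_{L^2}\detau$, matching the first term of $A(t)$. Summing the velocity and density contributions and applying Young's inequality to split the mixed products yields the integral inequality for $\mathcal E(t)$ announced above, and Gronwall's lemma closes the argument.

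The main technical obstacle is the justification that the weak formulations of Definition~\ref{weak_sol_L^2}, stated for compactly supported smooth test functions, can actually be applied to the globally defined quantities $\ue^{(1)}$, $|\ue^{(1)}|^2/2$ and $\sigma^{(1)}_\veps$. This is achieved by a standard truncation and mollification procedure, exploiting $\ue^{(1)}\in L^\infty_T(L^2\cap L^\infty)$ together with $\partial_t\ue^{(1)}\in L^1_T(L^2\cap L^\infty)$ as a consequence of (i)--(ii) (here the $L^2$ requirement on $\nabla\Pi^{(1)}_\veps$ is essential to tame the pressure term outside a fixed compact set), and analogously $\sigma^{(1)}_\veps\in L^\infty_T(L^2)$, $\partial_t\sigma^{(1)}_\veps\in L^1_T(L^2)$. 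The $L^2$ integrability of $\vrho^{(i)}_\veps-1$ encoded in Definition~\ref{weak_sol_L^2} is precisely what makes the cross product $\int\sigma^{(1)}_\veps\sigma^{(2)}_\veps\dx$ well-defined at every time, and is the ingredient that replaces the stronger $C^1_T(L^2)$ assumption of Theorem~\ref{th:uniq}.
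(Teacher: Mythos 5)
Your proposal is correct and follows essentially the same relative entropy / weak-strong uniqueness argument as the paper's proof: expand the relative energy $\mathcal E$, test the weak formulations satisfied by solution 2 against quantities built from the strong solution 1 (namely $\vec u^{(1)}$, $|\vec u^{(1)}|^2/2$, and the density fluctuation), use the strong Euler equations to eliminate the time derivatives and observe the Coriolis cancellation, then close with Gr\"onwall's lemma and a density argument. The only differences are cosmetic — working with $\sigma_\veps^{(i)}=\vrho_\veps^{(i)}-1$ rather than $R_\veps^{(i)}=(\vrho_\veps^{(i)}-1)/\veps$, and a slightly optimistic claim that $\partial_t\vec u^{(1)}\in L^1_T(L^\infty)$ where the paper only needs and uses $L^1_T(L^2)$ — and neither affects the validity of the argument.
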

\begin{proof} We start by defining, for $i=1,2$:
\begin{equation*}
R_\veps^{(i)}:=\frac{\vrho_\veps^{(i)}-1}{\veps}\quad \text{and}\quad R_{0,\veps}^{(i)}:=\frac{\vrho_{0,\veps}^{(i)}-1}{\veps}\, ,
\end{equation*}
and we notice that, owing to the continuity equation in \eqref{Euler_eps} and the divergence-free conditions $\div \vec u_\veps^{(i)}=0$, one has
\begin{equation}\label{ws:transport_R}
\d_tR_\veps^{(i)}+\div (R_\veps^{(i)}\vec u_\veps^{(i)})=0 \quad \text{with}\quad R_\veps^{(i)}(0)=R_{0,\veps}^{(i)}.
\end{equation}
 
For simplicity of notation, we fix $\veps=1$ throughout this proof and let us assume for a while the couple $(R^{(1)},\vec u^{(1)})$ be a pair of smooth functions such that $R^{(1)},\vec u^{(1)}\in C^\infty_c(\R_+\times \R^2)$ and $\div \vec u^{(1)}=0$, with the support of $R^{(1)}$ and $\vec u^{(1)}$ included in $[0,T]\times \R^2$.
First of all, we use $\vec u^{(1)}$ as a test function in the weak formulation of the momentum equation, finding that 
\begin{align}
\int_{\R^2} \vrho^{(2)}(T)\vec u^{(2)}(T)\cdot \vec u^{(1)}(T) \dx&= \int_{\R^2}\vrho_0^{(2)}\vec u_0^{(2)}\cdot \vec u_0^{(1)} \dx +\int_0^T\int_{\R^2}\vrho^{(2)}\vec u^{(2)}\cdot \d_t\vec u^{(1)} \dxdt \label{mom-eq-reg-1}\\
&+\int_0^T\int_{\R^2}(\vrho^{(2)}\vec u^{(2)}\otimes \vec u^{(2)}):\nabla \vec u^{(1)} \dxdt+\int_0^T\int_{\R^2}\vrho^{(2)}\vec u^{(2)}\cdot (\vec u^{(1)})^\perp \dxdt \nonumber\, ,
\end{align}
where we have also noted that $ (\vec u^{(2)})^\perp\cdot \vec u^{(1)}=- \vec u^{(2)}\cdot (\vec u^{(1)})^\perp$.

Next, testing the mass equation against $|\vec u^{(1)}|^2/2$, we obtain
\begin{equation}\label{mom-eq-reg-2}
\begin{split}
\frac{1}{2}\int_{\R^2}\vrho^{(2)}(T)|\vec u^{(1)}(T)|^2\dx&=\frac{1}{2}\int_{\R^2}\vrho_0^{(2)}|\vec u_0^{(1)}|^2\dx +\int_0^T\int_{\R^2}\vrho^{(2)}\vec u^{(1)}\cdot \d_t \vec u^{(1)} \dxdt\\
&+\frac{1}{2}\int_0^T\int_{\R^2}\vrho^{(2)}\vec u^{(2)}\cdot \nabla |\vec u^{(1)}|^2 \dxdt\\
&=\frac{1}{2}\int_{\R^2}\vrho_0^{(2)}|\vec u_0^{(1)}|^2\dx +\int_0^T\int_{\R^2}\vrho^{(2)}\vec u^{(1)}\cdot \d_t \vec u^{(1)} \dxdt\\
&+\frac{1}{2}\int_0^T\int_{\R^2}(\vrho^{(2)}\vec u^{(2)}\otimes\vec u^{(1)}): \nabla \vec u^{(1)} \dxdt.
\end{split}
\end{equation}
Recall also that the energy inequality reads
\begin{equation*}
\frac{1}{2}\int_{\R^2}\vrho^{(2)}(T)|\vec u^{(2)}(T)|^2\dx \leq\dfrac{1}{2}\int_{\R^2}\vrho_0^{(2)}|\vec u_0^{(2)}|^2\dx \, .
\end{equation*}
Now, we take care of the density oscillations $R^{(i)}$. We test the transport equation \eqref{ws:transport_R} for $R^{(2)}$ against $R^{(1)}$, getting 
\begin{align}
\int_{\R^2}R^{(2)}(T)R^{(1)}(T)\dx&=\int_{\R^2}R_0^{(2)}R_0^{(1)}\dx \label{transp-eq-reg-1}\\
&+\int_0^T\int_{\R^2}R^{(2)}\d_tR^{(1)}\dxdt+\int_0^T\int_{\R^2}R^{(2)}\vec u^{(2)}\cdot \nabla R^{(1)}\dxdt \nonumber .
\end{align}
Recalling Definition \ref{weak_sol_L^2}, we have the following energy balance:
\begin{equation*}
\int_{\R^2}|R^{(2)}(T)|^2\dx \leq\int_{\R^2}|R^{(2)}_0|^2\dx\, .
\end{equation*}
At this point, testing $\d_t1+\div(1\, \vec u^{(2)})=0$ against $|R^{(1)}|^2/2$, we may infer that 
\begin{equation}\label{transp-eq-reg-2}
\frac{1}{2}\int_{\R^2} |R^{(1)}(T)|^2\dx=\frac{1}{2}\int_{\R^2}|R_0^{(1)}|^2 \dx +\int_0^T\int_{\R^2}R^{(1)}\d_tR^{(1)}\dxdt+\int_0^T\int_{\R^2}R^{(1)}\vec u^{(2)}\cdot \nabla R^{(1)}\dxdt .
\end{equation}
Now, for notational convenience, let us define 
\begin{equation*}
\delta R:=R^{(2)}-R^{(1)}\quad \text{and}\quad \delta \vec u:=\vec u^{(2)}-\vec u^{(1)}\, .
\end{equation*} 
Putting all the previous relations together, we obtain 
\begin{align}
\frac{1}{2}\int_{\R^2}\Big(\vrho^{(2)}(T)|\delta \vec u(T)|^2+|\delta R(T)|^2\Big) \dx&\leq \frac{1}{2}\int_{\R^2}\Big(\vrho_0^{(2)}|\delta \vec u_0|^2+|\delta R_0|^2\Big) \dx-\int_0^T\int_{\R^2}\vrho^{(2)}\vec u^{(2)}\cdot (\vec u^{(1)})^\perp\dxdt \nonumber\\
&-\int_0^T\int_{\R^2}\Big(\vrho^{(2)}\delta \vec u\cdot \d_t\vec u^{(1)}+ \d_tR^{(1)}\, \delta R\Big)\dxdt \label{rel_en_ineq}\\
&-\int_0^T\int_{\R^2}\Big((\vrho^{(2)}\vec u^{(2)}\otimes \delta \vec u) :\nabla \vec u^{(1)}+\delta R \, \vec u^{(2)}\cdot \nabla R^{(1)}\Big)\dxdt \nonumber\, .
\end{align}
Next, we remark that we can write 
$$ (\vrho^{(2)}\vec u^{(2)}\otimes \delta \vec u) :\nabla \vec u^{(1)}=(\vrho^{(2)}\vec u^{(2)}\cdot \nabla \vec u^{(1)})\cdot \delta \vec u $$
and that we have $\vec u^{(2)}\cdot (\vec u^{(1)})^\perp=\delta \vec u\cdot (\vec u^{(1)})^\perp$ by orthogonality. 

Therefore, relation \eqref{rel_en_ineq} can be recasted as
\begin{equation*}\label{rel_en_ineq_1}
\begin{split}
\frac{1}{2}\int_{\R^2}\Big(\vrho^{(2)}(T)|\delta \vec u(T)|^2+|\delta R(T)|^2\Big) \dx&\leq \frac{1}{2}\int_{\R^2}\Big(\vrho_0^{(2)}|\delta \vec u_0|^2+|\delta R_0|^2\Big) \dx\\
&-\int_0^T\int_{\R^2}\vrho^{(2)}\Big(\d_t \vec u^{(1)}+\vec u^{(2)}\cdot \nabla \vec u^{(1)}+(\vec u^{(1)})^\perp\Big)\cdot \delta \vec u\, \dxdt \\
&-\int_0^T\int_{\R^2}\Big(\d_tR^{(1)}+\vec u^{(2)}\cdot \nabla R^{(1)}\Big)\, \delta R\, \dxdt\, .
\end{split}
\end{equation*}
At this point, we add and subtract the quantities $\pm (\vrho^{(2)}\vec u^{(1)}\cdot \nabla \vec u^{(1)})\cdot \, \delta \vec u\pm \vrho^{(2)}\frac{1}{\vrho^{(1)}}\nabla \Pi^{(1)}\cdot \, \delta \vec u$ and $\pm (\vec u^{(1)}\cdot \nabla R^{(1)}) \, \delta R$, yielding 
\begin{equation}\label{rel_en_ineq_2}
\begin{split}
\frac{1}{2}\int_{\R^2}\Big(\vrho^{(2)}(T)|\delta \vec u(T)|^2+|\delta R(T)|^2\Big) \dx&\leq \frac{1}{2}\int_{\R^2}\Big(\vrho_0^{(2)}|\delta \vec u_0|^2+|\delta R_0|^2\Big) \dx\\
&-\int_0^T\int_{\R^2}\Big(\vrho^{(2)}\delta \vec u\cdot \nabla \vec u^{(1)} +\delta R\frac{1}{\vrho^{(1)}}\nabla \Pi^{(1)}\Big)\cdot \delta \vec u \, \dxdt  \\
&-\int_0^T\int_{\R^2}(\delta \vec u\cdot \nabla R^{(1)})\, \delta R \dxdt\, ,
\end{split}
\end{equation}
where we have used the fact that $(R^{(1)},\, \vec u^{(1)})$ is solution to the Euler system and $\int_{\R^2}\nabla \Pi^{(1)}\cdot \delta \vec u \, \dx=0$. 

Therefore, setting $\mc E(T):=\|\sqrt{\vrho^{(2)}(T)}\delta \vec u(T)\|^2_{L^2}+\|\delta R(T)\|^2_{L^2}$, from relation \eqref{rel_en_ineq_2} we can deduce that
\begin{equation*}\label{rel_en_inq_3}
\mc E(T)\leq \mc E(0)+C\int_0^T\left(\|\nabla \vec u^{(1)}\|_{L^\infty}+\left\|\frac{1}{\sqrt{\vrho^{(2)}}\vrho^{(1)}}\nabla \Pi^{(1)}\right\|_{L^\infty}+\left\|\frac{1}{\sqrt{\vrho^{(2)}}}\nabla R^{(1)}\right\|_{L^\infty}\right)\mc E(t) \dt\, .
\end{equation*}
An application of Gr\"onwall lemma (we refer to Section \ref{app:sect_gron}) yields the desired stability inequality \eqref{stability_ineq}. 

In order to get the result, having the regularity stated in the theorem, we argue by density. 

Thanks to the regularity (stated in Definition \ref{weak_sol_L^2}) of weak solutions to the Euler equations \eqref{Euler_eps} and assumption $(i)$ of the theorem, all the terms appearing in relations \eqref{mom-eq-reg-1} and \eqref{mom-eq-reg-2} are well-defined, if in addition we have $\d_t \vec u^{(1)}\in L^1_T(L^2)$. However, this condition on the time derivative of the velocity field $\vec u^{(1)}$ comes for free from the momentum equation 
\begin{equation}\label{vec_1}
 \d_t \vec u^{(1)}=-\left(\vec u^{(1)}\cdot \nabla \vec u^{(1)}+(\vec u^{(1)})^\perp + \frac{1}{\vrho^{(1)}}\nabla \Pi^{(1)}\right)\, . 
 \end{equation}
Since $\vec u^{(1)}\in L^\infty_T(L^2)$ with $\nabla \vec u^{(1)}\in L^1_T(L^\infty)$ and $\vrho^{(1)}\in L^\infty_T(L^\infty)$, condition $(ii)$ implies that the right-hand side of \eqref{vec_1} is in $L^1_T(L^2)$. Recalling the regularity in Definition \ref{weak_sol_L^2} of $\vec u^{(1)}$, one gets $\vec u^{(1)}\in W^{1,1}_T(L^2)$ and hence $\vec u^{(1)}\in C^0_T(L^2)$. 

Analogously in order to justify computations in \eqref{transp-eq-reg-1} and \eqref{transp-eq-reg-2}, besides the previous regularity conditions, one needs the additional assumption $\d_t R^{(1)}\in L^1_T(L^2)$. Once again, one can take advantage of the continuity equation \eqref{ws:transport_R} to obtain the required regularity for $\d_t R^{(1)}$. Finally, condition $(ii)$ is necessary to make sense of relation \eqref{rel_en_ineq_2}.

This concludes the proof of the theorem.

\qed
\end{proof}

\section{Asymptotic analysis} \label{s:sing-pert_E}

The main goal of this section is to show the convergence when $\veps\rightarrow 0^+$: we achieve it employing a \textit{compensated compactness} technique. We point out that, in the sequel, the time $T>0$ is fixed by the existence theory developed in Section \ref{s:well-posedness_original_problem}.

We will show that \eqref{full Euler} converges towards a limit system, represented by the quasi-homogeneous incompressible Euler equations:
\begin{equation}\label{QH-Euler_syst}
\begin{cases}
\d_tR+\vu \cdot \nabla R=0\\
\d_t \vu +\vu \cdot \nabla \vu +R\vu^\perp +\nabla \Pi=0\\
\div \vu=0\, .
\end{cases}
\end{equation}
The previous system consists of a transport equation for the quantity $R$ (that can be interpreted as the deviation with respect to the constant density profile) and an Euler type equation for the limit velocity field $\vec u$.


In Section \ref{s:well-posedness_original_problem}, we have proved that the sequence $(\vrho_\veps, \ue, \nabla \Pi_\veps)_\veps$ is uniformly bounded (with respect to $\veps$) in suitable spaces. 
Next, thanks to the uniform bounds, we extract weak limit points, for which one has to find some constraints: the singular terms have to vanish at the limit (see Subsection \ref{sss:unif-prelim_E}).

 Finally, after performing the \textit{compensated compactness} arguments, we describe the limit dynamics (see Paragraph \ref{ss:wave_system} below).

The choice of using this technique derives from the fact that the oscillations in time of the solutions are out of control (see Subsection \ref{ss:wave_system}). To overcome this issue, rather than employing the standard $H^s$ estimates, we take advantage of the weak formulation of the problem. We test the equations against divergence-free test functions: this will lead to useful cancellations. In particular, we avoid to study the pressure term.
At the end, we close the argument by noticing that the weak limit solutions are actually regular solutions. 



\subsection{Preliminaries and constraint at the limit} \label{sss:unif-prelim_E}
We start this subsection by recalling the uniform bounds developed in Section \ref{s:well-posedness_original_problem}. The fluctuations $R_{\veps}$ satisfy the controls
	\begin{align*}
&\sup_{\veps\in\,]0,1]}\left\|  R_{\veps} \right\|_{L^\infty_T(L^\infty)}\,\leq \, C\\
&\sup_{\veps\in\,]0,1]}\left\| \nabla R_{\veps} \right\|_{L^\infty_T(H^{s-1})}\,\leq \, C \, ,
	\end{align*}
where $R_\veps:=(\vrho_\veps-1)/\veps$ as above.

As for the velocity fields, we have obtained the following uniform bound:
\begin{equation*}
\sup_{\veps\in\,]0,1]}\left\|  \vu_{\veps} \right\|_{L^\infty_T(H^s)}\,\leq \, C\, .
\end{equation*}
Thanks to the previous uniform estimates, we can assume (up to passing to subsequences) that there exist $R\in L^\infty_T( W^{1,\infty})$, with $\nabla R\in L^\infty_T(H^{s-1})$, and $\vec u \in L^\infty_T(H^s)$ such that 
\begin{equation}\label{limit_points}
\begin{split}
R:=\lim_{\veps \rightarrow 0^+}R_{\veps} \quad &\text{in}\quad L^\infty_T(L^\infty) \\
\nabla R:=\lim_{\veps \rightarrow 0^+}\nabla R_{\veps}\quad &\text{in}\quad L^\infty_T( H^{s-1})\\
\vu:=\lim_{\veps \rightarrow 0^+}\vu_{\veps}\quad &\text{in}\quad L^\infty_T(H^s) \, ,
\end{split}
\end{equation} 
where we agree that the previous limits are taken in the corresponding weak-$\ast$ topology.

\begin{remark}\label{rmk:conv_rho_1}
It is evident that $\vrho_\veps -1=O(\veps)$ in $L^\infty_T (L^{\infty})$ and therefore that $\vrho_\veps \ue$ weakly-$\ast$ converge to $\vec  u$ e.g. in the space $L^\infty_T( L^2)$.
\end{remark}

Next, we notice that the solutions stated in Theorem \ref{W-P_fullE} are \textit{strong} solutions. In particular, they satisfy in a weak sense the mass equation and the momentum equation, respectively:  
\begin{equation}\label{weak-con_E}
	-\int_0^T\int_{\R^2} \left( \vre \partial_t \varphi  + \vre\ue \cdot \nabla_x \varphi \right) \dxdt = 
	\int_{\R^2} \vrez \varphi(0,\cdot) \dx\, ,
	\end{equation}
for any $\varphi\in C^\infty_c([0,T[\,\times \R^2;\R)$;
	\begin{align}
	&\int_0^T\!\!\!\int_{\R^2}  
	\left( - \vre \ue \cdot \partial_t \vec\psi - \vre [\ue\otimes\ue]  : \nabla_x \vec\psi 
	+ \frac{1}{\ep} \,  \vre \ue^\perp \cdot \vec\psi  \right) \dxdt 
	= \int_{\R^2}\vrez \uez  \cdot \vec\psi (0,\cdot) \dx\, , \label{weak-mom_E} 
	\end{align}
for any test function $\vec\psi\in C^\infty_c([0,T[\,\times \R^2; \R^2)$ such that $\div \vec\psi=0$;
 
Moreover, the divergence-free condition on $\ue$ is satisfied in $\mc D^\prime (]0,T[\times \R^2)$.

Before going on, in the following lemma, we 
characterize the limit for the quantity $R_\veps \ue$. We recall that $R_\veps$ satisfies 
\begin{equation}\label{eq_transport_R_veps}
\d_t R_\veps =-\div (R_\veps \ue)\, , \quad \quad (R_\veps)_{|t=0}=R_{0,\veps}.
\end{equation}  
\begin{lemma}\label{l:reg_Ru_veps}
Let $(R_\veps)_\veps$ be uniformly bounded in $L^\infty_T(L^\infty(\R^2))$ with $(\nabla R_\veps)_\veps\subset L^\infty_T(H^{s-1}(\R^2))$, and let the velocity fields $(\ue)_\veps$ be uniformly bounded in $L^\infty(H^s(\R^2))$. Moreover, for any $\veps \in\; ]0,1]$, assume that the couple $(R_\veps, \vec u_\veps)$ solves the transport equation \eqref{eq_transport_R_veps}. Let $(R, \vec u)$ be the limit point identified in \eqref{limit_points}. Then, up to an extraction:
\begin{enumerate}
\item[(i)] $R_\veps \rightarrow R$ in $C^0_T(C^0_{\rm loc}(\R^2))$;
\item[(ii)] the product $R_\veps \ue$ converges to $R\vec u$ in the distributional sense.
\end{enumerate}
\end{lemma}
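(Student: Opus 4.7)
The plan is to establish statement $(i)$ by combining the transport equation (which yields uniform time regularity) with the uniform spatial regularity (which yields local compactness), applied through a Banach-valued Ascoli–Arzelà argument. Statement $(ii)$ will then follow by a standard weak–strong pairing.

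For $(i)$, I would first exploit $\d_t R_\veps = -\ue \cdot \nabla R_\veps$ together with the algebra structure of $H^{s-1}(\R^2)$ (valid since $s-1 > d/2 = 1$) to deduce that the sequence $(\d_t R_\veps)_\veps$ is uniformly bounded in $L^\infty_T(H^{s-1})$, and hence in $L^\infty_T(L^\infty)$ thanks to the embedding $H^{s-1}\hookrightarrow L^\infty$. This yields a uniform Lipschitz bound in time with values in $L^\infty_{\rm loc}(\R^2)$, namely $\|R_\veps(t_2)-R_\veps(t_1)\|_{L^\infty}\leq C\,|t_2-t_1|$. On the other hand, the spatial hypothesis $(\nabla R_\veps)_\veps\subset L^\infty_T(H^{s-1})$ and the embedding $H^{s-1}(\R^2)\hookrightarrow L^\infty(\R^2)$ produce a uniform $W^{1,\infty}$ bound on $R_\veps(t,\cdot)$; on any fixed compact $K\subset\R^2$ this family is equi-Lipschitz and hence pre-compact in $C^0(K)$ by the classical Ascoli–Arzelà theorem.

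Combining these two ingredients via the Banach-valued Ascoli–Arzelà theorem, I would extract a subsequence converging in $C^0\bigl([0,T];C^0(K)\bigr)$ for each compact $K$. A diagonal extraction along a countable exhaustion of $\R^2$ by compacts then upgrades this to convergence in $C^0_T(C^0_{\rm loc})$. Since, by \eqref{limit_points}, $R_\veps\weakstar R$ in $L^\infty_T(L^\infty)$, and since the strong topology is finer than the weak-$*$ one, the strong local limit must coincide with $R$. This proves $(i)$.

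For $(ii)$, pick any test function $\vec\psi\in C_c^\infty([0,T[\,\times\R^2;\R^2)$ with $\Supp\vec\psi\subset[0,T]\times K$ for some compact $K\subset\R^2$, and write
\begin{equation*}
\int_0^T\!\!\!\int_{\R^2} R_\veps\,\ue\cdot\vec\psi\,\dxdt\,=\,\int_0^T\!\!\!\int_{\R^2}\ue\cdot\bigl(R_\veps\,\vec\psi\bigr)\,\dxdt\,.
\end{equation*}
By $(i)$ and the boundedness of $\vec\psi$, the product $R_\veps\,\vec\psi$ converges strongly to $R\,\vec\psi$ in $L^1\bigl([0,T];L^2(\R^2;\R^2)\bigr)$. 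On the other hand, since $(\ue)_\veps\subset L^\infty_T(H^s)\subset L^\infty_T(L^2)$ uniformly, up to a further extraction one has $\ue\weakstar\vu$ in $L^\infty_T(L^2)$, with the same $\vu$ identified in \eqref{limit_points}. Pairing strong and weak-$*$ convergence yields the desired distributional limit $R_\veps\,\ue\to R\,\vu$. The main technical obstacle will be making the Banach-valued Ascoli–Arzelà argument and the diagonal extraction precise; once this is set up, the remainder is routine.
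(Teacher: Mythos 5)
Your proof is correct and follows essentially the same path as the paper's: use the transport equation to bound $\d_t R_\veps$ uniformly in $L^\infty_T(L^\infty)$, combine with the spatial $W^{1,\infty}$ bound, and invoke Ascoli–Arzelà to get strong convergence in $C^0_T(C^0_{\rm loc})$, then pair with the weak-$*$ convergence of $\ue$ for the product. The only cosmetic difference is that the paper bounds $\d_t R_\veps = -\div(R_\veps\ue)$ via the tame estimate of Proposition \ref{prop:app_fine_tame_est} on $R_\veps\ue$, whereas you bound $\d_t R_\veps = -\ue\cdot\nabla R_\veps$ using the algebra structure of $H^{s-1}$ directly; both are valid since $s>2$, and both reduce to the same $L^\infty_T(L^\infty)$ control.
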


\begin{proof}
We look at the transport equation \eqref{eq_transport_R_veps} for $R_\veps$. 
We employ Proposition \ref{prop:app_fine_tame_est} on the term in the right-hand side, obtaining 
\begin{equation*}
\|R_\veps \ue\|_{H^s}\leq C\left(\|R_\veps\|_{L^\infty}\|\ue  \|_{H^s}+\|\nabla R_\veps\|_{H^{s-1}}\|\ue\|_{L^\infty}\right)\, .
\end{equation*}
By embeddings, this implies that the sequence $(\d_t R_\veps)_\veps$ is uniformly bounded e.g. in $L^\infty_T (L^\infty)$ and so $(R_\veps)_\veps$ is bounded in $W^{1,\infty}_T(L^\infty)$ uniformly in $\veps$. On the other hand, we know that $(\nabla R_{\veps})_\veps$ is bounded in $L^\infty_T (L^\infty)$. Then, by Ascoli-Arzel\`a theorem (see Theorem \ref{app:th_A-A}), we gather that the family $(R_\veps)_\veps$ is compact in e.g. $C^0_T (C_{\rm loc}^{0})$ and hence we deduce the strong convergence property, up to passing to a suitable subsequence (not relabelled here), 
\begin{equation*}
R_\veps \rightarrow R \quad \text{in} \quad C^0([0,T]\, ;C_{\rm loc}^{0})\, .
\end{equation*}
Finally, since $(\ue)_\veps$ is weakly-$\ast$ convergent e.g. in $L^\infty_T( L^2)$ to $\vec u$, we get $R_\veps \ue \weakstar R \vec  u$ in the space $L^\infty_T( L_{\rm loc}^2)$.
\qed
\end{proof}


Now, as anticipated in the introduction of this section, we have to highlight the constraint that the limit points have to satisfy. We have to point out that this condition does not fully characterize the limit dynamics (see Subsection \ref{ss:wave_system} below). 

The only singular term (of order $\veps^{-1}$) appearing in the equations is the Coriolis force. Then, we test the momentum equation in \eqref{weak-mom_E} 
against $\veps \vec \psi$ with $\vec \psi \in C_c^\infty([0,T[\, \times \R^2;\R^2)$ such that $\div \vec \psi =0$.
Keeping in mind the assumptions on the initial data and due to the fact that $(\vrho_\veps \ue )_\veps$ is uniformly bounded in e.g. $L^\infty_T(L^2) $ and so is $(\vrho_\veps \ue \otimes \ue)_\veps$ in $L^\infty_T(L^1)$, it follows that all the terms in equation \eqref{weak-mom_E}, apart from the Coriolis operator, go to $0$ in the limit for $\veps\rightarrow 0^+$.

Therefore, 
we infer that, for any $\vec \psi \in C_c^\infty([0,T[\, \times \R^2;\R^2)$ such that $\div \vec \psi =0$,
\begin{equation*}
\lim_{\veps \rightarrow 0^+}\int_0^T\int_{\R^2}\vrho_\veps \ue^\perp \cdot \vec \psi \dx \dt=\int_0^T \int_{\R^2}\vec u^\perp \cdot \vec \psi \dx \dt=0\, .
\end{equation*}
This property tells us that $\vec u^\perp =\nabla \pi$, for some suitable function $\pi$. 

However, this relation does \textit{not} add more information on the limit dynamics, since we already know that the divergence-free condition $\div \ue=0$ is satisfied for all $\veps>0$. 


\subsection{Wave system and convergence}\label{ss:wave_system}
The goal of the present subsection is to describe oscillations of solutions in order to show convergence to the limit system. The Coriolis term is responsible for strong oscillations in time of solutions, which may prevent the convergence. To overcome this issue we implement a strategy based on \textit{compensated compactness} arguments. Namely, we perform algebraic manipulations on the wave system (see \eqref{wave system} below), in order to derive 
compactness properties for the quantity $\wtilde\gamma_\veps:=\curl (\vrho_\veps \ue)$. This will be enough to pass to the limit in the momentum equation (and, in particular, in the convective term). 

Let us define 
\begin{equation*}
\vec{V}_\veps:=\vrho_\veps \ue 
\, ,
\end{equation*}
that is uniformly bounded in $L^\infty_T(H^s)$, due to Proposition \ref{prop:app_fine_tame_est}. 

Now, using the fact that $\vrho_\veps=1+\veps R_\veps$, we recast the continuity equation in the following way:
\begin{equation}\label{eq:wave_mass_E}
\veps\d_t R_\veps+\div \vec V_\veps=0\, .
\end{equation}
In light of the uniform bounds and convergence properties stated in Lemma \ref{l:reg_Ru_veps}, we can easily pass to the limit in the previous formulation (or rather in \eqref{eq_transport_R_veps}) finding  
\begin{equation}\label{eq:limit_transp_R}
\d_t R+\div(R \vec u)=0\, .
\end{equation}
We decompose
$$ \vrho_\veps \ue^\perp=\ue^\perp +\veps\, R_\veps \ue^\perp $$
and from the momentum equation one can deduce
\begin{equation}\label{eq:wave_mom}
\veps \d_t \vec V_\veps +\nabla \Pi_\veps +\ue^\perp= \veps f_\veps\, ,
\end{equation}
where we have defined 
\begin{equation}\label{def_f}
f_\veps:=-\div (\vrho_\veps \ue \otimes \ue)-R_\veps \ue^\perp\, .
\end{equation}

In this way, we can rewrite system \eqref{Euler_eps} in the wave form
\begin{equation}\label{wave system}
\begin{cases}
\veps\d_t R_\veps+\div \vec V_\veps=0\\
\veps \d_t \vec V_\veps +\nabla \Pi_\veps +\ue^\perp= \veps f_\veps\, .
\end{cases}
\end{equation}

Applying again Proposition \ref{prop:app_fine_tame_est}, one can show that the terms $\vrho_\veps \ue \otimes \ue$ and $R_\veps \ue^\perp$ are uniformly bounded in $L^\infty_T(H^s)$. Thus, it follows that $(f_\veps)_\veps \subset L^\infty_T(H^{s-1})$.


However, the uniform bounds in Section \ref{s:well-posedness_original_problem} are not enough for proving convergence in the weak formulation of the momentum equation. Indeed, on the one hand, those controls allow to pass to the limit in the $\d_t$ term and in the initial datum; on the other hand, the non-linear term and the Coriolis force are out of control. We postpone the convergence analysis of the Coriolis force in the next Paragraph \ref{ss:limit_E} and now we focus on the 
the convective term $\div (\vrho_\veps \ue \otimes \ue)$ in \eqref{weak-mom_E}. We proceed as follows: first of all, we reduce our study to the constant density case (see Lemma \ref{l:approx_convective_term} below); next, we apply the \textit{compensated compactness} argument.

\begin{lemma}\label{l:approx_convective_term}
Let $T>0$. For any test function $\vec \psi \in C_c^\infty([0,T[\times \R^2;\R^2)$, we get 
\begin{equation}\label{eq_approx_convective_term}
\limsup_{\veps\rightarrow 0^+}\left|\int_0^T \int_{\R^2}\vrho_\veps \ue\otimes \ue :\nabla \vec \psi \, \dxdt- \int_0^T\int_{\R^2} \ue \otimes \ue :\nabla \vec \psi \, \dxdt\right|=0\, .
\end{equation}
\end{lemma}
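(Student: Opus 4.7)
The plan is to exploit the decomposition $\vrho_\veps = 1 + \veps R_\veps$ directly, which immediately reduces the difference in \eqref{eq_approx_convective_term} to a quantity with an explicit factor of $\veps$ in front. More precisely, writing
\[
\vrho_\veps\,\ue\otimes\ue\;=\;\ue\otimes\ue\,+\,\veps\,R_\veps\,\ue\otimes\ue\,,
\]
the quantity we need to estimate becomes
\[
\left|\int_0^T\!\!\int_{\R^2}\vrho_\veps \ue\otimes \ue :\nabla \vec \psi \, \dxdt- \int_0^T\!\!\int_{\R^2} \ue \otimes \ue :\nabla \vec \psi \, \dxdt\right|\;=\;\veps\,\left|\int_0^T\!\!\int_{\R^2}R_\veps\,\ue\otimes\ue :\nabla \vec \psi \, \dxdt\right|.
\]

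The second step is to check that the integral on the right-hand side stays uniformly bounded in $\veps$, so that the factor $\veps$ in front guarantees convergence to $0$. For this I would use the uniform bound $(R_\veps)_\veps \subset L^\infty_T(L^\infty)$ from \eqref{hyp:ill_data_R_0} together with the uniform control $(\ue)_\veps \subset L^\infty_T(H^s)$ established in Section \ref{s:well-posedness_original_problem}. Since $s>2$ in dimension $2$, the Sobolev embedding $H^s(\R^2)\hookrightarrow L^\infty(\R^2)$ yields that $(\ue)_\veps$ is uniformly bounded in $L^\infty_T(L^\infty)$ as well; thus $(R_\veps\,\ue\otimes\ue)_\veps$ is uniformly bounded in $L^\infty_T(L^\infty)$.

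The third and final step uses the compact support of $\vec\psi$: let $K\subset \R^2$ be a compact set containing $\Supp\vec\psi(t,\cdot)$ for all $t\in[0,T]$. A straightforward H\"older estimate gives
\[
\veps\,\left|\int_0^T\!\!\int_{\R^2}R_\veps\,\ue\otimes\ue :\nabla \vec \psi \, \dxdt\right|\,\leq\,\veps\,C\,\|R_\veps\|_{L^\infty_T(L^\infty)}\,\|\ue\|_{L^\infty_T(L^\infty)}^{2}\,\|\nabla\vec\psi\|_{L^1_T(L^1(K))}\,\leq\,C(\vec\psi)\,\veps\,,
\]
where the constant $C(\vec\psi)$ is independent of $\veps\in\,]0,1]$. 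Letting $\veps\to 0^+$ yields \eqref{eq_approx_convective_term}.

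There is no serious obstacle here: the lemma is essentially a direct consequence of the Boussinesq-type ansatz on the density, which produces the $O(\veps)$ factor, combined with the $L^\infty$-in-space uniform bounds on $R_\veps$ and $\ue$ already at our disposal. The compact support of the test function $\vec\psi$ removes any possible integrability issue at spatial infinity. Consequently, the reduction to the constant-density convective term $\ue\otimes\ue$ is obtained, which is the starting point of the compensated compactness argument to be developed in the subsequent paragraphs (where the genuine difficulty, namely passing to the limit in $\ue\otimes\ue$ itself in spite of the fast time oscillations induced by the Coriolis term, will be addressed).
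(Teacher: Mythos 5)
Your proof is correct and follows essentially the same route as the paper: write $\vrho_\veps = 1 + \veps R_\veps$, isolate the $O(\veps)$ remainder $\veps\int R_\veps\,\ue\otimes\ue:\nabla\vec\psi$, and use the uniform bounds $(R_\veps)_\veps\subset L^\infty_T(L^\infty)$ and $(\ue)_\veps\subset L^\infty_T(H^s)$ together with the compact support of $\vec\psi$. You merely spell out the Sobolev embedding and the H\"older step that the paper leaves implicit; the only cosmetic remark is that the time-uniform $L^\infty$ control on $R_\veps$ is more properly quoted from the well-posedness bounds of Section \ref{s:well-posedness_original_problem} (it follows from \eqref{hyp:ill_data_R_0} via preservation of the $L^\infty$ norm under transport by the divergence-free field $\ue$).
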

\begin{proof}
Let $\psi \in C_c^\infty([0,T[\times \R^2;\R^2)$ with $\Supp \vec \psi \subset [0,T]\times K$ for some compact set $K\subset \R^2$. Therefore, we can write
\begin{equation*}
\int_0^T \int_{K}\vrho_\veps \ue\otimes \ue :\nabla \vec \psi \, \dxdt= \int_0^T\int_{K} \ue \otimes \ue :\nabla \vec \psi \, \dxdt+\veps \int_0^T\int_{K} R_\veps \ue \otimes \ue :\nabla \vec \psi \, \dxdt\, .
\end{equation*}
As a consequence of the uniform bounds e.g. $(\ue)_\veps \subset L^\infty_T(H^s)$ and $(R_\veps)_\veps\subset L_T^\infty(L^\infty)$, the second integral in the right-hand side is of order $\veps$.
\qed
\end{proof}

Thanks to Lemma \ref{l:approx_convective_term}, we are reduced to study the convergence (with respect to $\veps$) of the integral
\begin{equation*}
-\int_0^T\int_{\R^2} \ue \otimes \ue :\nabla \vec \psi \, \dxdt=\int_0^T\int_{\R^2} \div(\ue \otimes \ue) \cdot \vec \psi \, \dxdt\, .
\end{equation*}
Owing to the divergence-free condition we can write:
\begin{equation}\label{eq:conv_term_rel}
\div (\ue \otimes \ue )=\ue \cdot \nabla \ue =\frac{1}{2}\nabla |\ue|^2+\omega_\veps \, \ue^\perp\, ,
\end{equation}
where we have denoted $\omega_\veps:=\curl \ue=-\d_2u^1+\d_1u^2$.

Notice that the former term, since it is a perfect gradient, vanishes identically when tested against $\vec \psi$ such that $\div \vec \psi=0$. 
As for the latter term we take advantage of equation \eqref{eq:wave_mom}. Taking the $\curl$, we get
\begin{equation}\label{relation_gamma}
\d_t  \wtilde\gamma_\veps=\curl f_\veps \, ,
\end{equation}
where we have set $\wtilde\gamma_\veps:= \curl \vec V_\veps$ with $\vec V_\veps:= \vrho_\veps \ue$. We recall also that $f_\veps$ defined in \eqref{def_f} is uniformly bounded in the space $L^\infty_T(H^{s-1})$.
Then, relation \eqref{relation_gamma} implies that the family $(\d_t \wtilde\gamma_\veps)_\veps$ is uniformly bounded in $L^\infty_T(H^{s-2})$. As a result, we get $(\wtilde\gamma_\veps)_\veps \subset W^{1,\infty}_T(H^{s-2})$. On the other hand, the sequence $(\nabla \wtilde\gamma_\veps) _\veps$ is also uniformly bounded in $L^\infty_T(H^{s-2})$.
At this point, the Ascoli-Arzel\`a theorem (we refer to Theorem \ref{app:th_A-A} in this regard) gives compactness of $(\wtilde\gamma_\veps)_\veps$ in e.g. $C^0_T(H^{s-2}_{\rm loc})$. Then, it converges (up to extracting a subsequence) to a tempered distribution $\wtilde\gamma$ in the same space. Thus, it follows that 
\begin{equation*}
\wtilde\gamma_\veps \longrightarrow \wtilde\gamma \quad \text{in}\quad C^0_T(H_{\rm loc}^{s-2})\, .
\end{equation*}
But since we already know the convergence $\vec{V}_\veps:=\vrho_\veps \ue \weakstar \vec u$ in e.g. $L^\infty_T(L^2)$, it follows that $\wtilde\gamma_\veps:= \curl \vec V_\veps \longrightarrow \omega:=\curl \vec u$ in $\mc D^\prime$, hence $\wtilde\gamma =\curl \vec u=\omega$.

Finally, writing $\vrho_\veps=1+\veps R_\veps$, we obtain 
$$ \wtilde\gamma_\veps:=\curl (\vrho_\veps \ue)=\omega_\veps+\veps \curl (R_\veps \ue)\, , $$
where the family $(\curl (R_\veps \ue))_\veps$ is uniformly bounded in $L^\infty_T(H^{s-1})$. From this relation and the previous analysis, we deduce the strong convergence (up to an extraction) for $\veps\rightarrow 0^+$:
\begin{equation*}
\omega_\veps\longrightarrow \omega \qquad \text{ in }\qquad L^\infty_T(H^{s-2}_{\rm loc})\,. 
\end{equation*}


In the end, we have proved the following convergence result for the convective term $\div (\vec u_\veps \otimes \ue)$. 
\begin{lemma}\label{lemma:limit_convective_term}
Let $T>0$. Up to passing to a suitable subsequence, one has the following convergence for $\veps\rightarrow 0^+$:
\begin{equation*}\label{limit_convective_term}
\int_0^T\int_{\R^2} \ue \otimes \ue :\nabla \vec \psi \, \dxdt\longrightarrow \int_0^T\int_{\R^2} \omega\, \vu^\perp \cdot \vec \psi\, \dxdt\, ,
\end{equation*}
for any test function $\vec \psi \in C_c^\infty([0,T[\times \R^2;\R^2)$ such that $\div \vec \psi=0$.
\end{lemma}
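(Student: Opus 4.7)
The plan is to exploit the algebraic decomposition \eqref{eq:conv_term_rel} together with the strong convergence of the vorticity $\omega_\veps$ already established via the compensated compactness argument on $\wtilde\gamma_\veps=\curl \vec V_\veps$. First, I would integrate by parts: since $\vec\psi\in C_c^\infty([0,T[\times\R^2;\R^2)$ with $\div\vec\psi=0$, one has
\begin{equation*}
\int_0^T\!\!\int_{\R^2}\ue\otimes\ue:\nabla\vec\psi\,\dxdt\,=\,-\int_0^T\!\!\int_{\R^2}\div(\ue\otimes\ue)\cdot\vec\psi\,\dxdt\,.
\end{equation*}
Then, using identity \eqref{eq:conv_term_rel} together with the divergence-free constraint on $\vec\psi$, the gradient term drops out and we are left with
\begin{equation*}
-\int_0^T\!\!\int_{\R^2}\div(\ue\otimes\ue)\cdot\vec\psi\,\dxdt\,=\,-\int_0^T\!\!\int_{\R^2}\omega_\veps\,\ue^\perp\cdot\vec\psi\,\dxdt\,.
\end{equation*}

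Next, the key step is to pass to the limit in the bilinear product $\omega_\veps\,\ue^\perp$. For this, I would combine two pieces of information already at hand: on the one hand, the compensated compactness argument developed above gives $\omega_\veps\longrightarrow\omega$ strongly in $C^0_T(H^{s-2}_{\rm loc})$, and hence (since $s>2$) in $L^2\big([0,T];L^2_{\rm loc}(\R^2)\big)$; on the other hand, \eqref{limit_points} provides the weak-$*$ convergence $\ue\weakstar\vu$ in $L^\infty_T(H^s)$, which implies in particular $\ue^\perp\rightharpoonup\vu^\perp$ weakly in $L^2\big([0,T];L^2_{\rm loc}(\R^2)\big)$. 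Fixing a compact set $K\subset\R^2$ containing $\Supp\vec\psi(t,\cdot)$ for every $t\in[0,T]$, the pairing strong--weak in $L^2([0,T]\times K)$ allows us to conclude that
\begin{equation*}
\omega_\veps\,\ue^\perp\,\longrightarrow\,\omega\,\vu^\perp\qquad\text{in }\mc D'\big(]0,T[\,\times\R^2\big)\,,
\end{equation*}
which, tested against $\vec\psi$, yields the claimed convergence.

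The main (and essentially only) technical obstacle in this statement has already been settled in the lines preceding the lemma: namely, the compactness of $\wtilde\gamma_\veps$ obtained through the wave system \eqref{wave system}, the cancellation of the singular Coriolis contribution after applying $\curl$ to \eqref{eq:wave_mom}, and the identification $\wtilde\gamma=\omega$ via the decomposition $\wtilde\gamma_\veps=\omega_\veps+\veps\,\curl(R_\veps\ue)$ together with the uniform bound on $R_\veps\ue$ in $L^\infty_T(H^{s-1})$. Once the strong convergence of $\omega_\veps$ is in force, the proof of the lemma is a short matter of combining the algebraic identity \eqref{eq:conv_term_rel} with a standard strong--weak convergence argument, with no further compensation needed. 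I would only need to make sure the regularity indices match, which is automatic thanks to the choice $s>2$ and the locally compact support of the test function $\vec\psi$.
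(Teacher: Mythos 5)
Your proof is correct and follows the paper's own argument essentially verbatim: the algebraic decomposition $\div(\ue\otimes\ue)=\tfrac{1}{2}\nabla|\ue|^2+\omega_\veps\,\ue^\perp$, the disappearance of the gradient against divergence-free test fields, the Ascoli--Arzel\`a compactness of $\wtilde\gamma_\veps=\curl(\vrho_\veps\ue)$ (with the Coriolis term killed by $\curl$) yielding strong convergence of $\omega_\veps$, and a strong--weak pairing to pass to the limit in $\omega_\veps\,\ue^\perp$. One sign caveat: your (correct) intermediate identity $\int\ue\otimes\ue:\nabla\vec\psi=-\int\omega_\veps\,\ue^\perp\cdot\vec\psi$ produces the limit $-\int\omega\,\vu^\perp\cdot\vec\psi$, so the ``$+$'' printed in the lemma's display is a sign typo already present in the paper (its own preceding derivation carries this same minus), and you should not close with ``yields the claimed convergence'' without flagging that the sign of the lemma as stated needs to be flipped.
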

As a consequence of the previous lemma, performing equalities \eqref{eq:conv_term_rel} backwards, for the convective term $\vrho_\veps \ue \otimes \ue$ we find that 
\begin{equation}\label{lim_conv-term}
\int_0^T \int_{\R^2}\vrho_\veps \ue\otimes \ue :\nabla \vec \psi \, \dxdt \longrightarrow \int_0^T\int_{\R^2} \vu \otimes \vu :\nabla \vec \psi \, \dxdt
\end{equation}
for $\veps \rightarrow 0^+$ and for all smooth divergence-free test functions $\vec \psi$.

\subsection{Description of the limit system} \label{ss:limit_E} 
With the convergence established in Paragraph \ref{ss:wave_system}, we can pass to the limit in the momentum equation.

To begin with, we take a test-function $\vec\psi$ such that 
\begin{equation}\label{def:test_function}
\vec \psi =\nabla^\perp \varphi\quad \quad  \text{with}\quad \quad
 \varphi \in C_c^\infty ([0,T[\times \R^2;\R)\, .
\end{equation}

For such a $\vec\psi$, all the gradient terms vanish identically. First of all, we recall the momentum equation in its weak formulation:
\begin{equation}\label{weak_mom_limit}
\int_0^T\!\!\!\int_{\R^2}  
	\left( - \vre \ue \cdot \partial_t \vec\psi - \vre [\ue\otimes\ue]  : \nabla_x \vec\psi 
	+ \frac{1}{\ep} \,  \vre \ue^\perp \cdot \vec\psi  \right) \dxdt 
	= \int_{\R^2}\vrez \uez  \cdot \vec\psi (0,\cdot) \dx\, .
\end{equation}
Making use of the uniform bounds, we can pass to the limit in the $\d_t$ term and thanks to our assumptions and embeddings we have $\vrho_{0,\veps}\vu_{0,\veps}\weak \vu_0$ in e.g. $L_{\rm loc}^{2}$.


Let us consider now the Coriolis term. We can write:
\begin{equation*}
\int_0^T\int_{\R^2}\frac{1}{\veps}\vrho_\veps \ue^\perp \cdot \vec \psi\, \dxdt=\int_0^T\int_{\R^2}R_\veps \ue^\perp \cdot \vec \psi\, \dxdt+\int_0^T\int_{\R^2}\frac{1}{\veps} \ue^\perp \cdot \vec \psi\, \dxdt\, .
\end{equation*}
Since $\ue$ is divergence-free, the latter term vanishes when tested against such $\vec \psi$ defined as in \eqref{def:test_function}. On the other hand, again thanks to Lemma \ref{l:reg_Ru_veps}, one can get 
 \begin{equation*}
 \int_0^T\int_{\R^2}R_\veps \ue^\perp \cdot \vec \psi\, \dxdt\longrightarrow \int_0^T\int_{\R^2}R \vu^\perp \cdot \vec \psi\, \dxdt\, .
 \end{equation*}
In the end, letting $\veps \rightarrow 0^+$ in \eqref{weak_mom_limit}, we gather (remembering also \eqref{lim_conv-term})   
\begin{equation*}
\int_0^T\!\!\!\int_{\R^2}  
	\left( -  \vu \cdot \partial_t \vec\psi -  \vu \otimes\vu  : \nabla_x \vec\psi 
	+  \,  R \vu^\perp \cdot \vec\psi  \right) \dxdt 
	= \int_{\R^2} \vu_0  \cdot \vec\psi (0,\cdot) \dx\, ,
\end{equation*}
for any test function $\vec \psi$ defined as in \eqref{def:test_function}. 

From this relation, we immediately obtain that 
\begin{equation*}
\d_t \vu +\div (\vu \otimes \vu) +R\vu^\perp +\nabla \Pi=0\, ,
\end{equation*}
for a suitable pressure term $\nabla \Pi$. This term appears as a result of the weak formulation of the problem. It can be viewed as a Lagrangian multiplier associated to the to the divergence-free constraint on $\vu$. Finally, the quantity $R$ satisfies the transport equation found in \eqref{eq:limit_transp_R}. 

We conclude this paragraph, devoting our attention to the analysis of the regularity of $\nabla \Pi$. We apply the $\div$ operator to the momentum equation in \eqref{QH-Euler_syst}, deducing that $\Pi$  satisfies 
\begin{equation}\label{time_reg_pressure}
-\Delta \Pi= \div G \qquad \text{where}\qquad G:= \vec u \cdot \nabla \vec u+R\vec u^\perp\, .
\end{equation}
On the one hand, 
Lemma \ref{lem:Lax-Milgram_type} gives 
\begin{equation*}
\|\nabla \Pi\|_{L^2}\leq C\|G\|_{L^2}\leq C\left(\|\vec u\|_{L^2}\|\nabla \vec u\|_{L^\infty}+\|R\|_{L^\infty}\|\vec u\|_{L^2}\right)\, .
\end{equation*}
This implies that $\nabla \Pi\in L^\infty_T(L^2)$.

On the other hand, owing to the divergence-free condition on $\vec u$, we have 
\begin{equation*}
\|\Delta \Pi\|_{H^{s-1}}\leq C\left(\|\vec u\|^2_{H^s}+\|R\|_{L^\infty}\|u\|_{H^s}+\|\nabla R\|_{H^{s-1}}\|\vec u\|_{L^\infty}\right)\, ,
\end{equation*}
where we have also used Proposition \ref{prop:app_fine_tame_est}.

In the end, we deduce that $\Delta \Pi \in L^\infty_T(H^{s-1})$. Thus, we conclude that $\nabla \Pi\in L^\infty_T(H^s)$.

At this point, employing classical results on solutions to transport equations in Sobolev spaces, we may infer the claimed $C^0$ time regularity of $\vec u$ and $R$. Moreover, thanks to the fact that $R$ and $\vec u$ are both continuous in time, from the elliptic equation \eqref{time_reg_pressure}, we get that also $\nabla \Pi \in C^0_T(H^s)$.

\section{Well-posedness for the quasi-homogeneous system}\label{s:well-posedness_Q-H}
In this section, for the reader's convenience, we review the well-posedness theory of the quasi-homogeneous Euler system \eqref{system_Q-H_thm}, in particular, the ``asymptotically global'' well-posedness result presented in \cite{C-F_sub}. In the first Subsection \ref{sub:local_well-pos_H^S}, we sketch the local well-posedness theorem for system \eqref{system_Q-H_thm} in the $H^s$ framework. Actually, equations \eqref{system_Q-H_thm} are locally well-posedness in all $B^s_{p,r}$ Besov spaces, under the condition \eqref{Lip_cond}. We refer to \cite{C-F_RWA} where the authors apply the standard Littlewood-Paley machinery to the quasi-homogeneous ideal MHD system to recover local in time well-posedness in spaces $B^s_{p,r}$ for any $1<p<+\infty$. The case $p=+\infty$ was reached in \cite{C-F_sub} with a different approach based on the vorticity formulation of the momentum equation (see also Subsection \ref{ss:W-P_Besov} for more details concerning the ``critical'' case $p=+\infty$).

In Subsection \ref{ss:improved_lifespan}, we explicitly derive the lower bound for the lifespan of solutions to system \eqref{system_Q-H_thm}. The reason in detailing the derivation of \eqref{T-ast:improved} for $T^\ast$ is due to the fact that it is much simpler than the one presented in \cite{C-F_sub}, where (due to the presence of the magnetic field) the lifespan behaves like the fifth iterated logarithm of the norms of the initial oscillation $R_0$ and the initial magnetic field. In addition, that lower bound (see \eqref{T-ast:improved} below) improves the standard lower bound coming from the hyperbolic theory, where the lifespan is bounded from below by the inverse of the norm of the initial data. 


\subsection{Local well-posedness in $H^s$ spaces}\label{sub:local_well-pos_H^S}
In this subsection, we state the well-posedness result for system \eqref{system_Q-H_thm} in the $H^s$ functional framework with $s>2$, in which we have analysed the well-posedness issue for system \eqref{Euler_eps}.  We limit ourselves to present the proof, by energy methods, of uniqueness of solutions (see Subsection \ref{s:uniq_QH-E}) and the implications of the continuation criterion (see Subsection \ref{ss:subsect_cont_cri}): in order to show that, we need some preparatory material, stated in Subsection \ref{s:existence_QH-E}.
\begin{theorem}\label{thm:well-posedness_Q-H-Euler_bis}
Take $s>2$. Let $\big(R_0,u_0 \big)$ be initial data such that $R_0\in L^{\infty}$, with $\nabla R_0\in H^{s-1}$, and
the divergence-free vector field $\vu_0 \,\in  H^s$.

Then, there exists a time $T^\ast > 0$ such that, on $[0,T^\ast]\times\R^2$, problem \eqref{system_Q-H_thm} has a unique solution $(R,\vu, \nabla \Pi)$ with:
\begin{itemize}
 \item $R\in C^0\big([0,T^\ast]\times \R^2\big)$ and $\nabla R\in C^0_{T^\ast}(H^{s-1}(\R^2))$;
 \item $\vu$ and $\nabla \Pi$ belong to $C^0_{T^\ast}( H^{s}(\R^2))$.
 \end{itemize}
Moreover, if $(R, \vec u, \nabla \Pi)$ is a solution to \eqref{system_Q-H_thm} on $[0,T^\ast[\, \times \R^2$ ($T^\ast<+\infty$) with the properties described above, and  
\begin{equation*}
\int_0^{T^\ast}  \big\| \nabla \vu(t) \big\|_{L^\infty}   \dt < +\infty\,, 
\end{equation*}
then the triplet $(R, \vu, \nabla \Pi)$ can be continued beyond $T^\ast$ into a solution of system \eqref{system_Q-H_thm} with the same regularity.
\end{theorem}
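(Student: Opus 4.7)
The plan is to follow the same scheme developed in Section \ref{s:well-posedness_original_problem} for the primitive system \eqref{Euler_eps}, with suitable simplifications since system \eqref{system_Q-H_thm} does not contain the singular Coriolis term of order $\veps^{-1}$. First, I would construct a sequence of smooth approximate solutions $(R^n, \vec u^n, \nabla \Pi^n)_{n\in\N}$ by induction: mollifying the initial data through $(S_n R_0, S_n \vec u_0)$, setting $(R^0, \vec u^0, \nabla \Pi^0) = (S_0 R_0, S_0 \vec u_0, 0)$ and then solving successively the linear transport equation
\[
\d_t R^{n+1} + \vec u^n \cdot \nabla R^{n+1} = 0\,, \qquad R^{n+1}_{|t=0} = S_{n+1} R_0\,,
\]
and the linearised momentum equation
\[
\d_t \vec u^{n+1} + \vec u^n \cdot \nabla \vec u^{n+1} + R^{n+1}(\vec u^{n+1})^\perp + \nabla \Pi^{n+1} = 0\,, \qquad \div \vec u^{n+1} = 0\,,
\]
supplemented with $\vec u^{n+1}_{|t=0} = S_{n+1}\vec u_0$. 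At each step, existence and smoothness of the iterates follow from classical theory, since all equations are linear with smooth coefficients.

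Second, I would establish uniform bounds on a time interval $[0, T^\ast]$ independent of $n$. For the density, the transport structure immediately yields propagation of the $L^\infty$ norm, and an $H^{s-1}$ estimate for $\nabla R^{n+1}$ follows from a Littlewood-Paley dyadic decomposition together with the commutator estimate in Lemma \ref{l:commutator_est}, producing an inequality of the form
\[
\sup_{[0,T]} \|\nabla R^{n+1}(t)\|_{H^{s-1}} \leq \|\nabla R_0\|_{H^{s-1}} \exp\!\left(\int_0^T C\|\vec u^n(t)\|_{H^s} \dt\right).
\]
For the velocity, an $L^2$ estimate uses the skew-symmetry of $R^{n+1}(\vec u^{n+1})^\perp$ against $\vec u^{n+1}$ and the incompressibility of $\vec u^n$. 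For the $H^s$ bound, testing dyadic blocks against $\Delta_j \vec u^{n+1}$ and employing Proposition \ref{prop:app_fine_tame_est} to handle the paraproduct $R^{n+1}(\vec u^{n+1})^\perp$ produces the needed estimate, provided we control the pressure. The pressure term satisfies the \emph{constant-coefficient} elliptic equation
\[
-\Delta \Pi^{n+1} = \div\!\left(\vec u^n \cdot \nabla \vec u^{n+1} + R^{n+1}(\vec u^{n+1})^\perp\right),
\]
so standard Calderón–Zygmund theory (applied as in the derivation of \eqref{time_reg_pressure}) yields $\|\nabla \Pi^{n+1}\|_{H^s} \leq C(\|\vec u^n\|_{H^s}\|\vec u^{n+1}\|_{H^s} + \|R^{n+1}\|_{L^\infty}\|\vec u^{n+1}\|_{H^s} + \|\nabla R^{n+1}\|_{H^{s-1}}\|\vec u^{n+1}\|_{L^\infty})$, which is much simpler than the non-constant coefficient bound \eqref{est_Pi_final} derived for the primitive system. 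Combining all these estimates and choosing $T^\ast$ small enough relative to the data yields the sought-after uniform bounds via an inductive argument analogous to the one displayed in \eqref{eq:unifbounds}.

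Third, I would prove convergence of the sequence in a low regularity norm. Writing down the equations satisfied by the differences $\delta R^{n,l} := R^{n+l} - R^n$, $\delta \vec u^{n,l}$ and $\delta \Pi^{n,l}$, energy estimates at the $L^2$ level together with Gr\"onwall's lemma (mimicking Subsection \ref{ss:conv_H^s}) give that $(R^n, \vec u^n, \nabla \Pi^n)_n$ is Cauchy in $C^0_{T^\ast}(L^2)$; by interpolation with the uniform $H^s$ bounds and the Fatou property, one gets strong convergence in $C^0_{T^\ast}(H^\sigma)$ for every $\sigma < s$ and weak-$\ast$ convergence in $L^\infty_{T^\ast}(H^s)$. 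This is enough to pass to the limit in all terms of the equation and to recover the time continuity at the endpoint regularity by arguments already detailed at the end of Subsection \ref{ss:conv_H^s} (using the transport equation for $R$, the momentum equation for $\vec u$, and the elliptic equation for $\nabla \Pi$). Uniqueness can then be established by a direct stability estimate in $L^2$, analogous to Theorem \ref{th:uniq}, or by a relative entropy argument in the spirit of Theorem \ref{th:uniq_bis}; both are considerably simpler here because the factor $1/\veps$ is absent.

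Finally, for the continuation criterion, I would argue by contradiction: assume that $T^\ast$ is the maximal time of existence and $\int_0^{T^\ast} \|\nabla \vec u(t)\|_{L^\infty} \dt < +\infty$. The bound on $\nabla \vec u$ in $L^1_T(L^\infty)$ immediately controls the $H^{s-1}$ norm of $\nabla R$ through the transport equation, and then, in turn, controls the $H^s$ norm of $\vec u$ via a logarithmic Sobolev-type interpolation combined with the commutator estimates and the Gr\"onwall lemma. Since all relevant norms remain finite as $t \to T^\ast$, one can restart the local existence theorem from time $T^\ast - \eta$ for small $\eta > 0$ and extend the solution strictly beyond $T^\ast$, contradicting maximality. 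The main technical obstacle in this last step will be handling the coupling term $R \vec u^\perp$ in the $H^s$ estimate for $\vec u$: one must propagate $\|\nabla R\|_{H^{s-1}}$ without losing control in $\|\vec u\|_{H^s}$, which requires careful use of the tame estimates of Proposition \ref{prop:app_fine_tame_est} rather than naive product estimates.
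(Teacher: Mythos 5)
Your overall scheme — mollified initial data, linear iterates, coupled $H^s$ a priori estimates, Cauchy argument in $C^0_T(L^2)$ plus interpolation and the Fatou property, then a stability estimate for uniqueness — is indeed the route the paper takes, and in fact you provide a more complete account of the existence step than the paper does: in the $H^s$ framework the paper only records the a priori estimates (Paragraph \ref{s:existence_QH-E}), the stability estimate (Theorem \ref{thm:stability_criterion}), and the implications of the continuation criterion, deferring the full iterative construction to the Besov-space treatment of Subsection \ref{ss:W-P_Besov}. Your iteration differs slightly from the paper's, in that you keep the lower-order term as $R^{n+1}(\vec u^{n+1})^\perp$ rather than lagging it to $R^{n+1}(\vec u^{n})^\perp$; this is legitimate (skew-symmetry still holds pointwise against $\vec u^{n+1}$, and the linear system for $(\vec u^{n+1},\nabla\Pi^{n+1})$ remains well-posed), but it couples the pressure to the unknown $\vec u^{n+1}$, whereas the paper's choice makes the pressure an explicit functional of the previous iterate. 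The observation that the pressure equation has \emph{constant} coefficients is the right simplification versus Section \ref{s:well-posedness_original_problem}.

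There is, however, a genuine gap in your treatment of the continuation criterion. You claim that the bound $\int_0^{T^\ast}\|\nabla\vec u\|_{L^\infty}\,\dt<+\infty$ ``immediately controls the $H^{s-1}$ norm of $\nabla R$'' and ``then, in turn'' the $H^s$ norm of $\vec u$. This sequential argument does not close: the transport estimate for $\nabla R$ in $H^{s-1}$ involves, via the commutator Lemma \ref{l:commutator_est} and Corollary \ref{cor:tame_est}, a term of the form $\|\nabla\vec u\|_{H^{s-1}}\|\nabla R\|_{L^\infty}$, so you cannot bound $\|\nabla R\|_{H^{s-1}}$ without already knowing $\|\vec u\|_{H^s}$ is finite. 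The correct strategy, implemented in the paper's proof of Proposition \ref{prop:cont_criterion_Bes}, is two-tiered: first, the BKM assumption and the pointwise transport estimate control $\|\nabla R\|_{L^\infty}$ alone (which \emph{does} close at the scalar level); only then does one run a \emph{coupled} Gr\"onwall inequality for $\|\nabla R\|_{H^{s-1}}+\|\vec u\|_{H^s}$, using that the integrating factor involves only $\|\nabla \vec u\|_{L^\infty}$, $\|\nabla R\|_{L^\infty}$, $\|R\|_{L^\infty}$ and $\|\vec u\|_{L^\infty}$, all of which are already controlled. Your reference to a ``logarithmic Sobolev-type interpolation'' is also out of place: such an inequality is used in BKM arguments where one bounds $\|\nabla\vec u\|_{L^\infty}$ by the vorticity alone, but here $\|\nabla\vec u\|_{L^\infty}$ is the assumed quantity, so no logarithmic inequality is required.
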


\subsubsection{Uniqueness by an energy argument}\label{s:uniq_QH-E}

Uniqueness in our functional framework is a consequence of the following stability result, whose proof is based on an energy method for the difference of two solutions to the quasi-homogeneous Euler system \eqref{system_Q-H_thm}.
We present here the classical proof with the $C^1_T$ regularity assumption on the time variable (see condition $(i)$ in the theorem below). In order to relax this requirement, one has to argue as done in Theorem \ref{th:uniq_bis}, with the additional $L^2$ integrability condition on densities. 

\begin{theorem}\label{thm:stability_criterion}
Let $(R_1, \vec u_1)$ and $(R_2, \vec u_2)$ be two solutions to the quasi-homogeneous Euler system \eqref{system_Q-H_thm}. Assume that, for some $T>0$, one has the following properties:
\begin{enumerate}[(i)]
\item the two quantities $\delta R\,:=\,R_1-R_2$ and $\delta \vec u\,:=\,\vec u_1-\vec u_2$ belong to the space $C^1\big([0,T];L^2(\R^2)\big)$;
\item $\vec u_1 \in L^1\big([0,T];W^{1, \infty}(\R^2)\big)$ and $\nabla R_1 \in L^1\big([0,T];L^\infty(\R^2)\big)$.
\end{enumerate}

Then, for all $t\in[0,T]$, we have the stability inequality: 
\begin{equation}\label{stab_est_QH-E}
\|\delta R(t)\|_{L^2}^2+\|\delta \vu(t)\|_{L^2}^2\leq C \left(\|\delta R_0\|_{L^2}^2+\|\delta \vu_0\|_{L^2}^2\right) \, e^{CB(t)}\, ,
\end{equation}
for a universal constant $C>0$, where we have defined
\begin{equation}\label{Def_A}
B(t):= \displaystyle \int_0^t\left(\|\nabla R_1(\tau )\|_{L^\infty}+\| \vu_1(\tau )\|_{W^{1,\infty}}\right)\, \detau\, .
\end{equation}
\end{theorem}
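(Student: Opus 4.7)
The plan is to derive an $L^2$ energy estimate for the differences $\delta R$ and $\delta \vec u$, choosing the ``right'' form of the difference equations so that the resulting bound naturally involves the norms of $R_1$ and $\vec u_1$ appearing in $B(t)$, and exploiting crucially the incompressibility condition together with the skew-symmetry of the Coriolis-type operator.

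First, I would write the difference equations in the form in which the transport field is $\vec u_2$ and the forcing involves $R_1$ and $\vec u_1$. Specifically, subtracting the two mass equations and rearranging $\vec u_1\cdot\nabla R_1-\vec u_2\cdot\nabla R_2=\vec u_2\cdot\nabla\delta R+\delta\vec u\cdot\nabla R_1$, one obtains
\[
\d_t\delta R+\vec u_2\cdot\nabla\delta R\,=\,-\,\delta\vec u\cdot\nabla R_1\,,
\]
and analogously, using the corresponding manipulation on the momentum equations (together with $R_1\vec u_1^\perp-R_2\vec u_2^\perp=R_2\,\delta\vec u^\perp+\delta R\,\vec u_1^\perp$),
\[
\d_t\delta\vec u+\vec u_2\cdot\nabla\delta\vec u+\delta\vec u\cdot\nabla\vec u_1+R_2\,\delta\vec u^\perp+\delta R\,\vec u_1^\perp+\nabla\delta\Pi\,=\,0\,,
\]
supplemented with $\div\delta\vec u=0$. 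The regularity hypothesis $(i)$ makes these equations legitimate in $C^0_T(L^2)$.

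Next, I would perform the standard $L^2$ energy estimates, testing the first equation against $\delta R$ and the second against $\delta\vec u$. Three key cancellations occur: the transport terms $\int\vec u_2\cdot\nabla\delta R\,\delta R$ and $\int(\vec u_2\cdot\nabla\delta\vec u)\cdot\delta\vec u$ both vanish thanks to $\div\vec u_2=0$; the pressure term $\int\nabla\delta\Pi\cdot\delta\vec u$ vanishes because $\div\delta\vec u=0$; and the Coriolis contribution $\int R_2\,\delta\vec u^\perp\cdot\delta\vec u$ vanishes by pointwise orthogonality. The remaining terms are controlled by H\"older's inequality as
\[
\frac{1}{2}\frac{d}{dt}\|\delta R\|_{L^2}^2\,\leq\,\|\nabla R_1\|_{L^\infty}\,\|\delta\vec u\|_{L^2}\,\|\delta R\|_{L^2}
\]
and
\[
\frac{1}{2}\frac{d}{dt}\|\delta\vec u\|_{L^2}^2\,\leq\,\|\nabla\vec u_1\|_{L^\infty}\,\|\delta\vec u\|_{L^2}^2\,+\,\|\vec u_1\|_{L^\infty}\,\|\delta R\|_{L^2}\,\|\delta\vec u\|_{L^2}\,.
\]

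Summing these two inequalities and applying Young's inequality on the mixed product gives
\[
\frac{d}{dt}\bigl(\|\delta R\|_{L^2}^2+\|\delta\vec u\|_{L^2}^2\bigr)\,\leq\,C\bigl(\|\nabla R_1\|_{L^\infty}+\|\vec u_1\|_{W^{1,\infty}}\bigr)\bigl(\|\delta R\|_{L^2}^2+\|\delta\vec u\|_{L^2}^2\bigr)\,.
\]
At this point, the stability bound \eqref{stab_est_QH-E} follows from Gr\"onwall's lemma together with the definition \eqref{Def_A} of $B(t)$. I do not foresee a genuine technical obstacle here: the delicate choice is only structural, namely selecting the form of the difference equations so that the surviving right-hand side matches the $L^\infty$-norms available through $B(t)$. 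Uniqueness is then an immediate corollary, by choosing two solutions issued from the same initial datum.
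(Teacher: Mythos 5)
Your proof is correct and follows essentially the same route as the paper: you write the difference system with $\vec u_2$ transporting and $(R_1,\vec u_1)$ entering the forcing, test against $\delta R$ and $\delta\vec u$, exploit the same three cancellations (incompressibility of $\vec u_2$, the pressure term by $\div\delta\vec u=0$, and pointwise orthogonality of $\delta\vec u^\perp$ and $\delta\vec u$), and close by Young and Gr\"onwall. The only difference is cosmetic: you postpone Young's inequality until after summing the two estimates, whereas the paper applies it inside each intermediate bound.
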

\begin{proof}
First of all, we take the difference of the two systems \eqref{system_Q-H_thm} solved by the triplets $(R_1,\vu_1, \nabla \Pi_1)$ and $(R_2,\vu_2, \nabla \Pi_2)$, obtaining
\begin{equation}\label{system_diff_QH-E}
\begin{cases}
\d_t \delta R+\vu_2 \cdot \nabla \delta R=-\delta \vu \cdot \nabla R_1\\
\d_t \delta \vu+\vu_2 \cdot \nabla \delta \vu + R_2\, \delta \vu^\perp+\nabla \delta \Pi =-\delta \vu \cdot \nabla \vu_1 -\delta R\, \vu_1^\perp\\
\div \delta \vec u=0\, ,
\end{cases}
\end{equation}
where $\delta \Pi := \Pi_1-\Pi_2$.

We start by testing the first equation of \eqref{system_diff_QH-E} against $\delta R$ and we get 
\begin{equation*}
\frac{1}{2}\frac{d}{dt}\|\delta R\|_{L^2}^2=-\int_{\R^2}(\delta \vu \cdot \nabla R_1)\, \delta R\leq \frac{1}{2}\|\nabla R_1\|_{L^\infty}\left(\|\delta R\|_{L^2}^2+\|\delta \vu\|_{L^2}^2\right)\, .
\end{equation*}
Next, testing the second equation on $\delta \vu$, due to the divergence-free conditions $\div \vu_1=\div \vu_2=0$, we gather 
\begin{equation*}
\frac{1}{2}\frac{d}{dt}\|\delta \vu\|_{L^2}^2=-\int_{\R^2}(\delta \vu \cdot \nabla \vu_1) \cdot\, \delta \vu-\int_{\R^2}(\delta R\; \vu_1^\perp)\cdot \delta \vu\leq \|\nabla \vu_1\|_{L^\infty}\|\delta \vu\|_{L^2}^2+\frac{1}{2}\| \vu_1\|_{L^\infty}\left(\|\delta R\|_{L^2}^2+\|\delta \vu\|_{L^2}^2\right) .
\end{equation*}
Putting the previous inequalities together, we finally infer 
\begin{equation*}
\frac{1}{2}\frac{d}{dt}\left(\|\delta R\|_{L^2}^2+\|\delta \vu\|_{L^2}^2\right)\leq C \left(\|\nabla R_1\|_{L^\infty}+\| \vu_1\|_{W^{1,\infty}}\right)\left(\|\delta R\|_{L^2}^2+\|\delta \vu\|_{L^2}^2\right)\, .
\end{equation*}
An application of Gr\"onwall's lemma gives us the stability estimate \eqref{stab_est_QH-E}, i.e. 
\begin{equation*}
\|\delta R(t)\|_{L^2}^2+\|\delta \vu(t)\|_{L^2}^2\leq C \left(\|\delta R_0\|_{L^2}^2+\|\delta \vu_0\|_{L^2}^2\right) \, e^{CB(t)}\, ,
\end{equation*}
for a universal constant $C>0$ and $B(t)$ defined as in \eqref{Def_A}.
\qed
\end{proof}

At this point, the uniqueness in the claimed framework (see Theorem \ref{thm:well-posedness_Q-H-Euler_bis}) follows from the previous statement. Let us sketch the proof.

We take an initial datum $(R_0, \vec u_0)$ satisfying the assumptions in Theorem \ref{thm:well-posedness_Q-H-Euler_bis}. We consider two solutions $(R_1, \vec u_1)$ and $(R_2, \vec u_2)$ of system \eqref{system_Q-H_thm}, related to the initial datum $(R_0, \vec u_0)$. Moreover, those solutions have to fulfill the regularity properties stated in the quoted theorem. 

Now, due to embeddings, we have only to detail how the previous solutions match the condition $(i)$ in Theorem \ref{thm:stability_criterion}. We focus on the regularity of $\delta R$, since similar arguments apply to $\delta \vec u$. 

We look at the first equation in \eqref{system_diff_QH-E}: $\delta R$ is transported by the divergence-free vector field $\vec u_2$, with in addition the presence of an ``external force'' $g:=-\delta \vec u \cdot \nabla R_1$. Thanks to the regularity properties presented in Theorem \ref{thm:well-posedness_Q-H-Euler_bis} and embeddings, we know that $\delta\vec u\in C^0_T(L^2)$ and $R_1\in C^0_T(W^{1,\infty})$. Thus, one can deduce that $g\in C^0_T(L^2)$. Therefore, from classical results for transport equations, we get that $\delta R\in C^1_T(L^2)$, as claimed. 

In the end, recalling that at the initial time $(\delta R, \delta \vec u)_{|t=0}=0$, we can apply Theorem \ref{thm:stability_criterion} to infer that $\|(\delta R, \delta \vec u)\|_{L^\infty_T(L^2)}=0$. This implies the desired uniqueness.

\subsubsection{A priori estimates}\label{s:existence_QH-E}

We start by bounding the $L^p$ norms of the solutions. First, since $R$ is transported by $\vu$ we have, for any $t\geq 0$,
\begin{equation*}
\|R(t)\|_{L^\infty}=\|R_0\|_{L^\infty}\, .
\end{equation*}
In addition, an energy estimate for the momentum equation in \eqref{system_Q-H_thm} yields 
\begin{equation}\label{eq:L^2_velocity}
\|\vu(t)\|_{L^2}\leq \|\vu_0\|_{L^2}\, .
\end{equation}
Making use of the dyadic blocks $\Delta_j$, for $i=1,2$ we find 
\begin{equation}\label{QH-Euler_vor_dyadic}
\begin{cases}
\d_t\Delta_j\,  \d_iR+\vu \cdot \nabla \Delta_j\,  \d_i R=[\vu\cdot \nabla,\Delta_j]\, \d_i R-\Delta_j(\d_i \vec u \cdot \nabla R)\\
\d_t \Delta_j \vec u +\vu \cdot \nabla \Delta_j \vu + \Delta_j \nabla \Pi=[\vu\cdot \nabla, \Delta_j]\vu-\Delta_j (R\vu^\perp)\, .
\end{cases}
\end{equation}

Following the same lines performed in Subsection \ref{ss:unif_est}, we can write 
\begin{equation*}
\begin{split}
2^{j(s-1)}\|\Delta_j\nabla R(t)\|_{L^2}+2^{js}\|\Delta_j \vu(t)\|_{L^2}&\leq C\left(2^{j(s-1)}\|\Delta_j \nabla R_0\|_{L^2}+2^{js}\|\Delta_j \vu_0\|_{L^2}\right)\\
&+C\int_0^t c_j\, (\tau)\|\vu(\tau) \|_{H^s}\|R(\tau)\|_{L^\infty}\, \detau \\
&+C\int_0^t c_j(\tau)\left(\|\vu(\tau) \|_{H^s}\|\nabla R(\tau)\|_{H^{s-1}}+\|\vu(\tau) \|_{H^s}^2\right) \, \detau \, ,
\end{split}
\end{equation*}
for suitable sequences $(c_j(t))_{j\geq -1}$ belonging to the unit sphere of $\ell^2$.

Now, we define for all $t\geq0$:
\begin{equation}\label{def_E(t)}
\wtilde E(t):=\|R(t)\|_{L^\infty}+\|\nabla R(t)\|_{H^{s-1}}+\|\vu(t)\|_{H^s}\, .
\end{equation}
Thanks to the previous bounds, employing Minkowski's inequality (see Section \ref{sec:assorted_ineq}), we gather 
\begin{equation*}
\wtilde E(t)\leq C\, \wtilde E(0)+C\int_0^t \wtilde E(\tau)^2 \, \detau\, .
\end{equation*}
At this point, the goal is to close the estimate, bounding the integral on the right-hand side in a small time. 

To this purpose, we define the time $T^\ast >0$ such that
\begin{equation}\label{def_T}
T^\ast :=\sup \left\{t>0:\int_0^t \wtilde E(\tau)^2 \, \detau \leq \wtilde E(0) \right\}\, .
\end{equation}
Then, we deduce $\wtilde E(t)\leq C\, \wtilde E(0)$ for all times $t\in [0,T^\ast ]$ and for some positive constant $C=C(s)$.

\subsubsection{The continuation criterion}\label{ss:subsect_cont_cri}
This subsection is devoted to the implications of the continuation result (Proposition \ref{th:cont-crit} below) for solutions to system \eqref{system_Q-H_thm}. The proof is omitted, since it is an easy adaptation of the more complex case we will present in Subsection \ref{subsec:cont_crit_besov}.
\begin{proposition} \label{th:cont-crit}
Let $T > 0$ and let $(R, \vu)$ be a solution to system \eqref{system_Q-H_thm} on $[0,T[\,\times\R^2$, enjoying the properties described in the previous Theorem \ref{thm:well-posedness_Q-H-Euler} for all $t<T$. Assume that 
\begin{equation}\label{cond_crit-cont}
\int_0^{T}  \big\| \nabla \vu(t) \big\|_{L^\infty}   \dt < +\infty\,.
\end{equation}
Then, 
$$ \sup_{0\leq t<T}\left(\|R\|_{L^\infty}+\|\nabla R\|_{H^{s-1}}+\|\vec u\|_{H^s}\right)<+\infty	\, . $$
\end{proposition}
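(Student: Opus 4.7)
The plan is to derive a Gr\"onwall-type differential inequality for the quantity $E(t):=\|\vu(t)\|_{H^s}+\|\nabla R(t)\|_{H^{s-1}}$, in which the multiplicative coefficient is controlled by $1+\|\nabla\vu\|_{L^\infty}$, and then invoke the hypothesis \eqref{cond_crit-cont}. The first step would be to record the conservation laws hidden in system \eqref{system_Q-H_thm}. Since $R$ is transported by the divergence-free field $\vu$, one gets $\|R(t)\|_{L^\infty}=\|R_0\|_{L^\infty}$ for free. Testing the momentum equation against $\vu$ itself (the term $R\vu^\perp$ is pointwise orthogonal to $\vu$, and the pressure contribution vanishes thanks to incompressibility), one obtains $\|\vu(t)\|_{L^2}=\|\vu_0\|_{L^2}$. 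Moreover, since $\nabla R$ satisfies $\d_t \nabla R+\vu\cdot\nabla(\nabla R)=-(\nabla \vu)^\top \nabla R$, integration along characteristics yields $\|\nabla R(t)\|_{L^\infty}\leq\|\nabla R_0\|_{L^\infty}\exp\!\big(\int_0^t\|\nabla\vu\|_{L^\infty}\detau\big)$, which stays finite on $[0,T[\,$ by \eqref{cond_crit-cont}. Finally, the $\R^2$ Gagliardo--Nirenberg-type inequality $\|\vu\|_{L^\infty}^2\leq C\|\vu\|_{L^2}\|\nabla\vu\|_{L^\infty}$ (proved via a direct mean-value argument on a ball where $|\vu|$ remains comparable to its maximum) combined with the preservation of $\|\vu\|_{L^2}$ gives $\|\vu(t)\|_{L^\infty}\leq C_T\big(1+\|\nabla\vu(t)\|_{L^\infty}^{1/2}\big)$ on $[0,T[\,$.

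Next, I would perform $H^s$ and $H^{s-1}$ energy estimates in the spirit of Paragraph \ref{s:existence_QH-E}. Applying $\Delta_j$ to the momentum equation and taking the $L^2$ scalar product with $\Delta_j\vu$, the pressure term drops because $\Delta_j$ commutes with $\div$ so $\int \Delta_j\vu\cdot\Delta_j\nabla\Pi=-\int\Delta_j\Pi\,\div\Delta_j\vu=0$. The convective term is handled by the commutator bound of Lemma \ref{l:commutator_est}, and the forcing $R\vu^\perp$ by the tame product estimate of Proposition \ref{prop:app_fine_tame_est}, which in the form $\|Rv\|_{H^s}\leq C\big(\|R\|_{L^\infty}\|v\|_{H^s}+\|v\|_{L^\infty}\|\nabla R\|_{H^{s-1}}\big)$ is compatible with the fact that $R$ itself is only in $L^\infty$. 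After summing in $j$:
\[
\tfrac{d}{dt}\|\vu\|_{H^s}\leq C\big(\|\nabla\vu\|_{L^\infty}+\|R_0\|_{L^\infty}\big)\|\vu\|_{H^s}+C\|\vu\|_{L^\infty}\|\nabla R\|_{H^{s-1}}\,.
\]
Analogously, differentiating the transport equation for $R$ and carrying out the same Littlewood--Paley energy estimate with commutator and tame bounds yields
\[
\tfrac{d}{dt}\|\nabla R\|_{H^{s-1}}\leq C\|\nabla\vu\|_{L^\infty}\|\nabla R\|_{H^{s-1}}+C\|\nabla R\|_{L^\infty}\|\vu\|_{H^s}\,.
\]

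Putting the pieces together, and using the bounds on $\|\vu\|_{L^\infty}$ and $\|\nabla R\|_{L^\infty}$ (both dominated on $[0,T[\,$ by a constant $C_T$ depending only on the data, $T$, and $\int_0^T\|\nabla\vu\|_{L^\infty}\dt$), one ends up with a purely linear inequality for $E(t)$:
\[
E'(t)\leq C_T\bigl(1+\|\nabla\vu(t)\|_{L^\infty}\bigr)E(t)\,.
\]
Gr\"onwall's lemma then gives $E(t)\leq E(0)\exp\!\big(C_T\int_0^t(1+\|\nabla\vu(\tau)\|_{L^\infty})\detau\big)$, which is finite on $[0,T[\,$ in view of \eqref{cond_crit-cont}. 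Combined with $\|R(t)\|_{L^\infty}=\|R_0\|_{L^\infty}$, this yields the claimed bound. The main obstacle in this scheme is the genuinely cross term $\|\vu\|_{L^\infty}\|\nabla R\|_{H^{s-1}}$ coming from the tame estimate on $R\vu^\perp$: using the naive embedding $\|\vu\|_{L^\infty}\leq C\|\vu\|_{H^s}$ would produce a quadratic $E^2$ term and only a Riccati-type inequality, which does not close under the sole assumption \eqref{cond_crit-cont}. It is precisely the 2D Gagliardo--Nirenberg bound $\|\vu\|_{L^\infty}^2\lesssim\|\vu\|_{L^2}\|\nabla\vu\|_{L^\infty}$, together with the conservation of $\|\vu\|_{L^2}$, that downgrades this cross term to something absorbable into the Gr\"onwall exponent.
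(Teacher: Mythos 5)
Your proof is correct. The overall logic (conservation laws, transport bound on $\|\nabla R\|_{L^\infty}$, an interpolation bound on $\|\vu\|_{L^\infty}$, Littlewood--Paley energy estimates with commutator and tame product bounds, Gr\"onwall) matches what the paper has in mind, and the ``main obstacle'' you identify---the cross term $\|\vu\|_{L^\infty}\|\nabla R\|_{H^{s-1}}$ coming from the forcing $R\vu^\perp$---is exactly the point on which the argument turns.

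There is, however, a genuine structural difference from the paper's proof, which is declared to be ``an easy adaptation'' of the Besov-space argument in Proposition~\ref{prop:cont_criterion_Bes}. That argument passes to the vorticity formulation \eqref{QH-Euler_vorticity_Bes}, does all the work on $\omega=\curl\vu$, and only at the end reconstructs $\vu$ via the Biot--Savart-type Lemma~\ref{l:rel_curl}. This detour is \emph{mandatory} in $B^s_{\infty,r}$: there is no $L^2$ scalar product available, so one cannot kill the pressure by integrating $\Delta_j\nabla\Pi\cdot\Delta_j\vu$, and the only clean way to eliminate $\nabla\Pi$ is to apply $\curl$. You instead stay with the momentum equation directly and observe that in the $H^s\equiv B^s_{2,2}$ framework the pressure contribution vanishes for free, because $\Delta_j$ commutes with $\div$ and $\div\Delta_j\vu=0$. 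That is a legitimate simplification---it avoids both the curl/Biot--Savart machinery and the term $\div(R\vu)$ that appears in the vorticity equation, which is why the paper calls the $H^s$ case ``easier.'' The second minor divergence is in the bound for $\|\vu\|_{L^\infty}$: the paper splits into low and high frequencies via Bernstein, obtaining $\|\vu\|_{L^\infty}\leq C(\|\vu_0\|_{L^2}+\|\nabla\vu\|_{L^\infty})$, whereas you invoke the two-dimensional mean-value Gagliardo--Nirenberg inequality $\|\vu\|_{L^\infty}^2\lesssim\|\vu\|_{L^2}\|\nabla\vu\|_{L^\infty}$. Both give a coefficient integrable under \eqref{cond_crit-cont} (for your version, $\int_0^T\|\nabla\vu\|_{L^\infty}^{1/2}\leq T^{1/2}(\int_0^T\|\nabla\vu\|_{L^\infty})^{1/2}$), so either serves; the Bernstein split is marginally more robust since it is linear in $\|\nabla\vu\|_{L^\infty}$ and reads off directly from the Littlewood--Paley decomposition already in play. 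Net assessment: same underlying strategy, but you have stripped away the vorticity scaffolding that is only genuinely needed in the $L^\infty$-based Besov version.
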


As an immediate corollary we have that if $T<+\infty$, then the couple $(R, \vu)$ can be continued beyond $T$ into a solution of system \eqref{system_Q-H_thm} with the same regularity.

As a matter of fact, Proposition \ref{th:cont-crit} ensures that $\|R\|_{L^\infty_T(L^\infty)}$, $\|\nabla R\|_{L^\infty_T(H^{s-1})}$ and $\|\vec u\|_{L^\infty_T({H^s})}$ are finite. From the previous Subsection \ref{s:existence_QH-E}, we know that there exists a time $\oline \tau$ depending on $s$, $\|R\|_{L^\infty_T(L^\infty)}$, $\|\nabla R\|_{L^\infty_T(H^{s-1})}$, $\|\vec u\|_{L^\infty_T({H^s})}$ and on the norm of the data such that for all $\widetilde{T}<T$, the quasi-homogeneous system with data $\big(R(\widetilde{T}),\vec u(\widetilde{T})\big)$ has a unique solution until time $\oline \tau$. Now, taking $\widetilde{T}=T-\oline \tau/2$, we get a continuation of $(R,\vec u)$ up to time $T+\oline \tau/2$.

\subsection{Well-posedness in Besov spaces} \label{ss:W-P_Besov}
The main goal of this subsection is to review the lifespan estimate presented in \cite{C-F_sub} (for the MHD system) in order to get \eqref{improved_low_bound} (we refer also to Subsection \ref{ss:improved_lifespan} for the details of the proof). To show that, one has to work in critical Besov spaces where one can take advantage of the improved estimates for linear transport equations \textit{à la} Hmidi-Keraani-Vishik. In order to ensure that the condition \eqref{Lip_cond} is satisfied, the lowest regularity space we can reach is $B^1_{\infty,1}$. In addition, since $\vec u\in B^1_{\infty,1}$, we have that the $B^0_{\infty,1}$ norm of the $\curl \vu$ can be bounded \textit{linearly} with respect to $\|\nabla \vu\|_{L^1_t(L^\infty)}$, instead of exponentially as in classical $B^s_{p,r}$ estimates (see Theorem \ref{thm:improved_est_transport}).

Finally, we construct a ``bridge'' between $H^s$ and $B^1_{\infty,1}$ Besov spaces establishing a continuation criterion, in the spirit of the one by Beale-Kato-Majda in \cite{B-K-M} (see Subsection \ref{subsec:cont_crit_besov}).

We start by proving the local well-posedness result for system \eqref{system_Q-H_thm} in $B^s_{\infty,r}$ and, in particular, in the end-point space $B^1_{\infty,1}$. In this regard, Subsection \ref{subsec:aprioriBesov} is devoted to the \emph{a priori} estimates, presenting also the standard lower bound (coming from the hyperbolic theory) for the lifespan of solutions. Next, we construct the smooth approximate solutions (in Subsection \ref{ss:approx_sol_QH-E}) showing the uniform bounds for those regular solutions in Subsection \ref{ss:unif_bounds_QH-E}, and sketching the convergence (in the regularisation parameter $n$) argument in Subsection \ref{subsect:conv_argument}. 

\begin{theorem}\label{thm:W-P_besov_spaces_p-infty}
Let $(s,r)\in \R\times [1,+\infty]$ such that $s>1$ or $s=r=1$. Let $(R_0, \vec u_0)$ be an initial datum such that $R_0\in B^s_{\infty, r}(\R^2)$ and the divergence-free vector field $\vec u_0\in L^2(\R^2)\cap B^s_{\infty,r}(\R^2)$. Then, there exists a time $T^\ast>0$ such that system \eqref{system_Q-H_thm} has a unique solution $(R, \vec u)$ with the following regularity properties, if $r<+\infty$:
\begin{itemize}
\item $R\in C^0([0,T^\ast];B^s_{\infty,r}(\R^2))\cap C^1([0,T^\ast];B^{s-1}_{\infty,r}(\R^2)) $;
\item $\vec u$ and $\nabla \Pi$ belong to $C^0([0,T^\ast];B^s_{\infty,r}(\R^2))\cap C^1([0,T^\ast];L^2(\R^2)\cap B^{s-1}_{\infty,r}(\R^2))$. 
\end{itemize}
 In the case when $r=+\infty$, we need to replace $C^0([0,T^\ast];B^s_{\infty,r}(\R^2))$ by the space $C_w^0([0,T^\ast];B^s_{\infty,r}(\R^2))$.
\end{theorem}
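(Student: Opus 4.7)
The strategy is the classical one for quasilinear hyperbolic systems: Friedrichs-type approximation scheme, uniform a priori bounds, compactness and passage to the limit, uniqueness via a low-regularity stability estimate, and time continuity at the end. I would first linearize the system by setting $(R^0,\vec u^0) = (S_0 R_0, S_0 \vec u_0)$ and, inductively, defining $(R^{n+1},\vec u^{n+1},\nabla\Pi^{n+1})$ as the unique smooth solution to
\[
\d_t R^{n+1} + \vec u^n \cdot \nabla R^{n+1} = 0, \qquad
\d_t \vec u^{n+1} + \vec u^n \cdot \nabla \vec u^{n+1} + R^{n+1}\, (\vec u^n)^\perp + \nabla\Pi^{n+1} = 0,
\]
with $\div \vec u^{n+1}=0$ and initial data $(S_n R_0, S_n \vec u_0)$. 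Unlike \eqref{elliptic_eq_introduction} in the primitive system, here $\nabla\Pi^{n+1}$ is determined by a \emph{constant-coefficient} Poisson equation, which is a significant simplification.

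The heart of the argument is to propagate, uniformly in $n$, bounds on the quantity $E^n(t) := \|\vec u^n(t)\|_{L^2} + \|R^n(t)\|_{B^s_{\infty,r}} + \|\vec u^n(t)\|_{B^s_{\infty,r}}$ on a time interval $[0,T^\ast]$ independent of $n$. For $s>1$, applying the dyadic blocks $\Delta_j$ to the two transport equations and using the commutator estimates of Lemma \ref{l:commutator_est}, together with the $L^2$ energy identity for $\vec u^{n+1}$, reduces the problem to controlling $\nabla\Pi^{n+1}$ in $B^s_{\infty,r}$. \textbf{This will be the main obstacle}: in the $p=\infty$ framework the bare Calderón--Zygmund estimate fails, so one cannot directly bound $\nabla\Pi^{n+1}$ by the $B^s_{\infty,r}$ norm of the source. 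To bypass this I would proceed as in \cite{D_JDE} and \cite{D-F_JMPA}: estimate low frequencies of the pressure in $L^2$ from the $L^2$ energy bound, and high frequencies in $B^s_{\infty,r}$ by applying $\Delta_j$ to the Poisson equation and combining Bernstein's inequalities (Lemma \ref{l:bern}), Bony's paraproduct decomposition, and the tame estimates of Proposition \ref{prop:app_fine_tame_est}. This yields a closed control $\|\nabla\Pi^{n+1}\|_{L^2 \cap B^s_{\infty,r}} \lesssim (1+E^n)\,E^{n+1}$, so that a Gronwall argument, combined with a bootstrap in the spirit of \eqref{def_T}, delivers the desired uniform bounds up to a time $T^\ast$ depending only on the norms of the initial datum.

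The endpoint case $s=r=1$ is more delicate because the generic transport estimate in $B^1_{\infty,1}$ loses a logarithm. Here I would work with the vorticity formulation of the momentum equation: since $\div \vec u^n = 0$, one has the algebraic identity $\curl(R^{n+1}(\vec u^n)^\perp) = \nabla R^{n+1}\cdot(\vec u^n)^\perp$, so that $\omega^{n+1}:=\curl \vec u^{n+1}$ satisfies a pure transport equation with a right-hand side in $B^0_{\infty,1}$. The improved Vishik/Hmidi--Keraani estimate recalled in Theorem \ref{thm:improved_est_transport} then gives a \emph{linear} (rather than exponential) dependence on $\int_0^t \|\nabla \vec u^n\|_{L^\infty}\,d\tau$, and reconstructing $\vec u^{n+1}$ from $\omega^{n+1}$ and the divergence-free constraint via the Biot--Savart law, together with the $L^2$ bound, closes the estimate at this critical regularity.

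Once the uniform bounds are in force, convergence of the sequence $(R^n,\vec u^n,\nabla\Pi^n)$ in the low-regularity space $C^0([0,T^\ast];L^2)$ follows from a Cauchy-type argument in the spirit of Subsection \ref{ss:conv_H^s}, and interpolation with the uniform $B^s_{\infty,r}$ bound promotes this to strong convergence in $B^{s'}_{\infty,r}$ for every $s'<s$, which suffices to pass to the limit in every nonlinear term of the equations. Uniqueness in this class follows from an $L^2$ stability estimate in the spirit of Theorem \ref{thm:stability_criterion}, whose hypotheses are satisfied thanks to the embedding $B^s_{\infty,r}\hookrightarrow W^{1,\infty}$ guaranteed by the assumption \eqref{Lip_assumption}. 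Finally, the time continuity assertions are obtained by rewriting each equation as a transport problem with a $B^s_{\infty,r}$-valued source and invoking Theorem \ref{thm_transport}; when $r=+\infty$, the space $B^s_{\infty,\infty}$ is not separable, so only weak-$*$ continuity can be achieved, in agreement with the statement.
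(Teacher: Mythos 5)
Your overall skeleton (Friedrichs-type iteration, uniform \emph{a priori} bounds, $L^2$ Cauchy argument plus interpolation, uniqueness by an $L^2$ stability estimate, time continuity via the linear transport theory) agrees with the paper's. However, the core mechanism for closing the $B^s_{\infty,r}$ estimate diverges in two ways, and one of them rests on a misconception.

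First, the paper does \emph{not} treat $s>1$ and $s=r=1$ by different devices. For \emph{all} admissible $(s,r)$ it passes to the vorticity formulation
\begin{equation*}
\d_t R + \vec u\cdot\nabla R = 0\,,\qquad
\d_t \omega + \vec u\cdot\nabla\omega = -\div(R\vec u)\,,
\end{equation*}
and reconstructs $\vec u$ from $(\|\vec u\|_{L^2},\omega)$ via Lemma \ref{l:rel_curl}. This eliminates $\nabla\Pi$ from the estimates entirely; the only pressure bound needed is the elementary $L^2$ one from Lemma \ref{lem:Lax-Milgram_type}. Your proposal instead keeps the momentum equation and tries to control $\nabla\Pi^{n+1}$ in $B^s_{\infty,r}$ directly via a low/high frequency splitting (low frequencies from $L^2$, high frequencies from the Poisson equation). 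This can be made to work, since the Riesz-type multipliers are bounded on the annular spectral pieces, but it is heavier than the paper's argument and you would still need to reproduce the low-frequency correction carefully at the level of the iterates. Note also that in the vorticity iteration, taking $\curl$ of $\vec u^n\cdot\nabla\vec u^{n+1}$ with $n\ne n+1$ produces an extra bilinear term $\mathcal L(\nabla\vec u^n,\nabla\vec u^{n+1})$ which the paper estimates by Lemma \ref{lem:L}; this term does not appear in your momentum-based scheme, but it is the price the paper pays, and the mechanism is worth knowing.

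Second, the improved Vishik/Hmidi--Keraani estimate (Theorem \ref{thm:improved_est_transport}) is \emph{not} what makes the endpoint $s=r=1$ well-posedness go through. The well-posedness part of Theorem \ref{thm:W-P_besov_spaces_p-infty} at $B^1_{\infty,1}$ uses the standard commutator estimates of Lemma \ref{l:commutator_est}, which hold whenever $(s,p,r)$ satisfies \eqref{Lip_cond}, including the endpoint. The linear-in-time estimate of Theorem \ref{thm:improved_est_transport} is used only later, in the derivation of the $\log\log$ lower bound for the lifespan (Subsection \ref{ss:improved_lifespan}), not for local existence. Invoking it here is unnecessary and somewhat misleading about the structure of the argument.

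Finally, a small algebra slip: with $\vec u^\perp=(-u^2,u^1)$ and $\curl F=\d_1F^2-\d_2F^1$, one has $\curl(R\vec u^\perp)=\div(R\vec u)$, which equals $\vec u\cdot\nabla R$ when $\div\vec u=0$; you wrote $\nabla R\cdot\vec u^\perp$, which is a different (Jacobian-type) quantity.
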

We highlight that the physically relevant $L^2$ condition on $\vec u$, in the previous theorem, is necessary to control the low frequency part of the solution, so as to reconstruct the velocity from its $\curl$ (see Lemma \ref{l:rel_curl} below).
\subsubsection{A priori estimate in $B^s_{\infty,r}$}\label{subsec:aprioriBesov}
To begin with, we prove a general relation between a function and its $\curl$ that will be useful in the sequel. 
\begin{lemma}\label{l:rel_curl}
Assume $f\in (L^2 \cap B^s_{\infty ,r})(\R^2)$ to be divergence-free. Denote by $\curl f:=-\d_2f^1+\d_1f^2$ its $\curl$ in $\R^2$.
Then, we have
\begin{equation}\label{eq:rel_curl}
\|f\|_{L^2\cap B^s_{\infty ,r}}\sim\|f\|_{L^2}+\|\curl f\|_{B^{s-1}_{\infty, r}}\, .
\end{equation}
\end{lemma}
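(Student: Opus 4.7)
The equivalence \eqref{eq:rel_curl} splits naturally into two inequalities. One direction is essentially trivial: since $\curl$ is a first-order differential operator, one has $\|\curl f\|_{B^{s-1}_{\infty,r}}\,\lesssim\,\|\nabla f\|_{B^{s-1}_{\infty,r}}\,\lesssim\,\|f\|_{B^s_{\infty,r}}$ by a direct application of the definition of Besov norms and Bernstein's inequalities from Section \ref{app:LP}. Combined with the common term $\|f\|_{L^2}$, this yields $\|f\|_{L^2}+\|\curl f\|_{B^{s-1}_{\infty,r}}\,\lesssim\,\|f\|_{L^2\cap B^s_{\infty,r}}$. The entire difficulty therefore lies in the reverse bound, namely in reconstructing $f$ from the couple $\bigl(f_{|\text{low frequencies}}\,,\,\curl f\bigr)$.

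The key algebraic identity is the two-dimensional Biot--Savart relation: since $\div f=0$, a direct computation using $\partial_1f^1=-\partial_2f^2$ shows that $\Delta f\,=\,\nabla^\perp(\curl f)$, componentwise. Thus, at least at nonzero frequencies, one can invert the Laplacian and write $f\,=\,-(-\Delta)^{-1}\nabla^\perp(\curl f)$. The plan is then to split the Littlewood--Paley decomposition into the low-frequency block $\Delta_{-1}f$ and the high-frequency blocks $(\Delta_jf)_{j\geq0}$, and to treat each piece separately.

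For the high-frequency part, fix $j\geq0$. Since the symbol of $\Delta_j$ is supported in an annulus of size $2^j$, the operator $-\Delta_j(-\Delta)^{-1}\nabla^\perp$ is, on this annulus, a smooth Fourier multiplier of order $-1$. By standard Fourier multiplier estimates for Littlewood--Paley blocks (a consequence of Lemma \ref{l:bern}), this yields
\[
\|\Delta_j f\|_{L^\infty}\,\lesssim\,2^{-j}\,\|\Delta_j\,\curl f\|_{L^\infty}\qquad\forall\, j\geq 0\,.
\]
Multiplying by $2^{js}$ and taking the $\ell^r$-norm in $j\geq0$, one immediately obtains
\[
\Bigl\|\bigl(2^{js}\|\Delta_jf\|_{L^\infty}\bigr)_{j\geq0}\Bigr\|_{\ell^r}\,\lesssim\,\|\curl f\|_{B^{s-1}_{\infty,r}}\,.
\]
For the low-frequency block $\Delta_{-1}f$, the Biot--Savart inversion breaks down (the symbol of $(-\Delta)^{-1}$ is singular at the origin), and this is precisely the point where the $L^2$ assumption on $f$ enters. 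Here one uses the Bernstein inequality (Lemma \ref{l:bern}) to gain integrability: $\|\Delta_{-1}f\|_{L^\infty}\lesssim\|\Delta_{-1}f\|_{L^2}\lesssim\|f\|_{L^2}$, since $\Delta_{-1}$ localises the Fourier support in a fixed ball.

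Summing the two contributions yields $\|f\|_{B^s_{\infty,r}}\,\lesssim\,\|f\|_{L^2}+\|\curl f\|_{B^{s-1}_{\infty,r}}$, which is the desired reverse inequality. The main (in fact, the only) subtle point in this argument is the low-frequency analysis: without an \emph{a priori} $L^p$ control on $f$ (for some $p<\infty$), the map $\curl f\mapsto f$ is not well-defined modulo constants in the Besov class $B^s_{\infty,r}$, and the equivalence \eqref{eq:rel_curl} would fail. This explains why the physically natural $L^2$ hypothesis on $\vec u$ has been included in the statement of Theorem \ref{thm:W-P_besov_spaces_p-infty}.
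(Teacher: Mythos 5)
Your proof is correct and takes essentially the same route as the paper: both reconstruct $f$ from $\curl f$ via the Biot--Savart law, split into the low-frequency block $\Delta_{-1}f$ (bounded by $\|f\|_{L^2}$ via Bernstein) and the high-frequency blocks (where the Fourier multiplier $(-\Delta)^{-1}\nabla^\perp$ is smooth of order $-1$ on each annulus). The only cosmetic difference is that you also spell out the trivial direction and phrase the inversion as $f=-(-\Delta)^{-1}\nabla^\perp(\curl f)$ rather than componentwise, but the argument is identical.
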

\begin{proof}
Using the divergence-free condition $\div f=0$, we can write the \textit{Biot-Savart law}:
\begin{equation*}
f^1=(-\Delta )^{-1}\d_2\, \curl f \quad \quad \text{and}\quad \quad f^2=-(-\Delta )^{-1}\d_1\, \curl f\, .
\end{equation*}
From that, we deduce 
\begin{equation*}
\|f\|_{B^s_{\infty ,r}}\sim \left\|\Delta_{-1}(-\Delta )^{-1}\sum_{i=1}^2(-1)^i\d_i\, \curl f\right\|_{L^\infty}+\left\|  \mathbbm{1}_{\{\nu \geq 0\}}\, 2^{\nu s}\|\Delta_{\nu}(-\Delta )^{-1}\sum_{i=1}^2(-1)^i\d_i\, \curl f\|_{L^\infty}\right\|_{\ell^r}\, .
\end{equation*}
On the one hand, if $\nu \geq 0$ we know that $\Delta_{\nu}\curl f$ is spectrally supported in an annulus, on which the symbol of $(-\Delta )^{-1}\d_i $ is smooth. Hence by employing Bernstein inequalities of Lemma \ref{l:bern}, we gather 
\begin{equation*}
2^{\nu s}\|\Delta_{\nu}(-\Delta )^{-1}\sum_{i=1}^2\d_i\, \curl f\|_{L^\infty}\sim \, 2^{(s-1)\nu}\|\Delta_{\nu}\curl f\|_{L^\infty}\, .
\end{equation*}
On the other hand, using the fact that the symbol of $(-\Delta )^{-1}\nabla \curl$ is homogeneous of degree zero and bounded on the unit sphere, Bernstein inequalities yield 
\begin{equation*}
\|\Delta_{-1}(-\Delta )^{-1}\sum_{i=1}^2\d_i\, \curl f\|_{L^\infty}\leq C \|\Delta_{-1}(-\Delta )^{-1}\nabla \curl f\|_{L^\infty}\leq C\|f\|_{L^2}\, .
\end{equation*}
Therefore,
\begin{equation*}
\|f\|_{B^s_{\infty ,r}}\leq C\left( \|f\|_{L^2}+\|\curl f\|_{B_{\infty, r}^{s-1}}\right)\, .
\end{equation*}
This completes the proof of the lemma.
\qed 
\end{proof}

\medskip

In the sequel of this subsection, we will show \textit{a priori} estimates for smooth solutions in the relevant norms. 

We start by recalling that the $L^2$ norm of the velocity field is preserved. In other words, we have:
\begin{equation}\label{L2_velocity}
 \|\vec u(t)\|_{L^2}=\|\vec u_0\|_{L^2}\, .
\end{equation}

Thanks to Lemma \ref{l:rel_curl}, in order to bound $\vu$ in $B^s_{\infty ,r}$, it will be enough to focus on estimates for $\curl \vu$ in $B^{s-1}_{\infty ,r}$. Hence, we apply the $\curl$ operator to the second equation in system \eqref{system_Q-H_thm} to get 
\begin{equation}\label{QH-Euler_vorticity_Bes}
\begin{cases}
\d_tR+\vu \cdot \nabla R=0\\
\d_t \omega +\vu \cdot \nabla \omega=-\div (R\vu)\, ,
\end{cases}
\end{equation}
where we recall $\omega :=\curl \vu=-\d_2u^1+\d_1u^2$.

Now, since $R$ is transported by $\vu$ we have, for any $t\geq 0$,
\begin{equation*}
\|R(t)\|_{L^\infty}=\|R_0\|_{L^\infty}\leq \|R_0\|_{B^s_{\infty ,r}}\, .
\end{equation*}
At this point we apply the dyadic blocks $\Delta_j$ to the system \eqref{QH-Euler_vorticity_Bes} and we find 
\begin{equation}\label{QH-Euler_vor_dyadic_Bes}
\begin{cases}
\d_t\Delta_j R+\vu \cdot \nabla \Delta_j R=[\vu\cdot \nabla,\Delta_j]R\\
\d_t \Delta_j \omega +\vu \cdot \nabla \Delta_j \omega=[\vu\cdot \nabla, \Delta_j]\omega-\Delta_j\div (R\vu)\, .
\end{cases}
\end{equation}
For the term $\div (R\vec u)$, we have
\begin{equation}\label{est_force term_Bes}
\|\div (R\vu)\|_{B^{s-1}_{\infty,r}}\leq C\,  \| R\vu\|_{B^{s}_{\infty,r}}\leq C \left(\|R\|_{L^\infty}\|\vu \|_{B^s_{\infty ,r}}+\|\vu\|_{L^\infty}\|R\|_{B^s_{\infty ,r}}\right)\, .
\end{equation}

Next, employing the commutator estimates (see Lemma \ref{l:commutator_est}), we get 
\begin{equation}\label{eq:commutator_R_Bes}
\begin{split}
2^{js}\|[\vu\cdot \nabla, \Delta_j]R\|_{L^\infty}&\leq C\, c_j(t)\,\left( \|\nabla \vu \|_{L^\infty}\|R\|_{B^s_{\infty,r}}+ \|\nabla \vu \|_{B^{s-1}_{\infty,r}}\|\nabla R\|_{L^\infty}\right)\\
&\leq C\, c_j(t)\, \|\vu \|_{B^s_{\infty,r}}\|R\|_{B^s_{\infty,r}}
\end{split}
\end{equation}
and
\begin{equation}\label{eq:commutator_omega_Bes}
\begin{split}
2^{j(s-1)}\|[\vu\cdot \nabla, \Delta_j]\omega\|_{L^\infty}&\leq C\, c_j(t)\,\left( \|\nabla \vu \|_{L^\infty}\|\omega\|_{B^{s-1}_{\infty,r}}+ \|\nabla \vu \|_{B^{s-1}_{\infty,r}}\|\omega\|_{L^\infty}\right)\\
&\leq C\, c_j(t)\, \|\vu \|_{B^s_{\infty,r}}^2\, ,
\end{split}
\end{equation}
for suitable sequences $(c_j(t))_{j\geq -1}$ belonging to the unit sphere of $\ell^r$.

\begin{remark}
We point out that we need the second estimate in Lemma \ref{l:commutator_est} to deal with \eqref{eq:commutator_omega_Bes} in the cases $s<2$, and $s=2$ and $r\neq 1$. In those cases, the Besov space $B^{s-1}_{\infty,r}$ is not contained in the Lipschitz space $W^{1,\infty}$.   
\end{remark}

Summing up estimates \eqref{est_force term_Bes}, \eqref{eq:commutator_R_Bes} and \eqref{eq:commutator_omega_Bes}, one may derive 
\begin{equation}\label{Bound_E}
\begin{split}
2^{js}\|\Delta_jR(t)\|_{L^\infty}+2^{j(s-1)}\|\Delta_j \omega(t)\|_{L^\infty}&\leq C\left(2^{js}\|\Delta_jR_0\|_{L^\infty}+2^{j(s-1)}\|\Delta_j \omega_0\|_{L^\infty}\right)\\
&+C\int_0^t c_j(\tau)\left(\|\vu(\tau) \|_{B^s_{\infty,r}}\|R(\tau)\|_{B^s_{\infty,r}}+\|\vu(\tau) \|_{B^s_{\infty,r}}^2\right) \, \detau.
\end{split}
\end{equation}
At this point, we define for all $t\geq0$:
\begin{equation*}
E(t):=\|R(t)\|_{B^s_{\infty,r}}+\|\vu(t)\|_{L^2}+\|\omega (t)\|_{B^{s-1}_{\infty,r}}\, .
\end{equation*}
Thanks to the $L^2$ estimate \eqref{L2_velocity} and the bound \eqref{Bound_E}, employing the Minkowski inequality, one may infer that 
\begin{equation*}
E(t)\leq C\, E(0)+\int_0^t E(\tau)^2 \, \detau\, .
\end{equation*}
We define now $T^\ast >0$ such that
\begin{equation*}
T^\ast =\sup \left\{t>0:\int_0^tE(\tau)^2 \, \detau \leq E(0) \right\}\, .
\end{equation*}
Then, we deduce $E(t)\leq C\, E(0)$ for all times $t\in [0,T^\ast ]$ and for some positive constant $C=C(s,r,d)$. Therefore, for all $t\in [0,T^\ast]$, we gather
\begin{equation*}
\int_0^tE(\tau)^2 \, \detau \leq CtE(0)^2\, .
\end{equation*}
By using the definition of $T^\ast$ and Lemma \ref{l:rel_curl}, we finally argue that 
\begin{equation}\label{est:T-star}
T^\ast \geq \frac{C}{\|R_0\|_{B^s_{\infty,r}}+\|\vu_0\|_{L^2\cap B^s_{\infty,r}}}\, . 
\end{equation}
In other words, we have shown that one can close the estimates for a small time $T^\ast$, which is bounded from below by \eqref{est:T-star}.

\subsubsection{Construction of approximate solutions} \label{ss:approx_sol_QH-E}
Since the material in this subsection is standard and already presented in Subsection \ref{sec:construction_smooth_sol} for system \eqref{Euler_eps}, we will only sketch it. 
 
For any $n\in \N$, let 
\begin{equation*}
(R^n_0, \vu_0^n):=(S_n R_0,\, S_n \vu_0)\, ,
\end{equation*}
where $S_n$ is the low frequency cut-off operator as in \eqref{eq:S_j}. By the assumption $\vu_0\in L^2$, we have $\vu_0^n \in H^\infty$ and similarly $R_0^n \in C^\infty_b$. Moreover, one has 
\begin{equation}\label{conv-properties}
\begin{split}
R_0^n\rightarrow R_0 \quad &\text{in}\quad B^s_{\infty,r}\\
\vu_0^n\rightarrow \vu_0 \quad &\text{in}\quad L^2 \cap B^s_{\infty,r}\, .
\end{split}
\end{equation}
Now, we will define the sequence of approximate solutions. First of all, we take $(R^0,\vu^0)=(R^0_0,\vu^0_0)$. Then, for all $\sigma \in \R$ we get $R^0\in C^0(\R_+;B^\sigma_{\infty,r})$ and $\vu^0\in C^0(\R_+;H^\sigma)$ with $\div \vu^0=0$. Next, we assume that $(R^n,\vu^n)$ is given such that, for all $\sigma \in \R$, 
\begin{equation*}
R^n\in C^0(\R_+;B^{\sigma}_{\infty,r}),\quad \vu^n\in C^0(\R_+;H^\sigma)\quad \text{and}\quad \div \vu^n=0\, .
\end{equation*}
We start by defining $R^{n+1}$ as the unique solution to the linear transport equation 
\begin{equation}\label{approx_R_QH-E}
\begin{cases}
\d_tR^{n+1}+\vu^n \cdot \nabla R^{n+1}=0\\
R^{n+1}_{|t=0}=R_0^{n+1}\, ,
\end{cases}
\end{equation}
and we deduce that $R^{n+1}\in C^0(\R_+;B^\sigma_{\infty, r})$ for all $\sigma \in \R$.

Next, we solve the linear transport equation with the divergence-free condition: 
\begin{equation}\label{approx_u_QH-E}
\begin{cases}
\d_t\vu^{n+1}+\vu^n \cdot \nabla \vu^{n+1}+\nabla \Pi^{n+1}=-R^{n+1}\vu^{\perp ,n}\\
\div \vu ^{n+1}=0\\
\vu^{n+1}_{|t=0}=\vu_0^{n+1}\, .
\end{cases}
\end{equation} 
We point out that the right-hand side term belongs to $L^1_{\rm loc}(\R_+;H^\sigma)$ for any $\sigma \in \R$. At this point, one can solve the previous linear problem by energy methods (see Propositions 3.2 and 3.4 in \cite{D_AT}) to find an unique solution $\vu^{n+1}\in C^0(\R_+;H^\sigma)$.

\subsubsection{Uniform bounds}\label{ss:unif_bounds_QH-E} 
We show now uniform bounds for the sequence $(R^n,\vu^n)_{n\in \N}$ constructed in the previous Paragraph \ref{ss:approx_sol_QH-E}.

We argue by induction and prove that there exists a time $T^\ast>0$ such that, for all $n\in \N$ and $t\in [0,T^\ast]$, one has
\begin{align}
&\|R^n(t)\|_{L^\infty}\leq C\|R_0\|_{L^\infty}\label{eq:induc_R}\\
&\|R^n(t)\|_{B^s_{\infty,r}}+\|\vu^n(t)\|_{L^2 \cap B^s_{\infty,r}}\leq CK_0\, e^{CK_0t}\label{eq:induc_u}\, ,
\end{align}
where the constant $C>0$ does not depend on the data neither on the solutions, and where we have defined 
\begin{equation*}
K_0:=\|R^n_0\|_{B^s_{\infty,r}}+\|\vu^n_0\|_{L^2 \cap B^s_{\infty,r}}\, .
\end{equation*}
It is clear that the couple $(R^0,\vu^0)$ satisfies the previous bounds. Assume now that $(R^n,\vu^n)$ verifies \eqref{eq:induc_R} and \eqref{eq:induc_u} on some interval $[0,T^\ast]$. Then, we have to prove the same properties for the step $n+1$.

We start by bounding $R^{n+1}$. We deduce that, for any $t\geq 0$,
\begin{equation*}
\|R^{n+1}(t)\|_{L^\infty}=\|R^{n+1}_0\|_{L^\infty}\leq C\|R_0\|_{L^\infty}\leq C\|R_0\|_{B^s_{\infty ,r}}\, .
\end{equation*} 
Next, employing an energy estimate for the velocity field, one can get 
\begin{equation*}
\begin{split}
\|\vu^{n+1}(t)\|_{L^2}&\leq \|\vu^{n+1}_0\|_{L^2}+C\int_0^t\|R^{n+1}(\tau)\vu^{\perp, n}(\tau)\|_{L^2}\, \detau\\
&\leq C  \|\vu_0\|_{L^2}+C\|R_0\|_{B^s_{\infty ,r}}\int_0^t\|\vu^{n}(\tau)\|_{L^2}\, \detau\, .
\end{split}
\end{equation*}
At this point, to get uniform bounds for the Besov norms, we resort the vorticity formulation:
\begin{equation*}
\d_t \omega^{n+1}+\vu^n\cdot \nabla \omega^{n+1}=\mathcal{L}(\nabla \vu^n,\nabla \vu^{n+1})+\div (R^{n+1}\vu^n)\, ,
\end{equation*}
where
\begin{equation}\label{eq:def_L}
\mathcal{L}(\nabla \vu^n,\nabla \vu^{n+1})=\sum_{k=1}^2\d_2 u_k^n\, \d_k u_1^{n+1}-\d_1 u_k^n\, \d_k u_2^{n+1}\, .
\end{equation}
Since the bound for $\div (R^{n+1}\vu^n)$ is analogous to the one performed in \eqref{est_force term_Bes}, it remains to bound $\mathcal{L}(\nabla \vu^n,\nabla \vu^{n+1})$ in $B^{s-1}_{\infty,r}$.
\begin{lemma}\label{lem:L}
Let $(\vec{v},\vec{w})$ be a couple of divergence-free vector fields in $B^s_{\infty,r}$. Then, one has 
\begin{equation*}
\|\mathcal{L}(\nabla \vec{v},\nabla \vec{w})\|_{B^{s-1}_{\infty,r}}\leq C\left(\|\nabla \vec{v}\|_{L^\infty}\|\vec{w}\|_{B^s_{\infty, r}}+\|\nabla \vec{w}\|_{L^\infty}\|\vec{v}\|_{B^s_{\infty, r}}\right)\, .
\end{equation*} 
\end{lemma}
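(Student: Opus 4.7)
The proof will follow by direct application of a classical tame product estimate in $B^{s-1}_{\infty,r}$. First I would expand $\mathcal{L}(\nabla \vec{v},\nabla \vec{w})$ as a finite linear combination of terms of the form $\d_i v_j\,\d_k w_l$ with indices in $\{1,2\}$. Let me note in passing that the divergence-free hypothesis on $\vec{v}$ and $\vec{w}$ actually plays no role in the proof of this specific bound; it is stated as an assumption because of the context where $\mathcal{L}$ appears in the vorticity formulation of system \eqref{approx_u_QH-E}.

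For each such bilinear term, the plan is to invoke Bony's paraproduct decomposition
\[
\d_i v_j\,\d_k w_l\,=\,T_{\d_i v_j}\,\d_k w_l\,+\,T_{\d_k w_l}\,\d_i v_j\,+\,R(\d_i v_j,\d_k w_l)\,,
\]
and then to estimate each piece separately. Paraproduct continuity together with the standard remainder boundedness, in the spirit of Proposition \ref{prop:app_fine_tame_est}, yield the tame estimate
\[
\|\d_i v_j\,\d_k w_l\|_{B^{s-1}_{\infty,r}}\,\leq\,C\,\Bigl(\|\d_i v_j\|_{L^\infty}\,\|\d_k w_l\|_{B^{s-1}_{\infty,r}}\,+\,\|\d_k w_l\|_{L^\infty}\,\|\d_i v_j\|_{B^{s-1}_{\infty,r}}\Bigr)\,.
\]
Using then the elementary continuity $\|\nabla f\|_{B^{s-1}_{\infty,r}}\leq C\|f\|_{B^s_{\infty,r}}$ of the derivative operator between Besov spaces in order to replace $\|\nabla \vec{v}\|_{B^{s-1}_{\infty,r}}$ and $\|\nabla \vec{w}\|_{B^{s-1}_{\infty,r}}$ by $\|\vec{v}\|_{B^s_{\infty,r}}$ and $\|\vec{w}\|_{B^s_{\infty,r}}$ respectively, and summing over the finitely many bilinear terms appearing in the definition \eqref{eq:def_L} of $\mathcal{L}$, I would conclude.

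The only point deserving some attention is the validity of the tame product estimate in the range of indices fixed in Theorem \ref{thm:W-P_besov_spaces_p-infty}. When $s>1$, one has $s-1>0$ and the Bony remainder $R(f,g)$ converges in $B^{s-1}_{\infty,r}$ by standard paraproduct theory. The endpoint case $s=r=1$, instead, takes place in $B^0_{\infty,1}$, and here one relies on the algebra structure of this space, itself a consequence of the fact that the remainder $R(f,g)=\sum_j\Delta_j f\,\widetilde\Delta_j g$ is summable in $B^0_{\infty,1}$ whenever both factors belong to $L^\infty\cap B^0_{\infty,1}$. This endpoint represents the main (in fact, essentially the only) technical subtlety of the proof; apart from it, the estimate is a routine exercise in Littlewood--Paley calculus.
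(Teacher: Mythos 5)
Your argument works verbatim for $s>1$, where $B^{s-1}_{\infty,r}$ falls under the tame estimate of Corollary \ref{cor:tame_est}, but it breaks at the endpoint $s=r=1$, and the break happens exactly at the two places where your proof deviates from the paper.

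First, you assert that $B^0_{\infty,1}$ is an algebra because "the remainder $R(f,g)=\sum_j\Delta_j f\,\widetilde\Delta_j g$ is summable in $B^0_{\infty,1}$ whenever both factors belong to $L^\infty\cap B^0_{\infty,1}$." This is false, and the paper even records the failure in a remark just after Corollary \ref{cor:tame_est}: $B^0_{\infty,1}$ is \emph{not} an algebra, and while the two paraproducts $\mathcal{T}_f g$, $\mathcal{T}_g f$ are indeed controlled, the remainder $\mathcal{R}(f,g)$ is not. Concretely, Proposition \ref{T-R} gives continuity of $\mathcal{R}$ on $B^{s_1}_{\infty,r_1}\times B^{s_2}_{\infty,r_2}$ only when $s_1+s_2>0$, or, at the borderline $s_1+s_2=0$, when $1/r_1+1/r_2=1$ and even then only into $B^0_{\infty,\infty}$. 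Applying it directly to a bilinear term $\partial_i v_j\,\partial_k w_l$ with both derivatives at regularity $0$ and summability $r_1=r_2=1$ violates the hypotheses, so your proposed tame estimate for such a product in $B^0_{\infty,1}$ simply does not hold.

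Second, you state that "the divergence-free hypothesis on $\vec{v}$ and $\vec{w}$ actually plays no role in the proof of this specific bound." It is precisely what rescues the endpoint. The paper exploits $\div\vec v=\div\vec w=0$ to rewrite $\mathcal{L}$ in the derivative form \eqref{rel_L}, namely $\mathcal{L}(\nabla\vec v,\nabla\vec w)=\sum_k\partial_k(w^1\,\partial_2 v^k)-\partial_k(w^2\,\partial_1 v^k)$. After Bony decomposition, the remainder contribution can then be written as a total derivative $\partial_k\mathcal{R}(w^\ell,\partial_m v^k)$, which one bounds in $B^0_{\infty,1}$ by controlling $\mathcal{R}(w^\ell,\partial_m v^k)$ in $B^1_{\infty,1}$; here the two factors sit at regularities $0$ and $1$, so $s_1+s_2=1>0$ and Proposition \ref{T-R} does apply, giving $\|\nabla\vec w\|_{L^\infty}\|\vec v\|_{B^1_{\infty,1}}$. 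Dropping the divergence-free hypothesis removes exactly this algebraic manipulation, and with it the only available route to control the remainder at $s=1$. You should therefore restore the hypothesis, keep the direct argument for $s>1$, and for $s=r=1$ rewrite $\mathcal{L}$ in divergence form before applying Bony's decomposition so that the remainder can be estimated one derivative higher.
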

\begin{proof}
The estimate easily follows from Corollary \ref{cor:tame_est} if $s>1$. Then, we have to show the bound when $\nabla \vec{v}$ and $\nabla \vec{w}$ are in $B^0_{\infty ,1}$ which is not an algebra.

Due to the fact that $\vec v$ and $\vec w$ are divergence-free, we can write 
\begin{equation}\label{rel_L}
\mathcal{L}(\nabla \vec{v},\nabla \vec{w})=\sum_{k=1}^2 \d_k(w^1\, \d_2v^k)-\d_k(w^2\, \d_1v^k)\, .
\end{equation}
Now, making use of Bony decomposition (we refer to Section \ref{app_paradiff} for more details), we have 
\begin{equation*}
\mathcal{L}(\nabla \vec{v},\nabla \vec{w})=\mathcal{L}_{\mathcal{T}}(\nabla \vec v,\nabla \vec w)+\mathcal{L}_{\mathcal{R}}(\nabla \vec{v},\nabla \vec{w})\, ,
\end{equation*}
where 
\begin{equation*}
\mathcal{L}_{\mathcal{T}}(\nabla \vec{v},\nabla \vec{w}):=\sum_{k=1}^2\mathcal{T}_{\d_kw^1}(\d_2v^k)+\mathcal{T}_{\d_2v^k}(\d_k w^1)-\mathcal{T}_{\d_kw^2}(\d_1v^k)-\mathcal{T}_{\d_1v^k}(\d_k w^2)
\end{equation*}
and
\begin{equation*}
\mathcal{L}_{\mathcal{R}}(\nabla \vec{v},\nabla \vec{w})=\sum_{k=1}^2 \mathcal{R}(\d_kw^1,\, \d_2v^k)-\mathcal{R}(\d_kw^2,\, \d_1v^k)\, .
\end{equation*}
On the one hand, thanks to Proposition \ref{T-R}, we can estimate the paraproducts in the following way:
\begin{equation*}
\|\mathcal{T}_{\nabla \vec{v}}(\nabla \vec{w})\|_{B^0_{\infty,1}}+\|\mathcal{T}_{\nabla \vec{w}}(\nabla \vec{v})\|_{B^0_{\infty,1}}\leq C\left( \|\nabla \vec{v}\|_{L^\infty}\|\nabla \vec{w}\|_{B^0_{\infty,1}}+\|\nabla \vec{w}\|_{L^\infty}\|\nabla \vec{v}\|_{B^0_{\infty,1}}\right)\, .
\end{equation*}
On the other hand, due to relation \eqref{rel_L} we may write
\begin{equation*}
\mathcal{L}_{\mathcal{R}}(\nabla \vec{v},\nabla \vec{w})=\sum_{k=1}^2 \d_k\mathcal{R}(w^1,\, \d_2v^k)-\d_k\mathcal{R}(w^2,\, \d_1v^k)\, .
\end{equation*}
Now, again thanks to Proposition \ref{T-R} we have
\begin{equation*}
\|\d_k\mathcal{R}(w^2,\, \d_1v^k)\|_{B^0_{\infty,1}}\leq C\, \|\mathcal{R}(w^2,\, \d_1v^k)\|_{B^1_{\infty,1}}\leq C\|\nabla \vec w\|_{B^0_{\infty,\infty}}\|\vec v\|_{B^1_{\infty,1}}\leq C\|\nabla \vec w\|_{L^\infty}\|\vec v\|_{B^1_{\infty,1}}
\end{equation*}
since $L^\infty\hookrightarrow B^0_{\infty,\infty}$. Similar argumentations apply to $\|\d_k\mathcal{R}(w^1,\, \d_2v^k)\|_{B^0_{\infty,1}}$.

Then, one has
\begin{equation*}
\|\mathcal{L}(\nabla \vec{v},\nabla \vec{w})\|_{B^{0}_{\infty,r}}\leq C\left(\|\nabla \vec{v}\|_{L^\infty}\|\vec{w}\|_{B^1_{\infty, r}}+\|\nabla \vec{w}\|_{L^\infty}\|\vec{v}\|_{B^1_{\infty, r}}\right)\, .
\end{equation*}  
This concludes the proof in the case $s=1$.
\qed
\end{proof}

\medskip

Therefore, applying Lemma \ref{lem:L} with $\vec v=\vu^n$ and $\vec w=\vu^{n+1}$, one can get 
\begin{equation*}
\|\mathcal{L}(\nabla \vu^n,\nabla \vu^{n+1})\|_{B^{s-1}_{\infty,r}}\leq C\left(\|\nabla \vu^n\|_{L^\infty}\|\vu^{n+1}\|_{B^s_{\infty, r}}+\|\nabla \vu^{n+1}\|_{L^\infty}\|\vu^n\|_{B^s_{\infty, r}}\right)\, .
\end{equation*} 

Reached this point, one can exactly proceed as in the proof for the \textit{a priori} estimates, finding that 
\begin{equation*}
\begin{split}
2^{js}\|\Delta_jR^{n+1}(t)\|_{L^\infty}+2^{j(s-1)}\|\Delta_j \omega^{n+1}(t)\|_{L^\infty}&\leq C\left(2^{js}\|\Delta_jR^{n+1}_0\|_{L^\infty}+2^{j(s-1)}\|\Delta_j \omega^{n+1}_0\|_{L^\infty}\right)\\
&+C\int_0^t c_j(\tau)\left(\|\vu^{n+1} \|_{B^s_{\infty,r}}+\|R^{n+1}\|_{B^s_{\infty,r}}\right)\|\vu^n \|_{B^s_{\infty,r}} \, \detau,
\end{split}
\end{equation*}
where the sequence $(c_j(t))_{j\geq -1}$ belongs to the unit sphere of $\ell^r$. 

Now, we define for all $t\geq 0$:
\begin{equation*}
\oline E^{n+1}(t):=\|R^{n+1}(t)\|_{B^s_{\infty, r}}+ \|\vu^{n+1}(t)\|_{L^2\cap B^s_{\infty,r}}\, .
\end{equation*}
Thus, recalling Lemma 
\ref{l:rel_curl}, from the previous inequalities we obtain 
\begin{equation*}
\oline E^{n+1}(t)\leq C\, \oline E^{n+1}(0)+C\int_0^t \oline E^{n+1}(\tau)\|\vu^n(\tau)\|_{L^2\cap B^s_{\infty,r}}\, \detau.
\end{equation*}
An application of Gr\"onwall arguments and the fact that $\oline E^{n+1}(0)\leq CK_0$, give 
\begin{equation*}
\oline E^{n+1}(t)\leq CK_0\exp \left(C\int_0^t\|\vu^n(\tau)\|_{L^2\cap B^s_{\infty,r}}\detau \right),
\end{equation*}
where $K_0:=\|R^n_0\|_{B^s_{\infty,r}}+\|\vu^n_0\|_{L^2 \cap B^s_{\infty,r}}$.

Next, from the inductive assumption \eqref{eq:induc_u}, we get 
\begin{equation*}
\int_0^t\|\vu^n(\tau)\|_{L^2\cap B^s_{\infty,r}}\detau \leq e^{CK_0t}-1
\end{equation*}
and we notice that for $0\leq x\leq1$ one has $e^x-1\leq x+x^2\leq 2x$. So, if we choose $T^\ast>0$ such that $CK_0T^\ast\leq 1$, we have 
\begin{equation*}
\oline E^{n+1}(t)\leq CK_0\exp (e^{CK_0t}-1)\leq CK_0\exp (CK_0t)\quad \quad \text{for}\; \; t\in [0,T^\ast]\, .
\end{equation*}
In this way we have completed the proof of the uniform bounds.

\subsubsection{Convergence}\label{subsect:conv_argument}
We show now convergence of the sequence $(R^n,\vu^n)_{n\in \N}$ towards a solution $(R,\vu)$ of the original problem. 
The proof follows the arguments already performed in Subsection \ref{ss:conv_H^s}: we limit ourselves to highlight only the main steps.

We define
\begin{equation*}
\widetilde{R}^n:=R^n-R^n_0
\end{equation*}
which satisfies 
\begin{equation*}
\begin{cases}
\d_t \widetilde{R}^{n+1}=-\vu^n\cdot \nabla R^{n+1}\\
\widetilde{R}^{n+1}_{|t=0}=0\, .
\end{cases}
\end{equation*}
Thus, one can check that $(\widetilde{R}^n)_{n\in \N}$ is uniformly bounded in $C^0([0,T];L^2)$.

Now, we will prove that $(\widetilde{R}^n,\vu^n)_{n\in \N}$ is a Cauchy sequence in $C^0([0,T];L^2)$. For any couple $(n,l)\in \N^2$, we introduce
\begin{equation*}
\begin{split}
&\delta \widetilde{R}^{n,l}:=\widetilde{R}^{n+l}-\widetilde{R}^{n}\\
&\delta R^{n,l}:=R^{n+l}-R^n\\
&\delta \vu^{n,l}:=\vu^{n+l}-\vu^n\\
&\delta \Pi^{n,l}:=\Pi^{n+l}-\Pi^n\, ,
\end{split}
\end{equation*}
and we have that $\div \delta \vec u^{n,l}=0$ for any $(n,l)\in \N^2$. 

Taking the difference between the $(n+l)$-iterate and the $n$-iterate, we may find 
\begin{equation}\label{syst_approx_conv}
\begin{cases}
\d_t \delta \widetilde{R}^{n,l}+\vu^{n+l-1}\cdot \nabla \delta \widetilde{R}^{n,l}=-\delta \vu^{n-1,l}\cdot \nabla R^n+\vu^{n+l-1} \cdot \nabla \delta R_0^{n,l}\\
\d_t \delta \vu^{n,l} +\vu^{n+l-1}\cdot \nabla \delta \vu^{n,l}+\nabla \delta \Pi^{n,l}=-\delta \vu^{n-1,l}\cdot \nabla \vu^n -R^{n+l}(\delta \vu^{\perp})^{n-1,l}-\delta R^{n,l}\vu^{\perp ,n-1}\, ,
\end{cases}
\end{equation}
supplemented with the initial data $(\delta \widetilde{R}^{n,l} ,\delta \vu^{n,l})_{|t=0}=(0,\delta \vu^{n,l}_0)$.

An energy estimate for the first equation of \eqref{syst_approx_conv} yields 
\begin{equation*}
\|\delta \widetilde{R}^{n,l}(t)\|_{L^2}\leq C\int_0^t \|\delta \vu ^{n-1,l}\|_{L^2}\|\nabla R^n\|_{L^\infty}+\|\vu^{n+l-1}\|_{L^2}\|\nabla \delta R_0^{n,l}\|_{L^\infty}\,  \detau\, ,
\end{equation*}
and similarly from the second equation we obtain 
\begin{equation*}
\begin{split}
\|\delta \vu^{n,l}(t)\|_{L^2}&\leq C\|\delta \vu_0^{n,l}\|_{L^2}+C\int_0^t\left(\|\delta \vu^{n-1,l}\|_{L^2}\|\nabla \vu^n\|_{L^\infty}+\|\delta \vu^{n-1,l}\|_{L^2}\|R^{n+l}\|_{L^\infty}\right)\, \detau\\
&+C\int_0^t\left(\|\delta \widetilde{R}^{n,l}\|_{L^2}+\|\delta R^{n,l}_0\|_{L^\infty}\right)\|\vu^{n-1}\|_{L^2\cap L^\infty}\, \detau \, .
\end{split}
\end{equation*}

Employing the uniform bounds established in Paragraph \ref{ss:unif_bounds_QH-E} and the embeddings, we note that 
\begin{equation*}
\sup_{t\in [0,T^\ast]}\left(\|\nabla R^n(t)\|_{L^\infty}+\|\nabla \vu^n(t)\|_{L^\infty}+\| R^{n+l}(t)\|_{L^\infty}\right)+\int_0^{T^\ast}\|\vu^{n+l-1}\|_{L^2}+\|\vu^{n-1}\|_{L^2\cap L^\infty}\dt \leq C_{T^\ast},
\end{equation*}  
for a constant $C_{T\ast}$ which depends only on $T^\ast$ and on the initial data. 

Therefore, thanks to the Gr\"onwall lemma, we get 
\begin{equation*}
\|\delta \widetilde{R}^{n,l}(t)\|_{L^2}+\|\delta \vu^{n,l}(t)\|_{L^2}\leq C_{T^\ast}\left(\|\delta R^{n,l}_0\|_{W^{1,\infty}}+\|\delta \vu_0^{n,l}\|_{L^2}+\int_0^t\|\delta \vu^{n-1,l}(\tau)\|_{L^2}\detau \right) ,
\end{equation*}
for all $t\in [0,T^\ast]$.

As already done in Subsection \ref{ss:conv_H^s}, after setting
\begin{equation*}
F_0^n:=\sup_{l\geq 0}\left(\|\delta R^{n,l}_0\|_{W^{1,\infty}}+\|\delta \vu_0^{n,l}\|_{L^2}\right)\quad \quad \text{ and }\quad \quad G^n(t):=\sup_{l\geq 0}\sup_{[0,t]}\left(\|\delta \widetilde{R}^{n,l}\|_{L^2}+\|\delta \vec u^{n,l}\|_{L^2}\right)\, ,
\end{equation*}
we may infer that, for all $t\in [0,T^\ast]$, 
\begin{equation*}
G^n(t)\leq C_{T^\ast}\sum_{k=0}^{n-1}\frac{(C_{T^\ast}T^\ast)^k}{k!}F_0^{n-k}+\frac{(C_{T^\ast} T^\ast)^n}{n!}G^0(t)\, ,
\end{equation*}
and, bearing in mind \eqref{conv-properties},
we have 
\begin{equation*}
\lim_{n\rightarrow +\infty}F^n_0=0\, .
\end{equation*}
Hence,
\begin{equation*}
\lim_{n\rightarrow +\infty}\sup_{l\geq 0}\sup_{t\in [0,T^\ast]}\left(\|\delta \widetilde{R}^{n,l}(t)\|_{L^2}+\|\delta \vu^{n,l}(t)\|_{L^2}\right)=0\, .
\end{equation*}

This property implies that $(\widetilde{R}^n)_{n\in \N}$ and $(\vu^n)_{n\in \N}$ are Cauchy sequences in $C^0_{T^\ast}(L^2)$. Hence, converge to some function $\widetilde{R}$ and $\vu$ in the same space. 

Define $R:=\widetilde{R}+R_0$. We notice that, owing to the embedding $L^2\hookrightarrow B^{-1}_{\infty,2}$, and thanks to the uniform bounds and to an interpolation argument, the sequence $(\vu^n)_{n\in \N}$ strongly converges in any intermediate space $L^\infty_{T^\ast} (B^\sigma_{\infty,r})$ with $\sigma <s$ and in particular in $L^\infty([0,T^\ast]\times \R^2)$. Moreover, we have that $R^n=\widetilde{R}^n+R^n_0$ strongly converges to $R$ in $L^\infty_{T^\ast}(L^2_{\rm loc})$. This is enough to pass to the limit in the weak formulation of \eqref{approx_R_QH-E} and \eqref{approx_u_QH-E} finding that $(R,\vu)$ is a weak solution to the original problem for a suitable pressure term $\nabla \Pi$. The regularity for $(R,\vu)$ in $B^s_{\infty,r}$ follows by the uniform bounds and Fatou's property in Besov spaces. 

Moreover, an argument similar to the one performed in Subsection \ref{ss:conv_H^s} apply here to show the desired regularity for the pressure term, after noticing that
\begin{equation*}
\|\nabla \Pi\|_{L^2\cap B^s_{\infty,r}}\sim \|\nabla \Pi\|_{L^2}+\|\Delta \Pi\|_{B^{s-1}_{\infty,r}}\, .
\end{equation*}



Finally, employing classical results for transport equations in Besov spaces (remember Theorem \ref{thm_transport}), we can get the claimed time continuity of $R$ with values in $B^s_{\infty,r}$, of $\vu$ with values in $L^2\cap B^s_{\infty,r}$ and of $\nabla \Pi$ with values in $L^2\cap B^s_{\infty,r}$. In addition, the sought regularity properties for the time derivatives $\d_t R$ and $\d_t \vu$ follow from an analysis of system \eqref{system_Q-H_thm}.

\subsubsection{Continuation criterion in Besov spaces}\label{subsec:cont_crit_besov}
We conclude this section showing the following continuation criterion for solutions of system \eqref{system_Q-H_thm} in $B^s_{\infty,r}$, where the couple $(s,r)$ satisfies the Lipschitz condition \eqref{Lip_cond}.
\begin{proposition}\label{prop:cont_criterion_Bes}
Let $(R_0,\vu_0)\in B^s_{\infty,r}\times (L^2\cap B^s_{\infty,r})$ with $\div \vu_0=0$. Given a time $T>0$, let $(R,\vu)$ be a solution of \eqref{system_Q-H_thm} on $[0,T[$ that belongs to $L^\infty_t(B^s_{\infty,r})\times L^\infty_t(L^2\cap B^s_{\infty,r})$ for any $t\in [0,T[$. If we assume that  
\begin{equation}\label{cont_cond_Bes}
\int_0^T\|\nabla \vu \|_{L^\infty}\, \dt<+\infty\, ,
\end{equation} 
then $(R, \vu)$ can be continued beyond $T$ into a solution of \eqref{system_Q-H_thm} with the same regularity.


Moreover, the lifespan of a solution $(R,\vu)$ to system \eqref{system_Q-H_thm} does not depend on $(s,r)$ and, in particular, the lifespan of solutions in Theorem \ref{thm:well-posedness_Q-H-Euler} is the same as the lifespan in
$B^1_{\infty,1}\times \left( L^2 \cap B^1_{\infty,1}\right)$.
\end{proposition}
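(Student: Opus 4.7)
The strategy is to show that, under condition \eqref{cont_cond_Bes}, the norms of $(R,\vu)$ in their respective spaces remain uniformly bounded as $t \to T^-$. Once such a bound $K_T$ is available, one applies the local existence Theorem \ref{thm:W-P_besov_spaces_p-infty} at some initial time $t_0 < T$ sufficiently close to $T$: the guaranteed lifespan, by \eqref{est:T-star}-type estimates, depends only on $K_T$, so for $t_0$ close enough to $T$ the new solution exists past $T$ and, by uniqueness, coincides with $(R,\vu)$ where they overlap. The core of the proof is thus the derivation of a priori estimates.

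I would first treat the critical case $s=r=1$, which is where condition \eqref{cont_cond_Bes} is most delicate. The density $R$ is transported by the divergence-free velocity $\vu$, so the classical transport result of Theorem \ref{thm_transport} yields
$$
\|R(t)\|_{B^1_{\infty,1}} \,\leq\, \|R_0\|_{B^1_{\infty,1}}\,\exp\Bigl(C\int_0^t \|\nabla \vu(\tau)\|_{L^\infty}\,d\tau\Bigr),
$$
which is finite on $[0,T]$ by hypothesis. The $L^2$ norm of $\vu$ is conserved by \eqref{L2_velocity}. By Lemma \ref{l:rel_curl}, controlling $\vu$ in $L^2\cap B^1_{\infty,1}$ reduces to bounding $\omega=\curl \vu$ in $B^0_{\infty,1}$. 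Since $\div\vu=0$, the vorticity satisfies
$$
\d_t \omega \,+\, \vu \cdot \nabla \omega \,=\, -\div(R\vu) \,=\, -\nabla R \cdot \vu.
$$
Here the key observation is that at the critical regularity $B^0_{\infty,1}$ one may invoke the improved estimate \emph{\`a la} Hmidi--Keraani--Vishik (Theorem \ref{thm:improved_est_transport}): the $B^0_{\infty,1}$ norm of $\omega$ is controlled \emph{linearly}, and not exponentially, by $\int_0^t \|\nabla \vu\|_{L^\infty}\,d\tau$. Coupled with a tame bound on the source $\nabla R\cdot \vu$ in $B^0_{\infty,1}$ (using the previously obtained control on $R$) and a Gronwall argument, this produces a finite bound for $\|\omega\|_{L^\infty_T(B^0_{\infty,1})}$.

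For a general pair $(s,r)$ satisfying \eqref{Lip_cond}, I would propagate the higher norm by combining the classical transport estimate of Theorem \ref{thm_transport} applied to $R$ and to $\omega$, with the tame-type estimates of Corollary \ref{cor:tame_est} and Lemma \ref{lem:L} for the forcing $\div(R\vu)$ and its analogue in the momentum equation; a final use of Gronwall's lemma together with \eqref{cont_cond_Bes} and Lemma \ref{l:rel_curl} delivers the a priori bound. Concerning the independence of the lifespan on $(s,r)$: the inclusion $T^\ast_{s,r}\leq T^\ast_{1,1}$ is immediate since any $B^s_{\infty,r}$ solution is in particular a $B^1_{\infty,1}$ solution. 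Conversely, if we had $T^\ast_{s,r}< T^\ast_{1,1}$, then $\vu \in L^\infty([0,T^\ast_{s,r}];B^1_{\infty,1})$ would force $\int_0^{T^\ast_{s,r}} \|\nabla \vu\|_{L^\infty}\,dt<+\infty$, and the continuation criterion just established would extend the $B^s_{\infty,r}$ solution beyond $T^\ast_{s,r}$, a contradiction.

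The main obstacle is precisely the critical step $(s,r)=(1,1)$: standard transport estimates would produce a double-exponential bound on the vorticity that cannot be absorbed into $\|\nabla \vu\|_{L^1_T(L^\infty)}$, and only the linear growth provided by the improved estimate of Theorem \ref{thm:improved_est_transport} closes the loop. A secondary technical point is ensuring that the forcing term $\nabla R\cdot \vu$ can be estimated in $B^0_{\infty,1}$ without losing a derivative on $R$; this is where Bony's paraproduct decomposition (in the spirit of Lemma \ref{lem:L}) plays a role, placing the gradient either on the smoother factor or using the preserved $L^\infty$ control on $R$.
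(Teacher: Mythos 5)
Your proposal is correct in spirit but diverges from the paper at the central step, and the claim of necessity you make there is too strong.

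The paper does \emph{not} invoke the Hmidi--Keraani--Vishik linear estimate (Theorem \ref{thm:improved_est_transport}) to prove the continuation criterion; it reserves that tool exclusively for the refined lifespan bound of Subsection \ref{ss:improved_lifespan}. For the continuation criterion, the paper applies the second commutator estimate of Lemma \ref{l:commutator_est} directly to the vorticity, exploiting the pointwise bound $\|\omega\|_{L^\infty}\leq C\|\nabla\vu\|_{L^\infty}$ and $\|\omega\|_{B^{s-1}_{\infty,r}}\leq C\|\vu\|_{B^s_{\infty,r}}$ so that the commutator contributes a term $\|\nabla\vu\|_{L^\infty}\|\vu\|_{B^s_{\infty,r}}$ to the Gr\"onwall inequality. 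After also bounding $\|\vu\|_{L^\infty}\lesssim\|\vu_0\|_{L^2}+\|\nabla\vu\|_{L^\infty}$ (low/high frequency split) and $\|\nabla R\|_{L^\infty_T(L^\infty)}\lesssim\|\nabla R_0\|_{L^\infty}\exp\!\big(C\!\int_0^T\|\nabla\vu\|_{L^\infty}\big)$, a single Gr\"onwall closure yields a finite bound uniformly for all admissible $(s,r)$, including the critical pair $(1,1)$. Your assertion that at $s=r=1$ ``only the linear growth provided by the improved estimate closes the loop'' is therefore not correct: the commutator route produces a (possibly double-exponential, but certainly finite) bound, and finiteness is all a continuation criterion requires. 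What the improved transport estimate really buys is the \emph{log-log} lifespan lower bound \eqref{improved_low_bound}, where the distinction between linear and exponential growth in $\|\nabla\vu\|_{L^1_T(L^\infty)}$ actually matters quantitatively.

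That said, your route does work as an alternative proof: Theorem \ref{thm:improved_est_transport} applies to $\omega\in B^0_{\infty,1}$ precisely because it is formulated at zero regularity, where the standard black-box transport Theorem \ref{thm_transport} does not apply (the triplet $(0,\infty,1)$ fails the Lipschitz condition \eqref{Lip_cond}), and the source $\div(R\vu)=\nabla R\cdot\vu$ can be handled with Bony decomposition. You would still need a Gr\"onwall loop to close the estimate, since the tame bound on the source reintroduces $\|\vu\|_{B^1_{\infty,1}}$. Your argument for the independence of the lifespan on $(s,r)$ (monotonicity plus the contradiction via the just-established criterion) is correct, and is in fact more explicit than what the paper writes; that part of the proposal is a small improvement in exposition.
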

\begin{proof}
It is enough to show that, under condition \eqref{cont_cond_Bes}, the solution $(R,\vu)$ remains bounded in the space $L^\infty_T(B^s_{\infty,r})\times L^\infty_T(L^2\cap B^s_{\infty,r})$. Recalling the \textit{a priori} estimates for the non-linear terms in system \eqref{QH-Euler_vor_dyadic_Bes}, we have 
\begin{equation*}
\begin{split}
2^{js}\|[\vu\cdot \nabla, \Delta_j]R\|_{L^\infty}\leq C\, c_j(t)\,\left( \|\nabla \vu \|_{L^\infty}+ \|\nabla R\|_{L^\infty}\right)\left( \|R\|_{B^s_{\infty,r}}+ \| \vu \|_{B^{s}_{\infty,r}}\right)
\end{split}
\end{equation*}
and
\begin{equation*}
\begin{split}
2^{j(s-1)}\|[\vu\cdot \nabla, \Delta_j]\omega\|_{L^\infty}\leq C c_j(t)\, \|\nabla \vu\|_{L^\infty}\|\vu\|_{B^s_{\infty,r}}\, ,
\end{split}
\end{equation*}
where we have used the fact that $\|\omega\|_{L^\infty}\leq C\|\nabla \vu\|_{L^\infty}$ and $\|\omega\|_{B^{s-1}_{\infty,r}}\leq C\| \vu\|_{B^s_{\infty,r}}$.
Moreover, from relation \eqref{est_force term_Bes}, we obtain 
\begin{equation*}
\begin{split}
\|\div (R\vu)\|_{B^{s-1}_{\infty,r}}\leq C \left(\|R\|_{L^\infty}+\|\vu\|_{L^\infty}\right) \left(\|\vu \|_{B^s_{\infty ,r}}+\|R\|_{B^s_{\infty ,r}}\right)\, .
\end{split}
\end{equation*}
Summing the previous bounds, for all $0\leq t\leq T$, we get
\begin{equation*}
\begin{split}
\|R(t)\|_{B^s_{\infty,r}}+\|\omega (t)\|_{B^{s-1}_{\infty,r}}&\leq C \left(\|R_0\|_{B^s_{\infty,r}}+\|\omega_0\|_{B^{s-1}_{\infty,r}}\right)\\
&+C\int_0^t\left(\|\nabla \vu\|_{L^\infty}+\|R\|_{W^{1,\infty}}+\|\vu\|_{L^\infty}\right)\left( \|R\|_{B^s_{\infty,r}}+ \| \vu \|_{B^{s}_{\infty,r}}\right) \, \detau \, .
\end{split}
\end{equation*}

At this point, we have to find estimates for $\|\vu\|_{L^\infty}$ and $\|R\|_{W^{1,\infty}}$. To deal with $\|\vu\|_{L^\infty}$, we separate low and hight frequencies deducing 
\begin{equation*}
\|\vu\|_{L^\infty}\leq \|\Delta_{-1}\vu\|_{L^\infty}+\sum_{j\geq 0}\|\Delta_j \vu\|_{L^\infty} \leq C\|\vu_0\|_{L^2} +\sum_{j\geq 0}\|\Delta_j \vu\|_{L^\infty} \, ,
\end{equation*}
where we have also employed the Bernstein inequalities (see Lemma \ref{l:bern}).

For the high frequency terms, we can write 
\begin{equation*}
\sum_{j\geq 0}\|\Delta_j \vu\|_{L^\infty}\leq C\sum_{j\geq 0}2^{-j}\|\Delta_j \nabla \vu\|_{L^\infty}\leq C\|\nabla \vu\|_{L^\infty}\, .
\end{equation*}
Therefore, 
\begin{equation}\label{u_low-high_freq}
\|\vu\|_{L^\infty}\leq C\left(\|\vu_0\|_{L^2}+\|\nabla \vu\|_{L^\infty}\right)\, .
\end{equation}
Now, we focus on the bound for $\|R\|_{W^{1,\infty}}$. On the one hand, $\|R(t)\|_{L^\infty}=\|R_0\|_{L^\infty}$, for all $t\geq 0$.
On the other hand, differentiating the continuity equation, we obtain 
\begin{equation}\label{R_Lipschitz}
\|\nabla R(t)\|_{L^\infty_T(L^\infty)}\leq \|\nabla R_0\|_{L^\infty}\exp \left(C\int_0^T\|\nabla \vu\|_{L^\infty}\, \dt \right)\, .
\end{equation}

Thus, using the previous relations and recalling equation \eqref{eq:rel_curl}, we finally have 
\begin{equation*}
\begin{split}
\|R(t)\|_{B^s_{\infty,r}}+\|\vu(t)\|_{L^2\cap B^s_{\infty,r}}&\leq C\left(\|R_0\|_{B^s_{\infty,r}}+\|\vu_0\|_{L^2\cap B^s_{\infty,r}}\right)\\
&+C\int_0^t\left(\|\nabla \vu \|_{L^\infty}+\|R_0\|_{W^{1,\infty}}+\|\vu_0\|_{L^2}\right)\left( \|R\|_{B^s_{\infty,r}}+ \| \vu \|_{L^2\cap B^{s}_{\infty,r}}\right) \, \detau \, .
\end{split}
\end{equation*}
In the end, employing Gr\"onwall's type inequalities, we may conclude that, under the assumption \eqref{cont_cond_Bes},
\begin{equation*}
\sup_{t\in [0,T]}\left(\|R(t)\|_{B^s_{\infty,r}}+\|\vu(t)\|_{L^2\cap B^s_{\infty,r}}\right)<+\infty\, .
\end{equation*}
\qed
\end{proof}

\subsection{The asymptotically global well-posedness result}\label{ss:improved_lifespan}
In this paragraph 
we focus on finding an asymptotic behaviour (in the regime of small oscillations for the densities) for the lifespan of solutions to system \eqref{system_Q-H_thm}. Namely, for small fluctuations $R_0$ of size $\delta>0$, the lifespan of solutions to this system tends to infinity when $\delta\rightarrow 0^+$. To show that, we have to take advantage of the \textit{linear} estimate in Theorem \ref{thm:improved_est_transport} for the transport equations in Besov spaces with zero regularity index. For that reason, it is important to work with the vorticity formulation of \eqref{system_Q-H_thm}, since $\omega \in B^0_{\infty,1}$. Thanks to the continuation criterion presented in Proposition \ref{prop:cont_criterion_Bes}, it is enough to find the bound of the lifespan in the lowest regularity space $B^1_{\infty,1}$. 


To begin with, we recall relation \eqref{eq:rel_curl}, i.e.
\begin{equation*}
\|f\|_{L^2\cap B^s_{\infty ,r}}\sim\|f\|_{L^2}+\|\curl f\|_{B^{s-1}_{\infty, r}}\, .
\end{equation*}

Therefore, due to the previous relation, we can define (for $t\geq 0$)
\begin{equation}\label{def_mathcal_E}
\mathcal{E}(t):= \|\vu(t)\|_{L^2}+\|\omega(t)\|_{B^0_{\infty,1}}\sim \|\vu(t)\|_{L^2\cap B^1_{\infty,1}}\, .
\end{equation}

Since the $L^2$ norm of the velocity field is preserved, to control $\vu$ in $B^1_{\infty ,1}$, it will be enough to find estimates for $\curl \vu$ in $B^{0}_{\infty ,1}$. Hence, we apply again the $\curl$ operator to the second equation in system \eqref{system_Q-H_thm} to get the system \eqref{QH-Euler_vorticity_Bes}, i.e. 
\begin{equation*}\label{QH-Euler_vorticity}
\begin{cases}
\d_tR+\vu \cdot \nabla R=0\\
\d_t \omega +\vu \cdot \nabla \omega=-\div (R\vu)\, .
\end{cases}
\end{equation*}

Making use of Theorem \ref{thm:improved_est_transport}, we obtain 
\begin{equation*}
\|\omega (t)\|_{B^0_{\infty,1}}\leq C \left( \|\omega_0\|_{B^0_{\infty,1}}+\int_0^t\|\div (R\vu)\|_{B^0_{\infty,1}}\detau\right)\left(1+\int_0^t\|\nabla \vu\|_{L^\infty}\detau\right)\, .
\end{equation*}
Now, we look at the bound for $\div (R\vu)$, finding that 
\begin{equation*}
\|\div (R\vu)\|_{B^0_{\infty,1}}\leq C \left(\|R\|_{L^\infty}\|\vu\|_{B^1_{\infty,1}}+\|\vu\|_{L^\infty}\|R\|_{B^1_{\infty,1}}\right)\leq C \|R\|_{B^1_{\infty,1}}\mathcal{E}(\tau)\, .
\end{equation*}
Then, we deduce 
\begin{equation}\label{eq:en_est_1}
\mathcal{E}(t)\leq C \left(\mc E(0)+\int_0^t\mc E(\tau)\|R(\tau)\|_{B^1_{\infty,1}}\, \detau \right)\left(1+\int_0^t\mc E(\tau)\, \detau\right)\, .
\end{equation}
At this point, Theorem \ref{thm_transport} implies that 
\begin{equation*}
\|R(t)\|_{B^1_{\infty,1}}\leq \|R_0\|_{B^1_{\infty,1}}\exp \left(C\int_0^t \mc E(\tau)\, \detau\right)\, .
\end{equation*}
Plugging this bound into \eqref{eq:en_est_1} gives
\begin{equation*}
\mc E(t)\leq C \left(1+\int_0^t\mc E(\tau)\, \detau\right)\left(\mc E(0)+\|R_0\|_{B^1_{\infty,1}}\int_0^t \mc E(\tau)\exp \left(\int_0^\tau \mc E(s)\, \ds \right)\, \detau \right)\, .
\end{equation*}
We now define
\begin{equation}\label{def_T^ast}
T^\ast:=\sup \left\{t>0:\|R_0\|_{B^1_{\infty,1}}\int_0^t \mc E(\tau)\exp \left(\int_0^\tau \mc E(s)\, \ds \right)\, \detau \leq \mc E(0)\right\}\, .
\end{equation}
Then, for all $t\in [0,T^\ast]$, we deduce 
\begin{equation*}
\mc E(t)\leq C\left(1+\int_0^t\mc E(\tau)\, \detau\right)\mc E(0)
\end{equation*}
and thanks to the Gr\"onwall's lemma we infer 
\begin{equation}\label{eq:en_est_2}
\mc E(t)\leq C\mc E(0)\,  e^{C\mc E(0)t}\, ,
\end{equation}
for a suitable constant $C>0$.

It remains to find a control on the integral of $\mc E(t)$. We have 
\begin{equation*}
\int_0^t\mc E(\tau) \, \detau \leq e^{C\mc E(0)t}-1
\end{equation*}
and, due to the previous bound \eqref{eq:en_est_2}, we get
\begin{equation}\label{estimate_integral_energy}
\begin{split}
\|R_0\|_{B^1_{\infty,1}}\int_0^t \mc E(\tau)\exp \left(\int_0^\tau \mc E(s)\, \ds \right)\, \detau &\leq C\|R_0\|_{B^1_{\infty,1}}\int_0^t \mc E(0)\, e^{C\mc E(0)\tau}\exp \left(e^{C\mc E(0)\tau}-1 \right)\, \detau \\
&\leq C\|R_0\|_{B^1_{\infty,1}}\left(\exp \left(e^{C\mc E(0)t}-1\right)-1\right)\, .
\end{split}
\end{equation}
Finally, by definition \eqref{def_T^ast} of $T^\ast$, we can argue that
\begin{equation*}
\mc E(0)\leq C\|R_0\|_{B^1_{\infty,1}}\left(\exp \left(e^{C\mc E(0)T^\ast}-1\right)-1\right)\, ,
\end{equation*}
which gives the following lower bound for the lifespan of solutions:
\begin{equation*}
T^\ast\geq \frac{C}{\mc E(0)}\log\left(\log \left(C\frac{\mc E(0)}{\|R_0\|_{B^1_{\infty,1}}}+1\right)+1\right)\, .
\end{equation*}
From there, recalling the definition \eqref{def_mathcal_E} for $\mc E(0)$, we have 
\begin{equation}\label{T-ast:improved}
T^\ast\geq \frac{C}{\|\vu_0\|_{L^2\cap B^1_{\infty,1}}}\log\left(\log \left(C\frac{\|\vu_0\|_{L^2\cap B^1_{\infty,1}}}{\|R_0\|_{B^1_{\infty,1}}}+1\right)+1\right)\, ,
\end{equation}
for a suitable constant $C>0$. This is the claimed lower bound stated in Theorem \ref{thm:well-posedness_Q-H-Euler}.

\section{The lifespan of solutions to the primitive problem}\label{s:lifespan_full}
The main goal of this section is to present an ``asymptotically global'' well-posedness result for system \eqref{Euler-a_eps_1}, when the size of fluctuations of the densities goes to zero, in the spirit of Subsection \ref{ss:improved_lifespan}. We start by showing a continuation type criterion for system \eqref{Euler-a_eps_1} and discussing the related consequences (see Subsection \ref{ss:cont_criterion+consequences} below for details).
We conclude this section presenting the asymptotic behaviour of the lifespan of solutions to system \eqref{Euler-a_eps_1}: the lifespan may be very large, if the size of non-homogeneities $a_{0,\veps}$ defined in \eqref{def_a_veps} is small (see relation \eqref{asymptotic_time} below). We point out that it is \textit{not} clear at all that the global existence holds in a fast rotation regime without any assumption of smallness on the size of the densities. 
 
\subsection{The continuation criterion and consequences}\label{ss:cont_criterion+consequences}
In this paragraph, we start by presenting a continuation type result in Sobolev spaces for system \eqref{Euler-a_eps_1}, in the spirit of the Beale-Kato-Majda continuation criterion \cite{B-K-M}. 
The proof is an adaptation of the arguments in \cite{B-L-S} by Bae, Lee and Shin.
\begin{proposition}\label{prop:cont_criterion_original_prob}
Take $\veps \in\;  ]0,1]$ fixed. Let $(a_{0,\veps},\vu_{0,\veps})\in L^\infty \times H^s$ with $\nabla a_{0,\veps}\in H^{s-1} $ and $\div \vu_{0,\veps}=0$. Given a time $T>0$, let $(a_\veps,\vu_\veps, \nabla \Pi_\veps)$ be a solution of \eqref{Euler-a_eps_1} on $[0,T[$ that belongs to $L^\infty_t(L^\infty)\times L^\infty_t(H^s)\times L^\infty_t(H^s)$ and $\nabla a_\veps \in L^\infty_t(H^{s-1}) $ for any $t\in [0,T[$. If we assume that  
\begin{equation}\label{cont_cond_orig_prob}
\int_0^T\|\nabla \vu_\veps \|_{L^\infty}\, \dt<+\infty\, ,
\end{equation} 
then $(a_\veps, \vu_\veps,\nabla \Pi_\veps)$ can be continued beyond $T$ into a solution of \eqref{Euler-a_eps_1} with the same regularity.
\end{proposition}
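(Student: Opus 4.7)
The strategy is the standard Beale--Kato--Majda-type bootstrap: we show that, under \eqref{cont_cond_orig_prob}, all the norms controlling the solution class of Theorem \ref{W-P_fullE} remain finite on $[0,T[$, namely
\[
\sup_{t\in[0,T[}\Big(\|a_\veps(t)\|_{L^\infty}+\|\nabla a_\veps(t)\|_{H^{s-1}}+\|\vu_\veps(t)\|_{H^s}+\|\nabla\Pi_\veps(t)\|_{H^s}\Big)\,<\,+\infty\,.
\]
Once this is granted, the gluing argument already recalled below Proposition \ref{th:cont-crit} applies: if $\oline\tau>0$ denotes the local existence time provided by Theorem \ref{W-P_fullE} associated with the bounds above, then invoking the theorem at the time $\widetilde T=T-\oline\tau/2$ with initial datum $(a_\veps(\widetilde T),\vu_\veps(\widetilde T))$ extends the solution up to $T+\oline\tau/2$ with the same regularity.

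The control on $a_\veps$ is the easiest step: since $\vu_\veps$ is divergence-free, the $L^\infty$ norm of $a_\veps$ is preserved in time, so $\|a_\veps(t)\|_{L^\infty}=\|a_{0,\veps}\|_{L^\infty}$ for every $t<T$. Differentiating the transport equation and applying $\Delta_j$ gives $\d_t\Delta_j\nabla a_\veps+\vu_\veps\cdot\nabla\Delta_j\nabla a_\veps=[\vu_\veps\cdot\nabla,\Delta_j]\nabla a_\veps-\Delta_j(\nabla\vu_\veps\cdot\nabla a_\veps)$; combining the commutator estimate of Lemma \ref{l:commutator_est} with the tame product estimate of Proposition \ref{prop:app_fine_tame_est}, summing in $j$ and using the embedding $H^{s-1}\hookrightarrow L^\infty$ (valid for $s>2$), we obtain
\[
\|\nabla a_\veps(t)\|_{H^{s-1}}\,\leq\,\|\nabla a_{0,\veps}\|_{H^{s-1}}\,+\,C\int_0^t\Big(\|\nabla\vu_\veps(\tau)\|_{L^\infty}+\|\vu_\veps(\tau)\|_{H^s}\Big)\,\|\nabla a_\veps(\tau)\|_{H^{s-1}}\,\detau\,.
\]

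The pivotal step is the $H^s$ bound on $\vu_\veps$. Repeating the spectral analysis performed in Paragraph \ref{ss:unif_est} on the momentum equation of \eqref{Euler-a_eps_1}, we notice two cancellations: the Coriolis term $\vu_\veps^\perp/\veps$ contributes zero in the $L^2$ scalar product against $\Delta_j\vu_\veps$ by skew-symmetry, and the ``singular'' part $\nabla\Pi_\veps/\veps$ of the pressure vanishes against $\Delta_j\vu_\veps$ by the divergence-free constraint. Only the genuinely non-linear piece $a_\veps\nabla\Pi_\veps$ of the pressure survives, yielding
\[
\|\vu_\veps(t)\|_{H^s}\,\leq\,\|\vu_{0,\veps}\|_{H^s}\,+\,C\int_0^t\Big(\|\nabla\vu_\veps(\tau)\|_{L^\infty}\,\|\vu_\veps(\tau)\|_{H^s}+\|a_\veps(\tau)\,\nabla\Pi_\veps(\tau)\|_{H^s}\Big)\detau\,.
\]
The product $a_\veps\nabla\Pi_\veps$ is estimated by the tame inequality and the elliptic bound \eqref{est_Pi_final}, the low-frequency part of which is handled through the preservation of $\|\sqrt{\vrho_\veps}\,\vu_\veps\|_{L^2}$ and \eqref{assumption_densities}; this gives an estimate of the form
\[
\|\nabla\Pi_\veps(t)\|_{H^s}\,\leq\,C\,\big(1+\|\nabla a_\veps(t)\|_{H^{s-1}}\big)^s\,\big(1+\|\vu_\veps(t)\|_{H^s}\big)\,\|\vu_\veps(t)\|_{H^s}\,.
\]

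The main obstacle is to extract a closed integral inequality whose exponential factor depends \emph{only} on the quantity $\|\nabla\vu_\veps\|_{L^\infty}$, the sole integrable datum provided by \eqref{cont_cond_orig_prob}. Setting $\Phi_\veps(t):=\|\nabla a_\veps(t)\|_{H^{s-1}}+\|\vu_\veps(t)\|_{H^s}$ and combining the three previous inequalities, one derives
\[
\Phi_\veps(t)\,\leq\,\Phi_\veps(0)\,+\,C\int_0^t\Big(1+\|\nabla\vu_\veps(\tau)\|_{L^\infty}\Big)\,\big(1+\Phi_\veps(\tau)\big)^{s+2}\,\detau\,,
\]
in which the superlinear factor $(1+\Phi_\veps)^{s+2}$ is the delicate ingredient inherited from the pressure bound. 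A standard bootstrap on short sub-intervals of $[0,T[$, coupled with a Gronwall argument in the spirit of \cite{B-L-S} (see also \cite{B-K-M}), shows that such an inequality cannot blow up while $\int_0^T\|\nabla\vu_\veps\|_{L^\infty}\dt$ is finite, hence $\Phi_\veps\in L^\infty([0,T[)$. Plugging this information back into the elliptic estimate yields $\nabla\Pi_\veps\in L^\infty_T(H^s)$, and the continuation argument stated at the beginning concludes the proof.
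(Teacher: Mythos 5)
Your overall strategy (bound the $H^s$ and $H^{s-1}$ norms, then glue) is the right one, and your observations on the cancellations of the Coriolis and the singular part $\nabla\Pi_\veps/\veps$ of the pressure in the $H^s$ energy estimate are correct and do match the paper. However, the final step contains a genuine gap: the integral inequality
\[
\Phi_\veps(t)\,\leq\,\Phi_\veps(0)\,+\,C\int_0^t\big(1+\|\nabla\vu_\veps(\tau)\|_{L^\infty}\big)\,\big(1+\Phi_\veps(\tau)\big)^{s+2}\,\detau
\]
is \emph{superlinear} in $\Phi_\veps$, and an ODE of the form $\Phi'=g(t)(1+\Phi)^{s+2}$ with $g\in L^1(0,T)$ can perfectly well blow up before time $T$ (by separation of variables, $(1+\Phi)^{-(s+1)}$ decreases linearly in $\int g$ and can hit zero). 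So the assertion that ``such an inequality cannot blow up while $\int_0^T\|\nabla\vu_\veps\|_{L^\infty}\,\dt$ is finite'' is false as stated, and no ``standard bootstrap on sub-intervals'' fixes this without an additional small-data or smallness-in-time hypothesis that is not available here.

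What the paper actually does — and what you must do — is work harder so that the resulting Gr\"onwall inequality is \emph{linear} in $\Phi_\veps$, with an integrable coefficient. Concretely, the integrand must be reduced to $\big(\|\nabla\vu_\veps\|_{L^\infty}+\|\nabla a_\veps\|_{L^\infty}+\|\nabla\Pi_\veps\|_{L^\infty}\big)\,\Phi_\veps$ plus lower-order contributions, and then each of $\|\nabla a_\veps\|_{L^\infty}$ and $\|\nabla\Pi_\veps\|_{L^\infty}$ must be shown to be controlled by the assumption \eqref{cont_cond_orig_prob} \emph{independently} of $\Phi_\veps$. For $\|\nabla a_\veps\|_{L^\infty}$ this follows directly from the transport equation: $\|\nabla a_\veps\|_{L^\infty_T(L^\infty)}\leq\|\nabla a_{0,\veps}\|_{L^\infty}\exp\bigl(C\int_0^T\|\nabla\vu_\veps\|_{L^\infty}\dt\bigr)$. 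For $\|\nabla\Pi_\veps\|_{L^\infty}$, the paper uses a chain of Gagliardo--Nirenberg interpolations ($\|\nabla\Pi_\veps\|_{L^\infty}\lesssim\|\Delta\Pi_\veps\|_{L^4}^{2/3}\|\nabla\Pi_\veps\|_{L^2}^{1/3}$, with $\|\nabla\Pi_\veps\|_{L^4}\lesssim\|\Delta\Pi_\veps\|_{L^4}^{1/3}\|\nabla\Pi_\veps\|_{L^2}^{2/3}$ to absorb the term $\|\nabla a_\veps\|_{L^\infty}\|\nabla\Pi_\veps\|_{L^4}$ by Young), anchored by the $L^2$ bound $\|\nabla\Pi_\veps\|_{L^2}\lesssim 1+\|\nabla\vu_\veps\|_{L^\infty}$ coming from the energy conservation. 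This reduces $\|\nabla\Pi_\veps\|_{L^\infty}$ to a control on $\|\nabla\vu_\veps\|_{L^4}$, which is then obtained from the vorticity equation $\d_t\omega_\veps+\ue\cdot\nabla\omega_\veps=-\nabla a_\veps\wedge\nabla\Pi_\veps$ by a Gr\"onwall argument in $L^4$. Finally, the $H^s$ elliptic estimate for $\nabla\Pi_\veps$ is refined via the interpolation $\|\nabla\Pi_\veps\|_{H^{s-1}}\leq\|\nabla\Pi_\veps\|_{L^2}^{1/s}\|\nabla\Pi_\veps\|_{H^s}^{1-1/s}$ and Young, so that $\|\nabla\Pi_\veps\|_{H^s}$ is bounded \emph{linearly} by $\|\vu_\veps\|_{H^s}$ and $\|\nabla a_\veps\|_{H^{s-1}}$ with coefficients depending only on the controlled $L^\infty$ quantities. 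Without these ingredients the Gr\"onwall step does not close.
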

\begin{proof}
As already pointed out in the proof of Proposition \ref{prop:cont_criterion_Bes}, it would be enough to show that 
$$ \sup_{0\leq t<T}\left(\|\vec u_\veps\|_{H^s}+\|\nabla a_\veps\|_{H^{s-1}}\right)<+\infty \, . $$
Since $\veps \in \, ]0,1]$ is fixed and does not play any role in the following proof, for notational convenience, we set it equal to 1. 

We start by recalling that, from the continuity equation of \eqref{Euler-a_eps_1}, one gets
\begin{equation}\label{cont_eq_nabla_a}
 \d_t \d_i a+\vec u\cdot \nabla \d_i a =-\d_i \vec u \cdot \nabla a\quad \quad \text{for}\; i=1,2\, . 
\end{equation}
So, applying the operator $\Delta_j$ to the above relation and using the divergence-free condition $\div \vec u=0$, one has 
$$ \d_t \Delta_j \d_i a+\vec u \cdot \nabla \Delta_j \d_i a=-\Delta_j\left(\d_i \vec u \cdot \nabla a\right)+[\vec u\cdot \nabla,\Delta_j]\d_i a \, .$$
Therefore, thanks to the commutator estimates (see Lemma \ref{l:commutator_est}), one may argue that 
\begin{equation*}
2^{j(s-1)}\|[\vec u \cdot \nabla,\Delta_j]\d_i a\|_{L^2}\leq C\, c_j(t)\left( \|\nabla \vec u\|_{L^\infty}+\|\nabla a\|_{L^\infty}\right)\left(\|\nabla a\|_{H^{s-1}}+\|\vec u\|_{H^s}\right)\, ,
\end{equation*}
where $(c_j(t))_{j\geq -1}$ is a sequence in the unit ball of $\ell^2$, and due to Corollary \ref{cor:tame_est} one has
\begin{equation*}
\|\d_i \vec u\cdot \nabla a\|_{H^{s-1}}\leq C\left(\|\nabla \vec u\|_{L^\infty}\|\nabla a\|_{H^{s-1}}+\|\nabla \vec u\|_{H^{s-1}}\|\nabla a\|_{L^\infty}\right)\, .
\end{equation*}
At this point, we recall the bounds for the momentum equation in system \eqref{Euler-a_eps_1}. First of all, we apply the non-homogeneous dyadic blocks $\Delta_j$, getting
\begin{equation*}
\d_t \Delta_j \vec u+\vec u\cdot \nabla \Delta_j \vec u+\Delta_j \vec u^\perp +\nabla \Delta_j \Pi +\Delta_j\left(a\nabla \Pi\right)=[\vec u \cdot \nabla,\Delta_j]\vec u\,.
\end{equation*}
Then, we obtain 
\begin{equation*}
2^{js}\|[\vec u\cdot \nabla, \Delta_j]\vec u\|_{L^2}\leq C \, c_j(t)\|\nabla \vec u\|_{L^\infty}\|\vec u\|_{H^s}
\end{equation*}
with $(c_j(t))_{j\geq -1}$ a sequence in the unit ball of $\ell^2$, and in addition we have
\begin{equation*}
\|a\nabla \Pi\|_{H^s}\leq C \left(\|a\|_{L^\infty}\|\nabla \Pi\|_{H^s}+\|\nabla \Pi\|_{L^\infty}\|\nabla a\|_{H^{s-1}}\right)\, .
\end{equation*}
Summing up the previous inequalities, for all $t\in [0,T[$ we may infer that
\begin{equation*}
\begin{split}
\|\nabla a (t)\|_{H^{s-1}}+\|\vec u(t)\|_{H^s}&\leq \left(\|\nabla a_0\|_{H^{s-1}}+\|\vec u_0\|_{H^s}\right)+C\int_0^t\|a\|_{L^\infty}\|\nabla \Pi\|_{H^s}\detau\\
&+C\int_0^t \left(\|\nabla a\|_{L^\infty}+\|\nabla \vec u\|_{L^\infty}+\|\nabla \Pi\|_{L^\infty}\right)\left(\|\nabla a \|_{H^{s-1}}+\|\vec u\|_{H^s}\right)\detau\, .
\end{split}
\end{equation*}
To close the proof, under the hypothesis of the theorem, we have to find the bounds $\|\nabla a\|_{L^1_T(L^\infty)}$, $\|\nabla \Pi\|_{L^1_T(L^\infty)}$ and $\|\nabla \Pi\|_{L^1_T(H^s)}$. 

From the continuity equation \eqref{cont_eq_nabla_a}, we obtain 
\begin{equation*}
\|\nabla a\|_{L^\infty_T(L^\infty)}\leq C\|\nabla a_0\|_{L^\infty}\exp\left(\int_0^T\|\nabla \vec u\|_{L^\infty}\detau\right)\, .
\end{equation*}
Now, we focus on the estimate $\|\nabla \Pi\|_{H^s}$. 
We recall again the elliptic equation
\begin{equation}\label{ellip_eq_cont_thm}
-\div (A\nabla \Pi)=\div F\, \quad \text{where}\quad  F:= \vu \cdot \nabla \vu+ \vu^\perp \quad \text{and}\quad A:=1/\vrho\, . 
\end{equation}
From the previous relation, it is a standard matter to deduce that (see also Proposition \ref{app:prop7_danchin}):
\begin{equation*}
\|\nabla \Pi\|_{H^s}\leq C\left(\|\nabla \vec u\|_{L^\infty}\|\vec u\|_{H^s}+\|\vec u\|_{H^s}+\|\nabla a\|_{L^\infty}\|\nabla \Pi\|_{H^{s-1}}+\|\nabla \Pi \|_{L^\infty}\|\nabla a\|_{H^{s-1}}\right)\, .
\end{equation*}
Using an interpolation argument, one has 
$$ \|\nabla \Pi\|_{H^{s-1}}\leq C\|\nabla \Pi\|_{L^2}^{1/s}\|\nabla \Pi\|_{H^s}^{1-1/s} $$
and due to the Young's inequality we end up with 
\begin{equation*}
\|\nabla a\|_{L^\infty}\|\nabla \Pi\|_{H^{s-1}}\leq C\left(\|\nabla a\|_{L^\infty}^s \|\nabla \Pi\|_{L^2}+\left(1-\frac{1}{s}\right)\|\nabla \Pi\|_{H^s}\right)\, .
\end{equation*}
In addition, we already know that 
$$ \|\nabla \Pi\|_{L^2}\leq C \left(\|\vec u\|_{L^2}\|\nabla \vec u\|_{L^\infty}+\|\vec u\|_{L^2}\right)\leq C \left(\|\vec u_0\|_{L^2}\|\nabla \vec u\|_{L^\infty}+\|\vec u_0\|_{L^2}\right)\leq C \left(\|\nabla \vec u\|_{L^\infty}+1\right) \, .$$
As it is apparent the constant term on the right-hand side will be irrelevant in the next computations: hence, it will be omitted supposing e.g. that $\|\nabla \vec u\|_{L^\infty}>1$. 

At this point, we have only to take care of the $L^\infty$ bound for the pressure term. Thanks to an application of Gagliardo-Nirenberg (see Theorem \ref{app:thm_G-N}) and Young inequalities, we get 
\begin{equation*}
\|\nabla \Pi\|_{L^\infty}\leq C \|\Delta \Pi\|_{L^4}^{2/3}\|\nabla \Pi\|_{L^2}^{1/3}\leq C\left(\|\Delta \Pi\|_{L^4}+\|\nabla \Pi\|_{L^2}\right)\leq C\left(\|\Delta \Pi\|_{L^4}+\|\nabla \vec u\|_{L^\infty}\right)\, .
\end{equation*}
Again from the elliptic equation \eqref{ellip_eq_cont_thm}, one can find
\begin{equation*}
\Delta \Pi=-\vrho \nabla a\cdot \nabla \Pi-\vrho \, \div \left( \vec u\cdot \nabla \vec u\right)-\vrho\, \div \vec u^\perp
\end{equation*}
and then 
\begin{equation*}
\begin{split}
\|\Delta \Pi\|_{L^4}&\leq C\left(\|\vrho\|_{L^\infty}\|\nabla a\|_{L^\infty}\|\nabla \Pi\|_{L^4}+\|\vrho\|_{L^\infty}\|\nabla \vec u\|_{L^\infty}\|\nabla \vec u\|_{L^4}+\|\vrho\|_{L^\infty}\|\nabla \vec u\|_{L^4}\right)\\
&\leq C\left(\|\nabla a\|_{L^\infty}\|\nabla \Pi\|_{L^4}+\|\nabla \vec u\|_{L^\infty}\|\nabla \vec u\|_{L^4}+\|\nabla \vec u\|_{L^4}\right)\, ,
\end{split}
\end{equation*}
where we have employed the fact that the densities are bounded from above.

Once again, due to the Gagliardo-Nirenberg's inequality, we obtain 
$$ \|\nabla \Pi\|_{L^4}\leq C\|\Delta \Pi\|_{L^4}^{1/3}\|\nabla \Pi\|_{L^2}^{2/3}\, . $$
So, 
\begin{equation*}
\|\Delta \Pi\|_{L^4}\leq C\left(\|\nabla a\|_{L^\infty}\|\Delta \Pi\|_{L^4}^{1/3}\|\nabla \Pi\|_{L^2}^{2/3}+\|\nabla \vec u\|_{L^\infty}\|\nabla \vec u\|_{L^4}+\|\nabla \vec u\|_{L^4}\right)\, .
\end{equation*}
Therefore, Young inequality implies that 
\begin{equation*}
\begin{split}
\|\Delta \Pi\|_{L^4}&\leq C\left(\|\nabla a\|_{L^\infty}^{3/2}\|\nabla \Pi\|_{L^2}+\|\nabla \vec u\|_{L^\infty}\|\nabla \vec u\|_{L^4}+\|\nabla \vec u\|_{L^4}\right)\\
&\leq C\left(\|\nabla a\|_{L^\infty}^{3/2}\|\nabla \vec u\|_{L^\infty}+\|\nabla \vec u\|_{L^\infty}\|\nabla \vec u\|_{L^4}+\|\nabla \vec u\|_{L^4}\right)\, .
\end{split}
\end{equation*}
At the end, we ensure that 
\begin{equation*}
\|\nabla \Pi\|_{L^\infty}\leq C\left(\|\nabla a\|_{L^\infty}^{3/2}\|\nabla \vec u\|_{L^\infty}+\|\nabla \vec u\|_{L^\infty}\|\nabla \vec u\|_{L^4}+\|\nabla \vec u\|_{L^4}+\|\nabla \vec u\|_{L^\infty}\right)\, .
\end{equation*}
Now, we have to estimate $\|\nabla \vec u\|_{L^4}$: to do so, we take advantage of the vorticity formulation
$$ \d_t \omega+\vec u\cdot \nabla \omega=-\nabla a \wedge \nabla \Pi\, , $$
where $\nabla a \wedge \nabla \Pi:=\d_1 a\, \d_2 \Pi-\d_2 a \, \d_1 \Pi$.

From that formulation, we get the following bound for all $t\in [0,T[$ :
\begin{equation*}
\|\omega(t)\|_{L^4}\leq \|\omega_0\|_{L^4}+C\int_0^t\|\nabla a\|_{L^\infty}\|\nabla \Pi\|_{L^4}\detau\, .
\end{equation*}
As done in the previous computations, we can deduce that 
\begin{equation*}
\begin{split}
\|\omega(t)\|_{L^4}&\leq \|\omega_0\|_{L^4}+C\int_0^t \|\nabla a\|_{L^\infty}\|\Delta \Pi\|_{L^4}^{1/3}\|\nabla \vec u\|_{L^\infty}^{2/3}\\
&\leq \|\omega_0\|_{L^4}+C\int_0^t \|\nabla a\|_{L^\infty}\left(\|\nabla a\|_{L^\infty}^{3/2}\|\nabla \vec u\|_{L^\infty}+\|\nabla \vec u\|_{L^\infty}\|\nabla \vec u\|_{L^4}+\|\nabla \vec u\|_{L^4}\right)^{1/3}\|\nabla \vec u\|_{L^\infty}^{2/3}\\
&\leq \|\omega_0\|_{L^4}+C\int_0^t \left(\|\nabla a\|_{L^\infty}^{3/2}\|\nabla \vec u\|_{L^\infty}+\|\nabla a\|_{L^\infty}\|\nabla \vec u\|_{L^\infty}\|\omega\|_{L^4}^{1/3}+\|\nabla a\|_{L^\infty}\|\nabla \vec u\|_{L^\infty}^{2/3}\|\omega\|_{L^4}^{1/3}\right).
\end{split}
\end{equation*}
At this point, we apply the Young's inequality to infer:
\begin{itemize}
\item[(i)] $\|\nabla a\|_{L^\infty}^{3/2}\|\nabla \vec u\|_{L^\infty}\leq C\left(\|\nabla a\|_{L^\infty}\|\nabla \vec u\|_{L^\infty}+\|\nabla a\|_{L^\infty}^{2}\|\nabla \vec u\|_{L^\infty}\right)$;
\item[(ii)] $\|\nabla a\|_{L^\infty}\|\nabla \vec u\|_{L^\infty}\|\omega\|_{L^4}^{1/3}\leq C\left(\|\nabla a\|_{L^\infty}\|\nabla \vec u\|_{L^\infty}\|\omega\|_{L^4}+\|\nabla a\|_{L^\infty}\|\nabla \vec u\|_{L^\infty}\right)$;
\item[(iii)] $\|\nabla a\|_{L^\infty}\|\nabla \vec u\|_{L^\infty}^{2/3}\|\omega\|_{L^4}^{1/3}\leq C\left(\|\nabla a\|_{L^\infty}\|\nabla \vec u\|_{L^\infty}+\|\nabla a\|_{L^\infty}\|\omega\|_{L^4}\right)$.
\end{itemize}

\medskip

In the end, for all $t\in [0,T[\, $,
\begin{equation*}
\begin{split}
\|\omega(t)\|_{L^4}&\leq \|\omega_0\|_{L^4}+C\int_0^t\left(\|\nabla a\|_{L^\infty}\|\nabla \vec u\|_{L^\infty}\|\omega\|_{L^4}+\|\nabla a\|_{L^\infty}\|\omega\|_{L^4}\right) \detau\\
&+C\int_0^t\left(\|\nabla a\|_{L^\infty}^2\|\nabla \vec u\|_{L^\infty}+\|\nabla a\|_{L^\infty}\|\nabla \vec u\|_{L^\infty}\right)\detau \, . 
\end{split}
\end{equation*}
Hence, thanks to the Gr\"onwall's lemma, we get 
$$ \|\omega(t)\|_{L^4}\leq \exp\left[C\int_0^t \|\nabla a\|_{L^\infty}\left(\|\nabla \vec u\|_{L^\infty}+1\right) \right]\left[\|\omega_0\|_{L^4}+C\int_0^t\|\nabla a\|_{L^\infty}\|\nabla \vec u\|_{L^\infty}\left(\|\nabla a\|_{L^\infty}+1\right) \right] .$$
Recalling condition \eqref{cont_cond_orig_prob}, the theorem is thus proved.
\qed
\end{proof}

\medskip

At this point, we discuss some consequences of the previous result. 

In particular, it would be enough to control $\vec u_\veps$ in $L^\infty_T(L^2\cap B^1_{\infty,1})$ in order to have the existence of the solution until the time $T$. 
Indeed, if we are able to control the norm $\|\ue \|_{L^\infty_T(L^2\cap B^1_{\infty,1})}$, then we are able to bound $\|\nabla \ue \|_{L^\infty_T(L^\infty)}$. This will imply \eqref{cont_cond_orig_prob} and, therefore, the solution will exist until time $T$. 


Let us give some details. First of all, we have 
\begin{equation*}
\|\nabla \ue \|_{L^\infty_T(L^\infty)}\leq C\, \| \ue \|_{L^\infty_T(B^1_{\infty,1})}\, .
\end{equation*} 
As already pointed out in Lemma \ref{l:rel_curl}, to control the $B^1_{\infty,1}$ norm of $\ue$ it is enough to have a $L^2$ estimate for $\ue$ and a $B^0_{\infty, 1}$ estimate for its $\curl$. Those estimates are the topic of the next Subsection \ref{ss:asym_lifespan}, provided that the time $T>0$ is defined as in \eqref{def_T^ast_veps} below. 

Therefore, $\|\nabla \ue\|_{L^\infty_T(L^\infty)}<+\infty$ and so 
$$ \int_0^T \|\nabla \ue\|_{L^\infty}<+\infty\, ,$$
for such $T>0$. 

Finally, we note that we have already shown the existence and uniqueness of solutions to system \eqref{Euler-a_eps_1} in the Sobolev spaces $H^s$ with $s>2$ (see Section \ref{s:well-posedness_original_problem}) and, thanks to Proposition \ref{p:embed}, those spaces are continuously embedded in the space $B^1_{\infty,1}$. Thus, the solution will exist until $T$.

\subsection{The asymptotic lifespan}\label{ss:asym_lifespan}
In this paragraph we focus our attention on the lifespan $T^\ast_\veps$ of solutions $(\vrho_\veps, \vec u_\veps, \nabla \Pi_\veps)$ to the primitive system \eqref{Euler-a_eps_1}. 
We point out that if we consider the initial densities as in \eqref{in_vr_E}, i.e. $\vrho_{0,\veps}=1+\veps R_{0,\veps}$, it is not clear to us how to show that $T_\veps^\ast \longrightarrow +\infty$ when $\veps\rightarrow 0^+$. Nevertheless, on the one hand, as soon as the densities are of the form $\vrho_{0,\veps}=1+\veps^{1+\alpha}R_{0,\veps}$ (with $\alpha >0$), we obtain that $T^\ast_\veps \sim \log \log \left(1/\veps\right)$.
On the other hand, independently of the rotational effects, we can state an ``asymptotically global'' well-posedness result in the regime of small oscillations: namely, we get $T^\ast_\veps \geq T^\ast(\delta)$, with $T^\ast (\delta)\longrightarrow +\infty$, when the size $\delta >0$ of $R_{0,\veps}$ goes to $0^+$ (this is coherent with the result in \cite{D-F_JMPA} for a density-depend fluid in the absence of Coriolis force).

Therefore, the main goal of this subsection is to prove estimate \eqref{improv_life_fullE} of Theorem \ref{W-P_fullE}.

First of all, we have to take advantage of the vorticity formulation of system \eqref{Euler-a_eps_1}. To do so, we apply the $\curl$ operator to the momentum equation, obtaining 
\begin{equation}\label{curl_eq}
\d_t  \omega_\veps +\ue \cdot \nabla  \omega_\veps+ \nabla a_\veps\,  \wedge\,  \nabla \Pi_\veps=0\, ,
\end{equation}
where we recall $\omega_\veps:=\curl \ue$ and $\nabla a_\veps\,  \wedge\,  \nabla \Pi_\veps:=\d_1 a_\veps \,\d_2\Pi_\veps-\d_2 a_\veps \d_1\Pi_\veps$.

We notice that the vorticity formulation is the key point to bypass the issues coming from the Coriolis force, whose singular effects disappear in \eqref{curl_eq}.

Next, we make use of Theorem \ref{thm:improved_est_transport} and we deduce that 
\begin{equation}\label{transp_curl}
\| \omega_\veps \|_{B^0_{\infty,1}}\leq C \left(\| \omega_{0,\veps}\|_{B^0_{\infty,1}}+\int_0^t \|\nabla a_\veps\, \wedge\, \nabla \Pi_\veps \|_{B^0_{\infty,1}}\detau \right)\left(1+\int_0^t\|\nabla \ue\|_{L^\infty} \detau\right)\, .
\end{equation}
We start by bounding the $B^0_{\infty,1}$ norm of $\nabla a_\veps\, \wedge\, \nabla \Pi_\veps$. We observe that 
\begin{equation}\label{l-p_dec_wedge}
\begin{split}
\d_1a_\veps\, \d_2 \Pi_\veps-\d_2a_\veps\, \d_1\Pi_\veps&=\mc T_{\d_1a_\veps}\d_2 \Pi_\veps-\mc T_{\d_2a_\veps}\d_1 \Pi_\veps+\mc T_{\d_2\Pi_\veps}\d_1 a_\veps-\mc T_{\d_1\Pi_\veps}\d_2 a_\veps\\
&+\d_1\mc R(a_\veps -\Delta_{-1}a_\veps,\, \d_2 \Pi_\veps)-\d_2\mc R(a_\veps -\Delta_{-1}a_\veps,\, \d_1 \Pi_\veps)\\
&+\mc R(\d_1\Delta_{-1}a_\veps,\, \d_2 \Pi_\veps)+\mc R(\d_2\Delta_{-1}a_\veps,\, \d_1 \Pi_\veps)\, .
\end{split}
\end{equation}


Applying Proposition \ref{T-R} directly to the terms involving the paraproduct $\mc T$, we have 
\begin{equation*}
\|\mc T_{\nabla a_\veps}\nabla \Pi_\veps\|_{B^0_{\infty,1}}+\|\mc T_{\nabla \Pi_\veps}\nabla  a_\veps\|_{B^0_{\infty,1}}\leq C \left(\|\nabla a_\veps\|_{L^\infty}\|\nabla \Pi_\veps\|_{B^0_{\infty,1}}+\|\nabla a_\veps\|_{B^0_{\infty,1}}\|\nabla \Pi_\veps\|_{L^\infty}\right)\, .
\end{equation*}
Next, we have to deal with the remainders $\mc R$. We start by bounding the $B^0_{\infty,1}$ norm of $\d_1\mc R(a_\veps -\Delta_{-1}a_\veps,\, \d_2 \Pi_\veps)$. 
One has:
\begin{equation*}
\begin{split}
\|\d_1\mc R(a_\veps -\Delta_{-1}a_\veps,\, \d_2 \Pi_\veps)\|_{B^0_{\infty,1}}&\leq C\|\mc R(a_\veps -\Delta_{-1}a_\veps,\, \d_2 \Pi_\veps)\|_{B^1_{\infty,1}}\\
&\leq C\left(\|\nabla \Pi_\veps\|_{B^0_{\infty,\infty}}\|(\text{Id}-\Delta_{-1})\, a_\veps\|_{B^1_{\infty,1}}\right)\\
&\leq C \left(\|\nabla \Pi_\veps\|_{L^\infty}\|\nabla a_\veps\|_{B^0_{\infty,1}}\right)\, ,
\end{split}
\end{equation*}
where we have employed the localization properties of the Littlewood-Paley decomposition. In a similar way, one can argue for 
$\d_2\mc R(a_\veps -\Delta_{-1}a_\veps,\, \d_1 \Pi_\veps)$.

It remains to bound $\mc R(\d_1\Delta_{-1}a_\veps,\, \d_2 \Pi_\veps)$, and analogously one can treat the term $\mc R(\d_2\Delta_{-1}a_\veps,\, \d_1 \Pi_\veps)$ in \eqref{l-p_dec_wedge}. 
We obtain that 
\begin{equation*}
\|\mc R(\d_1\Delta_{-1}a_\veps,\, \d_2 \Pi_\veps)\|_{B^0_{\infty,1}}\leq C \|\mc R(\d_1\Delta_{-1}a_\veps,\, \d_2 \Pi_\veps)\|_{B^1_{\infty,1}}\leq C\left(\|\nabla \Pi_\veps\|_{L^\infty}\|\d_1\Delta_{-1}a_\veps\|_{B^1_{\infty,1}}\right)\, .
\end{equation*}
Employing the spectral properties of operator $\Delta_{-1}$, one has that 
\begin{equation*}
\|\d_1\Delta_{-1}a_\veps\|_{B^1_{\infty,1}}\leq C\|\Delta_{-1}\nabla a_\veps\|_{L^\infty}\, .
\end{equation*}
Then, 
$$ \|\mc R(\d_1\Delta_{-1}a_\veps,\, \d_2 \Pi_\veps)\|_{B^0_{\infty,1}}\leq C \left(\|\nabla \Pi_\veps\|_{L^\infty}\|\nabla a_\veps\|_{B^0_{\infty,1}}\right)\, .$$
Finally, we get
\begin{equation*}
\|\nabla a_\veps\, \wedge\, \nabla \Pi_\veps \|_{B^0_{\infty,1}}\leq C\left(\|\nabla a_\veps\|_{L^\infty}\|\nabla \Pi_\veps\|_{B^0_{\infty,1}}+\|\nabla a_\veps\|_{B^0_{\infty,1}}\|\nabla \Pi_\veps\|_{L^\infty}\right)\, .
\end{equation*}
So plugging the previous estimate in \eqref{transp_curl}, one gets
\begin{equation*}
\| \omega_\veps \|_{B^0_{\infty,1}}\leq C \left(\| \omega_{0,\veps}\|_{B^0_{\infty,1}}+\int_0^t \|\nabla a_\veps\|_{B^0_{\infty,1}} \| \nabla \Pi_\veps \|_{B^0_{\infty,1}}\detau \right)\left(1+\int_0^t\|\nabla \ue\|_{L^\infty} \detau\right)\, .
\end{equation*}
At this point, we define
\begin{equation}\label{def_energy_and_A}
E_\veps(t):=\|\ue (t)\|_{L^2\cap B^1_{\infty,1}}\qquad \text{and}\qquad \mc A_\veps(t):=\|\nabla a_\veps (t)\|_{B^0_{\infty,1}}\, .
\end{equation}
In this way, we have 
\begin{equation}\label{energy}
E_\veps(t)\leq C \left(E_\veps (0)+\int_0^t \mc A_\veps(\tau) \|\nabla \Pi_\veps(\tau)\|_{B^0_{\infty,1}}\detau \right)\left(1+\int_0^t E_\veps (\tau )\detau \right)\, .
\end{equation}
Next, we recall that, for $i=1,2$:
\begin{equation*}
\d_t\, \d_i a_\veps+\ue \cdot \nabla \, \d_i a_\veps=-\d_i \ue \cdot \nabla a_\veps
\end{equation*}
and, due to the divergence-free condition on $\vec u_\veps$, we can write
\begin{equation*}
\d_i \ue \cdot \nabla a_\veps=\sum_j\d_i \ue^j \, \d_ja_\veps=\sum_j\Big(\d_i(\ue^j\, \d_j a_\veps)-\d_j(\ue^j\, \d_ia_\veps)\Big)\, .
\end{equation*}
So, using Proposition \ref{T-R} and the fact that 
\begin{equation*}
\|\mc \d_i \mc R(\ue^j,\, \d_j a_\veps)\|_{B^0_{\infty,1}}\leq C\, \|\mc R(\ue^j,\, \d_j a_\veps)\|_{B^1_{\infty,1}}\leq C\, \|\nabla a_\veps\|_{B^0_{\infty,1}}\|\ue\|_{B^1_{\infty,1}}\, ,
\end{equation*}
we may finally get
\begin{equation*}
\|\d_i \ue \cdot \nabla a_\veps\|_{B^0_{\infty,1}}\leq C \|\nabla a_\veps\|_{B^0_{\infty,1}}\|\ue \|_{B^1_{\infty,1}}\, .
\end{equation*}
Thus, 
\begin{equation*}
\|\nabla a_\veps (t)\|_{B^0_{\infty,1}}\leq \|\nabla a_{0, \veps}\|_{B^0_{\infty,1}}\exp \left(C\int_0^t \| \ue\|_{B^1_{\infty,1}}\detau \right)\, .
\end{equation*}
Therefore, recalling \eqref{def_energy_and_A}, one has
\begin{equation}\label{est_A}
\mc A_\veps(t)\leq \mc A_\veps (0) \exp \left( C\int_0^t E(\tau) \detau\right)\, .
\end{equation}
The next goal is to bound the pressure term in $B^0_{\infty,1}$. Actually, we shall bound its $B^1_{\infty,1}$ norm. Similarly to the analysis performed in Subsection \ref{ss:unif_est} for the $H^s$ norm (see e.g. inequality \eqref{est_Pi_H^s_1} in this respect), there exists some exponent $\lambda \geq 1$ such that
\begin{equation*}\label{est_pres}
\|\nabla \Pi_\veps\|_{B^1_{\infty,1}}\leq C\left(\left(1+\veps \|\nabla a_\veps \|_{B^0_{\infty,1}}^\lambda \right)\|\nabla \Pi_\veps \|_{L^2}+\veps\, \|\vrho_\veps\, \div (\ue \cdot \nabla \ue)\|_{B^0_{\infty,1}}+\|\vrho_\veps \, \div \ue^\perp\|_{B^0_{\infty,1}}\right)\, .
\end{equation*}
The $L^2$ estimate for the pressure term follows in a similar way to one performed in \eqref{est:Pi_L^2}, i.e. 
\begin{equation}\label{L^2-est-pressure}
\|\nabla \Pi_\veps\|_{L^2}\leq C\, \veps \, \|\ue \|_{L^2}\|\nabla \ue \|_{L^\infty}+\|\ue \|_{L^2}\, .
\end{equation}

Next, as showed above in the bound for $\|\d_i \ue \cdot \nabla a_\veps\|_{B^0_{\infty,1}}$, combining Bony's decomposition with the fact that $\div (\ue \cdot \nabla \ue)=\nabla \ue :\nabla \ue$, we may infer:
\begin{equation*}
\|\div (\ue \cdot \nabla \ue)\|_{B^0_{\infty,1}}\leq C\, \|\ue \|^2_{B^1_{\infty,1}}\, .
\end{equation*} 
Now, our scope is to estimate the $B^1_{\infty,1}$ norm of the density. To do so, we make use of the following proposition, whose proof can be found in \cite{D_JDE}. 
\begin{proposition}
Let $I$ be an open interval of $\R$ and $\oline F:I\rightarrow \R$ a smooth function. Then, for any compact subset $J\subset I$, $s>0$ and $(p,r)\in [1,+\infty]^2$, there exists a constant $C$ such that for any function $g$ valued in $J$ and with gradient in $B^{s-1}_{p,r}$, we have $\nabla \big(\oline F(g)\big)\in B^{s-1}_{p,r}$ and 
$$\left\|\nabla \big(\oline F(g)\big)\right\|_{B^{s-1}_{p,r}}\leq C\left\|\nabla g\right\|_{B^{s-1}_{p,r}}\, .$$
\end{proposition}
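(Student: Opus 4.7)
The plan is to reduce the composition estimate to a paraproduct analysis, using the chain rule, together with the boundedness of $\oline F$ and all its derivatives on the compact set $J$ (a consequence of the smoothness of $\oline F$ and of $J \Subset I$). First, by a standard regularization-then-pass-to-the-limit argument, the chain rule yields $\nabla\bigl(\oline F(g)\bigr)\,=\,\oline F'(g)\,\nabla g$ in the sense of distributions. Thus the proof reduces to establishing the bound
\[
\left\|\oline F'(g)\,\nabla g\right\|_{B^{s-1}_{p,r}}\,\leq\,C\,\left\|\nabla g\right\|_{B^{s-1}_{p,r}}\,,
\]
for a constant $C$ depending only on $J$, on $\|\oline F^{(k)}\|_{L^\infty(J)}$ for finitely many $k$, and on $(s,p,r)$.

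Next, I would apply Bony's decomposition (Section \ref{app_paradiff}) to the product $\oline F'(g)\,\nabla g$, writing
\[
\oline F'(g)\,\nabla g\,=\,\mc T_{\oline F'(g)}\nabla g\,+\,\mc T_{\nabla g}\,\oline F'(g)\,+\,\mc R\bigl(\oline F'(g),\nabla g\bigr)\,.
\]
The first paraproduct is the easiest: by Proposition \ref{T-R}, one has
\[
\left\|\mc T_{\oline F'(g)}\,\nabla g\right\|_{B^{s-1}_{p,r}}\,\leq\,C\,\left\|\oline F'(g)\right\|_{L^\infty}\,\left\|\nabla g\right\|_{B^{s-1}_{p,r}}\,,
\]
and $\|\oline F'(g)\|_{L^\infty}\leq \|\oline F'\|_{L^\infty(J)}$ since $g$ takes values in $J$. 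The key obstacle is to estimate the other two terms, since the paraproduct $\mc T_{\nabla g}$ and the remainder $\mc R$ require some positive regularity on the low-frequency factor $\oline F'(g)$, and that factor has \emph{no} a priori smoothness beyond mere boundedness.

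To overcome this, I would establish that $\oline F'(g)\in L^\infty$ has gradient in $B^{s-1}_{p,r}$, with a quantitative bound
\[
\left\|\nabla\bigl(\oline F'(g)\bigr)\right\|_{B^{s-1}_{p,r}}\,=\,\left\|\oline F''(g)\,\nabla g\right\|_{B^{s-1}_{p,r}}\,\leq\,C\,\left\|\nabla g\right\|_{B^{s-1}_{p,r}}\,,
\]
which follows from the exact same argument applied to $\oline F''$ in place of $\oline F'$ (a fixed-point-style bootstrap). Equipped with this, the second paraproduct and the remainder can be controlled by standard continuity results (again Proposition \ref{T-R}, in its version using one derivative of the low-frequency factor): one writes, schematically,
\[
\left\|\mc T_{\nabla g}\oline F'(g)\right\|_{B^{s-1}_{p,r}}\,+\,\left\|\mc R\bigl(\oline F'(g),\nabla g\bigr)\right\|_{B^{s-1}_{p,r}}\,\leq\,C\,\left\|\nabla\bigl(\oline F'(g)\bigr)\right\|_{B^{s-1}_{p,r}}^{\theta}\,\left\|\oline F'(g)\right\|_{L^\infty}^{1-\theta}\,\left\|\nabla g\right\|_{B^{s-1}_{p,r}}\,,
\]
the precise form depending on whether $s>1$, $s=1$, or $0<s<1$. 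In the last range, where $s-1<0$, the remainder is the most delicate term; one handles it by exploiting its symmetry in the two arguments and the extra derivative gained via integration by parts of $\mc R$ against dyadic blocks, in the spirit of the argument in Lemma \ref{lem:L}. Combining all these pieces yields the claimed estimate.

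In summary, the core steps are: (i) the chain rule to reduce to a bilinear estimate; (ii) Bony's decomposition; (iii) trivial control of the first paraproduct by the $L^\infty$ bound on $\oline F'(g)$ coming from the compactness of $J$; (iv) a bootstrap using $\oline F''$ to obtain the needed regularity on $\oline F'(g)$; (v) careful paraproduct/remainder estimates, with the case $0<s<1$ being the main technical obstacle.
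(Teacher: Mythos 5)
Your steps (i)–(iii) are fine, but step (iv), the ``bootstrap'', is circular and the argument does not close. To control $\mc T_{\nabla g}\oline F'(g)$ and $\mc R\bigl(\oline F'(g),\nabla g\bigr)$ you need $\nabla\bigl(\oline F'(g)\bigr)=\oline F''(g)\nabla g\in B^{s-1}_{p,r}$ with the bound $\|\oline F''(g)\nabla g\|_{B^{s-1}_{p,r}}\lesssim\|\nabla g\|_{B^{s-1}_{p,r}}$, and you assert this ``follows from the exact same argument applied to $\oline F''$ in place of $\oline F'$''. But that is verbatim the statement you are trying to prove, with $\oline F$ replaced by $\oline F'$. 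Iterating it produces an infinite regress to $\oline F'''$, $\oline F^{(4)}$, \dots, with no base case and no smallness mechanism to make the descent converge — it is not a fixed-point argument, because nothing shrinks. Moreover, even granting $\nabla\bigl(\oline F'(g)\bigr)\in B^{s-1}_{p,r}$, that does not by itself put $\oline F'(g)$ in a H\"older space $B^\sigma_{\infty,\infty}$ with $\sigma>1-s$, which is what the paraproduct/remainder estimates of Proposition \ref{T-R} actually require when $s-1<0$ (the embedding $B^{s-1}_{p,r}\hookrightarrow B^{s-1-d/p}_{\infty,\infty}$ may land at negative regularity). So the Bony route, as set up, does not go through.

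For the record, the paper does not prove this statement: it cites Danchin, \cite{D_JDE} (Proposition 5 there). The proof there avoids Bony's decomposition entirely and uses the paralinearization (``Meyer'') trick: one telescopes
\begin{equation*}
\oline F(g)\,=\,\oline F(S_0 g)\,+\,\sum_{j\geq 0}\bigl[\oline F(S_{j+1}g)-\oline F(S_jg)\bigr]
\,=\,\oline F(S_0g)\,+\,\sum_{j\geq 0} m_j\,\Delta_j g\,,
\qquad m_j\,:=\,\int_0^1 \oline F'\bigl(S_jg + \tau\Delta_j g\bigr)\,d\tau\,,
\end{equation*}
and then differentiates, writing $\nabla(m_j\Delta_jg)=m_j\,\Delta_j\nabla g+(\nabla m_j)\,\Delta_j g$. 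Since $g$, and hence $S_jg+\tau\Delta_jg$, takes values near the compact $J$ on which $\oline F$ and all its derivatives are bounded, Fa\`a di Bruno together with Bernstein's inequality give the uniform \emph{symbol-type} bounds $\|m_j\|_{L^\infty}\leq C$ and $\|\nabla^k m_j\|_{L^\infty}\leq C_k\,2^{jk}$. These, together with $\|\Delta_j g\|_{L^p}\lesssim 2^{-j}\|\Delta_j\nabla g\|_{L^p}$, are exactly what Meyer's multiplier lemma needs to sum the series in $B^{s-1}_{p,r}$ for every $s>0$. The key structural point — absent from your attempt — is that the multipliers $m_j$ require no Besov regularity whatsoever, only $L^\infty$ bounds on their derivatives at scale $2^j$, and those come for free from the compactness of $J$; no regularity of $\oline F'(g)$ needs to be bootstrapped.
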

Then, from the definition of $B^1_{\infty,1}$ and the previous proposition, the $B^1_{\infty,1}$ estimate for $\vrho_\veps$ reads:
\begin{equation*}
\|\vrho_\veps\|_{B^1_{\infty,1}}\leq C\, \left( \oline\vrho+\veps\, \|\nabla a_\veps\|_{B^0_{\infty,1}}\right)\, .
\end{equation*}
Finally, plugging the $L^2$ estimate \eqref{L^2-est-pressure} and all the above inequalities in \eqref{est_pres}, one may conclude that
\begin{equation}\label{est_pres_1}
\begin{split}
\|\nabla \Pi_\veps\|_{B^1_{\infty,1}}&\leq C \left(1+\veps\, \|\nabla a_\veps \|_{B^0_{\infty,1}}^\lambda \right)\left(\veps\, \|\ue\|_{L^2}\|\nabla \ue\|_{B^0_{\infty,1}}+\|\ue \|_{L^2}\right)\\
&+C \left(1+\veps\, \|\nabla a_\veps \|_{B^0_{\infty,1}} \right)\left(\veps\, \| \ue\|_{B^1_{\infty,1}}^2+\|\ue \|_{B^1_{\infty,1}}\right)\\
&\leq C (1+\veps\, \mc A_\veps^\lambda)(\veps\, E_\veps^2+E_\veps)+C(1+\veps \mc A_\veps)(\veps\, E_\veps^2+E_\veps)\\
&\leq C (\veps \, E_\veps^2+E_\veps)(1+\veps\,\mc A_\veps +\veps\, \mc A_\veps^\lambda)\\
&\leq C (\veps \, E_\veps^2+E_\veps)(1 +\veps\, \mc A_\veps^\lambda)\, .
\end{split}
\end{equation}
We insert now in \eqref{energy} the estimates found in \eqref{est_pres_1} and in \eqref{est_A}, deducing that 
\begin{equation}\label{est_E}
\begin{split}
&E_\veps(t)\leq C\left(E_\veps(0)+\mc B_\veps(0)\int_0^t \exp \left(C \int_0^\tau E_\veps (s)\, \ds\right) \left(\veps\, E_\veps^2(\tau)+ E_\veps(\tau)\right)\, \detau \right)\\
&\qquad \qquad \qquad \qquad \qquad \qquad \qquad \qquad \qquad \qquad \qquad \qquad \qquad \times \left(1+\int_0^t E_\veps (\tau) \detau \right) ,
\end{split}
\end{equation}
where we have set $\mc B_\veps (0):=\mc A_\veps(0)+\veps\, \mc A_\veps(0)^{\lambda +1} $.

At this point, we define $T_\veps^\ast>0$ such that
\begin{equation}\label{def_T^ast_veps}
T_\veps^\ast:= \sup \left\{t>0 : \, \mc B_\veps(0)\int_0^t \exp \left(C \int_0^\tau E_\veps (s)\, \ds\right) \left(\veps\, E_\veps^2(\tau)+ E_\veps(\tau)\right)\, \detau \leq E_\veps(0) \right\}\, .
\end{equation}
So, from \eqref{est_E} and using Gr\"onwall's inequality, we obtain that
\begin{equation*}
E_\veps(t)\leq C\,  E_\veps(0)e^{CtE_\veps(0)}\, ,
\end{equation*}
for all $t\in [0,T_\veps^\ast]$.

The previous estimate implies that, for all $t\in [0,T_\veps^\ast]$, one has 
\begin{equation*}
\int_0^t E_\veps(\tau) \detau \leq  e^{CtE_\veps(0)}-1\, .
\end{equation*}
Analogously to inequality \eqref{estimate_integral_energy} in Subsection \ref{ss:improved_lifespan}, we can argue that  
\begin{equation*}
\mc B_\veps(0)\int_0^t \exp \left(C \int_0^\tau E_\veps (s)\, \ds\right)  E_\veps(\tau)\, \detau\leq C \mc B_\veps(0) \left(\exp \left(e^{CtE_\veps(0)}-1\right)-1\right)\, .
\end{equation*}
Then, it remains to control 
$$ \veps \mc B_\veps(0) \int_0^t \exp \left(C \int_0^\tau E_\veps (s)\, \ds\right)  E_\veps^2(\tau)\, \detau\, . $$ 
For this term, we may infer that 
\begin{equation*}
\begin{split}
\veps \mc B_\veps(0)\int_0^t \exp \left(C \int_0^\tau E_\veps (s)\, \ds\right)  E_\veps^2(\tau)\, \detau &\leq C\, \veps \,\mc B_\veps(0) \int_0^t E_\veps^2(0)e^{C\tau E_\veps(0)}\, \exp\left(e^{C \tau E_\veps(0)}-1\right)\, \detau\\
&\leq C\, \veps\, \mc B_\veps(0) E_\veps (0) \left(\exp \left(e^{CtE_\veps(0)}-1\right)-1\right)\, .
\end{split}
\end{equation*}
In the end, by definition \eqref{def_T^ast_veps} of $T^\ast_\veps$, we deduce
\begin{equation}\label{asymptotic_time}
T^\ast_\veps\geq \frac{C}{E_\veps(0)}\log\left(\log\left(\frac{CE_\veps(0)}{\max \{\mc B_\veps(0),\, \veps \, \mc B_\veps(0)E_\veps(0)\}}+1\right)+1\right),
\end{equation} 
for a suitable constant $C>0$. This concludes the proof of Theorem \ref{W-P_fullE}.

\chapter*{Future perspectives}\addcontentsline{toc}{chapter}{Future perspectives}
We conclude this manuscript pointing out some possible upcoming goals. 

First of all, we would like to continue the studies started in Chapter \ref{chap:multi-scale_NSF} and Chapter \ref{chap:BNS_gravity} in two different directions:
\begin{itemize}
\item on the one hand, we will investigate the regimes not covered yet, i.e. either $m=1$ with the centrifugal effects or $(m+1)/2<n<m$;
\item on the other hand, we would focus on the well-posedness analysis for the Oberbeck-Boussinesq limiting system.
\end{itemize}
In a second instance, we would like to dedicate ourselves to the analysis of a different system that describes the evolution of temperature on the ocean surface: the so-called \emph{surface quasi-geostrophic system}. Specifically, we are interested in the asymptotic analysis in regimes of fast rotational effects and we would like to inspect the well-posedness of such system on manifolds, like the sphere. 

To conclude, we highlight that we would like to spend more time regarding the lifespan of solutions to Euler equations in order to study  more deeply the stabilization effects due to the rotation.

\chapter*{Perspectives d'avenir}\addcontentsline{toc}{chapter}{Perspectives d'avenir}
Nous concluons ce manuscrit en soulignant quelque but possible à venir.

Tout d'abord, nous continuerons les études commencées dans le chapitre \ref{chap:multi-scale_NSF} et dans le chapitre \ref{chap:BNS_gravity} vers deux directions différentes :
\begin{itemize}
\item d'une part, nous étudierons les régimes pas encore couverts dans nos études précédentes, soit $m=1$ avec les effets centrifuges soit $(m+1)/2<n<m$;
\item d'autre part, nous nous dédierons à l'analyse du caractère bien posé pour le système limite du type Oberbeck-Boussinesq.
\end{itemize}
Dans un second temps, nous nous consacrerons à l'analyse d'un autre système décrivant l'évolution de la température à la surface de l'océan, qu'on appelle le \emph{système quasi-géostrophique à la surface}. 
Plus précisément, nous nous intéressons à l'analyse asymptotique des régimes en rotation rapide et nous aimerions examiner le caractère bien posé d'un tel système sur des variétés, comme la sphère.

En conclusion, nous soulignons qu'on aimerait consacrer plus de temps à l’étude de la durée de vie des solutions des équations d'Euler afin de comprendre plus en profondeur les effets de stabilisation dûs à la rotation.


\appendix 
\counterwithout{definition}{section}
\counterwithin{definition}{chapter}

\chapter{Tools}\label{app:Tools}
The goal of this appendix is to present the tools, which have been useful in our analysis. Unless otherwise specified, we refer to Chapter 2 of \cite{B-C-D} for details.

\section{Littlewood-Paley theory: introduction} \label{app:LP}

We start by exhibiting some tools from Littlewood-Paley theory.

For simplicity of exposition, we deal with the $\R^d$ case, with $d\geq1$; however, the whole construction can be adapted also to the $d$-dimensional torus $\TT^d$, and to the ``hybrid'' case
$\R^{d_1}\times\TT^{d_2}$.

First of all, we introduce the \emph{Littlewood-Paley decomposition}. For this
we fix a smooth radial function $\chi$ such that it satisfies the following properties: 
\begin{itemize}
\item[(i)] $\Supp\chi\subset B(0,2)$;
\item[(ii)] $\chi\equiv 1$ in a neighborhood of the ball $B(0,1)$;
\item[(iii)] the map $r\mapsto\chi(r\,e)$ is non-increasing over $\R_+$ for all unitary vectors $e\in\R^d$.
\end{itemize}
Set now 
$$\varphi\left(\xi\right):=\chi\left(\xi\right)-\chi\left(2\xi\right)\quad  \text{and} \quad\vphi_j(\xi):=\vphi(2^{-j}\xi) \quad \text{for all}\quad j\geq0.$$
The dyadic blocks $(\Delta_j)_{j\in\Z}$ are defined by\footnote{We agree  that  $f(D)$ stands for 
the pseudo-differential operator $u\mapsto\mc{F}^{-1}[f(\xi)\,\what u(\xi)]$.} 
$$
\Delta_j\,:=\,0\quad\mbox{ if }\; j\leq-2,\qquad\Delta_{-1}\,:=\,\chi(D)\qquad\mbox{ and }\qquad
\Delta_j\,:=\,\varphi(2^{-j}D)\quad \mbox{ if }\;  j\geq0\,.
$$
For any $j\geq0$ fixed, we  also introduce the \emph{low frequency cut-off operator}
\begin{equation} \label{eq:S_j}
S_j\,:=\,\chi(2^{-j}D)\,=\,\sum_{k\leq j-1}\Delta_{k}\,.
\end{equation}
Note that $S_j$ is a convolution operator. More precisely, after defining
$$
K_0\,:=\,\mc F^{-1}\chi\qquad\qquad\mbox{ and }\qquad\qquad K_j(x)\,:=\,\mathcal{F}^{-1}[\chi (2^{-j}\cdot)] (x) = 2^{jd}K_0(2^j x)\,,
$$
for all $j\in\N$ and all tempered distributions $u\in\mc S'$ we have that $S_ju\,=\,K_j\,*\,u$.
Thus the $L^1$ norm of $K_j$ is independent of $j\geq0$, hence $S_j$ maps continuously $L^p$ into itself, for any $1 \leq p \leq +\infty$.

Moreover, the following property states the usefulness of such a decomposition.
\begin{lemma}
For any $u\in\mc{S}'$, then one has the equality $u=\sum_{j}\Delta_ju$ in the sense of $\mc{S}'$.
\end{lemma}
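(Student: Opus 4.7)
The plan is to unfold the definitions and reduce the claim to a convergence property of Schwartz functions under low-frequency truncation. First I observe that, since $\Delta_j=0$ for $j\leq -2$ and since $\varphi(\xi)=\chi(\xi)-\chi(2^{-1}\cdot 2\xi)$ telescopes, for every integer $N\geq 0$ the partial sum satisfies
\[
\sum_{j=-1}^{N}\Delta_ju\;=\;S_{N+1}u\;=\;\chi(2^{-(N+1)}D)u\qquad\text{in }\mc S'.
\]
Consequently, proving the lemma reduces to showing that $S_Nu\to u$ in $\mc S'(\R^d)$ as $N\to+\infty$. This is the whole content of the statement.

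To establish the latter, I will dualise. Since $\chi$ is real-valued and radial, the Fourier multiplier $S_N=\chi(2^{-N}D)$ is formally self-adjoint on $\mc S$, in the sense that $\langle S_Nu,\phi\rangle=\langle u,S_N\phi\rangle$ for every $u\in\mc S'$ and $\phi\in\mc S$. Hence it suffices to prove that, for every fixed $\phi\in\mc S(\R^d)$, one has $S_N\phi\to\phi$ in the topology of $\mc S(\R^d)$ as $N\to+\infty$. On the Fourier side this amounts to showing
\[
\bigl(\chi(2^{-N}\cdot)-1\bigr)\,\widehat\phi\;\longrightarrow\;0\qquad\text{in }\mc S(\R^d),
\]
since the Fourier transform is a topological automorphism of $\mc S$. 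Thanks to property (ii) of $\chi$, there exists $\rho>0$ such that $\chi\equiv 1$ on $B(0,1+\rho)$; therefore the function $g_N(\xi):=\chi(2^{-N}\xi)-1$ vanishes on $B(0,(1+\rho)\,2^N)$ and is uniformly bounded in $N$, together with all its derivatives (with each derivative gaining an extra $2^{-N}$ factor). Combining this support information with the rapid decay of $\widehat\phi$, I will bound the Schwartz seminorms of $g_N\widehat\phi$ by using Leibniz's rule and the elementary estimate
\[
\sup_{|\xi|\geq (1+\rho)2^N}|\xi|^M\,\bigl|\partial^\beta\widehat\phi(\xi)\bigr|\;\leq\;2^{-N}\,\sup_{\xi\in\R^d}|\xi|^{M+1}\bigl|\partial^\beta\widehat\phi(\xi)\bigr|,
\]
valid for any multi-index $\beta$ and any $M\in\N$. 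This makes every Schwartz seminorm of $g_N\widehat\phi$ decay at least like $2^{-N}$, which yields the required convergence in $\mc S$.

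Finally, transporting this back through the Fourier transform, we obtain $S_N\phi\to\phi$ in $\mc S$, hence $\langle u,S_N\phi\rangle\to\langle u,\phi\rangle$ for every $u\in\mc S'$, which by the self-adjointness relation above is exactly $\langle S_Nu,\phi\rangle\to\langle u,\phi\rangle$. This concludes the proof of the identity $u=\sum_j\Delta_ju$ in $\mc S'$. The only genuinely non-automatic step is the verification that $S_N\phi\to\phi$ in the (quite strong) Schwartz topology; once that is secured, the remainder of the argument is a purely formal rearrangement of the telescopic sum and a duality pairing.
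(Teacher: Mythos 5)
Your proof is correct and is exactly the standard argument given in Chapter~2 of \cite{B-C-D}, to which the paper itself defers (the lemma is stated without proof in the appendix). The reduction to $S_N\phi\to\phi$ in $\mathcal{S}$ via the transposition identity $\langle S_Nu,\phi\rangle=\langle u,S_N\phi\rangle$ (which uses only that $\chi$ is even), and then the support-plus-rapid-decay estimate on $(\chi(2^{-N}\cdot)-1)\,\widehat\phi$, is precisely how the claim is established. One small slip: the displayed expression $\varphi(\xi)=\chi(\xi)-\chi(2^{-1}\cdot 2\xi)$ is garbled (it evaluates to $0$); the telescoping identity you actually need and use is simply the paper's own $S_{N+1}=\sum_{k\leq N}\Delta_k$ from \eqref{eq:S_j}, so this does not affect the substance of the argument.
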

Let us also recall the so-called \emph{Bernstein inequalities}, which describe the way derivatives take effect on the spectrally localized functions.
  \begin{lemma} \label{l:bern}
Let  $0<r<R$.   A constant $C$ exists so that, for any non-negative integer $k$, any couple $(p,q)$ 
in $[1,+\infty]^2$, with  $p\leq q$,  and any function $u\in L^p$,  we  have, for all $\lambda>0$,
\begin{equation*}
\begin{split}
&(i)\quad \text{if}\quad \Supp\, \widehat u \subset   B(0,\lambda R),\quad
\text{then}\quad
\|\nabla^k u\|_{L^q}\, \leq\,
 C^{k+1}\,\lambda^{k+d\left(\frac{1}{p}-\frac{1}{q}\right)}\,\|u\|_{L^p}; \\
 &(ii)\quad \text{if}\quad{\Supp}\, \widehat u \subset \{\xi\in\R^d\,:\, \lambda r\leq|\xi|\leq \lambda R\},
\quad \text{then}\quad C^{-k-1}\,\lambda^k\|u\|_{L^p}\,
\leq\,
\|\nabla^k u\|_{L^p}\,
\leq\,
C^{k+1} \, \lambda^k\|u\|_{L^p}\,.
\end{split}
\end{equation*}
\end{lemma}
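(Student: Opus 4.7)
The plan is to prove both inequalities by realising spectrally localised functions as convolutions against suitable rescaled Schwartz kernels, and then appealing to Young's convolution inequality; the scaling factors $\lambda^{\,\cdot}$ appearing in the statement emerge naturally from changes of variables in those kernels.

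For part \emph{(i)}, I would first fix a smooth radial cut-off $\theta\in C^\infty_c(\R^d)$ such that $\theta\equiv 1$ on $B(0,R)$. Since $\Supp \widehat u\subset B(0,\lambda R)$, the identity $\widehat u(\xi)=\theta(\xi/\lambda)\,\widehat u(\xi)$ holds and, by inverse Fourier transform, $u=\lambda^{d}\,\check\theta(\lambda\,\cdot\,)\ast u$. Differentiating $k$ times gives
\[
\nabla^k u(x)\,=\,\lambda^{\,d+k}\,(\nabla^k\check\theta)(\lambda\,\cdot\,)\ast u(x).
\]
I would then apply Young's inequality with $1+1/q=1/p+1/r$ (which forces $r\geq 1$ since $p\leq q$), together with the rescaling identity $\|(\nabla^k\check\theta)(\lambda\,\cdot\,)\|_{L^r}=\lambda^{-d/r}\|\nabla^k\check\theta\|_{L^r}$. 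Using $d-d/r=d(1/p-1/q)$, the bound follows. The constant of the form $C^{k+1}$ is obtained by a careful (but standard) estimate of $\|\nabla^k\check\theta\|_{L^r}$ in terms of $k$-independent quantities, which relies on the fact that $\check\theta$ is Schwartz.

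For part \emph{(ii)}, the upper bound is an immediate consequence of \emph{(i)} applied with $p=q$ to each component of $\nabla^k u$ (which is also spectrally supported in the annulus of radii $\lambda r$ and $\lambda R$). To prove the lower bound, the key idea is to invert the Fourier multiplier $|\xi|^k$ on the annulus. I would fix a smooth cut-off $\widetilde\varphi\in C^\infty_c(\R^d\setminus\{0\})$ equal to $1$ on $\{r\leq|\xi|\leq R\}$ and supported in a slightly larger annulus. Using the multinomial identity $|\xi|^{2k}=\sum_{|\alpha|=k}\binom{k}{\alpha}\xi^{2\alpha}$, I can write
\[
\widehat u(\xi)\,=\,\widetilde\varphi(\xi/\lambda)\,\widehat u(\xi)\,=\,\sum_{|\alpha|=k}c_\alpha\,\frac{\widetilde\varphi(\xi/\lambda)\,(i\xi)^\alpha}{|\xi|^{2k}}\,\widehat{\partial^\alpha u}(\xi),
\]
for suitable combinatorial constants $c_\alpha$. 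Setting $\widehat{K_\alpha}(\eta):=\widetilde\varphi(\eta)\,(i\eta)^\alpha/|\eta|^{2k}$, which is smooth and compactly supported (hence in the Schwartz class), the corresponding multiplier acts as convolution with $\lambda^{\,d-k}K_\alpha(\lambda\,\cdot\,)$, whose $L^1$ norm equals $\lambda^{-k}\|K_\alpha\|_{L^1}$. Young's inequality then yields
\[
\|u\|_{L^p}\,\leq\,C\,\lambda^{-k}\sum_{|\alpha|=k}\|\partial^\alpha u\|_{L^p}\,\leq\,C^{k+1}\,\lambda^{-k}\,\|\nabla^k u\|_{L^p},
\]
which is the desired lower bound.

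The main technical obstacle is the tracking of the constant in step~\emph{(ii)}: one must verify that $\|K_\alpha\|_{L^1}$ can be dominated by $C^{k+1}$ uniformly in the multi-index $\alpha$ with $|\alpha|=k$, so that the final constant indeed has the claimed geometric-type growth in $k$. This follows from classical Schwartz-seminorm estimates on $K_\alpha$, exploiting that $\widetilde\varphi$ is fixed once and for all and that its support stays away from the origin, so no singularity from the factor $|\xi|^{-2k}$ enters the analysis.
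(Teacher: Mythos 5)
Your proof is correct and follows the standard convolution-kernel argument; note that the paper itself states this lemma without proof, referring the reader to Chapter~2 of \cite{B-C-D}, and your argument is precisely the one given there (Lemma~2.1): a rescaled Schwartz kernel plus Young's inequality for part~(i), and the inversion of the multiplier $|\xi|^k$ on the annulus via the multinomial expansion of $|\xi|^{2k}$ for the lower bound in part~(ii). The only point you (rightly) flag as requiring care is the geometric growth $C^{k+1}$ of the constants, which indeed reduces to uniform Schwartz-seminorm bounds for the fixed cut-offs; this is standard and your sketch of it is adequate.
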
   

By use of Littlewood-Paley decomposition, we can define the class of Besov spaces.
\begin{definition} \label{d:B}
  Let $s\in\R$ and $1\leq p,r\leq+\infty$. The \emph{non-homogeneous Besov space}
$B^{s}_{p,r}$ is defined as the subset of tempered distributions $u$ for which
$$
\|u\|_{B^{s}_{p,r}}\,:=\,
\left\|\left(2^{js}\,\|\Delta_ju\|_{L^p}\right)_{j\geq -1}\right\|_{\ell^r}\,<\,+\infty\,.
$$
\end{definition}

The spaces just defined have the following important topological properties (which have been broadly employed in Chapter \ref{chap:Euler}).
\begin{proposition}\label{proposition_Fatou}
Let the triplet $(s,p,r)$ be in $\R\times [1,+\infty]^2$. The set $B^s_{p,r}$ is a Banach space and satisfies the Fatou property, namely, if $(f_n)_{n\in \N}$ is a bounded sequence of $B^s_{p,r}$, then an element $f$ of $B^{s}_{p,r}$ and a subsequence $f_{\psi(n)}$ exist such that 
$$ \lim_{n\rightarrow +\infty}f_{\psi(n)}=f \, \text{ in }\mc S^\prime \quad \quad \text{and}\quad \quad \|f\|_{B^s_{p,r}}\leq C\liminf_{n\rightarrow +\infty}\|f_{\psi(n)}\|_{B^s_{p,r}}\, .$$
\end{proposition}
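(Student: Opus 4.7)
The plan is to split the statement into two claims and handle them in sequence, both by reducing to properties of the dyadic blocks in the Lebesgue spaces $L^p$.

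First, I would address completeness. Given a Cauchy sequence $(f_n)_{n}$ in $B^{s}_{p,r}$, the definition of the norm implies that, for every fixed $j\geq-1$, the sequence $\bigl(\Delta_j f_n\bigr)_n$ is Cauchy in $L^p$ and hence converges to some $g_j\in L^p$. The spectral localisation of the $\Delta_j$ is preserved in the limit (since convergence in $L^p$ implies convergence in $\mc S'$, and the Fourier transform is continuous on $\mc S'$), so each $g_j$ has Fourier support in the corresponding dyadic annulus. One then sets $f:=\sum_{j\geq -1}g_j$, verifying that the series converges in $\mc S'$ thanks to the bound $\sup_n\|f_n\|_{B^s_{p,r}}<+\infty$ (which follows from the Cauchy property) and standard embeddings relating $B^s_{p,r}$ to tempered distributions. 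By construction $\Delta_j f=g_j$, and a direct estimate on $\bigl\|(2^{js}\|\Delta_j(f_n-f)\|_{L^p})_j\bigr\|_{\ell^r}$, using the Cauchy property and Fatou's lemma in $\ell^r$, gives the convergence $f_n\to f$ in $B^s_{p,r}$.

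Next, I would turn to the Fatou property. Let $M:=\sup_n\|f_n\|_{B^s_{p,r}}<+\infty$. For each $j\geq-1$, the family $\bigl(\Delta_j f_n\bigr)_n$ is bounded in $L^p$. When $1<p\leq+\infty$, Banach--Alaoglu gives weak (or weak-$*$, in the case $p=+\infty$) convergence up to extraction. A diagonal extraction then produces a single subsequence $\bigl(f_{\psi(n)}\bigr)_n$ along which $\Delta_j f_{\psi(n)}\rightharpoonup g_j$ in $L^p$ for every $j\geq-1$, with each $g_j$ still spectrally localised in the right annulus. Setting $f:=\sum_{j}g_j$ (the series converging in $\mc S'$ by the same argument as above), one has $\Delta_j f=g_j$. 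The lower semicontinuity of the $L^p$ norm under weak/weak-$*$ convergence yields
\[
\|\Delta_j f\|_{L^p}\,\leq\,\liminf_{n\to+\infty}\|\Delta_j f_{\psi(n)}\|_{L^p}\qquad\mbox{ for all }\;j\geq-1\,,
\]
and a second application of Fatou's lemma, now in $\ell^r(\N)$ with weights $2^{js}$, gives the desired norm bound with a universal constant $C$.

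The case $p=1$ deserves a separate comment: since $L^1$ is not the dual of a separable space, weak-$*$ compactness is unavailable. The standard remedy is to embed $\bigl(\Delta_j f_n\bigr)_n$ in the space of finite Radon measures (which is the dual of $C_0$) and extract weak-$*$ limits there; the spectral localisation forces each limit measure to be in fact absolutely continuous with density in $L^1$, and one concludes exactly as above. The main technical obstacle, in both steps, is to justify the convergence of the series $\sum_j g_j$ in $\mc S'$ and to identify the dyadic blocks of the candidate limit $f$ with the $g_j$; everything else is a routine interplay between the Littlewood--Paley calculus, lower semicontinuity, and a diagonal extraction argument.
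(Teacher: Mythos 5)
Your argument is correct. Note that the paper itself gives no proof here: the appendix opens with a blanket citation to Chapter~2 of \cite{B-C-D}, and this proposition is presented there as a known tool, so your write-up is essentially a faithful reconstruction of the standard argument from that reference (Cauchy-in-$L^p$ blocks plus Fatou's lemma in $\ell^r$ for completeness; Banach--Alaoglu on each block, a diagonal extraction, weak lower semicontinuity of the $L^p$ norm, and a second Fatou in $\ell^r$ for the Fatou property), including the right workaround for $p=1$ via weak-$*$ compactness in the space of finite Radon measures, with the spectral localisation forcing the limits back into $L^1$. The only place you move a bit fast is the identification $\Delta_j f=g_j$: since adjacent dyadic annuli overlap, this does not follow from the support of $g_j$ alone; one should rather observe that $\Delta_j f=\lim_n\Delta_j\bigl(\Delta_{j-1}+\Delta_j+\Delta_{j+1}\bigr)f_n=\lim_n\Delta_j f_n=g_j$, a routine but necessary verification, not a genuine gap.
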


In addition, Besov spaces are interpolation spaces between Sobolev spaces. In fact, for any $k\in\N$ and~$p\in[1,+\infty]$
we have the chain of continuous embeddings $$ B^k_{p,1}\hookrightarrow W^{k,p}\hookrightarrow B^k_{p,\infty}.$$
In the case when $1<p<+\infty$, the previous chain of embeddings can be refined to
$$B^k_{p, \min (p, 2)}\hookrightarrow W^{k,p}\hookrightarrow B^k_{p, \max(p, 2)}\, .$$
In particular, for all $s\in\R$ we deduce that $B^s_{2,2}\equiv H^s$, with equivalence of norms:
\begin{equation} \label{eq:LP-Sob}
\|f\|_{H^s}\,\sim\,\left(\sum_{j\geq-1}2^{2 j s}\,\|\Delta_jf\|^2_{L^2}\right)^{\!\!1/2}\,.
\end{equation}
Observe that, from that equivalence, we easily get the following property: 
\begin{lemma}\label{lemma_sobolev_H^s}
For any $f\in H^s$ and any $j\in \N$, one has
\begin{equation} \label{est:sobolev}
\left\|\big(\Id-S_j\big)f\right\|_{H^\s}\,\leq\,C\,\|\nabla f\|_{H^{s-1}}\,2^{-j(s-\s)}\quad \quad \text{for all}\quad \quad \,\s\leq s\,,
\end{equation}
where $C>0$ is a ``universal'' constant, independent of $f$, $j$, $s$ and $\s$. 
\end{lemma}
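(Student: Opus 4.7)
\medskip

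The plan is to prove the inequality by a direct dyadic computation, starting from the Littlewood-Paley characterisation \eqref{eq:LP-Sob} of the Sobolev norm and exploiting Bernstein's inequalities of Lemma \ref{l:bern} to ``convert'' one derivative loss into a factor $2^{-k}$ on each annular block.

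First, I would write $\Id-S_j\,=\,\sum_{k\geq j}\Delta_k$, which is just the telescoping identity based on the definition of $S_j$ in \eqref{eq:S_j} together with the Littlewood-Paley decomposition. Since $j\in\N$ (in particular $j\geq 0$), every block $\Delta_k$ appearing in this sum is spectrally supported in an annulus of size $2^k$, so part (ii) of Lemma \ref{l:bern} yields the two-sided estimate $\|\Delta_k f\|_{L^2}\,\sim\,2^{-k}\,\|\Delta_k\nabla f\|_{L^2}$ for each $k\geq 0$. In particular one gets
\[
\left\|\Delta_k\big((\Id-S_j)f\big)\right\|_{L^2}\,\leq\,C\,2^{-k}\,\|\Delta_k\nabla f\|_{L^2}\qquad\text{for every }k\geq j\,,
\]
while the low-frequency block $\Delta_{-1}$ plays no role since $(\Id-S_j)f$ has spectrum in $\{|\xi|\gtrsim 2^j\}$.

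Next, I would apply the characterisation \eqref{eq:LP-Sob} of $H^\s$ to $(\Id-S_j)f$: up to a universal constant,
\[
\left\|(\Id-S_j)f\right\|_{H^\s}^2\,\sim\,\sum_{k\geq j}2^{2k\s}\,\|\Delta_kf\|_{L^2}^2\,\leq\,C\sum_{k\geq j}2^{2k(\s-1)}\,\|\Delta_k\nabla f\|_{L^2}^2\,.
\]
At this point the key (and essentially only) trick is to use the hypothesis $\s\leq s$ together with the constraint $k\geq j$ to factor out $2^{-2j(s-\s)}$: indeed, writing $2\,k(\s-1)\,=\,-\,2(s-\s)\,k\,+\,2k(s-1)\,\leq\,-\,2(s-\s)\,j\,+\,2k(s-1)$, one obtains
\[
\left\|(\Id-S_j)f\right\|_{H^\s}^2\,\leq\,C\,2^{-2j(s-\s)}\sum_{k\geq j}2^{2k(s-1)}\,\|\Delta_k\nabla f\|_{L^2}^2\,\leq\,C\,2^{-2j(s-\s)}\,\|\nabla f\|_{H^{s-1}}^2\,,
\]
where in the last step I use again \eqref{eq:LP-Sob} to recognise the $H^{s-1}$ norm of $\nabla f$. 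Taking square roots yields exactly the claimed estimate \eqref{est:sobolev}.

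There is no real obstacle to this argument: the only mildly delicate point is that one has to make sure that $k\geq j\geq 0$ so that all the blocks one is summing are annular (this is what allows Bernstein to give the crucial $2^{-k}$ factor that turns $f$ into $\nabla f$ without losing regularity). Note that the constant $C$ in the final inequality is independent of $f$, $j$, $s$ and $\s$, as required, since it only depends on the universal constants appearing in the Littlewood-Paley characterisation of $H^\s$ and on the Bernstein constant for the fixed annulus in part (ii) of Lemma \ref{l:bern}.
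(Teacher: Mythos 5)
Your proof is correct and proceeds along essentially the same lines as the paper's: both start from the dyadic characterisation \eqref{eq:LP-Sob} of $H^\sigma$ applied to $(\Id-S_j)f\,=\,\sum_{k\geq j}\Delta_k f$, invoke the annular Bernstein inequality (Lemma \ref{l:bern}(ii)) to trade $\|\Delta_k f\|_{L^2}$ for $2^{-k}\|\Delta_k\nabla f\|_{L^2}$, and then rearrange exponents to extract the factor $2^{-j(s-\sigma)}$.

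The one place you genuinely diverge from the paper, and where your version is actually tighter, is the way the final sum is closed. You use $2^{2k(\sigma-1)}\leq 2^{-2j(s-\sigma)}\,2^{2k(s-1)}$ for $k\geq j$, $\sigma\leq s$, pull out the factor $2^{-2j(s-\sigma)}$, and then recognise $\sum_{k\geq j}2^{2k(s-1)}\|\Delta_k\nabla f\|_{L^2}^2\leq\|\nabla f\|_{H^{s-1}}^2$ in one stroke. The paper instead first bounds each term $2^{2k(s-1)}\|\Delta_k\nabla f\|_{L^2}^2$ by the full norm $\|\nabla f\|_{H^{s-1}}^2$ and then sums the remaining geometric series $\sum_{k\geq j}2^{-2k(s-\sigma)}$. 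That last step, taken literally, yields a constant $\bigl(1-2^{-2(s-\sigma)}\bigr)^{-1}$ that degenerates as $\sigma\to s$ and is not even finite at the endpoint $\sigma=s$ allowed by the statement, so it does not quite deliver the claimed independence of the constant on $s$ and $\sigma$. Your rearrangement avoids summing a geometric series altogether and gives a constant that is genuinely universal, matching what the lemma asserts.
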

\begin{proof}
We make use of the characterization \eqref{eq:LP-Sob} of $H^\s$ and we write 
\begin{equation*}
\begin{split}
\left\|(\Id-S_{j}) f\right\|_{H^{\s}}^{2}& \leq C\sum_{k\geq j}2^{2k\s}\left\|\Delta_{k}f \right\|_{L^{2}}^{2}\, 2^{2ks}\; 2^{-2ks}
\leq C \sum_{k\geq j}2^{-2k(s-\s)}\; 2^{2ks}\left\|\Delta_{k}f \right\|_{L^{2}}^{2}\\
&\leq C \sum_{k\geq j}2^{-2k(s-\s)}\; 2^{2k(s-1)}\left\|\Delta_{k}\nabla f \right\|_{L^{2}}^{2}\leq C\sum_{k\geq j}2^{-2k(s-\s)}\;\left\| \nabla f \right\|_{H^{s-1}}^{2}\\
&\leq C \; 2^{-2j(s-\s)}\|\nabla f\|^2_{H^{s-1}}\, ,
\end{split}
\end{equation*}
for all $\s\leq s$. 
Then, we obtain \eqref{est:sobolev}, concluding the proof of the lemma.
\qed
\end{proof}

As an immediate consequence of the first Bernstein inequality (see Lemma \ref{l:bern}), one gets the following embedding result, which generalises the Sobolev embeddings.
\begin{proposition}\label{p:embed}
The space $B^{s_1}_{p_1,r_1}$ is continuously embedded in the space $B^{s_2}_{p_2,r_2}$ for all indices satisfying $p_1\,\leq\,p_2$ and either
$s_2\,<\,s_1-d\big(1/p_1-1/p_2\big)$, or $s_2\,=\,s_1-d\big(1/p_1-1/p_2\big)$ and $r_1\leq r_2$.
\end{proposition}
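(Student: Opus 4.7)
The plan is to reduce the claim to a dyadic block estimate obtained by applying the first Bernstein inequality (point (i) of Lemma \ref{l:bern}) to each $\Delta_j u$, and then comparing the resulting weighted sequences in $\ell^r$. Concretely, for any $u \in B^{s_1}_{p_1,r_1}$ and for any $j \geq -1$, the spectral localisation of $\Delta_j u$ in a ball of radius $\sim 2^j$ (or in a fixed ball, for $j=-1$) together with $p_1 \leq p_2$ gives, via Bernstein,
$$
\|\Delta_j u\|_{L^{p_2}} \;\leq\; C\,2^{j\,d\left(\frac{1}{p_1} - \frac{1}{p_2}\right)}\,\|\Delta_j u\|_{L^{p_1}}\,.
$$
Multiplying by $2^{j s_2}$ and introducing the exponent $\alpha := s_1 - s_2 - d\left(1/p_1 - 1/p_2\right) \geq 0$, one gets the fundamental pointwise (in $j$) bound
$$
2^{j s_2}\,\|\Delta_j u\|_{L^{p_2}} \;\leq\; C\,2^{-j\alpha}\;\bigl(2^{j s_1}\,\|\Delta_j u\|_{L^{p_1}}\bigr)\,.
$$

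From here the proof splits into the two cases. If $\alpha > 0$ (the strict inequality case), then the sequence $(2^{-j\alpha})_{j\geq -1}$ belongs to $\ell^q$ for every $q \in [1,+\infty]$, and an application of H\"older's inequality in the $\ell^r$ scale (choosing $q$ with $1/r_2 = 1/q + 1/r_1$ when $r_1 \leq r_2$, and using $\ell^{r_1} \hookrightarrow \ell^\infty$ with $(2^{-j\alpha}) \in \ell^{r_2}$ otherwise) yields
$$
\|u\|_{B^{s_2}_{p_2,r_2}} \;\leq\; C\,\|u\|_{B^{s_1}_{p_1,r_1}}\,,
$$
with no restriction on $(r_1,r_2)$. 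If instead $\alpha = 0$, the pointwise bound collapses to $2^{j s_2}\|\Delta_j u\|_{L^{p_2}} \leq C\,2^{j s_1}\|\Delta_j u\|_{L^{p_1}}$, so one only has to pass from the $\ell^{r_1}$ norm to the $\ell^{r_2}$ norm of the same sequence; the continuous embedding $\ell^{r_1} \hookrightarrow \ell^{r_2}$, which holds precisely when $r_1 \leq r_2$, gives the conclusion and explains the role of the assumption on $r_1, r_2$ in the critical case.

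There is no real obstacle here: the whole argument is a direct consequence of Bernstein's inequality for spectrally localised functions and of elementary H\"older / monotonicity estimates for $\ell^r$ sequences. The only small care needed is to treat the low-frequency block $\Delta_{-1}u$ separately (where Bernstein still applies since the support is contained in a fixed ball $B(0,2)$, so the factor $2^{j d(1/p_1-1/p_2)}$ is harmlessly replaced by a constant), and to use the equivalence $\|\Delta_j u\|_{L^p} \sim 2^{-j s}\bigl(2^{j s}\|\Delta_j u\|_{L^p}\bigr)$ to keep track of the weights when taking $\ell^r$ norms.
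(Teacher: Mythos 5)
Your argument is essentially the one the paper alludes to (it states the result as an ``immediate consequence of the first Bernstein inequality'' without proof), and it is correct in substance: apply Bernstein's inequality (i) of Lemma \ref{l:bern} block-by-block, pick up the factor $2^{j\,d(1/p_1-1/p_2)}$, set $\alpha = s_1 - s_2 - d(1/p_1-1/p_2)\geq 0$, and then compare weighted $\ell^r$ norms. One small slip in the case $\alpha>0$: the H\"older split is inverted. If $r_1\leq r_2$ then $1/r_2 - 1/r_1 \leq 0$, so there is no admissible conjugate exponent $q$ with $1/r_2 = 1/q + 1/r_1$ (except $q=\infty$ when $r_1=r_2$). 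In fact your ``otherwise'' branch already handles \emph{all} pairs $(r_1,r_2)$ when $\alpha>0$: from $\ell^{r_1}\hookrightarrow\ell^\infty$ one gets $2^{js_2}\|\Delta_j u\|_{L^{p_2}}\leq C\,2^{-j\alpha}\|u\|_{B^{s_1}_{p_1,r_1}}$, and $(2^{-j\alpha})_{j\geq -1}\in\ell^{r_2}$ for every $r_2\in[1,+\infty]$, so no H\"older is needed at all. The H\"older interpolation would be the device of choice only if $r_1>r_2$ and one wanted to avoid the $\ell^\infty$ detour; as written the two branches are swapped, but the conclusion stands. The $\alpha=0$ case is treated correctly via $\ell^{r_1}\hookrightarrow\ell^{r_2}$ for $r_1\leq r_2$, which is exactly where the restriction on $(r_1,r_2)$ in the statement comes from.
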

In particular, we get the following chain of continuous embeddings:
$$ B^s_{p,r}\hookrightarrow B^{s-d/p}_{\infty,r}\hookrightarrow B^0_{\infty,1}\hookrightarrow L^\infty \, , $$
whenever the triplet $(s,p,r)\in \R\times [1,+\infty]^2$ satisfies 
\begin{equation*}
s>\frac{d}{p} \quad \quad \quad \text{or}\quad \quad \quad s=\frac{d}{p} \quad \text{and}\quad r=1\, .
\end{equation*}
This last chain of embeddings has been fundamental and of constant use in Chapter \ref{chap:Euler}. 



\medskip

In the sequel, we recall some definitions and properties of paradifferential calculus. Moreover, using those notions, we will focus on the study of transport equations in Besov spaces.

\section{Paradifferential calculus}\label{app_paradiff}
Let us introduce the Bony decomposition (see \cite{Bony}). 
Formally, the product of two tempered distributions $u$ and $v$ can be decomposed into
$$ u\,v=\mathcal{T}_uv+\mathcal{T}_vu+\mathcal{R}(u,v)\, , $$
where the \textit{paraproduct} $\mathcal{T}$ and the \textit{remainder} $\mathcal{R}$ are defined as follows: 
$$ \mathcal{T}_uv=\sum_jS_{j-1}u\, \Delta_jv \quad \text{and}\quad  \mathcal{R} (u,v):= \sum_j\sum_{|k-j|\leq 1}\Delta_ju\, \Delta_kv\, . $$
The paraproduct and remainder operators have nice continuity properties. The following ones have been of constant throughout the manuscript.
\begin{proposition}\label{T-R}
For any $(s,p,r)\in \R \times [1, +\infty ]^2$ and $t>0$, the paraproduct operator $\mathcal{T}$ maps continuously $L^\infty \times B^s_{p,r}$ into $B^s_{p,r}$ and $B^{-t}_{\infty,\infty}\times B^s_{p,r}$ into $B^{s-t}_{p,r}$. Moreover, we have the following estimates
\begin{equation*}
\|\mathcal{T}_u v\|_{B^s_{p,r}}\leq C\|u\|_{L^\infty} \|\nabla v\|_{B^{s-1}_{p,r}} \quad \text{and}\quad \|\mathcal{T}_u v\|_{B^{s-t}_{p,r}}\leq C\|u\|_{B^{-t}_{\infty,\infty}} \|\nabla v\|_{B^{s-1}_{p,r}}\, .
\end{equation*} 
For any $(s_1,p_1,r_1)$ and $(s_2,p_2,r_2)$ in $\R \times [1, +\infty ]^2$ such that $s_1+s_2>0$, $1/p:=1/p_1+1/p_2\leq 1$ and $1/r:=1/r_1+1/r_2\leq 1$, the remainder operator $\mathcal{R}$ maps continuously $B^{s_1}_{p_1,r_1}\times B^{s_2}_{p_2,r_2}$ into $B^{s_1+s_2}_{p,r}$. In the case when $s_1+s_2=0$ and $1/r_1+1/r_2=1$, the operator $\mathcal{R}$ is continuous from $B^{s_1}_{p_1,r_1}\times B^{s_2}_{p_2,r_2}$ to $B^{0}_{p,\infty}$.  
\end{proposition}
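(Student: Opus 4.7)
The plan is to exploit the localisation of the Fourier support of each building block in the Bony decomposition, and to reduce both statements to the application of H\"older's inequality combined with Young's convolution inequality for sequences.

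First I would treat the paraproduct $\mathcal{T}_uv=\sum_j S_{j-1}u\,\Delta_j v$. The crucial structural point is that, for each $j$, the function $S_{j-1}u\,\Delta_j v$ is spectrally supported in an annulus $\{|\xi|\sim 2^j\}$, so that $\Delta_k(S_{j-1}u\,\Delta_j v)=0$ unless $|k-j|\leq N_0$ for some universal integer $N_0$. Next, I would estimate the building block by H\"older's inequality, $\|S_{j-1}u\,\Delta_j v\|_{L^p}\leq \|S_{j-1}u\|_{L^\infty}\|\Delta_j v\|_{L^p}$, and use the boundedness of $S_{j-1}$ on $L^\infty$ for the first estimate, while for the second one I would use the standard bound $\|S_{j-1}u\|_{L^\infty}\leq C\,2^{jt}\|u\|_{B^{-t}_{\infty,\infty}}$, which follows from $\|\Delta_k u\|_{L^\infty}\leq 2^{kt}\|u\|_{B^{-t}_{\infty,\infty}}$ and summation of the geometric series (here the hypothesis $t>0$ is essential). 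To convert $\|\Delta_j v\|_{L^p}$ into the homogeneous quantity $2^{-j}\|\Delta_j \nabla v\|_{L^p}$, I would invoke Bernstein's inequality (Lemma \ref{l:bern}) on the annulus where $\Delta_j v$ is supported; this is legitimate since the terms with $j\leq 0$ in the paraproduct vanish because $S_{j-1}\equiv 0$ for such indices. Multiplying by $2^{ks}$ (resp.\ $2^{k(s-t)}$), summing over the finitely many $j$ with $|k-j|\leq N_0$, and taking the $\ell^r$ norm in $k$ then produces the two announced continuity estimates for $\mathcal{T}$.

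For the remainder $\mathcal{R}(u,v)=\sum_j\sum_{|k-j|\leq 1}\Delta_j u\,\Delta_k v$, the Fourier support of each block $\Delta_j u\,\Delta_k v$ is now contained in a \emph{ball} of radius $\sim 2^j$, not in an annulus. Consequently $\Delta_\ell\mathcal{R}(u,v)=0$ whenever $j<\ell-N_1$ for some universal $N_1$, so the effective sum runs over $j\geq \ell-N_1$. By H\"older's inequality with $1/p=1/p_1+1/p_2$ one gets $\|\Delta_j u\,\widetilde\Delta_j v\|_{L^p}\leq \|\Delta_j u\|_{L^{p_1}}\,\|\widetilde\Delta_j v\|_{L^{p_2}}$, where $\widetilde\Delta_j:=\Delta_{j-1}+\Delta_j+\Delta_{j+1}$. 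Setting $b_j:=2^{js_1}\|\Delta_j u\|_{L^{p_1}}\in\ell^{r_1}$ and $c_j:=2^{js_2}\|\widetilde\Delta_j v\|_{L^{p_2}}\in\ell^{r_2}$, the preceding observations yield, for $\sigma:=s_1+s_2$,
\[
2^{\ell\sigma}\|\Delta_\ell\mathcal{R}(u,v)\|_{L^p}\;\leq\;C\sum_{j\geq \ell-N_1} 2^{(\ell-j)\sigma}\,b_j\,c_j\,.
\]
When $\sigma>0$ the kernel $2^{(\ell-j)\sigma}\mathbf{1}_{\{j\geq\ell-N_1\}}$ defines a sequence in $\ell^1$, so a discrete Young inequality gives
\[
\bigl\|2^{\ell\sigma}\|\Delta_\ell\mathcal{R}(u,v)\|_{L^p}\bigr\|_{\ell^r}\;\leq\;C\,\|b_j c_j\|_{\ell^r}\;\leq\;C\,\|b\|_{\ell^{r_1}}\,\|c\|_{\ell^{r_2}}\,,
\]
where the last inequality is H\"older's inequality in $j$ for exponents satisfying $1/r=1/r_1+1/r_2\leq 1$. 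In the endpoint case $\sigma=0$ and $1/r_1+1/r_2=1$, the kernel is merely bounded, hence one only obtains
\[
\sup_\ell \|\Delta_\ell\mathcal{R}(u,v)\|_{L^p}\;\leq\;C\sum_j b_j c_j\;\leq\;C\,\|b\|_{\ell^{r_1}}\,\|c\|_{\ell^{r_2}}\,,
\]
i.e.\ the claimed continuity into $B^0_{p,\infty}$.

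The main subtlety is the careful bookkeeping of the indices in the convolution argument for $\mathcal{R}$: the condition $s_1+s_2>0$ is exactly what makes the geometric kernel $2^{(\ell-j)(s_1+s_2)}\mathbf{1}_{\{j\geq\ell-N_1\}}$ summable, and the loss of summability at $s_1+s_2=0$ forces the drop to $r=\infty$ in the target space. Once these Fourier-localisation estimates and the discrete Young inequality are set up, the rest of the proof is a routine matter of collecting the estimates, using Bernstein's inequality to convert $\|\Delta_j v\|_{L^p}$ into $\|\Delta_j\nabla v\|_{L^p}$ for $j\geq 1$, and handling the finitely many low-frequency blocks separately.
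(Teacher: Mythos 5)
Your proof is correct and follows exactly the standard Littlewood--Paley argument (the one given in Bahouri--Chemin--Danchin, Theorems 2.82 and 2.85), which is the reference the paper implicitly relies on: the paper itself states this proposition without proof. In particular, you correctly identify the key structural facts — annulus localisation for the paraproduct blocks (so only finitely many $\Delta_k$ are active), ball localisation for the remainder blocks (forcing a one-sided sum in $j\geq\ell-N_1$), the geometric-series bound on $\|S_{j-1}u\|_{L^\infty}$ which is where $t>0$ enters, the use of Bernstein to replace $\|\Delta_jv\|_{L^p}$ by $2^{-j}\|\Delta_j\nabla v\|_{L^p}$ (valid since $S_{j-1}\equiv 0$ for $j\leq 0$), and the discrete Young/H\"older step where $s_1+s_2>0$ guarantees the $\ell^1$ summability of the kernel and the drop to $r=\infty$ at the endpoint $s_1+s_2=0$.
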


As a consequence of the Proposition \ref{T-R}, the spaces $B^s_{p ,r}$ are Banach algebras, provided that condition 
\begin{equation}\label{cond_algebra}
s>\frac{d}{p} \quad \quad \quad \text{or}\quad \quad \quad s=\frac{d}{p} \quad \text{and}\quad r=1
\end{equation}
holds for $s>0$ and $(p,q)\in [1,+\infty]^2$. Moreover, in that case, we have the so-called \textit{tame estimates}. 

\begin{corollary}\label{cor:tame_est}
Let $(s,p,r)\in\; ]0,+\infty[\; \times [1,+\infty ]^2$ satisfy \eqref{cond_algebra}. Then, for every $f,g\in L^\infty \cap B^s_{p ,r}$, one has 
\begin{equation*}
\|fg\|_{B^s_{p ,r}}\leq C \left(\|f\|_{L^\infty}\|g\|_{B^s_{p ,r}}+\|f\|_{B^s_{p ,r}}\|g\|_{L^\infty}\right)\, .
\end{equation*}
\end{corollary}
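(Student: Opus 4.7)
The plan is to rely on Bony's decomposition and then to bound each of the three resulting pieces by means of Proposition~\ref{T-R}. More precisely, writing
\begin{equation*}
fg\,=\,\mathcal{T}_f g\,+\,\mathcal{T}_g f\,+\,\mathcal{R}(f,g)\,,
\end{equation*}
the problem reduces to proving that each of the summands on the right-hand side belongs to $B^s_{p,r}$ with the claimed tame control. I note that the structural assumption \eqref{cond_algebra}, which ensures $B^s_{p,r}\hookrightarrow L^\infty$, is not really used for the estimate itself (both $f$ and $g$ are assumed to lie in $L^\infty\cap B^s_{p,r}$): the hypothesis on $(s,p,r)$ only enters through the positivity requirement $s>0$ needed to handle the remainder.

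For the two paraproducts, I simply apply the first continuity property stated in Proposition~\ref{T-R}: since $f,g\in L^\infty$ and $\nabla f,\nabla g\in B^{s-1}_{p,r}$, I get
\begin{equation*}
\|\mathcal{T}_f g\|_{B^s_{p,r}}\,\leq\,C\,\|f\|_{L^\infty}\,\|\nabla g\|_{B^{s-1}_{p,r}}\,\leq\,C\,\|f\|_{L^\infty}\,\|g\|_{B^s_{p,r}}\,,
\end{equation*}
and symmetrically $\|\mathcal{T}_g f\|_{B^s_{p,r}}\leq C\,\|g\|_{L^\infty}\,\|f\|_{B^s_{p,r}}$. These are the two desired contributions, and they do not use the condition \eqref{cond_algebra} at all.

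The main point, which is actually the only delicate one, is the control of the remainder $\mathcal{R}(f,g)$. The idea is to place one of the two factors, say $f$, in the space $B^0_{\infty,\infty}$ via the continuous embedding $L^\infty\hookrightarrow B^0_{\infty,\infty}$, and to keep the other factor $g$ in $B^s_{p,r}$. With the choice $(s_1,p_1,r_1)=(0,\infty,\infty)$ and $(s_2,p_2,r_2)=(s,p,r)$, the indices fulfil $s_1+s_2=s>0$ (which is where the hypothesis $s>0$ in \eqref{cond_algebra} comes into play), $1/p_1+1/p_2=1/p\leq 1$ and $1/r_1+1/r_2=1/r\leq 1$. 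Hence the second part of Proposition~\ref{T-R} directly yields
\begin{equation*}
\|\mathcal{R}(f,g)\|_{B^s_{p,r}}\,\leq\,C\,\|f\|_{B^0_{\infty,\infty}}\,\|g\|_{B^s_{p,r}}\,\leq\,C\,\|f\|_{L^\infty}\,\|g\|_{B^s_{p,r}}\,.
\end{equation*}
Alternatively, by the symmetry of $\mathcal{R}$, one could bound it by $C\,\|g\|_{L^\infty}\,\|f\|_{B^s_{p,r}}$; either form is compatible with the asymmetric tame estimate in the statement.

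Summing the three contributions then gives the tame estimate. I do not expect any serious obstacle: the only care needed is to check that the indices $(s_1,p_1,r_1)$ and $(s_2,p_2,r_2)$ used in the remainder term satisfy the hypotheses of Proposition~\ref{T-R}, in particular that $s>0$ so that the positivity condition $s_1+s_2>0$ is met. In the borderline case $s=d/p$ with $r=1$, the above argument still goes through verbatim, since the only requirement we actually exploit from \eqref{cond_algebra} is the strict positivity of $s$.
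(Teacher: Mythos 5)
Your proof is correct and is precisely the argument the paper has in mind: Corollary \ref{cor:tame_est} is stated without a written proof, but the neighbouring Proposition \ref{prop:app_fine_tame_est} is proved by exactly this Bony-decomposition-plus-Proposition-\ref{T-R} scheme (paraproducts via the $L^\infty\times B^s_{p,r}$ continuity, remainder via $L^\infty\hookrightarrow B^0_{\infty,\infty}$), and the corollary is the special case where both factors are assumed to lie in $L^\infty\cap B^s_{p,r}$. Your side remark that only the positivity $s>0$ is used in the estimate, while the full condition \eqref{cond_algebra} is what would make $B^s_{p,r}$ a Banach algebra by combining the tame estimate with the embedding $B^s_{p,r}\hookrightarrow L^\infty$, is also accurate.
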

\begin{remark}
The space $B^0_{\infty ,1}$ is \textit{not} an algebra. If $f,g \in B^0_{\infty ,1}$, applying Proposition \ref{T-R}, one can bound the paraproducts $\mathcal{T}_fg$ and $\mathcal{T}_gf$ but \textit{not} the remainder $\mathcal{R} (f,g)$.
\end{remark}

To end this paragraph, we present a fine estimate for products in which one of the two functions is only bounded in $L^\infty$ but its gradient belongs to the Besov space $B^{s-1}_{p,r}$. 
\begin{proposition}\label{prop:app_fine_tame_est}
Let $(s,p,r)\in\; ]0,+\infty[\; \times \, [1,+\infty]^2$ satisfy condition \eqref{cond_algebra}. Assume that $g\in L^\infty \cap B^{s}_{p,r}$ and $f$ is a bounded function such that $\nabla f\in B^{s-1}_{p,r}$. Then, the product $fg$ belongs to $L^\infty \cap B^s_{p,r}$ and one has the following estimate:
\begin{equation*}
\|fg\|_{B^s_{p,r}}\leq C\left(\|f\|_{L^\infty}\|g\|_{B^s_{p,r}}+\|\nabla f\|_{B^{s-1}_{p,r}}\|g\|_{L^\infty}\right)\, .
\end{equation*}
\end{proposition}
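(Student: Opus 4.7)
My plan is to use Bony's decomposition $fg = \mc T_f g + \mc T_g f + \mc R(f,g)$ and to estimate each of the three pieces separately, exploiting at each step only the minimal information available on $f$, namely $f \in L^\infty$ with $\nabla f \in B^{s-1}_{p,r}$. The key observation is that whenever a full derivative on $f$ would be required (as in the standard tame estimate of Corollary \ref{cor:tame_est}), the corresponding $\Delta_j f$ in the sum is a high-frequency block, for which the second Bernstein inequality in Lemma \ref{l:bern} gives $\|\Delta_j f\|_{L^p} \leq C\, 2^{-j}\|\Delta_j \nabla f\|_{L^p}$ whenever $j \geq 0$. This factor of $2^{-j}$ is exactly what lets one replace $\|f\|_{B^s_{p,r}}$ by $\|\nabla f\|_{B^{s-1}_{p,r}}$ at the end.

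First, the paraproduct $\mc T_f g$ is controlled directly by Proposition \ref{T-R}: since $f\in L^\infty$ and $g\in B^s_{p,r}$, one has $\|\mc T_f g\|_{B^s_{p,r}}\leq C\|f\|_{L^\infty}\|g\|_{B^s_{p,r}}$. For the symmetric term, I will write $\mc T_g f = \sum_{j\geq 1} S_{j-1}g\,\Delta_j f$ and use the boundedness of $S_{j-1}$ on $L^\infty$ together with the Bernstein inequality above to estimate, for each $j\geq 1$,
$$
\|S_{j-1}g\,\Delta_j f\|_{L^p}\,\leq\,C\,\|g\|_{L^\infty}\,2^{-j}\,\|\Delta_j \nabla f\|_{L^p}\,.
$$
Since the blocks $S_{j-1}g\,\Delta_j f$ are spectrally localized in annuli of size $\sim 2^j$, multiplying by $2^{js}$ and taking the $\ell^r$ norm in $j$ yields $\|\mc T_g f\|_{B^s_{p,r}}\leq C\|g\|_{L^\infty}\|\nabla f\|_{B^{s-1}_{p,r}}$.

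The main (mild) obstacle will be the remainder $\mc R(f,g) = \sum_j \Delta_j f\,\wtilde\Delta_j g$, with $\wtilde\Delta_j := \sum_{|k-j|\leq 1}\Delta_k$, because here each summand has Fourier support in a ball rather than in an annulus, so one cannot use an almost-orthogonality argument directly. The plan is simply to use $\|\Delta_j f\|_{L^\infty} \leq C\|f\|_{L^\infty}$ uniformly in $j\geq-1$ (the Littlewood--Paley kernels have uniformly bounded $L^1$ norms), so that no gradient structure of $f$ is needed here. Applying $\Delta_\nu$ and keeping only the indices $j\geq \nu-N_0$ which survive the spectral localization, I will arrive at
$$
2^{\nu s}\|\Delta_\nu \mc R(f,g)\|_{L^p}\,\leq\,C\,\|f\|_{L^\infty}\,\sum_{j\geq \nu-N_0} 2^{(\nu-j)s}\,\bigl(2^{js}\|\wtilde\Delta_j g\|_{L^p}\bigr)\,.
$$
At this point the assumption $s>0$ built into condition \eqref{cond_algebra} becomes essential: it makes the sequence $2^{(\nu-j)s}\mathbbm{1}_{j\geq \nu-N_0}$ summable in the convolution variable $\nu-j$, and a discrete Young inequality in $\ell^r$ yields $\|\mc R(f,g)\|_{B^s_{p,r}}\leq C\|f\|_{L^\infty}\|g\|_{B^s_{p,r}}$.

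Adding the three bounds produces the announced tame estimate, while the $L^\infty$ bound $\|fg\|_{L^\infty}\leq \|f\|_{L^\infty}\|g\|_{L^\infty}$ is trivial. I expect that the only delicate point is the last step: the condition $s>0$ from \eqref{cond_algebra} is really used in the remainder, which is consistent with the fact that $B^0_{\infty,1}$ is not an algebra (as recalled in the remark following Corollary \ref{cor:tame_est}).
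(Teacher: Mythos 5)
Your proof is correct and follows essentially the same route as the paper's: both start from Bony's decomposition $fg = \mc T_f g + \mc T_g f + \mc R(f,g)$ and bound $\mc T_f g$ by $C\|f\|_{L^\infty}\|g\|_{B^s_{p,r}}$, $\mc T_g f$ by $C\|g\|_{L^\infty}\|\nabla f\|_{B^{s-1}_{p,r}}$, and $\mc R(f,g)$ by $C\|f\|_{L^\infty}\|g\|_{B^s_{p,r}}$ using $s>0$. The only difference is one of exposition: the paper simply cites the paraproduct and remainder continuity bounds of Proposition \ref{T-R} (using $L^\infty\hookrightarrow B^0_{\infty,\infty}$ for the remainder), whereas you re-derive them from Bernstein's inequality and the spectral localization of the blocks.
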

\begin{proof}
Taking advantage of Bony decomposition, one can write 
\begin{equation*}
fg =\mathcal{T}_{f}g +\mathcal{T}_{g }f + \mathcal{R}(f, g )
\end{equation*}
and employing Proposition \ref{T-R}, we get
\begin{equation*}
\begin{split}
\|\mathcal{T}_{f}g\|_{B^s_{p,r}}&\leq C\|f\|_{L^\infty}\|g \|_{B^s_{p,r}}\\
\|\mathcal{T}_{g }\, f\|_{B^s_{p,r}}&\leq C\|g \|_{L^\infty}\|\nabla f\|_{B^{s-1}_{p,r}}\\
\|\mathcal{R}(f ,g)\|_{B^s_{p,r}}&\leq C\|f\|_{B^0_{\infty,\infty}}\|g \|_{B^s_{p,r}}\leq C\|f\|_{L^\infty}\|g  \|_{B^s_{p,r}}\, .
\end{split}
\end{equation*}
This completes the proof of the proposition.
\qed
\end{proof}

\section{Commutator estimates}\label{app:comm_est}
In this paragraph, we recall the main commutator estimates widely employed throughout the chapter \ref{chap:Euler}. 
\begin{definition}
We say that the triplet $(s,p,r)\in \R \times [1, +\infty ]^2$ satisfies the Lipschitz condition if 
\begin{equation} \label{Lip_cond}
s>1+d/p \quad \text{and}\quad r\in [1,+\infty ]\quad \quad \quad \text{or}\quad \quad \quad s=1+d/p \quad \text{and}\quad r=1\, .
\end{equation}
\end{definition}

\smallskip

The proof of the following Lemma \ref{l:commutator_pressure} can be found in \cite{D_JDE} by Danchin. 

\begin{lemma}\label{l:commutator_pressure}
Let $(s,p,r)\in \R \times [1, +\infty ]^2$ satisfy condition \eqref{Lip_cond} and $\sigma$ be in $]-1,\, s-1]$. Assume that $w\in B^{\sigma}_{p,r}$ and $A$ is a bounded function on $\R^d$ such that $\nabla A\in B^{s-1}_{p,r}$. Then, there exists a constant $C=C(s,p,r,\sigma ,d)$ such that for all $i\in \{1, \dots, d\}$, we have: 
\begin{equation*}
\|\d_i[A, \Delta_j]w\|_{L^p}\leq C\, c_j \, 2^{-j\sigma}\|\nabla A\|_{B^{s-1}_{p,r}}\|w\|_{B^{\sigma}_{p,r}}\quad \text{for all }j\geq -1\, ,
\end{equation*}
with $\|(c_j)_{j\geq -1}\|_{\ell^r}=1$.
\end{lemma}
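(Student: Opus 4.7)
My plan is to prove the commutator estimate via Bony's paradifferential decomposition, splitting the product $Aw$ into a paraproduct part and a remainder part, and then estimating each piece carefully after differentiation.

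\textbf{Step 1: Bony decomposition of the commutator.} I would begin by writing, in the sense of tempered distributions,
\[
Aw \,=\, \mathcal{T}_A w \,+\, \mathcal{T}_w A \,+\, \mathcal{R}(A,w).
\]
Applying $\Delta_j$ to this identity and subtracting it from $A\,\Delta_j w$, decomposed in the same way, yields
\[
[A,\Delta_j]w \,=\, [\mathcal{T}_A,\Delta_j]w \,+\, \bigl(\mathcal{T}_{\Delta_j w}A - \Delta_j\mathcal{T}_w A\bigr) \,+\, \bigl(\mathcal{R}(A,\Delta_j w) - \Delta_j\mathcal{R}(A,w)\bigr).
\]
I would then apply $\partial_i$ and estimate the $L^p$ norm of each of the three pieces separately, the goal being a bound of the form $c_j\,2^{-j\sigma}\|\nabla A\|_{B^{s-1}_{p,r}}\|w\|_{B^\sigma_{p,r}}$ with $\|(c_j)_j\|_{\ell^r}\leq 1$.

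\textbf{Step 2: The paraproduct commutator.} For $[\mathcal{T}_A,\Delta_j]w=\sum_k[S_{k-1}A,\Delta_j]\Delta_k w$, the spectral localisation forces $|k-j|\leq N_0$. Writing $\Delta_j$ as convolution with $2^{jd}h(2^j\cdot)$ and performing a first-order Taylor expansion on $S_{k-1}A$, I would obtain the classical bound
\[
\bigl\|\partial_i[S_{k-1}A,\Delta_j]\Delta_k w\bigr\|_{L^p}\,\lesssim\,\|\nabla S_{k-1}A\|_{L^\infty}\,\|\Delta_k w\|_{L^p},
\]
where the gain $2^{-j}$ from the Taylor expansion compensates exactly for the $2^{j}$ lost in $\partial_i$. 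Using the embedding $B^{s-1}_{p,r}\hookrightarrow L^\infty$ coming from the Lipschitz condition \eqref{Lip_cond} and multiplying by $2^{-j\sigma}\cdot 2^{j\sigma}$, this piece produces the desired bound with a sequence $c_j$ in the unit ball of $\ell^r$.

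\textbf{Step 3: The ``mixed'' paraproduct terms.} For $\partial_i \mathcal{T}_{\Delta_j w}A$ I would move $\partial_i$ onto $A$, getting $\sum_k S_{k-1}\Delta_j w\cdot\partial_i\Delta_k A$, supported on $k\sim j$; Bernstein's inequality applied to $\Delta_j w$ and the definition of $\|\nabla A\|_{B^{s-1}_{p,r}}$ give the claim. The symmetric term $\partial_i\Delta_j\mathcal{T}_w A$, concentrated on annuli $k\sim j$, is handled similarly by shifting $\partial_i$ onto $\Delta_k A$. Here the upper restriction $\sigma\leq s-1$ ensures that the $\ell^r$-summability can be gathered in a single sequence.

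\textbf{Step 4: The remainder terms, and the main obstacle.} The piece $\partial_i\bigl(\mathcal{R}(A,\Delta_j w)-\Delta_j\mathcal{R}(A,w)\bigr)$ is the delicate one, because $A$ is only in $L^\infty\hookrightarrow B^0_{\infty,\infty}$, so the standard remainder rule of Proposition \ref{T-R} requires $\sigma>0$, which is not assumed. This is where the restriction $\sigma>-1$ must be exploited: I would rewrite the remainder as $\mathcal{R}(A,\Delta_j w)=\sum_{|k-k'|\leq 1,\,k\gtrsim j}\Delta_k A\,\Delta_{k'}\Delta_j w$ and distribute $\partial_i$ so as to make $\partial_i\Delta_k A$ appear. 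Since $\nabla A\in B^{s-1}_{p,r}$ with $s-1>-1\geq -\sigma$ in the relevant range, the series converges and one recovers the factor $2^{-j\sigma}$ upon summation in $k$. The symmetric term $\partial_i\Delta_j\mathcal{R}(A,w)$ is treated by the same mechanism. Putting the three contributions together and normalising the resulting $\ell^r$-sequence yields the announced estimate.
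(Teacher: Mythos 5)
Note first that the paper offers no proof of Lemma~\ref{l:commutator_pressure}: the sentence just before the statement says explicitly that the proof is in Danchin's paper \cite{D_JDE}. That proof is based on exactly the Bony decomposition you set up in Step~1, so your overall route is the right one, and Steps~1--3 are sound.

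Step~4, however, is where the hypothesis $\sigma>-1$ must do its work, and there are two slips there. You write $\mathcal{R}(A,\Delta_j w)=\sum_{|k-k'|\leq 1,\,k\gtrsim j}\Delta_k A\,\Delta_{k'}\Delta_j w$ as if it were a tail sum; in fact $\Delta_{k'}\Delta_j\neq 0$ forces $|k'-j|\leq 1$, hence $|k-j|\leq 2$, so $\mathcal{R}(A,\Delta_j w)$ is a \emph{finite} sum and is the easy piece. The infinite tail sits in $\Delta_j\mathcal{R}(A,w)$, for which only the frequencies $k\geq j-N_0$ contribute. Second, the inequality chain ``$s-1>-1\geq-\sigma$'' is incorrect ($-1\geq-\sigma$ reads $\sigma\geq 1$, which is not assumed) and does not encode the true mechanism. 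That mechanism is: Bernstein gives $\|\partial_i\Delta_j\mathcal{R}(A,w)\|_{L^p}\lesssim 2^j\|\Delta_j\mathcal{R}(A,w)\|_{L^p}$ for $j\geq 0$; then $\|\Delta_k A\|_{L^\infty}\lesssim 2^{-k}\|\nabla A\|_{L^\infty}\lesssim 2^{-k}\|\nabla A\|_{B^{s-1}_{p,r}}$ for $k\geq 0$, where the second embedding is precisely what the Lipschitz condition \eqref{Lip_cond} buys you; and therefore
\begin{equation*}
2^{j\sigma}\,\big\|\partial_i\Delta_j\mathcal{R}(A,w)\big\|_{L^p}\;\lesssim\;\|\nabla A\|_{B^{s-1}_{p,r}}\,\|w\|_{B^\sigma_{p,r}}\sum_{k\geq j-N_0}2^{(j-k)(\sigma+1)}\,c_k\,,
\end{equation*}
with $(c_k)_k$ a unit sequence in $\ell^r$. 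The geometric tail converges, and Young's convolution inequality in $\ell^r$ produces a new unit sequence, \emph{precisely} because $\sigma+1>0$. That is the role of $\sigma>-1$; it has nothing to do with $-\sigma\leq -1$. With Step~4 repaired along these lines, your argument does reproduce Danchin's proof.
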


The next statement concerns a standard commutator estimate between the transport operator and the frequency localisation operator.  
\begin{lemma}\label{l:commutator_est}
Assume that $v\in B^s_{p ,r}$ with $(s,p,r)$ satisfying the Lipschitz condition \eqref{Lip_cond}. Denote by $[v\cdot \nabla ,\Delta_j]f=(v\cdot \nabla)\Delta_j f-\Delta_j(v\cdot \nabla) f$ the commutator between the transport operator $v\cdot \nabla$ and the frequency localisation operator $\Delta_j$. Then, for every $f\in B^{s}_{p,r}$, 
\begin{equation*}
\left\|\left(2^{js}\|[v\cdot \nabla ,\Delta_j]f\|_{L^p}\right)_j\right\|_{\ell^r}\leq C\left(\|\nabla v\|_{L^\infty}\|f\|_{B^{s}_{p ,r}}+\|\nabla v\|_{B^{s-1}_{p ,r}}\|\nabla f\|_{L^\infty}\right)
\end{equation*}
and also, for every $f\in B^{s-1}_{p ,r}$, 
\begin{equation*}
\left\|\left(2^{j(s-1)}\|[v\cdot \nabla ,\Delta_j]f\|_{L^p}\right)_j\right\|_{\ell^r}\leq C\left(\|\nabla v\|_{L^\infty}\|f\|_{B^{s-1}_{p ,r}}+\|\nabla v\|_{B^{s-1}_{p ,r}}\| f\|_{L^\infty}\right)\, ,
\end{equation*}
for some constant $C=C(s,p,d)>0$.
\end{lemma}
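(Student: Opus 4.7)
My plan is to derive both inequalities from Bony's paraproduct decomposition. Writing $v\cdot\nabla f=\sum_{k=1}^{d}v^k\partial_k f$ and decomposing each product as
\[
v^k\partial_k f\;=\;\mathcal{T}_{v^k}\partial_k f\,+\,\mathcal{T}_{\partial_k f}v^k\,+\,\mathcal{R}(v^k,\partial_k f),
\]
and doing the same for $v^k\partial_k\Delta_j f$, the commutator $[v\cdot\nabla,\Delta_j]f$ splits into three independent contributions that I would estimate separately: (I) the \emph{paraproduct commutator} $\sum_k[\mathcal{T}_{v^k},\Delta_j]\partial_k f$, (II) the \emph{symmetric paraproduct defect} $\sum_k\bigl(\mathcal{T}_{\partial_k\Delta_j f}v^k-\Delta_j\mathcal{T}_{\partial_k f}v^k\bigr)$, and (III) the \emph{remainder defect} $\sum_k\bigl(\mathcal{R}(v^k,\partial_k\Delta_j f)-\Delta_j\mathcal{R}(v^k,\partial_k f)\bigr)$.

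For (I), I would restrict the sum defining the paraproduct to the only indices $l$ with $|l-j|\leq 4$ (thanks to the localisation properties of the Littlewood-Paley blocks), thus reducing matters to estimating $[S_{l-1}v^k,\Delta_j]\partial_k\Delta_l f$ for such $l$. Writing $\Delta_j$ as the convolution with the $L^1$-normalised kernel $2^{jd}h(2^j\cdot)$ (with $h$ Schwartz) and using a first-order Taylor expansion of $S_{l-1}v^k$ inside the integral, one gains exactly the factor $2^{-j}$ needed to turn the derivative $\partial_k$ into a $2^l\sim 2^j$ factor absorbed on $\Delta_l f$. This produces the bound $\|[S_{l-1}v^k,\Delta_j]\partial_k\Delta_l f\|_{L^p}\lesssim\|\nabla v\|_{L^\infty}\|\Delta_l f\|_{L^p}$, which multiplied by $2^{js}$ and summed in $\ell^r$ over $l$ yields the first term $\|\nabla v\|_{L^\infty}\|f\|_{B^s_{p,r}}$ of the right-hand side of the first inequality (and its analogue $2^{j(s-1)}$-version for the second one).

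For (II), the support considerations for the Littlewood-Paley decomposition again reduce both $\mathcal{T}_{\partial_k\Delta_j f}v^k$ and $\Delta_j\mathcal{T}_{\partial_k f}v^k$ to sums over indices $l$ close to $j$. Hence I need to estimate $\|S_{l-1}\partial_k f\|_{L^\infty}\|\Delta_l v^k\|_{L^p}$ (or its $\Delta_j f$-variant): here the difference between the two statements enters. In the first inequality I keep $\|S_{l-1}\partial_k f\|_{L^\infty}\leq\|\nabla f\|_{L^\infty}$ and leave the whole smoothness on $v$ to produce $2^{-l(s-1)}c_l\|\nabla v\|_{B^{s-1}_{p,r}}$; in the second inequality, since $f$ is only in $B^{s-1}_{p,r}$, I instead invoke Bernstein $\|S_{l-1}\partial_k f\|_{L^\infty}\lesssim 2^l\|f\|_{L^\infty}$ and gain from the extra $2^l$ factor. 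Piece (III) is handled in the very same spirit: the discrete convolution on the paraproduct/remainder indices and classical sequences $(c_j)\in\ell^r$ of unit norm allow to close the $\ell^r$ summation after multiplication by the appropriate weight $2^{js}$ or $2^{j(s-1)}$.

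The main obstacle I foresee is \emph{not} any single one of these steps, each of which is a standard Littlewood-Paley argument, but rather the bookkeeping needed to guarantee that in the second (lower-regularity) statement one ends up with $\|f\|_{L^\infty}$ instead of $\|\nabla f\|_{L^\infty}$. This forces one to play very carefully with Bernstein's inequality on the low-frequency truncations $S_{l-1}\partial_k f$ and with the spectral annulus where the various paraproduct/remainder terms are effectively localised, so as to trade a factor of $2^{-1}$ everywhere a derivative on $f$ appears; the Lipschitz assumption on $(s,p,r)$ is used precisely to guarantee the embedding $B^{s-1}_{p,r}\hookrightarrow L^\infty$ that keeps the $L^\infty$ norm of $f$ finite and makes this trade admissible.
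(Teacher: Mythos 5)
The paper does not actually prove this lemma: it is stated in the appendix as ``a standard commutator estimate'' with the proof deferred to Chapter~2 of \cite{B-C-D}, so there is no in-text argument to compare yours against. Your sketch reproduces precisely that standard Bahouri--Chemin--Danchin argument (Bony decomposition of $v\cdot\nabla f$ and $v\cdot\nabla\Delta_j f$, first-order Taylor expansion inside the convolution kernel of $\Delta_j$ to gain the crucial $2^{-j}$ on the paraproduct commutator, direct product estimates plus Bernstein's inequality for the symmetric and remainder defects, discrete Young convolution in $\ell^r$ to close the sums), and the mechanism you identify as the key to the lower-regularity variant --- trading $\|\nabla f\|_{L^\infty}$ for $2^l\|f\|_{L^\infty}$ via Bernstein on $S_{l-1}\partial_k f$ and on $\partial_k\Delta_j f$ --- is exactly the right one. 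One small imprecision: the piece $\mathcal{T}_{\partial_k\Delta_j f}v^k$ is \emph{not} localised to $|l-j|\leq 4$; since $S_{l-1}\Delta_j=0$ only forces $l\gtrsim j$, the sum there runs over all $l\geq j-O(1)$, and you need the geometric decay $2^{(j-l)s}$ (using $s\geq 1+d/p>0$) to sum the infinite tail --- but this is covered by the discrete-convolution argument you invoke at the end, so the proof closes.
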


Finally, the next result deals with commutators between paraproduct operators and Fourier multipliers.
\begin{lemma}
Let $\kappa$ be a smooth function on $\R^d$, which is homogeneous of degree $m$ away from a neighborhood of $0$. Take $(s,p,r)\in \R \times [1,+\infty]^2$ and $v$ a vector field such that $\nabla v\in L^\infty$. Then, for every $f\in B^s_{p ,r}$, one has
\begin{equation*}
\left\|[T_v,\kappa (D)]f\right\|_{B^{s-m+1}_{p ,r}}\leq C(s,d)\, \|\nabla v\|_{L^\infty}\|f\|_{B^s_{p ,r}}\, .
\end{equation*}
\end{lemma}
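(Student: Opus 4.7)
The plan is to reduce the estimate to a kernel computation on dyadic pieces, exploiting the fact that $\kappa$ is homogeneous of degree $m$ away from the origin and that $\nabla v$ is bounded. First I would unfold the definition of the paraproduct and commute the sum with $\kappa(D)$:
\[
[T_v,\kappa(D)]f\,=\,\sum_{j\geq-1}\bigl[S_{j-1}v,\,\kappa(D)\bigr]\,\Delta_jf\,.
\]
Since $\Delta_j f$ is supported in an annulus of size $2^j$ (for $j\geq0$), on that annulus $\kappa(\xi)\,\varphi(2^{-j}\xi)$ is a smooth, $2^j$-rescaled symbol of order $m$: writing a slightly fattened block $\widetilde\Delta_j$ satisfying $\widetilde\Delta_j\Delta_j=\Delta_j$, one has
\[
\kappa(D)\,\widetilde\Delta_j\,g\,=\,h_j*g\qquad\mbox{with}\qquad h_j(z)\,=\,2^{j(m+d)}\,h(2^jz)
\]
for $j$ large enough (and a smooth compactly supported $h$), while for the few low-frequency indices where $\kappa$ is not homogeneous one gets an equally good kernel just by smoothness of $\kappa$ near $0$ localised by $\chi$.

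Next I would represent the commutator with the kernel and use a first-order Taylor expansion: since $\nabla S_{j-1}v\in L^\infty$ with $\|\nabla S_{j-1}v\|_{L^\infty}\leq C\,\|\nabla v\|_{L^\infty}$, one has
\[
\bigl|S_{j-1}v(x)-S_{j-1}v(y)\bigr|\,\leq\,C\,\|\nabla v\|_{L^\infty}\,|x-y|\,.
\]
Therefore
\[
\bigl\|[S_{j-1}v,\kappa(D)]\Delta_jf\bigr\|_{L^p}\,\leq\,C\,\|\nabla v\|_{L^\infty}\,\bigl\|\,|z|\,h_j(z)\,\bigr\|_{L^1}\,\|\Delta_jf\|_{L^p}\,,
\]
and a direct rescaling gives $\||z|h_j(z)\|_{L^1}\,=\,2^{j(m-1)}\,\||w|h(w)\|_{L^1}$, whence
\begin{equation*}
\bigl\|[S_{j-1}v,\kappa(D)]\Delta_jf\bigr\|_{L^p}\,\leq\,C\,2^{j(m-1)}\,\|\nabla v\|_{L^\infty}\,\|\Delta_jf\|_{L^p}\,.
\end{equation*}

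Then I would use the spectral localisation of each summand: $S_{j-1}v\,\Delta_j(\kappa(D)f)$ and $\kappa(D)(S_{j-1}v\,\Delta_jf)$ are both spectrally supported in an annulus of size $\sim 2^j$, so $\Delta_k\bigl([T_v,\kappa(D)]f\bigr)$ involves only indices $j$ with $|j-k|\leq N_0$ for some fixed $N_0$. Multiplying by the Besov weight,
\[
2^{k(s-m+1)}\bigl\|\Delta_k\bigl([T_v,\kappa(D)]f\bigr)\bigr\|_{L^p}\,\leq\,C\,\|\nabla v\|_{L^\infty}\!\!\sum_{|j-k|\leq N_0}\!\!2^{js}\,\|\Delta_jf\|_{L^p}\,,
\]
since the exponents combine as $k(s-m+1)+j(m-1)=js+O(|j-k|)$. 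Taking the $\ell^r(k)$ norm and applying a discrete Young-type inequality (the convolution in $k$ is by a finitely supported sequence) yields
\[
\bigl\|[T_v,\kappa(D)]f\bigr\|_{B^{s-m+1}_{p,r}}\,\leq\,C(s,d)\,\|\nabla v\|_{L^\infty}\,\|f\|_{B^s_{p,r}}\,,
\]
which is the desired inequality.

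The only delicate point, and the step I would treat most carefully, is the low-frequency contribution: for the few indices $j$ where the support of $\varphi(2^{-j}\cdot)$ meets the neighbourhood of the origin on which $\kappa$ fails to be homogeneous, the identity $h_j(z)=2^{j(m+d)}h(2^jz)$ is no longer exact. This is handled by noting that $\kappa$ is smooth everywhere, so the associated kernel is Schwartz and one obtains bounds uniform in $j$ (with $m$ replaced by any negative order), which are even better than the ones above; the finitely many low-frequency terms are then absorbed into the constant. Beyond this cosmetic fix, the heart of the argument is simply the kernel-plus-Taylor estimate that produces the gain $2^{j(m-1)}$, exactly compensating the loss $m-1$ of derivatives in the target Besov index.
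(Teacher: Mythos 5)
The paper states this lemma without proof, referring the reader implicitly to \cite{B-C-D} (it is essentially Lemma~2.99 there), so your proof is not to be compared against an argument in the text but rather against the standard one, which you have reproduced correctly: unfold the paraproduct, represent $\kappa(D)\widetilde\Delta_j$ by a rescaled kernel $h_j(z)=2^{j(m+d)}h(2^jz)$, use first-order Taylor on $S_{j-1}v$ together with Young's inequality to gain the factor $2^{j(m-1)}$ that exactly compensates the shift $m-1$ in the Besov index, then invoke quasi-orthogonality of the dyadic pieces and a discrete Young inequality in $k$. The only small inaccuracy is the parenthetical claim that $h$ is ``smooth compactly supported'': since $\widehat h=\kappa\,\widetilde\varphi$ is what is compactly supported, $h$ itself is merely Schwartz; this changes nothing because all you need is $\||z|\,h(z)\|_{L^1}<+\infty$, which holds for Schwartz functions. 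Your treatment of the finitely many low-frequency blocks (where homogeneity of $\kappa$ fails but smoothness still gives a nice kernel, and for $j=-1$ the term vanishes outright since $S_{-2}v=0$) is also correct.
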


\section{Transport equations}
In this paragraph, we deal with the transport equations in non-homogeneous Besov spaces. We refer to Chapter 3 of \cite{B-C-D} for additional details. We study, in $\R_+\times \R^d$, the initial value problem 
\begin{equation}\label{general_transport}
\begin{cases}
\d_t f+v\cdot \nabla f =g\\
f_{|t=0}=f_0\, .
\end{cases}
\end{equation}
We always assume the velocity field $v=v(t,x)$ to be a Lipschitz divergence-free function. 
In the case when the Lipschitz condition \eqref{Lip_cond} is satisfied, we have the embedding $B^s_{p ,r}\hookrightarrow W^{1,\infty}$.

We state now the main well-posedness result concerning problem \eqref{general_transport} in Besov spaces. We point out also that the notation $C^0_w([0,T]; X)$, with $X$ a Banach space, refers to the space of functions which are continuous in time with values in $X$ endowed with its weak topology.

\begin{theorem}\label{thm_transport}
Let $(s,p,r)\in \R\times [1,+\infty ]^2$ satisfy the Lipschitz condition \eqref{Lip_cond}. Given $T>0$, take $g\in L^1_T(B^s_{p ,r})$. Assume that $v\in L^1_T(B^s_{p ,r})$ and that there exist two real numbers $q>1$ and $\sigma >0$ such that $v\in L^q_T(B^{-\sigma}_{\infty ,\infty})$. Finally, let $f_0\in B^s_{p ,r}$ be the initial datum. Then, the transport equation \eqref{general_transport} has a unique solution $f$ in:
\begin{itemize}
\item the space $C^0\left([0,T];B^s_{p ,r}\right)$, if $r<+\infty$;
\item the space $\left(\displaystyle \bigcap_{s^\prime <s} C^0\left([0,T];B^{s^\prime}_{p ,\infty}\right)\right) \cap C^0_w\left([0,T];B^s_{p ,\infty}\right)$, if $r=+\infty$.
\end{itemize}
Moreover, the unique solution satisfies the following estimate:
\begin{equation*}
\|f\|_{L^\infty_T(B^s_{p ,r})}\leq \exp \left(C\int_0^T\|\nabla v\|_{B^{s-1}_{p ,r}}\, \detau \right)\left(\|f_0 \|_{B^{s}_{p ,r}}+\int_0^T\exp \left(-C\int_0^t\|\nabla v\|_{B^{s-1}_{p ,r}}\, \detau \right)\|g\|_{B^{s}_{p ,r}}\, \dt\right),
\end{equation*}
for some constant $C=C(s,p,r,d)>0$.
\end{theorem}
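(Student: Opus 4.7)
\medskip

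\noindent\textbf{Proof proposal for Theorem \ref{thm_transport}.}

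The plan is the classical Littlewood-Paley approach to linear transport: first derive \emph{a priori} estimates for smooth solutions at the level of dyadic blocks, then construct approximate solutions, pass to the limit, and finally recover uniqueness and the time continuity. For the a priori step, I would apply $\Delta_j$ to \eqref{general_transport} in order to get
\begin{equation*}
\d_t \Delta_j f + v\cdot\nabla \Delta_j f \,=\, \Delta_j g + [v\cdot\nabla,\Delta_j] f,\qquad (\Delta_j f)_{|t=0}=\Delta_j f_0\,.
\end{equation*}
Since $v$ is Lipschitz and divergence-free, a standard $L^p$ energy argument (multiply by $|\Delta_j f|^{p-2}\Delta_j f$ if $p<\infty$, with the usual modifications for $p=\infty$, and integrate by parts) yields
\begin{equation*}
\|\Delta_j f(t)\|_{L^p}\,\leq\,\|\Delta_j f_0\|_{L^p}+\int_0^t\left(\|\Delta_j g\|_{L^p}+\|[v\cdot\nabla,\Delta_j]f\|_{L^p}\right)\detau\,.
\end{equation*}
Multiplying by $2^{js}$, taking the $\ell^r$ norm and invoking the commutator estimate of Lemma \ref{l:commutator_est}, together with the embedding $B^{s-1}_{p,r}\hookrightarrow L^\infty$ guaranteed by the Lipschitz condition \eqref{Lip_cond}, produces
\begin{equation*}
\|f(t)\|_{B^s_{p,r}}\,\leq\,\|f_0\|_{B^s_{p,r}}+\int_0^t\|g(\tau)\|_{B^s_{p,r}}\detau
\,+\,C\int_0^t\|\nabla v(\tau)\|_{B^{s-1}_{p,r}}\,\|f(\tau)\|_{B^s_{p,r}}\detau\,,
\end{equation*}
and the Grönwall lemma closes the claimed estimate.

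For existence, I would construct approximate solutions by a Friedrichs-type truncation: fix $n\in\N$ and replace $v$ by $v_n:=S_n v$ and the data by $f_{0,n}:=S_n f_0$, $g_n:=S_n g$. The resulting problem is solved in $C^0(\R_+;L^2\cap L^\infty)$ by the method of characteristics (the flow of $v_n$ being globally well-defined and volume-preserving), and the a priori estimate applied to $f_n$ gives uniform bounds in $L^\infty_T(B^s_{p,r})$. Using the equation, $(\d_t f_n)_n$ is uniformly bounded in a space of negative regularity, hence by Ascoli-Arzelà and a standard compactness/interpolation argument one extracts a subsequence converging, in any intermediate $C^0_T(B^{s'}_{p,r})$ with $s'<s$, to a solution $f$ of \eqref{general_transport}, which belongs to $L^\infty_T(B^s_{p,r})$ by Fatou's property (Proposition \ref{proposition_Fatou}).

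Uniqueness follows from the same a priori estimate applied to the difference $\delta f$ of two solutions with the same data, which solves the homogeneous transport equation; here the auxiliary hypothesis $v\in L^q_T(B^{-\sigma}_{\infty,\infty})$ ensures that the product $v\cdot\nabla \delta f$ makes sense as a distribution in the endpoint case, using the continuity of the paraproduct and remainder from Proposition \ref{T-R}. The remaining, and actually most delicate, step is the time continuity. For $r<+\infty$, I would argue by density: regularizing the data via $S_n$ gives smooth solutions $f_n$ which are trivially in $C^0_T(B^s_{p,r})$, and the a priori estimate applied to $f-f_n$ together with $\|f_0-S_nf_0\|_{B^s_{p,r}}\to 0$ (which fails precisely when $r=+\infty$) upgrades the weak continuity to strong continuity in the $B^s_{p,r}$ topology. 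For $r=+\infty$, strong continuity is lost but weak-$*$ continuity with values in $B^s_{p,\infty}$ still follows from the uniform bound combined with strong continuity in $B^{s'}_{p,\infty}$ for every $s'<s$, which gives the second item of the statement. The main obstacle I anticipate is exactly this endpoint $r=+\infty$ analysis and the careful handling of the commutator term in the borderline case $s=1+d/p$, $r=1$, where one has to use the refined second estimate of Lemma \ref{l:commutator_est} rather than the naive one.
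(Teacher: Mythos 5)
The paper does not prove Theorem \ref{thm_transport}; it is stated as a classical result, with a pointer to Chapter 3 of \cite{B-C-D} for the details. Your proposal reproduces the standard Littlewood--Paley scheme from that reference---dyadic $L^p$ estimate along characteristics, commutator bound from Lemma \ref{l:commutator_est} combined with the Lipschitz embedding, Gr\"onwall, Friedrichs truncation for existence, a density argument for strong time continuity when $r<+\infty$ and weak continuity at $r=+\infty$---and is essentially correct and complete at the level of a proof sketch. One remark on the interpretation you offer of the auxiliary hypothesis $v\in L^q_T(B^{-\sigma}_{\infty,\infty})$ with $q>1$: it is not there to give meaning to the product $v\cdot\nabla\delta f$; under \eqref{Lip_cond} the embedding $B^s_{p,r}\hookrightarrow W^{1,\infty}$ already guarantees that. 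In \cite{B-C-D} this assumption (which is automatically satisfied in the Lipschitz regime but retained for the more general statement) serves to control the low-frequency block $\Delta_{-1}$ in the commutator and paraproduct estimates and, through the $q>1$ integrability, to furnish compactness in the existence step. This is a cosmetic imprecision rather than a gap in your argument.
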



To conclude this paragraph, we show a refinement of Theorem \ref{thm_transport}, proved by Vishik in \cite{Vis} and in a different way by Hmidi and Keraani (see \cite{H-K}). It states that, if $\div v=0$ and the Besov regularity index is $s=0$, the estimate in Theorem \ref{thm_transport} can be replaced by an inequality which is \textit{linear} with respect to $\|\nabla v\|_{L^1_T(L^\infty)}$.

\begin{theorem}\label{thm:improved_est_transport}
Given $T>0$, assume that $v$ is a  divergence-free vector field such that $\nabla v\in L^1_T(L^\infty )$ and let $g\in L^1_T(B^0_{\infty ,r})$. Take $r\in [1,+\infty ]$ and $f_0\in L^1_T(B^0_{\infty ,r})$. Then, there exists a constant $C=C(d)$ such that, for any solution $f$ to problem \eqref{general_transport} in
$C^0([0,T]; B^0_{\infty ,r})$ (or with the usual modification of $C^0$ into $C^0_w$ if $r=+\infty$), we have 
\begin{equation*}
\|f\|_{L^\infty_T(B^0_{\infty ,r})}\leq C\left(\|f_0\|_{B^0_{\infty ,r}}+\|g\|_{L^1_T(B^0_{\infty ,r})}\right)\left(1+\int_0^T\|\nabla v(\tau )\|_{L^\infty}\, \detau \right)\, .
\end{equation*}
\end{theorem}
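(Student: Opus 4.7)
The plan is to follow the strategy of Hmidi--Keraani \cite{H-K}, which revisits Vishik's original argument \cite{Vis}, by combining a localisation on each dyadic block with a Lagrangian change of coordinates. The crucial point, and the entire reason one obtains a linear rather than exponential factor in $\int_0^T \|\nabla v(\tau)\|_{L^\infty} \detau$, is that for Besov regularity index $s=0$ the commutator between $\Delta_j$ and composition with the bi-Lipschitz flow of $v$ can be controlled \emph{without} paying a Grönwall-type amplification.

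First, I would apply $\Delta_j$ to \eqref{general_transport} to obtain, for each $j \geq -1$,
\begin{equation*}
\d_t \Delta_j f + v \cdot \nabla \Delta_j f = \Delta_j g + R_j, \qquad R_j := [v\cdot \nabla, \Delta_j] f.
\end{equation*}
Since $\div v = 0$, the flow $\psi_t$ associated with $v$ is measure-preserving, and the standard $L^\infty$ estimate for transport equations yields
\begin{equation*}
\|\Delta_j f(t)\|_{L^\infty} \leq \|\Delta_j f_0\|_{L^\infty} + \int_0^t \|\Delta_j g(\tau)\|_{L^\infty} \detau + \int_0^t \|R_j(\tau)\|_{L^\infty} \detau.
\end{equation*}
A naive use of a commutator estimate in the spirit of Lemma \ref{l:commutator_est} and then Grönwall lemma would at best produce an exponential factor $\exp\bigl(C \int_0^t \|\nabla v(\tau)\|_{L^\infty}\detau\bigr)$, so this direct route is hopeless and must be replaced by a more refined analysis.

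The second step is the Lagrangian reformulation. Introducing $F(t,x) := f(t, \psi_t(x))$ and $G(t,x) := g(t, \psi_t(x))$, the equation collapses to $\d_t F = G$, whence $F(t) = f_0 + \int_0^t G(\tau)\detau$, and thus $f(t,\cdot) = F(t) \circ \psi_t^{-1}$. I would then decompose
\begin{equation*}
\Delta_j f(t) \,=\, \Delta_j\!\bigl(F(t) \circ \psi_t^{-1}\bigr) \,=\, (\Delta_j F(t)) \circ \psi_t^{-1} \,+\, \bigl[\Delta_j,\, \cdot\circ\psi_t^{-1}\bigr] F(t),
\end{equation*}
and use the invariance of the $L^\infty$ norm under composition with $\psi_t^{-1}$ on the first piece, so that
\begin{equation*}
\|(\Delta_j F(t))\circ \psi_t^{-1}\|_{L^\infty} \leq \|\Delta_j f_0\|_{L^\infty} + \int_0^t \|\Delta_j g(\tau)\|_{L^\infty}\detau,
\end{equation*}
after observing that the Lagrangian composition on $g$ does not change its $L^\infty$ norm.

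The third step, which I expect to be the main obstacle, is the control of the composition commutator $\bigl[\Delta_j,\,\cdot\circ\psi_t^{-1}\bigr] F(t)$. Here Bony's decomposition (Proposition \ref{T-R}) combined with a scale-by-scale analysis of $\psi_t^{-1}$, together with the elementary bound $\|\nabla \psi_t^{\pm 1}\|_{L^\infty} \leq \exp\bigl(\int_0^t \|\nabla v\|_{L^\infty}\bigr)$, produces an $\ell^r(L^\infty)$ bound of the form $C\bigl(\|f_0\|_{B^0_{\infty,r}} + \|g\|_{L^1_T(B^0_{\infty,r})}\bigr) \int_0^t \|\nabla v(\tau)\|_{L^\infty}\detau$. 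The miracle specific to the $s=0$ Besov index is that the exponential blow-up of $\nabla \psi_t^{\pm 1}$ gets cancelled by a telescoping argument on the dyadic scales (this is exactly the Hmidi--Keraani lemma, which itself encodes Vishik's insight), leaving only the linear factor. Finally, taking the $\ell^r$ norm in $j$ of the two contributions and combining them yields
\begin{equation*}
\|f\|_{L^\infty_T(B^0_{\infty,r})} \leq C\bigl(\|f_0\|_{B^0_{\infty,r}} + \|g\|_{L^1_T(B^0_{\infty,r})}\bigr)\left(1 + \int_0^T \|\nabla v(\tau)\|_{L^\infty} \detau\right),
\end{equation*}
as desired.
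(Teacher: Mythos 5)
The paper does not actually prove this statement: Theorem \ref{thm:improved_est_transport} is recalled from Vishik \cite{Vis} and Hmidi--Keraani \cite{H-K} with only a pointer to those references, so there is no in-paper proof to compare against. Your sketch, however, contains genuine gaps worth flagging.

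Your Lagrangian starting point is the right one, but the ``first piece'' estimate already breaks down. Writing $F(t) = f_0 + \int_0^t G(\tau)\detau$ with $G(\tau) := g(\tau)\circ\psi_\tau$, you claim $\|\Delta_j F(t)\|_{L^\infty} \leq \|\Delta_j f_0\|_{L^\infty} + \int_0^t\|\Delta_j g(\tau)\|_{L^\infty}\detau$. But $\Delta_j G(\tau) \neq (\Delta_j g(\tau))\circ\psi_\tau$ --- composition with the flow does \emph{not} commute with the frequency projector --- so the source term suffers from exactly the same difficulty as the data and gets no free ride. The correct exact formula is $f(t) = f_0\circ\psi_t^{-1} + \int_0^t g(\tau)\circ\psi_{\tau,t}^{-1}\detau$ (with $\psi_{\tau,t}$ the flow from time $\tau$ to $t$), and both contributions must be dyadically decomposed \emph{before} composing with the flow, then hit with $\Delta_j$.

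The ``miracle'' you invoke in the third step is also mislabeled. Bony's paraproduct decomposition (Proposition \ref{T-R}) concerns bilinear products and plays no role in estimating $\Delta_j(h\circ\Psi)$ for a composition; and what produces the linear factor is not a telescope in $j$. The actual key lemma is a two-sided bound: if $u_q$ is spectrally supported in an annulus of size $\sim 2^q$ and $\Psi$ is a measure-preserving bi-Lipschitz map with $L := \max\left(\|\nabla\Psi\|_{L^\infty},\|\nabla\Psi^{-1}\|_{L^\infty}\right)$, then
\[
\|\Delta_j(u_q\circ\Psi)\|_{L^\infty} \,\lesssim\, \min\!\left(1,\; 2^{-|j-q|}\,L\right)\,\|u_q\|_{L^\infty}\,.
\]
For $j>q$ this comes from the mean-zero kernel and Bernstein on $u_q$; for $j<q$ it relies on writing $u_q = \div w_q$ with $\|w_q\|_{L^\infty}\lesssim 2^{-q}\|u_q\|_{L^\infty}$ and using the Piola identity $(\div w)\circ\Psi = \div\bigl((\nabla\Psi)^{-1}(w\circ\Psi)\bigr)$, valid precisely because $\det\nabla\Psi\equiv 1$ --- this is where $\div v = 0$ enters, and it never appears in your argument. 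The trivial bound $\min(\cdot)\leq 1$ is just $L^\infty$-preservation of the composition. Finally, one takes the Schur sum $\sup_q\sum_j\min(1,2^{-|j-q|}e^{V(t)})$, with $V(t)=\int_0^t\|\nabla v\|_{L^\infty}\detau$, and splits at $|j-q|\sim V(t)/\ln 2$: the on-diagonal block contributes $\sim V(t)$ terms bounded by $1$, the off-diagonal tail is $O(1)$, giving $1+V(t)$. It is the \emph{logarithm} of the exponential Lipschitz bound $e^{V(t)}$ on the flow that produces linearity --- a counting/optimization between the two bounds, not a cancellation or a telescope. Without the explicit off-diagonal decay estimate, the volume-preservation input, and the min-and-count argument, the sketch does not reach the claimed inequality.
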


\chapter{Some well-known results}\label{appendixA}

This appendix is thought for the reader's convenience in order to quickly check the statement of some famous theorems. For that reason, we limit ourself to state such results, used throughout the whole thesis. Unless indicated otherwise, we refer to the introductory part of book \cite{F-N} for details (see also Chapter 10) and to Chapter 1 of \cite{B-C-D}.

\section{Embedding theorems}
We start by recalling the \emph{Rellich-Kondrachov embedding theorem}. 
\begin{theorem}\label{app:sob_embedd_thm}
Let $Q\subset \R^d$ be a bounded Lipschitz domain.
\begin{itemize}
\item[(i)] If $kp<d$ and $p\geq 1$, then the space $W^{k,p}(Q)$ is continuously embedded in $L^q(Q)$ for any $1\leq q\leq p^\ast:=(dp)/(d-kp)$. \\
 Moreover, the embedding is compact if $k>0$ and $q<p^\ast$. 
\item[(ii)] If $kp=d$, the space $W^{k,p}(Q)$ is compactly embedded in $L^q(Q)$ for any $q\in [1,+\infty)$.
\item[(iii)] If $kp>d$, then $W^{k,p}(Q)$ is continuously embedded in\footnote{The symbol $\lfloor z \rfloor$ denotes the integer part of $z$.} $\mc C^{k-\lfloor d/p \rfloor-1,\, \nu}(\oline Q)$, where either $\nu=\lfloor d/p \rfloor +1-d/p$ if $(d/p)\not \in \Z$ or $\nu$ is an arbitrary positive number in $(0,1)$ if $(d/p)\in \Z$. \\
Moreover, the embedding is compact if $0<\nu < \lfloor d/p \rfloor +1 -d/p$. 
\end{itemize}
\end{theorem}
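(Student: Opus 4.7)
The plan is to reduce the statement to the case $k=1$ by an induction argument on the differentiation order, and then to use an extension operator (available here since $Q$ is a bounded Lipschitz domain) to replace $Q$ by the whole space $\R^d$. More precisely, since $Q$ is Lipschitz, there exists a continuous linear extension $E\colon W^{k,p}(Q)\to W^{k,p}(\R^d)$ such that $Ef_{|Q}=f$ and $Ef$ is compactly supported in a fixed neighbourhood of $\oline Q$. Composing with the restriction from $\R^d$ to $Q$, it suffices to prove the corresponding continuous and compact embeddings for compactly supported functions on $\R^d$.

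For item (i), I would first handle the endpoint case $q=p^{\ast}$ with $k=1$ via the classical Gagliardo--Nirenberg--Sobolev inequality, which is obtained by writing each component of $f$ as the integral of its partial derivative and applying the Loomis--Whitney (or iterated H\"older) inequality to $|f|^{d/(d-1)}$. The case $1<p<d$ is recovered by applying the case $p=1$ to $|f|^{\gamma}$ with $\gamma=p(d-1)/(d-p)$ and H\"older's inequality. The intermediate exponents $1\leq q<p^{\ast}$ then follow by interpolation with $L^{p}(Q)$, using that $|Q|<+\infty$. Higher $k$ is obtained by iterating the case $k=1$: if $kp<d$, set $q_1=dp/(d-p)$, observe $W^{k,p}\hookrightarrow W^{k-1,q_1}$, and repeat.

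The compactness assertion in (i), and the whole statement (ii), are based on the \emph{Fréchet--Kolmogorov criterion}: a bounded family $\mc F\subset L^{q}(Q)$ with $1\leq q<+\infty$ is relatively compact provided $\sup_{f\in\mc F}\|f(\cdot+h)-f\|_{L^q}\to 0$ as $|h|\to 0$ (uniform equicontinuity of translations) together with uniform decay at infinity (trivial here since $Q$ is bounded). For $f\in W^{1,p}(\R^d)$ with compact support, the inequality $\|f(\cdot+h)-f\|_{L^p}\leq |h|\,\|\nabla f\|_{L^p}$ gives equicontinuity in $L^p$, and one upgrades the $L^p$ convergence to $L^q$ for $q<p^{\ast}$ by interpolation with the continuous embedding into $L^{p^{\ast}}$. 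This argument covers (i) in the range $q<p^{\ast}$; in the case $kp=d$ the Sobolev embedding gives $W^{k,p}\hookrightarrow L^{\tilde q}$ for every $\tilde q<+\infty$, and one repeats the interpolation trick between $L^p$ and $L^{\tilde q}$ with $\tilde q>q$ to conclude compactness into each $L^q$, $q<+\infty$.

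For item (iii), I would argue via \emph{Morrey's inequality}: for $p>d$ and $f\in W^{1,p}(\R^d)$,
\[
|f(x)-f(y)|\,\leq\,C\,|x-y|^{1-d/p}\,\|\nabla f\|_{L^p},
\]
which is proved by estimating the average of $f$ over a ball $B(x,r)$ minus $f(x)$ through the representation of $f-\overline{f}_{B}$ as a Riesz potential of $\nabla f$ and applying H\"older. This yields $W^{1,p}\hookrightarrow C^{0,\nu}$ with $\nu=1-d/p$ when $1\leq d/p<\text{(non-integer)}$; iterating on derivatives gives the general H\"older regularity $k-\lfloor d/p\rfloor-1,\nu$. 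The compactness for $0<\nu<\lfloor d/p\rfloor+1-d/p$ then follows from the Ascoli--Arzel\`a theorem, combined with the strict gain $\nu'>\nu$ in the Morrey exponent, which provides uniform equicontinuity of the derivatives of order $k-\lfloor d/p\rfloor-1$ of a bounded family in $W^{k,p}(Q)$.

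The main technical obstacle will be the careful handling of the endpoint cases: the integer value $d/p\in\Z$ in (iii), where one must enlarge slightly $\nu$ and work with an auxiliary Sobolev exponent; and the borderline $kp=d$ in (ii), where there is \emph{no} continuous embedding into $L^{\infty}$ in general, so compactness into each $L^q$, $q<+\infty$, has to be obtained through the interpolation argument sketched above rather than through a direct Morrey-type bound. Once those two delicate points are settled, the remainder of the proof is a routine assembly of Gagliardo--Nirenberg, Morrey and Fréchet--Kolmogorov.
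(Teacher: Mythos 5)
The paper does not actually prove Theorem~\ref{app:sob_embedd_thm}: it is stated without proof in Appendix~\ref{appendixA}, where the author explicitly writes that the appendix only collects statements of classical results and refers the reader to the introductory chapters of \cite{F-N} and to Chapter~1 of \cite{B-C-D} for the proofs. So there is no in-paper argument to compare against.

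Taken on its own, your sketch is the standard textbook route and is sound in its main lines: extension from a bounded Lipschitz domain to $\R^d$, Gagliardo--Nirenberg--Sobolev for the endpoint $q=p^\ast$ with $k=1$ followed by iteration in $k$ and interpolation with $L^p(Q)$ using $|Q|<+\infty$, Fr\'echet--Kolmogorov (Riesz--Kolmogorov) for the compactness claims in (i) and (ii), and Morrey's estimate plus Ascoli--Arzel\`a for (iii). Two small points you should make fully precise when you write it out. First, for the compactness in (i) with $k>1$, the translation estimate $\|f(\cdot+h)-f\|_{L^p}\leq |h|\,\|\nabla f\|_{L^p}$ only gives compactness into $L^q$ for $q<dp/(d-p)$; to reach the full range $q<p^\ast=dp/(d-kp)$ you must either interpolate between $L^p$-translation decay and the continuous embedding into $L^{p^\ast}$ (as you hint), or bootstrap through $W^{k,p}\hookrightarrow W^{k-1,dp/(d-p)}$ and use the already-known compactness one level down; either works, but the step should be spelled out, since as written the passage ``upgrades the $L^p$ convergence to $L^q$ for $q<p^\ast$'' hides the nontrivial use of the endpoint bound as the other leg of the interpolation. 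Second, for (iii) the one-paragraph ``iterate on derivatives'' remark conceals exactly where $\lfloor d/p\rfloor$ and the residual exponent $\lfloor d/p\rfloor+1-d/p$ come from: you first apply the Sobolev embedding $\lfloor d/p\rfloor$ times to raise the integrability exponent above $d$, and only then apply Morrey to the surviving derivatives of order $k-\lfloor d/p\rfloor-1$; making this counting explicit is what produces the statement's H\"older index, and it is also what forces the special treatment of the case $d/p\in\Z$, where one perturbs $p$ slightly to avoid landing exactly on the critical exponent. With those two points filled in, the argument is complete.
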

As a direct consequence of the previous theorem we have the \emph{embedding theorem for dual Sobolev spaces}.
\begin{theorem}\label{app:thm_dual_Sob}
Let $Q\subset \R^d$ be a bounded domain. Let $k>0$ and $q<+\infty$ satisfy 
\begin{itemize}
\item $q>p^\ast/(p^\ast -1)$ where $p^\ast:=(dp)/(d-kp)$, if $kp<d$;
\item $q>1$, if $kp=d$;
\item $q\geq 1$, if $kp>d$.
\end{itemize}
Then, the space $L^q(Q)$ is compactly embedded into the space $W^{-k,p^\prime}(Q)$, with $1/p+1/p^\prime=1$.
\end{theorem}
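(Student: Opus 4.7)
The plan is to deduce this statement from Theorem \ref{app:sob_embedd_thm} by a standard duality argument, exploiting the fact that $W^{-k,p'}(Q)$ is (by definition) the topological dual of $W^{k,p}_0(Q)$, and that the adjoint of a compact operator is compact (Schauder's theorem).

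First I would fix the conjugate exponent $q'\in[1,+\infty]$ defined by $1/q+1/q'=1$. The conditions imposed on $q$ in the three cases correspond exactly to the conditions needed to apply Theorem \ref{app:sob_embedd_thm} in order to obtain a \emph{compact} embedding of $W^{k,p}(Q)$ into $L^{q'}(Q)$. More precisely: if $kp<d$, the assumption $q>p^{*}/(p^{*}-1)$ translates into $q'<p^{*}$, so case (i) of Theorem \ref{app:sob_embedd_thm} applies (using $k>0$); if $kp=d$, then $q>1$ gives $q'<+\infty$, so case (ii) applies; if $kp>d$, then $q'\in[1,+\infty]$ is arbitrary and case (iii) together with the compact embedding of $C^{k-\lfloor d/p\rfloor-1,\nu}(\overline{Q})$ into $C(\overline{Q})\hookrightarrow L^{q'}(Q)$ yields compactness.

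Next, I would denote by $\iota:W^{k,p}_{0}(Q)\hookrightarrow L^{q'}(Q)$ the associated compact inclusion (restricting, if necessary, to $W^{k,p}_{0}(Q)$, which is the natural space for defining the dual $W^{-k,p'}(Q)$). Taking its adjoint $\iota^{*}:(L^{q'}(Q))'\longrightarrow (W^{k,p}_{0}(Q))'$ and using the canonical isometric identifications $(L^{q'}(Q))'\simeq L^{q}(Q)$ (valid since $q<+\infty$) and $(W^{k,p}_{0}(Q))'\simeq W^{-k,p'}(Q)$, I would produce a continuous linear map $L^{q}(Q)\to W^{-k,p'}(Q)$. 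By Schauder's theorem, $\iota^{*}$ is compact whenever $\iota$ is compact.

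Finally, I would verify that $\iota^{*}$ coincides with the natural embedding, i.e. that for $f\in L^{q}(Q)$ the distribution $\iota^{*}f\in W^{-k,p'}(Q)$ acts on $\varphi\in C^{\infty}_{c}(Q)$ by $\langle \iota^{*}f,\varphi\rangle=\int_{Q}f\,\varphi\,\dx$; this is immediate from the definition of the adjoint. Consequently $L^{q}(Q)\hookrightarrow W^{-k,p'}(Q)$ compactly, which is the desired conclusion. The main (minor) obstacle I expect is purely bookkeeping: checking in each of the three regimes that the exponents line up correctly under conjugation, in particular ensuring that the borderline strict/non-strict inequalities in the hypotheses on $q$ match precisely the strict/non-strict inequalities required by Theorem \ref{app:sob_embedd_thm} on $q'$ to guarantee \emph{compactness} (and not merely continuity).
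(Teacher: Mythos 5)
Your overall strategy — dualizing the compact Rellich–Kondrachov embedding via Schauder's theorem — is the intended one (the paper presents the statement as a "direct consequence" of Theorem~\ref{app:sob_embedd_thm} without giving a proof, and this is the standard route). The exponent bookkeeping in the regimes $kp<d$ and $kp=d$ is correct: $q>p^*/(p^*-1)\Leftrightarrow q'<p^*$, and $q>1\Leftrightarrow q'<+\infty$. The restriction to $W^{k,p}_0(Q)$ is well placed, and it also quietly repairs the discrepancy between the hypotheses of the two statements (Theorem~\ref{app:sob_embedd_thm} assumes $Q$ Lipschitz, Theorem~\ref{app:thm_dual_Sob} only that $Q$ is bounded: for $W^{k,p}_0(Q)$ one can extend by zero to a ball, so no boundary regularity is needed).

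There is, however, one genuine gap, and it sits exactly in the endpoint you flag as "minor bookkeeping." Your justification of the identification $(L^{q'}(Q))'\simeq L^q(Q)$ reads "(valid since $q<+\infty$)." That is not the right condition: the identification requires $q'<+\infty$, i.e.\ $q>1$. In the first two regimes this is automatic (you already argued $q>1$ there), but the third bullet ($kp>d$) explicitly permits $q=1$, in which case $q'=\infty$ and $(L^\infty(Q))'$ is strictly larger than $L^1(Q)$, so the duality step as written collapses. The fix is easy but must be said: when $q=1$, compose the continuous inclusion $L^1(Q)\hookrightarrow(L^\infty(Q))'$ (or, equivalently, $L^1(Q)\hookrightarrow\mathcal{M}(\overline{Q})=(C(\overline{Q}))'$ after using the compact embedding $W^{k,p}_0(Q)\hookrightarrow C(\overline{Q})$) with the compact adjoint $\iota^*$; the restriction of a compact operator remains compact, and one checks as you do at the end that the restricted map is the natural inclusion of $L^1$ into $W^{-k,p'}$. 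With that adjustment the argument is complete.
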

\section{Mollifiers}\label{section:molli}
Here we recall the main properties of the mollifiers. Such properties are widely employed in Section \ref{s:proof-1} of Chapter \ref{chap:multi-scale_NSF}.

We start by fixing a smooth, radial and radially decreasing function $\chi \in{C}^\infty_c(\mbb{R}^d)$, such that 
$$0\leq\chi\leq1, \quad \quad \chi(x)=0 \; \text{ for }|x|\geq1 \quad \quad \text{and}\quad \quad \int_{\R^d}\chi(x)\dx=1.$$
Next, we define the \emph{mollifying kernel} $\big(\chi_\veps\big)_{\veps>0}$ by the formula
$$
\chi_\veps(x)\,:=\,\frac{1}{\veps^d}\,\,\chi\!\left(\frac{x}{\veps}\right)\qquad\qquad\text{for any}\; \veps>0\; \text{ and }\;x\in\R^d\,.
$$
Then, for any tempered distribution $\mf S$ on $\R^d$ and any $\veps>0$, we introduce
$$
\mf S_{\veps}\,:=\,\chi_{\veps}\,*\,\mf S\,.
$$
\begin{theorem}\label{app:thm_mollifiers}
Let $X$ a Banach space. If $\mf S\in L^1_{\rm loc}(\R^d;X)$, then we have $\mf S_\veps \in C^\infty(\R^d; X)$. In addition, we have the following properties: 
\begin{itemize}
\item[(i)] if $\mf S\in L^p_{\rm loc}(\R^d;X)$ with $1\leq p<+\infty$, then $\mf S_\veps \in L^p_{\rm loc}(\R^d;X)$ and 
$$ \mf S_\veps \longrightarrow \mf S \quad \text{in}\quad L^p_{\rm loc}(\R^d;X) \quad \text{as}\quad \veps\rightarrow 0^+\, ; $$
\item[(ii)] if $\mf S\in L^p(\R^d;X)$ with $1\leq p<+\infty$, then $\mf S_\veps \in L^p(\R^d;X)$, 
$$ \| \mf S_\veps \|_{L^p(\R^d;X)}\leq \|\mf S\|_{L^p(\R^d;X)} $$
and 
$$ \mf S_\veps \longrightarrow \mf S \quad \text{in}\quad L^p(\R^d;X) \quad \text{as}\quad \veps\rightarrow 0^+\, ; $$
\item[(iii)] if $\mf S\in L^\infty(\R^d;X)$, then $\mf S_\veps \in L^\infty(\R^d;X)$ and 
$$ \| \mf S_\veps \|_{L^\infty(\R^d;X)}\leq \|\mf S\|_{L^\infty(\R^d;X)}\, ; $$
\item[(iv)] if $\mf S \in C^k(Q; X)$ where $k$ is a non-negative integer and $Q\subset \R^d$ is a ball, then $(\partial^\alpha \mf S)_\veps (x)=\partial^\alpha \mf S_\veps(x)$ for all $x\in Q$, $\veps \in (0, {\rm dist}[x, \partial Q])$ and for any multi-index $\alpha$ such that $|\alpha |\leq k$. Moreover, 
$$ \|\mf S_\veps\|_{C^k(\oline B;X)}\leq \|\mf S\|_{C^k(\oline V;X)} $$
for any $\veps \in (0, {\rm dist}[\partial B,\partial V])$, where $B$ and $V$ are balls in $\R^d$ such that $\oline B\subset V\subset \oline V\subset Q$. Finally, 
$$ \mf S_\veps \longrightarrow \mf S \quad \text{in}\quad C^k(\oline B;X) \quad \text{as}\quad \veps\rightarrow 0^+\, . $$
\end{itemize}
\end{theorem}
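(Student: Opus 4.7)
The plan is to prove the four items essentially independently, relying on two core ingredients: the $L^1$-normalisation $\|\chi_\veps\|_{L^1}=1$ (inherited from $\int\chi=1$ by a change of variables) and the pointwise representation $\mathfrak{S}_\veps(x)=\int_{\R^d}\chi_\veps(x-y)\,\mathfrak{S}(y)\,{\rm d}y$. First, I would establish that $\mathfrak{S}_\veps\in C^\infty(\R^d;X)$ whenever $\mathfrak{S}\in L^1_{\rm loc}(\R^d;X)$: since $\chi_\veps\in C^\infty_c(\R^d)$ and its derivatives are uniformly bounded on every compact set, one justifies differentiation under the Bochner integral (using dominated convergence in the Banach-space-valued setting) to get $\partial^\alpha\mathfrak{S}_\veps(x)=\int(\partial^\alpha\chi_\veps)(x-y)\,\mathfrak{S}(y)\,{\rm d}y$ for every multi-index $\alpha$.

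For items (i)--(iii), the quantitative bounds come from a Banach-valued version of Young's convolution inequality: writing $\|\mathfrak{S}_\veps(x)\|_X\leq\int\chi_\veps(x-y)\,\|\mathfrak{S}(y)\|_X\,{\rm d}y$ and applying Minkowski's integral inequality together with $\|\chi_\veps\|_{L^1}=1$, one gets $\|\mathfrak{S}_\veps\|_{L^p(\R^d;X)}\leq\|\mathfrak{S}\|_{L^p(\R^d;X)}$ for all $p\in[1,+\infty]$, which covers the norm bounds in (ii) and (iii) and, by cutting off on balls, the local version in (i). To upgrade to strong convergence $\mathfrak{S}_\veps\to\mathfrak{S}$ in $L^p$ for $p<+\infty$, the standard route is a density/approximation argument: first verify convergence on the dense subspace $C_c(\R^d;X)$ by uniform continuity (writing the difference as $\int\chi_\veps(z)\,(\mathfrak{S}(x-z)-\mathfrak{S}(x))\,{\rm d}z$ and exploiting $\Supp\chi_\veps\subset B(0,\veps)$), then use an $\veps/3$-argument combined with the uniform operator bound from Young's inequality to pass to general $\mathfrak{S}\in L^p$. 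The local version in (i) follows by multiplying by a cutoff supported on a slightly larger compact set.

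For item (iv), the key observation is that one can transfer derivatives inside the convolution in two different ways: $(\partial^\alpha\mathfrak{S})_\veps=\partial^\alpha(\mathfrak{S}_\veps)$ whenever $\mathfrak{S}$ is sufficiently regular and the mollifier is supported inside the domain. Concretely, for $x\in Q$ and $\veps<{\rm dist}(x,\partial Q)$, the translate $y\mapsto\chi_\veps(x-y)$ has support inside $Q$, so integration by parts is legitimate and yields the claimed commutation on $B$ for $\veps<{\rm dist}(\partial B,\partial V)$. Combining this with the uniform bound from (iii) applied to each $\partial^\alpha\mathfrak{S}$ on $\oline V$ gives the $C^k(\oline B;X)$ norm control, and the strong $C^k(\oline B;X)$ convergence is then reduced to the $C^0$ case applied to each derivative $\partial^\alpha\mathfrak{S}$, which in turn follows from uniform continuity of the (continuous on $\oline V$) function $\partial^\alpha\mathfrak{S}$ on the compact set $\oline B$.

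The only mildly delicate point, and the one I would treat with most care, is the Banach-space-valued aspect: for $X$ not necessarily separable one has to work with the Bochner integral and check that the appropriate measurability and strong-measurability hypotheses propagate to $\mathfrak{S}_\veps$; however, all the estimates above only use $\|\cdot\|_X$ pointwise and Minkowski's inequality, so no real obstruction arises. The rest is entirely routine, modelled on the classical scalar-valued proof.
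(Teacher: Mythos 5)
The paper does not actually prove this statement: Theorem \ref{app:thm_mollifiers} appears in an appendix explicitly devoted to recalling classical results without proof, and the authors simply refer the reader to the book [F-N] of Feireisl and Novotn\'y. So there is no ``paper's own proof'' to compare against; what you have written is a self-contained substitute.

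Your argument is correct and is the standard proof. The two key ingredients you isolate — the normalisation $\|\chi_\veps\|_{L^1}=1$ combined with Minkowski's integral inequality to get the $L^p$ operator bounds, and the density-plus-$\veps/3$ argument for convergence when $p<+\infty$ — are exactly what one needs, and the Bochner-integral formulation poses no obstruction since all estimates factor through $\|\cdot\|_X$ and strong measurability of $\mathfrak{S}$ is part of the definition of $L^p(\R^d;X)$. The treatment of item (iv) is also right: for $x\in\oline B$ and $\veps<\mathrm{dist}(\partial B,\partial V)$ the translate $\chi_\veps(x-\cdot)$ is supported inside $V$, so integration by parts is boundary-free and yields $\partial^\alpha\mathfrak{S}_\veps=(\partial^\alpha\mathfrak{S})_\veps$ on $\oline B$, after which the $C^k$ bound and convergence reduce to the $C^0$ case for each $\partial^\alpha\mathfrak{S}$ via uniform continuity on the compact $\oline V$. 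One small point worth making explicit if you write this out in full: the identity $\mathfrak{S}_\veps|_K=(\mathfrak{S}\bbbone_{K'})_\veps|_K$ for small $\veps$, which you use to localise item (i), requires $\mathrm{dist}(K,\partial K')>\veps$; this is what makes the reduction to item (ii) legitimate.
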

\section{Some assorted inequalities}\label{sec:assorted_ineq}
We start this section by mentioning some classical inequalities: \emph{Young's inequality} (see Chapter 4 of \cite{Brezis} in this respect), \emph{H\"older's inequality} and \emph{Minkowski's inequality}.
\begin{proposition}[Young's inequality]\label{app:young_ineq}
If $ a\geq 0$ and $ b\geq 0$ are non-negative real numbers and if $p>1$ and $q>1$ are real numbers such that 
$$\frac {1}{p}+\frac {1}{q}=1\, ,$$
then\footnote{It is sometimes convenient to use the formulation $ab \leq \veps a^p+ \veps^{-1/(p-1)}b^q$, with $\veps>0$.}
$$ ab\leq \frac {a^{p}}{p}+\frac {b^{q}}{q}\, .$$
\end{proposition}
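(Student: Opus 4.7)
The plan is to establish this classical inequality by exploiting the convexity of the exponential function (equivalently, the concavity of the logarithm), which is the most transparent route and avoids any calculus optimisation. First I would dispense with the degenerate cases: if $a=0$ or $b=0$ the inequality reads $0\leq a^p/p+b^q/q$, which holds trivially since both terms on the right-hand side are non-negative. Thus I can assume $a>0$ and $b>0$ for the remainder of the argument.

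Next I would rewrite the product $ab$ as an exponential of a convex combination. Using the assumption $1/p+1/q=1$, I would observe that
\[
ab\,=\,\exp\bigl(\log a+\log b\bigr)\,=\,\exp\!\left(\frac{1}{p}\log a^p+\frac{1}{q}\log b^q\right).
\]
Setting $x:=\log a^p$ and $y:=\log b^q$, the weights $\lambda:=1/p$ and $1-\lambda=1/q$ lie in $(0,1)$ and sum to $1$, so the convexity of the exponential yields
\[
\exp\bigl(\lambda x+(1-\lambda)y\bigr)\,\leq\,\lambda\,e^{x}+(1-\lambda)\,e^{y}\,=\,\frac{1}{p}\,a^p+\frac{1}{q}\,b^q,
\]
which is exactly the desired bound.

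There is no real obstacle here: the only subtlety is making sure the exponents on the right-hand side match, but this is automatic once the conjugacy relation $1/p+1/q=1$ is invoked. As an alternative route, one could instead minimise the one-variable function $f(a):=a^p/p+b^q/q-ab$ (for fixed $b>0$) by differentiation, checking that $f'(a)=a^{p-1}-b=0$ gives $a=b^{1/(p-1)}$ and verifying that $f$ attains a non-negative minimum at this point; however, the convexity argument above is shorter, conceptually cleaner, and immediately captures the equality case $a^p=b^q$.
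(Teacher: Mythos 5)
Your proof is correct: the convexity-of-the-exponential (equivalently, concavity-of-the-logarithm) argument is the classical derivation of Young's inequality, and you correctly handle the degenerate cases $a=0$ or $b=0$ and correctly match the exponents via $\frac{1}{p}\log a^p=\log a$, $\frac{1}{q}\log b^q=\log b$. Note, however, that the paper does not actually supply a proof of this statement; it is quoted as a well-known fact with a pointer to Chapter~4 of Brezis, so there is no paper proof to compare against — your argument is the standard one found in such references.
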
 
\begin{proposition}[H\"older's inequality]\label{app:hold_ineq}
Let $(X, \mu)$ be a measure space and $(p,q,r)$ in $[1,+\infty]^3$ be such that 
$$ \frac{1}{p}+\frac{1}{q}=\frac{1}{r}\, . $$
If the couple $(f,g)$ belongs to $L^p(X,\mu)\times L^q(X,\mu)$, then the product $fg$ belongs to $L^r(X,\mu)$ and 
$$ \|fg\|_{L^r}\leq \|f\|_{L^p}\|g\|_{L^q}\, . $$
\end{proposition}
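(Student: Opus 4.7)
The plan is to reduce the statement to the classical case $r = 1$ and then invoke Young's inequality, which has just been recalled as Proposition \ref{app:young_ineq}. First I would dispose of the degenerate situations: if $\|f\|_{L^p} = 0$ or $\|g\|_{L^q} = 0$, then $fg = 0$ $\mu$-almost everywhere and the claimed inequality is trivial; if $p = +\infty$ (so necessarily $q = r$), then $|f(x) g(x)| \leq \|f\|_{L^\infty} |g(x)|$ for a.e.\ $x$, and integration of the $r$-th power yields the bound, with the symmetric argument covering $q = +\infty$.

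Next I would handle the main case $1 \leq p, q < +\infty$ by first proving the statement in the situation $r = 1$, which corresponds to the relation $1/p + 1/q = 1$ (so $p, q > 1$ as conjugate exponents). Assuming $\|f\|_{L^p}$ and $\|g\|_{L^q}$ are both strictly positive and finite, I would apply Young's inequality pointwise to
\[
a \,=\, \frac{|f(x)|}{\|f\|_{L^p}}\,, \qquad b \,=\, \frac{|g(x)|}{\|g\|_{L^q}}\,,
\]
which gives
\[
\frac{|f(x) g(x)|}{\|f\|_{L^p}\,\|g\|_{L^q}} \,\leq\, \frac{1}{p}\,\frac{|f(x)|^p}{\|f\|_{L^p}^p} \,+\, \frac{1}{q}\,\frac{|g(x)|^q}{\|g\|_{L^q}^q}\,.
\]
Integrating over $X$ with respect to $\mu$ and using the definitions of the $L^p$ and $L^q$ norms, the right-hand side equals $1/p + 1/q = 1$, which yields the sought inequality $\|fg\|_{L^1} \leq \|f\|_{L^p}\,\|g\|_{L^q}$ and, in particular, shows that $fg \in L^1(X,\mu)$.

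Finally, for general $r \in [1, +\infty[\,$ with $1/p + 1/q = 1/r$, I would apply the case $r = 1$ just established to the functions $F := |f|^r$ and $G := |g|^r$, together with the conjugate exponents $p' := p/r$ and $q' := q/r$, which satisfy $1/p' + 1/q' = r(1/p + 1/q) = 1$. Since $F \in L^{p'}(X, \mu)$ with $\|F\|_{L^{p'}} = \|f\|_{L^p}^r$ and analogously for $G$, the case $r=1$ gives
\[
\int_X |f(x)|^r\,|g(x)|^r\, {\rm d}\mu(x) \,=\, \|F G\|_{L^1} \,\leq\, \|F\|_{L^{p'}}\,\|G\|_{L^{q'}} \,=\, \|f\|_{L^p}^r\,\|g\|_{L^q}^r\,.
\]
Taking the $r$-th root produces the desired bound $\|fg\|_{L^r} \leq \|f\|_{L^p}\,\|g\|_{L^q}$ and shows $fg \in L^r(X, \mu)$. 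There is no real obstacle here beyond bookkeeping of the degenerate endpoint cases; the entire proof is a routine application of the already-stated Young inequality together with this homogenisation trick.
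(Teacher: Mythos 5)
The paper does not itself prove Proposition~\ref{app:hold_ineq}: the appendix explicitly opens by saying that it only \emph{states} well-known results for the reader's convenience, deferring proofs to the cited references (\cite{F-N}, \cite{B-C-D}), so there is no in-paper argument to compare against. Your proof is the standard one and it is correct --- dispose of the degenerate endpoints, reduce to the conjugate case $r=1$ via a pointwise application of Young's inequality (Proposition~\ref{app:young_ineq}) to the normalised functions, then recover general $r$ by applying that case to $|f|^r$, $|g|^r$ with exponents $p/r$, $q/r$; the only cosmetic quibble is that in the sub-case $p=q=+\infty$ (forcing $r=+\infty$) one takes essential suprema rather than ``integrates the $r$-th power,'' but that is a wording slip, not a gap.
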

\begin{proposition}[Minkowski's inequality]\label{app:mink} 
Let $(X_1, \mu_1)$ and $(X_2, \mu_2)$ be two measure spaces and $f$ a non-negative measurable function over $X_1 \times X_2$. For all $1\leq p\leq q\leq +\infty$, we have 
$$ \left\| \|f(\cdot ,x_2)\|_{L^p(X_1,\mu_1)}\right\|_{L^q(X_2,\mu_2)}\leq \left\| \|f(x_1 ,\cdot)\|_{L^q(X_2,\mu_2)}\right\|_{L^p(X_1,\mu_1)}\, .  $$
\end{proposition}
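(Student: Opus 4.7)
The statement to prove is the classical Minkowski integral inequality. My plan is to reduce it to a duality argument in $L^{q/p}$ combined with an application of Hölder's inequality (Proposition \ref{app:hold_ineq}), together with Tonelli's theorem to interchange the order of integration (which is legitimate since $f\geq 0$).

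First, I would dispose of the degenerate cases. If $p=q$, the two sides coincide by Tonelli, so there is nothing to prove. If $p=1$, the inequality reduces, after raising both sides to the power $q$ and using Tonelli, to the triangle inequality for $\|\cdot\|_{L^q(X_2,\mu_2)}$ applied to sections of $f$, which is the classical Minkowski inequality for $L^q$ norms. If $q=+\infty$, the right-hand side is $\left\|\operatorname{ess\,sup}_{x_2} f(\cdot,x_2)\right\|_{L^p(X_1,\mu_1)}$ and the inequality follows from monotonicity of the $L^p$ norm in $x_1$. So from now on I assume $1\leq p<q<+\infty$.

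The core step is the following. Set $r:=q/p\geq 1$ and define $F(x_2):=\int_{X_1} f(x_1,x_2)^p\,d\mu_1=\|f(\cdot,x_2)\|_{L^p(X_1,\mu_1)}^p$. Then raising the left-hand side to the power $p$ gives $\|F\|_{L^r(X_2,\mu_2)}$. Using the duality characterisation of the $L^r$ norm, I can write
\begin{equation*}
\|F\|_{L^r(X_2,\mu_2)}\,=\,\sup\left\{\int_{X_2} F(x_2)\,g(x_2)\,d\mu_2\;:\; g\geq 0,\ \|g\|_{L^{r'}(X_2,\mu_2)}\leq 1\right\},
\end{equation*}
where $r'$ is the conjugate exponent of $r$. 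For each such $g$, Tonelli's theorem gives
\begin{equation*}
\int_{X_2} F(x_2)\,g(x_2)\,d\mu_2\,=\,\int_{X_1}\!\!\int_{X_2} f(x_1,x_2)^p\,g(x_2)\,d\mu_2\,d\mu_1.
\end{equation*}
Now I apply Hölder's inequality in the $x_2$-variable with exponents $r=q/p$ and $r'$: since $\|f(x_1,\cdot)^p\|_{L^r}=\|f(x_1,\cdot)\|_{L^q}^p$, I obtain
\begin{equation*}
\int_{X_2} f(x_1,x_2)^p\,g(x_2)\,d\mu_2\,\leq\,\|f(x_1,\cdot)\|_{L^q(X_2,\mu_2)}^p\,\|g\|_{L^{r'}(X_2,\mu_2)}\,\leq\,\|f(x_1,\cdot)\|_{L^q(X_2,\mu_2)}^p.
\end{equation*}

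Integrating this bound over $X_1$ yields
\begin{equation*}
\int_{X_2} F(x_2)\,g(x_2)\,d\mu_2\,\leq\,\int_{X_1}\|f(x_1,\cdot)\|_{L^q(X_2,\mu_2)}^p\,d\mu_1\,=\,\Big\|\|f(x_1,\cdot)\|_{L^q(X_2,\mu_2)}\Big\|_{L^p(X_1,\mu_1)}^p.
\end{equation*}
Taking the supremum over admissible $g$ gives $\|F\|_{L^r(X_2,\mu_2)}$ on the left, which is exactly the left-hand side of Minkowski's inequality raised to the $p$-th power. Extracting the $p$-th root completes the proof. The only delicate point is justifying the duality representation of the $L^r$ norm when $r=1$ (i.e. $p=q$, already excluded) or when $\mu_2$ is not $\sigma$-finite; in the $\sigma$-finite setting assumed throughout the thesis, this is standard, so no additional obstacle arises.
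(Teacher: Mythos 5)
Your argument is correct. There is, however, no proof in the paper to compare against: Proposition \ref{app:mink} is recalled without proof in the appendix ``Some well-known results,'' which the authors explicitly introduce by saying they limit themselves to stating such classical facts. Your duality approach --- raising the left side to the $p$-th power, writing $\|F\|_{L^{q/p}}$ as a supremum of pairings against $g\in L^{(q/p)'}$ with $\|g\|\leq 1$, interchanging the integrals by Tonelli, applying H\"older in $x_2$ with exponents $q/p$ and $(q/p)'$, and integrating over $X_1$ --- is the standard derivation, and every step is sound. Two minor observations. First, your preliminary disposal of the case $p=1$ (``reduces to the classical Minkowski inequality'') is circular, since the continuous triangle inequality for $L^q$ \emph{is} the assertion being proved when $p=1$; but this is harmless because your main argument only assumes $1\leq p<q<+\infty$ and therefore already covers $p=1$. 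Second, the closing caveat about $\sigma$-finiteness is unnecessary: for $r=q/p>1$ the duality lower bound can be obtained directly by testing against $g$ proportional to $F^{r-1}$ (after a truncation if $\|F\|_{L^r}=+\infty$), so no hypothesis on $\mu_2$ is needed.
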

The next two results are fundamental to give uniform bounds in Sobolev norms for the velocity fields and the temperatures. Such propositions are the so-called \emph{generalized Poincar\'e inequality} and \emph{generalized Korn-Poincar\'e inequality} respectively.
\begin{proposition}\label{app:poincare_prop}
Let $1\leq p\leq +\infty$, $0<r<+\infty$, $V_0>0$ and let $Q\subset \R^d$ be a bounded Lipschitz domain. Then, there exists a positive constant $c=c(p,r,V_0)$ such that
$$ \| v\|_{W^{1,p}(Q)}\leq c\left[\|\nabla  v\|_{L^{p}(Q;\, \R^{d})}+\left(\int_V | v|^r\dx \right)^{1/r}\right] \, ,$$
for any measurable $V\subset Q$ with $|V|\geq V_0$ and any $ v\in W^{1,p}(Q)$.
\end{proposition}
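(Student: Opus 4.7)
The plan is to argue by contradiction and compactness, which is the standard route for Poincaré-type inequalities involving a non-trivial normalising term. Suppose the conclusion fails. Then, for every $n\in\N$, one can find a function $v_n\in W^{1,p}(Q)$ and a measurable set $V_n\subset Q$ with $|V_n|\geq V_0$ such that
\[
\|v_n\|_{W^{1,p}(Q)}\,>\,n\left(\|\nabla v_n\|_{L^p(Q;\R^d)}\,+\,\Big(\int_{V_n}|v_n|^r\dx\Big)^{1/r}\right)\,.
\]
After dividing by $\|v_n\|_{W^{1,p}(Q)}$, we may assume that $\|v_n\|_{W^{1,p}(Q)}=1$ for all $n$, so that, in the limit $n\to+\infty$, one has
\[
\|\nabla v_n\|_{L^p(Q;\R^d)}\,\longrightarrow\,0\qquad\mbox{ and }\qquad \int_{V_n}|v_n|^r\dx\,\longrightarrow\,0\,.
\]

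Next, I would exploit the Rellich--Kondrachov compact embedding from Theorem \ref{app:sob_embedd_thm} (with the usual modification via Arzel\`a--Ascoli when $p=+\infty$, in which case $W^{1,\infty}(Q)$ embeds compactly in $C^0(\oline Q)$). Up to extraction, this yields a $v\in W^{1,p}(Q)$ and strong convergence $v_n\to v$ in $L^p(Q)$; combined with $\|\nabla v_n\|_{L^p}\to 0$ and the lower semicontinuity of the norms, one gets $v_n\to v$ in $W^{1,p}(Q)$ and $\nabla v\equiv 0$ a.e. in $Q$. Since $Q$ is a connected Lipschitz domain, $v$ has to be a constant, and the normalisation $\|v_n\|_{W^{1,p}}=1$ forces $\|v\|_{W^{1,p}}=1$, hence $v\equiv c$ for some $c\neq 0$.

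The main obstacle is to transfer the vanishing of the local integral over the moving sets $V_n$ to information on the constant $c$. For this, I would use the boundedness of $(v_n)_n$ in a space of higher integrability (either $L^{p^\ast}$ when $kp<d$, or $L^\infty$ when $p>d$, via Sobolev embeddings) to deduce that $\big(|v_n|^r\big)_n$ is equi-integrable on $Q$. Combining equi-integrability with the almost everywhere convergence $v_n\to v=c$ (extracted from the $L^p$ strong convergence), Vitali's convergence theorem gives $|v_n|^r\to |c|^r$ in $L^1(Q)$. At this point I would write the natural splitting
\[
\int_{V_n}|v_n|^r\dx\,=\,\int_Q\bbbone_{V_n}\bigl(|v_n|^r-|c|^r\bigr)\dx\,+\,|c|^r\,|V_n|\,\geq\,-\,\bigl\||v_n|^r-|c|^r\bigr\|_{L^1(Q)}\,+\,|c|^r\,V_0\,,
\]
and pass to the limit: the first term tends to zero, while the left-hand side also vanishes by our contrary assumption, so we must have $|c|^r\,V_0\leq 0$, i.e. $c=0$. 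This contradicts $\|v\|_{W^{1,p}(Q)}=1$ and concludes the proof.

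The delicate point in the argument is exactly the passage to the limit in $\int_{V_n}|v_n|^r\dx$, since the sets $V_n$ are only assumed to be measurable with a uniform lower bound on their Lebesgue measure, and may otherwise depend on $n$ in an arbitrary (possibly oscillating) way. The two ingredients that make it work are the uniform lower bound $|V_n|\geq V_0>0$ (which prevents the family of indicator functions $\bbbone_{V_n}$ from concentrating on a negligible set) and the equi-integrability of $\bigl(|v_n|^r\bigr)_n$ coming from Sobolev embeddings. A minor technical point to handle separately is the borderline case $p=+\infty$, where one replaces the Rellich--Kondrachov step by the compact embedding $W^{1,\infty}(Q)\hookrightarrow C^0(\oline Q)$, so that the convergence $v_n\to c$ is in fact uniform and the conclusion becomes immediate.
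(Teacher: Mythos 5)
The paper states this proposition in its appendix of ``well-known results'' without proof (pointing the reader to the references \cite{F-N} and \cite{B-C-D}), so there is no internal proof to compare against; your contradiction-and-compactness strategy is the standard one for this kind of generalised Poincar\'e inequality with a floating normalising set, and the overall architecture (normalise, extract a $W^{1,p}$-strong limit $v\equiv c$ via Rellich--Kondrachov, then rule out $c\neq 0$ using the uniform lower bound $|V_n|\geq V_0$) is sound.

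There is, however, a genuine gap in the step where you claim that $\bigl(|v_n|^r\bigr)_n$ is equi-integrable on $Q$ as a consequence of Sobolev embeddings. This is false for large $r$: when $1\leq p<d$, the bound $\|v_n\|_{W^{1,p}}=1$ only gives $(v_n)_n\subset L^{p^*}(Q)$ with $p^*=dp/(d-p)$, so $\bigl(|v_n|^r\bigr)_n$ is bounded in $L^{p^*/r}(Q)$; if $r>p^*$ this exponent is below $1$ and $v_n$ need not even belong to $L^r(Q)$, and even at $r=p^*$ mere $L^1$-boundedness gives no equi-integrability. Since the statement is asserted for \emph{all} $r\in\,]0,+\infty[\,$, this range must be covered. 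The fix is a one-line preliminary reduction: for $r'\,:=\,\min\{r,1\}$, Jensen's inequality for the probability measure $\dx/|V_n|$ on $V_n$ gives
\[
\int_{V_n}|v_n|^{r'}\dx\,\leq\,|V_n|^{\,1-r'/r}\left(\int_{V_n}|v_n|^{r}\dx\right)^{r'/r}\,\leq\,|Q|^{\,1-r'/r}\left(\int_{V_n}|v_n|^{r}\dx\right)^{r'/r}\,\longrightarrow\,0\,,
\]
so one may run the whole argument with $r'$ in place of $r$. Once $r'\leq 1$, the inequality $\bigl||a|^{r'}-|b|^{r'}\bigr|\leq|a-b|^{r'}$ together with H\"older and the strong $L^p$ convergence $v_n\to c$ yields $\|\,|v_n|^{r'}-|c|^{r'}\,\|_{L^1(Q)}\to 0$ directly, without invoking Vitali at all (alternatively, $r'\leq 1<p^*$ makes your equi-integrability claim correct). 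With this reduction inserted, the rest of your argument, including the splitting $\int_{V_n}|v_n|^{r'}\geq -\|\,|v_n|^{r'}-|c|^{r'}\,\|_{L^1(Q)}+|c|^{r'}V_0$ and the separate treatment of $p=+\infty$, goes through.
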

\begin{proposition} \label{app:korn-poincare_prop}
Let $Q\subset \R^d$, with $d>2$, be a bounded Lipschitz domain, and let $1<p<+\infty$, $M_0>0$, $K>0$, $\alpha>1$. Then, there exists a positive constant $c=c(p,M_0,K,\alpha)$ such that the inequality 
$$ \| v\|_{W^{1,p}(Q;\R^d)}\leq c\left(\|\nabla  v+\, ^t\nabla  v-\frac{2}{d}\, \div  v \, \Id\|_{L^p(Q;\R^d)}+\int_Q f| v| \dx\right) $$
holds for any $ v\in W^{1,p}(Q;\R^d)$ and any non-negative function $f$ such that 
$$ 0<M_0\leq\int_Q f\dx \quad \text{and}\quad \int_Q f^\alpha \dx\leq K \; \; \text{for a certain }\alpha>1\, . $$
\end{proposition}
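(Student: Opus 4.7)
My plan is to set up a compactness-contradiction argument driven by the classical Korn inequality and the rigidity of the kernel of the deviatoric symmetric gradient
$D(v) := \nabla v + \,^t\nabla v - \tfrac{2}{d}\div v\,\Id$.
Were the stated inequality to fail, one could normalize in order to produce sequences $(v_n)_n \subset W^{1,p}(Q;\R^d)$ with $\|v_n\|_{W^{1,p}} = 1$, together with non-negative functions $(f_n)_n$ satisfying the two integral constraints, such that
$$
\|D(v_n)\|_{L^p} \,+\, \int_Q f_n |v_n|\,\dx \,\longrightarrow\, 0\,.
$$
Up to extraction, Theorem \ref{app:sob_embedd_thm} yields $v_n \weak v$ weakly in $W^{1,p}$ and strongly in $L^q$ for every $q$ strictly below the Sobolev exponent $p^*$, while the uniform bound $\|f_n\|_{L^\alpha}\leq K^{1/\alpha}$ allows one to extract $f_n \weak f$ weakly in $L^\alpha$, with $\int_Q f\,\dx \geq M_0 > 0$ by weak lower semi-continuity.

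Next I would invoke the classical Korn inequality $\|\nabla w\|_{L^p(Q)} \leq C\bigl(\|D(w)\|_{L^p(Q)} + \|w\|_{L^p(Q)}\bigr)$, valid on bounded Lipschitz domains in dimension $d \geq 3$, and apply it to differences $v_n - v_m$: combined with the $L^p$-strong convergence of $(v_n)$, this shows that $(v_n)$ is Cauchy in $W^{1,p}$, so $v_n \to v$ in fact \emph{strongly} in $W^{1,p}$. In particular $\|v\|_{W^{1,p}} = 1$, hence $v \not\equiv 0$. The vanishing $\|D(v_n)\|_{L^p} \to 0$ then forces $D(v) = 0$ in the distributional sense, which for $d > 2$ identifies $v$ as a \emph{conformal Killing vector field}: such fields are polynomials of degree at most two, hence real-analytic, and form a finite-dimensional linear space. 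It is precisely here that the dimensional restriction $d>2$ plays a decisive role: the two-dimensional kernel would be infinite-dimensional (essentially holomorphic vector fields), and the rigidity argument below would break down.

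To conclude, one passes to the limit in the weighted integral. By weak convergence $f_n \weak f$ in $L^\alpha$ and strong convergence of $|v_n|$ in $L^{\alpha'}$ (obtained by combining the strong $W^{1,p}$-convergence just derived with the Sobolev embedding, so as to cover the conjugate exponent $\alpha'$), the weak-strong pairing yields $\int_Q f|v|\,\dx = 0$. Since $f \geq 0$ and $|v| \geq 0$, this means $f|v| = 0$ almost everywhere; but $\{f>0\}$ has positive Lebesgue measure (because $\int_Q f\,\dx \geq M_0 > 0$), so $v$ vanishes on a set of positive measure. Real-analyticity of $v$ then forces $v \equiv 0$ on $Q$, contradicting $\|v\|_{W^{1,p}} = 1$.

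The most delicate step I expect is the weak-strong passage to the limit in $\int_Q f_n |v_n|\,\dx$: because $\alpha > 1$ is arbitrary, the conjugate exponent $\alpha'$ may not be directly reachable from the Sobolev embedding $W^{1,p} \hookrightarrow L^{p^*}$, and one may need to combine that embedding with a truncation on $f_n$ (splitting $f_n = f_n \bbbone_{\{f_n \leq M\}} + f_n \bbbone_{\{f_n > M\}}$ and controlling the tail uniformly via the $L^\alpha$-bound) in order to justify the limit uniformly in $n$. A secondary conceptual difficulty is the identification of the kernel of $D$ as a finite-dimensional space of real-analytic vector fields — the algebraic heart of the argument — which is exactly where the hypothesis $d > 2$ is used in an essential way.
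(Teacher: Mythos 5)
The paper itself does not prove this proposition: it is stated in the appendix of ``well-known results'' with a pointer to Chapter~10 of Feireisl--Novotn\'y \cite{F-N}. Your compactness--contradiction argument is precisely that standard proof, and the main structural steps are right: the Korn inequality for the trace-free symmetric gradient on a bounded Lipschitz domain in dimension $d\geq 3$, used on differences $v_n-v_m$ to upgrade weak to strong $W^{1,p}$-convergence; the identification of $\ker D$ with conformal Killing vector fields, a finite-dimensional space of polynomials of degree at most two (this is exactly where $d>2$ is essential, as you correctly note); and the final rigidity step via real-analyticity.

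The one place where the proposal is not yet a proof is the step you flag yourself, and your proposed repair does not quite close it. Truncating $f_n$ into $f_n\bbbone_{\{f_n\leq M\}}+f_n\bbbone_{\{f_n>M\}}$ and estimating $\int_Q f_n\bbbone_{\{f_n>M\}}\,|v_n-v|\,\dx$ by H\"older against $\|v_n-v\|_{L^{p^*}}$ forces you to put $f_n\bbbone_{\{f_n>M\}}$ into $L^{(p^*)'}$; this is under control via the $L^\alpha$-bound only when $\alpha\geq (p^*)'$, and there is no such lower bound on $\alpha$ in the statement (for instance $p=2$, $d=3$ gives $(p^*)'=6/5$, and $\alpha$ may be anywhere in $(1,6/5)$). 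The correct repair instead uses two facts that are already implicit in your argument but are not invoked at the right moment: \emph{(i)} the limit $v$ is a polynomial, hence $v\in L^\infty(Q)$, so $|v|\in L^{\alpha'}$ for \emph{every} $\alpha>1$ and the weak pairing $\int_Q f_n|v|\,\dx\to\int_Q f|v|\,\dx$ is immediate; \emph{(ii)} the strong $L^p$-convergence gives $v_n\to v$ in Lebesgue measure. One then writes, for $\eta>0$,
\begin{equation*}
\int_Q f_n|v|\,\dx\;\leq\;\int_Q f_n|v_n|\,\dx\;+\;\eta\,K^{1/\alpha}|Q|^{1/\alpha'}\;+\;\|v\|_{L^\infty}\,K^{1/\alpha}\,\big|\{|v_n-v|>\eta\}\big|^{1/\alpha'}\;+\;\int_Q f_n|v_n|\,\dx\,,
\end{equation*}
where the third term comes from bounding $|v_n-v|\leq|v_n|+|v|$ on $\{|v_n-v|>\eta\}$; letting $n\to+\infty$ and then $\eta\to 0^+$ gives $\int_Q f|v|\,\dx=0$. (Equivalently, one may argue directly: if $v\not\equiv 0$ then $|v|\geq\delta$ on a set $E_\delta$ with $|Q\setminus E_\delta|$ as small as desired, since the zero set of a nontrivial polynomial has measure zero; the constraints then force $\int_{E_\delta}f_n\,\dx\geq M_0/2$, and convergence in measure yields $\int_Q f_n|v_n|\,\dx\gtrsim\delta M_0$ for $n$ large, a contradiction.) This bypasses any need for a Sobolev embedding that reaches $L^{\alpha'}$. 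With that modification your proof is complete and coincides with the one in \cite{F-N}.
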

We conclude this paragraph, by stating a classical result in the theory of Sobolev spaces that relates the $L^{p}$ norms of the weak derivatives of a function: the so-called \textit{Gagliardo–Nirenberg inequality} (see \cite{Nir} for details of the proof).
\begin{theorem}\label{app:thm_G-N}
Let $f$ be a function belonging to $L^q(\R^d)$ and its derivatives of order $m$, denoted by $D^m f$, belong to $L^r(\R^d)$ with $1\leq q,r\leq +\infty$. For the derivatives $D^jf$, where $0\leq j<m$, the following inequality holds
\begin{equation}\label{ineq:G-N}
\|D^j f\|_{L^p}\leq C \|D^m f\|_{L^r}^{\alpha}\|f\|_{L^q}^{1-\alpha}\, ,
\end{equation}
where
$$ \frac{1}{p}=\frac{j}{d}+\alpha\left(\frac{1}{r}-\frac{m}{d}\right)+(1-\alpha )\frac{1}{q} \, ,$$
for all $\alpha$ in the interval 
$$ \frac{j}{m}\leq \alpha \leq 1 \, , $$
with the following exceptional cases:
\begin{itemize}
\item[(i)] if $j=0$, $rm< d$ and $q=+\infty$, then we make the additional assumption that either $f$ tends to zero at infinity or $f\in L^b(\R^d)$ for some $b>0$;
\item[(ii)] if $1<r<+\infty$ and $m-j-d/r$ is a non-negative integer, then \eqref{ineq:G-N} holds only for $\alpha$ satisfying $j/m\leq \alpha<1$.
\end{itemize}
\end{theorem}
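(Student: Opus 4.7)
The plan is to follow the classical strategy due to Gagliardo and Nirenberg. First I would carry out a scaling argument: applying the sought inequality to the rescaled function $f_\lambda(x) := f(\lambda x)$ for $\lambda > 0$ and demanding that both sides depend on $\lambda$ by the same power of $\lambda$, I recover exactly the exponent relation
\begin{equation*}
\frac{1}{p} = \frac{j}{d} + \alpha\left(\frac{1}{r} - \frac{m}{d}\right) + (1-\alpha)\frac{1}{q}
\end{equation*}
stated in the theorem. This confirms that no other balance of exponents is admissible and reduces the problem to establishing the inequality at one (or two) representative values of $\alpha$, the rest being filled in by interpolation.

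Next, I would treat the two endpoint cases. The endpoint $\alpha = 1$ (with $j = 0$) is the classical Sobolev embedding $\dot W^{m,r}(\R^d) \hookrightarrow L^p(\R^d)$ with $1/p = 1/r - m/d$, which can be proved iteratively from the Gagliardo-Nirenberg-Sobolev inequality $\|f\|_{L^{d/(d-1)}} \leq C \|\nabla f\|_{L^1}$; the latter is obtained through the fundamental theorem of calculus applied on lines and a repeated use of H\"older's inequality (Proposition \ref{app:hold_ineq}) in the remaining $d-1$ variables.

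The heart of the argument is the other endpoint $\alpha = j/m$. I would first establish the base case $m = 2$, $j = 1$ in one space dimension through integration by parts: starting from
\begin{equation*}
\int_{\R} |f'|^{2r} \, dx \,=\, -\,\int_{\R} f \cdot \partial_x\!\left(|f'|^{2r-2} f'\right) dx
\end{equation*}
and invoking H\"older's inequality on the right-hand side, one finds $\|f'\|_{L^{2r}}^{2} \leq C\, \|f\|_{L^\infty} \|f''\|_{L^{r}}$. Iterating this procedure produces the analogous one-dimensional mixed-derivative estimates for higher $m$ and intermediate $j$. Transfer to $\R^d$ proceeds by applying the one-dimensional bound along each coordinate axis and then integrating in the remaining variables, coupled with Minkowski's inequality (Proposition \ref{app:mink}). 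Intermediate values $\alpha \in (j/m, 1)$ are finally recovered by a H\"older interpolation between the two endpoints.

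The main obstacle will be the two exceptional cases. Case (i), in which $j = 0$, $rm < d$ and $q = +\infty$, fails for non-zero constants, so one must impose the extra decay hypothesis and adapt the iterative Sobolev argument to rule out this sole obstruction. Case (ii), where $m - j - d/r$ is a non-negative integer, corresponds to the critical exponent of the Sobolev embedding, at which $W^{m-j,r}$ does \emph{not} embed into $L^\infty$: hence the endpoint $\alpha = 1$ is genuinely unavailable, and the inequality has to be established through interpolation with $\alpha$ strictly less than $1$, exploiting the ability to lose a small amount of regularity in the critical scale.
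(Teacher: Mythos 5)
The paper does not actually contain a proof of Theorem~\ref{app:thm_G-N}: it states the inequality and refers the reader to Nirenberg's 1959 article (the citation \cite{Nir}) for the argument. So there is no in-text proof to compare against; your proposal should be measured against the classical strategy of that reference.

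With that understood, your outline is faithful to Nirenberg's approach: the scaling argument to pin down the exponent relation, the Sobolev endpoint at $\alpha=1$, the integration-by-parts endpoint at $\alpha=j/m$ starting from the one-dimensional estimate $\|f'\|_{L^{2r}}^2\lesssim\|f\|_{L^\infty}\|f''\|_{L^r}$, the lift to $\R^d$ via line integrals and Minkowski, and the interpolation in between. Two places would need more care in a complete write-up. First, the passage from one dimension to $\R^d$ is genuinely more delicate than ``apply the one-dimensional bound along each axis and integrate'': when $q\neq r$ the intermediate mixed norms do not collapse directly, and one needs the anisotropic/mixed-norm version of the H\"older--Minkowski interplay (this is exactly the technical bulk of Nirenberg's paper). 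Second, the exceptional case (ii) is not merely a matter of ``losing a small amount of regularity'': at $\alpha=1$ the embedding $W^{m-j,r}\hookrightarrow L^\infty$ fails on the nose when $m-j-d/r$ is a non-negative integer and $r>1$, so the theorem genuinely excludes that endpoint rather than patching it by a perturbative argument; your formulation should make clear that the conclusion simply does not hold for $\alpha=1$ in that regime, rather than being recoverable by interpolation. Apart from these points, the plan is sound; it is a sketch rather than a proof, which is appropriate given that the paper itself treats this as a classical black-box result.
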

\section{Elliptic estimates}
This part of the appendix is devoted to a classical result pertaining $L^2$ solutions of the following elliptic equation:
\begin{equation}\label{app:eq_elliptic-est}
-\div (A \nabla \Pi)=\div F \quad \text{in}\quad \R^d\, , 
\end{equation}
where $A=A(x)$ is a given suitably smooth bounded function satisfying 
\begin{equation}\label{bound-A_ellip}
 A_\ast :=\inf_{x\in \R^d} A(x)>0\, . 
\end{equation}
\begin{lemma}\label{lem:Lax-Milgram_type}
For all vector fields $F$ in $L^2(\R^d)$, there exists a tempered distribution $\Pi$ (unique up to constant functions) such that $\nabla \Pi \in L^2(\R^d)$, and equation \eqref{app:eq_elliptic-est} is satisfied. Moreover, we have 
\begin{equation}\label{app:est_Pi_L^2}
A_\ast \|\nabla \Pi\|_{L^2}\leq \|F\|_{L^2}\, . 
\end{equation} 
\end{lemma}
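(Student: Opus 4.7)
\noindent\textbf{Proof plan for Lemma \ref{lem:Lax-Milgram_type}.} The natural approach is a Lax-Milgram argument carried out on a homogeneous Sobolev-type space, since the equation $-\div(A\nabla\Pi)=\div F$ only controls $\nabla\Pi$ and determines $\Pi$ merely up to an additive constant. I would work in the Hilbert space $\dot{\mathcal{H}}$ obtained as the completion of $C^{\infty}_{c}(\R^{d})/\R$ with respect to the inner product $(u,v)\mapsto \int_{\R^{d}}\nabla u\cdot\nabla v\,dx$. This space embeds naturally into $\mathcal{S}'(\R^{d})/\R$, and its elements are represented by tempered distributions whose gradient lies in $L^{2}(\R^{d};\R^{d})$, which is exactly the functional setting required by the statement.

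The plan is then to introduce on $\dot{\mathcal{H}}$ the bilinear form and the linear functional
\[
a(\Pi,\varphi)\,:=\,\int_{\R^{d}}A\,\nabla\Pi\cdot\nabla\varphi\,dx\,,\qquad L(\varphi)\,:=\,-\int_{\R^{d}}F\cdot\nabla\varphi\,dx\,,
\]
and to verify the three hypotheses of the Lax-Milgram theorem. Continuity of $a$ is immediate from $A\in L^{\infty}$, as $|a(\Pi,\varphi)|\leq\|A\|_{L^{\infty}}\|\nabla\Pi\|_{L^{2}}\|\nabla\varphi\|_{L^{2}}$. Coercivity follows from \eqref{bound-A_ellip}, namely $a(\Pi,\Pi)\geq A_{\ast}\|\nabla\Pi\|_{L^{2}}^{2}$. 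Continuity of $L$ is a straightforward application of Cauchy-Schwarz, giving $|L(\varphi)|\leq \|F\|_{L^{2}}\|\nabla\varphi\|_{L^{2}}$. Lax-Milgram then yields a unique $\Pi\in\dot{\mathcal{H}}$ (hence unique up to constants as a tempered distribution) such that $a(\Pi,\varphi)=L(\varphi)$ for every $\varphi\in\dot{\mathcal{H}}$. Testing in particular against $\varphi\in C^{\infty}_{c}(\R^{d})$, one recovers \eqref{app:eq_elliptic-est} in the distributional sense.

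The estimate \eqref{app:est_Pi_L^2} is then obtained by taking $\varphi=\Pi$ as a test function in the variational identity: coercivity gives $A_{\ast}\|\nabla\Pi\|_{L^{2}}^{2}\leq a(\Pi,\Pi)=L(\Pi)\leq\|F\|_{L^{2}}\|\nabla\Pi\|_{L^{2}}$, and dividing by $\|\nabla\Pi\|_{L^{2}}$ yields the claim (the case $\nabla\Pi\equiv 0$ being trivial).

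The main technical subtlety, which I expect to be the only non-routine point, is the rigorous definition of the space $\dot{\mathcal{H}}$ and, in particular, the verification that its elements are honest tempered distributions defined modulo a constant. In dimension $d\geq 3$ this is standard thanks to the Hardy-Sobolev-type embedding $\dot{H}^{1}(\R^{d})\hookrightarrow L^{2d/(d-2)}(\R^{d})$, which provides a canonical representative of each equivalence class. In dimension $d=2$, which is the dimension of interest in Chapter \ref{chap:Euler}, no such embedding holds and one has to be slightly more careful: one shows that for any Cauchy sequence $(u_{n})_{n}$ in $C^{\infty}_{c}(\R^{2})/\R$ with respect to the $\|\nabla\cdot\|_{L^{2}}$ norm, a suitable choice of constants makes $(u_{n})_{n}$ converge in $L^{2}_{\mathrm{loc}}(\R^{2})$ to a distribution $u$, whose gradient is the $L^{2}$-limit of $(\nabla u_{n})_{n}$. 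This identification step is the only delicate point; once it is performed, the Lax-Milgram argument sketched above concludes the proof.
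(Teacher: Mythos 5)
The paper does not actually prove this lemma in-text; it simply states ``We refer to \cite{D_JDE} for details of the proof.'' Your Lax--Milgram argument on the homogeneous space $\dot{\mathcal{H}}$ — completion of $C^\infty_c(\R^d)/\R$ under the $\|\nabla\cdot\|_{L^2}$ seminorm, with the bilinear form $a(\Pi,\varphi)=\int A\,\nabla\Pi\cdot\nabla\varphi\,\dx$ and the functional $L(\varphi)=-\int F\cdot\nabla\varphi\,\dx$ — is correct and complete, it correctly identifies the only genuinely delicate point (realising elements of $\dot{\mathcal{H}}$ as tempered distributions modulo constants, especially in $d=2$ where no Hardy--Sobolev embedding is available), and it is the standard route taken in the cited reference.
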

We refer to \cite{D_JDE} for details of the proof. 

Let us now state higher order estimates for the pressure term in Besov spaces (we refer again to \cite{D_JDE} for details). 
\begin{proposition}\label{app:prop7_danchin}
Let $1<p<+\infty$ and $1\leq r\leq +\infty$. Let $A$ be a bounded function satisfying \eqref{bound-A_ellip} such that $\nabla A\in B^{s-1}_{p,r}$ for some $s>1+d/p$ or $s\geq 1+d/p$ if $r=1$. 
\begin{itemize}
\item[(i)] If $\sigma \in \; ]1,s]$ and $\nabla \Pi \in B^{\sigma}_{p,r}$ satisfies \eqref{app:eq_elliptic-est} for some function $F$ such that $\div F\in B^{\sigma -1}_{p,r}$, then we have for some constant $C=C(s, \sigma, p, d)$ that 
$$ A_\ast\|\nabla \Pi\|_{B^{\sigma}_{p,r}}\leq C\left( \|\div F\|_{B^{\sigma-1}_{p,r}}+A_\ast (1+A_\ast^{-1}\|\nabla A\|_{B^{s-1}_{p,r}})^\sigma \|\nabla \Pi\|_{L^p}\right). $$
\item[(ii)] If $2\leq p<+\infty$ and $F$ is in $L^2$ and satisfies $\div F\in B^{\sigma-1}_{p,r}$ for some $\sigma \in \; ]1+d/p-d/2,\, s]$, then \eqref{app:eq_elliptic-est} has a unique solution $\Pi$ (up to constant functions) such that $\nabla \Pi \in L^2 \cap B^{\sigma}_{p,r}$. Furthermore, inequality \eqref{app:est_Pi_L^2} is satisfied and there exist a positive exponent $\gamma =\gamma (\sigma,p,d)$ and a positive constant $C=C(s,\sigma,p,d)$ such that 
$$ A_\ast\|\nabla \Pi\|_{B^{\sigma}_{p,r}}\leq C\left( \|\div F\|_{B^{\sigma-1}_{p,r}}+ (1+A_\ast^{-1}\|\nabla A\|_{B^{s-1}_{p,r}})^\gamma \|\nabla \Pi\|_{L^2}\right). $$
\item[(iii)]If $\sigma >1$ and $1<p<+\infty$, then the following inequality holds:
$$ A_\ast\|\nabla \Pi\|_{B^{\sigma}_{p,r}}\leq C\left( \|\div F\|_{B^{\sigma-1}_{p,r}}+ \|\nabla A\|_{L^\infty}\|\nabla \Pi\|_{B^{\sigma -1}_{p,r}}+ \|\nabla \Pi\|_{L^\infty}\|\nabla A\|_{B^{\sigma -1}_{p,r}}\right). $$
\end{itemize}
\end{proposition}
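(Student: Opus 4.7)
The plan is to prove the three parts of the proposition by applying a Littlewood–Paley decomposition to the elliptic equation \eqref{app:eq_elliptic-est} and exploiting the commutator estimate from Lemma \ref{l:commutator_pressure}. Fix $j\geq 0$ and apply $\Delta_j$ to \eqref{app:eq_elliptic-est}: one obtains
\[
-\,\div\bigl(A\,\Delta_j\nabla\Pi\bigr)\,=\,\div\bigl(\Delta_j F\bigr)\,+\,\div\bigl([\Delta_j,A]\nabla\Pi\bigr)\,.
\]
Testing this identity against $|\Delta_j\Pi|^{p-2}\Delta_j\Pi$ (the duality bracket $L^p$-$L^{p'}$), integrating by parts, and using the ellipticity assumption \eqref{bound-A_ellip}, I would derive, thanks to a Bernstein-type lower bound $\|\Delta_j\nabla\Pi\|_{L^p}\sim 2^j\|\Delta_j\Pi\|_{L^p}$ valid for $j\geq 0$, an inequality of the form
\[
A_\ast\,\|\Delta_j\nabla\Pi\|_{L^p}\,\leq\,C\,\bigl(\|\div\Delta_j F\|_{L^p}\,+\,\|\div[\Delta_j,A]\nabla\Pi\|_{L^p}\bigr)\,.
\]

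For part $(i)$, I would multiply the previous bound by $2^{j(\sigma-1)}$, take the $\ell^r$ norm over $j\geq 0$, and invoke Lemma \ref{l:commutator_pressure} with regularity index $\sigma-1\in\,]-1,s-1]$ to bound the commutator term by $C\,\|\nabla A\|_{B^{s-1}_{p,r}}\,\|\nabla\Pi\|_{B^{\sigma-1}_{p,r}}$. The low-frequency block $\Delta_{-1}\nabla\Pi$ is controlled directly by $\|\nabla\Pi\|_{L^p}$ via Bernstein's inequality. An interpolation inequality together with Young's inequality (to absorb the fractional power of $\|\nabla\Pi\|_{B^\sigma_{p,r}}$ appearing from the interpolation of $\|\nabla\Pi\|_{B^{\sigma-1}_{p,r}}$ between $L^p$ and $B^\sigma_{p,r}$) closes the estimate, yielding the $(1+A_\ast^{-1}\|\nabla A\|_{B^{s-1}_{p,r}})^\sigma$ prefactor. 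The main obstacle here is to track the exact power $\sigma$: this comes from iterating the interpolation $\sigma$ times, exactly as in the scheme developed in \cite{D_JDE}.

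For part $(ii)$, the existence statement follows by a standard approximation scheme: one truncates $\div F$ in frequency, applies Lemma \ref{lem:Lax-Milgram_type} to obtain an $L^2$ solution satisfying \eqref{app:est_Pi_L^2}, and then uses the \textsl{a priori} estimate from part $(i)$ together with the Fatou property of Besov spaces (Proposition \ref{proposition_Fatou}) to pass to the limit. To replace the $L^p$ norm of $\nabla\Pi$ appearing in $(i)$ by the $L^2$ norm, one uses the embedding $L^2\cap B^\sigma_{p,r}\hookrightarrow L^p$, which is guaranteed precisely by the condition $\sigma>1+d/p-d/2$ (via Proposition \ref{p:embed} applied to the high frequencies, the low frequencies being in $L^2$). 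An interpolation between $L^2$ and $B^\sigma_{p,r}$, followed by Young's inequality to absorb, produces the required exponent $\gamma=\gamma(\sigma,p,d)$.

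For part $(iii)$, I would rewrite the equation as $A\,\Delta\Pi=-\div F-\nabla A\cdot\nabla\Pi$, divide by $A$ (which is admissible since $A\geq A_\ast>0$), and localise in frequency. The key observation is that, for $\sigma>1$, the space $B^{\sigma-1}_{p,r}$ is an algebra as soon as condition \eqref{cond_algebra} holds, so the tame estimate of Corollary \ref{cor:tame_est} applies to the product $\nabla A\cdot\nabla\Pi$ and splits the bound into the two desired pieces $\|\nabla A\|_{L^\infty}\|\nabla\Pi\|_{B^{\sigma-1}_{p,r}}+\|\nabla\Pi\|_{L^\infty}\|\nabla A\|_{B^{\sigma-1}_{p,r}}$. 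The hard part is to handle the factor $1/A$ when applying Bony's decomposition: one has to combine a composition estimate for $1/A$ (which inherits the regularity of $A$ because $A$ is bounded away from zero) with the paraproduct continuity of Proposition \ref{T-R}, and to invoke once more the commutator bound in Lemma \ref{l:commutator_pressure} at regularity index $\sigma-1$.
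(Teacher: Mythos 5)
The paper does not actually prove Proposition~\ref{app:prop7_danchin}: it refers entirely to \cite{D_JDE} for details. The nearest thing to a proof inside the paper is the analogous $H^s$ computation carried out in Subsection~\ref{ss:unif_est} to bound $\nabla\Pi_\veps^{n+1}$, and your Part~(i) sketch mirrors that computation, generalised to $p\neq2$; so that is what I compare against.

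For Parts~(i)–(ii) the outline is sound: dyadic localisation of the divergence-form equation, an $L^p$ energy estimate at each frequency, the commutator bound of Lemma~\ref{l:commutator_pressure} at regularity index $\sigma-1$, and then interpolation plus Young's inequality. Two things need repair. First, the displayed intermediate inequality
\[
A_\ast\,\|\Delta_j\nabla\Pi\|_{L^p}\,\leq\,C\bigl(\|\div\Delta_j F\|_{L^p}+\|\div[\Delta_j,A]\nabla\Pi\|_{L^p}\bigr)
\]
is dimensionally off: testing against $|\Delta_j\Pi|^{p-2}\Delta_j\Pi$ and using $\|\Delta_j\nabla\Pi\|_{L^p}\sim 2^j\|\Delta_j\Pi\|_{L^p}$ produces a factor $2^j$ on the left, exactly as in the paper's line $2^{j}\|\Delta_j\nabla\Pi_\veps^{n+1}\|_{L^2}^2\leq C\|\Delta_j\nabla\Pi_\veps^{n+1}\|_{L^2}(\ldots)$ in Subsection~\ref{ss:unif_est}. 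With that $2^j$ in place, multiplying by $2^{j(\sigma-1)}$ and taking the $\ell^r$ norm does yield the $B^\sigma_{p,r}$ seminorm of $\nabla\Pi$ on the left as you intend; as displayed, it would give the $B^{\sigma-1}_{p,r}$ seminorm. Second, the exponent $\sigma$ in the constant does not arise from ``iterating the interpolation $\sigma$ times'': the single interpolation $\|\nabla\Pi\|_{B^{\sigma-1}_{p,r}}\leq C\|\nabla\Pi\|_{L^p}^{1/\sigma}\|\nabla\Pi\|_{B^\sigma_{p,r}}^{1-1/\sigma}$ followed by one application of Young with conjugate exponents $\sigma$ and $\sigma/(\sigma-1)$ produces the power $\sigma$ directly — exactly as the paper does with $\|\nabla\Pi\|_{H^{s-1}}\leq C\|\nabla\Pi\|_{L^2}^{1/s}\|\nabla\Pi\|_{H^s}^{1-1/s}$.

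Part~(iii), as sketched, is the wrong route. Dividing the equation through by $A$ forces you to estimate $(1/A)\div F$ and $(1/A)\nabla A\cdot\nabla\Pi$ in $B^{\sigma-1}_{p,r}$, which requires a composition estimate for $1/A$; this introduces additional factors of $A_\ast^{-1}$ in front of the $\|\nabla A\|$ terms and does not produce the stated form where $A_\ast$ appears only once on the left. More importantly, you invoke Corollary~\ref{cor:tame_est} ``as soon as condition~\eqref{cond_algebra} holds,'' but Part~(iii) hypothesises only $\sigma>1$ and $1<p<+\infty$; there is no assumption $\sigma-1>d/p$, so that appeal is not available. (The tame product estimate in fact holds for any regularity index strictly greater than $0$ without the algebra condition — it is a Moser-type estimate — but you did not use it in that generality, and the divide-by-$A$ route still does not yield the clean constants.) The way to obtain exactly the split
\[
\|\nabla A\|_{L^\infty}\|\nabla\Pi\|_{B^{\sigma-1}_{p,r}}+\|\nabla\Pi\|_{L^\infty}\|\nabla A\|_{B^{\sigma-1}_{p,r}}
\]
with only $A_\ast$ on the left is to stay with the divergence-form dyadic energy estimate and replace Lemma~\ref{l:commutator_pressure} by the corresponding \emph{tame} commutator estimate $\|\div[\Delta_j,A]\nabla\Pi\|_{L^p}\lesssim c_j 2^{-j(\sigma-1)}(\|\nabla A\|_{L^\infty}\|\nabla\Pi\|_{B^{\sigma-1}_{p,r}}+\|\nabla\Pi\|_{L^\infty}\|\nabla A\|_{B^{\sigma-1}_{p,r}})$, valid for $\sigma-1>0$. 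That is what \cite{D_JDE} does, and it is what you need here.
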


\section{Compactness theorems}
Next, a ``milestone'' in our analysis is the compactness of special quantities in order to perform the asymptotic process. In this sense, the \emph{Ascoli-Arzel\`a} and \emph{Aubin-Lions} theorems come in our ``rescue''.

\begin{theorem}[Ascoli-Arzel\`a Theorem] \label{app:th_A-A}
Let $Q\subset \R^d$ be compact and $X$ be a compact topological metric space endowed with a metric $d_X$. Let $(v_k)_k$ be a sequence of functions in $\mc C^0(Q;X)$ that is equi-continuous, i.e. for any $\veps >0$ there exists $\delta >0$ such that 
$$d_X\Big(v_k(y),\, v_k(z)\Big)\leq \veps \, , $$
provided $|y-z|<\delta$ independently of $k$. 

Then, $(v_k)_k$ is precompact in $\mc C^0(Q;X)$, meaning that there exists a subsequence (not relabelled) and a function $v\in \mc C^0(Q;X)$ such that 
$$ \sup_{y\in Q}d_X\Big(v_k(y),\, v(y)\Big)\longrightarrow 0 \quad \text{as}\quad k\rightarrow +\infty \, . $$ 
\end{theorem}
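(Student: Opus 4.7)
\noindent
The proof will proceed via the classical diagonal extraction argument, combined with the equi-continuity hypothesis to upgrade pointwise convergence on a countable dense set to uniform convergence on the whole compact $Q$.

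First, since $Q\subset\R^d$ is compact, it is separable: I would fix a countable dense subset $\{y_n\}_{n\in\N}\subset Q$. For each fixed $n\in\N$, the sequence $\bigl(v_k(y_n)\bigr)_{k\in\N}$ takes values in the compact metric space $(X,d_X)$, hence admits a convergent subsequence. A standard Cantor diagonal extraction then produces a single subsequence (not relabelled) $(v_k)_k$ such that, for every $n\in\N$, the sequence $\bigl(v_k(y_n)\bigr)_k$ converges in $X$ to some limit, which I call $v(y_n)$.

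Next, I would use the equi-continuity to promote this to uniform convergence. Fix $\veps>0$ and choose $\delta>0$ (uniform in $k$) as provided by equi-continuity, so that $d_X\bigl(v_k(y),v_k(z)\bigr)\leq\veps$ whenever $|y-z|<\de$. By compactness of $Q$, finitely many balls $B(y_{n_1},\de),\dots,B(y_{n_N},\de)$ centered at points of the dense subset cover $Q$. Since $\bigl(v_k(y_{n_i})\bigr)_k$ converges for each $i=1,\dots,N$, there exists $K=K(\veps)$ such that for all $k,\ell\geq K$ and all $i$, we have $d_X\bigl(v_k(y_{n_i}),v_\ell(y_{n_i})\bigr)\leq\veps$. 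For any $y\in Q$, picking $y_{n_i}$ with $|y-y_{n_i}|<\de$ and using the triangle inequality together with equi-continuity yields
\[
d_X\bigl(v_k(y),v_\ell(y)\bigr)\,\leq\,d_X\bigl(v_k(y),v_k(y_{n_i})\bigr)+d_X\bigl(v_k(y_{n_i}),v_\ell(y_{n_i})\bigr)+d_X\bigl(v_\ell(y_{n_i}),v_\ell(y)\bigr)\,\leq\,3\veps\,,
\]
for all $k,\ell\geq K$, uniformly in $y\in Q$. Hence $(v_k)_k$ is Cauchy in $C^0(Q;X)$ endowed with the uniform metric.

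Finally, since $X$ is a compact, and in particular complete, metric space, the Cauchy property above implies that $v_k$ converges uniformly on $Q$ to some map $v\colon Q\to X$. The continuity of $v$ follows from the standard fact that a uniform limit of continuous functions is continuous (which can also be read off by passing to the limit in the equi-continuity estimate, obtaining that $v$ itself is continuous with the same modulus of continuity). The main subtlety I anticipate is purely bookkeeping: ensuring that the diagonal subsequence is well-defined and that the equi-continuity constant $\de$ chosen at the beginning is genuinely uniform in $k$, so that the finite cover argument produces a bound independent of the particular index. No truly hard analytical obstacle is expected, since both the equi-continuity and the compactness of target and source are exactly the ingredients the classical argument requires.
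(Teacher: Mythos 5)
Your proof is correct and is the standard diagonal-extraction argument for the Ascoli--Arzelà theorem. Note that the paper itself does not prove this statement: Theorem~\ref{app:th_A-A} appears in Appendix~\ref{appendixA}, which the authors explicitly introduce as a collection of ``well-known results'' stated without proof for the reader's convenience, with a pointer to the references \cite{F-N} and \cite{B-C-D}. Since there is no proof in the paper to compare against, the only meaningful check is internal consistency, and your argument holds up: separability of the compact $Q\subset\R^d$ gives the countable dense set, compactness of $X$ drives the diagonal extraction, equi-continuity plus the finite subcover upgrades pointwise to uniform Cauchy, and completeness of $X$ (hence of $\mc C^0(Q;X)$ with the uniform metric) delivers the limit; continuity of $v$ then follows from uniform convergence (or, as you observe, by passing to the limit in the equi-continuity bound).
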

The proof of the next statement can be found in Section 5.3 of Chapter 2 of \cite{B-P} (see also Section 5 of Chapter 1 in \cite{Lions_1969}).
\begin{theorem}[Aubin-Lions Theorem]\label{app:Aubin_thm}
Let $X_0 \subset X_1 \subset X_2$ be three Banach spaces. We assume that the embedding of $X_1$ in $X_2$ is continuous and that the embedding of $X_0$ in $X_1$ is compact. Let $(p,r)$ such that $1\leq p,r\leq +\infty$. For $T>0$, we define
$$ E_{p,r}:=\left\{v\in L^p(0,T;X_0) \quad \text{and}\quad \frac{dv}{dt}\in L^r(0,T;X_2)\right\}\, .  $$
Then,
\begin{itemize}
\item[(i)] if $p<+\infty$, the embedding of $E_{p,r}$ in $L^p(0,T;X_1)$ is compact;
\item[(ii)] if $p=+\infty$ and $r>1$, the embedding of $E_{p,r}$ in $\mc C^0(0,T; X_1)$ is compact.
\end{itemize}

\end{theorem}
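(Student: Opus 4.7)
The plan is to proceed along the classical scheme due to Lions and Aubin, which ultimately rests on two ingredients: an interpolation-type inequality between the three spaces, and an equicontinuity-in-time property extracted from the uniform bound on $dv/dt$.

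First I would establish the so-called \emph{Ehrling lemma} (or Lions's inequality): under the hypothesis that $X_0 \hookrightarrow X_1$ compactly and $X_1 \hookrightarrow X_2$ continuously, for every $\eta>0$ there exists a constant $C_\eta>0$ such that
\[
\|u\|_{X_1}\,\leq\,\eta\,\|u\|_{X_0}\,+\,C_\eta\,\|u\|_{X_2}\qquad\text{for all }u\in X_0\,.
\]
The proof is by contradiction: if the inequality failed for some $\eta_0>0$, one would find a sequence $(u_n)_n$ normalised by $\|u_n\|_{X_1}=1$ for which both $\|u_n\|_{X_0}$ stays bounded and $\|u_n\|_{X_2}\to0$; the compact embedding $X_0\hookrightarrow X_1$ would then extract a subsequence converging to some $u$ in $X_1$, necessarily with $\|u\|_{X_1}=1$, but also $u=0$ in $X_2$ by uniqueness of the limit, contradicting the continuous embedding $X_1\hookrightarrow X_2$.

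Next, given a bounded sequence $(v_n)_n\subset E_{p,r}$, my goal would be to produce a subsequence that is Cauchy in $L^p(0,T;X_1)$ (respectively in $C^0([0,T];X_1)$). The Ehrling inequality, applied pointwise in $t$ to $u=v_n(t)-v_m(t)$ and then integrated in time, reduces the problem to showing that, up to extraction, $(v_n)_n$ is Cauchy in $L^p(0,T;X_2)$ (resp. in $C^0([0,T];X_2)$): the $\eta$-term can be made arbitrarily small because $(v_n)_n$ stays bounded in $L^p(0,T;X_0)$, and the remaining $C_\eta$-term is then controlled by $X_2$-convergence. At this stage the compactness input has been fully used, and one is left with a purely ``soft'' equicontinuity question in the larger space~$X_2$.

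The core estimate is then the time-translation bound: for any $h\in(0,T)$,
\[
\|v_n(\,\cdot\,+h)-v_n(\,\cdot\,)\|_{L^r(0,T-h;X_2)}\,\leq\,h^{1-1/r}\,\left\|\tfrac{d v_n}{dt}\right\|_{L^r(0,T;X_2)}\,,
\]
which follows from the fundamental theorem of calculus in $X_2$ together with Jensen's inequality, hence is uniform in $n$. Combined with uniform boundedness in $L^p(0,T;X_2)$, the Fr\'echet--Kolmogorov--Riesz criterion for compactness in $L^p(0,T;X_2)$ (and Ascoli--Arzel\`a in $C^0([0,T];X_2)$ for part (ii), where $r>1$ yields $C^{0,1-1/r}$ time regularity) delivers a subsequence converging strongly in $L^p(0,T;X_2)$, respectively uniformly on $[0,T]$ with values in $X_2$. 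Combining with the reduction step above concludes the proof. A convenient way to package the $L^p$-in-time compactness without quoting Fr\'echet--Kolmogorov is to introduce a time-mollifier $(\rho_\de)_\de$ and show, again via Ehrling, that the smoothed family $\rho_\de*v_n$ is relatively compact in $C^0([0,T];X_1)$ for each $\de>0$ (Ascoli--Arzel\`a plus the compact embedding $X_0\hookrightarrow X_1$), while $\rho_\de*v_n\to v_n$ in $L^p(0,T;X_2)$ uniformly in $n$ as $\de\to 0^+$; a diagonal extraction then yields the desired subsequence.

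The main obstacle, as I see it, is the end-point case $p=+\infty$: one needs genuine uniform continuity on $[0,T]$ with values in $X_1$, and the mere boundedness of $dv_n/dt$ in $L^1(0,T;X_2)$ would not suffice to produce any modulus of continuity. This is precisely why the hypothesis $r>1$ is made in part (ii); under this assumption, H\"older's inequality upgrades the integral bound into a uniform H\"older modulus in $X_2$, which combined with the Ehrling step upgrades to equicontinuity in $X_1$, making Ascoli--Arzel\`a applicable in $C^0([0,T];X_1)$. Handling carefully the delicate interplay between the integrability exponents $p,r$ and the endpoint cases is where most of the technical bookkeeping will concentrate.
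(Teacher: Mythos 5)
The paper does not actually prove this statement: it simply cites Section~5.3 of Chapter~2 of Boyer--Fabrie (and Section~5 of Chapter~1 of Lions~1969) for the proof. So there is no in-paper argument to compare against. That said, your sketch is the standard Aubin--Lions--Simon proof and is essentially correct: Ehrling's lemma reduces the $X_1$-compactness to $X_2$-compactness; the time-translation estimate deduced from the uniform bound on $dv/dt$, together with either Fr\'echet--Kolmogorov (or the mollifier-plus-diagonal device you also describe) for part~(i), or Ascoli--Arzel\`a for part~(ii), closes the argument. You also correctly pinpoint the role of the hypothesis $r>1$ in part~(ii) as the source of a genuine modulus of continuity.

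One small imprecision worth fixing: the displayed translation estimate
$\|v_n(\cdot+h)-v_n(\cdot)\|_{L^r(0,T-h;X_2)}\leq h^{1-1/r}\|dv_n/dt\|_{L^r(0,T;X_2)}$
mixes two different bounds. The H\"older-in-time argument gives the \emph{pointwise} (i.e.\ $L^\infty_t$) control $\sup_t\|v_n(t+h)-v_n(t)\|_{X_2}\leq h^{1-1/r}\|dv_n/dt\|_{L^r}$, which is what you actually use for part~(ii). The Jensen--Fubini argument you invoke yields instead the sharper $L^r_t$ control $\|v_n(\cdot+h)-v_n(\cdot)\|_{L^r(0,T-h;X_2)}\leq h\,\|dv_n/dt\|_{L^r(0,T;X_2)}$, which is what is needed for part~(i) and, crucially, still works at the endpoint $r=1$ (where your stated exponent $h^{1-1/r}=h^0$ would give nothing). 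Since part~(i) allows $r=1$, you should use the latter form there. Finally, when invoking Fr\'echet--Kolmogorov in the Banach-valued setting or Ascoli--Arzel\`a in $C^0([0,T];X_1)$, remember that one needs pointwise relative compactness (of the time-averages, resp.\ of $\{v_n(t)\}_n$ for each $t$), and that this too is supplied by the compact embedding $X_0\hookrightarrow X_1$ together with the uniform $L^p(X_0)$ bound; this is where the compactness hypothesis is used a second time, and it is worth making explicit rather than letting it hide inside the cited criteria.
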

Now, we present the celebrated \emph{Div-Curl Lemma} which,  roughly speaking, ensures that the product of two functions weakly converge, if each function weakly converges and in addition one has information on the $\div$ of one and the $\curl$ of the other. 
\begin{theorem}\label{app:div-curl_lem}
Let $Q\subset \R^d$ be an open set. Assume that
\begin{equation*}
\begin{split}
 u_n\weak  u \quad &\text{weakly in }L^p(Q;\R^d)\\
 v_n\weak v \quad  &\text{weakly in }L^q(Q;\R^d)
\end{split}
\end{equation*}
where
$$ \frac{1}{p}+\frac{1}{q}=\frac{1}{r}<1\, . $$
In addition, let $\div  u_n$ be precompact in $W^{-1,s}(Q)$ and $\curl  v_n$ be precompact in $W^{-1,s}(Q;\R^{d\times d})$, for a certain $s>1$. Then, 
$$  u_n\cdot v_n\weak  u \cdot  v \quad \text{weakly in }L^r(Q)\, . $$
\end{theorem}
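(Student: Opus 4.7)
The plan is to follow the classical Murat–Tartar argument, whose main structural idea is to use a Hodge-type decomposition to split each sequence into a ``gradient'' part and a ``divergence-free'' part, upgrade one of these parts from weak to strong convergence via elliptic regularity applied to the div/curl information, and then exploit the remaining $(\text{div-free}) \cdot (\text{grad})$ pairings via integration by parts.

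First, I would reduce to a compactly supported, global setting. Since weak convergence in $L^r(Q)$ is tested against $L^{r'}(Q)$ functions, it suffices to prove that $\int_Q u_n\cdot v_n\,\varphi\,\dx\longrightarrow\int_Q u\cdot v\,\varphi\,\dx$ for every $\varphi\in C_c^\infty(Q)$. Fixing $\chi\in C_c^\infty(Q)$ with $\chi\equiv 1$ on $\Supp\varphi$, I replace $u_n,v_n$ by $\chi u_n,\chi v_n$ and extend by zero to $\R^d$. Since $\div(\chi u_n)=\chi\,\div u_n+\nabla\chi\cdot u_n$, the first term inherits precompactness in $W^{-1,s}$ from that of $\div u_n$, while the second term is bounded in $L^p$ with support in a fixed compact set, hence precompact in $W^{-1,s}$ by the dual Rellich embedding (Theorem \ref{app:thm_dual_Sob}), after possibly decreasing $s$ slightly above $1$. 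An identical argument treats $\curl(\chi v_n)$.

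Next, on $\R^d$ I would Hodge-decompose $\chi u_n=\nabla a_n+U_n$ with $\Delta a_n=\div(\chi u_n)$ and $\div U_n=0$, and symmetrically $\chi v_n=\nabla b_n+V_n$ with $\Delta b_n=\div(\chi v_n)$ and $\div V_n=0$. The Calder\'on–Zygmund bound $\nabla(-\Delta)^{-1}\colon W^{-1,s}\to L^s$ shows that $\nabla a_n$ is precompact in $L^s_{\rm loc}$; meanwhile, from $\curl V_n=\curl(\chi v_n)$ together with $\div V_n=0$, the identity $-\Delta V_n=\curl(\curl V_n)$ (read componentwise) combined with the same elliptic bound yields that $V_n$ is precompact in $L^s_{\rm loc}$. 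Expanding the integrand $\chi u_n\cdot\chi v_n=(\nabla a_n+U_n)\cdot(\nabla b_n+V_n)$ produces four terms: the terms $\nabla a_n\cdot\nabla b_n$, $\nabla a_n\cdot V_n$ and $U_n\cdot V_n$ all couple a strongly converging factor (in $L^s_{\rm loc}$) with a weakly converging one (in $L^p$ or $L^q$), and pass to the limit whenever the exponents are conjugate; the delicate cross term $\int U_n\cdot\nabla b_n\,\varphi\,\dx$ is handled by integration by parts, which using $\div U_n=0$ reduces it to $-\int b_n\,U_n\cdot\nabla\varphi\,\dx$, and Rellich's theorem applied to $b_n$ (bounded in $W^{1,q}_{\rm loc}$) gives local strong convergence of $b_n$, while $U_n\rightharpoonup U$ weakly in $L^p$.

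The main obstacle will be the arithmetic of the integrability exponents: the strong-weak products used above require $1/s+1/q\leq 1$ and $1/s+1/p\leq 1$ simultaneously, while the $W^{-1,s}$ hypothesis leaves only limited freedom in $s$. The point is that, \emph{on bounded supports}, one can freely replace $W^{-1,s}$ by $W^{-1,s'}$ for any $1<s'\leq s$ while preserving precompactness (by the monotonicity of these spaces on bounded domains), and the Calder\'on–Zygmund theory also lowers the index; combined with $1/r=1/p+1/q<1$, this furnishes a valid choice. Once the four pieces are shown to converge individually, identifying their sum with $\int u\cdot v\,\varphi\,\dx$ follows by applying the very same Hodge decomposition to the weak limits $u,v$ and matching each term, which concludes the proof.
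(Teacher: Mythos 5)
The paper states this Div--Curl Lemma as a standard reference result in the appendix and gives no proof, so there is nothing to compare against directly; I will therefore assess your argument on its own. Your skeleton --- localise by a cut-off and extend to $\R^d$, perform a Hodge decomposition $\chi u_n=\nabla a_n+U_n$, $\chi v_n=\nabla b_n+V_n$, upgrade one factor in each pairing to strong convergence via Calder\'on--Zygmund estimates on the div/curl data, and integrate by parts in the remaining cross term --- is the classical Murat--Tartar route and is sound in outline. However, there is a genuine gap exactly where you address the ``arithmetic of the integrability exponents.'' The strongly converging factors $\nabla a_n$ and $V_n$ have to converge in $L^t_{\rm loc}$ for $t$ at least $q'$ and $p'$ respectively, yet elliptic regularity applied to the $W^{-1,s}$ information yields precompactness only in $L^s_{\rm loc}$, and the hypothesis permits $s$ arbitrarily close to $1$. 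Your proposed remedy --- replacing $W^{-1,s}$ by $W^{-1,s'}$ with $1<s'\leq s$ --- moves in the wrong direction: lowering the index gives strong convergence in an even smaller Lebesgue exponent, which makes the strong-weak pairing harder, not easier. Concretely, with $p=q=3$, $r=3/2$ and $s=1.1$ you would only obtain $\nabla a_n$ strong in $L^{1.1}_{\rm loc}$, which does not pair against $\nabla b_n$ weak in $L^3$, since $1/1.1+1/3>1$; no choice of $s'\leq s$ helps.

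The missing ingredient is that $\nabla a_n$ is not merely precompact in $L^s_{\rm loc}$: since $\nabla(-\Delta)^{-1}\div$ is a Calder\'on--Zygmund operator, it is bounded on $L^p(\R^d)$, hence $\nabla a_n$ is also \emph{uniformly bounded in $L^p$}; likewise $\nabla b_n$ is bounded in $L^q$ and therefore so is $V_n=\chi v_n-\nabla b_n$. Interpolating the strong $L^s_{\rm loc}$ convergence against the uniform $L^p$ (resp.\ $L^q$) bound upgrades $\nabla a_n$ to strong convergence in $L^t_{\rm loc}$ for every $s\leq t<p$ (resp.\ $V_n$ for every $s\leq t<q$). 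The strict inequality $1/p+1/q<1$ then guarantees $q'<p$ and $p'<q$, so one may take $t=q'$ for $\nabla a_n$ and $t=p'$ for $V_n$, making each of the four pieces a legitimate strong-weak pairing; the cross term $U_n\cdot\nabla b_n$ you already handle correctly by integration by parts plus Rellich on $b_n$. With this correction the argument closes.
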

\section{Gr\"onwall's lemma}\label{app:sect_gron}
To conclude this appendix, we mention a \emph{Gr\"onwall estimate} which has been of constant use in Chapter \ref{chap:Euler}.
\begin{lemma}\label{app:lem_gron}
Let $a\in L^1(0,T)$, $a\geq 0$, $\beta \in L^1(0,T)$, $b_0\in \R$ and 
$$b(\tau )=b_0+\int_0^{\tau}\beta (t)\dt $$
be given. Let $f\in L^\infty(0,T)$ satisfy 
$$ f(\tau )\leq b(\tau)+\int_0^{\tau}a(t)\, f(t) \dt \quad \text{ for a.a. }\tau \in [0,T]. $$
Then, one has 
$$ f(\tau)\leq b_0 \exp\left(\int_0^{\tau}a(t)\dt \right)+\int_0^{\tau}\beta (t)\exp \left(\int_t^{\tau}a(s) \ds \right)\dt $$
for a.a. $\tau \in [0,T]$.
\end{lemma}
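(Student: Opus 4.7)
The plan is to follow the classical Grönwall strategy: transform the integral inequality for $f$ into a differential inequality for the auxiliary quantity
\[
g(\tau)\,:=\,\int_0^\tau a(t)\,f(t)\,\dt,
\]
which is absolutely continuous with $g(0)=0$ and $g'(\tau)=a(\tau)\,f(\tau)$ a.e. The hypothesis gives $f(\tau)\leq b(\tau)+g(\tau)$ a.e., so multiplying by $a(\tau)\geq0$ I obtain the differential inequality
\[
g'(\tau)\,-\,a(\tau)\,g(\tau)\,\leq\,a(\tau)\,b(\tau)\qquad\text{a.e.\ in }[0,T].
\]

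The next step is to integrate this linear differential inequality against the integrating factor $\exp\bigl(-\int_0^\tau a(s)\,\ds\bigr)$. This yields
\[
\frac{d}{d\tau}\!\left[g(\tau)\,\exp\!\left(-\!\int_0^\tau a(s)\,\ds\right)\right]\,\leq\,a(\tau)\,b(\tau)\,\exp\!\left(-\!\int_0^\tau a(s)\,\ds\right),
\]
and integrating on $[0,\tau]$ (using $g(0)=0$) and rearranging the exponentials gives
\[
g(\tau)\,\leq\,\int_0^\tau a(t)\,b(t)\,\exp\!\left(\int_t^\tau a(s)\,\ds\right)\dt.
\]
Together with the pointwise bound $f(\tau)\leq b(\tau)+g(\tau)$, this already proves a standard Grönwall estimate in terms of $b(\tau)$.

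The final step is to convert that bound into the statement written in terms of $b_0$ and $\beta$. Here I will use the antiderivative identity
\[
\int_t^\tau a(r)\,\exp\!\left(\int_r^\tau a(s)\,\ds\right)\dr\,=\,\exp\!\left(\int_t^\tau a(s)\,\ds\right)\,-\,1,
\]
which is the main computational device. I plug $b(t)=b_0+\int_0^t\beta(s)\,\ds$ into the previous estimate and split the resulting expression into a $b_0$-contribution and a $\beta$-contribution. The $b_0$-contribution collapses, by the identity above, to $b_0\exp(\int_0^\tau a(s)\,\ds)$. The $\beta$-contribution, after swapping the order of integration via Fubini, takes the form
\[
\int_0^\tau\beta(s)\left[\int_s^\tau a(r)\,\exp\!\left(\int_r^\tau a(u)\,\du\right)\dr\right]\ds\,+\,\int_0^\tau\beta(t)\,\dt,
\]
and a second application of the same antiderivative identity makes the bracketed quantity equal to $\exp(\int_s^\tau a(u)\,\du)-1$, so that the leftover $-\int_0^\tau\beta(s)\,\ds$ cancels the extra term $\int_0^\tau\beta(t)\,\dt$ and leaves exactly $\int_0^\tau\beta(t)\exp(\int_t^\tau a(s)\,\ds)\,\dt$. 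Summing the two contributions yields the desired inequality.

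The main obstacle is essentially bookkeeping: one must carry out the Fubini exchange carefully (both $a$ and $\beta$ are only $L^1$, but the integrand is nonnegative after decomposing $\beta=\beta^+-\beta^-$, so Tonelli applies separately to each sign) and track the cancellations between the ``$-1$'' terms coming from the antiderivative identity and the standalone $\int_0^\tau\beta$ term. There is no analytic difficulty beyond these manipulations.
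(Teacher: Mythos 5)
Your proof is correct. The paper does not supply a proof of this lemma (Appendix~B is explicitly a collection of ``well-known results'' stated without proof, cited to \cite{F-N}), so there is nothing to compare against; your argument is the standard one: convert the integral inequality into the differential inequality $g'-ag\le ab$ for $g(\tau)=\int_0^\tau af$, integrate with the factor $\exp(-\int_0^\tau a)$, and then re-express the resulting bound in terms of $b_0$ and $\beta$ via the antiderivative identity $\int_t^\tau a(r)\exp(\int_r^\tau a)\,{\rm d}r=\exp(\int_t^\tau a)-1$ and a Fubini exchange. Your handling of the measure-theoretic points (absolute continuity of $g$ since $af\in L^1$, sign-splitting $\beta=\beta^+-\beta^-$ so Tonelli applies to each piece, the $\exp$ factor being uniformly bounded by $\exp(\|a\|_{L^1})$) is exactly what is needed and the cancellations work out as you describe.
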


\addcontentsline{toc}{chapter}{\bibname}
{\small

}


\newpage
\null
\thispagestyle{empty}
\newpage

\end{document}